\documentclass[11pt]{amsart}
\usepackage{amscd,amssymb,longtable}
\usepackage[matrix,arrow,curve]{xy}

\sloppy \pagestyle{plain}

\textwidth=16cm \textheight=25.1cm

\addtolength{\topmargin}{-70pt}%
\addtolength{\oddsidemargin}{-1.1cm}%
\addtolength{\evensidemargin}{-2.4cm}%

\pagenumbering{arabic}

\theoremstyle{definition}
\newtheorem{theorem}[equation]{Theorem}
\newtheorem*{theorem*}{Theorem}
\newtheorem{lemma}[equation]{Lemma}
\newtheorem{corollary}[equation]{Corollary}
\newtheorem{conjecture}[equation]{Conjecture}
\newtheorem{example}[equation]{Example}
\newtheorem{definition}[equation]{Definition}
\newtheorem*{definition*}{Definition}

\theoremstyle{remark}
\newtheorem{remark}[equation]{Remark}

\makeatletter\@addtoreset{equation}{section} \makeatother

%%%%%%%%%%%%%%%%%%%%%%%%
\def \O {\mathcal{O}}
\def \X {\mathcal{X}}
\def \LCS {\mathrm{LCS}}
\def \P {\mathbb{P}}
\def \Q {\mathbb{Q}}

\def \Z {\mathbb{Z}}
\def \F {\mathbb{F}}

\def \qlineq {\sim_{\Q}}
\def \dbar {{\overline\partial}}
\def \ddbar {{\partial\overline\partial}}

%%%% mathrm %%%%%%%%%%%%
\def \Supp {\mathrm{Supp}\,}
\def \mult {\mathrm{mult}}

\def \Aut {\mathrm{Aut}}

\def \Sing {\mathrm{Sing}\,}
\def \lct {\mathrm{lct}}
\def \diam {\mathrm{diam}}
%%%%%%%%%%%%%%%%%%%%%%%%

%%%%% bor'ba s urodskimi standartami %%%%
\def \ge {\geqslant}
\def \le {\leqslant}
\def \kappa {\varkappa}
\def \eps {\varepsilon}
%%%%%%%%%%%%%%%%%%%%%%%%%%%%%%%%%%%%%%%%%%

%%%%%% v pomosh' poligrafistu %%%%%%%%%%%%

\def\nlb {\nolinebreak}
%%%%%%%%%%%%%%%%%%%%%%%%%%%%%%%%%%%%%%%%%%

%%%%%%%%%%%%%%%%%%%%%%%%%%%%%%%%%%%%%%%%%%%%%%%%%%%%%%%%%%%%%

\author{Ivan Cheltsov and Constantin Shramov\\ \\ With an appendix by Jean-Pierre Demailly}

\title{Log canonical thresholds of smooth Fano threefolds}

%\address{University of Edinburgh, Kings Buildings, Mayfield Road, Edinburgh EH9 3JZ, UK}%
%\email{cheltsov@yahoo.com, shramov@mccme.ru}

\thanks{The first author was supported by
the grants NSF DMS-0701465 and EPSRC EP/E048412/1,
the~second~author was supported by the grants RFFI
No.~08-01-00395-a, N.Sh.-1987.2008.1 and EPSRC EP/E048412/1.}

\begin{document}

\begin{abstract}
Complex singularity exponent is a local invariant of a holomorphic
function defined by the square-integrability of fractional powers
of the function. Log canonical thresholds of effective
$\mathbb{Q}$-divisors on normal algebraic varieties are algebraic
counterparts of complex singularity exponents. For a Fano variety,
these invariants have global analogues. In the former case, it is
so-called $\alpha$-invariant of Tian. In the latter case, it is a
global log canonical threshold of the Fano variety, which is the
infimum of log canonical thresholds of all effective
$\mathbb{Q}$-divisors numerically equivalent to the anticanonical
divisor. The appendix to this paper contains the proof that the
global log canonical threshold of a smooth Fano variety coincides
with its $\alpha$-invariant. The purpose of this paper is to
compute global log canonical thresholds of smooth Fano threefolds
(altogether there are 105 deformation families of such
threefolds). We compute global log canonical thresholds of every
smooth threefold in 64 deformation families, and we compute global
log canonical thresholds of general threefold in 20 deformation
families. For 14 deformation families, we find some bounds for
global log canonical thresholds.
\end{abstract}

\maketitle

\tableofcontents

\section{Introduction}
\label{section:intro}

The multiplicity of a polynomial $\phi\in\mathbb{C}[z_1,\ldots,
z_n]$ in the origin $O\in \mathbb{C}^n$ is the number
$$
\mathrm{min}\left\{m\in\mathbb{Z}_{\ge 0}\ \Big|\ \frac{\partial^m  \phi\big(z_{1},\ldots,z_{n}\big)}{\partial^{m_1} z_1\partial^{m_2} z_2\ldots\partial^{m_n} z_n}\big(O\big)\ne 0 \right\}\in\mathbb{Z}_{\ge 0}\cup\big\{+\infty\big\}.%
$$

There is a similar but more subtle invariant
$$
c_{0}\big(\phi\big)=\mathrm{sup}\left\{\eps\in\mathbb{Q}\ \Big|\
\text{the function}\  \frac{1}{\ \ \big|\phi\big|^{2\eps}}~\text{is
locally integrable near $O\in\mathbb{C}^n$}\right\}\in\mathbb{Q}_{\ge 0}
\cup\{+\infty\},%
$$
which is called the complex singularity exponent of the polynomial
$\phi$ at the point $O$.

\begin{example}
\label{remark:sum-of-powers} Let $m_{1},\ldots,m_{n}$ be positive
integers. Then
$$
\mathrm{min}\left(1,\ \sum_{i=1}^{n} \frac{1}{m_{i}}\right)=c_{0}\left(\sum_{i=1}^{n}z_{i}^{m_{i}}\right)\geqslant c_{0}\left(\prod_{i=1}^{n}z_{i}^{m_{i}}\right)=\mathrm{min}\left(\frac{1}{m_{1}},\frac{1}{m_{2}},\ldots,\frac{1}{m_{n}}\right).%
$$
\end{example}

Let $X$ be a variety\footnote{All varieties are assumed to be
complex, algebraic, projective and normal if the opposite is not
stated explicitly.} with at most log canonical singularities (see
\cite{Ko97}), let $Z\subseteq X$ be a closed subvariety, and let
$D$ be an effective $\mathbb{Q}$-Cartier $\mathbb{Q}$-divisor on
the variety $X$. Then the number
$$
\mathrm{lct}_{Z}\big(X,D\big)=\mathrm{sup}\left\{\lambda\in\mathbb{Q}\
\Big|\ \text{the log pair}\
 \big(X, \lambda D\big)\ \text{is log canonical along}~Z\right\}\in\mathbb{Q}\cup\big\{+\infty\big\}%
$$
is called a log canonical threshold of the divisor $D$ along $Z$.
It follows from \cite{Ko97} that
$$
\mathrm{lct}_{O}\Big(\mathbb{C}^n,\big(\phi=0\big)\Big)=c_0\big(\phi\big),
$$
so that $\mathrm{lct}_{Z}(X,D)$ is an algebraic
counterpart of the number $c_{0}(\phi)$. One has
$$
\mathrm{lct}_{X}\big(X,D\big)=\mathrm{inf}\left\{\mathrm{lct}_P\big(X,D\big)\
\Big\vert\ P\in
 X\right\}=\mathrm{sup}\left\{\lambda\in\mathbb{Q}\ \Big|\ \text{the log pair}\ \big(X, \lambda
 D\big)\
 \text{is log canonical}\right\},%
$$
and, for simplicity, we put
$\mathrm{lct}(X,D)=\mathrm{lct}_X(X,D)$.

Suppose that $X$ is a Fano variety with at most log terminal
singularities (see \cite{IsPr99}).

\begin{definition}
\label{definition:threshold} Global log canonical threshold of the
Fano variety $X$ is the number
$$
\mathrm{lct}\big(X\big)=\mathrm{inf}\left\{\mathrm{lct}\big(X,D\big)\ \Big\vert\ D\
\text{is an effective $\mathbb{Q}$-divisor on $X$ such that}\
D\ \sim_{\mathbb{Q}} -K_{X}\right\}\geqslant 0.%
$$
\end{definition}

The number $\mathrm{lct}(X)$ is an algebraic counterpart of the
$\alpha$-invariant introduced in \cite{Ti87}. One~has
$$
\mathrm{lct}\big(X\big)=\mathrm{sup}\left\{\eps\in\mathbb{Q}\ \left|\ %
\aligned
&\text{the log pair}\ \left(X, \frac{\eps}{n}D\right)\ \text{is log canonical for}\\
&\text{every divisor}\ D\in\big|-nK_{X}\big|, n\in\mathbb{Z}_{>0}\\
\endaligned\right.\right\}.%
$$

Recall that every Fano variety $X$ is rationally connected (see
\cite{Zh06}). Thus, the group $\mathrm{Pic}(X)$ is torsion free.
Then
$$
\mathrm{lct}\big(X\big)=\mathrm{sup}\left\{\lambda\in\mathbb{Q}\ \left|\ %
\aligned
&\text{the log pair}\ \Big(X, \lambda D\Big)\ \text{is log canonical}\\
&\text{for every effective $\mathbb{Q}$-divisor}\ D\qlineq -K_{X}\\
\endaligned\right.\right\}.%%
$$

\begin{example}
\label{example:hypersurface-index-big} Let $X$ be a smooth
hypersurface in $\mathbb{P}^{n}$ of degree $m<n$. Then
$$
\mathrm{lct}\big(X\big)=\frac{1}{n+1-m}%
$$
as shown in \cite{Ch01b}. In
particular, the equality $\mathrm{lct}(\mathbb{P}^{n})=1/(n+1)$ holds.
\end{example}

\begin{example}
\label{example:Hwang} Let $X$ be a rational homogeneous space such
that $-K_{X}\sim rD$ and
$$
\mathrm{Pic}\big(X\big)=\mathbb{Z}\big[D\big],
$$
where $D$ is an ample divisor and $r\in\mathbb{Z}_{>0}$. Then
$\mathrm{lct}(X)=1/r$ (see \cite{Hw06b}).
\end{example}

In general, the number $\mathrm{lct}(X)$ depends on small
deformations of the variety $X$.

\begin{example}
\label{example:sextic-double-solid-explicit} Let $X$ be a smooth
hypersurface in $\mathbb{P}(1,1,1,1,3)$ of degree $6$. Then
$$
\mathrm{lct}\big(X\big)\in\left\{\frac{5}{6},\frac{43}{50},\frac{13}{15},\frac{33}{38},\frac{7}{8},\frac{33}{38},\frac{8}{9}, \frac{9}{10},\frac{11}{12},\frac{13}{14},\frac{15}{16},\frac{17}{18},\frac{19}{20},\frac{21}{22},\frac{29}{30},1\right\}%
$$
by \cite{Pu04d} and \cite{ChPaWo}, and all these values are attained.
\end{example}

\begin{example}
\label{example:Cheltsov-Park} Let $X$ be a smooth hypersurface in
$\mathbb{P}^{n}$ of degree $n\geqslant 2$. Then the inequalities
$$
1\geqslant\mathrm{lct}\big(X\big)\geqslant \frac{n-1}{n}%
$$
hold (see \cite{Ch01b}). Then it follows from \cite{Pu04d} and \cite{ChPaWo}
that
$$
\mathrm{lct}\big(X\big)\geqslant\left\{%
\aligned
&1\ \text{if}\ n\geqslant 6,\\%
&22/25\ \text{if}\ n=5,\\%
&16/21\ \text{if}\ n=4,\\%
&3/4\ \text{if}\ n=3,\\%
\endaligned\right.%
$$
whenver $X$ is general. But $\mathrm{lct}(X)=1-1/n$ if $X$ contains a cone of
dimension $n-2$.
\end{example}

\begin{example}
\label{example:IHES} Let $X$ be a quasismooth hypersurface in
$\mathbb{P}(1,a_{1},a_{2},a_{3},a_{4})$ of
degree~$\sum_{i=1}^{4}a_{i}$ such that $X$ has at most terminal
singularities (see \cite{Ko97}), where $a_{1}\leqslant
a_{2}\leqslant a_{3}\leqslant a_{4}$. Then
$$
-K_{X}\sim\mathcal{O}_{\mathbb{P}(1,\,a_{1},\,a_{2},\,a_{3},\,a_{4})}\big(1\big)\Big\vert_{X},
$$
and there are $95$ possibilities for the quadruple
$(a_{1},a_{2},a_{3},a_{4})$ (see \cite{IF00}, \cite{JoKo01}). Then
$$
1\geqslant\mathrm{lct}\big(X\big)\geqslant\left\{%
\aligned
&16/21\ \text{if}\ a_{1}=a_{2}=a_{3}=a_{4}=1,\\%
&7/9\ \text{if}\ (a_{1},a_{2},a_{3},a_{4})=(1,1,1,2),\\%
&4/5\ \text{if}\ (a_{1},a_{2},a_{3},a_{4})=(1,1,2,2),\\%
&6/7\ \text{if}\ (a_{1},a_{2},a_{3},a_{4})=(1,1,2,3),\\%
&1\ \text{in the remaining cases},\\%
\endaligned\right.%
$$
if $X$ is general (see  \cite{Ch07a}, \cite{ChPaWo},
\cite{Ch08d}). The global log canonical threshold of the
hypersurface
$$
w^{2}=t^{3}+z^{9}+y^{18}+x^{18}\subset\mathbb{P}\big(1,1,2,6,9\big)\cong\mathrm{Proj}\Big(\mathbb{C}\big[x,y,z,t,w\big]\Big)
$$
is equal to $17/18$ (see  \cite{Ch07a}), where
$\mathrm{wt}(x)=\mathrm{wt}(y)=1$, $\mathrm{wt}(z)=2$,
$\mathrm{wt}(t)=6$, $\mathrm{wt}(w)=9$.%
\end{example}

\begin{example}
\label{example:WPS} It follows from Lemma~\ref{lemma:toric} that
$$
\mathrm{lct}\Big(\mathbb{P}\big(a_{0},a_{1},\ldots,a_{n}\big)\Big)=\frac{a_{0}}{\sum_{i=0}^{n}a_{i}},%
$$
where  $\mathbb{P}(a_{0},a_{1},\ldots,a_{n})$ is well-formed (see
\cite{IF00}), and $a_{0}\leqslant a_{1}\leqslant\ldots\leqslant
a_{n}$.
\end{example}

\begin{example}
\label{example:double-cover-big-index} Let $X$ be a smooth
hypersurface in $\mathbb{P}(1^{n+1},d)$ of degree $2d$.
Then
$$
\mathrm{lct}\big(X\big)=\frac{1}{n+1-d}%
$$
in the case when the inequalities $2\leqslant d\leqslant n-1$ hold
(see Proposition~20 in \cite{Ch08a}).
\end{example}

\begin{example}
\label{example:del-Pezzos} Let $X$ be smooth surface del Pezzo. It
follows from \cite{Ch07b} that
$$
\mathrm{lct}\big(X\big)=\left\{%
\aligned
&1\ \text{if}\ K_{X}^{2}=1\ \text{and}\ |-K_{X}|\ \text{contains no cuspidal curves},\\%
&5/6\ \text{if}\ K_{X}^{2}=1\ \text{and}\ |-K_{X}|\ \text{contains a cuspidal curve},\\%
&5/6\ \text{if}\ K_{X}^{2}=2\ \text{and}\ |-K_{X}|\ \text{contains no tacnodal curves},\\%
&3/4\ \text{if}\ K_{X}^{2}=2\ \text{and}\ |-K_{X}|\ \text{contains a tacnodal curve},\\%
&3/4\ \text{if}\ X\ \text{is a cubic in}\ \mathbb{P}^{3}\ \text{with no Eckardt points},\\%
&2/3\ \text{if either}\ X\ \text{is a cubic in}\ \mathbb{P}^{3}\
\text{with an Eckardt point, or
 $K_{X}^{2}=4$},\\%
&1/2\ \text{if}\ X\cong\mathbb{P}^{1}\times\mathbb{P}^{1}\ \text{or}\ K_{X}^{2}\in\{5,6\},\\%
&1/3\ \text{in the remaining cases}.\\%
\endaligned\right.%
$$
\end{example}

It would be interesting to compute global log canonical thresholds
of del Pezzo surfaces with at most canonical singularities that
are of Picard rank one, which has been classified in \cite{Fur86}.

\begin{example}
\label{example:singular-cubics} Let $X$ be a singular cubic
surface in $\mathbb{P}^{3}$ such that $X$ has at most canonical
singularities. The singularities of the surface $X$ are classified
in \cite{BW79}. It follows from \cite{Ch07c} that
$$
\mathrm{lct}\big(X\big)=\left\{%
\aligned
&2/3\ \text{if}\ \mathrm{Sing}\big(X\big)=\big\{\mathbb{A}_{1}\big\},\\%
&1/3\ \text{if}\ \mathrm{Sing}\big(X\big)\supseteq\big\{\mathbb{A}_{4}\big\},\
\mathrm{Sing}\big(X\big)=\big\{\mathbb{D}_{4}\big\}\ \text{or}\ \mathrm{Sing}\big(X\big)\supseteq\big\{\mathbb{A}_{2},\mathbb{A}_{2}\big\},\\%
&1/4\ \text{if}\ \mathrm{Sing}\big(X\big)\supseteq\big\{\mathbb{A}_{5}\big\}\
\text{or}\ \mathrm{Sing}\big(X\big)=\big\{\mathbb{D}_{5}\big\},\\%
&1/6\ \text{if}\ \mathrm{Sing}\big(X\big)=\big\{\mathbb{E}_{6}\big\},\\%
&1/2\ \text{in the remaining cases}.\\%
\endaligned\right.%
$$
\end{example}

It is unknown whether $\mathrm{lct}(X)\in\mathbb{Q}$ or not\footnote{It is even
unknown whether $\mathrm{lct}(X)\in\mathbb{Q}$ or not if $X$ is a del Pezzo
surfaces with log terminal singularities.} (cf. Question~1 in \cite{Ti90b}).

\begin{conjecture}
\label{conjecture:stabilization} There is an effective
$\mathbb{Q}$-divisor $D\qlineq -K_{X}$ on the variety $X$ such that
$$
\mathrm{lct}\big(X\big)=\mathrm{lct}\big(X,D\big)\in\mathbb{Q}.%
$$
\end{conjecture}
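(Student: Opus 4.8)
The plan is to recast $\lct(X)$ as an infimum over valuations and then to extract the required divisor from a valuation attaining that infimum. Recall that for an effective $\Q$-divisor $D\qlineq -K_{X}$ one has the valuative description $\lct(X,D)=\inf_{v}A_{X}(v)/v(D)$, the infimum running over all divisorial valuations $v$ on $X$, where $A_{X}(v)$ denotes the log discrepancy of $v$ (see \cite{Ko97}). Setting
$$
T\big(v\big)=\sup\Big\{v\big(D\big)\ \Big|\ D\qlineq -K_{X},\ D\geqslant 0\Big\},
$$
and interchanging the two infima, I would first establish the formula
$$
\lct\big(X\big)=\inf_{v}\frac{A_{X}(v)}{T(v)},
$$
with $v$ again ranging over divisorial valuations. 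A divisor witnessing Conjecture~\ref{conjecture:stabilization} should then be manufactured from a valuation $v_{0}$ realizing this infimum: choosing an effective $D_{0}\qlineq -K_{X}$ with $v_{0}(D_{0})=T(v_{0})$, the trivial inequality $\lct(X,D_{0})\geqslant\lct(X)$ together with $\lct(X,D_{0})\leqslant A_{X}(v_{0})/v_{0}(D_{0})=\lct(X)$ would force $\lct(X,D_{0})=\lct(X)$. Note that rationality is then automatic, since the log canonical threshold of any effective $\Q$-Cartier $\Q$-divisor is a rational number; thus the entire content of the conjecture reduces to attainment of the infimum by a genuine $\Q$-divisor, which also settles the question of whether $\lct(X)\in\Q$ raised after Example~\ref{example:singular-cubics}.

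To produce such a $v_{0}$ I would take a minimizing sequence of divisorial valuations $v_{i}$ with $A_{X}(v_{i})/T(v_{i})\to\lct(X)$, normalized so that $A_{X}(v_{i})=1$. These satisfy the uniform bound $T(v_{i})\leqslant 1/\lct(X)$, so the $v_{i}$ lie in a space of valuations of bounded log discrepancy, where one may hope to extract a limiting valuation $v_{0}$. The identification of $\lct(X)$ with Tian's $\alpha$-invariant proved in the appendix could be used here to import weak compactness of positive currents into this step.

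The hard part, and the reason the statement remains conjectural, is exactly this limiting procedure. First, the limit of a minimizing sequence of divisorial valuations need not itself be divisorial: a priori it is only quasi-monomial, or even more general, and one must prove that the infimum is computed by an honest divisor extracted on some birational model of $X$. Second, the functional $T$ is a supremum of evaluations and hence only lower semicontinuous, so passing to a limit does not automatically preserve the ratio $A_{X}/T$; one must separately show that the infimum is genuinely attained and that the supremum defining $T(v_{0})$ is realized by some effective $D_{0}\qlineq -K_{X}$, a point linked to finite generation of the anticanonical section ring and to boundedness phenomena in the minimal model program. Both difficulties sit at the heart of the minimal model program, which is precisely why the remainder of this paper computes $\lct(X)$ explicitly, family by family, rather than attempting to resolve the conjecture in general.
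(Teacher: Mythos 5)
The statement you are asked to prove is labelled a \emph{conjecture} in the paper, and the paper contains no proof of it; indeed the authors remark just before stating it that it is not even known whether $\lct(X)\in\Q$. So there is no argument of the authors to compare yours against, and the only question is whether your proposal closes the conjecture. It does not, and you say so yourself: what you have written is a reduction of the conjecture to two attainment statements, both of which are left open, and those two statements are essentially the whole content of the conjecture.

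Concretely, the reformulation $\lct(X)=\inf_{v}A_{X}(v)/T(v)$ and the deduction of $\lct(X,D_{0})=\lct(X)$ from the existence of a minimizing divisorial valuation $v_{0}$ together with an effective $D_{0}\qlineq -K_{X}$ realizing $T(v_{0})$ are both sound. But the proposal supplies neither ingredient. First, a minimizing sequence of divisorial valuations normalized by $A_{X}(v_{i})=1$ has no a priori limit in the class of divisorial valuations; the natural compactifications of valuation space produce quasi-monomial or more general valuations, and the claim that the infimum is computed by a divisor on some birational model is precisely the kind of finite-generation statement one cannot wave at. Second, even granting $v_{0}$, the functional $T$ is a supremum and only lower semicontinuous, so $A_{X}(v_{0})/T(v_{0})$ could a priori exceed $\lim A_{X}(v_{i})/T(v_{i})$, and the supremum defining $T(v_{0})$ need not be attained by an actual effective $\Q$-divisor. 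Your closing observation that rationality would follow automatically from attainment (since $\lct(X,D)\in\Q$ for any effective $\Q$-Cartier $\Q$-divisor $D$, by \cite{Ko97}) is correct and worth keeping, but the two gaps above are genuine and unresolved, so the statement remains, as in the paper, a conjecture.
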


Let $G\subset\mathrm{Aut}(X)$ be an arbitrary subgroup.

\begin{definition}
\label{definition:G-threshold} Global $G$-invariant log canonical threshold of
the Fano variety $X$ is
$$
\mathrm{lct}\big(X, G\big)=\mathrm{sup}\left\{\eps\in\mathbb{Q}\ \left|\ %
\aligned
&\text{the log pair}\ \left(X, \frac{\eps}{n}\mathcal{D}\right)\ \text{has log canonical singularities for every}\\
&\text{$G$-invariant linear system}\
\mathcal{D}\subset\big|-nK_{X}\big|, n\in\mathbb{Z}_{>0}\\
\endaligned\right.\right\}.%
$$
\end{definition}

\begin{remark}
\label{remark:G-extension} To define thresholds $\mathrm{lct}(X)$
and $\mathrm{lct}(X,G)$, we only need to assume that
$|-nK_X|\neq\varnothing$ for some $n\gg 0$. This property is
shared by many varieties (toric varieties, weak Fano varieties),
but all known applications are related to the case when $-K_{X}$
is ample and $G$ is compact.
\end{remark}

In the case when the Fano variety $X$ is smooth and $G$ is compact,
the equality
$$
\mathrm{lct}\big(X,G\big)=\alpha_{G}\big(X\big),
$$
holds (see Appendix~\ref{section:alpha}), where $\alpha_{G}(X)$ is
the $\alpha$-invariant introduced in \cite{Ti87}. It is clear that
$$
\mathrm{lct}\big(X, G\big)=\mathrm{sup}\left\{\lambda\in\mathbb{Q}\ \left|\ %
\aligned
&\text{the log pair}\ \Big(X, \lambda D\Big)\ \text{has log canonical singularities}\\
&\text{for every $G$-invariant effective $\mathbb{Q}$-divisor}\ D\qlineq -K_{X}\\
\endaligned\right.\right\}%
$$
in the case when $|G|<+\infty$. Note that
$0\leqslant\mathrm{lct}(X)\leqslant\mathrm{lct}(X,G)\in\mathbb{R}\cup\{+\infty\}$.

\begin{example}
\label{example:quintic} Let $X$ be a smooth del Pezzo surface such
that $K_{X}^{2}=5$. Then
\begin{itemize}
\item the isomorphism $\mathrm{Aut}(X)\cong\mathrm{S}_{5}$ holds (see \cite{DoIs06}),%
\item the equalities $\mathrm{lct}(X,\mathrm{S}_{5})=\mathrm{lct}(X,\mathrm{A}_{5})=2$ hold (see~\cite{Ch07b}).%
\end{itemize}
\end{example}

\begin{example}
\label{example:Fermat-cubic} Let $X$ be the cubic surface in
$\mathbb{P}^{3}$ given by the equation
$$
x^{3}+y^{3}+z^{3}+t^{3}=0\subset\mathbb{P}^{3}\cong\mathrm{Proj}\Big(\mathbb{C}\big[x,y,z,t\big]\Big),
$$
and let $G=\mathrm{Aut}(X)\cong
\mathbb{Z}_{3}^{3}\rtimes\mathrm{S}_{4}$. Then $\mathrm{lct}(X,
G)=4$ by~\cite{Ch07b}.
\end{example}

The following result was proved in \cite{Ti87}, \cite{Na90},
\cite{DeKo01} (see Appendix~\ref{section:alpha}).

\begin{theorem}
\label{theorem:KE} Suppose that $X$ has at most quotient
singularities, and $G\subset\mathrm{Aut}(X)$ is a compact subgroup.
Assume that the inequality
$$
\mathrm{lct}\big(X,G\big)>\frac{\mathrm{dim}\big(X\big)}{\mathrm{dim}\big(X\big)+1}%
$$
holds. Then $X$ admits an orbifold K\"ahler--Einstein metric.
\end{theorem}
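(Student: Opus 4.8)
The plan is to establish existence of the metric by the continuity method, isolating a single a priori estimate that the hypothesis on $\mathrm{lct}(X,G)$ is precisely tailored to supply. Because $G$ is compact, I would first average an arbitrary orbifold K\"ahler form to obtain a $G$-invariant reference metric $\omega_0$ representing $c_1(X)$, so that the orbifold K\"ahler--Einstein equation $\mathrm{Ric}(\omega)=\omega$ becomes the complex Monge--Amp\`ere equation $(\omega_0+i\ddbar\varphi)^n=e^{h_0-\varphi}\omega_0^n$ for a potential $\varphi$, where $h_0$ is the Ricci potential of $\omega_0$ and $n=\dim(X)$. Embedding this in the family $(\omega_0+i\ddbar\varphi_t)^n=e^{h_0-t\varphi_t}\omega_0^n$ for $t\in[0,1]$, the set of solvable parameters is nonempty (Yau's solution of the Calabi conjecture, in its orbifold form, handles $t=0$) and open (for $t<1$ the linearized operator is invertible by the first-eigenvalue estimate coming from $\mathrm{Ric}(\omega_t)\geqslant t\omega_t$). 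The whole problem thus reduces to \emph{closedness} of this set.

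For closedness I would invoke the Yau--Aubin a priori estimates, which on orbifolds hold via local uniformizing charts: a uniform $C^0$-bound on the $\varphi_t$ bootstraps to uniform higher-order bounds, and hence to a convergent subsequence forcing $t=1$ to be attained. Normalizing $\sup_X\varphi_t=0$, it therefore suffices to bound the \emph{oscillation} $-\inf_X\varphi_t$ uniformly along the path.

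This oscillation bound is the heart of the argument and the step I expect to be the main obstacle. Since $\omega_0$ and the equation are $G$-invariant, uniqueness of solutions for $t<1$ forces each $\varphi_t$ to be $G$-invariant, so $\varphi_t$ ranges over the space of $G$-invariant $\omega_0$-plurisubharmonic potentials. Recall that the analytic $\alpha$-invariant is the supremum of those $\alpha>0$ for which $\int_X e^{-\alpha(\varphi-\sup_X\varphi)}\omega_0^n$ is uniformly bounded over this space; by the identification of Appendix~\ref{section:alpha} (and its orbifold extension) it equals $\mathrm{lct}(X,G)$. Given $\mathrm{lct}(X,G)>n/(n+1)$, I would fix $\alpha$ strictly between $n/(n+1)$ and $\mathrm{lct}(X,G)$ and feed the resulting uniform integrability into Tian's functional estimates: the factor $n/(n+1)$ in the comparison $\tfrac{1}{n+1}I_{\omega_0}\leqslant J_{\omega_0}\leqslant\tfrac{n}{n+1}I_{\omega_0}$, together with the volume normalization $\int_X e^{h_0-t\varphi_t}\omega_0^n=\int_X\omega_0^n$, is exactly what converts the bound $\alpha>n/(n+1)$ into a uniform estimate $\mathrm{osc}_X\varphi_t\leqslant C(\alpha)$. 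Equivalently, one may argue by contradiction in Nadel's language: were the oscillation to blow up, a rescaled limit of the densities $e^{-t\varphi_t}$ would produce a $G$-invariant effective $\mathbb{Q}$-divisor $D\qlineq-K_X$ with $\mathrm{lct}(X,D)\leqslant n/(n+1)$, contradicting the strict inequality. Making either route rigorous across the quotient singularities---controlling integrability against $\omega_0^n$ and the Green's-function and Sobolev inputs in the orbifold charts, and pinning down the exact threshold $n/(n+1)$---is the delicate point; once the oscillation estimate is in hand, closedness follows and the continuity method yields the desired $G$-invariant orbifold K\"ahler--Einstein metric.
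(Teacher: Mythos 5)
The paper gives no proof of Theorem~\ref{theorem:KE}; it simply cites \cite{Ti87}, \cite{Na90} and \cite{DeKo01}, together with the identification $\mathrm{lct}(X,G)=\alpha_{G}(X)$ established in Appendix~\ref{section:alpha}. Your sketch is a correct account of exactly that argument --- the continuity method, openness from the first-eigenvalue estimate, and closedness reduced to the oscillation bound that $\alpha_{G}(X)>n/(n+1)$ supplies via the inequality $J\leqslant\frac{n}{n+1}I$ --- so it takes essentially the same route as the sources the paper relies on.
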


Theorem~\ref{theorem:KE} has various applications (see~\cite{Na90},
and Example~\ref{example:skew-blow-up}; cf Examples~\ref{example:Cheltsov-Park},~\ref{example:IHES}).

\begin{example}
\label{example:skew-blow-up} Let $X$ be a blow up of
$\mathbb{P}^{3}$ along a disjoint union of two lines,
let~$G$~be~a~maximal compact subgroup in $\mathrm{Aut}(X)$. Then
the inequality $\mathrm{lct}(X,G)\geqslant 1$ holds by
\cite{Na90}. But $\mathrm{lct}(X)=1/3$ by
Theorem~\ref{theorem:main}.
\end{example}

If a variety with quotient singularities admits an orbifold
K\"ahler--Einstein metric, then either its canonical divisor is
numerically trivial, or its canonical divisor is ample (variety of
general type), or its canonical divisor is antiample (Fano
variety). Every variety with at most quotient singularities that
has numerically trivial or ample canonical divisor always admits
an orbifold K\"ahler--Einstein metric (see \cite{Aub78},
\cite{Yau78}, \cite{Yau96}).

There are several known obstructions for the Fano variety $X$ to admit a
K\"ahler--Einstein~met\-ric. For example, if the variety $X$ is smooth, then it
does not admit a K\"ahler--Einstein metric~if
\begin{itemize}
\item either the group $\mathrm{Aut}(X)$ is not reductive (see~\cite{Mat57}),%
\item or the tangent bundle of $X$ is not polystable with respect to $-K_{X}$ (see  \cite{Lub83}),%
\item or the Futaki character of holomorphic vector fields on $X$ does not vanish~(see~\cite{Fu83}).%
\end{itemize}

\begin{example}
\label{example:V7} The following varieties admit no
K\"ahler--Einstein metrics:
\begin{itemize}
\item a blow up of $\mathbb{P}^{2}$ in one or two points (see~\cite{Mat57}),%
\item a smooth Fano threefold
$\mathbb{P}(\mathcal{O}_{\mathbb{P}^{2}}\oplus
 \mathcal{O}_{\mathbb{P}^{2}}(1))$ (see  \cite{Stef96}),%
\item a smooth Fano fourfold
$$
\mathbb{P}\Big(\alpha^{*}\big(\mathcal{O}_{\mathbb{P}^{1}}(1)\big)\oplus
 \beta^{*}\big(\mathcal{O}_{\mathbb{P}^{2}}(1)\big)\Big),%
$$
where $\alpha\colon
\mathbb{P}^{1}\times\mathbb{P}^{2}\to\mathbb{P}^{1}$ and
$\beta\colon \mathbb{P}^{1}\times\mathbb{P}^{2}\to\mathbb{P}^{2}$
are natural projections (see \cite{Fu83}).
\end{itemize}
\end{example}

The problem of existence of K\"ahler--Einstein metrics on smooth
toric Fano varieties is completely solved. Namely, the following
result holds (see \cite{Mab87}, \cite{BaSe99}, \cite{WaZhu04},
\cite{Ni90}).

\begin{theorem}
\label{theorem:toric} If $X$ is smooth and toric, then the
following conditions are equivalent:
\begin{itemize}
\item the variety $X$ admits a K\"ahler--Einstein metric;%
\item the Futaki character of holomorphic vector field of $X$ vanishes;%
\item the baricenter of the reflexive polytope of $X$ is zero.%
\end{itemize}
\end{theorem}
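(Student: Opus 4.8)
The plan is to prove the three equivalences by arranging them in the cycle $(1)\Rightarrow(2)\Leftrightarrow(3)\Rightarrow(1)$, where $(1)$, $(2)$, $(3)$ denote the three listed conditions in order. The implication $(1)\Rightarrow(2)$ is the classical obstruction theorem of Futaki and requires no toric hypothesis: the Futaki character $\mathcal{F}\colon\mathfrak{h}\to\mathbb{C}$ on the Lie algebra $\mathfrak{h}$ of holomorphic vector fields, defined by $\mathcal{F}(v)=\int_X v(h_\omega)\,\omega^n$ where $\mathrm{Ric}(\omega)-\omega=\frac{i}{2\pi}\partial\bar\partial h_\omega$ for any $\omega\in c_1(X)$, is independent of the chosen Kähler form $\omega$, and it vanishes identically whenever $\omega$ may be taken Kähler--Einstein, since then $h_\omega$ is constant.

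For the equivalence $(2)\Leftrightarrow(3)$ I would exploit the toric structure to compute $\mathcal{F}$ explicitly. Since $X$ is a smooth toric Fano, the algebraic torus $T\cong(\mathbb{C}^\ast)^n$ acts on $X$ and $\mathfrak{h}$ is generated by $\mathrm{Lie}(T)$ together with vector fields on which $\mathcal{F}$ vanishes automatically. Choosing a $T$-invariant representative $\omega$ and passing to moment-map coordinates, one rewrites $\mathcal{F}$ on $\mathrm{Lie}(T)$ as an integral over the moment polytope $P$, which for a Fano is the reflexive polytope $P=P_{-K_X}$. A direct computation gives $\mathcal{F}(\partial/\partial x_i)=c\int_P y_i\,dy$ for a fixed nonzero constant $c$, so that $\mathcal{F}$ vanishes precisely when every coordinate of the barycenter $\frac{1}{\mathrm{vol}(P)}\int_P y\,dy$ is zero. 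Hence the full character $\mathcal{F}$ vanishes if and only if the barycenter of $P$ is the origin.

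The substantial part is the remaining implication $(3)\Rightarrow(1)$, producing a Kähler--Einstein metric from the barycenter condition; this is the content of the Wang--Zhu theorem. The approach is the continuity method applied to the family of complex Monge--Ampère equations interpolating between a fixed background metric and the Kähler--Einstein equation. Using $T$-invariance, one restricts attention to $T$-invariant potentials, reducing the complex equation to a real Monge--Ampère equation for a convex function on $\mathbb{R}^n$ whose gradient image is the interior of $P$. Openness along the continuity path is standard; the decisive step is the a priori $C^0$-estimate, equivalently the properness of the relevant Mabuchi-type functional along $T$-invariant metrics. Here the barycenter condition enters exactly as the hypothesis guaranteeing coercivity, via a Moser--Trudinger-type inequality for convex functions on $P$: when the barycenter is at the origin the linear term obstructing properness disappears, the functional becomes proper, and the continuity method closes up to yield the Kähler--Einstein metric.

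I expect the main obstacle to be precisely this a priori $C^0$-estimate. Controlling the oscillation of the Monge--Ampère potential is where the barycenter condition must be converted into an analytic inequality; reducing the complex problem to real convex analysis on $P$ is what makes this tractable, and verifying that the origin being the barycenter forces coercivity of the functional is the crux of the whole argument.
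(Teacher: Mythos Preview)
The paper does not prove this theorem at all: it is stated as a known result with the citation ``(see \cite{Mab87}, \cite{BaSe99}, \cite{WaZhu04}, \cite{Ni90})'' and no argument is given. So there is nothing to compare your proposal against in the paper itself.

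That said, your outline is a faithful summary of how the cited literature actually assembles the result. The implication $(1)\Rightarrow(2)$ is Futaki's obstruction \cite{Fu83}; the equivalence $(2)\Leftrightarrow(3)$ is Mabuchi's toric computation of the Futaki invariant as (a constant multiple of) the barycenter of the moment polytope \cite{Mab87}; and the hard implication $(3)\Rightarrow(1)$ is precisely the Wang--Zhu theorem \cite{WaZhu04}, whose proof proceeds via the continuity method, reduction to a real Monge--Amp\`ere equation on the polytope, and a properness/Moser--Trudinger estimate that uses the vanishing-barycenter hypothesis to kill the linear obstruction. Your identification of the $C^0$-estimate as the crux is correct. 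One small caveat: your claim that $\mathfrak{h}$ is generated by $\mathrm{Lie}(T)$ together with vector fields on which $\mathcal{F}$ vanishes automatically deserves more care---what one actually uses is that $\mathcal{F}$ is a Lie-algebra character, hence vanishes on the derived subalgebra, and that for a toric variety the reductive part of $\mathrm{Aut}^0(X)$ has the torus as a maximal torus, so checking $\mathcal{F}$ on $\mathrm{Lie}(T)$ suffices.
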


It should be pointed out that the assertion of
Theorem~\ref{theorem:KE} gives only a sufficient condition~for the
existence of a K\"ahler--Einstein metric on $X$.

\begin{example}
\label{example:cubic-Eckardt-point} Let $X$ be a general cubic
surface in $\mathbb{P}^{3}$ that has an Eckardt point (see
Definition~\ref{definition:Eckardt-point}). Then
$\mathrm{Aut}(X)\cong\mathbb{Z}_{2}$ (see \cite{DoIs06}) and
$\mathrm{lct}(X,\ \mathrm{Aut}(X))=\mathrm{lct}(X)=2/3$
by~\cite{Ch07b}. But every smooth del Pezzo surface~that has a
reductive automorphism groups admits a K\"ahler--Einstein metric
(see \cite{Ti90}).
\end{example}

\begin{example}[{cf. Example~\ref{example:double-cover-big-index}}]
\label{example:Nadel-Tian-Arezzo-Ghigi-Pirola} Let $X$ be a
general hypersurface in $\mathbb{P}(1^{5},3)$ of degree $6$. Then
$\mathrm{Aut}(X)\cong\mathbb{Z}_{2}$ (see \cite{MaMon64}) and
$\mathrm{lct}(X,\
\mathrm{Aut}(X))=\mathrm{lct}(X)=1/2$
by~\cite{Ch08a}. But $X$ admit a K\"ahler--Einstein metric (see
\cite{ArGha06}).
\end{example}

The numbers $\mathrm{lct}(X)$ and  $\mathrm{lct}(X, G)$ play an
important role in birational geometry.

\begin{example}
\label{example:fiberwise} Let $V$ and $\bar{V}$ be varieties with
at most terminal and $\mathbb{Q}$-factorial singularities, and let
$Z$ be a smooth curve. Suppose that there is a commutative diagram
$$
\xymatrix{
&V\ar@{->}[d]_{\pi}\ar@{-->}[rr]^{\rho}&&\ \bar{V}\ar@{->}[d]^{\bar{\pi}}&\\%
&Z\ar@{=}[rr]&&Z&}
$$
such that $\pi$ and $\bar{\pi}$ are flat morphisms, and $\rho$ is
a birational map that induces an isomorphism
$$
V\setminus X\cong\bar{V}\setminus\bar{X},%
$$
where $X$ and $\bar{X}$ are scheme fibers of $\pi$ and $\bar{\pi}$
over a point $O\in Z$, respectively. Suppose that
\begin{itemize}
\item the fibers $X$ and $\bar{X}$ are irreducible and reduced,
\item the divisors $-K_{V}$ and $-K_{\bar{V}}$ are $\pi$-ample and
$\bar{\pi}$-ample,
 respectively,%
\item the varieties $X$ and $\bar{X}$ have at most log terminal singularities,%
\end{itemize}
and $\rho$ is not an isomorphism. Then it follows from \cite{Pa01}
and \cite{Ch07c} that
$\mathrm{lct}(X)+\mathrm{lct}(\bar{X})\leqslant 1$, where $X$ and
$\bar{X}$ are Fano varieties by the adjunction formula.
\end{example}

In general, the inequality in Example~\ref{example:fiberwise} is
sharp.

\begin{example}
\label{example:dP3-D4} Let $\pi\colon V\to Z$ be a surjective flat
morphism from a smooth threefold $V$ to a smooth curve $Z$ such that
the divisor $-K_{V}$ is $\pi$-ample,
let $X$ be a scheme fiber of the morphism $\pi$ over a point $O\in
Z$ such that $X$ is a smooth cubic surface in $\mathbb{P}^{3}$
that has an Eckardt poin $P\in X$ (cf.
Definition~\ref{definition:Eckardt-point}), let
$L_{1},L_{2},L_{3}\subset X$ be the lines that pass through the
point $P$. Then it follows from \cite{Co96} that there is a
commutative diagram
$$
\xymatrix{
&&U\ar@{->}[dl]_{\alpha}\ar@{-->}[rr]^{\psi}&&\bar{U}\ar@{->}[dr]^{\beta}\\%
&V\ar@{->}[dr]_{\pi}\ar@{-->}[rrrr]^{\rho}&&&&\ \bar{V}\ar@{->}[dl]^{\bar{\pi}}&\\%
&&Z\ar@{=}[rr]&&Z&}
$$
such that $\alpha$ is a blow up of the point $P$, the map $\psi$
is an antiflip in the proper transforms of the~curves
$L_{1},L_{2},L_{3}$, and $\beta$~is~a~contraction of the~proper
transform of the fiber $X$. Then
\begin{itemize}
\item the birational map $\rho$ is not an isomorphism,%
\item the threefold $\bar{V}$ has terminal and $\mathbb{Q}$-factorial singularities,%
\item the divisor $-K_{\bar{V}}$ is a Cartier $\bar{\pi}$-ample divisor,%
\item the map $\rho$ induces an isomorphism
$V\setminus X\cong\bar{V}\setminus\bar{X}$,
where $\bar{X}$ is a scheme fiber of $\bar{\pi}$ over the point $O$,%
\item the surface $\bar{X}$ is a cubic surface with a singular point of
type $\mathbb{D}_{4}$.
\end{itemize}
The latter assertion implies that $\mathrm{lct}(X)+\mathrm{lct}(\bar{X})=1$ (see
Examples~\ref{example:del-Pezzos}~and~\ref{example:singular-cubics}).
\end{example}

Global log canonical thresholds can be used to prove that some
higher-dimensional Fano varieties are non-rational.

\begin{definition}
\label{definition:birational-superrigidity} The variety $X$ is
said to be birationally superrigid if the following conditions
hold:
\begin{itemize}
\item  $\mathrm{rk}\,\mathrm{Pic}(X)=1$;%
\item the variety $X$ has terminal $\mathbb{Q}$-factorial singularities;%
\item there is no rational dominant map $\rho\colon X\dasharrow Y$ with rationally connected fibers such that $0\ne \mathrm{dim}(Y)<\mathrm{dim}(X)$;%
\item there is no birational map $\rho\colon X\dasharrow Y$ onto a
variety $Y$ with terminal $\mathbb{Q}$-factorial singularities
such that $\mathrm{rk}\,\mathrm{Pic}(Y)=1$ holds,%
\item groups $\mathrm{Bir}(X)$ and $\mathrm{Aut}(X)$ coincide.
\end{itemize}
\end{definition}

The following result is known as the Noether--Fano inequality (see
\cite{Ch05umn}).

\begin{theorem}
\label{theorem:superrigidity} The variety $X$ is birationally superrigid
if and only if $\mathrm{rk}\,\mathrm{Pic}(X)=1$,
the variety $X$ has terminal $\mathbb{Q}$-factorial singularities, and
for every linear system $\mathcal{M}$ on the variety $X$
that does not have fixed components, the log pair $(X, \lambda
\mathcal{M})$ has canonical singularities, where
$K_{X}+\lambda\mathcal{M}\qlineq 0$.
\end{theorem}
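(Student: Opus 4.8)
The plan is to prove both implications through the Noether--Fano--Iskovskikh method, realizing every birational map out of $X$ as an untwisting governed by the canonical threshold of a mobile linear system. Since $\mathrm{rk}\,\Pic(X)=1$ and $X$ is Fano, any linear system $\mathcal{M}$ without fixed components satisfies $\mathcal{M}\qlineq -nK_{X}$ for a unique $n\in\Q_{>0}$, so the number $\lambda$ with $K_{X}+\lambda\mathcal{M}\qlineq 0$ is exactly $\lambda=1/n$; the hypothesis of the theorem says precisely that $(X,\lambda\mathcal{M})$ is canonical for every such $\mathcal{M}$, i.e. that the canonical threshold of $\mathcal{M}$ always attains the critical value $1/n$.

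For the implication from the canonicity condition to superrigidity, I would argue by contradiction against each of the three nontrivial clauses of Definition~\ref{definition:birational-superrigidity}. Suppose first there is a birational map $\rho\colon X\dashrightarrow Y$ onto a terminal $\Q$-factorial variety $Y$ with $\mathrm{rk}\,\Pic(Y)=1$ that is not an isomorphism. Pick a mobile linear system $\mathcal{M}_{Y}$ on $Y$ with $K_{Y}+\lambda_{Y}\mathcal{M}_{Y}\qlineq 0$, let $\mathcal{M}=\rho^{-1}_{*}\mathcal{M}_{Y}$ be its proper transform, so that $K_{X}+\lambda\mathcal{M}\qlineq 0$. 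On a common resolution $p\colon W\to X$, $q\colon W\to Y$ I would write the two crepant expansions
$$
K_{W}+\lambda\,\mathcal{N}=\sum_{i} a_{i} E_{i},\qquad K_{W}+\lambda_{Y}\,\mathcal{N}=\sum_{j} b_{j} F_{j},
$$
where $\mathcal{N}$ is the common strict transform of the system, and then compare discrepancies. The Noether--Fano inequality asserts that if $\rho$ is not an isomorphism then some $a_{i}<0$, contradicting the assumed canonicity of $(X,\lambda\mathcal{M})$; hence $\rho$ is an isomorphism in codimension one, and because $\mathrm{rk}\,\Pic=1$ together with $\Q$-factoriality this forces $\rho\in\Aut(X)$, so taking $Y=X$ yields $\Bir(X)=\Aut(X)$. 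The same discrepancy comparison, applied to the mobile system $\tau^{*}|H|$ pulled back from an ample $H$ on the base of a hypothetical rationally connected fibration $\tau\colon X\dashrightarrow S$ with $0\ne\dim(S)<\dim(X)$, produces a center violating canonicity, so no such fibration exists.

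For the converse I would run the relative Sarkisov machinery: if some mobile $\mathcal{M}$ with $K_{X}+\lambda\mathcal{M}\qlineq 0$ gives a non-canonical pair, then there is a geometric valuation with negative discrepancy, and extracting this maximal singularity and running the $(K_{X}+\lambda\mathcal{M})$-minimal model program produces a nontrivial chain of Sarkisov links terminating in a Mori fibre space different from $X$; this output is either a fibration as in the third clause, or a terminal $\Q$-factorial model of Picard rank one as in the fourth, or a nontrivial element of $\Bir(X)\setminus\Aut(X)$, in every case contradicting superrigidity. Since the first two clauses of Definition~\ref{definition:birational-superrigidity} are hypotheses of the theorem, this completes the equivalence. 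The principal obstacle is the Noether--Fano inequality itself\t the discrepancy bookkeeping on $W$ showing that canonicity on the $X$-side is incompatible with $\rho$ being a non-isomorphism; this rests on the negativity-of-contraction lemma and on the termination of the Sarkisov program (see \cite{Ch05umn}), which is where all the real geometric input is concentrated.
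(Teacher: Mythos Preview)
Your approach is essentially the same as the paper's. The paper declares the implication ``canonicity $\Rightarrow$ superrigid'' to be well known (citing \cite{Ch05umn}) and writes out only the converse, which it does exactly as you sketch in your final paragraph: assuming a mobile $\mathcal{M}$ with $(X,\lambda\mathcal{M})$ non-canonical, it passes to a resolution, invokes \cite{BCHM06} to build a relative log terminal model over $X$ on which some exceptional divisor survives with negative coefficient, observes that $K_U+\lambda\mathcal{B}_U$ is then not pseudo-effective, and runs the MMP again (via \cite{BCHM06}) to land on a Mori fibre space $Y\to Z$, whose rationally connected fibres contradict superrigidity. Your phrasing in terms of extracting a maximal centre and running Sarkisov links is the same mechanism; the paper is just more explicit that the two invocations of \cite{BCHM06} are what makes the argument go through.
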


\begin{proof}
Because one part of the required assertion is well-known (see
\cite{Ch05umn}), we prove only another part of the required
assertion. Suppose that the variety  $X$ is birationally
superrigid, but there is a linear system $\mathcal{M}$ on $X$ such
that $\mathcal{M}$ has no fixed components and the~singularities
of $(X,\lambda \mathcal{M})$ are not canonical,
where $K_{X}+\lambda\mathcal{M}\qlineq 0$.%

Let  $\pi\colon V\to X$ be a birational morphism such that the
variety $V$ is smooth, and the proper transform of $\mathcal{M}$
on the variety $V$ has no base points. Let $\mathcal{B}$ be the
proper transform of the linear system $\mathcal{M}$ on the variety
$V$. Then
$$
K_{V}+\lambda\mathcal{B}\qlineq\pi^{*}\Big(K_{X}+\lambda\mathcal{M}\Big)+\sum_{i=1}^{r}a_{i}E_{i}\qlineq\sum_{i=1}^{r}a_{i}E_{i},
$$
where $E_{i}$ is an exceptional divisor of $\pi$, and
$a_{i}\in\mathbb{Q}$.

It follows from \cite{BCHM06} that there is a commutative diagram
$$
\xymatrix{
&V\ar@{-->}[dl]_{\rho}\ar@{->}[dr]^{\pi}\\%
U\ar@{->}[rr]_{\phi}&&X}
$$
such that $\rho$ is a birational map, the morphism $\phi$ is
birational, the divisor
$$
K_{U}+\lambda\rho\big(\mathcal{B}\big)\qlineq\phi^{*}\Big(K_{X}+\lambda\mathcal{M}\Big)+\sum_{i=1}^{r}a_{i}\rho\big(E_{i}\big)\qlineq\sum_{i=1}^{r}a_{i}\rho\big(E_{i}\big)
$$
is $\phi$-nef, the variety $U$ is $\mathbb{Q}$-factorial, the log
pair $(U,\lambda\rho(\mathcal{B})$ has terminal singularities.

The morphism $\phi$ is not an isomorphism. It follows from
\cite[1.1]{Sho93} that
$$
a_{i}>0\Longrightarrow\mathrm{dim}\Big(\rho\big(E_{i}\big)\Big)\leqslant\mathrm{dim}\big(X\big)-2,
$$
and it follows from the construction of the map $\rho$ that there
is $k\in\{1,\ldots,r\}$ such that the inequality $a_{k}<0$ holds and
the subvariety $\rho(E_{k})\subset U$ is a divisor,
because the singularities of the log pair $(X,\lambda\mathcal{M})$
are not canonical.

The divisor $K_{U}+\lambda\rho(\mathcal{B})$ is not
pseudo-effective. Then it follows from \cite{BCHM06} that there is
a diagram
$$
\xymatrix{
U\ar@{->}[d]_{\phi}\ar@{-->}[rr]^{\psi}&&Y\ar@{->}[d]^{\tau}\\%
X&&Z}
$$
such that $\psi$ is a birational map, the morphism $\tau$ is a
Mori fibred space (see \cite{KoMo98}), and the divisor
$-(K_{Y}+\lambda(\psi\circ\rho)(\mathcal{B}))$ is $\tau$-ample.
The variety $Y$ has terminal $\mathbb{Q}$-factorial singularities,
and $\mathrm{rk}\,\mathrm{Pic}(Y/Z)=1$. Then the birational map
$\psi\circ\rho\circ\pi^{-1}$ is not an isomorphism, because
$K_{X}+\lambda\mathcal{M}\qlineq 0$, but a general fiber of the
morphism $\tau$ is rationally connected (see \cite{Zh06}), which
contradicts the assumption that $X$ is birationally superrigid.
\end{proof}

Birationally superrigid Fano varieties are non-rational. In
particular, if the variety $X$~is~birationally superrigid, then
$\dim(X)\neq 2$.

\begin{example}
\label{example:Pukhlikov-rigid} General hypersurface in
$\mathbb{P}^{n}$ and $\mathbb{P}(1^{n+1}, n)$ of degree
$n\geqslant 4$ and $2n\geqslant 6$ are birationally superrigid
(see \cite{Pu98a}, \cite{Pu04d}), respectively.
\end{example}

The following result is proved in \cite{Pu04d}.

\begin{theorem}
\label{theorem:Pukhlikov} Let $X_{1},X_{2},\ldots,X_{r}$ be
birationally superrigid Fano varieties such that
$\mathrm{lct}(X_{1})\geqslant 1, \mathrm{lct}(X_{2})\geqslant
1,\ldots, \mathrm{lct}(X_{r})\geqslant 1$. Then
\begin{itemize}
\item the variety $X_{1}\times\ldots\times X_{r}$ is non-rational
and
$$
\mathrm{Bir}\Big(X_{1}\times\ldots\times
X_{r}\Big)=\mathrm{Aut}\Big(X_{1}\times\ldots\times
 X_{r}\Big),
$$
\item for every rational dominant map
$$
\rho\colon X_{1}\times\ldots\times X_{r}\dasharrow Y,
$$
whose general fiber is rationally connected, there is a
commutative diagram
$$
\xymatrix{
X_{1}\times\ldots\times X_{r}\ar@{->}[d]_{\pi}\ar@{-->}[rrrrd]^{\rho}\\
X_{i_{1}}\times\ldots\times X_{i_{k}}\ar@{-->}[rrrr]_{\xi}&&&&Y}%
$$
for some $\{i_{1},\ldots,i_{k}\}\subseteq\{1,\ldots,r\}$, where
$\xi$ is a birational map, and $\pi$ is a projection.
\end{itemize}
\end{theorem}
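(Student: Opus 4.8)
The plan is to deduce both assertions from the method of maximal singularities applied to the product $X=X_{1}\times\cdots\times X_{r}$, using the birational superrigidity of the factors (via Theorem~\ref{theorem:superrigidity}) to control maximal singularities ``horizontal'' to a projection, and the inequalities $\lct(X_{i})\ge 1$ to control the ``vertical'' ones. First I would fix the numerical setup. Each $X_{i}$ is Fano, hence rationally connected with $H^{1}(\O_{X_{i}})=0$ and $\mathrm{rk}\,\Pic(X_{i})=1$; so, writing $p_{i}\colon X\to X_{i}$ for the projection and $H_{i}=p_{i}^{*}(-K_{X_{i}})$, one has
$$
\Pic(X)\otimes\Q=\bigoplus_{i=1}^{r}\Q\cdot H_{i},\qquad -K_{X}=\sum_{i=1}^{r}H_{i}.
$$
I would also use that a birationally superrigid variety has dimension at least $3$, since its dimension is $\ne 2$ by the remark after Theorem~\ref{theorem:superrigidity} and a variety of dimension $\le 1$ with $\mathrm{rk}\,\Pic=1$ is rational.

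The technical heart is the following claim, proved by induction on $r$: a mobile linear system $\mathcal{M}$ on $X$ with $\mathcal{M}\qlineq\sum_{i=1}^{r}n_{i}H_{i}$ and all $n_{i}>0$ admits no maximal singularity in the sense of the Noether--Fano inequality (see \cite{Ch05umn}, \cite{Pu04d}). The mechanism is restriction to general fibres of partial projections. For fixed $i$, the general fibre of $q_{i}\colon X\to\prod_{j\ne i}X_{j}$ is a copy of $X_{i}$, and $\mathcal{M}\big|_{X_{i}}$ lies in $\bigl|-n_{i}K_{X_{i}}\bigr|$; by Theorem~\ref{theorem:superrigidity} applied to the superrigid factor $X_{i}$, the pair $(X_{i},\tfrac{1}{n_{i}}\mathcal{M}|_{X_{i}})$ is canonical, and this excludes a maximal singularity of $\mathcal{M}$ whose centre dominates $\prod_{j\ne i}X_{j}$. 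For a centre failing to dominate the base of every such projection, I would analyse the fixed part produced after restriction and invoke $\lct(X_{i})\ge 1$, i.e.\ that every effective $\mathbb{Q}$-divisor $\qlineq -K_{X_{i}}$ yields a log canonical pair (Definition~\ref{definition:threshold}), to bound the vertical multiplicities; Shokurov--Koll\'ar connectedness would then glue these fibrewise estimates into a contradiction.

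Granting this, I extract the assertions as follows. Given $\rho\colon X\dashrightarrow Y$ dominant with rationally connected general fibre, let $\mathcal{H}$ be a very ample complete linear system on $Y$ and set $\mathcal{M}=\rho^{-1}_{*}(\mathcal{H})\qlineq\sum_{i=1}^{r}n_{i}H_{i}$ with $n_{i}\ge 0$, and put $I=\{i:n_{i}>0\}$. If $n_{j}=0$, the restriction of $\mathcal{M}$ to a general fibre $X_{j}$ of $q_{j}$ is numerically trivial, so that fibre is contracted by $\rho$ and $\mathcal{M}$ is pulled back from $\prod_{i\ne j}X_{i}$; doing this for every $j\notin I$ factors $\rho$ through the projection $\pi=p_{I}\colon X\to X_{I}:=\prod_{i\in I}X_{i}$. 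The induced map $\xi\colon X_{I}\dashrightarrow Y$ must be birational: otherwise it is a nontrivial rationally connected fibration, and the Noether--Fano inequality would furnish a maximal singularity of the corresponding system on $X_{I}$ (which has all multiplicities positive), contradicting the claim applied to $X_{I}$. This gives the second assertion. For the first, a self-map $\rho\in\Bir(X)$ has an associated mobile system with all multiplicities positive, hence no maximal singularity, so the Noether--Fano analysis forces $\rho$ to be an isomorphism in codimension one; such a map preserves the ample class $-K_{X}$ and is therefore induced by $\mathrm{Proj}\bigoplus_{m\ge 0}H^{0}(X,-mK_{X})$, so $\rho\in\Aut(X)$ and $\Bir(X)=\Aut(X)$. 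Finally, were $X$ rational, composing a birational map $X\dashrightarrow\P^{N}$ with a linear projection would produce a rationally connected fibration $X\dashrightarrow\P^{N-1}$ with one-dimensional fibres; by the second assertion it factors through a projection to a proper subproduct, whose fibres have dimension $\sum_{j\notin I}\dim X_{j}\ge 3$ --- a contradiction.

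I expect the main obstacle to be the inductive claim, specifically the maximal singularities whose centre is neither horizontal nor vertical for any single projection. Handling these requires a careful inversion-of-adjunction argument combined with Shokurov--Koll\'ar connectedness to compare the singularities of $(X,\mathcal{M})$ with those of its restrictions to general fibres of the various partial projections, all while tracking the distinct multiplicities $n_{i}$. It is precisely here that the hypotheses $\lct(X_{i})\ge 1$ are indispensable: they prevent the vertical part of $\mathcal{M}$ from creating a non-canonical singularity that superrigidity of the factors alone cannot rule out.
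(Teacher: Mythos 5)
The paper does not prove Theorem~\ref{theorem:Pukhlikov}: it is quoted verbatim from \cite{Pu04d}, so there is no in-text proof to compare yours against. Judged on its own terms, your outline reproduces the known global strategy (Noether--Fano on the product, horizontal centres killed by superrigidity of the factors after restriction to general fibres of the partial projections $q_{i}$), and your derivation of the two bullet points from the central technical claim is essentially correct, modulo the usual care needed to run the Noether--Fano inequality when $\mathrm{rk}\,\mathrm{Pic}(X)=r>1$, where ``$K_{X}+\lambda\mathcal{M}\qlineq 0$'' cannot be arranged unless all the degrees $n_{i}$ coincide.

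The genuine gap is that the inductive claim in your third paragraph \emph{is} the theorem, and you assert it rather than prove it. Concretely: if the centre $Z$ of a maximal singularity dominates no $\prod_{j\ne i}X_{j}$ (for instance $Z=C_{1}\times C_{2}$ with $C_{i}\subsetneq X_{i}$), a general fibre of every $q_{i}$ misses $Z$ entirely and superrigidity of the factors gives nothing. The actual mechanism in \cite{Pu04d} --- mirrored in miniature by Lemmas~\ref{lemma:lct-P1-product} and~\ref{lemma:lct-product} of this paper --- is an induction on $r$ carrying a \emph{strengthened} hypothesis: one must simultaneously establish that the global log canonical threshold of every sub-product is at least $1$, because restricting $(X,\frac{1}{n}\mathcal{M})$ to the fibre of a projection $p_{i}$ onto a \emph{single} factor over a general point of $p_{i}(Z)$ degrades ``canonical'' to ``log canonical'' and ``mobile system'' to ``effective divisor'' (the fibre may lie in the base locus and must be split off with its multiplicity controlled, as in the proof of Lemma~\ref{lemma:Hwang}); on the fibre the only usable input is then the log canonical threshold of the sub-product, not superrigidity. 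Your sketch never formulates this auxiliary statement nor the attendant multiplicity bookkeeping. A further caveat: Pukhlikov's published argument uses, besides condition (L) (i.e.\ $\lct\geqslant 1$), the quantitative condition (K) bounding $\mathrm{mult}_{x}$ of the self-intersection of a mobile system by $4n^{2}$, and it is (K) rather than bare superrigidity that powers the hardest case; extracting the statement exactly as formulated here therefore requires additional justification. As written, your proposal is an accurate table of contents for the proof rather than a proof.
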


Varieties satisfying all hypotheses of
Theorem~\ref{theorem:Pukhlikov}
exist~(see also Examples~\ref{example:Cheltsov-Park},~\ref{example:Pukhlikov-rigid}).

\begin{example}
\label{example:Cheltsov-Park-Won} Let $X$ be a hypersurface
that is given by
$$
w^{2}=x^6+y^6+z^6+t^6+x^2y^2zt\subset\mathbb{P}\big(1,1,1,1,3\big)\cong\mathrm{Proj}\Big(\mathbb{C}\big[x,y,z,t,w\big]\Big),
$$
where
$\mathrm{wt}(x)=\mathrm{wt}(y)=\mathrm{wt}(z)=\mathrm{wt}(t)=1$
and $\mathrm{wt}(w)=3$. Then
\begin{itemize}
\item the threefold $X$ is smooth~and~bira\-ti\-onally superrigid~(see~\cite{Is80b}),%
\item it follows from \cite{ChPaWo} that the equality
$\mathrm{lct}(X)=1$ holds.%
\end{itemize}
\end{example}

Suppose, in addition, that the subgroup $G\subset\mathrm{Aut}(X)$
is finite.

\begin{definition}
\label{definition:G-birational-superrigidity} A Fano variety $X$ is
$G$-birationally superrigid if
\begin{itemize}
\item the $G$-invariant subgroup of the group $\mathrm{Cl}(X)$ is isomorphic to $\mathbb{Z}$,%
\item the variety $X$ has terminal singularities,%
\item there is no $G$-equivariant rational map $\rho\colon X\dasharrow Y$ with rationally connected fibers such that  $0\ne \mathrm{dim}(Y)<\mathrm{dim}(X)$,%
\item there is no
$G$-equivariant non-biregular birational  map $\rho\colon
X\dasharrow Y$ onto a variety $Y$ with terminal singularities such
that the $G$-invariant subgroup of the group $\mathrm{Cl}(Y)$ is
isomorphic to $\mathbb{Z}$.
\end{itemize}
\end{definition}

Arguing as in the proof of Theorem~\ref{theorem:superrigidity}, we
obtain the following result.

\begin{theorem}
\label{theorem:G-superrigidity} The variety  $X$ is $G$-birationally
superrigid if and only if the $G$-invariant subgroup of the
group $\mathrm{Cl}(X)$ is isomorphic to $\mathbb{Z}$, the variety $X$
has terminal singularities,
for every $G$-invariant linear system $\mathcal{M}$ on $X$
that~has~no~fixed~components, the log pair $(X,
\lambda\mathcal{M})$ is canonical, where
$K_{X}+\lambda\mathcal{M}\qlineq 0$.
\end{theorem}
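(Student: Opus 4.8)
The plan is to transcribe the proof of Theorem~\ref{theorem:superrigidity} into the $G$-equivariant category, exploiting throughout that $G$ is finite so that no ingredient of that argument is genuinely obstructed. As there, one implication (that the stated numerical and singularity hypotheses force $G$-birational superrigidity) is the standard Noether--Fano argument, whose equivariant form I would quote from \cite{Ch05umn}. I would therefore argue the contrapositive of the remaining implication: assuming $X$ is $G$-birationally superrigid, suppose for contradiction that there is a $G$-invariant linear system $\mathcal{M}$ with no fixed components for which $(X,\lambda\mathcal{M})$ is not canonical, where $K_{X}+\lambda\mathcal{M}\qlineq 0$.

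First I would take a $G$-equivariant log resolution $\pi\colon V\to X$, which exists because $G$ is finite and resolution of singularities is functorial; let $\mathcal{B}$ be the $G$-invariant base-point-free proper transform of $\mathcal{M}$, so that $K_{V}+\lambda\mathcal{B}\qlineq\sum_{i=1}^{r}a_{i}E_{i}$, where $G$ permutes the exceptional divisors $E_{i}$ and the discrepancies $a_{i}$ are constant along $G$-orbits. Next I would run the relative minimal model program of \cite{BCHM06} $G$-equivariantly over $X$: since $G$ is finite it acts on the relevant cones of divisors, and one may choose every extremal ray and contraction inside the $G$-invariant subspace of the relative N\'eron--Severi space, so that each step descends to a $G$-equivariant one. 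This produces a $G$-equivariant model $\phi\colon U\to X$ with $U$ $\mathbb{Q}$-factorial, $(U,\lambda\rho(\mathcal{B}))$ terminal and $K_{U}+\lambda\rho(\mathcal{B})$ being $\phi$-nef, exactly as in the non-equivariant proof. Because $(X,\lambda\mathcal{M})$ is not canonical, $\phi$ is not an isomorphism, and \cite[1.1]{Sho93} again furnishes an index $k$ with $a_{k}<0$ for which $\rho(E_{k})$ is a divisor on $U$.

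Then I would note that $K_{U}+\lambda\rho(\mathcal{B})$ is not pseudo-effective and run a second $G$-equivariant program to reach a $G$-equivariant Mori fibred space $\tau\colon Y\to Z$ on which $-(K_{Y}+\lambda(\psi\circ\rho)(\mathcal{B}))$ is $\tau$-ample, with $Y$ terminal and the $G$-invariant part of $\mathrm{Cl}(Y/Z)$ of rank one. This is exactly where the hypothesis that the $G$-invariant subgroup of $\mathrm{Cl}(X)$ is $\mathbb{Z}$ enters, replacing the condition $\mathrm{rk}\,\mathrm{Pic}(X)=1$: it forces the composite $\psi\circ\rho\circ\pi^{-1}\colon X\dasharrow Y$ to be non-biregular, since $K_{X}+\lambda\mathcal{M}\qlineq 0$ while the pushforward of $K+\lambda\mathcal{B}$ is $\tau$-antiample. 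As a general fibre of $\tau$ is rationally connected \cite{Zh06}, this $G$-equivariant non-biregular map either is a fibration with rationally connected fibres over a base of intermediate dimension or is a genuine birational map onto another $G$-Mori fibre space; either way it contradicts $G$-birational superrigidity.

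I expect the only real subtlety to be the $G$-equivariance of the minimal model program, that is, the legitimacy of applying \cite{BCHM06} in the category of $G$-varieties. The point I would make carefully is that finiteness of $G$ renders the $G$-invariant part of each relevant cone a rational polyhedral cone on which the program runs unchanged, so that all flips, divisorial contractions, and the terminal fibration may be taken $G$-equivariant, and both the discrepancy bookkeeping and the pseudo-effectivity tests are insensitive to restriction to the $G$-invariant sublattice. Granting this, every other step is a word-for-word transcription of the proof of Theorem~\ref{theorem:superrigidity}.
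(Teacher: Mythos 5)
Your proposal is correct and is exactly what the paper intends: its entire proof of this theorem is the sentence ``Arguing as in the proof of Theorem~\ref{theorem:superrigidity}, we obtain the following result,'' and your transcription of that argument into the $G$-equivariant category, with the $G$-invariant part of $\mathrm{Cl}(X)$ replacing $\mathrm{Pic}(X)$, is the intended route. Your explicit justification that the MMP of \cite{BCHM06} runs $G$-equivariantly for finite $G$ fills in the one point the paper leaves tacit.
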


The proof of Theorem~\ref{theorem:Pukhlikov} implies the
following result (see \cite{Ch07b}).

\begin{theorem}
\label{theorem:G-Pukhlikov} Let $X_{i}$ be a Fano variety, and let
$G_{i}\subset\mathrm{Aut}(X_{i})$ be a finite subgroup such that
the variety $X_{i}$ is $G_{i}$-birationally superrigid, and
the inequality $\mathrm{lct}(X_{i},G_{i})\geqslant 1$ holds for
$i=1,\ldots,r$.
Then
\begin{itemize}
\item there is no $G_{1}\times\ldots\times G_{r}$-equivariant birational map $\rho\colon X_{1}\times\ldots\times X_{r}\dasharrow\mathbb{P}^{n}$;%
\item every $G_{1}\times\ldots\times G_{r}$-equivariant birational automorphism of $X_{1}\times\ldots\times X_{r}$ is biregular;%
\item for every $G_{1}\times\ldots\times G_{r}$-equivariant rational dominant
map
$$
\rho\colon X_{1}\times\ldots\times X_{r}\dasharrow Y,
$$
whose general fiber is
rationally connected, there a commutative diagram
$$
\xymatrix{
X_{1}\times\ldots\times X_{r}\ar@{->}[d]_{\pi}\ar@{-->}[rrrrd]^{\rho}\\
X_{i_{1}}\times\ldots\times X_{i_{k}}\ar@{-->}[rrrr]_{\xi}&&&&Y}%
$$
where $\xi$ is a birational~map, $\pi$ is a natural projection,
and $\{i_{1},\ldots,i_{k}\}\subseteq\{1,\ldots,r\}$.
\end{itemize}
\end{theorem}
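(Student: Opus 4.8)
The plan is to reproduce, in the $G$-equivariant setting, the argument that proves Theorem~\ref{theorem:Pukhlikov}, replacing the Picard group by the $G$-invariant part of the class group, Theorem~\ref{theorem:superrigidity} by Theorem~\ref{theorem:G-superrigidity}, and the bounds $\mathrm{lct}(X_i)\geqslant 1$ by $\mathrm{lct}(X_i,G_i)\geqslant 1$. Write $W=X_1\times\cdots\times X_r$, let $G=G_1\times\cdots\times G_r$ act diagonally, and let $p_i\colon W\to X_i$ and $q_i\colon W\to\prod_{j\ne i}X_j$ be the projections. All three assertions will be obtained from a single factorisation statement, proved by induction on $r$: every $G$-equivariant dominant rational map $\rho\colon W\dasharrow Y$ with rationally connected general fibre factors, up to a birational modification of the target, through the projection onto some subproduct $X_{i_1}\times\cdots\times X_{i_k}$, as in the displayed diagram of the theorem. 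The base case $r=1$ is exactly Theorem~\ref{theorem:G-superrigidity}: a $G_1$-birationally superrigid $X_1$ admits neither a nontrivial $G_1$-equivariant Mori fibration nor a non-biregular $G_1$-equivariant birational map to a model with $\mathbb{Z}$ as $G_1$-invariant class group.

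For the inductive step I first run a $G$-equivariant minimal model program (as in the proof of Theorem~\ref{theorem:superrigidity}, using \cite{BCHM06}) to replace $Y$ by a $G$-Mori fibre space $\tau\colon Y\to S$, and I let $\mathcal{M}=\rho^{-1}_{*}(\mathcal{H})$ be the $G$-invariant movable linear system on $W$ obtained by pulling back a sufficiently positive $G$-linearised system $\mathcal{H}$ on $Y$. Since each $X_i$ is rationally connected with torsion-free $G_i$-invariant class group generated by (a multiple of) $-K_{X_i}$, the K\"unneth decomposition identifies the $G$-invariant class group of $W$ with $\bigoplus_i\mathbb{Z}\,p_i^{*}(-K_{X_i})$, so that $\mathcal{M}\sim_{\mathbb{Q}}\sum_{i=1}^{r}\mu_i\,p_i^{*}(-K_{X_i})$ for some $\mu_i\in\mathbb{Q}_{\geqslant 0}$. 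The $G$-equivariant Noether--Fano mechanism underlying Theorem~\ref{theorem:G-superrigidity} then asserts: if $\rho$ is not \emph{square} over the fibration, i.e.\ does not factor through a projection, then the log pair $(W,\mathcal{M})$ carries a $G$-invariant maximal singularity, a geometric valuation $E$ whose centre $B\subset W$ violates the canonicity bound dictated by the multidegree $(\mu_1,\ldots,\mu_r)$.

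The heart of the matter, and the step I expect to be the main obstacle, is to show that such a centre $B$ must be \emph{vertical}, i.e.\ contracted by some projection $q_i$. I would restrict $\mathcal{M}$ to a general fibre of $q_i$, a copy of $X_i$, obtaining a $G_i$-invariant movable system $\mathcal{M}_i\sim_{\mathbb{Q}}\mu_i(-K_{X_i})$; by $G_i$-birational superrigidity (Theorem~\ref{theorem:G-superrigidity}) the pair $(X_i,\tfrac{1}{\mu_i}\mathcal{M}_i)$ is canonical, which forbids a maximal singularity sitting over a general point of $\prod_{j\ne i}X_j$. Promoting this fibrewise canonicity to $W$, and excluding maximal singularities whose centre dominates a factor, is exactly where the global bound $\mathrm{lct}(X_i,G_i)\geqslant 1$ is needed: combined with a connectedness and inversion-of-adjunction argument along the fibration $q_i$ (following \cite{Pu04d} and \cite{Ch07b}), it shows that the contribution of the $X_i$-direction to the discrepancy of $E$ is log canonical, so no genuine non-canonical centre can be horizontal over $X_i$. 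Making the factorwise thresholds add up correctly across the multidegree $(\mu_1,\ldots,\mu_r)$ --- so that the presence of a maximal singularity forces the entire discrepancy defect to concentrate in a single factor --- is the delicate bookkeeping I anticipate as the principal difficulty.

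Once $B$ is known to be vertical over $q_i$, the map $\rho$ descends to a $G$-equivariant dominant map $\bar\rho\colon\prod_{j\ne i}X_j\dasharrow Y$ with rationally connected general fibre, and the induction hypothesis applied to this $(r-1)$-fold product yields the factorisation through a subproduct, proving the third bullet. The same induction classifies the $G$-Mori fibre space models of $W$ as the projections $q_i\colon W\to\prod_{j\ne i}X_j$, fibred in the superrigid $X_i$; since $\mathbb{P}^n\to\mathrm{pt}$ has a zero-dimensional base while these have positive-dimensional bases once $r\geqslant 2$, there is no $G$-equivariant birational map $W\dasharrow\mathbb{P}^n$, which is the first bullet (for $r=1$ this is again Theorem~\ref{theorem:G-superrigidity}). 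Finally, a $G$-equivariant birational self-map of $W$ is a composite of links between these models and hence reduces to fibrewise $G_i$-equivariant birational self-maps of the factors together with permutations of isomorphic factors; by $G_i$-birational superrigidity each such self-map is biregular, giving the second bullet. This is the $G$-equivariant transcription of the proof of Theorem~\ref{theorem:Pukhlikov}.
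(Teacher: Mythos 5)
Your plan coincides with the paper's: the paper proves this theorem only by the single remark that the proof of Theorem~\ref{theorem:Pukhlikov} (Pukhlikov's product theorem from \cite{Pu04d}) carries over in the $G$-equivariant setting, citing \cite{Ch07b}, and your outline is exactly that transcription --- Noether--Fano on the product, exclusion of horizontal maximal singularities via $\mathrm{lct}(X_i,G_i)\geqslant 1$ together with connectedness/inversion of adjunction, and induction on the number of factors. The step you flag as the principal difficulty (the multidegree bookkeeping forcing the centre of a maximal singularity to be vertical) is precisely the step the paper delegates wholesale to \cite{Pu04d} and \cite{Ch07b} rather than reproving.
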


Varieties satisfying all hypotheses of
Theorem~\ref{theorem:G-Pukhlikov} do exist (see
Example~\ref{example:Fermat-cubic}).

\begin{example}
\label{example:Valentiner} The simple group $\mathrm{A}_{6}$ is a
group of automorphisms of the sextic
$$
10x^{3}y^{3}+9zx^{5}+9zy^{5}+27z^{6}=45x^{2}y^{2}z^{2}+135xyz^{4}\subset\mathbb{P}^{2}\cong\mathrm{Proj}\Big(\mathbb{C}\big[x,y,z\big]\Big),
$$
which induces an embedding
$\mathrm{A}_{6}\subset\mathrm{Aut}(\mathbb{P}^{2})$. Then
$\mathbb{P}^{2}$ is $\mathrm{A}_{6}$-birationally~super\-ri\-gid
and $\mathrm{lct}(\mathbb{P}^{2},\mathrm{A}_{6})=2$ (see
\cite{Ch07b}). Thus, there is an induced embedding
$$
\mathrm{A}_{6}\times
\mathrm{A}_{6}\cong\Omega\subset\mathrm{Bir}\big(\mathbb{P}^{4}\big)
$$
such that $\Omega$ is not conjugate to any subgroup in
$\mathrm{Aut}(\mathbb{P}^{4})$ by Theorem~\ref{theorem:G-Pukhlikov}.
\end{example}

Let us consider Fano varieties that are close to being
birationally superrigid.

\begin{definition}
\label{definition:birational-rigidity} The Fano variety $X$ is birationally rigid  if%
\begin{itemize}
\item the equality $\mathrm{rk}\,\mathrm{Pic}(X)=1$ holds,
\item the variety $X$ has $\mathbb{Q}$-factorial and terminal singularities,%
\item there is no rational map $\rho\colon X\dasharrow Y$ with
rationally connected fibers such that $0\ne \mathrm{dim}(Y)<\mathrm{dim}(X)$;%
\item there is no birational map
$\rho\colon X\dasharrow Y$ onto a variety $Y\not\cong X$ with
terminal $\mathbb{Q}$-factorial singularities such that
$\mathrm{rk}\,\mathrm{Pic}(Y)=1$.
\end{itemize}
\end{definition}

Arguing as in the proof of Theorem~\ref{theorem:superrigidity}, we
obtain the following result.

\begin{theorem}
\label{theorem:rigidity} The variety  $X$ is birationally rigid
if and only if $\mathrm{rk}\,\mathrm{Pic}(X)=1$,
the variety $X$ has terminal $\mathbb{Q}$-factorial singularities, and
for every linear system $\mathcal{M}$ on
$X$ that does not have fixed components, there is a birational
automorphism $\xi\in\mathrm{Bir}(X)$ such that the log pair
$(X,\ \lambda\,\xi(\mathcal{M}))$
has canonical singularities, where
$K_{X}+\lambda\,\xi(\mathcal{M})\qlineq 0$.
\end{theorem}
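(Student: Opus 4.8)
The plan is to follow the proof of Theorem~\ref{theorem:superrigidity} almost verbatim, the only new feature being that the birational maps produced by the minimal model program must now be absorbed into $\mathrm{Bir}(X)$ instead of being excluded outright. As in the superrigid case, one of the two implications is the classical Noether--Fano inequality (see \cite{Ch05umn}), so I would concentrate on proving that birational rigidity of $X$ forces the stated canonicity condition, and treat the converse by the standard argument.

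First I would fix a fixed-component-free linear system $\mathcal{M}$ with $K_{X}+\lambda\mathcal{M}\qlineq 0$ and reproduce the construction from the proof of Theorem~\ref{theorem:superrigidity}: resolve $\mathcal{M}$ by a birational morphism $\pi\colon V\to X$, pass by \cite{BCHM06} to a $\mathbb{Q}$-factorial model $U$ on which $(U,\lambda\rho(\mathcal{B}))$ is terminal and $K_{U}+\lambda\rho(\mathcal{B})$ is $\phi$-nef, and, if $(X,\lambda\mathcal{M})$ is \emph{not} canonical, use \cite[1.1]{Sho93} together with \cite{BCHM06} to produce a Mori fibred space $\tau\colon Y\to Z$ and a birational map $\chi=\psi\circ\rho\circ\pi^{-1}\colon X\dasharrow Y$ such that $-(K_{Y}+\lambda\chi(\mathcal{M}))$ is $\tau$-ample. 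If instead $(X,\lambda\mathcal{M})$ is already canonical, the assertion holds with $\xi=\mathrm{id}$.

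The place where rigidity enters is the identification of $Y$. A general fibre of $\tau$ is rationally connected by \cite{Zh06}, so the third bullet of Definition~\ref{definition:birational-rigidity} rules out $0\ne\dim(Z)<\dim(X)$; hence $Z$ is a point and $Y$ is a $\mathbb{Q}$-factorial terminal Fano variety with $\mathrm{rk}\,\mathrm{Pic}(Y)=1$. The fourth bullet of Definition~\ref{definition:birational-rigidity} then forces $Y\cong X$, so $\chi$ yields an element $\sigma\in\mathrm{Bir}(X)$ and a new movable system $\sigma(\mathcal{M})$. I would then replace $\mathcal{M}$ by $\sigma(\mathcal{M})$ and iterate, observing that by the Noether--Fano inequality each such untwisting strictly decreases the degree $\mu$ of the system, where $\mathcal{M}\equiv\mu(-K_{X})$ and this $\mu$ is well defined because $\mathrm{rk}\,\mathrm{Pic}(X)=1$. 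The automorphism $\xi$ obtained at the final step is the one required.

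The main obstacle is precisely this termination step: one must show that the degree invariant attached to $\mathcal{M}$ cannot decrease indefinitely, so that the pair becomes canonical after finitely many self-maps. This is exactly the content of the Sarkisov program and ultimately rests on \cite{BCHM06}, and it must be combined with the observation — forced by rigidity — that every intermediate Mori fibred space is again isomorphic to $X$, so that the whole descent takes place inside $\mathrm{Bir}(X)$. For the converse implication I would run the classical Noether--Fano argument: given a birational map $\chi\colon X\dasharrow Y$ onto a terminal $\mathbb{Q}$-factorial variety $Y$ with $\mathrm{rk}\,\mathrm{Pic}(Y)=1$, pull back a movable linear system from $Y$ to a fixed-component-free $\mathcal{M}$ on $X$, use the hypothesis to find $\xi\in\mathrm{Bir}(X)$ making $(X,\lambda\xi(\mathcal{M}))$ canonical, and conclude from canonicity that $\chi\circ\xi^{-1}$ is an isomorphism, whence $Y\cong X$; the non-existence of rationally connected fibrations of intermediate dimension is excluded in the same manner.
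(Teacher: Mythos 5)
Your proposal is correct and follows essentially the route the paper intends: the paper gives no separate argument for Theorem~\ref{theorem:rigidity}, saying only ``Arguing as in the proof of Theorem~\ref{theorem:superrigidity}, we obtain the following result,'' and your write-up is precisely that adaptation (run the construction from the superrigid case, use the third and fourth bullets of Definition~\ref{definition:birational-rigidity} to force $Z=\mathrm{pt}$ and $Y\cong X$, and iterate). You also correctly isolate the one point the paper leaves implicit, namely termination of the untwisting descent, and your appeal to the Sarkisov program via \cite{BCHM06} is the standard and adequate way to close it.
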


Birationally rigid Fano varieties are non-rational~(see
\cite{Ch05umn}).

\begin{definition}
\label{definition:untwisting} Suppose that $X$ is birationally
rigid. A subset
$\Gamma\subset\mathrm{Bir}(X)$~untwists~all~maximal singularities
if for every linear system $\mathcal{M}$~on the variety $X$ that
has no fixed components, there is a birational automorphism
$\xi\in\Gamma$ such that the log pair
$$
\Big(X,\ \lambda\,\xi\big(\mathcal{M}\big)\Big)
$$
has canonical singularities, where $\lambda$ is a rational number
such that $K_{X}+\lambda\,\xi(\mathcal{M})\qlineq 0$.
\end{definition}

If $X$ is birationally rigid and there is
$\Gamma\subset\mathrm{Bir}(X)$ that untwists all maximal
singularities, then the group $\mathrm{Bir}(X)$ is generated by
$\Gamma$ and $\mathrm{Aut}(X)$.%

\begin{definition}
\label{definition:birational-rigidity-universal} The variety $X$
is universally birationally rigid if for any variety $U$, the
variety
$$
X\otimes\mathrm{Spec}\Big(\mathbb{C}\big(U\big)\Big)
$$
is birationally rigid over a field of rational functions
$\mathbb{C}(U)$ of the variety $U$.
\end{definition}

If $X$ is defined over a perfect field, then
Definition~\ref{definition:birational-rigidity} still makes sense
(see \cite{Ma67}, \cite{DoIs06}).

\begin{example}
\label{example:double-quadric} Let $X$ be a smooth Fano threefold
such that there is a double cover
$$
\pi\colon X\longrightarrow Q\subset\mathbb{P}^{3},
$$
where $Q$ is a quadric threefold, and $\pi$ is branched in a
surface $S\subset Q$ of degree $8$. Put
$$
\mathcal{C}=\Big\{C\subset X\ \Big\vert\ C\ \text{is a smooth
curve such that}\ -K_{X}\cdot
 C=1\Big\},%
$$
then $\mathcal{C}$ is a one-dimensional family. For every curve
$C\in\mathcal{C}$ there is a commutative diagram
$$
\xymatrix{
X\ar@{-->}[d]_{\psi_{C}}\ar@{->}[rr]^{\pi}&&Q\ar@{-->}[d]^{\phi_{C}}\\
\mathbb{P}^{2}\ar@{=}[rr]&&\mathbb{P}^{2}}%
$$
where $\phi_{C}$ is a projection from the line $\pi(C)$. General
fiber of the map $\psi_{C}$ is an elliptic curve, the map
$\psi_{C}$ induces an elliptic fibration with a section and an
involution $\tau_{C}\in\mathrm{Bir}(X)$. Then
$$
\psi_{C}\in\mathrm{Aut}\big(X\big)\iff C\subset S,
$$
and $S$ contains no curves in $\mathcal{C}$ if $X$ is general. It
follows from \cite{Is80b} that there is an exact sequence
$$
1\longrightarrow\Gamma\longrightarrow\mathrm{Bir}\big(X\big)\longrightarrow\mathrm{Aut}\big(X\big)\longrightarrow
 1,%
$$
where $\Gamma$ is a free product of subgroups that are generated
by  non-biregular birational involutions constructed above. Hence
the Fano variety $X$ is universally birationally rigid (see
\cite{Is80b}).
\end{example}

Birationally superrigid Fano varieties are universally
birationally rigid.

\begin{definition}
\label{definition:untwisting-universal}  Suppose that $X$ is
universally birationally rigid. A subset
$\Gamma\subset\mathrm{Bir}(X)$ universally untwists all maximal
singularities if for every variety $U$ the induced subset
$$
\Gamma\subset\mathrm{Bir}\big(X\big)\subseteq\mathrm{Bir}\left(X\otimes\mathrm{Spec}\Big(\mathbb{C}\big(U\big)\Big)\right)
$$
untwists all maximal singularities on
$X\otimes\mathrm{Spec}(\mathbb{C}(U))$.
\end{definition}

An identity map universally untwists all maximal singularities if
$X$ is birationally~superrigid.

\begin{remark}
\label{remark:Kollar} Suppose that $X$~is~bira\-ti\-onally rigid,
and $\mathrm{dim}(X)\ne 1$. Let $\Gamma\subseteq\mathrm{Bir}(X)$
be a subset. It follows from \cite{Ko08} that the following
conditions are equivalent:
\begin{itemize}
\item the subset $\Gamma$ universally untwists all maximal singularities;%
\item the subset $\Gamma$ untwists all maximal singularities, and $\mathrm{Bir}(X)$ is countable.%
\end{itemize}
\end{remark}

\begin{example}
\label{example:CPR} In the assumptions of
Example~\ref{example:IHES},~suppose~that~$X$~is~general.~Then
\begin{itemize}
\item the hypersurface $X$ is universally birationally rigid (see \cite{CPR}),%
\item there are involutions
$\tau_{1}, \ldots, \tau_{k}\in\mathrm{Bir}(X)$ such that that the
sequence of groups
$$
1\longrightarrow\big\langle\tau_{1}, \ldots, \tau_{k}\big\rangle\longrightarrow\mathrm{Bir}\big(X\big)\longrightarrow\mathrm{Aut}\big(X\big)\longrightarrow 1%
$$
is exact (see \cite{CPR}, \cite{ChPa05}), where $\langle\tau_{1}, \ldots,
\tau_{k}\rangle$ is a subgroup generated by $\tau_{1},\ldots,\tau_{k}$,%
\item the subgroup $\langle\tau_{1},\ldots,\tau_{k}\rangle$ universally untwists all maximal  singularities (see \cite{CPR}).%
\end{itemize}
All relations between the involutions $\tau_{1},\ldots,\tau_{k}$
are found in \cite{ChPa05}.
\end{example}

Let  $X_{1},\ldots,X_{r}$ be Fano varieties that have at most
$\mathbb{Q}$-factorial and terminal singularities, let
$$
\pi_{i}\colon X_{1}\times\ldots\times X_{i-1}\times X_{i}\times
X_{i+1}\times\ldots\times X_{r}\longrightarrow
X_{1}\times\ldots\times X_{i-1}\times \widehat{X_{i}}\times
X_{i+1}\times\ldots\times X_{r}
$$
be a natural projection, and let $\X_{i}$ be a scheme general
fiber of the projection $\pi_{i}$, which is defined over
$\mathbb{C}(X_{1}\times\ldots\times X_{i-1}\times
\widehat{X_{i}}\times X_{i+1}\times\ldots\times X_{r})$. Suppose
that
$\mathrm{rk}\,\mathrm{Pic}(X_{1})=\ldots=\mathrm{rk}\,\mathrm{Pic}(X_{r})=1$.

\begin{remark}
\label{remark:generic-fiber} There are natural embeddings of
groups
$$
\prod_{i=1}^{r}\mathrm{Bir}\big(X_{i}\big)\subseteq
\Big\langle\mathrm{Bir}\big(\X_{1}\big), \ldots, \mathrm{Bir}\big(\X_{r}\big)
\Big\rangle\subseteq\mathrm{Bir}\Big(X_{1}\times\ldots\times X_{r}\Big).%
$$
\end{remark}

The following generalization of Theorem~\ref{theorem:Pukhlikov}
holds (see \cite{Ch07a}).

\begin{theorem}
\label{theorem:Cheltsov} Suppose that $X_{1},X_{2},\ldots,X_{r}$ are
universally birationally rigid. Then
\begin{itemize}
\item the variety $X_{1}\times\ldots\times X_{r}$ is non-rational
and
$$
\mathrm{Bir}\Big(X_{1}\times\ldots\times X_{r}\Big)=
\Big\langle\mathrm{Bir}\big(\X_{1}\big), \ldots, \mathrm{Bir}\big(\X_{r}\big),
\mathrm{Aut}\Big(X_{1}\times\ldots\times X_{r}\Big)\Big\rangle,%
$$%
\item for every rational dominant map $\rho\colon
X_{1}\times\ldots\times X_{r}\dasharrow Y$, whose general fiber is
rationally connected, there is a subset
$\{i_{1},\ldots,i_{k}\}\subseteq\{1,\ldots,r\}$ and a commutative
diagram
$$
\xymatrix{ X_{1}\times\ldots\times
X_{r}\ar@{->}[d]_{\pi}\ar@{-->}[rr]^{\sigma}&&X_{1}\times\ldots\times
 X_{r}\ar@{-->}[rrd]^{\rho}\\
X_{i_{1}}\times\ldots\times X_{i_{k}}\ar@{-->}[rrrr]_{\xi}&&&&Y}%
$$
where
$\xi$ and $\sigma$  are birational maps, and $\pi$ is a projection,%
\end{itemize}
in the case when the inequalities $\mathrm{lct}(X_{1})\geqslant 1,
\mathrm{lct}(X_{2})\geqslant 1, \ldots,\mathrm{lct}(X_{r})\geqslant 1$ hold.
\end{theorem}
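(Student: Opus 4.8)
The plan is to follow the strategy by which Theorem~\ref{theorem:Pukhlikov} is proved, replacing birational superrigidity of the factors by universal birational rigidity and carefully tracking the untwisting maps that come from the generic fibres. Write $X=X_{1}\times\ldots\times X_{r}$ and set $H_{i}=\pi_{i}^{*}(-K_{X_{i}})$, the pullbacks of the anticanonical classes under the projections. Since each $X_{i}$ is rationally connected with $\mathrm{rk}\,\Pic(X_{i})=1$, the group $\Pic(X)$ is freely generated by $H_{1},\ldots,H_{r}$, and $-K_{X}\qlineq\sum_{i=1}^{r}H_{i}$. Given a rational dominant map $\rho\colon X\dasharrow Y$ with rationally connected general fibre, I would resolve it and take $\mathcal{M}$ to be the mobile linear system that is the proper transform of the pullback of a very ample system on $Y$; then $\mathcal{M}\qlineq\sum_{i=1}^{r}n_{i}H_{i}$ for non-negative integers $n_{1},\ldots,n_{r}$, and the multidegree $(n_{1},\ldots,n_{r})$ is the quantity on which the whole argument will be an induction.

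Next I would establish a dichotomy by a relative Noether--Fano argument, the fibrewise analogue of Theorem~\ref{theorem:superrigidity}. Either the log pair $(X,\lambda\mathcal{M})$ has canonical singularities for the relevant threshold value of $\lambda$, in which case no divisor is extracted and $\rho$ factors through the projection onto those factors with $n_{i}>0$; or the pair has a maximal singularity realised by an exceptional divisor over a subvariety $Z\subset X$. The crux is then to locate $Z$ relative to the factors. This is where the hypothesis $\lct(X_{i})\ge 1$ enters: I claim a maximal singularity cannot be horizontal over every factor, so that there is an index $i$ with $n_{i}>0$ for which, on passing to the scheme generic fibre $\X_{i}$ of $\pi_{i}$ over $\mathbb{C}(X_{1}\times\ldots\times\widehat{X_{i}}\times\ldots\times X_{r})$, the restriction $\mathcal{M}|_{\X_{i}}\qlineq n_{i}(-K_{\X_{i}})$ acquires a maximal singularity over that field. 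Indeed, $\lct(X_{i})\ge 1$ forces $(\X_{i},\frac{1}{n_{i}}\mathcal{M}|_{\X_{i}})$ to be log canonical, so the excess multiplicity producing the maximal singularity on the product must sit in a fibre direction, and an inversion-of-adjunction computation transfers the maximal singularity to $\X_{i}$.

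This third step is the main obstacle. It must rule out ``mixed'' maximal singularities spread across several factors, and it must be carried out over the non-closed field $\mathbb{C}(\prod_{j\ne i}X_{j})$ rather than over $\mathbb{C}$ \t which is exactly why universal, and not merely absolute, birational rigidity is required, and why the groups $\Bir(\X_{i})$ rather than $\Bir(X_{i})$ appear in the statement.

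Finally I would untwist and induct. Since $X_{i}$ is universally birationally rigid, Definition~\ref{definition:untwisting-universal} furnishes $\xi_{i}\in\Bir(\X_{i})$ untwisting the maximal singularity on $\X_{i}$; via the embedding $\Bir(\X_{i})\subseteq\Bir(X)$ of Remark~\ref{remark:generic-fiber}, the map $\xi_{i}$ acts on $X$ fibrewise over $\prod_{j\ne i}X_{j}$, hence acts trivially on the base Picard group, leaving every $n_{j}$ with $j\ne i$ unchanged while strictly decreasing $n_{i}$ by the relative Noether--Fano mechanism. Thus the multidegree strictly decreases in $\mathbb{Z}_{\ge 0}^{r}$, and after finitely many untwistings their composite $\sigma$ transforms $\mathcal{M}$ into a system with no maximal singularity; by the dichotomy, the residual map factors $\rho$ through the projection $\pi$ onto the surviving factors $X_{i_{1}}\times\ldots\times X_{i_{k}}$, yielding the asserted diagram. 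Applying this to birational self-maps, that is to $Y=X$, forces $k=r$ and $\pi=\mathrm{id}$, so the residual birational self-map is biregular and every element of $\Bir(X)$ is a product of maps $\xi_{i}\in\Bir(\X_{i})$ and an automorphism, giving the displayed generation of $\Bir(X)$. For non-rationality, were $X$ rational, composing a birational map $X\dasharrow\mathbb{P}^{\dim X}$ with a linear projection would give a dominant map to $\mathbb{P}^{\dim X-1}$ with one-dimensional rationally connected fibres, which by the structure statement factors through a projection dropping factors of total dimension one; but $\lct(X_{i})\ge 1$ excludes $X_{i}\cong\mathbb{P}^{1}$, so every factor has dimension at least two and no such projection exists, a contradiction.
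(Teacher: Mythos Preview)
The paper does not prove Theorem~\ref{theorem:Cheltsov} at all; it merely states the result with a citation to \cite{Ch07a}, so there is no in-paper proof to compare against. That said, your outline is exactly the strategy carried out in that reference: set up the multidegree $(n_{1},\ldots,n_{r})$, run a relative Noether--Fano argument, use $\lct(X_{i})\ge 1$ to force a maximal singularity of $(X,\lambda\mathcal{M})$ to survive on some scheme-generic fibre $\X_{i}$, untwist there via universal birational rigidity (which is why $\Bir(\X_{i})$ rather than $\Bir(X_{i})$ appears), and descend on the multidegree.

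The one place where your write-up is genuinely thin is step~3. The sentence ``$\lct(X_{i})\ge 1$ forces $(\X_{i},\tfrac{1}{n_{i}}\mathcal{M}|_{\X_{i}})$ to be log canonical, so the excess multiplicity \ldots\ must sit in a fibre direction'' blurs the distinction between \emph{log canonical} and \emph{canonical} and does not by itself explain why some restriction becomes non-canonical. The actual mechanism is the contrapositive: one shows that if every restriction $(\X_{i},\tfrac{1}{n_{i}}\mathcal{M}|_{\X_{i}})$ is canonical, then a product/adjunction argument in the spirit of Lemma~\ref{lemma:lct-product} (and this is precisely where $\lct(X_{i})\ge 1$ is consumed) forces $(X,\lambda\mathcal{M})$ to be canonical as well, contradicting the existence of a maximal singularity. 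Making this implication precise over the non-closed function field is the technical heart of \cite{Ch07a}. The remainder of your sketch --- that the fibrewise untwisting fixes the other $n_{j}$, that the process terminates, the deduction of the generation of $\Bir(X)$, and the non-rationality argument --- is correct.
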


\begin{corollary}
\label{corollary:Cheltsov} Suppose that there are subgroups
$\Gamma_{1}\subseteq\mathrm{Bir}(X_{1}),\ldots,\Gamma_{r}\subseteq\mathrm{Bir}(X_{r})$
that universally untwists all maximal singularities, and
$\mathrm{lct}(X_{1})\geqslant 1, \mathrm{lct}(X_{2})\geqslant 1,
\ldots,\mathrm{lct}(X_{r})\geqslant 1$. Then
$$
\mathrm{Bir}\Big(X_{1}\times\ldots\times X_{r}\Big)=\Big<\prod_{i=1}^{r}\Gamma_{i},\ \mathrm{Aut}\Big(X_{1}\times\ldots\times X_{r}\Big)\Big>.%
$$
\end{corollary}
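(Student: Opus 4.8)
The plan is to derive Corollary~\ref{corollary:Cheltsov} directly from Theorem~\ref{theorem:Cheltsov}, using the hypothesis that each $\Gamma_i$ universally untwists all maximal singularities to pin down the birational group on the nose. First I would observe that the existence of subgroups $\Gamma_i\subseteq\mathrm{Bir}(X_i)$ that universally untwist all maximal singularities forces each factor $X_i$ to be universally birationally rigid (this is part of Definition~\ref{definition:untwisting-universal}). Hence the hypotheses of Theorem~\ref{theorem:Cheltsov} are met, and combined with the assumption $\mathrm{lct}(X_i)\ge 1$ for all $i$ we obtain
$$
\mathrm{Bir}\Big(X_{1}\times\ldots\times X_{r}\Big)=
\Big\langle\mathrm{Bir}\big(\X_{1}\big), \ldots, \mathrm{Bir}\big(\X_{r}\big),
\mathrm{Aut}\Big(X_{1}\times\ldots\times X_{r}\Big)\Big\rangle.
$$
So the task reduces to replacing each $\mathrm{Bir}(\X_i)$ on the right-hand side by $\Gamma_i$ (viewed inside $\mathrm{Bir}(X_1\times\ldots\times X_r)$ via Remark~\ref{remark:generic-fiber}).

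The main step is to show that, for the generic fiber $\X_i$ over $\mathbb{C}(X_1\times\ldots\times\widehat{X_i}\times\ldots\times X_r)$, the group $\mathrm{Bir}(\X_i)$ is generated by the image of $\Gamma_i$ together with the automorphisms of $\X_i$ over that base field. Here I would invoke the remark immediately following Definition~\ref{definition:untwisting}: since $\X_i$ is birationally rigid over the field $\mathbb{C}(X_1\times\ldots\times\widehat{X_i}\times\ldots\times X_r)$ and $\Gamma_i$ universally untwists all maximal singularities, the induced subset $\Gamma_i\subseteq\mathrm{Bir}(\X_i)$ untwists all maximal singularities on $\X_i$; by that remark the group $\mathrm{Bir}(\X_i)$ is therefore generated by $\Gamma_i$ and $\mathrm{Aut}(\X_i)$. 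Feeding this back into the displayed generation statement, each $\mathrm{Bir}(\X_i)$ is absorbed into $\langle\Gamma_i,\ \mathrm{Aut}(\X_i)\rangle$, and since every $\mathrm{Aut}(\X_i)$ over the base field is contained in $\mathrm{Aut}(X_1\times\ldots\times X_r)$ once we spread out over the base, these automorphism contributions are swallowed by the $\mathrm{Aut}(X_1\times\ldots\times X_r)$ term already present. This yields
$$
\mathrm{Bir}\Big(X_{1}\times\ldots\times X_{r}\Big)=\Big<\prod_{i=1}^{r}\Gamma_{i},\ \mathrm{Aut}\Big(X_{1}\times\ldots\times X_{r}\Big)\Big>,
$$
where I have used Remark~\ref{remark:generic-fiber} to identify $\langle\Gamma_1,\ldots,\Gamma_r\rangle$ with $\prod_{i=1}^{r}\Gamma_i$ inside the big birational group.

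The step I expect to be the main obstacle is the careful bookkeeping of fields of definition: one must check that the automorphisms $\mathrm{Aut}(\X_i)$ of the generic fiber, which a priori live over the function field $\mathbb{C}(X_1\times\ldots\times\widehat{X_i}\times\ldots\times X_r)$, genuinely extend to birational automorphisms of the product that lie in $\mathrm{Aut}(X_1\times\ldots\times X_r)$ (or at least in the subgroup generated by the $\Gamma_j$ and the full automorphism group). This is precisely the content of the universal untwisting hypothesis, so no new difficulty arises, but one should be explicit that passing from the generic fiber back to the product is exactly what universality guarantees. The remaining verifications — that the embeddings of Remark~\ref{remark:generic-fiber} are compatible with the generation statement, and that no extra generators are introduced — are routine and I would not grind through them.
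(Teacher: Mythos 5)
Your proposal is correct and follows the route the paper intends: the corollary is stated without proof precisely because it is the combination of Theorem~\ref{theorem:Cheltsov} with the observation (the remark after Definition~\ref{definition:untwisting}, applied over the function field) that universal untwisting gives $\mathrm{Bir}(\X_i)=\langle\Gamma_i,\ \mathrm{Aut}(\X_i)\rangle$ for each generic fiber, after which Remark~\ref{remark:generic-fiber} identifies the generators. The one imprecision is your attribution of the inclusion $\mathrm{Aut}(\X_i)\subseteq\mathrm{Aut}(X_1\times\ldots\times X_r)$ to the universal untwisting hypothesis: that inclusion is a separate (routine) fact, coming from the finiteness and hence constancy of the automorphism group of a birationally rigid Fano variety over the field $\mathbb{C}(X_1\times\ldots\times\widehat{X_i}\times\ldots\times X_r)$, so that automorphisms of the generic fiber are automorphisms of $X_i$ itself and therefore extend biregularly to the product.
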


Thus, the following example is implied by
Examples~\ref{example:IHES} and \ref{example:CPR}.

\begin{example}[{cf Example~\ref{example:CPR}}]
\label{example:41-41} Let $X$ be a general hypersurface in
$\mathbb{P}(1,1,4,5,10)$ of degree~$20$. The sequence
$$
1\longrightarrow\prod_{i=1}^{m}\Big(\mathbb{Z}_{2}\ast\mathbb{Z}_{2}\Big)\longrightarrow\mathrm{Bir}\Big(\underbrace{X\times\ldots\times X}_{m\ \mathrm{times}}\Big)\longrightarrow\mathrm{S}_{m}\longrightarrow 1%
$$
is exact, where $\mathrm{S}_{m}$ is a permutation group, and
$\mathbb{Z}_{2}\ast\mathbb{Z}_{2}$ is the infinite dihedral group.
\end{example}

Suppose now that $X$ is a smooth Fano threefold (see
\cite{IsPr99}). The threefold $X$ lies in $105$ deformation
families (see~\cite{Is77}, \cite{Is78}, \cite{MoMu81},
\cite{MoMu83}, \cite{MoMu03}). Let
$$
\gimel\big(X\big)\in\Big\{1.1,1.2,\ldots,1.17,2.1,\ldots,2.36,3.1,\ldots,3.31,4.1,\ldots,4.13,5.1,\ldots,5.7,5.8\Big\}
$$
be the ordinal number of the deformation type of the threefold $X$
in the notation of Table~\ref{table:Fanos}. The main purpose of
this paper is to prove the following result.

\begin{theorem}
\label{theorem:main} The following assertions hold:
\begin{itemize}
\item $\mathrm{lct}(X)=1/5$, if $\gimel(X)\in\{2.36, 3.29\}$;%
\item $\mathrm{lct}(X)=1/4$, if $\gimel(X)\in\{1.17,\allowbreak 2.28,\allowbreak 2.30,\allowbreak 2.33,\allowbreak 2.35,\allowbreak 3.23,\allowbreak 3.26,\allowbreak 3.30, 4.12\}$;%
\item $\mathrm{lct}(X)=1/3$, if $\gimel(X)\in\{1.16,\allowbreak
2.29,\allowbreak 2.31,\allowbreak 2.34,\allowbreak 3.9,\allowbreak
3.18, \allowbreak 3.19, \allowbreak 3.20, \allowbreak 3.21,
\allowbreak 3.22, \allowbreak 3.24,\allowbreak 3.25, \allowbreak
3.28,\allowbreak 3.31,\allowbreak 4.4,
\allowbreak 4.8, \allowbreak 4.9, \allowbreak 4.10, \allowbreak 4.11,\allowbreak 5.1,\allowbreak 5.2\}$;%
\item $\mathrm{lct}(X)=3/7$, if $\gimel(X)=4.5$;%
\item $\mathrm{lct}(X)=1/2$, if $\gimel(X)\in\{1.11,\allowbreak
1.12,\allowbreak 1.13,\allowbreak 1.14,\allowbreak
1.15,\allowbreak 2.1,\allowbreak 2.3,\allowbreak 2.18,
2.25,\allowbreak 2.27,\allowbreak 2.32,\allowbreak 3.4,
\allowbreak 3.10,\allowbreak 3.11,\allowbreak 3.12,\allowbreak
3.14,\allowbreak 3.15, \allowbreak 3.16,\allowbreak
3.17,\allowbreak 3.24,\allowbreak 3.27,\allowbreak 4.1,\allowbreak
4.2,\allowbreak 4.3,\allowbreak 4.6,\allowbreak 4.7,\allowbreak
5.3,
\allowbreak 5.4,\allowbreak 5.5,\allowbreak 5.6, 5.7, 5.8\}$;%
\item if $X$ is a general threefold in its deformation family,
then
\begin{itemize}
\item $\mathrm{lct}(X)=1/3$ if $\gimel(X)=2.23$,%
\item $\mathrm{lct}(X)=1/2$ if $\gimel(X)\in\{2.5,\allowbreak
2.8,\allowbreak 2.10,\allowbreak 2.11,\allowbreak 2.14,\allowbreak
2.15,\allowbreak 2.19, \allowbreak 2.24,\allowbreak
2.26,\allowbreak 3.2,\allowbreak 3.5,\allowbreak 3.6,\allowbreak
3.7, \allowbreak 3.8,\allowbreak 4.13\}$,%
\item $\mathrm{lct}(X)=2/3$ if $\gimel(X)=3.3$,%
\item $\mathrm{lct}(X)=3/4$ if $\gimel(X)\in\{2.4, 3.1\}$,%
\item $\mathrm{lct}(X)=1$ if $\gimel(X)=1.1$.
\end{itemize}
\end{itemize}
\end{theorem}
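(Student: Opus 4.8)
The plan is to determine $\lct(X)$ family by family, proving in each of the $105$ cases a matching upper and lower bound. For the upper bounds it suffices, in a given family, to produce a single effective $\Q$-divisor $D\qlineq -K_{X}$ for which the pair $(X,\lambda D)$ is not log canonical once $\lambda$ exceeds the asserted value, since then $\lct(X)\le\lct(X,D)$ is at most that value. I would extract such divisors from the geometry that distinguishes the family: when $-K_{X}\qlineq rH$ for the Fano index $r$, a member of $|H|$ that is a cone or is singular along a curve already yields $\lct(X)\le 1/r$ (this accounts for the values $1/4,1/3,1/2$ attached to the index-$4,3,2$ threefolds such as $\gimel(X)=1.17$, $\gimel(X)=1.16$, and $\gimel(X)\in\{1.11,\dots,1.15\}$); when $X$ admits a conic bundle or del Pezzo fibration, a divisor supported on a degenerate fibre does the job; and in the remaining families the low-degree curves on $X$ (lines, exceptional curves of the Mori contractions) support divisors of large multiplicity.

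The substance of the theorem is the lower bound: one must show that $(X,\lambda D)$ is log canonical for \emph{every} effective $D\qlineq -K_{X}$ and every $\lambda$ strictly below the claimed threshold. I would argue by contradiction. Suppose $(X,\lambda D)$ is not log canonical, and let $Z$ be a minimal non-log-canonical centre; by the connectedness theorem of Shokurov and Koll\'ar, $Z$ may be taken to be either a point or an irreducible curve. If $Z$ is a curve, I would bound $\mult_{Z}(D)$ by intersecting $D$ with a covering family of curves or with the fibres of a contraction, using $D\cdot C\le -K_{X}\cdot C$, and derive a contradiction with the numerical class $D\qlineq -K_{X}$. If $Z$ is a point $P$, I would cut $X$ by a general surface $S\in|-K_{X}|$ through $P$ (so that $S$ is a K3 surface with $(-K_{X})|_{S}^{2}=(-K_{X})^{3}$), apply inversion of adjunction to transfer non-log-canonicity to $(S,\lambda D|_{S})$, and estimate the resulting log canonical threshold on the smooth surface $S$ by multiplicity and intersection bounds controlled by the anticanonical degree of $X$. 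Throughout, the essential positivity input is the amplitude of $-K_{X}$ together with explicit knowledge of the Mori cone and of the curves of low anticanonical degree on $X$.

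The decisive difficulty is that no single argument serves all $105$ families: the lower-bound analysis must be carried out case by case, and the delicate cases are exactly those in which the threshold is smaller on special members of the family. For the $20$ families in which only the \emph{general} $X$ is treated, I expect to require a genericity hypothesis that excludes the curves or divisors responsible for a smaller threshold---for instance that the branch surface of a double cover contain no line, or that a fibration have no unusually degenerate fibre---and then to rule out every potential non-log-canonical configuration using the positivity of the mobile part of $D$. For the $14$ families where we obtain only bounds, this degenerate geometry cannot yet be controlled completely; there I would record the sharp lower bound coming from the multiplicity estimates alongside the best upper bound furnished by an explicit divisor, leaving the exact value of $\lct(X)$ undetermined.
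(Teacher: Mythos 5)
Your overall architecture (explicit divisors for the upper bounds, contradiction plus connectedness for the lower bounds) agrees with the paper, and your remarks about genericity hypotheses for the $20$ ``general member'' families and about the $14$ families where only bounds are known are accurate. But the central device you propose for the lower bound at an isolated point --- cutting by a general K3 surface $S\in|-K_{X}|$ through $P$ and applying inversion of adjunction --- is not what the paper does, and as stated it would fail. After inversion of adjunction you are left with the pair $(S,\lambda D|_{S})$ on a K3 surface, where $D|_{S}$ is an effective divisor of self-intersection $(-K_{X})^{3}$ (up to $64$) about which nothing is known beyond its numerical class; the only general estimate available is $\mathrm{mult}_{P}(D|_{S})>1/\lambda$, which contradicts nothing. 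A K3 surface carries no distinguished low-degree curves, no Mori contractions and no previously computed log canonical threshold to exploit, so this cut cannot produce the sharp constants $1/2$, $1/3$, $1/4$, $3/7$, etc.

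What the paper actually uses, and what is missing from your plan, is the extremal-contraction structure of each $X$ together with an induction on the list of Fano threefolds. If $\alpha\colon X\to V$ contracts a divisor $E$ and $\lct(V)$ equals the claimed threshold (computed earlier in the list, or by the toric formula of Lemma~\ref{lemma:toric}, or by the product formula of Lemma~\ref{lemma:lct-product}), then $\LCS(X,\lambda D)\subseteq E$ for $\lambda$ below the threshold; one then restricts $D$ to $E$ (typically $E\cong C\times\mathbb{P}^{1}$ or $\mathbb{P}^{1}\times\mathbb{P}^{1}$ or $\mathbb{F}_{n}$), bounds the coefficient of $E$ in $D$ by intersecting with a covering family of curves, and concludes by Theorem~\ref{theorem:adjunction} together with Lemma~\ref{lemma:elliptic-times-P1} or Lemma~\ref{lemma:F1}. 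Similarly, for a del Pezzo fibration or conic bundle one applies Theorem~\ref{theorem:Hwang} fibrewise and invokes the known global log canonical thresholds of the (possibly singular) del Pezzo fibres established in Sections~\ref{section:cubic-surfaces} and~\ref{section:del-Pezzo}. Without this inductive scaffolding --- and without the preliminary surface results on cubic surfaces, degree-$4$ and degree-$5$ del Pezzo surfaces with a node, and the toric computation covering a dozen of the families outright --- the case-by-case analysis you outline has no engine to run on.
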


The generality condition in Theorem~\ref{theorem:main} can not be
omitted in many cases.

\begin{example}
\label{example:4-13} Suppose that $\gimel(X)=4.13$. Note that this deformation
type was omitted in~\cite{MoMu81}, and it has been discovered only twenty years
later (see \cite{MoMu03}). There is a birational morphism
$$
\alpha\colon X\longrightarrow \mathbb{P}^1\times\mathbb{P}^1\times\mathbb{P}^1
$$
that contracts a surface $E\subset X$ to a curve $C$ such that
$C\cdot F_{1}=C\cdot F_{2}=1$ and $C\cdot F_{3}=3$, where
$$
F_{1}\cong F_{2}\cong F_{3}\cong\P^1\times\P^1
$$
are fibers of three different projections
$\mathbb{P}^1\times\mathbb{P}^1\times\mathbb{P}^1\to\mathbb{P}^{1}$,
respectively. Then
$$
\mathrm{lct}\big(X\big)=1/2
$$
by Theorem~\ref{theorem:main} if $X$ is general. There is a
surface $G\in|F_{1}+F_{2}|$ such that $C\subset G$. Then
$$
-K_{X}\sim 2\bar{G}+E+\bar{F}_{3},
$$
where $\bar{F}_{3}\subset X\supset\bar{G}$ are proper transforms of $F_{3}$ and
$G$, respectively. Then $\mathrm{lct}(X)\leqslant 1/2$.~But
$$
\mathrm{lct}\big(X\big)\leqslant\mathrm{lct}\Big(X,\ 2\bar{G}+E+\bar{F}_{3}\Big)\leqslant 4/9<1/2%
$$
in the case when the intersection $F_{3}\cap C$ consists of a
single point.
\end{example}

We organize the paper in the following way. In
Sections~\ref{section:preliminaries}, \ref{section:cubic-surfaces}
and \ref{section:del-Pezzo} we consider auxiliary results that are
used in the proof of Theorem~\ref{theorem:main}.  In
Section~\ref{section:toric}, we compute global log canonical
thresholds of toric Fano varieties. In
Section~\ref{section:del-Pezzos}, we prove
Theorem~\ref{theorem:main} for smooth Fano threefolds of index
$2$, i.e., for
$$
\gimel\big(X\big)\in\Big\{1.11, 1.12, 1.13, 1.14, 1.15, 2.32, 2.35,
3.27\Big\}.
$$
In Section~\ref{section:rho-2}, we prove Theorem~\ref{theorem:main} in the
case when $\mathrm{rk}\,\mathrm{Pic}(X)=2$.
In Section~\ref{section:rho-3}, we prove Theorem~\ref{theorem:main} in the
case when $\mathrm{rk}\,\mathrm{Pic}(X)=3$.
In Section~\ref{section:rho-4}, we prove Theorem~\ref{theorem:main} in the
case when $\mathrm{rk}\,\mathrm{Pic}(X)\geqslant 4$.
In Section~\ref{section:bounds}, we find upper bounds for
$\mathrm{lct}(X)$ in the case when
$$
\gimel\big(X\big)\in\Big\{1.8,1.9,1.10,2.2, 2.7,
2.9,2.12,2.13,2.16,2.17, 2.20,2.21,2.22,3.13\Big\}.
$$
In Appendix~\ref{section:alpha}, written by J.-P.\,Demailly, the
relation between global log canonical thresholds of smooth Fano
varieties and the $\alpha$-invariants of smooth Fano varieties
introduced by G.\,Tian~in~\cite{Ti87} for the study of the
existence of K\"ahler--Einstein metrics is established. In
Appendix~\ref{section:appendix}, we put Table~\ref{table:Fanos}
that contains the list of all smooth Fano threefolds together with
the known values and bounds for their global log canonical
thresholds.

We use a standard notation $D_1\sim D_2$ (resp., $D_1\qlineq D_2$)
for the linearly equivalent (resp., $\Q$-linearly equivalent)
divisors (resp., $\Q$-divisors). If a divisor (resp., a
$\Q$-divisor) $D$ is linearly equivalent to a line bundle
$\mathcal{L}$ (resp., is $\Q$-linearly equivalent to a divisor
that is linearly equivalent to $\mathcal{L}$), we write
$D\sim\mathcal{L}$ (resp., $D\qlineq\mathcal{L}$). Recall that
$\Q$-linear  equivalence coincides with numerical equivalence in
the case of Fano varieties.

The projectivisation $\P_{Y}(\mathcal{E})$ of a vector bundle $\mathcal{E}$
on a variety $Y$ is the variety of hyperplanes in the fibers of
$\mathcal{E}$.
The symbol $\mathbb{F}_{n}$ denotes the
Hirzebruch surface
$\mathbb{P}(\mathcal{O}_{\mathbb{P}^1}\oplus\mathcal{O}_{\mathbb{P}^1}(n))$.

We always refer to a smooth Fano threefold $X$ using the ordinal
number $\gimel(X)$ introduced in Table~\ref{table:Fanos}.

We are very grateful to J.-P.\,Demailly for writing
Appendix~\ref{section:alpha}, and to C.\,Boyer, A.\,Iliev,
P.\,Jahnke, A.-S.\,Kaloghiros, A.\,G.\,Kuznetsov, J.\,Park and
Yu.\,G.\,Prokhorov for useful discussions. The first author would
like to express his gratitude to IHES (Bures-sur-Yvette, France)
and MPIM (Bonn, Germany) for hospitality.

%%%%%%%%%%%%%%%%%%%%%%%%%%%%%%%%%%%%%%%%%%%%%%%%%%%%%%%%%%%%%%%%%%%%

\section{Preliminaries}
\label{section:preliminaries}

Let $X$ be a variety with log terminal singularities. Let us
consider a $\mathbb{Q}$-divisor
$$
B_{X}=\sum_{i=1}^{r}a_{i}B_{i},
$$
where $B_{i}$ is a prime Weil divisor on the variety $X$, and
$a_{i}$ is an arbitrary non-negative rational number. Suppose that
$B_{X}$ is a $\mathbb{Q}$-Cartier divisor such that
$B_{i}\neq B_{j}$ for $i\neq j$.

Let $\pi\colon\bar{X}\to X$ be a birational morphism such that
$\bar{X}$ is smooth. Put
$$
B_{\bar{X}}=\sum_{i=1}^{r}a_{i}\bar{B}_{i},
$$
where $\bar{B}_{i}$ is a proper transform of the divisor $B_{i}$
on the variety $\bar{X}$. Then
$$
K_{\bar{X}}+B_{\bar{X}}\qlineq\pi^{*}\Big(K_{X}+B_{X}\Big)+\sum_{i=1}^{n}c_{i}E_{i},
$$
where $c_{i}\in\mathbb{Q}$, and $E_{i}$ is an exceptional divisor
of the morphism $\pi$. Suppose that
$$
\left(\bigcup_{i=1}^{r}\bar{B}_{i}\right)\bigcup\left(\bigcup_{i=1}^{n}E_{i}\right)
$$
is a divisor with simple normal crossing. Put
$$
B^{\bar{X}}=B_{\bar{X}}-\sum_{i=1}^{n}c_{i}E_{i}.
$$

\begin{definition}
\label{definition:log canonical-singularities} The singularities
of $(X, B_{X})$ are log canonical (resp., log terminal) if
\begin{itemize}
\item the inequality $a_{i}\leqslant 1$ holds (resp., the inequality $a_{i}<1$
holds),%
\item the inequality $c_{j}\geqslant -1$ holds (resp., the inequality
$c_{j}>-1$ holds),%
\end{itemize}
for every $i=1,\ldots,r$ and $j=1,\ldots,n$.
\end{definition}

One can show that Definition~\ref{definition:log
canonical-singularities} does not depend on the choice of the
morphism $\pi$. Put
$$
\mathrm{LCS}\Big(X, B_{X}\Big)=\left(\bigcup_{a_{i}\geqslant
 1}B_{i}\right)\bigcup\left(\bigcup_{c_{i}\leqslant -1}\pi\big(E_{i}\big)\right)\subsetneq X,%
$$
then $\mathrm{LCS}(X,B_{X})$ is called the locus of log canonical
singularities of the log pair $(X, B_{X})$.

\begin{definition}
\label{definition:log canonical-center} A proper irreducible
subvariety $Y\subsetneq X$ is said to be a center of log canonical
singularities of the log pair $(X, B_{X})$ if one of the following
conditions is satisfied:
\begin{itemize}
\item either the inequality $a_{i}\geqslant 1$ holds and $Y=B_{i}$,%
\item or the inequality $c_{i}\leqslant -1$ holds and $Y=\pi(E_{i})$
for some choice of the birational morphism $\pi\colon\bar{X}\to X$.
\end{itemize}
\end{definition}

Let $\mathbb{LCS}(X, B_{X})$ be the set of all centers of log
canonical singularities of $(X, B_{X})$. Then
$$
Y\in\mathbb{LCS}\Big(X, B_{X}\Big)\Longrightarrow Y\subseteq\mathrm{LCS}\Big(X, B_{X}\Big)%
$$
and $\mathbb{LCS}(X, B_{X})=\varnothing\iff\mathrm{LCS}(X,
B_{X})=\varnothing$ $\iff$ the log pair $(X,B_{X})$ is log
terminal.

\begin{remark}
\label{remark:hyperplane-reduction} Let $\mathcal{H}$ be a linear
system on $X$ that has  no base points, let $H$ be a sufficiently
general divisor in the linear system $\mathcal{H}$, and let
$Y\subsetneq X$ be an irreducible subvariety. Put
$$
Y\Big\vert_{H}=\sum_{i=1}^{m}Z_{i},
$$
where $Z_{i}\subset H$ is an irreducible subvariety. It follows
from Definition~\ref{definition:log canonical-center} (cf.
Theorem~\ref{theorem:adjunction})~that
$$
Y\in\mathbb{LCS}\Big(X,\ B_{X}\Big)\iff \Big\{Z_{1},\ldots,Z_{m}\Big\}\subseteq \mathbb{LCS}\Big(H,\ B_{X}\Big\vert_{H}\Big).%
$$
\end{remark}

\begin{example}
\label{example:smooth-point-and-log-pull-back} Let $\alpha\colon
V\to X$ be a blow up of a smooth point $O\in X$. Then
$$
B_{V}\qlineq\alpha^{*}\big(B_{X}\big)-\mathrm{mult}_{O}\big(B_{X}\big)E,
$$
where $\mathrm{mult}_{O}(B_{X})\in\mathbb{Q}$, and $E$ is the
exceptional divisor of the blow up $\alpha$. Then
$$
\mathrm{mult}_{O}\big(B_{X}\big)>1
$$
if the log pair $(X,B_{X})$ is not log canonical at the point $O$.
Put
$$
B^{V}=B_{V}+\Big(\mathrm{mult}_{O}\big(B_{X}\big)-\mathrm{dim}\big(X\big)+1\Big)E,
$$
and suppose that
$\mathrm{mult}_{O}(B_{X})\geqslant\mathrm{dim}(X)-1$. Then $O\in
\mathbb{LCS}(X, B_{X})$ if and only if
\begin{itemize}
\item either $E\in\mathbb{LCS}(V, B^{V}), \mbox{ i.\,e. }
\mathrm{mult}_{O}(B_{X})\geqslant\mathrm{dim}(X)$,%
\item or there is a subvariety $Z\subsetneq E$ such that $Z\in\mathbb{LCS}(V, B^{V})$.%
\end{itemize}
\end{example}

The locus $\mathrm{LCS}(X,B_{X})\subset X$ can be equipped with a
scheme structure (see \cite{Na90}, \cite{Sho93}). Put
$$
\mathcal{I}\Big(X, B_{X}\Big)=\pi_{*}\Big(\sum_{i=1}^{n}\lceil
c_{i}\rceil
 E_{i}-\sum_{i=1}^{r}\lfloor a_{i}\rfloor \bar{B}_{i}\Big),%
$$
and let $\mathcal{L}(X, B_{X})$ be a subscheme that corresponds to
the ideal sheaf $\mathcal{I}(X, B_{X})$.

\begin{definition}
\label{definition:log canonical-subscheme} For the log pair
$(X,B_{X})$, we say that
\begin{itemize}
\item the subscheme $\mathcal{L}(X, B_{X})$ is the subscheme of
log canonical singularities of
 $(X,B_{X})$,%
\item the ideal sheaf $\mathcal{I}(X, B_{X})$ is the multiplier ideal sheaf of $(X,B_{X})$.%
\end{itemize}
\end{definition}

It follows from the construction of the subscheme $\mathcal{L}(X,
B_{X})$ that
$$
\mathrm{Supp}\Big(\mathcal{L}\big(X,
B_{X}\big)\Big)=\mathrm{LCS}\Big(X, B_{X}\Big)\subset X.
$$

The following result is the Nadel--Shokurov vanishing theorem (see
\cite{Sho93},~\cite[Theorem~9.4.8]{La04}).

\begin{theorem}
\label{theorem:Shokurov-vanishing} Let $H$ be a nef and big
$\mathbb{Q}$-divisor on $X$ such that
$$
K_{X}+B_{X}+H\qlineq D
$$
for some Cartier divisor $D$ on the variety $X$. Then for every
$i\geqslant 1$
$$
H^{i}\Big(X,\ \mathcal{I}\big(X, B_{X}\big)\otimes D\Big)=0.
$$
\end{theorem}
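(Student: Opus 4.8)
The plan is to reduce the statement to the classical Kawamata--Viehweg vanishing theorem on the smooth model $\bar{X}$, following the derivation of Nadel vanishing in \cite{La04}. I keep the log resolution $\pi\colon\bar{X}\to X$ and the symbols $c_i,E_i,a_i,\bar B_i$ fixed as in the discussion preceding Definition~\ref{definition:log canonical-subscheme}, and set
$$
G=\sum_{i=1}^{n}\lceil c_i\rceil E_i-\sum_{i=1}^{r}\lfloor a_i\rfloor\bar B_i,
$$
so that $\mathcal I(X,B_X)=\pi_{*}\mathcal O_{\bar X}(G)$ by definition. Since $D$ is Cartier, the divisor $\pi^{*}D+G$ is integral, and the projection formula gives $\pi_{*}\mathcal O_{\bar X}(\pi^{*}D+G)=\mathcal I(X,B_X)\otimes D$. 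Thus it suffices to produce vanishing for $\mathcal O_{\bar X}(\pi^{*}D+G)$ on $\bar X$ and then push it down to $X$.

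First I would carry out the numerical bookkeeping on $\bar X$. Starting from $K_{\bar X}+B_{\bar X}\qlineq\pi^{*}(K_X+B_X)+\sum_{i}c_iE_i$ and the hypothesis $D\qlineq K_X+B_X+H$, a direct computation yields
$$
\pi^{*}D+G-K_{\bar X}\qlineq\pi^{*}H+\Delta,\qquad \Delta=\sum_{i=1}^{r}\big(a_i-\lfloor a_i\rfloor\big)\bar B_i+\sum_{i=1}^{n}\big(\lceil c_i\rceil-c_i\big)E_i.
$$
Here $\Delta$ is effective, its support lies in the simple normal crossing divisor $(\bigcup\bar B_i)\cup(\bigcup E_i)$, and all of its coefficients lie in $[0,1)$, so that $\lfloor\Delta\rfloor=0$; moreover $\pi^{*}H$ is nef (pullback of a nef divisor) and big (its volume equals that of $H$, as $\pi$ is birational). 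Applying Kawamata--Viehweg vanishing in the form ``$L\qlineq K_{\bar X}+(\text{nef and big})+\Delta$ with $\Delta$ a simple normal crossing boundary with $\lfloor\Delta\rfloor=0$ forces $H^{i}(\bar X,\mathcal O_{\bar X}(L))=0$ for $i>0$'', taken with $L=\pi^{*}D+G$, gives $H^{i}(\bar X,\ \mathcal O_{\bar X}(\pi^{*}D+G))=0$ for all $i\geqslant 1$.

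It remains to descend this vanishing from $\bar X$ to $X$, and this is the step I expect to be the main obstacle, since $X$ is only normal and $\pi$ is not an isomorphism. The tool is the local vanishing theorem for multiplier ideals: from the analogous computation $G\qlineq K_{\bar X}-\pi^{*}(K_X+B_X)+\Delta$ one sees that $G-K_{\bar X}$ is $\qlineq$-equivalent to the sum of the $\pi$-trivial (hence $\pi$-nef and relatively big) divisor $-\pi^{*}(K_X+B_X)$ and the fractional boundary $\Delta$, so relative Kawamata--Viehweg vanishing yields $R^{j}\pi_{*}\mathcal O_{\bar X}(G)=0$ for all $j>0$; tensoring with the Cartier divisor $D$ and using the projection formula gives $R^{j}\pi_{*}\mathcal O_{\bar X}(\pi^{*}D+G)=0$ for $j>0$ as well. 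The Leray spectral sequence for $\pi$ then degenerates and furnishes isomorphisms
$$
H^{i}\big(X,\ \mathcal I(X,B_X)\otimes D\big)\cong H^{i}\big(\bar X,\ \mathcal O_{\bar X}(\pi^{*}D+G)\big),
$$
whose right-hand side vanishes for $i\geqslant 1$ by the previous paragraph. The care needed is entirely in verifying the relative positivity that makes the local vanishing applicable; granting this, the theorem follows.
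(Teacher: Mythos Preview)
Your argument is correct and is precisely the standard derivation of Nadel--Shokurov vanishing from Kawamata--Viehweg vanishing via a log resolution, local vanishing for multiplier ideals, and the Leray spectral sequence; the numerical bookkeeping and the verification that $\Delta$ is an snc boundary with $\lfloor\Delta\rfloor=0$ are carried out accurately.

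The paper, however, does not supply a proof of this statement at all: it simply records the theorem with the attribution ``see \cite{Sho93},~\cite[Theorem~9.4.8]{La04}'' and then uses it as a black box. So there is nothing to compare at the level of argument; you have written out what the cited reference \cite{La04} contains, whereas the paper treats the result as known.
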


For every Cartier divisor $D$ on the variety $X$, let us consider
the exact sequence of sheaves
$$
0\longrightarrow\mathcal{I}\big(X, B_{X}\big)\otimes
 D\longrightarrow\mathcal{O}_{X}\big(D\big)\longrightarrow\mathcal{O}_{\mathcal{L}(X,\,
 B_{X})}\big(D\big)\longrightarrow 0,%
$$
and let us consider the corresponding exact sequence of cohomology
groups
$$
H^{0}\Big(\mathcal{O}_{X}\big(D\big)\Big)\longrightarrow
H^{0}\Big(\mathcal{O}_{\mathcal{L}(X,\,
 B_{X})}\big(D\big)\Big)\longrightarrow H^{1}\Big(\mathcal{I}\big(X, B_{X}\big)\otimes D\Big).%
$$

\begin{theorem}
\label{theorem:connectedness} Suppose that $-(K_{X}+B_{X})$ is nef
and big. Then $\mathrm{LCS}(X, B_{X})$ is connected.
\end{theorem}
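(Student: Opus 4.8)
The plan is to run the standard Nadel-vanishing argument, applying Theorem~\ref{theorem:Shokurov-vanishing} with the trivial Cartier divisor $D=0$. The key observation is that the hypothesis supplies a nef and big $\Q$-divisor for free: setting $H=-(K_{X}+B_{X})$, which is nef and big by assumption, we have $K_{X}+B_{X}+H\qlineq 0$, so we may legitimately take $D=0$ in the vanishing theorem. First I would invoke Theorem~\ref{theorem:Shokurov-vanishing} with this $H$ and $D=0$ to conclude that $H^{i}(X,\mathcal{I}(X,B_{X}))=0$ for all $i\geqslant 1$; in particular $H^{1}(X,\mathcal{I}(X,B_{X}))=0$.

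Next I would feed $D=0$ into the exact sequence of sheaves
$$
0\longrightarrow\mathcal{I}\big(X,B_{X}\big)\longrightarrow\mathcal{O}_{X}\longrightarrow\mathcal{O}_{\mathcal{L}(X,\,B_{X})}\longrightarrow 0
$$
displayed just before the statement, and read off the tail of the associated long exact cohomology sequence
$$
H^{0}\big(\mathcal{O}_{X}\big)\longrightarrow H^{0}\big(\mathcal{O}_{\mathcal{L}(X,\,B_{X})}\big)\longrightarrow H^{1}\big(\mathcal{I}(X,B_{X})\big)=0,
$$
which shows that the restriction map $H^{0}(\mathcal{O}_{X})\to H^{0}(\mathcal{O}_{\mathcal{L}(X,B_{X})})$ is surjective. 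Since $X$ is an irreducible projective variety we have $H^{0}(\mathcal{O}_{X})=\mathbb{C}$, and hence $h^{0}(\mathcal{O}_{\mathcal{L}(X,B_{X})})\leqslant 1$.

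To finish, I would argue by contradiction. If $\mathrm{LCS}(X,B_{X})=\mathrm{Supp}(\mathcal{L}(X,B_{X}))$ were disconnected, then $\mathcal{L}(X,B_{X})$ would decompose as a disjoint union of two nonempty closed subschemes, each contributing an independent constant to the global sections, so that $h^{0}(\mathcal{O}_{\mathcal{L}(X,B_{X})})\geqslant 2$, contradicting the bound above. Hence $\mathrm{LCS}(X,B_{X})$ is connected, the case $\mathrm{LCS}(X,B_{X})=\varnothing$ being trivial.

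The only genuine subtlety, and what I regard as the main point to get right, lies in this last step: one must be sure that a disconnected support forces $\dim H^{0}(\mathcal{O}_{\mathcal{L}(X,B_{X})})\geqslant 2$ even though $\mathcal{L}(X,B_{X})$ may be nonreduced. This causes no difficulty, because disjointness of the support splits the structure sheaf as a direct sum over the connected components, and each summand already contains the locally constant functions; the possible nilpotents only enlarge $H^{0}$ and cannot reduce it. Everything else is a purely formal consequence of the Nadel--Shokurov vanishing theorem together with the equality $H^{0}(\mathcal{O}_{X})=\mathbb{C}$.
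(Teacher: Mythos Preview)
Your proof is correct and follows exactly the same approach as the paper's own proof: set $D=0$, apply the Nadel--Shokurov vanishing theorem with $H=-(K_X+B_X)$ to kill $H^1(\mathcal{I}(X,B_X))$, and read off connectedness from the surjection $\mathbb{C}=H^0(\mathcal{O}_X)\twoheadrightarrow H^0(\mathcal{O}_{\mathcal{L}(X,B_X)})$. You have simply spelled out in more detail the final implication (surjectivity from $\mathbb{C}$ forces connected support, even for nonreduced schemes) that the paper leaves implicit.
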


\begin{proof}
Put $D=0$. Then it follows from
Theorem~\ref{theorem:Shokurov-vanishing} that the sequence
$$
\mathbb{C}=H^{0}\Big(\mathcal{O}_{X}\Big)\longrightarrow
H^{0}\Big(\mathcal{O}_{\mathcal{L}(X,\,
 B_{X})}\Big)\longrightarrow H^{1}\Big(\mathcal{I}\big(X, B_{X}\big)\Big)=0%
$$
is exact. Thus, the
locus
$$
\mathrm{LCS}\Big(X,\ B_{X}\Big)=\mathrm{Supp}\Big(\mathcal{L}\big(X,\ B_{X}\big)\Big)%
$$
is connected.
\end{proof}

Let us consider few elementary applications of
Theorem~\ref{theorem:connectedness} (cf.
Example~\ref{example:del-Pezzos}).

\begin{lemma}
\label{lemma:plane} Suppose that $\mathrm{LCS}(X,
B_{X})\ne\varnothing$, where $X\cong\mathbb{P}^{n}$, and
$$
B_{X}\qlineq -\lambda K_{X}
$$
for some rational number $0<\lambda<n/(n+1)$. Then
\begin{itemize}
\item the inequality $\mathrm{dim}(\mathrm{LCS}(X, B_{X}))\ge 1$
holds,
\item the subscheme $\mathcal{L}(X,B_{X})$ does not contain isolated zero-dimensional components.%
\end{itemize}
\end{lemma}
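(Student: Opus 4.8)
The plan is to exploit the Nadel--Shokurov vanishing machinery together with the connectedness theorem (Theorem~\ref{theorem:connectedness}), which applies here because $-(K_{X}+B_{X})\qlineq (1-\lambda)(-K_{X})$ is ample on $\mathbb{P}^{n}$ under the hypothesis $0<\lambda<n/(n+1)$. First I would prove the second assertion, from which the first follows almost immediately. So suppose, for contradiction, that the subscheme $\mathcal{L}(X,B_{X})$ has an isolated zero-dimensional component supported at some point $P$. I would like to detect this point by a suitable cohomology group and derive a contradiction with the connectedness of $\mathrm{LCS}(X,B_{X})$, or directly with a global section count.

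The key computation is the following. Choose the Cartier divisor $D$ in the exact sequence preceding Theorem~\ref{theorem:connectedness} so that the vanishing theorem applies and $\mathcal{O}_{X}(D)$ has few sections. Concretely, I would set $D$ to be a divisor with $K_{X}+B_{X}+H\qlineq D$ for a nef and big $H$; on $\mathbb{P}^{n}$ we have $K_{X}\sim\mathcal{O}(-(n+1))$ and $B_{X}\qlineq\lambda(n+1)H_{0}$ where $H_{0}$ is a hyperplane, so I can take $D=\mathcal{O}_{\mathbb{P}^{n}}(d)$ with $d$ chosen minimally subject to writing $D-K_{X}-B_{X}$ as a nef and big $\Q$-divisor; since $\lambda<n/(n+1)$ the rounding leaves room to take $d=0$, i.e.\ $D=0$, exactly as in the proof of Theorem~\ref{theorem:connectedness}. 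Then $H^{1}(X,\mathcal{I}(X,B_{X}))=0$, and the cohomology sequence gives the surjection
$$
\mathbb{C}=H^{0}\big(\mathcal{O}_{X}\big)\twoheadrightarrow H^{0}\big(\mathcal{O}_{\mathcal{L}(X,\,B_{X})}\big).
$$
If $\mathcal{L}(X,B_{X})$ had an isolated zero-dimensional component, then $\mathcal{L}(X,B_{X})$ would be disconnected (the isolated point being separated from the rest of $\mathrm{LCS}(X,B_{X})$, which is nonempty), so $H^{0}(\mathcal{O}_{\mathcal{L}(X,B_{X})})$ would have dimension at least $2$, contradicting the surjectivity from the one-dimensional space $\mathbb{C}$. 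This simultaneously forces $\mathrm{LCS}(X,B_{X})$ to be connected and rules out an isolated point unless $\mathrm{LCS}(X,B_{X})$ is a single point; to exclude the latter I would note that a single isolated point is itself disconnected from nothing but still gives $h^{0}(\mathcal{O}_{\mathcal{L}})=1$, so the connectedness argument alone is not enough and I must rule out the \emph{purely} zero-dimensional case by a separate argument.

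The cleanest way to finish, and the step I expect to be the main obstacle, is to handle the case where $\mathrm{LCS}(X,B_{X})$ is zero-dimensional (a single point $P$). Here I would perform a hyperplane reduction as in Remark~\ref{remark:hyperplane-reduction}: intersect with a general hyperplane $H\cong\mathbb{P}^{n-1}$ through $P$ and restrict $B_{X}$, comparing $\mathrm{LCS}(X,B_{X})$ with $\mathrm{LCS}(H,B_{X}|_{H})$ to set up an induction on $n$, the base case $n=1$ being immediate since on $\mathbb{P}^{1}$ the bound $\lambda<1/2$ forces $B_{X}$ to have all multiplicities below $1$. Alternatively, and perhaps more robustly, I would compare degrees: if $P$ were an isolated log canonical center, then blowing up $P$ and using Example~\ref{example:smooth-point-and-log-pull-back} would give $\mathrm{mult}_{P}(B_{X})\ge n$, whereas the numerical constraint $B_{X}\qlineq\lambda(-K_{X})$ with $\lambda<n/(n+1)$ bounds the total degree $B_{X}\cdot\ell=\lambda(n+1)$ on a general line $\ell$ through $P$ by $\lambda(n+1)<n$, contradicting $\mathrm{mult}_{P}(B_{X})\le B_{X}\cdot\ell$. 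This degree inequality is where the precise threshold $n/(n+1)$ enters and is the heart of the matter; I would make sure the multiplicity-versus-degree estimate is applied to a line through $P$ so that $-K_{X}\cdot\ell=n+1$ yields exactly the needed strict inequality, thereby eliminating the isolated point and completing both bullet points.
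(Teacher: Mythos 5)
You correctly isolate the crux: the connectedness theorem alone disposes of the case where $\mathrm{LCS}(X,B_{X})$ consists of an isolated point \emph{plus} something else, and the real issue is when $\mathrm{LCS}(X,B_{X})$ is a single point $P$. Unfortunately, both devices you offer for that case break down. The degree argument rests on the claim that an isolated zero-dimensional log canonical centre forces $\mathrm{mult}_{P}(B_{X})\geqslant n$; this is false. Example~\ref{example:smooth-point-and-log-pull-back} gives $\mathrm{mult}_{P}(B_{X})\geqslant\dim(X)$ only when the exceptional divisor of the blow up of $P$ is itself a log canonical place, which need not happen: for the germ $(\mathbb{C}^{2},\tfrac{5}{6}\{y^{2}=x^{3}\})$ the origin is an isolated zero-dimensional centre of log canonical singularities with $\mathrm{mult}=5/3<2$. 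All one gets in general is $\mathrm{mult}_{P}(B_{X})\geqslant 1$, which is weaker than $B_{X}\cdot\ell=\lambda(n+1)$ and yields nothing. The inductive alternative fares no better: restricting to a general hyperplane $H\ni P$ gives $B_{X}\vert_{H}\qlineq\lambda\tfrac{n+1}{n}(-K_{\mathbb{P}^{n-1}})$, and the hypothesis $\lambda<n/(n+1)$ only guarantees $\lambda\tfrac{n+1}{n}<1$, not the bound $(n-1)/n$ required to invoke the statement on $\mathbb{P}^{n-1}$; the threshold degrades at each step and the induction does not close.

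The paper's proof avoids all of this with one move that also subsumes your first case: assuming $\mathrm{LCS}(X,B_{X})=O\cup\Sigma$ with $O$ an isolated point (and $\Sigma$ possibly empty), add a \emph{general} hyperplane $H$, so that $O\not\in H$. Then $\mathrm{LCS}(X,B_{X}+H)=O\cup H\cup\Sigma$ is disconnected because $H\neq\varnothing$, while
$$
-\Big(K_{X}+B_{X}+H\Big)\qlineq\Big(\big(1-\lambda\big)\big(n+1\big)-1\Big)H
$$
is still ample precisely because $\lambda<n/(n+1)$, contradicting Theorem~\ref{theorem:connectedness}. This is where the threshold actually enters, rather than through any multiplicity estimate at $P$. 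If you want to salvage your write-up, replace the entire third paragraph with this perturbation of the boundary by a hyperplane missing $P$.
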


\begin{proof}
Suppose that there is a point $O\in X$ such that
$$
\mathrm{LCS}\Big(X, \lambda B_{X}\Big)=O\cup\Sigma,
$$
where $\Sigma\subset X$ is a possibly empty subset such that
$O\not\in X$.

Let $H$ be a general line in $X\cong\mathbb{P}^{2}$. Then
the locus
$$
\mathrm{LCS}\Big(X, \lambda B_{X}+H\Big)=O\cup H\cup\Sigma
$$
is disconnected. But the divisor $-(K_{X}+\lambda B_{X}+H)$ is
ample, which contradicts Theorem~\ref{theorem:connectedness}.
\end{proof}

\begin{lemma}
\label{lemma:P3} Suppose that $\mathrm{LCS}(X,
B_{X})\ne\varnothing$, where $X\cong\mathbb{P}^{3}$, and
$$
B_{X}\qlineq -\lambda K_{X}
$$
for some rational number $0<\lambda<1/2$. Then
$\mathbb{LCS}(X, B_{X})$ contains a surface.
\end{lemma}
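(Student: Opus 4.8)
The plan is to argue by contradiction, imitating the proof of Lemma~\ref{lemma:plane} one dimension higher, and to reduce everything to a general hyperplane section $H\cong\mathbb{P}^{2}$. Suppose that $\mathbb{LCS}(X,B_{X})$ contains no surface. Since $X\cong\mathbb{P}^{3}$ has $-K_{X}\sim\mathcal{O}_{\mathbb{P}^{3}}(4)$ and $0<\lambda<1/2<3/4=n/(n+1)$ for $n=3$, the first assertion of Lemma~\ref{lemma:plane} applies to $(X,B_{X})$ and gives $\dim(\mathrm{LCS}(X,B_{X}))\ge 1$. Under our assumption, therefore, $\mathrm{LCS}(X,B_{X})$ is a nonempty union of curves; in particular it contains some curve $C$.

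Next I would restrict to a general hyperplane $H\cong\mathbb{P}^{2}$. By adjunction $-K_{H}\sim\mathcal{O}_{\mathbb{P}^{2}}(3)$, while the effective $\mathbb{Q}$-divisor $B_{X}\big\vert_{H}$ satisfies
$$
B_{X}\big\vert_{H}\qlineq -\lambda K_{X}\big\vert_{H}\qlineq\mathcal{O}_{\mathbb{P}^{2}}\big(4\lambda\big)\qlineq -\mu K_{H},\qquad \mu=\frac{4\lambda}{3}.
$$
The bound $\lambda<1/2$ is exactly what forces $\mu<2/3=n/(n+1)$ for $n=2$, so that Lemma~\ref{lemma:plane} becomes applicable to the pair $\big(H,B_{X}\vert_{H}\big)$ on $H\cong\mathbb{P}^{2}$. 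Because $C\in\mathbb{LCS}(X,B_{X})$, Remark~\ref{remark:hyperplane-reduction} shows that the finitely many points of $C\cap H$ lie in $\mathbb{LCS}(H,B_{X}\vert_{H})$, whence $\mathrm{LCS}(H,B_{X}\vert_{H})\neq\varnothing$.

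To reach a contradiction I would show that $\mathrm{LCS}(H,B_{X}\vert_{H})$ is zero-dimensional. For a general $H$ one has the set-theoretic equality $\mathrm{LCS}(H,B_{X}\vert_{H})=\mathrm{LCS}(X,B_{X})\cap H$ (take a log resolution of $(X,B_{X})$ meeting $H$ transversally, or use the generic restriction of multiplier ideals, see \cite{La04}). Since $\mathrm{LCS}(X,B_{X})$ is one-dimensional and $H$ is general, the right-hand side is a nonempty finite set of points, so $\dim(\mathrm{LCS}(H,B_{X}\vert_{H}))=0$. This contradicts the first assertion of Lemma~\ref{lemma:plane} for $\big(H,B_{X}\vert_{H}\big)$, which forces $\dim(\mathrm{LCS}(H,B_{X}\vert_{H}))\ge 1$. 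Hence $\mathbb{LCS}(X,B_{X})$ must contain a surface. I expect the main obstacle to be precisely the inclusion $\mathrm{LCS}(H,B_{X}\vert_{H})\subseteq\mathrm{LCS}(X,B_{X})\cap H$: one must rule out that passing to a general hyperplane creates a genuinely new positive-dimensional center on $H$ not cut out by a center on $X$. Equivalently, a curve $\Gamma\subseteq\mathrm{LCS}(H,B_{X}\vert_{H})$ would, via Remark~\ref{remark:hyperplane-reduction} applied to a surface $S\subset X$ with $S\cap H=\Gamma$, produce a surface in $\mathbb{LCS}(X,B_{X})$, against our assumption; making this genericity argument precise is the one delicate point, after which the computation $\mu=4\lambda/3$ and the dimension bookkeeping are routine.
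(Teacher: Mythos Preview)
Your argument is correct and follows essentially the same route as the paper. The paper also cuts with a general plane $S\cong\mathbb{P}^{2}$, observes that $(S,B_X\vert_S)$ is not log terminal, asserts that $\mathrm{LCS}(S,B_X\vert_S)$ is a finite set of points, and contradicts Lemma~\ref{lemma:plane}. The only cosmetic difference is that the paper does not invoke Lemma~\ref{lemma:plane} on $\mathbb{P}^3$ to produce a curve center; instead it adds $S$ to the boundary and uses connectedness of $\mathrm{LCS}(\mathbb{P}^3,B_X+S)$ (valid since $-(K_X+B_X+S)$ is ample for $\lambda<1/2$) to force $\mathrm{LCS}(X,B_X)$ to meet $S$, which via Remark~\ref{remark:hyperplane-reduction} makes $(S,B_X\vert_S)$ non--log-terminal. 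Your route through the $\mathbb{P}^n$ statement of Lemma~\ref{lemma:plane} is equivalent.

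The point you single out as delicate --- that for a general hyperplane $H$ one has $\mathrm{LCS}(H,B_X\vert_H)\subseteq\mathrm{LCS}(X,B_X)\cap H$ --- is exactly the step the paper uses without comment when it says ``$\mathrm{LCS}(S,B_X\vert_S)$ consists of finitely many points''. Your justification via generic restriction of multiplier ideals (choose a log resolution of $(X,B_X)$ and take $H$ transverse to all strata) is the standard one and is adequate; your alternative phrasing through Remark~\ref{remark:hyperplane-reduction} also works, since a curve $\Gamma\subset\mathrm{LCS}(H,B_X\vert_H)$ would force the surface it sweeps (as $H$ varies, or equivalently the surface $S$ with $S\cap H=\Gamma$) into $\mathbb{LCS}(X,B_X)$, contrary to hypothesis. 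So there is no gap.
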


\begin{proof}
Suppose that $\mathbb{LCS}(X, B_{X})$ contains no surfaces. Let
$S\subset\mathbb{P}^{3}$ be a general plane. The~locus
$$
\LCS\Big(\P^3,\ B_{X}+S\Big)
$$
is connected by Theorem~\ref{theorem:connectedness}. Then $(S,B_{X}\vert_{S})$
is not log terminal by Remark~\ref{remark:hyperplane-reduction}. But the locus
$$
\LCS\Big(S,\ B_{X}\Big\vert_{S}\Big)
$$
consists of finitely many points, which is impossible by
Lemma~\ref{lemma:plane}.
\end{proof}

\begin{lemma}
\label{lemma:quadric-curves} Suppose that $\mathrm{LCS}(X,
B_{X})\ne\varnothing$, where $X$ is a smooth quadric threefold in~$\P^4$,~and
$$
B_{X}\qlineq -\lambda K_{X}
$$
for some rational number $0<\lambda<1/2$. Then
$\mathbb{LCS}(X, B_{X})$ contains a surface.
\end{lemma}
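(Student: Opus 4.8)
The plan is to mimic the proof of Lemma~\ref{lemma:P3}, replacing the general plane in $\P^3$ by a general hyperplane section of the quadric, and to use Lemma~\ref{lemma:plane} as the contradiction at the bottom. The quadric threefold $X\subset\P^4$ has $-K_X\sim 3H$, where $H$ is the hyperplane class (the restriction of $\mathcal{O}_{\P^4}(1)$), and a general hyperplane section $S=X\cap\Pi$ is a smooth quadric surface $S\cong\P^1\times\P^1$. The subtlety, and the reason this is not a verbatim copy of Lemma~\ref{lemma:P3}, is that the quadric surface $S$ is not a projective plane, so at the very last step I cannot cite Lemma~\ref{lemma:plane} directly — I will need a version of the connectedness argument valid on $\P^1\times\P^1$. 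Let me set up the main argument first and isolate that surface obstacle.

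**Main argument.** Suppose for contradiction that $\mathbb{LCS}(X,B_X)$ contains no surface. Let $S\subset X$ be a general hyperplane section; then $S$ is a smooth quadric surface, and since $S$ is general it meets every center of log canonical singularities properly. The divisor
$$
-\big(K_X+B_X+S\big)\qlineq (1-\lambda)(-K_X)-S
$$
is ample because $-K_X\sim 3H$, $S\sim H$, and $0<\lambda<1/2$ force the coefficient $3(1-\lambda)-1>1/2>0$ of $H$ to be positive; hence $\LCS(X,B_X+S)$ is connected by Theorem~\ref{theorem:connectedness}. As in Lemma~\ref{lemma:P3}, connectedness of $\LCS(X,B_X)\cup S$ (which is disconnected unless the two loci meet) combined with Remark~\ref{remark:hyperplane-reduction} forces the log pair $\big(S,B_X\big|_S\big)$ to fail to be log terminal. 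By the assumption that $\mathbb{LCS}(X,B_X)$ has no surface, every center of $(X,B_X)$ is a point or a curve, so its trace on the general $S$ is a finite set of points; thus $\LCS\big(S,B_X\big|_S\big)$ consists of finitely many points.

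**The surface obstacle.** The remaining task is to derive a contradiction from the existence of a log canonical singularity supported at isolated points on $S\cong\P^1\times\P^1$, given that $B_X\big|_S\qlineq -\lambda K_X\big|_S$ with $0<\lambda<1/2$. I expect this to be the only real work. The clean way is to run the connectedness trick one more time inside $S$: adding a general ruling $L$ (a fiber of one projection $S\to\P^1$), for which $-\big(K_S+B_X|_S+L\big)$ remains ample when $\lambda<1/2$, the locus $\LCS\big(S,B_X|_S+L\big)$ must be connected by Theorem~\ref{theorem:connectedness}; but an isolated point off $L$ together with $L$ gives a disconnected locus, the contradiction. One checks the ampleness: on $\P^1\times\P^1$ one has $-K_S\qlineq (2,2)$, so $B_X|_S\qlineq (2\lambda,2\lambda)$ and $-\big(K_S+B_X|_S+L\big)\qlineq (2-2\lambda-1,\,2-2\lambda)=(1-2\lambda,\,2-2\lambda)$, which is ample precisely because $\lambda<1/2$ makes both entries positive. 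This is exactly the $\P^1\times\P^1$ analogue of Lemma~\ref{lemma:plane}, and it closes the argument.
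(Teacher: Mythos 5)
There is a genuine gap in your final ``surface obstacle'' step, caused by a miscomputation of the restricted divisor. Since $-K_X\sim 3H$ on the quadric threefold, $B_X\qlineq 3\lambda H$ and hence $B_X\big|_S\qlineq 3\lambda\,H\big|_S$, which has type $(3\lambda,3\lambda)$ on $S\cong\P^1\times\P^1$ --- not $(2\lambda,2\lambda)$: you substituted $-K_S$ for $-K_X\big|_S$, whereas by adjunction $-K_X\big|_S=-K_S+S\big|_S=(3,3)$. With the correct value, $-\big(K_S+B_X\big|_S+L\big)\qlineq(1-3\lambda,\,2-3\lambda)$, which is ample only for $\lambda<1/3$; on $[1/3,1/2)$ Theorem~\ref{theorem:connectedness} cannot be applied on $S$ with this boundary, and lowering the coefficient of $L$ is not an option because $L$ must enter the locus of log canonical singularities with coefficient $1$. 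Worse, the surface statement you are implicitly invoking (the ``$\P^1\times\P^1$ analogue of Lemma~\ref{lemma:plane}'') is simply false in that range: take $B_S=\frac{7}{10}\big(L_1+L_2+C\big)$, where $L_1,L_2$ are the two rulings through a point $P$ and $C$ is a smooth $(1,1)$-curve through $P$ transversal to them. Then $B_S$ has type $\big(\frac{7}{5},\frac{7}{5}\big)=(3\lambda,3\lambda)$ with $\lambda=\frac{7}{15}<\frac{1}{2}$, all coefficients are below $1$, and $\mathrm{mult}_P(B_S)=\frac{21}{10}>2$, so that $\mathrm{LCS}(S,B_S)=P$ is a single isolated point. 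Hence no adjustment of constants rescues the restriction-to-a-hyperplane-section route over the whole interval $0<\lambda<1/2$; your first connectedness step is fine, but the argument cannot be closed this way.

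The paper's proof never passes to a hyperplane section. It takes a general line $L\subset X$, two general points $P_1,P_2\in L$, and the tangent hyperplane sections $H_1,H_2$ of $X$ at $P_1,P_2$. Each $H_i$ is an irreducible quadric cone containing $L$, so $\frac{3}{4}(H_1+H_2)$ has multiplicity $\frac{3}{2}>1$ along $L$ while each component carries coefficient $\frac{3}{4}<1$; thus exactly the curve $L$ (and no surface) is forced into $\mathrm{LCS}\big(X,\,B_X+\frac{3}{4}(H_1+H_2)\big)$, and
$$
-\Big(K_X+B_X+\frac{3}{4}\big(H_1+H_2\big)\Big)\qlineq\Big(\frac{3}{2}-3\lambda\Big)H
$$
stays ample precisely for $\lambda<\frac{1}{2}$. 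A general line is disjoint from the at most one-dimensional locus $\mathrm{LCS}(X,B_X)$, so Theorem~\ref{theorem:connectedness} yields the contradiction. You would need this device (or a comparable one, such as the automorphism trick described in the remark following Lemma~\ref{lemma:V7}) to cover $\lambda\in[1/3,1/2)$.
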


\begin{proof}
Let $L\subset X$ be a general line, let $P_1\in L\ni P_2$ be two general
points, let $H_1$ and $H_2$ be the hyperplane sections of $X$ that are tangent
to $X$ at the points $P_1$ and $P_2$, respectively. Then
$$
\LCS\left(X,\ \lambda B_X+\frac{3}{4}\Big(H_1+H_2\Big)\right)=\LCS\Big(X,\ \lambda B_X\Big)\cup L%
$$
is disconnected, which is impossible by
Theorem~\ref{theorem:connectedness}.
\end{proof}

\begin{remark}
One can prove Lemmas~\ref{lemma:P3},~\ref{lemma:quadric-curves} and
\ref{lemma:P1xP2} using the following trick. Suppose that
$$
B_{X}\qlineq -\lambda K_{X}
$$
for some $\lambda\in\mathbb{Q}$ such that $0<\lambda<1/2$, where
$X$ is either $\mathbb{P}^{3}$, or $\P^1\times\P^2$, or a smooth
quadric threefold, and the set $\mathbb{LCS}(X, B_{X})$ contains
no surfaces. Then
$$
\LCS\Big(X,\ B_{X}\Big)\subseteq\Sigma,
$$
where $\Sigma\subset X$ is a (possibly reducible) curve. For a
general $\phi\in\mathrm{Aut}(X)$ we have
$$
\phi\big(\Sigma\big)\cap \Sigma=\varnothing,
$$
which implies that $\LCS(X, \phi(B_{X}))\cap\LCS(X,
B_{X})=\varnothing$. But
$$
\LCS\Big(X,\ \phi\big(B_{X}\big)+B_{X}\Big)=\LCS\Big(X,\ \phi\big(B_{X}\big)\Big)\bigcup\LCS\Big(X,\ B_{X}\Big)%
$$
whenever $\phi$ is sufficiently general. The latter contradicts
Theorem~\ref{theorem:connectedness} since $\lambda<1/2$.
\end{remark}

\begin{lemma}
\label{lemma:V7} Suppose that $\mathrm{LCS}(X,
B_{X})\ne\varnothing$, where $X$ is a blow up of $\P^3$ in one
point,~and
$$
B_{X}\qlineq -\lambda K_{X}
$$
for some rational number $0<\lambda<1/2$. Then
$\mathbb{LCS}(X, B_{X})$ contains a surface.
\end{lemma}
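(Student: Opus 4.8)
The plan is to push the problem down to $\P^3$ and invoke Lemma~\ref{lemma:P3}, reserving a direct analysis on the exceptional divisor for the one case that does not descend (this is also why the automorphism trick of the preceding Remark is not available here: $\Aut(X)$ preserves the exceptional divisor). Write $\sigma\colon X\to\P^3$ for the contraction of the exceptional divisor $E\cong\P^2$ of the blow up of a point $O\in\P^3$, put $H=\sigma^*\O_{\P^3}(1)$, so that $\Pic(X)=\Z H\oplus\Z E$ and $-K_X\sim 4H-2E$; the hypothesis then reads $B_X\qlineq\lambda(4H-2E)$ with $0<\lambda<1/2$. Since $-(K_X+B_X)\qlineq(1-\lambda)(-K_X)$ is ample, Theorem~\ref{theorem:connectedness} is available for $(X,B_X)$. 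Assume, for contradiction, that $\mathbb{LCS}(X,B_X)$ contains no surface.

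First I would descend to $\P^3$. Set $\bar B=\sigma_*(B_X)$; as $\sigma_*H=\O_{\P^3}(1)$ and $\sigma_*E=0$, one has $\bar B\qlineq 4\lambda\,\O_{\P^3}(1)\qlineq -\lambda K_{\P^3}$, with the same $\lambda<1/2$. If $(\P^3,\bar B)$ is not log canonical, then $\LCS(\P^3,\bar B)\ne\varnothing$ and Lemma~\ref{lemma:P3} yields a surface $T\subset\P^3$ in $\mathbb{LCS}(\P^3,\bar B)$. As $\sigma$ is an isomorphism in codimension one, the coefficient of the proper transform $\widetilde T$ in $B_X$ equals that of $T$ in $\bar B$, hence is at least $1$, so $\widetilde T$ is a surface in $\mathbb{LCS}(X,B_X)$ \t a contradiction. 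Therefore $(\P^3,\bar B)$ is log canonical.

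Next I would exploit a degree bound. For a general line $\ell\subset\P^3$ one has $\mult_O(\bar B)\le\bar B\cdot\ell=4\lambda<2$; set $m=\mult_O(\bar B)$ and $B'=\sigma^{-1}_*(\bar B)$, so that $K_X+B'+(m-2)E\qlineq\sigma^*(K_{\P^3}+\bar B)$ by Example~\ref{example:smooth-point-and-log-pull-back}. Log canonicity of $(\P^3,\bar B)$ transfers to its crepant pull-back $(X,B'+(m-2)E)$, and since $2-m>0$ a comparison of log discrepancies shows that $(X,B')$ is itself log canonical. Writing $B_X=B'+a_0E$, where $a_0$ is the coefficient of $E$ in $B_X$, the no-surface assumption forces $a_0<1$, whereas $a_0=0$ would make $(X,B_X)=(X,B')$ log canonical; thus $0<a_0<1$, and because $B_X=B'$ away from $E$ we conclude $\LCS(X,B_X)\subseteq E$.

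It remains to exclude a log canonical center $C\subsetneq E\cong\P^2$. Since $(X,E+B')\ge(X,B_X)$ is not log canonical along $C$, adjunction together with inversion of adjunction for the smooth divisor $E\subset X$ (Theorem~\ref{theorem:adjunction}) shows that $(\P^2,B'|_E)$ is not log canonical along $C$, where $B'|_E$ is effective with $\deg(B'|_E)=B'\cdot\ell_E=m$ for a line $\ell_E\subset E$; here $B'|_E\qlineq-\tfrac{m}{3}K_{\P^2}$ and $\tfrac{m}{3}<\tfrac23$, so Lemma~\ref{lemma:plane} applies and rules out $C$ being a point. Hence $C$ is a curve occurring in $B'|_E$ with coefficient exceeding $1$; as $\deg(B'|_E)=m<2$, this forces $C$ to be a line and $m>1$. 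I expect this last step to be the main obstacle: for $\lambda<1/4$ one has $m\le 4\lambda<1$ and the contradiction is immediate, but for $\lambda\in[1/4,1/2)$ one must feed back the log canonicity of $(\P^3,\bar B)$ at $O$. The plan is to read it off, via adjunction along $E$ applied to the crepant pull-back $(X,B'+(m-2)E)$, as a log canonical pair on $\P^2$ whose boundary dominates $B'|_E$ near $C$, contradicting the non\t log canonicity just found. The delicacy \t and the heart of the matter \t is that $E$ enters $B_X$ with a fractional coefficient $a_0<1$, so this reduction to $\P^2$ must be made with the different of $E$ and the exact value $m=\mult_O(\bar B)$, rather than with a naive restriction.
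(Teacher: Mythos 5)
Your reduction to $\LCS(X,B_X)\subseteq E$ is sound and coincides with the paper's first step: push $B_X$ down to $\P^3$ and use Lemma~\ref{lemma:P3} to produce a surface whose proper transform would lie in $\mathbb{LCS}(X,B_X)$. The genuine gap is the endgame on $E$, which you yourself flag as unfinished. After reducing to a line $C\subset E$ with $\mult_C(B'\vert_E)>1$ and $1<m<2$, you propose to get a contradiction by ``adjunction along $E$ applied to the crepant pull-back $(X,\,B'+(m-2)E)$''. This cannot work as described: Theorem~\ref{theorem:adjunction} requires the divisor being restricted to appear with coefficient $1$, whereas $E$ enters the crepant sub-boundary with coefficient $m-2<0$; raising it to $1$ means adding the effective divisor $(3-m)E$, after which the log canonicity of $(\P^3,\bar B)$ at $O$ gives no information beyond $\deg\bigl(B'\vert_E\bigr)=m$, which you have already used. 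The same sign issue undermines your earlier assertion that ``$(X,B')$ is itself log canonical'': since $B'=\bigl(B'+(m-2)E\bigr)+(2-m)E$ with $2-m>0$, the boundary \emph{increases}, so discrepancies can only drop, and the claim does not follow from the comparison you invoke. (That claim is not needed for $\LCS(X,B_X)\subseteq E$, which holds simply because $\sigma$ is an isomorphism off $E$, but it is symptomatic of the same reversed inequality.) As written, the case $\lambda\in[1/4,1/2)$ is not proved.

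The paper avoids any analysis on $E$ by a global connectedness argument. Let $H$ be a general plane and $H_1,H_2$ general planes through the blown-up point; then $\bar{H}_1+\bar{H}_2+2\bar{H}\sim 4\sigma^{*}H-2E\sim -K_X$ and $\bar{H}\cap E=\varnothing$, so the locus $\LCS\bigl(X,\ B_X+\tfrac12(\bar{H}_1+\bar{H}_2+2\bar{H})\bigr)$ contains both the surface $\bar{H}$ and the nonempty set $\LCS(X,B_X)\subseteq E$, hence is disconnected; since the augmented boundary is $\qlineq -(\lambda+\tfrac12)K_X$ with $\lambda+\tfrac12<1$, this contradicts Theorem~\ref{theorem:connectedness}. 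If you wish to salvage your local approach, you would need a new input for the range $1<m<2$ (for instance, intersecting $B_X$ with suitable curves meeting $C$), but the cleanest fix is to replace your entire second half by the auxiliary-divisor argument above.
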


\begin{proof}
Suppose that the set $\mathbb{LCS}(X, B_{X})$ contains no
surfaces. Let
$$
\alpha\colon X\longrightarrow\P^3
$$
be the blow up of a point, and let $E$ be the exceptional divisor
of $\alpha$. In the case when
$$
\LCS\big(X,\ \lambda B_{X}\Big)\not\subseteq E,
$$
we can apply Lemma~\ref{lemma:P3} to the pair
$(\mathbb{P}^{3},\alpha(B_{X}))$ to get a contradiction. Hence
$\LCS(X, B_{X})\subseteq E$.

Let $H\subset\P^3$ a general hyperplane, and let $H_1\subset\P^3\supset H_2$ be
general hyperplanes that pass through $\alpha(E)$. Denote by $\bar{H}$,
$\bar{H}_1$ and $\bar{H}_2$ the proper transforms of these planes on $X$. Then
$$
\LCS\left(X,\
B_{X}+\frac{1}{2}\Big(\bar{H}_1+\bar{H}_2+2\bar{H}\Big)\right)
$$
is disconnected, which is impossible by
Theorem~\ref{theorem:connectedness}.
\end{proof}

\begin{lemma}
\label{lemma:singular-quadric-threefold} Suppose that $X$ is a cone
in $\mathbb{P}^{4}$ over a smooth quadric surface, and
$$
B_{X}\qlineq -\lambda K_{X}
$$
for some rational number $0<\lambda<1/3$. Then $\mathbb{LCS}(X,
B_{X})=\varnothing$.
\end{lemma}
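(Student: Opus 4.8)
The plan is to show that the log pair $(X,B_X)$ is log terminal at every point, so that $\mathbb{LCS}(X,B_X)=\varnothing$. Write $H$ for the hyperplane section class; since $X\subset\P^4$ is a quadric, adjunction gives $-K_X\sim 3H$, and therefore $B_X\qlineq 3\lambda H$ with $3\lambda<1$. The variety $X$ is smooth away from its vertex $O$ (an ordinary double point), so I would treat the smooth locus and the point $O$ separately. The degree $H^3=2$ together with the abundance of lines on $X$ will control the relevant multiplicities, while at $O$ a single blow up reduces the question to the equality $\mathrm{lct}(\P^1\times\P^1)=1/2$ from Example~\ref{example:del-Pezzos}.

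First I would dispose of the smooth locus. Through any point $P\in X\setminus\{O\}$ the quadric $X$ carries a one-parameter family of lines $\ell$ with $H\cdot\ell=1$; choosing $\ell$ not contained in $\Supp(B_X)$ gives $\mult_P(B_X)\le B_X\cdot\ell=3\lambda<1$. Hence $\mathrm{lct}_P(X,B_X)\ge 1/\mult_P(B_X)>1$, and $(X,B_X)$ is log terminal at every such $P$. Consequently the locus $\mathrm{LCS}(X,B_X)$ is contained in $\{O\}$, so any center of log canonical singularities must be the single point $O$, and it remains only to prove that $(X,B_X)$ is log terminal at the vertex.

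For the vertex, let $\alpha\colon\tilde X\to X$ be the blow up of $O$, with exceptional divisor $E\cong\P^1\times\P^1$. Since $O$ is an ordinary double point one computes $K_{\tilde X}=\alpha^*K_X+E$ and $N_{E/\tilde X}=\O_E(-1,-1)$. Writing $\alpha^*B_X=\tilde B+mE$ for the proper transform $\tilde B$, intersection with the proper transform of a general ruling line of the cone (a fibre of $\tilde X\to\P^1\times\P^1$ meeting $E$ once) gives $m\le B_X\cdot\ell=3\lambda<1$. Restricting to $E$ and using $\alpha^*B_X|_E\equiv 0$ yields $\tilde B|_E\qlineq -m\,(E|_E)=m\,\O_E(1,1)\qlineq\tfrac m2(-K_E)$; as $\tfrac m2<\tfrac12=\mathrm{lct}(\P^1\times\P^1)$, the surface pair $(E,\tilde B|_E)$ is log terminal. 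By inversion of adjunction the pair $(\tilde X,\tilde B+E)$ is then purely log terminal near $E$, so $(\tilde X,\tilde B)$ is log terminal in a neighbourhood of $E$. Finally, from $K_{\tilde X}+\tilde B+(m-1)E=\alpha^*(K_X+B_X)$ with $m-1<0$, every divisor over $O$ has discrepancy for $(X,B_X)$ no smaller than its discrepancy for $(\tilde X,\tilde B)$, hence greater than $-1$; thus $(X,B_X)$ is log terminal at $O$ as well, completing the argument.

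The main obstacle is the local analysis at the singular vertex: one must pin down the discrepancy of $E$ and the normal bundle $N_{E/\tilde X}$ correctly, which is where the node structure of the cone enters, and then organise the coefficient bookkeeping so that the known value $\mathrm{lct}(\P^1\times\P^1)=1/2$ applies. The passage through the auxiliary purely log terminal pair $(\tilde X,\tilde B+E)$ is needed precisely because $E$ occurs with coefficient $m-1\ne 1$ in the crepant pullback, so adjunction cannot be applied to that pullback directly.
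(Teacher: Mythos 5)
Your analysis at the vertex is correct, but the smooth-locus step contains a genuine gap. You claim that through every point $P\in X\setminus\{O\}$ one can choose a line $\ell\subset X$ with $\ell\not\subset\Supp(B_X)$ and deduce $\mult_P(B_X)\le B_X\cdot\ell=3\lambda<1$. However, the lines of $X$ through $P$ are exactly the lines lying in the two planes $\Pi_1,\Pi_2\subset X$ through $P$ (the cones over the two ruling lines of the base quadric through the image of $P$), because $T_PX\cap X=\Pi_1\cup\Pi_2$ for degree reasons. If both $\Pi_1$ and $\Pi_2$ are components of $B_X$, no such $\ell$ exists, and your intermediate claim is in fact false: since $\Pi_1+\Pi_2\sim H$, the divisor $B_X=\frac{9}{10}(\Pi_1+\Pi_2)\qlineq -\frac{3}{10}K_X$ satisfies the hypotheses with $\lambda=\frac{3}{10}<\frac{1}{3}$, yet $\mult_P(B_X)=\frac{9}{5}>1$ at every point of the ruling line $\Pi_1\cap\Pi_2$. (The pair is still log terminal there, being simple normal crossings with coefficients less than $1$, but the multiplicity criterion cannot detect this.) So your argument leaves uncontrolled the finitely many ruling lines $\Pi\cap\Pi'$, where $\Pi$ and $\Pi'$ are component planes of $B_X$ from the two families, outside the neighbourhood of $O$ reached by the blow-up analysis. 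The gap is repairable, for instance via Remark~\ref{remark:convexity} applied with $\Delta=3\lambda(\Pi_1+\Pi_2)$, which lets one assume that at most one of the two planes occurs in $\Supp(B_X)$; but as written the step fails.

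You should also compare with the paper's argument, which is entirely different and much shorter: one restricts $B_X$ to a general hyperplane section $S\cong\mathbb{P}^{1}\times\mathbb{P}^{1}$, uses $\lct(\mathbb{P}^{1}\times\mathbb{P}^{1})=1/2$ together with Remark~\ref{remark:hyperplane-reduction} to conclude that $\LCS(X,B_X)$ is a finite set of points, and then observes that $\LCS(X,B_X+S)$ would be disconnected, contradicting Theorem~\ref{theorem:connectedness} because $-(K_X+B_X+S)$ is ample. That route avoids both the line geometry on the cone and any local computation at the vertex, whereas your direct local approach, once patched, has the merit of identifying exactly where the discrepancies come from.
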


\begin{proof}
Suppose that $\mathbb{LCS}(X, B_{X})\ne \varnothing$. Let
$S\subset X$ be a general hyperplane section. Then
$$
\LCS\Big(S,\ B_{X}\Big\vert_{S}\Big)=\varnothing,
$$
because $S\cong\mathbb{P}^{1}\times\mathbb{P}^{1}$ and
$\mathrm{lct}(\mathbb{P}^{1}\times\mathbb{P}^{1})=1/2$ (see
Example~\ref{example:del-Pezzos}).

One has $|\mathrm{LCS}(X, B_{X})|<+\infty$ by
Remark~\ref{remark:hyperplane-reduction}. Then the~locus
$$
\LCS\Big(X,\ B_{X}+S\Big)
$$
is disconnected, which contradicts
Theorem~\ref{theorem:connectedness}.
\end{proof}

The following result is a corollary
Theorem~\ref{theorem:Shokurov-vanishing}
(see~\cite[Theorem~4.1]{Na90}).

\begin{lemma}\label{lemma:rational-tree}
Suppose that $-(K_{X}+B_{X})$ is nef and big and
$\mathrm{dim}(\mathrm{LCS}(X, B_{X}))=1$. Then
\begin{itemize}
\item the locus $\mathrm{LCS}(X, B_X)$ is a connected union of smooth rational curves,%
\item every two irreducible components of the locus
$\mathrm{LCS}(X, B_X)$ meet in at most one
 point,%
\item every intersecting irreducible components of the locus
$\mathrm{LCS}(X, B_X)$ meet
 transversally,%
\item no three irreducible components of the locus $\mathrm{LCS}(X, B_X)$ meet in one point,%
\item the locus $\mathrm{LCS}(X, B_X)$ does not contain a cycle of smooth rational curves.%
\end{itemize}
\end{lemma}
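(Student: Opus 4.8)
The plan is to extract the whole statement from a single application of the Nadel--Shokurov vanishing theorem combined with the elementary geometry of curves of arithmetic genus zero. First I would set $H=-(K_X+B_X)$, which is nef and big by hypothesis, and apply Theorem~\ref{theorem:Shokurov-vanishing} with $D=0$, using $K_X+B_X+H\qlineq 0$; this gives $H^i(X,\mathcal{I}(X,B_X))=0$ for all $i\ge 1$. Writing $Z=\mathcal{L}(X,B_X)$ for the subscheme of log canonical singularities and inserting this vanishing into the long exact sequence of
$$
0\longrightarrow\mathcal{I}\big(X,B_X\big)\longrightarrow\O_X\longrightarrow\O_Z\longrightarrow 0,
$$
I recover $H^0(\O_Z)\cong H^0(\O_X)=\C$ (so $Z$ is connected, as in Theorem~\ref{theorem:connectedness}) and, since $H^1(\mathcal{I})=H^2(\mathcal{I})=0$, an isomorphism $H^1(\O_Z)\cong H^1(\O_X)$. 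As $X$ is rationally connected (indeed $-K_X=-(K_X+B_X)+B_X$ is big; see \cite{Zh06}), we have $H^1(\O_X)=0$, hence $H^1(\O_Z)=0$.

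Next I would pass from the possibly non-reduced scheme $Z$ to the reduced locus $C=\LCS(X,B_X)=Z_{\mathrm{red}}$. From the nilradical sequence $0\to\mathcal{N}\to\O_Z\to\O_C\to 0$, whose kernel $\mathcal{N}$ is supported on the at most one-dimensional set $Z$, the group $H^2(\mathcal{N})$ vanishes, so $H^1(\O_Z)$ surjects onto $H^1(\O_C)$ and therefore $H^1(\O_C)=0$. Thus $C$ is a connected, reduced, projective curve with $p_a(C)=0$, and, pleasantly, no control over the scheme structure of $Z$ itself is required.

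The genus conclusion is then finished by pure curve theory. Writing $C_1,\dots,C_k$ for the components of $C$ and $\nu\colon\widetilde{C}\to C$ for the normalization, the genus formula reads
$$
0=p_a\big(C\big)=\sum_{i=1}^k g\big(\widetilde{C}_i\big)+\sum_{P\in C}\delta_P-\big(k-1\big),
$$
while connectedness forces $\sum_{P}\delta_P\ge k-1$, each point contributing at least $(\text{number of branches})-1$. Equality throughout makes every $g(\widetilde{C}_i)=0$ and every $\delta_P$ minimal, whence each $C_i$ is a smooth rational curve, all intersections are transversal, any two components meet in at most one point, and the dual graph is a tree. This yields the first, second, third and fifth assertions.

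The main obstacle is the fourth assertion, that no three components pass through a single point: the genus computation alone does not exclude three smooth branches with independent tangent directions, since such a configuration also has $\delta_P=2$ and arithmetic genus zero. To rule it out I would argue locally, slicing by a general hyperplane through a putative multiple point and invoking Remark~\ref{remark:hyperplane-reduction} to reduce to a two-dimensional log canonical pair; the classification of log canonical surface singularities shows that the locus of log canonical singularities is there at worst a node, so at most two branches of $C$ can pass through any point. This local input, rather than the cohomology, is where the real work lies.
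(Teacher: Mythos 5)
Your cohomological skeleton is precisely what the paper intends by its one\,-sentence proof (``arguing as in the proof of Theorem~\ref{theorem:connectedness}\dots'', following Nadel): vanishing of $H^{1}$ and $H^{2}$ of the multiplier ideal, hence $h^{0}(\mathcal{O}_{Z})=1$ and $H^{1}(\mathcal{O}_{Z})$ a quotient of $H^{1}(\mathcal{O}_{X})$, then the passage to the reduced curve and the genus count. Granting $H^{1}(\mathcal{O}_{X})=0$, this correctly establishes the first, second, third and fifth assertions. However, your justification of $H^{1}(\mathcal{O}_{X})=0$ is not valid: bigness of $-K_{X}$ does not imply rational connectedness (Zhang's theorem needs a \emph{klt} pair $(X,\Delta)$ with $-(K_{X}+\Delta)$ nef and big), and the hypotheses of the lemma do not force the conclusion either. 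On the ruled surface $X=\mathbb{P}(\mathcal{O}_{E}\oplus L)$ over an elliptic curve $E$ with $\deg L=-e<0$, the negative section $C_{0}$ satisfies $-(K_{X}+C_{0})\qlineq C_{0}+ef$, which is nef with positive self-intersection, hence nef and big, while $\mathrm{LCS}(X,C_{0})=C_{0}\cong E$ is an elliptic curve. So even the first assertion fails in the stated generality; the correct hypothesis to add (satisfied in every application in this paper, where $X$ is a smooth Fano threefold or a surface dominated by $\mathbb{P}^{2}$) is $H^{1}(\mathcal{O}_{X})=0$, and you should say so rather than invoke \cite{Zh06}.

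The step you yourself flag as ``where the real work lies'' cannot be completed, because the fourth assertion is false once $\dim X\geqslant 3$. Your proposed reduction discards exactly the relevant information: a general surface germ through the putative triple point $P$ meets each of the three curves only at $P$, so the restricted surface pair sees a single point of its $\mathrm{LCS}$ and the classification of log canonical surface singularities says nothing about how many curve components of $\mathrm{LCS}(X,B_{X})$ passed through $P$. Nor can any refinement of the cohomology help, since three coordinate axes in $\mathbb{A}^{3}$ have $\delta_{P}=2$ and arithmetic genus $0$. And this configuration actually occurs: on $\mathbb{P}^{3}$ with coordinates $[x:y:z:w]$ take $\Pi_{1}=\{x=0\}$, $\Pi_{2}=\{y=0\}$, $\Pi_{3}=\{z=0\}$, $\Sigma_{1}=\{xw=yz\}$, $\Sigma_{2}=\{yw=xz\}$, $\Sigma_{3}=\{zw=xy\}$ and put
$$
B=\frac{3}{4}\Big(\Pi_{1}+\Pi_{2}+\Pi_{3}\Big)+\frac{1}{4}\Big(\Sigma_{1}+\Sigma_{2}+\Sigma_{3}\Big),
$$
so that $\deg B=15/4$ and $-(K_{\mathbb{P}^{3}}+B)$ is ample. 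Along each coordinate axis $L_{i}$ through $[0:0:0:1]$ exactly four of these six surfaces contain $L_{i}$, with four distinct tangent planes at a general point and coefficients summing to $2$, so each $L_{i}$ is a centre of log canonical singularities; at $[0:0:0:1]$ the multiplicity of $B$ equals $3$ and the blow up of that point restricts on its exceptional $\mathbb{P}^{2}$ to the log canonical triangle of lines, so the pair is log canonical there by Theorem~\ref{theorem:adjunction}. Hence $(\mathbb{P}^{3},B)$ is log canonical and $\mathrm{LCS}(\mathbb{P}^{3},B)$ is a union of three concurrent lines. What your (and the paper's) argument actually proves is that the locus is a connected, cycle-free, seminormal union of smooth rational curves, any two meeting transversally in at most one point; the fourth bullet should be dropped (the paper never uses it).
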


\begin{proof}
Arguing as in the proof of Theorem~\ref{theorem:connectedness}, we
see that the locus $\mathrm{LCS}(X, B_X)$ is a connected tree of
smooth rational curves with simple normal crossings.
\end{proof}

To consider another application of
Theorem~\ref{theorem:connectedness}, we need the following result
(see \cite{Pu98a}).

\begin{lemma}
\label{lemma:Cheltsov-Pukhlikov}%
Suppose that $X$ is a hypersurface in $\P^m$, and
$$
B_{X}\qlineq\mathcal{O}_{\mathbb{P}^{m}}(1)\Big\vert_{X}.
$$
Let $S\subsetneq X$ be an
irreducible subvariety such that $\mathrm{dim}(S)\geqslant k$.
Then
$$
\mathrm{mult}_{S}\big(B_{X}\big)\leqslant 1.
$$
\end{lemma}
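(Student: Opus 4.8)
The plan is to reduce the statement to the case in which $S$ is a prime divisor on $X$, and then to exploit that the Picard group of a hypersurface of dimension at least three is generated by the hyperplane class. Write $n=\dim(X)$ and let $d$ be the degree of $X$ in $\P^m$, so that $\O_{\P^m}(1)\vert_X$ restricts a very ample class $h$ with $h^{n}=d$; recall also that $\mult_S(B_X)$ is the multiplicity of $B_X$ at the generic point of $S$, equivalently the coefficient of $S$ in $B_X$ when $S$ lies in its support. The lower bound on $\dim(S)$ is what places us in the divisorial case: it singles out the maximal‑dimensional proper subvarieties, and this is the only regime in which the asserted inequality is sharp (for a point $O$ the quantity $\mult_O(B_X)$ can exceed $1$, as in Example~\ref{example:smooth-point-and-log-pull-back}). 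So I would first arrange that $S$ has codimension one in $X$ and write
$$
B_X=\mu S+B',
$$
where $\mu=\mult_S(B_X)\ge 0$ and $B'\ge 0$ is an effective $\Q$-divisor whose support does not contain $S$.

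The key step is to pass to $\Q$-linear (equivalently, numerical) classes. For $X$ of dimension at least three — which covers all the applications above, where $X$ is either $\P^3$ or a smooth quadric threefold in $\P^4$ — the Grothendieck--Lefschetz theorem shows that the restriction $\Pic(\P^m)\to\Pic(X)$ is an isomorphism, so $\Pic(X)=\Z\cdot h$. As $S$ is a nonzero effective divisor we may write $[S]=a\,h$ with $a\in\Z_{>0}$, while the hypothesis $B_X\qlineq\O_{\P^m}(1)\vert_X$ gives $[B_X]=h$. Writing $[B']=b\,h$ and intersecting with $h^{\,n-1}$ gives $b\,d=B'\cdot h^{\,n-1}\ge 0$, whence $b\ge 0$ since $B'$ is effective and $d>0$. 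The class identity $[B_X]=\mu[S]+[B']$ then reads
$$
1=\mu a+b
$$
in $\Pic(X)\otimes\Q\cong\Q$, and since $a\ge 1$ and $b\ge 0$ this yields $\mu\le\mu a\le 1$, as required.

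The one genuinely nontrivial ingredient is the rank‑one statement $\Pic(X)=\Z\,h$: this is what forces $\deg(S)=a\,d\ge d$ and thereby controls $\mu$. Without it the bound fails, and the crude estimate obtained by intersecting $B_X$ with a general linear curve $C=X\cap\Lambda$, $\Lambda\cong\P^2$, namely
$$
\mu\,\deg(S)\le C\cdot B_X=d,
$$
only delivers $\mu\le d$, since a prime divisor $S$ can then have degree smaller than $d$. Thus the hard part is to secure $\Pic(X)=\Z\,h$; in the cases where the lemma is invoked this is immediate from Lefschetz, the ambient varieties all being smooth of dimension three. A general‑hyperplane‑section argument (which preserves the relation $B_X\qlineq\O_{\P^m}(1)\vert_X$, keeps $S$ irreducible, and leaves $\mult_S(B_X)$ unchanged) is the natural tool for making the reduction to a prime divisor rigorous and for descending $\dim(X)$ down to the range where the Picard computation applies.
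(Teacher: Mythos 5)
Your argument breaks down at the reduction step, and the breakdown is fatal because the cases you set aside are exactly the ones the lemma is used for. Cutting by \emph{general} hyperplanes drops $\dim X$ and $\dim S$ by one each, so it preserves $\mathrm{codim}_X S$; no number of general linear sections will turn a subvariety of codimension $\geqslant 2$ into a prime divisor. (Cutting by hyperplanes \emph{containing} $S$ would change the codimension, but then every genericity claim you rely on --- irreducibility of the section, preservation of $\mult_S$, effectivity of the induced decomposition, smoothness --- is lost.) Consequently your identity $1=\mu a+b$ is only available when $S$ is already a divisor on a smooth hypersurface of dimension $\geqslant 3$, which is the easy and essentially contentless case: for $S$ of codimension $\geqslant 2$ the quantity $\mult_S$ of a prime component is not a function of its divisor class at all (a reduced irreducible surface in a threefold can have multiplicity $2$ along a curve), so there is nothing for the class computation to see. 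Moreover, your description of where the lemma is applied is inaccurate: in Lemmas~\ref{lemma:singular-cubic-threefold} and~\ref{lemma:double-covers} and throughout Section~\ref{section:cubic-surfaces} it is invoked for cubic and quartic \emph{surfaces} in $\P^{3}$, with $S$ a line or a conic; there $\dim X=2$, Grothendieck--Lefschetz says nothing, $\mathrm{Pic}(X)$ has rank $7$ or more, a prime divisor $S$ need not be a positive multiple of $h$, and the inequality $\deg S\geqslant\deg X$ that drives your bound simply fails ($\deg L=1<3$ for a line on a cubic surface).

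Note also that the paper does not actually prove the lemma --- its proof is the citation ``See~\cite{Pu98a}'' --- and the argument there is of a different nature: it is an intersection-theoretic multiplicity-versus-degree estimate obtained by intersecting $B_X$ with a family of low-degree curves that sweep out $X$ and pass through a general point of $S$ with controlled multiplicity (secant lines, rulings, residual conics, plane sections), so that a general member $\Gamma$ is not contained in $\mathrm{Supp}(B_X)$ and $B_X\cdot\Gamma=H\cdot\Gamma$ bounds $\mult_S(B_X)$ from above. That is also the technique behind the repeated phrase ``arguing as in the proof of Lemma~\ref{lemma:Cheltsov-Pukhlikov}'' elsewhere in the paper, e.g.\ to conclude that $L\cap\Sing(X)\ne\varnothing$ or that $R\vert_{\Pi}$ is reduced along $\tau(C)$ --- conclusions that have no counterpart in a divisor-class argument. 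So your proposal establishes a special case by a method that cannot be extended to the statement as it is actually used; to repair it you would need to supply the curve-sweeping argument (or reproduce the relevant proposition of \cite{Pu98a}) for positive-dimensional $S$ of arbitrary codimension at smooth points of $X$, and you should also flag that the hypothesis ``$\dim(S)\geqslant k$'' must at least mean $\dim S\geqslant 1$, since the bound fails for points.
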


\begin{proof}
See~\cite{Pu98a}.
\end{proof}

Let us consider another  simple application of
Theorem~\ref{theorem:connectedness} and
Lemma~\ref{lemma:Cheltsov-Pukhlikov}.

\begin{lemma}
\label{lemma:singular-cubic-threefold} Let $X$ be a cubic
hypersurface in $\mathbb{P}^{4}$ such that
$|\mathrm{Sing}(X)|<+\infty$. Suppose that
$$
B_{X}\qlineq -K_{X},
$$
and there is a positive rational number $\lambda<1/2$ such that
$\mathrm{LCS}(X,\lambda B_{X})\ne\varnothing$. Then
$$
\mathrm{LCS}\Big(X,\ \lambda B_{X}\Big)=L,
$$
where $L$ is a line in $X\subset\mathbb{P}^{4}$ such that $L\cap\mathrm{Sing}(X)\ne\varnothing$.%
\end{lemma}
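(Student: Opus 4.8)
The plan is to pin down $\mathrm{LCS}(X,\lambda B_{X})$ by slicing with a general hyperplane section and invoking the connectedness theorem several times. First I record that $K_{X}=(K_{\P^{4}}+X)\vert_{X}=\mathcal{O}_{\P^{4}}(-2)\vert_{X}$, so $-K_{X}\qlineq 2H$ with $H$ the hyperplane class, whence $B_{X}\qlineq 2H$ and $\tfrac12 B_{X}\qlineq H\qlineq\mathcal{O}_{\P^{4}}(1)\vert_{X}$. Since $-(K_{X}+\lambda B_{X})\qlineq(1-\lambda)(-K_{X})$ is ample, Theorem~\ref{theorem:connectedness} shows that $\mathrm{LCS}(X,\lambda B_{X})$ is connected.

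Next I would rule out the extreme dimensions. If an irreducible surface $Y$ were a component of $\mathrm{LCS}(X,\lambda B_{X})$, its coefficient in $\lambda B_{X}$ would be at least $1$, so $\mathrm{mult}_{Y}(B_{X})\ge 1/\lambda>2$; but $X$ is smooth at the generic point of $Y$, so Lemma~\ref{lemma:Cheltsov-Pukhlikov} applied to $\tfrac12 B_{X}$ gives $\mathrm{mult}_{Y}(B_{X})\le 2$, a contradiction. If $\mathrm{LCS}(X,\lambda B_{X})$ had an isolated point $P$, I would add a general $H_{0}\in|H|$ with $P\notin H_{0}$: then $\{P\}$ stays a connected component of $\mathrm{LCS}(X,\lambda B_{X}+H_{0})$ while $H_{0}$ also lies in that locus, so the locus is disconnected, whereas $-(K_{X}+\lambda B_{X}+H_{0})\qlineq(1-2\lambda)H$ is ample because $\lambda<1/2$, contradicting Theorem~\ref{theorem:connectedness}. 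Hence $\mathrm{LCS}(X,\lambda B_{X})$ is nonempty and pure one-dimensional, and by Lemma~\ref{lemma:rational-tree} it is a connected tree of smooth rational curves.

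The heart of the argument is to cut this tree down to a single line. Let $S\in|H|$ be general; by Bertini $S$ is a smooth cubic surface disjoint from $\mathrm{Sing}(X)$, meeting $\mathrm{LCS}(X,\lambda B_{X})$ in finitely many points, and $\Delta_{S}:=\lambda B_{X}\vert_{S}\qlineq 2\lambda(-K_{S})$. Each irreducible component $L_{i}$ is a center of $\mathrm{LCS}(X,\lambda B_{X})$, so by Remark~\ref{remark:hyperplane-reduction} the points of $L_{i}\cap S$ lie in $\mathbb{LCS}(S,\Delta_{S})$; conversely klt is preserved under restriction to a general member, so $\mathrm{LCS}(S,\Delta_{S})\subseteq\mathrm{LCS}(X,\lambda B_{X})\cap S$. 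Thus $\mathrm{LCS}(S,\Delta_{S})$ is a nonempty finite set. Since $-(K_{S}+\Delta_{S})\qlineq(1-2\lambda)(-K_{S})$ is ample, Theorem~\ref{theorem:connectedness} applied on $S$ forces this set to be connected, hence a single point. As $S$ is general, a reducible locus, or one component of degree $\ge 2$, would meet $S$ in at least two distinct points; therefore $\mathrm{LCS}(X,\lambda B_{X})$ is a single irreducible curve $L$ with $L\cdot H=1$, i.e. a line.

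Finally I would show $L\cap\mathrm{Sing}(X)\neq\varnothing$. With $q=L\cap S$, the point $q$ lies in $\mathrm{LCS}(S,\Delta_{S})$ on the smooth surface $S$, so $\mathrm{mult}_{q}(\Delta_{S})\ge 1$ (otherwise $\mathrm{lct}_{q}(S,\Delta_{S})\ge 1/\mathrm{mult}_{q}(\Delta_{S})>1$ would make the pair klt there); since $S$ is transverse to $L$ this reads $\mathrm{mult}_{L}(B_{X})\ge 1/\lambda>2$. If $X$ were smooth along all of $L$, then Lemma~\ref{lemma:Cheltsov-Pukhlikov} applied to $\tfrac12 B_{X}$ would give $\mathrm{mult}_{L}(B_{X})\le 2$, a contradiction; hence $L$ meets $\mathrm{Sing}(X)$. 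I expect the main obstacle to be the middle step: justifying the equality $\mathrm{LCS}(S,\Delta_{S})=\mathrm{LCS}(X,\lambda B_{X})\cap S$ (both inclusions, and its zero-dimensionality), and, in the last step, using Lemma~\ref{lemma:Cheltsov-Pukhlikov} only where $X$ is actually smooth — it is precisely the failure of that multiplicity bound at the singular points that makes a line through $\mathrm{Sing}(X)$ the only surviving possibility.
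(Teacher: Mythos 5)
Your argument is correct and follows essentially the same route as the paper's proof: slice with a general hyperplane section, use Theorem~\ref{theorem:connectedness} to force the locus to be one-dimensional and to reduce to a single point on the slice, invoke Lemma~\ref{lemma:Cheltsov-Pukhlikov} for the multiplicity bound that kills curves on the cubic surface, and then reuse that bound to force $L$ to meet $\mathrm{Sing}(X)$. The only difference is presentational — you rule out surfaces and isolated points explicitly and cite Lemma~\ref{lemma:rational-tree} (which is not actually needed), whereas the paper absorbs these steps into the finiteness of $\mathrm{LCS}(S,\lambda B_{X}\vert_{S})$.
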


\begin{proof}
Let $S$ be a general hyperplane section of $X$. Then
$$
S\cup\mathrm{LCS}\Big(X,\ \lambda B_{X}\Big)\subseteq \mathrm{LCS}\Big(X,\ \lambda B_{X}+S\Big),%
$$
which implies that $\dim(\mathrm{LCS}(X,\lambda B_{X}))\ge 1$ by
Theorem~\ref{theorem:connectedness}. Then
$$
\mathrm{LCS}\Big(S,\ \lambda B_{X}\Big\vert_{S}\Big)\ne\varnothing
$$
by Remark~\ref{remark:hyperplane-reduction}. But
$|\mathrm{LCS}(S,\ \lambda B_{X}\vert_{S})|<+\infty$ by
Lemma~\ref{lemma:Cheltsov-Pukhlikov}. There is a point $O\in S$
such that
$$
\mathrm{LCS}\Big(S,\ \lambda B_{X}\Big\vert_{S}\Big)=O%
$$
by Theorem~\ref{theorem:connectedness}. Therefore, there is a line $L\subset X$
such that $\mathrm{LCS}(X,\lambda B_{X})=L$ by
Remark~\ref{remark:hyperplane-reduction}.

Arguing as in the proof of Lemma~\ref{lemma:Cheltsov-Pukhlikov}, we see that
$L\cap\mathrm{Sing}(X)\ne\varnothing$.
\end{proof}

Similar to Lemma~\ref{lemma:singular-cubic-threefold}, one can
prove the following result.

\begin{lemma}
\label{lemma:double-covers} Suppose that there is a double cover $\tau\colon
X\to\mathbb{P}^{3}$ branched over an irreducible reduced quartic surface
$R\subset\mathbb{P}^{3}$ that has at most ordinary double points, the
equivalence
$$
B_{X}\qlineq -\lambda K_{X}
$$
holds and $\mathrm{LCS}(X,B_{X})\ne\varnothing$, where
$\lambda<1/2$. Then
$\mathrm{Sing}(X)\ne\varnothing$ and
$$
\mathrm{LCS}\Big(X,\ B_{X}\Big)=L,
$$
where $L$ is an irreducible curve on $X$ such that $-K_{X}\cdot
L=2$ and $L\cap\mathrm{Sing}(X)\ne\varnothing$.
\end{lemma}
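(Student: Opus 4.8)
The plan is to follow the proof of Lemma~\ref{lemma:singular-cubic-threefold} almost verbatim, replacing the general hyperplane section of the cubic by the preimage of a general plane, and replacing Lemma~\ref{lemma:Cheltsov-Pukhlikov} by an intersection computation on a del Pezzo surface of degree $2$. Write $H=\tau^{*}\O_{\P^{3}}(1)$, so that $-K_{X}\qlineq 2H$, $H^{3}=2$, and $B_{X}\qlineq 2\lambda H$. For a general plane $\Pi\subset\P^{3}$ the surface $S=\tau^{-1}(\Pi)\in|H|$ is a smooth del Pezzo surface of degree $2$, since $R\cap\Pi$ is a smooth quartic curve (the general $\Pi$ avoids the finitely many nodes of $R$ and is not tangent to $R$); here $-K_{S}\sim H|_{S}$ and $(-K_{S})^{2}=2$. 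Because $-(K_{X}+B_{X}+S)\qlineq(1-2\lambda)H$ is ample, Theorem~\ref{theorem:connectedness} applied to $S\cup\LCS(X,B_{X})\subseteq\LCS(X,B_{X}+S)$ shows that $\LCS(X,B_{X})$ cannot have a component disjoint from the general $S$; hence it has no zero-dimensional components and $\dim\LCS(X,B_{X})\ge 1$. By Remark~\ref{remark:hyperplane-reduction} we then obtain $\LCS(S,B_{X}|_{S})\ne\varnothing$.

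The heart of the matter is the following computation on a smooth del Pezzo surface $W$ of degree $2$ carrying an effective divisor $\Delta\qlineq 2\lambda(-K_{W})$ with $\lambda<1/2$: no $(-1)$-curve can occur in $\Delta$ with coefficient $\ge 1$. Indeed, if $Z$ is a $(-1)$-curve and $Z'=\theta(Z)$ is its image under the Geiser involution $\theta$ (induced by the degree-two map $W\to\P^{2}$), then $Z+Z'\sim -K_{W}$, $Z^{2}=Z'^{2}=-1$ and $Z\cdot Z'=2$. Writing $\Delta=aZ+a'Z'+\Omega$ with $\Omega\ge 0$ not containing $Z$ or $Z'$, the relation $\Delta\cdot Z'=2\lambda$ gives $a'=2a+\Omega\cdot Z'-2\lambda\ge 2a-2\lambda$, while $\Delta\cdot(-K_{W})=4\lambda$ gives $a+a'\le 4\lambda$; if $a\ge 1$ these combine to $4\lambda\ge a+a'\ge 3-2\lambda$, that is $\lambda\ge 1/2$, a contradiction. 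Applying this with $W=S$ and $\Delta=B_{X}|_{S}$, and noting that any irreducible curve of anticanonical degree $\ge 2$ has coefficient $\le 2\lambda<1$ directly while those of anticanonical degree $1$ are exactly the $(-1)$-curves, we conclude that $\LCS(S,B_{X}|_{S})$ contains no curve, hence is finite. By Remark~\ref{remark:hyperplane-reduction} this also rules out a surface in $\LCS(X,B_{X})$, whose restriction would be such a curve. Since $-(K_{S}+B_{X}|_{S})\qlineq(1-2\lambda)(-K_{S})$ is ample, Theorem~\ref{theorem:connectedness} makes $\LCS(S,B_{X}|_{S})$ connected, hence a single point; by Remark~\ref{remark:hyperplane-reduction} this forces $\LCS(X,B_{X})=L$ for a single irreducible curve $L$ meeting the general $S$ in one point, so $H\cdot L=1$ and $-K_{X}\cdot L=2$.

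The step I expect to be the main obstacle is the final one, showing $L\cap\Sing(X)\ne\varnothing$ (which in particular yields $\Sing(X)\ne\varnothing$). Since $H\cdot L=1$, the map $\tau$ sends $L$ isomorphically onto a line $\ell\subset\P^{3}$ and $\tau^{-1}(\ell)=L+L'$ splits, with $L'=\theta(L)$. Suppose $L$ avoided $\Sing(X)$; then $\ell$ meets $R$ only at smooth points, and for a general plane $\Pi'\supset\ell$ avoiding the nodes of $R$ the surface $T=\tau^{-1}(\Pi')$ is again a smooth del Pezzo surface of degree $2$ containing $L$ and $L'$ as Geiser-conjugate $(-1)$-curves. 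Because $L$ is a centre of log canonical singularities lying in the smooth locus of $X$, we have $\mult_{L}(B_{X})\ge 1$, so the coefficient of $L$ in $B_{X}|_{T}$ is $\ge 1$; the computation of the previous paragraph, applied to $W=T$ and $\Delta=B_{X}|_{T}$, then gives $\lambda\ge 1/2$, a contradiction. Hence $\ell$ passes through a node of $R$ and $L$ through the corresponding singular point of $X$. The delicate point here is that one must argue with the \emph{coefficients} of the restriction $B_{X}|_{T}$ rather than with multiplicities on $X$: the contradiction comes from the bound $a+a'\le 4\lambda$ on those coefficients, not from any claim that $L'$ is itself a log canonical centre of $(X,B_{X})$. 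One must also verify that a smooth $T$ through $\ell$ exists, which holds since the planes through $\ell$ form a pencil of which only finitely many pass through a node of $R$.
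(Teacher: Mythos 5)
Your reduction to a general surface $S\in|\tau^{*}\mathcal{O}_{\mathbb{P}^{3}}(1)|$ and the Geiser-involution computation on the resulting smooth del Pezzo surface of degree $2$ are correct, and they reach the intermediate conclusion $\mathrm{LCS}(X,B_{X})=L$ with $-K_{X}\cdot L=2$ by a route genuinely different from the paper's: the paper contracts a $(-1)$-curve on $S$ to land on a cubic surface and then invokes Lemma~\ref{lemma:Cheltsov-Pukhlikov}, whereas you bound the coefficients $a,a'$ of a $(-1)$-curve and its Geiser conjugate directly via $Z+Z'\sim -K_{S}$, $Z\cdot Z'=2$. That part is fine, and your treatment of the sub-case where $\tau(L)$ is \emph{not} contained in the branch surface $R$ is a clean variant of the paper's argument with the surface $Y$ through both components of $\tau^{-1}(\tau(L))$.

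The gap is in the last step: you tacitly assume $\ell=\tau(L)\not\subset R$. If $\ell\subset R$ (which is compatible with everything established so far, since an irreducible nodal quartic may contain lines avoiding its nodes), then $\tau^{-1}(\ell)$ does not split -- there is no second curve $L'$ and no Geiser conjugate -- and, worse, for \emph{every} plane $\Pi'\supset\ell$ the quartic $R\cap\Pi'$ contains the line $\ell$ and is therefore singular at the points $\ell\cap(R\cap\Pi'-\ell)$, so $T=\tau^{-1}(\Pi')$ is never a smooth del Pezzo surface of degree $2$ and your coefficient computation is unavailable. This case requires a separate argument; the paper handles it by taking a general point $O\in L$, the plane $\Pi$ tangent to $R$ at $\tau(O)$, and a general line $\Gamma\subset\Pi$ through $\tau(O)$, whose preimage component $\bar{\Gamma}$ satisfies $\mathrm{mult}_{O}(\bar{\Gamma})\cdot\mathrm{mult}_{L}(B_{X})\leqslant B_{X}\cdot\bar{\Gamma}=2\lambda H\cdot\bar{\Gamma}$ while $\mathrm{mult}_{O}(\bar{\Gamma})=H\cdot\bar{\Gamma}$ when $H\cdot\bar{\Gamma}=2$, forcing $\lambda\geqslant 1/2$. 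Without an argument of this kind your proof only shows that either $L$ meets $\mathrm{Sing}(X)$ or $\tau(L)\subset R$, which is weaker than the statement of the lemma.
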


\begin{proof}
We have $-K_{X}\sim 2H$, where $H$ is a Cartier divisor on $X$
such that
$$
H\sim\tau^{*}\Big(\mathcal{O}_{\mathbb{P}^{3}}\big(1\big)\Big).
$$
The variety $X$ is a Fano threefold, and $H^{3}=2$. Then
$$
\mathrm{LCS}\Big(X,\ B_{X}+H\Big)
$$
must be connected by Theorem~\ref{theorem:connectedness}.
Thus, there is a curve
$$
C\in\mathbb{LCS}\Big(X,\ B_{X}\Big),
$$
which implies that $\mathrm{mult}_{C}(B_{X})\geqslant
1/\lambda>2$.

Let $S$ be a general surface in $|H|$. Put $B_{S}=B_{X}\vert_{S}$. Then
$$
-K_{S}\sim H\big\vert_{S}\qlineq \frac{1}{\lambda}B_{S},
$$
but the log pair $(S, B_{S})$ is not log canonical in every point
of the intersection $S\cap\mathrm{LCS}(X,B_{X})$.

The surface $H$ is a smooth surface in $\mathbb{P}(1,1,1,2)$ of degree $4$.

Let $P$ be any point in $S\cap\mathrm{LCS}(X,B_{X})$. Then there
is a birational morphism
$$
\rho\colon S\longrightarrow\bar{S}
$$
such that $\bar{S}$ is a cubic surface in $\mathbb{P}^{3}$ and $\rho$ is an
isomorphism in a neighborhood of $P$. Then
$$
\Big(\bar{S},\ \rho\big(B_{S}\big)\Big)
$$
is not log terminal at the point $\rho(P)$. Thus, we have
$\mathrm{LCS}(\bar{S},\rho(B_{S}))\ne\varnothing$. But
$$
\frac{1}{\lambda}\rho(B_{S})\qlineq-K_{\bar{S}}\sim\mathcal{O}_{\mathbb{P}^{3}}\big(1\big)\Big\vert_{\bar{S}},
$$
which implies that $\mathrm{LCS}(\bar{S},\rho(B_{S}))$ consists of one point by
Lemma~\ref{lemma:Cheltsov-Pukhlikov} and Theorem~\ref{theorem:connectedness}.
Then
$$
P=S\cap C=S\cap\mathrm{LCS}\Big(X,B_{X}\Big)
$$
if the point $P$ is sufficiently general. Therefore, we see that
$$
\mathrm{LCS}\Big(X,B_{X}\Big)=C,
$$
the curve $C$ is irreducible and $-K_{X}\cdot C=2$. Then
$\tau(C)\subset\mathbb{P}^{3}$ is a line.

Suppose that $C\cap\mathrm{Sing}(X)=\varnothing$. Let us derive a
contradiction.

Suppose that $\tau(C)\subset R$. Take a general point $O\in C$.
Let
$$
\tau\big(O\big)\in \Pi\subset\mathbb{P}^{3}
$$
be a plane that is tangent to $R$ at the point $\tau(O)$. Arguing
as in the proof of Lemma~\ref{lemma:Cheltsov-Pukhlikov}, we see
that $R\vert_{\Pi}$ is reduced along $\tau(C)$, because
$\tau(C)\cap\mathrm{Sing}(R)=\varnothing$. Fix a general line
$$
\Gamma\subset\Pi\subset\mathbb{P}^{3}
$$
such that $\tau(O)\in\Gamma$. Let $\bar{\Gamma}\subset X$ be an
irreducible curve such that $\tau(\bar{\Gamma})=\Gamma$. Then
$$
\bar{\Gamma}\not\subseteq\mathrm{Supp}\Big(B_{X}\Big),
$$
because $\Gamma$ spans a dense subset in $\mathbb{P}^{3}$ when we
vary the point $O\in C$ and the line $\Gamma\subset\Pi$.
Note that $H\cdot\bar{\Gamma}$ equals either $1$ or $2$, and
$\mult_O(\bar{\Gamma})=2$ in the case when $H\cdot\bar{Gamma}=2$. Hence
$$
H\cdot\bar{\Gamma}>2\lambda H\cdot\bar{\Gamma}=\bar{\Gamma}\cdot B_{X}
\geqslant\mathrm{mult}_{O}\big(\bar{\Gamma}\big)\mathrm{mult}_{C}
\big(B_{V}\big)\geqslant H\cdot\bar{\Gamma},%
$$
which is a contradiction. Thus, we see that $\tau(C)\not\subset
R$.

There is an irreducible reduced curve $\bar{C}\subset X$ such that
$$
\tau\big(\bar{C}\big)=\tau\big(C\big)\subset\mathbb{P}^{3}
$$
and $\bar{C}\ne C$. Let $Y$ be a general surface in $|H|$ that
passes through the curves $\bar{C}$ and $C$. Then $Y$ is smooth, because
$C\cap\mathrm{Sing}(X)=\varnothing$, and
$$
\bar{C}\cdot\bar{C}=C\cdot C=-2
$$
on the surface $Y$.

By construction, we have $Y\not\subset\mathrm{Supp}(B_{X})$. Put
$B_{Y}=B_{X}\vert_{Y}$. Then
$$
B_{Y}=\mathrm{mult}_{\bar{C}}\big(B_{X}\big)\bar{C}+\mathrm{mult}_{C}\big(B_{X}\big)C+\Delta%
$$
where $\Delta$ is an effective $\mathbb{Q}$-divisor on the surface
$Y$ such that
$\bar{C}\not\subset\mathrm{Supp}(\Delta)\not\supset C$. But
$$
B_{Y}\qlineq 2\lambda\Big(\bar{C}+C\Big),
$$
which implies, in particular, that
$$
\Big(2\lambda-\mathrm{mult}_{C}\big(B_{X}\big)\Big)C\cdot C=\Big(\mathrm{mult}_{\bar{C}}\big(B_{X}\big)-2\lambda\Big)\bar{C}\cdot C+\Delta\cdot C\geqslant \Big(\mathrm{mult}_{\bar{C}}\big(B_{X}\big)-2\lambda\Big)\bar{C}\cdot C\geqslant 0,%
$$
because $\Delta\cdot C\geqslant 0$ and $\bar{C}\cdot C\geqslant 0$. Then
$\mathrm{mult}_{\bar{C}}(B_{X})\geqslant 2\lambda$, because $C\cdot C<0$. Thus,
we have
$$
-\Delta\qlineq\Big(\mathrm{mult}_{\bar{C}}\big(B_{X}\big)-2\lambda\Big)\bar{C}+\Big(\mathrm{mult}_{C}\big(B_{X}\big)-2\lambda\Big)C%
$$
which is impossible, because $\mathrm{mult}_{C}(B_{X})>2\lambda$ and $Y$ is
projective.
\end{proof}

One can generalize Theorem~\ref{theorem:connectedness} in the
following way (see \cite[Lemma~5.7]{Sho93}).

\begin{theorem}
\label{theorem:log-adjunction-connectedness-theorem} Let
$\psi\colon X\to Z$ be a morphism. Then the set
$$
\mathrm{LCS}\Big(\bar{X},\ B^{\bar{X}}\Big)
$$
is connected in a neighborhood of every fiber of the morphism
$\psi\circ\pi\colon X\to Z$ in the case when
\begin{itemize}
\item the morphism $\psi$ is surjective and has connected fibers,%
\item the divisor $-(K_{X}+B_{X})$ is nef and big with respect to $\psi$.%
\end{itemize}
\end{theorem}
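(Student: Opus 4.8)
\emph{Proof proposal.} The plan is to imitate the proof of Theorem~\ref{theorem:connectedness}, replacing the absolute Nadel--Shokurov vanishing theorem (Theorem~\ref{theorem:Shokurov-vanishing}) by its relative counterpart. Write $h=\psi\circ\pi\colon\bar{X}\to Z$ and recall from the relations in Section~\ref{section:preliminaries} that $K_{\bar{X}}+B^{\bar{X}}\qlineq\pi^{*}(K_{X}+B_{X})$, so that $-(K_{\bar{X}}+B^{\bar{X}})\qlineq\pi^{*}(-(K_{X}+B_{X}))$. Since $-(K_{X}+B_{X})$ is nef and big relative to $\psi$ and $\pi$ is birational, the divisor $\pi^{*}(-(K_{X}+B_{X}))$ is nef and big relative to $h$. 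First I would decompose the integral divisor
$$
\lceil -B^{\bar{X}}\rceil=A-G,
$$
where $A\ge 0$ is supported on the $\pi$-exceptional divisors $E_{i}$ with positive discrepancy, $G\ge 0$ satisfies $\mathrm{Supp}(G)=\mathrm{LCS}(\bar{X},B^{\bar{X}})$, and $A$ and $G$ share no common component. Using $\lceil -B^{\bar{X}}\rceil=-B^{\bar{X}}+\{B^{\bar{X}}\}$ together with the relation above, one computes
$$
A-G\qlineq K_{\bar{X}}+\big\{B^{\bar{X}}\big\}+\pi^{*}\Big(-\big(K_{X}+B_{X}\big)\Big),
$$
where $\{B^{\bar{X}}\}$ has simple normal crossing support with all coefficients in $[0,1)$.

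Next I would apply the relative Kawamata--Viehweg (Nadel--Shokurov) vanishing theorem to the right-hand side to obtain $R^{1}h_{*}\mathcal{O}_{\bar{X}}(A-G)=0$. Because $A$ is effective and $\pi$-exceptional one has $\pi_{*}\mathcal{O}_{\bar{X}}(A)=\mathcal{O}_{X}$, and since $\psi$ is surjective with connected fibres $\psi_{*}\mathcal{O}_{X}=\mathcal{O}_{Z}$; hence $h_{*}\mathcal{O}_{\bar{X}}(A)=\mathcal{O}_{Z}$. Twisting the structure sequence of the effective divisor $G$ by $\mathcal{O}_{\bar{X}}(A)$ gives
$$
0\longrightarrow\mathcal{O}_{\bar{X}}\big(A-G\big)\longrightarrow\mathcal{O}_{\bar{X}}\big(A\big)\longrightarrow\mathcal{O}_{\bar{X}}\big(A\big)\big\vert_{G}\longrightarrow 0,
$$
and pushing this sequence forward by $h$ and using the vanishing produces a surjection $\mathcal{O}_{Z}\twoheadrightarrow h_{*}(\mathcal{O}_{\bar{X}}(A)\vert_{G})$.

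Finally I would extract connectedness near a fibre exactly as in Theorem~\ref{theorem:connectedness}. Fix $z\in Z$ and suppose $G\cap h^{-1}(z)$ were disconnected; after shrinking $Z$ around $z$ and using properness of $h$ this splits $G$ into two nonempty open-and-closed pieces $G_{1},G_{2}$, so $h_{*}(\mathcal{O}_{\bar{X}}(A)\vert_{G})$ splits accordingly. As $\mathcal{O}_{\bar{X}}(A)\vert_{G}=\mathcal{O}_{G}(A\vert_{G})$ is the line bundle associated with the effective divisor $A\vert_{G}$, its canonical section is nonzero on each $G_{j}$, so each summand is nonzero at $z$ and $h_{*}(\mathcal{O}_{\bar{X}}(A)\vert_{G})\otimes k(z)$ has dimension at least $2$. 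This contradicts the surjection $\mathcal{O}_{Z}\to h_{*}(\mathcal{O}_{\bar{X}}(A)\vert_{G})$, whose fibre at $z$ is a cyclic $k(z)$-module. Hence $\mathrm{LCS}(\bar{X},B^{\bar{X}})=\mathrm{Supp}(G)$ is connected in a neighbourhood of every fibre of $h$. The main obstacle is the relative vanishing step: one must check that the fractional boundary $\{B^{\bar{X}}\}$ and the relatively nef and big divisor $\pi^{*}(-(K_{X}+B_{X}))$ are in the correct shape for the relative Kawamata--Viehweg theorem, and that the effective exceptional twist $A$ disturbs neither the identification $h_{*}\mathcal{O}_{\bar{X}}(A)=\mathcal{O}_{Z}$ nor the final dimension count.
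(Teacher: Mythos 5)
Your argument is correct and is essentially the argument behind the result the paper invokes: the paper gives no proof here, citing \cite[Lemma~5.7]{Sho93}, and both Shokurov's lemma and the paper's own proof of Theorem~\ref{theorem:connectedness} rest on exactly the decomposition $\lceil -B^{\bar{X}}\rceil=A-G$, relative Kawamata--Viehweg (Nadel--Shokurov) vanishing, and the pushforward of the structure sequence of $G$ twisted by $\mathcal{O}_{\bar{X}}(A)$. The only point to tighten is the last step: argue over the local ring $\mathcal{O}_{Z,z}$ (a cyclic module over a local ring cannot split as a direct sum of two nonzero coherent pieces) rather than over $k(z)$, since the fibre of $h_{*}$ at $z$ need not compute sections on the fibre without base-change hypotheses.
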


Let us consider one important application of
Theorem~\ref{theorem:log-adjunction-connectedness-theorem} (see
\cite[Theorem~5.50]{KoMo98}).

\begin{theorem}
\label{theorem:adjunction} Suppose that $B_{1}$ is a Cartier
divisor, $a_{1}=1$, and $B_{1}$ has at most log terminal
singularities. Then the following assertions are equivalent:
\begin{itemize}
\item the log pair $(X, B_{X})$ is log canonical in  a neighborhood of the divisor $B_{1}$;%
\item the singularities of the log pair
$(B_{1},\sum_{i=2}^{r}a_{i}B_{i}\vert_{B_{1}})$ are log
 canonical.%
\end{itemize}
\end{theorem}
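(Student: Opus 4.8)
The plan is to treat the two implications separately, with the connectedness theorem doing the real work in the nontrivial one. Fix, as in the Preliminaries, a log resolution $\pi\colon\bar{X}\to X$ of $(X,B_{X})$ for which the strict transform $\bar{B}_{1}$ of $B_{1}$ is smooth, and write
$$
K_{\bar{X}}+\bar{B}_{1}+\sum_{j}b_{j}F_{j}\qlineq\pi^{*}\big(K_{X}+B_{X}\big)=K_{\bar{X}}+B^{\bar{X}},
$$
where the $F_{j}$ are the remaining prime divisors on $\bar{X}$ (the strict transforms of $B_{2},\ldots,B_{r}$, on which $b_{j}=a_{j}$, together with the $\pi$-exceptional divisors). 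By Definition~\ref{definition:log canonical-singularities}, after discarding the trivial case in which some $a_{i}>1$ with $B_{i}\cap B_{1}\neq\varnothing$ (where both pairs are visibly not log canonical), the pair $(X,B_{X})$ is log canonical in a neighborhood of $B_{1}$ if and only if $b_{j}\le 1$ for every $F_{j}$ whose centre $\pi(F_{j})$ meets $B_{1}$. Since $B_{1}$ is Cartier and log terminal it is normal, the restriction $g=\pi|_{\bar{B}_{1}}\colon\bar{B}_{1}\to B_{1}$ is a log resolution of $(B_{1},\Delta|_{B_{1}})$ with $\Delta=\sum_{i=2}^{r}a_{i}B_{i}$, and the different of the Cartier divisor $B_{1}$ vanishes, so restricting the displayed identity to $\bar{B}_{1}$ and using adjunction on the smooth variety $\bar{X}$ gives
$$
K_{\bar{B}_{1}}+\sum_{j}b_{j}\big(F_{j}|_{\bar{B}_{1}}\big)\qlineq g^{*}\Big(K_{B_{1}}+\Delta|_{B_{1}}\Big).
$$
Hence $(B_{1},\Delta|_{B_{1}})$ is log canonical if and only if $b_{j}\le 1$ for every $F_{j}$ meeting $\bar{B}_{1}$.

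The implication ``$(X,B_{X})$ log canonical near $B_{1}$ $\Rightarrow$ $(B_{1},\Delta|_{B_{1}})$ log canonical'' is then immediate: if $b_{j}\le 1$ for every $F_{j}$ meeting $B_{1}$, then in particular $b_{j}\le 1$ for every $F_{j}$ meeting $\bar{B}_{1}$. This is the elementary direction, and it uses only the comparison of discrepancies above.

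For the converse I would argue by contradiction using Theorem~\ref{theorem:log-adjunction-connectedness-theorem} applied with $Z=X$ and $\psi=\mathrm{id}_{X}$: the fibres of $\psi$ are points, so $-(K_{X}+B_{X})$ is automatically nef and big relative to $\psi$, and the theorem yields that $\LCS(\bar{X},B^{\bar{X}})$ is connected in a neighborhood of $\pi^{-1}(x)$ for every point $x\in X$. Now suppose $(B_{1},\Delta|_{B_{1}})$ is log canonical while $(X,B_{X})$ is not log canonical at some point $x\in B_{1}$. Then there is an $F_{i}$ with $b_{i}>1$ and $x\in\pi(F_{i})$; since $(B_{1},\Delta|_{B_{1}})$ is log canonical, the second displayed identity forces $F_{i}\cap\bar{B}_{1}=\varnothing$. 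Both $\bar{B}_{1}$ (whose coefficient in $B^{\bar{X}}$ is exactly $1$) and $F_{i}$ lie in $\LCS(\bar{X},B^{\bar{X}})$ and meet the fibre $\pi^{-1}(x)$, yet they are disjoint.

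The main obstacle is that connectedness of $\LCS(\bar{X},B^{\bar{X}})$ does not by itself produce a contradiction, since components of $B^{\bar{X}}$ of coefficient exactly $1$ could bridge $\bar{B}_{1}$ to $F_{i}$. To close the argument I would shrink $X$ to a small neighborhood of $x$ and perturb the boundary, subtracting a suitable small effective $\pi$-exceptional $\Q$-divisor so that every superfluous coefficient-one component (those other than $\bar{B}_{1}$ and the genuinely bad divisors) drops below coefficient $1$, while $\bar{B}_{1}$ keeps coefficient $1$, the bad $F_{i}$ keeps coefficient $>1$, and the relative nef-and-bigness required by Theorem~\ref{theorem:log-adjunction-connectedness-theorem} is preserved. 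For the perturbed pair the locus of log canonical singularities near $\pi^{-1}(x)$ is exactly $\bar{B}_{1}$ together with the bad locus, and these are disjoint, contradicting connectedness; hence no such $F_{i}$ exists and $(X,B_{X})$ is log canonical near $B_{1}$. The two delicate points, which I expect to demand the most care, are the verification that the different of the Cartier divisor $B_{1}$ is trivial (so that $\Delta|_{B_{1}}$ is indeed the right boundary on $B_{1}$) and the explicit construction of the perturbation separating $\bar{B}_{1}$ from the extra non-log-canonical locus without violating the hypotheses of the connectedness theorem.
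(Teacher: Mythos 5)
The paper itself gives no proof of this theorem: it is quoted from \cite[Theorem~5.50]{KoMo98} as an application of Theorem~\ref{theorem:log-adjunction-connectedness-theorem}, which is also the tool you reach for, so your overall strategy is the right one. Your easy direction (restrict the crepant pullback to $\bar{B}_{1}$, using that the different of a normal Cartier divisor vanishes) is fine. The gap is in the last step of the converse. You propose to subtract a small effective $\pi$-exceptional $\Q$-divisor from $B^{\bar{X}}$ so that the ``superfluous'' coefficient-one components drop below $1$ while ``the relative nef-and-bigness required by Theorem~\ref{theorem:log-adjunction-connectedness-theorem} is preserved.'' This cannot be arranged: if $E\ne 0$ is effective and $\pi$-exceptional, then $-(K_{\bar{X}}+B^{\bar{X}}-\eps E)\equiv_{\pi}\eps E$, and a nonzero effective exceptional divisor is never nef over the base (negativity lemma), so the hypothesis of the connectedness theorem is destroyed rather than preserved. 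In addition, a coefficient-one bridge between $\bar{B}_{1}$ and the bad divisor $F_{i}$ need not be $\pi$-exceptional at all --- it can be the strict transform of some $B_{j}$ with $a_{j}=1$ --- and subtracting an exceptional divisor does not lower its coefficient. So the perturbation, as described, neither satisfies the hypotheses of the theorem you want to apply nor achieves its stated purpose.

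The standard repair --- and the place where the hypothesis that $B_{1}$ is log terminal actually enters --- is to perturb downstairs rather than upstairs: replace $\Delta=\sum_{i\geqslant 2}a_{i}B_{i}$ by $(1-\eps)\Delta$. Since $B_{1}$ is klt and $(B_{1},\Delta\vert_{B_{1}})$ is log canonical, the pair $(B_{1},(1-\eps)\Delta\vert_{B_{1}})$ is klt, so on the resolution no divisor of coefficient $\geqslant 1$ for the pair $(X,B_{1}+(1-\eps)\Delta)$ meets $\bar{B}_{1}$. Connectedness of the coefficient-$\geqslant 1$ locus near the fibres of $\pi$ then forces every such divisor to have centre disjoint from $B_{1}$: here the bridging problem evaporates, because the first link of any chain emanating from $\bar{B}_{1}$ would itself be a coefficient-$\geqslant 1$ divisor meeting $\bar{B}_{1}$, already a contradiction. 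Hence $(X,B_{1}+(1-\eps)\Delta)$ is purely log terminal near $B_{1}$ for every $\eps>0$, and letting $\eps\to 0$ (discrepancies are affine in $\eps$) gives log canonicity of $(X,B_{X})$ near $B_{1}$. Without some such reduction to the klt/plt case, your argument does not close.
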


The simplest application of Theorem~\ref{theorem:adjunction} is a non-obvious
result (see \cite[Corollary~5.57]{KoMo98}).

\begin{lemma}
\label{lemma:adjunction} Suppose that $\mathrm{dim}(X)=2$ and
$a_{1}\leqslant 1$. Then
$$
\Big(\sum_{i=2}^{r}a_{i}B_{i}\Big)\cdot B_{1}>1
$$
whenever $(X,B_{X})$ is not log canonical at some point $O\in B_{1}$
such that $O\not\in\mathrm{Sing}(X)\cup\mathrm{Sing}(B_{1})$.
\end{lemma}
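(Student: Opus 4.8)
The plan is to reduce the statement to adjunction on the smooth curve $B_{1}$. Write $\Delta=\sum_{i=2}^{r}a_{i}B_{i}$, so that $B_{X}=a_{1}B_{1}+\Delta$ and the quantity to be bounded is the intersection number $\Delta\cdot B_{1}$. First I would replace the coefficient $a_{1}\le 1$ by $1$. Since $B_{1}+\Delta-B_{X}=(1-a_{1})B_{1}$ is effective, we have $B_{1}+\Delta\ge B_{X}$, and enlarging an effective boundary can only decrease discrepancies; hence $(X,\,B_{1}+\Delta)$ is again not log canonical at $O$. This puts us in the situation $a_{1}=1$ demanded by Theorem~\ref{theorem:adjunction}.

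Next I would localise at $O$. Because $O\notin\Sing(X)\cup\Sing(B_{1})$, there is a neighbourhood $U\ni O$ on which $X$ is smooth \t so that $B_{1}$ is Cartier \t and on which $B_{1}$ is a smooth, hence log terminal, curve. Theorem~\ref{theorem:adjunction} applied on $U$ then asserts that $\big(U,\,(B_{1}+\Delta)|_{U}\big)$ is log canonical in a neighbourhood of $B_{1}\cap U$ if and only if $\big(B_{1}\cap U,\,\Delta|_{B_{1}}\big)$ is log canonical. Here $\Delta|_{B_{1}}$ is well defined because $B_{1}\neq B_{i}$ for $i\ge 2$, so that $B_{1}\not\subseteq\Supp(\Delta)$. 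Since the left-hand pair is not log canonical at $O\in B_{1}\cap U$, the inversion direction of this equivalence shows that the restricted pair on the curve $B_{1}$ is not log canonical either.

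The final step is the computation on a smooth curve. On $B_{1}$ the restriction is the effective divisor $\Delta|_{B_{1}}=\sum_{Q}(\Delta\cdot B_{1})_{Q}\,Q$, whose coefficient at a point $Q$ is the nonnegative local intersection multiplicity $(\Delta\cdot B_{1})_{Q}$. A pair consisting of a smooth curve together with such a boundary is log canonical at a point exactly when the coefficient there is at most $1$; hence the failure of the log canonical condition at $O$ forces $(\Delta\cdot B_{1})_{O}>1$. Summing the nonnegative local contributions over all of $B_{1}$ then yields
$$
\Delta\cdot B_{1}=\sum_{Q}\big(\Delta\cdot B_{1}\big)_{Q}\ge\big(\Delta\cdot B_{1}\big)_{O}>1,
$$
which is the desired inequality.

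The main point to handle carefully is the localisation: Theorem~\ref{theorem:adjunction} is phrased for a neighbourhood of the entire divisor $B_{1}$, so I must shrink $X$ to $U$, both to guarantee that $B_{1}$ is Cartier and log terminal there and to pin the center of non-log-canonicity to $O$ itself. Concretely, since $(X,\,B_{1}+\Delta)$ is not log canonical at $O$ for \emph{every} such $U$, the non-log-canonical locus of $\big(B_{1}\cap U,\,\Delta|_{B_{1}}\big)$ is nonempty for arbitrarily small $U$; as this locus is the discrete set of points with coefficient exceeding $1$, it must contain $O$, giving exactly $(\Delta\cdot B_{1})_{O}>1$. One could instead blow up $O$ as in Example~\ref{example:smooth-point-and-log-pull-back} and use $\mult_{O}(\Delta)\le(\Delta\cdot B_{1})_{O}$, but then one would have to argue that the failure of the log canonical condition is already visible on the first blow up \t which is precisely the work that inversion of adjunction performs for us.
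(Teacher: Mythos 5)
Your proof is correct and follows essentially the same route as the paper's: increase the coefficient of $B_{1}$ to $1$, invoke Theorem~\ref{theorem:adjunction} (inversion of adjunction) to conclude that $\bigl(B_{1},\sum_{i\ge 2}a_{i}B_{i}\vert_{B_{1}}\bigr)$ fails to be log canonical at $O$, and read off the local intersection multiplicity $>1$, which bounds the global intersection number from below. The extra care you take with localisation and with pinning the non-log-canonical point to $O$ is implicit in the paper's one-line argument but does not change the substance.
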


\begin{proof}
Suppose that  $(X,B_{X})$ is not log canonical in~a point $O\in B_{1}$. By
Theorem~\ref{theorem:adjunction}, we have
$$
\Big(\sum_{i=2}^{r}a_{i}B_{i}\Big)\cdot B_{1}\geqslant
\mathrm{mult}_{O}\Big(\sum_{i=2}^{r}a_{i}B_{i}\Big\vert_{B_{1}}\Big)>1
$$
if $O\not\in\mathrm{Sing}(X)\cup\mathrm{Sing}(B_{1})$, because $(X,
B_{1}+\sum_{i=2}^{r}a_{i}B_{i})$ is not log canonical at the point $O$.
\end{proof}

Let us consider another application of Theorem~\ref{theorem:adjunction} (cf.
Lemma~\ref{lemma:lct-product}).

\begin{lemma}
\label{lemma:lct-P1-product} Suppose that $X$ is a Fano variety
with log terminal singularities. Then
$$
\mathrm{lct}\Big(\mathbb{P}^{1}\times
X\Big)=\mathrm{min}\left(\frac{1}{2},\
 \mathrm{lct}\big(X\big)\right).%
$$
\end{lemma}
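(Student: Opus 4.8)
The plan is to prove the two inequalities $\mathrm{lct}(\P^1\times X)\le\mu$ and $\mathrm{lct}(\P^1\times X)\ge\mu$ separately, where $\mu=\mathrm{min}(1/2,\mathrm{lct}(X))$. Throughout I would write $p_1\colon\P^1\times X\to\P^1$ and $p_2\colon\P^1\times X\to X$ for the projections, so that
$$
-K_{\P^1\times X}\qlineq p_1^*\O_{\P^1}\big(2\big)+p_2^*\big(-K_{X}\big).
$$
For $p\in\P^1$ and $x\in X$ I set $F_p=\{p\}\times X\sim p_1^*\O_{\P^1}(1)$ and $C_x=\P^1\times\{x\}$, and I record that $F_p\cong X$, that $F_p|_{F_p}\sim 0$ (two fibres are disjoint), and that $-K_{\P^1\times X}\cdot C_x=2$. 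The two rulings $\{F_p\}$ and $\{C_x\}$ carry the two bounds: restriction to $F_p$ is governed by $\mathrm{lct}(X)$, while the degree $2$ along each $C_x$ produces the $1/2$.

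For the upper bound I would exhibit two divisors. Since $-K_X$ is ample, I pick an effective $G\qlineq p_2^*(-K_X)$; then $D=2F_p+G\qlineq -K_{\P^1\times X}$, and at a general point of $F_p$ lying off $G$ the pair looks locally like $(\mathbb{C}\times X,\,2(\{0\}\times X))$, whose threshold is $1/2$, giving $\mathrm{lct}(\P^1\times X)\le 1/2$. Next, for any effective $D_X\qlineq -K_X$ I take $D=p_2^*D_X+2F_p\qlineq -K_{\P^1\times X}$; off $F_p$ this divisor equals $p_2^*D_X$, and since $p_2$ is smooth $\mathrm{lct}(\P^1\times X,p_2^*D_X)=\mathrm{lct}(X,D_X)$, so $\mathrm{lct}(\P^1\times X,D)\le\mathrm{lct}(X,D_X)$. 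Taking the infimum over $D_X$ yields $\mathrm{lct}(\P^1\times X)\le\mathrm{lct}(X)$, and together these give $\mathrm{lct}(\P^1\times X)\le\mu$.

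For the lower bound I would fix an effective $D\qlineq -K_{\P^1\times X}$ and assume, for contradiction, that $(\P^1\times X,\mu D)$ fails to be log canonical at some $w=(p,x)$. First I bound the vertical part: writing $D=\sum_k c_kF_{p_k}+D^{h}$ where no component of $D^{h}$ is a fibre of $p_1$, intersection with a general $C_x$ (contained in no component of $D^{h}$) gives $D^{h}\cdot C_x=2-\sum_k c_k\ge 0$, so $\sum_k c_k\le 2$; in particular the coefficient $c$ of the fibre $F:=F_p$ through $w$ satisfies $c\le 2$, whence $\mu c\le 1$. Now I decompose $D=cF+D'$ with $F\not\subseteq\Supp(D')$. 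Restricting $-K_{\P^1\times X}$ to $F$ kills $p_1^*\O_{\P^1}(2)$, so $D|_F\qlineq -K_X$, and since $F|_F\sim 0$ this gives $D'|_F\qlineq D|_F\qlineq -K_X=-K_F$; the restriction $D'|_F$ is effective, and as $\mu\le\mathrm{lct}(X)$ the pair $(F,\mu D'|_F)$ is log canonical. Applying Theorem~\ref{theorem:adjunction} with $B_1=F$ (a Cartier divisor with log terminal singularities, entering with coefficient $1$) and $\sum_{i\ge 2}a_iB_i=\mu D'$ shows that $(\P^1\times X,\,F+\mu D')$ is log canonical in a neighbourhood of $F$. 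Finally, $\mu c\le 1$ gives $\mu D=\mu c\,F+\mu D'\le F+\mu D'$, so $(\P^1\times X,\mu D)$ is log canonical near $F\ni w$, contradicting the choice of $w$. Hence $\mathrm{lct}(\P^1\times X)\ge\mu$.

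The main obstacle is the vertical part of $D$. If a full fibre $F$ occurs in $D$ with coefficient $c$, then $F$ is itself a candidate non-log-canonical centre and adjunction cannot be applied to $\mu D$ directly. This is exactly where $\mu\le 1/2$ enters: the degree computation $\sum_k c_k\le 2$ forces $c\le 2$, hence $\mu c\le 1$, which lets me absorb $\mu cF$ into the integral boundary $F$ and reduce to the horizontal restriction $D'|_F\qlineq -K_X$, controlled by $\mathrm{lct}(X)$. The remaining points — the identifications $D|_F\qlineq -K_X$ via $F|_F\sim 0$, and the invariance of the log canonical threshold under the smooth projection $p_2$ — I expect to be routine.
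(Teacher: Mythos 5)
Your proof is correct and follows essentially the same route as the paper: write $D=cF+D'$ for the fibre $F$ of $p_1$ through the bad point, bound $c\le 2$ by intersecting with a ruling $\P^1\times\{x\}$ so that $\mu c\le 1$, and then apply Theorem~\ref{theorem:adjunction} to $F$ together with $D'|_F\qlineq -K_F$ to reach a contradiction with $\mu\le\mathrm{lct}(X)$. The only cosmetic differences are that you spell out the upper bound (which the paper declares obvious) and invoke the adjunction equivalence in its direct rather than contrapositive form.
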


\begin{proof}
The inequalities $1/2\geqslant\mathrm{lct}(V\times
U)\leqslant\mathrm{lct}(X)$ are obvious. Suppose that
$$
\mathrm{lct}\Big(\mathbb{P}^{1}\times
X\Big)<\mathrm{min}\left(\frac{1}{2},\
 \mathrm{lct}\big(X\big)\right),%
$$
and let us show that this assumption leads to a contradiction.

There is an effective $\mathbb{Q}$-divisor $D\qlineq
-K_{\mathbb{P}^{1}\times X}$ such that the log pair
$$
\Big(\mathbb{P}^{1}\times X,\ \lambda D\Big)
$$
is not log canonical in some point $P\in \mathbb{P}^{1}\times X$,
where $\lambda<\min(1/2, \mathrm{lct}(X))$.

Let $F$ be a fiber of the projection $\mathbb{P}^{1}\times
X\to\mathbb{P}^{1}$ such that $P\in F$. Then
$$
D=\mu F+\Omega,
$$
where $\Omega$ is an effective $\mathbb{Q}$-divisor on
$\mathbb{P}^{1}\times X$ such that
$F\not\subset\mathrm{Supp}(\Omega)$.

Let $L$ be a general fiber of the projection $\mathbb{P}^{1}\times
X\to X$. Then
$$
2=D\cdot L=\mu+\Omega\cdot L\geqslant\mu,
$$
which implies that the log pair $(\mathbb{P}^{1}\times X,
F+\lambda\Omega)$ is not log canonical at the point $P$. Then
$$
\Big(F,\ \lambda\Omega\Big\vert_{F}\Big)
$$
is not log canonical at the point $P$ by
Theorem~\ref{theorem:adjunction}. But
$$
\Omega\Big\vert_{F}\qlineq D\Big\vert_{F}\qlineq -K_{F},
$$
which is impossible, because $X\cong F$ and
$\lambda<\mathrm{lct}(X)$.
\end{proof}

Let $P$ be a point in $X$. Let us consider an effective  divisor
$$
\Delta=\sum_{i=1}^{r}\eps_{i}B_{i}\qlineq B_{X},%
$$
where $\eps_{i}$ is a non-negative rational number. Suppose
that
\begin{itemize}
\item the divisor $\Delta$ is a $\mathbb{Q}$-Cartier divisor,%
\item the equivalence $\Delta\qlineq B_{X}$ holds,%
\item the log pair  $(X, \Delta)$ is log canonical in the point $P\in X$.%
\end{itemize}

\begin{remark}
\label{remark:convexity} Suppose that $(X, B_{X})$ is not log
canonical in the point $P\in X$. Put
$$
\alpha=\mathrm{min}\left\{\frac{a_{i}}{\eps_{i}}\ \Big\vert\ \eps_{i}\ne 0\right\},%
$$
where $\alpha$ is well defined, because there is $\eps_{i}\ne
0$. Then $\alpha<1$, the log pair
$$
\left(X,\ \sum_{i=1}^{r}\frac{a_{i}-\alpha\eps_{i}}{1-\alpha}B_{i}\right)%
$$
is not log canonical in the point $P\in X$, the equivalence
$$
\sum_{i=1}^{r}\frac{a_{i}-\alpha\eps_{i}}{1-\alpha}B_{i}\qlineq B_{X}\qlineq\Delta%
$$
holds, and at least one irreducible component of the divisor
$\mathrm{Supp}(\Delta)$ is not contained in
$$
\mathrm{Supp}\left(\sum_{i=1}^{r}\frac{a_{i}-\alpha\eps_{i}}{1-\alpha}B_{i}\right).
$$
\end{remark}

The assertion of Remark~\ref{remark:convexity} is obvious.
Nevertheless it is very useful.

\begin{lemma}
\label{lemma:elliptic-times-P1} Suppose that $X\cong C_1\times C_2$, where
$C_1$ and $C_{2}$ are smooth curves, suppose that
$$
B_{X}\qlineq\lambda E+\mu F
$$
where $E\cong C_1$ and $F\cong C_2$ are curves on the surface $X$ such that
$$
E\cdot E=F\cdot F=0
$$
and $E\cdot F=1$, and $\lambda$ and $\mu$ are non-negative rational numbers.
Then
\begin{itemize}
\item the pair $(X, B_X)$ is log terminal if $\lambda<1$ and $\mu<1$,%
\item the pair $(X, B_X)$ is log canonical if $\lambda\leqslant 1$
and $\mu\leqslant 1$.
\end{itemize}
\end{lemma}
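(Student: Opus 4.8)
The plan is to derive the log canonical statement directly from the adjunction estimate of Lemma~\ref{lemma:adjunction}, and then to deduce the log terminal statement from it by a rescaling argument.

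First I would treat the log canonical case. Assuming $\lambda\le 1$ and $\mu\le 1$, suppose for contradiction that $(X,B_X)$ is not log canonical at a point $P$. Since $X=C_1\times C_2$, I would introduce the two ruling curves through $P$: the fibre $E_P\cong C_1$ of the projection to $C_2$ and the fibre $F_P\cong C_2$ of the projection to $C_1$. Both are smooth and pass through $P$, and are algebraically (hence numerically) equivalent to $E$ and $F$, so that $E_P^2=F_P^2=0$ and $E_P\cdot F_P=1$. From $B_X\qlineq\lambda E+\mu F$ one reads off $B_X\cdot F_P=\lambda$ and $B_X\cdot E_P=\mu$. Writing $B_X=aE_P+bF_P+\Omega$ with $\Omega$ effective and $E_P,F_P\not\subset\Supp(\Omega)$, the inequalities $\Omega\cdot F_P\ge 0$ and $\Omega\cdot E_P\ge 0$ give $a\le\lambda\le 1$ and $b\le\mu\le 1$.

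Next I would apply Lemma~\ref{lemma:adjunction} with $B_1=E_P$ (adjoining $E_P$ with coefficient $0$ if it is not already a component of $B_X$): its coefficient $a$ is $\le 1$, and $P\notin\Sing(X)\cup\Sing(E_P)$ because $X$ and $E_P$ are smooth. The failure of log canonicity at $P\in E_P$ then forces $(B_X-aE_P)\cdot E_P>1$; but $(B_X-aE_P)\cdot E_P=B_X\cdot E_P-a\,E_P^2=\mu\le 1$, a contradiction. This settles the log canonical assertion.

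Finally I would deduce the log terminal case by rescaling. If $\lambda<1$, $\mu<1$, and $\max(\lambda,\mu)>0$, set $t=1/\max(\lambda,\mu)>1$; then $tB_X\qlineq(t\lambda)E+(t\mu)F$ with $t\lambda\le 1$ and $t\mu\le 1$, so by the log canonical case already proved $(X,tB_X)$ is log canonical, whence $\lct(X,B_X)\ge t>1$ and $(X,B_X)$ is log terminal. (If $\lambda=\mu=0$ then the effective $\Q$-divisor $B_X$ is numerically trivial, hence $B_X=0$ and $(X,0)$ is log terminal since $X$ is smooth.) The only real obstacle is in the log canonical step, namely verifying the hypotheses of Lemma~\ref{lemma:adjunction}: one must produce the ruling curves through $P$ and check via the intersection estimate that the coefficient $a$ of the chosen curve does not exceed $1$, while using $E_P^2=0$ to see that removing the $aE_P$ part leaves the relevant intersection number equal to $\mu$. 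Once this is in place the log terminal case is purely formal.
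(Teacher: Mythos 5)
Your proof is correct, but it reaches the conclusion by a different mechanism than the paper. The paper's own proof is a three-line argument for the log terminal bullet only: if $(X,B_X)$ is not log terminal at $P$ then $\mathrm{mult}_P(B_X)\geqslant 1$; by Remark~\ref{remark:convexity} one may assume that one of the two rulings through $P$ is not contained in $\mathrm{Supp}(B_X)$; intersecting $B_X$ with that ruling then gives $\lambda\geqslant 1$ or $\mu\geqslant 1$, a contradiction (the log canonical bullet is left to the reader, via the same argument with $\mathrm{mult}_P(B_X)>1$). You instead prove the log canonical bullet first by decomposing $B_X=aE_P+bF_P+\Omega$, bounding $a\leqslant\lambda$ and $b\leqslant\mu$ by intersecting with the rulings, and then invoking the inversion-of-adjunction estimate of Lemma~\ref{lemma:adjunction} along $E_P$; since $E_P^2=0$ this yields $\mu>1$, a contradiction. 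This avoids the convexity trick entirely (your decomposition handles the case $E_P\subset\mathrm{Supp}(B_X)$ directly), and it delivers the boundary case $\lambda=\mu=1$ without any limiting argument; the log terminal bullet then follows by your rescaling observation, which is a standard convexity-of-discrepancies fact. The paper's route is shorter; yours is more explicit about why the log canonical case holds and verifies all hypotheses (smoothness of $X$ and of the ruling at $P$, coefficient $a\leqslant 1$) cleanly. Both are valid.
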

\begin{proof}
Suppose that $\lambda, \mu<1$, but $(X, B_{X})$ is not log terminal at some
point $P\in X$. Then
$$
\mathrm{mult}_{P}\big(B_{X}\big)\geqslant 1,
$$
and we may assume that $E\not\subset\mathrm{Supp}(B_{X})$ or
$F\not\subset\mathrm{Supp}(B_{X})$ by
Remark~\ref{remark:convexity}. But
$$
E\cdot B_{X}=\mu,\ F\cdot B_{X}=\lambda,
$$
which immediately leads to a contradiction, because
$\mathrm{mult}_{P}(B_{X})\geqslant 1$.
\end{proof}

Let $[B_{X}]$ be a class of $\mathbb{Q}$-rational equivalence of
the divisor $B_{X}$. Put
$$
\mathrm{lct}\Big(X,
\big[B_{X}\big]\Big)=\mathrm{inf}\left\{\mathrm{lct}\big(X,D\big)\
\Big\vert\
 D\ \text{is an effective $\mathbb{Q}$-divisor on $X$ such that}\ D\sim_{\mathbb{Q}}
 B_{X}\right\}\geqslant 0,%
$$
and put $\mathrm{lct}(X,[B_{X}])=+\infty$ if $B_{X}=0$. Note that
$B_{X}$ is an effective by assumption.

\begin{remark}
\label{remark:lct-class} The equality
$\mathrm{lct}(X,[-K_{X}])=\mathrm{lct}(X)$ holds
(see Definition~\ref{definition:threshold}).
\end{remark}

Arguing as in the proof of Lemma~\ref{lemma:lct-P1-product}, we obtain the
following result.

\begin{lemma}
\label{lemma:Hwang} Suppose that there is a surjective morphism with connected
fibers
$$
\phi\colon X\longrightarrow Z
$$
such that $\dim(Z)=1$. Let $F$ be a fiber of $\phi$ that has log
terminal singularities. Then either
$$
\mathrm{lct}_{F}\Big(X,
B_{X}\Big)\geqslant\mathrm{lct}\Big(F,\Big[B_{X}\big\vert_{F}\Big]\Big),
$$
or there is a positive rational number
$\eps<\mathrm{lct}(F,[B_{X}\vert_{F}])$ such that
$F\subseteq\mathrm{LCS}(X, \eps B_{X})$.
\end{lemma}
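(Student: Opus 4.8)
The plan is to mimic the proof of Lemma~\ref{lemma:lct-P1-product}, with the product structure used there replaced by the single relevant geometric fact about fibres: the restriction of a fibre of $\phi$ to itself is trivial. First I would assume that the first alternative fails, i.e. $\mathrm{lct}_{F}(X,B_{X})<\mathrm{lct}(F,[B_{X}\vert_{F}])$, and fix a rational number $\eps$ with $\mathrm{lct}_{F}(X,B_{X})<\eps<\mathrm{lct}(F,[B_{X}\vert_{F}])$. By the very definition of $\mathrm{lct}_{F}$ the log pair $(X,\eps B_{X})$ is not log canonical at some point $P\in F$, and it suffices to show that this particular $\eps$ satisfies $F\subseteq\mathrm{LCS}(X,\eps B_{X})$, which is exactly the content of the second alternative.

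Write $B_{X}=\mu F+\Omega$, where $\mu=\mathrm{mult}_{F}(B_{X})\geqslant 0$ and $\Omega$ is an effective $\mathbb{Q}$-divisor with $F\not\subseteq\mathrm{Supp}(\Omega)$. Since $F$ is a fibre of the morphism $\phi$ onto the curve $Z$, it is Cartier and $F\sim\phi^{*}(\mathrm{pt})$, so that $\mathcal{O}_{X}(F)\vert_{F}\cong(\phi\vert_{F})^{*}\big(\mathcal{O}_{Z}(\mathrm{pt})\vert_{\phi(F)}\big)\cong\mathcal{O}_{F}$; thus $F\vert_{F}\qlineq 0$ and hence $\Omega\vert_{F}\qlineq B_{X}\vert_{F}$, so that $\Omega\vert_{F}$ represents the class $[B_{X}\vert_{F}]$. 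The central step is to prove that $\eps\mu\geqslant 1$, and I would argue by contradiction: if $\eps\mu<1$, then the effective divisor $F+\eps\Omega$ dominates $\eps B_{X}=\eps\mu F+\eps\Omega$ coefficientwise, so $(X,F+\eps\Omega)$ is also not log canonical at $P$, and in particular not log canonical in any neighbourhood of $F$. Because $F$ is a Cartier divisor with at most log terminal singularities entering with coefficient $1$, Theorem~\ref{theorem:adjunction} applies and shows that $(F,\eps\Omega\vert_{F})$ is not log canonical. But $\Omega\vert_{F}\qlineq B_{X}\vert_{F}$ gives $\mathrm{lct}(F,\Omega\vert_{F})\geqslant\mathrm{lct}(F,[B_{X}\vert_{F}])>\eps$, so $(F,\eps\Omega\vert_{F})$ is log canonical, a contradiction.

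Therefore $\eps\mu\geqslant 1$, whence $F$ occurs in $\eps B_{X}$ with coefficient at least $1$ and so $F\subseteq\mathrm{LCS}(X,\eps B_{X})$, which is the desired conclusion. The only delicate ingredients are the triviality $F\vert_{F}\qlineq 0$ — this is exactly where the hypothesis $\dim(Z)=1$ enters, playing the role that the computation $D\cdot L=2$ plays in Lemma~\ref{lemma:lct-P1-product}, and which guarantees that $\Omega\vert_{F}$ lies in the class $[B_{X}\vert_{F}]$ — and the verification of the hypotheses of Theorem~\ref{theorem:adjunction}, namely that the fibre $F$ is Cartier (a fibre over a point of the smooth curve $Z$) and has at most log terminal singularities, the latter being assumed. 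The passage from coefficient $\eps\mu$ to coefficient $1$ on $F$ is harmless precisely because $\eps\mu\leqslant 1$ in the relevant case, so enlarging the boundary introduces no new non-log-canonical behaviour away from $F$.
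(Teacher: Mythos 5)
Your proof is correct and follows essentially the same route as the paper's: the same decomposition $B_{X}=\mu F+\Omega$, the same reduction to the case $\eps\mu\leqslant 1$, the same application of Theorem~\ref{theorem:adjunction} to $(X,F+\eps\Omega)$, and the same contradiction with $\Omega\vert_{F}\qlineq B_{X}\vert_{F}$. The extra care you take in justifying $F\vert_{F}\qlineq 0$ and the Cartier hypothesis is welcome but does not change the argument.
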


\begin{proof}
Suppose that $\mathrm{lct}_{F}(X, B_{X})<\mathrm{lct}(F,[B_{X}\vert_{F}])$.
Then there is a rational number
$$
\eps<\mathrm{lct}\Big(F,\Big[B_{X}\big\vert_{F}\Big]\Big)
$$
such that the log pair $(X,\eps B_{X})$ is not log canonical at some point $P\in
F$. Put
$$
B_{X}=\mu F+\Omega,
$$
where $\Omega$ is an
effective $\mathbb{Q}$-divisor on $X$ such that
$F\not\subset\mathrm{Supp}(\Omega)$.

We may assume that $\eps\mu\leqslant 1$. Then $(X, F+\eps\Omega)$ is
not canonical at the point $P$. Then
$$
\Big(F,\ \eps\Omega\Big\vert_{F}\Big)
$$
is not log canonical at $P$ by Theorem~\ref{theorem:adjunction}. But
$\Omega\vert_{F}\sim_{\mathbb{Q}} B_{X}\vert_{F}$, which is a contradiction.
\end{proof}

Let us show how to apply Lemma~\ref{lemma:Hwang}.

\begin{lemma}
\label{lemma:2-29-singular} Let $Q\subset\mathbb{P}^{4}$ be a cone
over a smooth quadric surface, and let $\alpha\colon X\to Q$ be a
blow up along a smooth conic $C\subset
Q\setminus\mathrm{Sing}(Q)$. Then $\mathrm{lct}(X)=1/3$.
\end{lemma}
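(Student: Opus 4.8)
The plan is to establish the two inequalities $\mathrm{lct}(X)\le 1/3$ and $\mathrm{lct}(X)\ge 1/3$ separately. Write $H=\alpha^*(\mathcal{O}_Q(1))$ and let $E$ be the $\alpha$-exceptional divisor over $C$; since $C$ lies in the smooth locus of $Q$ one has $-K_X\sim_{\mathbb{Q}}\alpha^*(-K_Q)-E=3H-E$, and the vertex $v$ of $Q$ gives an isolated node $\tilde v=\alpha^{-1}(v)$ of $X$ that does not meet $E$. The conic $C$ spans a plane $P_C\cong\mathbb{P}^2$ with $Q\cap P_C=C$, so projection from $P_C$ resolves, after blowing up $C$, to a morphism $\phi\colon X\to\mathbb{P}^1$ whose fibers $F$ are the proper transforms of the hyperplane sections of $Q$ through $C$; a general such $F$ is a smooth quadric surface isomorphic to $\mathbb{P}^1\times\mathbb{P}^1$, and $F\sim H-E$. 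A direct class computation gives $-K_X\sim_{\mathbb{Q}} 3F+2E$, and since a general $F$ is a prime divisor entering this effective divisor with coefficient $3$, the pair $(X,\lambda(3F+2E))$ is not log canonical for $\lambda>1/3$. Hence $\mathrm{lct}(X)\le\mathrm{lct}(X,3F+2E)\le 1/3$.

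For the reverse inequality I would fix an effective $D\sim_{\mathbb{Q}}-K_X$ and a rational number $\lambda<1/3$, assume that $(X,\lambda D)$ is not log canonical at a point $P$, and derive a contradiction by distinguishing whether $P$ lies on $E$. If $P\notin E$, then $\alpha$ is an isomorphism near $P$ and $D$ coincides with $\alpha^*(\alpha_*D)$ there, so $(Q,\lambda\,\alpha_*D)$ is not log canonical at $\alpha(P)$; but $\alpha_*D\sim_{\mathbb{Q}}-K_Q$ and Lemma~\ref{lemma:singular-quadric-threefold} gives $\mathrm{LCS}(Q,\lambda\,\alpha_*D)=\varnothing$, i.e. $(Q,\lambda\,\alpha_*D)$ is Kawamata log terminal, a contradiction. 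This single step disposes of every point off $E$ --- in particular of the node $\tilde v$, where Lemma~\ref{lemma:singular-quadric-threefold} is exactly what supplies $\mathrm{lct}(Q)\ge 1/3$ at the singular point, and of the cone vertices of the singular quadric fibers of $\phi$, all of which map into $Q\setminus C$.

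It remains to treat $P\in E$, where I would run the adjunction argument underlying Lemma~\ref{lemma:Hwang} along the fiber $F=\phi^{-1}(\phi(P))$. Writing $D=\mu F+\Omega$ with $F\not\subseteq\mathrm{Supp}(\Omega)$, intersection with the nef class $H^2$ gives $6=D\cdot H^2=2\mu+\Omega\cdot H^2\ge 2\mu$, so $\mu\le 3$ and therefore $\lambda\mu<1$. Bumping the coefficient of $F$ up to $1$ and applying Theorem~\ref{theorem:adjunction} then shows that $(F,\lambda\Omega|_F)$ is not log canonical at $P$, where $\Omega|_F\sim_{\mathbb{Q}}(-K_X)|_F=-K_F$ by adjunction. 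The key point is that the singular point of a singular quadric fiber is its cone vertex, which lies off $E$; hence $P$ is a smooth point of $F$. Choosing a line $\ell\subset F$ through $P$ with $\ell\not\subseteq\mathrm{Supp}(\Omega|_F)$ one finds $\mathrm{mult}_P(\Omega|_F)\le\Omega|_F\cdot\ell=-K_F\cdot\ell=2$, so $\mathrm{lct}_P(F,\Omega|_F)\ge 1/2>\lambda$ and $(F,\lambda\Omega|_F)$ is log canonical at $P$ --- the final contradiction.

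The main obstacle is precisely this analysis along $E$, and specifically the interplay between the special (singular quadric, and possibly reducible) fibers of $\phi$ and the exceptional surface: the global log canonical threshold of a cone fiber is only $1/4<1/3$ (see Example~\ref{example:WPS}), so the argument cannot use the threshold of $F$ globally and must instead exploit that the bad locus of such a fiber is concentrated at its cone vertex, which is disjoint from $E$, together with the local multiplicity bound $\mathrm{mult}_P(\Omega|_F)\le 2$ at the smooth points of $F$ that $E$ meets. Verifying that $\phi$ is genuinely a morphism with irreducible general fiber, and handling any reducible fibers component by component, are the places where the routine but delicate bookkeeping lives.
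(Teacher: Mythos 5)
Your upper bound and your reduction to the exceptional surface are essentially the paper's: the divisor $3\bar H+2E\sim -K_X$ gives $\mathrm{lct}(X)\le 1/3$, and applying Lemma~\ref{lemma:singular-quadric-threefold} to $\alpha_*D\sim_{\mathbb{Q}}-K_Q$ forces $\mathrm{LCS}(X,\lambda D)\subseteq E$; your computation $\mu\le 3$ via $D\cdot H^2=6$ is also correct. The gap is in the endgame on the singular fiber. After inversion of adjunction you must show that $(F,\lambda\Omega|_F)$ is log canonical at the smooth point $P\in E\cap F$ of the quadric cone $F$, and you do this by intersecting with ``a line $\ell\subset F$ through $P$ with $\ell\not\subseteq\mathrm{Supp}(\Omega|_F)$''. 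Through a smooth point of a quadric cone there is exactly \emph{one} ruling, so no such choice exists when that ruling is a component of $\Omega|_F$; and this case cannot be dismissed, because with $\ell\cdot\ell=1/2$ and $\Omega|_F\cdot\ell'=-K_F\cdot\ell'=2$ for a general ruling $\ell'$ the coefficient of $\ell$ in $\Omega|_F$ is only bounded by $4$, and for a coefficient in $(3,4]$ and $\lambda\in(1/4,1/3)$ the pair $(F,\lambda\Omega|_F)$ genuinely fails to be log canonical along $\ell$ --- consistent with $\mathrm{lct}(\mathbb{P}(1,1,2))=1/4<1/3$ from Example~\ref{example:WPS}. So the fiber restriction alone yields no contradiction in exactly the case you identified as the main obstacle, and the local multiplicity bound does not close it. (For the smooth fibers the two-rulings version of your argument is also shaky, but there you can simply quote $\mathrm{lct}(\mathbb{P}^1\times\mathbb{P}^1)=1/2$ from Example~\ref{example:del-Pezzos}.)

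The paper closes this case globally rather than fiberwise. Writing $S$ for the singular fiber and $\Gamma=E\cap S$, it first invokes Remark~\ref{remark:convexity} with $3S+2E\sim_{\mathbb{Q}}D$ to assume $S\not\subset\mathrm{Supp}(D)$ or $E\not\subset\mathrm{Supp}(D)$; it then intersects $D$ with a general ruling of the cone $\alpha(S)$ and with a general fiber of $E\to C$ to conclude $\Gamma\not\subseteq\mathrm{LCS}(X,\lambda D)$, so that Theorem~\ref{theorem:connectedness} reduces the locus to a single point $O\in\Gamma$; finally it applies Theorem~\ref{theorem:connectedness} to $\lambda D+\frac{1}{2}\big(\bar H+2R\big)$ for a general $R\in|\alpha^*(H)|$ avoiding $O$ to get a contradiction. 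To repair your proof you would need an analogous global step that prevents the ruling through $P$ from occurring in $D$ with multiplicity exceeding $1/\lambda$, i.e.\ that excludes $\ell$ as a log canonical centre of $(X,\lambda D)$; this is precisely what the convexity and ruling-intersection steps of the paper provide.
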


\begin{proof}
Let $H$ be a general hyperplane section of
$Q\subset\mathbb{P}^{4}$ that contains $C$, and let $\bar{H}$ be a
proper transform of the surface $H$ on the threefold $X$. Then
$$
-K_{X}\sim 3\bar{H}+2E,
$$
where $E$ is the exceptional divisor of $\alpha$. In particular,
the inequality $\mathrm{lct}(X)\leqslant 1/3$ holds.

We suppose that $\mathrm{lct}(X)<1/3$. Then there exists an
effective $\mathbb{Q}$-divisor $D\qlineq -K_{X}$ such that the log
pair $(X,\lambda D)$ is not log canonical for some positive
rational number $\lambda<1/3$.

There is a commutative diagram
$$
\xymatrix{
&X\ar@{->}[dl]_{\alpha}\ar@{->}[rd]^{\beta}&\\%
Q\ar@{-->}[rr]_{\psi}&&\mathbb{P}^{1}}
$$
where $\beta$ is a morphism given by the linear system
$|\bar{H}|$, and $\psi$ is a projection from the two-dimensional
linear subspace that contains the conic $C$.

Suppose that $\mathbb{LCS}(X,\lambda D)$ contains a surface
$M\subset X$. Then
$$
D=\mu M+\Omega,
$$
where $\mu\geqslant 1/\lambda$, and $\Omega$ is an effective
$\mathbb{Q}$-divisor such that
$M\not\subset\mathrm{Supp}(\Omega)$.

Let $F$ be a general fiber of $\beta$. Then
$F\cong\mathbb{P}^{1}\times\mathbb{P}^{1}$ and
$$
D\Big\vert_{F}=\mu M\Big\vert_{F}+\Omega\Big\vert_{F}\qlineq -K_{F},%
$$
which immediately implies that $M$ is a fiber of the morphism
$\beta$. But
$$
\alpha\big(D\big)=\mu\alpha\big(M\big)+\alpha\big(\Omega\big)\qlineq-K_{Q}\sim 3\alpha\big(M\big),%
$$
which is impossible, because $\mu\geqslant 1/\lambda>3$. Thus, the
set $\mathbb{LCS}(X,\lambda D)$ contains no surfaces.

There is a fiber $S$ of the morphism $\beta$ such that
$$
S\ne S\cap\mathrm{LCS}\Big(X,\ \lambda D\Big)\ne\varnothing,
$$
which implies that $S$ is singular by Lemma~\ref{lemma:Hwang},
because $\mathrm{lct}(\mathbb{P}^{1}\times\mathbb{P}^{1})=1/2$.

Thus, the surface $S$ is an irreducible quadric cone in
$\mathbb{P}^{3}$. Then
$$
\mathrm{LCS}\Big(X,\ \lambda D\Big)\subseteq S
$$
by Theorem~\ref{theorem:connectedness}.  We may assume that either
$S\not\subset\mathrm{Supp}(D)$ or $E\not\subset\mathrm{Supp}(D)$
by  Remark~\ref{remark:convexity}, because
$$
\left(X,\ S+\frac{2}{3}E\right)
$$
has log canonical singularities, and the equivalence $3S+2E\qlineq
D$ holds.

Put $\Gamma=E\cap S$. The curve $\Gamma$ is an irreducible conic
in $S$. Then
$$
\mathrm{LCS}\Big(X,\ \lambda D\Big)\subseteq \Gamma
$$
by Lemma~\ref{lemma:singular-quadric-threefold}. Intersecting $D$
with a general ruling of the cone $S\subset\mathbb{P}^{3}$, and
intersecting $D$ with a general fiber of the projection $E\to C$,
we see that
$$
\Gamma\not\subseteq\mathrm{LCS}\Big(X,\lambda D\Big),
$$
which implies that $\mathrm{LCS}(X,\lambda D)$ consists of a
single point $O\in\Gamma$ by Theorem~\ref{theorem:connectedness}.

Let $R$ be a general (not passing through $O$) surface in
$|\alpha^{*}(H)|$. Then
$$
\mathrm{LCS}\left(X,\ \lambda D+\frac{1}{2}\Big(\bar{H}+2R\Big)\right)=R\cup O,%
$$
which is impossible by Theorem~\ref{theorem:connectedness}, since
$-K_{X}\sim\bar{H}+2R\qlineq D$ and $\lambda<1/3$.
\end{proof}

The following generalization of Lemma~\ref{lemma:Hwang} follows
from \cite[Proposition~5.19]{Vi95} (cf. \cite{Hw06b}).

\begin{theorem}
\label{theorem:Hwang} Let $\phi\colon X\to Z$ be a surjective flat
morphism with connected fibers such that $Z$ has rational
singularities, and all scheme fibers of $\phi$ has at most
canonical Gorenstein singularities. Let $F$ be a scheme fiber of
$\phi$. Then either $\mathrm{lct}_{F}(X,
B_{X})\geqslant\mathrm{lct}(F,[B_{X}\vert_{F}])$, or there is a
positive rational number $\eps<\mathrm{lct}(F,[B_{X}\vert_{F}])$
such that $F\subseteq\nlb\mathrm{LCS}(X, \eps B_{X})$.
\end{theorem}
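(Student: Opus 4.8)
The plan is to argue by contradiction in the spirit of Lemma~\ref{lemma:Hwang}, reducing the assertion to an inversion of adjunction along the fibre $F$. Suppose the first alternative fails, so $\mathrm{lct}_{F}(X,B_{X})<\mathrm{lct}(F,[B_{X}\vert_{F}])$, and fix a rational number $\eps$ with $\mathrm{lct}_{F}(X,B_{X})<\eps<\mathrm{lct}(F,[B_{X}\vert_{F}])$; then $(X,\eps B_{X})$ is not log canonical at some point $P\in F$ while $\eps$ stays below the threshold of the fibre. The goal is to deduce $F\subseteq\mathrm{LCS}(X,\eps B_{X})$, which is precisely the second alternative. When $\dim(Z)=1$ this is Lemma~\ref{lemma:Hwang}: there $F$ is a Cartier divisor, one writes $B_{X}=\mu F+\Omega$ with $F\not\subset\mathrm{Supp}(\Omega)$, and either $\eps\mu>1$, giving $F\subseteq\mathrm{LCS}(X,\eps B_{X})$ at once, or $\eps\mu\le 1$, in which case Theorem~\ref{theorem:adjunction} moves the non-log-canonicity to $(F,\eps\Omega\vert_{F})$, contradicting $\eps<\mathrm{lct}(F,[B_{X}\vert_{F}])$ since $\Omega\vert_{F}\qlineq B_{X}\vert_{F}$.

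First I would record the structural consequences of the hypotheses that make such an adjunction available for a fibre of arbitrary codimension. As $\phi$ is flat with Gorenstein fibres it is a Gorenstein morphism, so the relative dualizing sheaf $\omega_{X/Z}$ is invertible and compatible with base change; in particular $\omega_{X/Z}\vert_{F}\cong\omega_{F}$, and since $\phi^{*}K_{Z}$ restricts trivially to a fibre this gives $K_{X}\vert_{F}\sim K_{F}$. Hence $B_{X}\vert_{F}$ determines a genuine class $[B_{X}\vert_{F}]$ on $F$ and $\mathrm{lct}(F,[B_{X}\vert_{F}])$ is meaningful. Secondly, the rational singularities of $Z$ together with the canonical, hence rational, singularities of the fibres force $X$ to have rational and Cohen--Macaulay singularities, so the multiplier ideal formalism of Section~\ref{section:preliminaries} and the vanishing Theorem~\ref{theorem:Shokurov-vanishing} apply on $X$ without change.

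The point where this genuinely exceeds Lemma~\ref{lemma:Hwang} is that for $\dim(Z)\ge 2$ the fibre $F$ has codimension $\ge 2$: it is not a divisor, so neither the decomposition $B_{X}=\mu F+\Omega$ nor the divisorial adjunction of Theorem~\ref{theorem:adjunction} is at hand, and one cannot simply slice. Cutting $Z$ by general hyperplanes through $\phi(P)$ and pulling back replaces $\eps B_{X}$ by $\eps B_{X}+\sum\phi^{*}(D_{i})$, but these added divisors all pass through $F$ and worsen the singularities along it, so although one may descend to a one-dimensional base in this way, the resulting $F\subseteq\mathrm{LCS}(X,\eps B_{X}+\sum\phi^{*}(D_{i}))$ cannot be cleaned up to $F\subseteq\mathrm{LCS}(X,\eps B_{X})$. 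The correct substitute is a relative inversion of adjunction for the fibre, supplied by \cite[Proposition~5.19]{Vi95}: under exactly the hypotheses assembled above it compares the non-log-canonical locus of $(X,\eps B_{X})$ near $F$ with that of $(F,\eps B_{X}\vert_{F})$, the dichotomy being that either the comparison holds, so $\mathrm{lct}_{F}(X,B_{X})\ge\mathrm{lct}(F,[B_{X}\vert_{F}])$, or the excess is purely vertical and sweeps out the whole fibre. In the first case the non-log-canonicity of $(X,\eps B_{X})$ on $F$ descends to $(F,\eps B_{X}\vert_{F})$; but $B_{X}\vert_{F}$ is an effective $\mathbb{Q}$-divisor in the class $[B_{X}\vert_{F}]$, so $\eps<\mathrm{lct}(F,[B_{X}\vert_{F}])\le\mathrm{lct}(F,B_{X}\vert_{F})$ makes $(F,\eps B_{X}\vert_{F})$ log canonical, a contradiction. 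Therefore the vertical case holds and $F\subseteq\mathrm{LCS}(X,\eps B_{X})$, as desired.

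The main obstacle I anticipate is exactly the step the naive slicing cannot perform: justifying that the positivity and relative vanishing underlying \cite[Proposition~5.19]{Vi95} still lift information from the fibre back to $X$ once $Z$ is only assumed to have rational singularities and the fibres only canonical Gorenstein singularities rather than being smooth. Concretely, one must confirm that the base-change identity $\omega_{X/Z}\vert_{F}\cong\omega_{F}$ and the Cohen--Macaulayness of $X$ are enough to run Viehweg's argument verbatim in this singular setting; once these compatibilities are checked, the dichotomy above closes the proof.
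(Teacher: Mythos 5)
Your proposal takes essentially the same route as the paper: the paper gives no argument for Theorem~\ref{theorem:Hwang} beyond the bare citation of \cite[Proposition~5.19]{Vi95} (cf.\ \cite{Hw06b}), and your write-up is an elaboration of exactly that citation, correctly identifying why the flat Gorenstein hypotheses make $K_{X}\vert_{F}\sim K_{F}$ and the class $[B_{X}\vert_{F}]$ meaningful, why naive slicing of the base fails in codimension $\geqslant 2$, and how Viehweg's semicontinuity statement supplies the dichotomy between restriction to the fibre and the whole fibre lying in $\mathrm{LCS}(X,\eps B_{X})$. The only caveat is that the burden of the proof still rests on \cite[Proposition~5.19]{Vi95}, exactly as in the paper, so nothing is lost or gained relative to the original.
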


Let us consider two elementary applications of
Theorem~\ref{theorem:Hwang}.

\begin{lemma}
\label{lemma:P1xP2} Suppose that $\mathrm{LCS}(X, B_{X})\ne
\varnothing$, where $X\cong\P^1\times\P^2$ and
$$
B_{X}\qlineq -\lambda K_{X}
$$
for some rational number $0<\lambda<1/2$. Then
$\mathbb{LCS}(X, B_{X})$ contains a surface.
\end{lemma}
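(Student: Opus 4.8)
The plan is to mimic the proof of Lemma~\ref{lemma:P3}, reducing to a two-dimensional statement on a well-chosen surface and then invoking Lemma~\ref{lemma:plane}. Write $p_{1}\colon X\to\P^1$ and $p_{2}\colon X\to\P^2$ for the two projections, and let $H_{1}$ and $H_{2}$ be the pullbacks of the hyperplane classes, so that $-K_{X}\sim 2H_{1}+3H_{2}$. Assume, to get a contradiction, that $\mathbb{LCS}(X,B_{X})$ contains no surface, so that $\LCS(X,B_{X})$ is a nonempty closed set of dimension at most $1$. I would take $S=p_{1}^{-1}(s)\cong\P^2$ to be a general fibre of $p_{1}$. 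Since $H_{1}\big\vert_{S}=0$ and $H_{2}\big\vert_{S}=\O_{\P^2}(1)$, one has $B_{X}\big\vert_{S}\qlineq -\lambda K_{S}$ with $\lambda<1/2<2/3$, which is exactly the regime in which Lemma~\ref{lemma:plane} forbids isolated zero-dimensional components of $\LCS(S,B_{X}\vert_{S})$.

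The key numerical input is that the divisor $-(K_{X}+B_{X}+S)\qlineq(1-2\lambda)H_{1}+3(1-\lambda)H_{2}$ is ample for $\lambda<1/2$, so Theorem~\ref{theorem:connectedness} applies to the pair $(X,B_{X}+S)$ and $\LCS(X,B_{X}+S)$ is connected. Now I argue exactly as in Lemma~\ref{lemma:P3}: the surface $S$ occurs in $B_{X}+S$ with coefficient $1$, hence $S\subseteq\LCS(X,B_{X}+S)$, while $\LCS(X,B_{X})\neq\varnothing$ is also contained in this locus. If $(S,B_{X}\vert_{S})$ were log terminal, then by Remark~\ref{remark:hyperplane-reduction} no centre of $\LCS(X,B_{X})$ would meet $S$, so $\LCS(X,B_{X}+S)=S\sqcup\LCS(X,B_{X})$ would be disconnected, a contradiction. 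Therefore $(S,B_{X}\vert_{S})$ is not log terminal. On the other hand, because $\LCS(X,B_{X})$ contains no surface, a general fibre $S$ meets it in finitely many points, and Remark~\ref{remark:hyperplane-reduction} identifies $\LCS(S,B_{X}\vert_{S})$ with $\LCS(X,B_{X})\cap S$; thus $\LCS(S,B_{X}\vert_{S})$ is a nonempty finite set. This is impossible by Lemma~\ref{lemma:plane}, and the contradiction shows that $\mathbb{LCS}(X,B_{X})$ contains a surface.

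The step I expect to require the most care is the identification of $\LCS(S,B_{X}\vert_{S})$ with $\LCS(X,B_{X})\cap S$ for a general $S$, i.e. checking that restricting to a general $\P^2$-fibre neither loses the centres of $\LCS(X,B_{X})$ nor creates spurious ones; this is the content of Remark~\ref{remark:hyperplane-reduction} together with the genericity of $S$, but it is the place where one must be careful with Bertini-type arguments. An alternative, matching the framing of this part of the paper, is to run the fibration $p_{2}\colon X\to\P^2$ through Theorem~\ref{theorem:Hwang}: since a fibre $F\cong\P^1$ satisfies $B_{X}\vert_{F}\qlineq-\lambda K_{F}$ and $\mathrm{lct}(F,[B_{X}\vert_{F}])=1/(2\lambda)>1$, each fibre either misses $\LCS(X,B_{X})$ or is contained in it, so connectedness forces $\LCS(X,B_{X})$ to be a single fibre, after which restricting to a general $\P^2$-fibre of $p_{1}$ and applying Lemma~\ref{lemma:plane} gives the same contradiction. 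Yet another route is the automorphism trick recorded in the remark following Lemma~\ref{lemma:quadric-curves}: a general $\phi\in\Aut(X)$ moves the at-most-one-dimensional $\LCS(X,B_{X})$ off itself, and since $\lambda<1/2$ the locus $\LCS(X,\phi(B_{X})+B_{X})$ would then be disconnected, again contradicting Theorem~\ref{theorem:connectedness}.
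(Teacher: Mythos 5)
Your argument is correct, but your primary route differs from the one in the paper. The paper first applies Theorem~\ref{theorem:Hwang} to the $\mathbb{P}^{1}$-bundle $\pi_{2}\colon X\to\mathbb{P}^{2}$ (whose fibres $F$ satisfy $\mathrm{lct}(F,[B_{X}\vert_{F}])>1$) together with Theorem~\ref{theorem:connectedness} to conclude that $\mathrm{LCS}(X,B_{X})$ is exactly one fibre $F$ of $\pi_{2}$, and then derives a contradiction by adding the auxiliary divisor $\frac{1}{2}(M_{1}+M_{2}+3S)\qlineq-\frac{1}{2}K_{X}$, where $S$ is pulled back from a general line in $\mathbb{P}^{2}$ missing $\pi_{2}(F)$, so that $\mathrm{LCS}=F\cup S$ is disconnected. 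Your main argument instead restricts to a general $\mathbb{P}^{2}$-fibre of $p_{1}$, exactly as in the proof of Lemma~\ref{lemma:P3}: either the fibre misses $\mathrm{LCS}(X,B_{X})$, giving disconnectedness after adding it with coefficient $1$ (and your ampleness check $(1-2\lambda)H_{1}+3(1-\lambda)H_{2}$ is right), or it meets it in a nonempty finite set, contradicting Lemma~\ref{lemma:plane} on $\mathbb{P}^{2}$ since $\lambda<1/2<2/3$. This is shorter in that it avoids Theorem~\ref{theorem:Hwang} altogether and needs only the pencil $|p_{1}^{*}\mathcal{O}_{\mathbb{P}^{1}}(1)|$ plus inversion of adjunction for its general member; the paper's route buys the sharper intermediate information that the locus is a single $\pi_{2}$-fibre, which is the form actually reused elsewhere (e.g.\ in Lemmas~\ref{lemma:3-11} and~\ref{lemma:3-15}). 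Your second alternative, via Theorem~\ref{theorem:Hwang} on $p_{2}$, reproduces the paper's first step and only finishes differently, and your third (the automorphism trick) is precisely the variant the paper itself records in the remark following Lemma~\ref{lemma:quadric-curves}. All three routes are sound.
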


\begin{proof}
Suppose that $\mathbb{LCS}(X, B_{X})$ contains no surfaces. By
Theorems~\ref{theorem:connectedness} and \ref{theorem:Hwang}, we
have
$$
\LCS\Big(X,\ B_{X}\Big)=F,
$$
where $F$ is a fiber of the natural projection $\pi_{2}\colon
X\to\P^2$. Let $S$ be a general surface in
$$
\Big|\pi_{1}^{*}\Big(\mathcal{O}_{\mathbb{P}^{2}}\big(1\big)\Big)\Big|,
$$
let $M_{1}$ and $M_{2}$ be general fibers of the natural
projection $\pi_{1}\colon X\to\P^1$. Then the locus
$$
\LCS\left(X,\ \lambda
D+\frac{1}{2}\Big(M_1+M_2+3S\Big)\right)=F\cup S
$$
is disconnected,  which is impossible by
Theorem~\ref{theorem:connectedness}.
\end{proof}

\begin{lemma}
\label{lemma:lct-product} Let $V$ and $U$ be Fano varieties with
at most canonical Gorenstein singularities. Then
$$
\mathrm{lct}\Big(V\times
U\Big)=\mathrm{min}\Big(\mathrm{lct}\big(V\big),\
 \mathrm{lct}\big(U\big)\Big).%
$$
\end{lemma}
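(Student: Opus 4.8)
The plan is to establish the two inequalities separately; the bound $\mathrm{lct}(V\times U)\le\min(\mathrm{lct}(V),\mathrm{lct}(U))$ is elementary, while the reverse is the heart of the matter. Write $p_{1}\colon V\times U\to V$ and $p_{2}\colon V\times U\to U$ for the projections, and recall that $-K_{V\times U}\qlineq p_{1}^{*}(-K_{V})+p_{2}^{*}(-K_{U})$. For the easy inequality, pick any effective $D_{V}\qlineq -K_{V}$ and any effective $D_{U}\qlineq -K_{U}$ (these exist since $-K_{V}$ and $-K_{U}$ are ample), and set $D=p_{1}^{*}(D_{V})+p_{2}^{*}(D_{U})\qlineq -K_{V\times U}$. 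Then $\mathrm{lct}(V\times U)\le\mathrm{lct}(V\times U,D)\le\mathrm{lct}(V\times U,p_{1}^{*}(D_{V}))$, and since $U$ has canonical (hence log terminal) singularities the fibre directions contribute only non-negative discrepancies, so that $\mathrm{lct}(V\times U,p_{1}^{*}(D_{V}))=\mathrm{lct}(V,D_{V})$. Taking the infimum over $D_{V}$ gives $\mathrm{lct}(V\times U)\le\mathrm{lct}(V)$, and symmetrically $\mathrm{lct}(V\times U)\le\mathrm{lct}(U)$.

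For the reverse inequality I argue by contradiction. After possibly interchanging the factors I may assume $\mathrm{lct}(V)\le\mathrm{lct}(U)$, and I suppose that $\mathrm{lct}(V\times U)<\mathrm{lct}(V)$. Then there is an effective $\mathbb{Q}$-divisor $D\qlineq -K_{V\times U}$ and a rational number $\lambda<\mathrm{lct}(V)\le\mathrm{lct}(U)$ such that $(V\times U,\lambda D)$ is not log canonical at some point $P=(v,u)$. I apply Theorem~\ref{theorem:Hwang} to the flat morphism $p_{2}$: its base $U$ has rational singularities (canonical singularities are rational), every scheme fibre is isomorphic to $V$ and hence has canonical Gorenstein singularities, and the fibre through $P$ is $F=V\times\{u\}\cong V$. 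By adjunction $\lambda D|_{F}\qlineq -\lambda K_{V}$, so Remark~\ref{remark:lct-class} together with $\lambda<\mathrm{lct}(V)$ gives $\mathrm{lct}(F,[\lambda D|_{F}])=\tfrac{1}{\lambda}\mathrm{lct}(V)>1$. If the first alternative of Theorem~\ref{theorem:Hwang} held, then $\mathrm{lct}_{F}(V\times U,\lambda D)>1$, so $(V\times U,\lambda D)$ would be log terminal along $F\ni P$, contradicting the choice of $P$.

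Hence the second alternative holds: there is $c<\mathrm{lct}(V)\ (\le\mathrm{lct}(U))$ with $F=V\times\{u\}\subseteq\mathrm{LCS}(V\times U,cD)$, where I have absorbed the auxiliary factor into $c$. The key point is that this fibre $F$ dominates $V$ under $p_{1}$, so every fibre $\{v'\}\times U$ of $p_{1}$ meets $\mathrm{LCS}(V\times U,cD)$ in the point $(v',u)$. Now fix a general $v'\in V$ and apply Theorem~\ref{theorem:Hwang} once more, this time to $p_{1}$ and the fibre $F'=\{v'\}\times U\cong U$, with boundary $cD$: here $\mathrm{lct}(F',[cD|_{F'}])=\tfrac{1}{c}\mathrm{lct}(U)>1$ because $c<\mathrm{lct}(U)$. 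Since $(v',u)\in\mathrm{LCS}(V\times U,cD)$, the first alternative is again excluded, so the second alternative yields $F'\subseteq\mathrm{LCS}(V\times U,\eps cD)$ for some $\eps<\tfrac{1}{c}\mathrm{lct}(U)$, that is, with $\eps c<\mathrm{lct}(U)$.

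This last step is where I expect the real difficulty to lie, since it is an assertion about the whole, high-codimension fibre $F'$ rather than about a divisorial component, so naive restriction to $F'$ is not available. The way to close the argument is to exploit that $v'$ is general: no component of $D$ then contains $F'$ (a $p_{1}$-vertical component has the form $p_{1}^{-1}(W)$ with $W\subsetneq V$ a divisor avoiding the general point $v'$, while a $p_{1}$-horizontal component meets $F'$ in a proper subset), so $\eps cD|_{F'}$ is a genuine effective $\mathbb{Q}$-divisor on $F'\cong U$ with $\eps cD|_{F'}\qlineq -\eps cK_{U}$. The restriction theorem for multiplier ideals along the general fibre of the projection then forces $(U,\eps cD|_{F'})$ to fail to be log terminal, which is impossible since $\eps c<\mathrm{lct}(U)$. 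This contradiction proves $\mathrm{lct}(V\times U)\ge\min(\mathrm{lct}(V),\mathrm{lct}(U))$, and combined with the easy inequality it completes the proof.
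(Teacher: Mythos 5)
Your proposal is correct and follows essentially the same route as the paper: the easy inequality by pulling back divisors from a factor, then Theorem~\ref{theorem:Hwang} applied to one projection to spread the failure of log canonicity over the whole fibre $V\times\{u\}$, and finally restriction to a general fibre of the other projection to contradict $\lambda<\mathrm{lct}(U)$. The last step, which you flag as the real difficulty and resolve via the restriction theorem for multiplier ideals, is exactly what the paper does by applying Remark~\ref{remark:hyperplane-reduction} $\dim(V)$ times; your intermediate second application of Theorem~\ref{theorem:Hwang} to $p_{1}$ is harmless but not needed once the point $(v',u)$ with $v'$ general is known to lie in the non-log-canonical locus.
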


\begin{proof}
The inequalities $\mathrm{lct}(U)\geqslant\mathrm{lct}(V\times
U)\leqslant\mathrm{lct}(U)$ are obvious. Suppose that
$$
\mathrm{lct}\Big(V\times
U\Big)<\mathrm{min}\Big(\mathrm{lct}\big(V\big),\
 \mathrm{lct}\big(U\big)\Big),%
$$
and let us show that this assumption leads to a contradiction.

There is an effective $\mathbb{Q}$-divisor $D\qlineq -K_{V\times
U}$ such that the log pair
$$
\Big(V\times U,\ \lambda D\Big)
$$
is not log canonical in some point $P\in V\times U$, where
$\lambda<\min(\lct(V), \lct(U))$.

Let us identify $V$ with a fiber of the projection $V\times U\to
U$ that contains the point $P$. The inequalities
$$
\mathrm{lct}\big(V\big)>\lambda>\mathrm{lct}_{V}\Big(V\times U,\ D\Big)\geqslant\mathrm{lct}\Big(V,\Big[D\big\vert_{V}\Big]\Big)=\mathrm{lct}\Big(V,\big[-K_{V}\big]\Big)=\mathrm{lct}\big(V\big)%
$$
are inconsistent. So, it follows from Theorem~\ref{theorem:Hwang},
that the log pair $(V\times U, \lambda D)$ is not log canonical in
every point of $V\subset V\times U$.

Let us identify $U$ with a general fiber of the projection
$V\times U\to V$. Then
$$
D\Big\vert_{U}\qlineq -K_{U},
$$
and $(U,\lambda D\vert_{U})$ is not log canonical in $U\cap V$ by
Remark~\ref{remark:hyperplane-reduction} (applied $\dim(V)$ times
here), which contradicts the inequality $\lambda<\mathrm{lct}(U)$.
\end{proof}

We believe that the assertion of Lemma~\ref{lemma:lct-product}
holds for log terminal varieties (cf.
Lemma~\ref{lemma:lct-P1-product}).

%%%%%%%%%%%%%%%%%%%%%%%%%%%%%%%%%%%%%%%%%%%%%%%%%%%%%%%%%%%%%%%%%%%%

\section{Cubic surfaces}
\label{section:cubic-surfaces}

Let $X$ be a cubic surface in $\mathbb{P}^{3}$ that has at most
one ordinary double point.

\begin{definition}
\label{definition:Eckardt-point} A point $O\in X$ is said to be an
Eckardt point if $O\not\in\mathrm{Sing}(X)$ and
$$
O=L_{1}\cap L_{2}\cap L_{3},
$$
where $L_{1},L_{2},L_{3}$ are different lines on the surface
$X\subset\mathbb{P}^{3}$.
\end{definition}

General cubic surfaces have no Eckardt points. It follows from
Example~\ref{example:del-Pezzos} and \ref{example:singular-cubics}
that
$$
\mathrm{lct}\big(X\big)=\left\{%
\aligned
&3/4\ \text{when}\ X\ \text{has no Eckardt points and}\ \mathrm{Sing}\big(X\big)=\varnothing\,\\%
&2/3\ \text{when}\ X\ \text{has an Eckardt point or}\ \mathrm{Sing}\big(X\big)\ne\varnothing,\\%
\endaligned\right.%
$$

Let $D$ be an effective $\mathbb{Q}$-divisor on $X$ such that
$D\qlineq -K_{X}$, and let $\omega$ be a positive rational number
such that $\omega<3/4$. In this section we prove the following
result (cf.  \cite{Ch01b}, \cite{Ch07b}).

\begin{theorem}
\label{theorem:cubic-surfaces} Suppose that $(X,\omega D)$ is not
log canonical. Then
$$
\mathrm{LCS}\Big(X,\ \omega D\Big)=O,
$$
where $O\in X$ is either a singular point or an Eckardt point.
\end{theorem}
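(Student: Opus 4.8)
The plan is to localise $\LCS(X,\omega D)$ to a single point and then to identify it. Since $D\qlineq-K_X$ and $\omega<3/4<1$, we have $-(K_X+\omega D)\qlineq(1-\omega)(-K_X)$, which is ample; hence $\LCS(X,\omega D)$ is connected by Theorem~\ref{theorem:connectedness}. It therefore suffices to show that the locus contains no curve, since a connected zero-dimensional set is a single point.

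To exclude curves, note that an irreducible curve $C$ lies in $\LCS(X,\omega D)$ only if $\omega\,\mult_C(D)\ge1$, i.e. $\mult_C(D)\ge1/\omega>4/3$. But $\mult_C(D)\le1$ for every irreducible curve $C\subset X$, because $D\qlineq\O_{\P^3}(1)\vert_X$; this is Lemma~\ref{lemma:Cheltsov-Pukhlikov}, and for smooth $X$ it also follows directly. Writing $d=-K_X\cdot C$ and $D=\mult_C(D)\,C+\Omega$ with $\Omega\ge0$ one gets $\mult_C(D)\,d\le D\cdot(-K_X)=3$, so $\mult_C(D)\le1$ once $d\ge3$; for a line ($d=1$) one intersects $D$ with a general member of the residual conic pencil $|-K_X-C|$, and for a conic ($d=2$) one intersects $D$ with the line $-K_X-C$ and uses the bound just proved for lines. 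Hence $\LCS(X,\omega D)$ contains no curve, so it is zero-dimensional and, being connected, equals a single point $O$.

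It remains to prove that $O$ is a singular or an Eckardt point, so suppose $O\notin\Sing(X)$ and that $O$ is not an Eckardt point; I aim for a contradiction. Non-log-canonicity at the smooth surface point $O$ forces $\mult_O(\omega D)>1$, i.e. $\mult_O(D)>1/\omega>4/3$. If a line $L\subset X$ passes through $O$ with $L\not\subseteq\Supp(D)$, then $\mult_O(D)\le D\cdot L=-K_X\cdot L=1$, a contradiction; so every line of $X$ through $O$ lies in $\Supp(D)$. Let $T\in|-K_X|$ be the tangent hyperplane section at $O$: as $O$ is smooth and not Eckardt, $T$ is not a union of three concurrent lines, so $\mult_O(T)=2$, and the local singularity of $T$ at $O$ has log canonical threshold at least $5/6>\omega$, whence $(X,\omega T)$ is log canonical at $O$. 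By the convexity trick of Remark~\ref{remark:convexity} (applied with $\omega T\qlineq\omega D$) I may assume that some component of $T$ is not contained in $\Supp(D)$. If $T$ is irreducible this gives $T\not\subseteq\Supp(D)$, whence $3=D\cdot T\ge\mult_O(D)\cdot\mult_O(T)=2\,\mult_O(D)$ and so $\mult_O(D)\in(4/3,\,3/2]$; if $T$ is reducible then $O$ lies on at least one line of $X$, a situation to be absorbed into the local analysis below.

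The final step is a local analysis at $O$. Blowing up $\alpha\colon V\to X$ with exceptional curve $E$, the coefficient of $E$ in the log pull-back of $\omega D$ is $\mult_O(\omega D)-1<\frac34\cdot\frac32-1=\frac18$, so by Example~\ref{example:smooth-point-and-log-pull-back} the transformed pair fails to be log canonical at a single point $Q\in E$, with no bad behaviour along the rest of $E$. Applying Lemma~\ref{lemma:adjunction} to $E$ at $Q$ shows that the part of $\omega D$ other than $E$ has local intersection $>1$ with $E$ at $Q$, so one tangent direction $v$ at $O$ carries multiplicity $\mult_{O,v}(D)>1/\omega>4/3$. Since every component of $D$ has multiplicity at most $1$, the direction $v$ must be shared by at least two distinct components of $D$ through $O$, i.e. by two curves of $X$ mutually tangent at $O$. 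I expect this to be the main obstacle: one must rule out such a configuration by analysing the possible classes of two tangent curves against the constraints $\mult_C(D)\le1$, $\mult_O(D)\le3/2$ and $D\qlineq-K_X$ (presumably via a further blow-up of $Q$ and a renewed use of Lemma~\ref{lemma:adjunction}, together with the fact that every line of $X$ through $O$ already lies in $\Supp(D)$), and thereby deduce that such tangency forces three lines of $X$ concurrent at $O$, contradicting the assumption that $O$ is neither singular nor an Eckardt point.
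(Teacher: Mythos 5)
Your reduction of $\mathrm{LCS}(X,\omega D)$ to a single point is correct and is essentially the paper's Lemma~\ref{lemma:dp3-point}: connectedness via Theorem~\ref{theorem:connectedness}, exclusion of curves of degree $\geqslant 3$ by intersecting with a general hyperplane section, and exclusion of lines and conics by intersecting with the residual conic pencil and the residual line, respectively. (One caveat: your blanket claim that $\mathrm{mult}_{C}(D)\leqslant 1$ for \emph{every} irreducible curve is stronger than what these computations give --- the paper only obtains $\mu\leqslant 4/3$ for lines and $\mu\leqslant 7/6$ for conics, which is enough since $\mu>1/\omega>4/3$ --- but your actual intersection-theoretic argument matches the paper's and suffices.)

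The second half, however, has a genuine gap, which you partly acknowledge yourself. After blowing up $O$ and locating the bad point $Q\in E$ with $\mathrm{mult}_{Q}(\bar D)+\mathrm{mult}_{O}(D)>8/3$, you assert that the offending tangent direction must be shared by two components of $D$ tangent to each other at $O$, and that ``such tangency forces three lines of $X$ concurrent at $O$.'' Neither claim is proved, and the second is not even plausibly reducible to an elementary tangent-cone analysis: the components of $D$ concentrating multiplicity at $Q$ need not be lines or conics of $X$ at all, and the constraints $\mathrm{mult}_{C}(D)\leqslant 1$, $\mathrm{mult}_{O}(D)\leqslant 3/2$ do not by themselves exclude, say, an irreducible member of $|-K_{X}|$ with a cusp at $O$ whose branch is tangent to the direction of $Q$. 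This is precisely where the paper's proof does its real work (Lemmas~\ref{lemma:dp3-no-lines} through~\ref{lemma:cubic-final}): it uses the Geiser involution $\tau$ on $U$ induced by the degree-two map $\psi\colon U\to\mathbb{P}^{2}$ (projection from $O$) to transport $Q$ to a point $\pi\circ\tau(Q)\in X$, takes the tangent hyperplane section $H$ of $X$ at that point, and pulls it back by $\tau$ to produce an auxiliary curve $\bar R\sim\pi^{*}(-2K_{X})-3E-\cdots$ singular at $Q$; intersecting $\bar D$ with $\bar R$ (or with the lines and conics extracted from $R$ when $R$ is reducible) is what finally yields the contradiction, after a careful case split on whether $T$ is irreducible, a line plus a conic, or three lines, and on whether $\mathrm{Sing}(X)$ meets the lines involved. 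Without this (or some substitute for it), the statement that a smooth non-Eckardt point cannot lie in $\mathrm{LCS}(X,\omega D)$ remains unproved.
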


Suppose that $(X,\omega D)$ is not log canonical. Let $P$ be a
point in $\mathrm{LCS}(X,\omega D)$. Suppose that
\begin{itemize}
\item neither $P=\mathrm{Sing}(X)$,%
\item nor $P$ is an Eckardt point.
\end{itemize}

\begin{lemma}
\label{lemma:dp3-point} One has $\mathrm{LCS}(X,\omega D)=P$.
\end{lemma}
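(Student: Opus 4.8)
The plan is to prove that $\LCS(X,\omega D)$ is connected, then that it contains no curve, so that being a nonempty connected zero-dimensional set it must equal the single point $P$. First I would record connectedness: since $K_X+\omega D\qlineq(1-\omega)K_X$, the divisor $-(K_X+\omega D)\qlineq(1-\omega)(-K_X)$ is ample because $0<\omega<3/4<1$, so Theorem~\ref{theorem:connectedness} applies and $\LCS(X,\omega D)$ is connected; it is nonempty and contains $P$ by hypothesis. Thus everything reduces to showing that $\LCS(X,\omega D)$ contains no irreducible curve.

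Suppose an irreducible curve $Z\subseteq\LCS(X,\omega D)$; then $\mult_Z(D)\ge 1/\omega>4/3$. Writing $D=\mult_Z(D)\,Z+(\text{effective})$ and intersecting with the ample class $-K_X$ gives $3=D\cdot(-K_X)\ge\mult_Z(D)\,(Z\cdot(-K_X))>\tfrac43\deg Z$, so $\deg Z\le 2$ and $Z$ is a line or a conic. The substantive step is to exclude both, for which I would use moving residual pencils. For a line $\ell\subset X$ put $a=\mult_\ell(D)$ and write $D=a\ell+\Omega$ with $\Omega\ge 0$ and $\ell\not\subseteq\Supp\Omega$; from $D\cdot\ell=(-K_X)\cdot\ell=1$ and $\ell^2=-1$ one gets $\Omega\cdot\ell=1+a$ and $\Omega\cdot(-K_X)=3-a$. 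The planes of $\mathbb{P}^3$ through $\ell$ cut out on $X$ a moving pencil of residual conics $C_t\qlineq -K_X-\ell$; a general member $C_t$ shares no component with $\Omega$, so $\Omega\cdot C_t\ge 0$, while numerically $\Omega\cdot C_t=\Omega\cdot(-K_X)-\Omega\cdot\ell=(3-a)-(1+a)=2-2a$. Hence $a\le 1<1/\omega$, and no line lies in $\LCS(X,\omega D)$.

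For a conic I would use the residual line. If $C$ is an irreducible conic with $b=\mult_C(D)$, its plane meets $X$ in $C$ and a residual line $\ell\qlineq -K_X-C$, so $C\cdot\ell=2$. Setting $c=\mult_\ell(D)$ and writing $D=bC+c\ell+\Theta$ with $\Theta\ge 0$, $\ell\not\subseteq\Supp\Theta$, I get $0\le\Theta\cdot\ell=D\cdot\ell-bC\cdot\ell-c\ell^2=1-2b+c$, i.e.\ $c\ge 2b-1$. If $C$ lay in $\LCS(X,\omega D)$ then $b\ge 1/\omega>4/3$, forcing $c>5/3>1$ and contradicting the bound $\mult_\ell(D)\le 1$ established above. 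Therefore no conic lies in $\LCS(X,\omega D)$ either. (The same multiplicity bounds $\mult_Z(D)\le 1$ can alternatively be extracted from Lemma~\ref{lemma:Cheltsov-Pukhlikov}.) Combining the two cases, $\LCS(X,\omega D)$ is a finite set of points; being connected and containing $P$, it equals $\{P\}$.

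The one genuine technical point, which I expect to be the main nuisance rather than a conceptual obstacle, is that the intersection numbers $\ell^2=-1$, $C\cdot\ell=2$, $(-K_X)^2=3$ used above assume the relevant line or conic avoids the ordinary double point that $X$ may carry. When such a curve passes through the node I would pass to the minimal resolution $\widetilde X\to X$, on which $-K_{\widetilde X}$ is the pullback of $-K_X$ (the singularity being canonical) and the exceptional $(-2)$-curve only perturbs these self-intersections; the residual-pencil inequalities require only minor modification and the same contradictions persist. Note that the hypotheses that $P$ is smooth and non-Eckardt are not used in this reduction: they enter only in the subsequent analysis of the distinguished point $P$.
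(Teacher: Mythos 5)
Your argument is correct and follows the paper's own proof almost line for line: connectedness forces an irreducible curve of degree at most $2$ through $P$ inside the locus, a line is excluded by intersecting $D$ with the moving residual conic, and a conic is excluded by intersecting with its residual line together with the multiplicity bound on lines. The only point you defer --- the modified intersection numbers when the line or conic meets the ordinary double point --- is carried out explicitly in the paper: there the line bound weakens from $\mathrm{mult}_{\ell}(D)\leqslant 1$ to $\mathrm{mult}_{\ell}(D)\leqslant 4/3$ and the conic case yields $\mu\leqslant 10/9$, both still strictly below $1/\omega>4/3$, so the contradiction you claim does persist.
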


\begin{proof}
Suppose that $\mathrm{LCS}(X,\omega D)\ne P$. Then there is a
curve $C\subset X$ such that
$$
P\in C\subseteq\mathrm{LCS}\Big(X,\omega D\Big)
$$
by Theorem~\ref{theorem:connectedness}. Then there is an effective
$\mathbb{Q}$-divisor $\Omega$ on $X$ such that
$C\not\subset\mathrm{Supp}(\Omega)$ and
$$
D=\mu C+\Omega,
$$
where $\mu\geqslant 1/\omega$. Let
$H$ be a general hyperplane section of $X$. Then
$$
3=H\cdot D=\mu H\cdot C+H\cdot \Omega\geqslant
\mu\mathrm{deg}\big(C\big),
$$
which implies that either $\mathrm{deg}(C)=1$, or
$\mathrm{deg}(C)=2$.

Suppose that $\mathrm{deg}(C)=1$. Let $Z$ be a general conic on
$X$ such that $-K_{X}\sim C+Z$. Then
$$
2=Z\cdot D=\mu Z\cdot C+Z\cdot \Omega\geqslant \mu Z\cdot C=\left\{%
\aligned
&2\mu\ \text{if}\ C\cap\mathrm{Sing}\big(X\big)=\varnothing\,\\%
&3\mu/2\ \text{if}\ C\cap\mathrm{Sing}\big(X\big)\ne\varnothing,\\%
\endaligned\right.%
$$
which implies that $\mu\leqslant 4/3$. But $\mu\geqslant
1/\omega>4/3$, which gives a contradiction.

We see that $\mathrm{deg}(C)=2$. Let $L$ be a line on $X$ such
that $-K_{X}\sim C+L$. Then
$$
D=\mu C+\lambda L+\Upsilon,
$$
where $\lambda\in\mathbb{Q}$ such that $\lambda\geqslant 0$, and
$\Upsilon$ is an effective $\mathbb{Q}$-divisor such that
$C\not\subset\mathrm{Supp}(\Upsilon)\not\supset L$. Then
$$
1=L\cdot D=\mu L\cdot C+\lambda L\cdot
L+L\cdot\Upsilon\geqslant \mu L\cdot C+\lambda L\cdot L=\left\{%
\aligned
&2\mu-\lambda\ \text{if}\ C\cap\mathrm{Sing}\big(X\big)=\varnothing\,\\%
&3\mu/2-\lambda/2\ \text{if}\ C\cap\mathrm{Sing}\big(X\big)\ne\varnothing,\\%
\endaligned\right.% ,
$$
which implies that $\mu\le 7/6<4/3$, because $\lambda\leqslant 4/3$ (see
the case when $\mathrm{deg}(C)=1$). But $\mu>4/3$, which gives a
contradiction.
\end{proof}

Let $\pi\colon U\to X$ be a blow up of $P$, and let $E$ be the
$\pi$-exceptional curve. Then
$$
\bar{D}\qlineq \pi^{*}\big(D\big)+\mathrm{mult}_{P}\big(D\big)E,
$$
where $\mathrm{mult}_{P}(D)\geqslant 1/\omega$ and
$\bar{D}$ is a proper transform of $D$ on the surface $U$. The log
pair
$$
\Big(U,\ \omega\bar{D}+\big(\omega\mathrm{mult}_{P}\big(D\big)-1\big)E\Big)%
$$
is not log canonical at some point $Q\in E$. Then either
$\mathrm{mult}_{P}(D)\geqslant 2/\omega$, or
\begin{equation}
\label{equation:blow-up}
\mathrm{mult}_{Q}\big(\bar{D}\big)+\mathrm{mult}_{P}\big(D\big)\geqslant 2\slash\omega>8\slash 3,%
\end{equation}
because the divisor
$\omega\bar{D}+(\omega\mathrm{mult}_{P}(D)-1)E$ is effective.

Let $T$ be the unique hyperplane section of $X$ that is singular
at $P$. We may assume that
$$
\mathrm{Supp}\big(T\big)\not\subseteq\mathrm{Supp}\big(D\big)
$$
by Remark~\ref{remark:convexity}, because $(X,\omega T)$ is log
canonical. The following cases are possible:
\begin{itemize}
\item the curve $T$ is irreducible;%
\item the curve $T$ is a union of a line and an irreducible conic;%
\item the curve $T$ consists of $3$ lines.%
\end{itemize}
Hence $T$ is reduced. Note that $\mathrm{mult}_{P}(T)=2$ since $P$ is not
an Eckardt point.
We exclude these cases one by one.

\begin{lemma}
\label{lemma:dp3-no-lines} The curve $T$ is reducible.
\end{lemma}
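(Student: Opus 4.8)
The plan is to argue by contradiction: assume $T$ is irreducible, so that $T$ is a plane cubic with a single double point (a node or a cusp) at $P$, with $\mult_P(T)=2$ since $P$ is not an Eckardt point, and recall that we may assume $T\not\subset\Supp(D)$, so that $\bar T\not\subset\Supp(\bar D)$ on $U$. Write $m=\mult_P(D)$. The first task is to bound $m$ sharply. The projection formula on $U$ gives $\bar D\cdot\bar T=D\cdot T-\mult_P(D)\mult_P(T)=3-2m$, and since $\bar T\not\subset\Supp(\bar D)$ this is non-negative, whence $m\le 3/2$. Moreover, if $m\ge 2/\omega$ then
\[
3=D\cdot T\ge\mult_P(D)\mult_P(T)=2m\ge 4/\omega>4,
\]
which is absurd; hence $1/\omega\le m<2/\omega$, so the coefficient $\omega m-1$ of $E$ lies in $[0,1)$. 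Thus we are exactly in the situation of inequality~(\ref{equation:blow-up}): the pair $(U,\ \omega\bar D+(\omega m-1)E)$ fails to be log canonical at a point $Q\in E$ with $\mult_Q(\bar D)+m\ge 2/\omega$. Setting $m_Q=\mult_Q(\bar D)$ and intersecting $\bar D$ with $E$ gives the auxiliary bound $m_Q\le\bar D\cdot E=m$.

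The core of the proof is a second blow up. Let $\sigma\colon W\to U$ be the blow up of $Q$ with exceptional curve $G$, and write $\tilde D,\tilde E,\tilde H$ for proper transforms. The coefficient of $G$ in the log pull back is $c_G=\omega(m_Q+m)-2$, and I claim $c_G\in[0,1)$: it is non-negative by~(\ref{equation:blow-up}), while $c_G\ge 1$ would force $m_Q+m\ge 3/\omega>4$, contradicting $m_Q\le m\le 3/2$. Consequently $(W,\ \omega\tilde D+(\omega m-1)\tilde E+c_G G)$ is not log canonical at a point $Q'\in G$, and its multiplicity at $Q'$ exceeds $1$. I would then split on whether $Q'\in\tilde E$. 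If $Q'\in\tilde E$, then $\mult_{Q'}(\tilde D)\le\tilde D\cdot\tilde E=m-m_Q$, and the multiplicity inequality at $Q'$ reduces to $3m>4/\omega$; combined with $m\le 3/2$ this yields $\omega>8/9$, contradicting $\omega<3/4$. If $Q'\notin\tilde E$, I would produce a hyperplane section $H$ through $P$ whose proper transform $\bar H$ passes through $Q$ and whose second transform $\tilde H$ passes through $Q'$, with $H\not\subset\Supp(D)$; then
\[
\tilde D\cdot\tilde H=\bar D\cdot\bar H-m_Q=(3-m)-m_Q,
\]
so $\mult_{Q'}(\tilde D)\le 3-m-m_Q$, and the multiplicity inequality collapses to $3>3/\omega$, i.e.\ $\omega>1$, again a contradiction. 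In both cases irreducibility of $T$ is refuted.

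The step I expect to be the main obstacle is the case $Q'\notin\tilde E$, specifically the existence of the auxiliary hyperplane section $H$ with the prescribed second-order tangency at $P$ and at $Q$ that is simultaneously not a component of $D$. The point is that a single blow up is genuinely insufficient here: when $Q\notin\bar T$, intersecting $\bar D$ with a general tangent hyperplane section only gives $\omega\ge 2/3$, so one is forced to pass to the infinitely near point $Q'$ and to use a curve tangent to the ``bad'' direction there. I would control the existence by realizing the hyperplane sections through $P$ as a two-dimensional linear system, imposing the two tangency conditions to cut out a non-empty subfamily, and, should the resulting member lie in $\Supp(D)$, removing that component by means of Remark~\ref{remark:convexity} before repeating the multiplicity analysis. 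Securing this genericity, together with careful bookkeeping of multiplicities across the two blow ups (the bounds $m_Q\le m$, $\tilde D\cdot\tilde E=m-m_Q$, $\tilde D\cdot\tilde H=3-m-m_Q$), is the delicate part; once it is in place the two displayed contradictions close the argument and show that $T$ must be reducible.
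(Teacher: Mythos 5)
Your route is genuinely different from the paper's: you attack the infinitely near point $Q'$ directly with a second blow up and the net of hyperplane sections through $P$, whereas the paper stays on $U$ and uses the Geiser involution induced by the double cover $U\to\mathbb{P}^2$ to transport the problem to the hyperplane section singular at $\pi\circ\tau(Q)$. Your preliminary bounds ($m\le 3/2$, $1/\omega\le m<2/\omega$, $m_Q\le m$, $c_G\in[0,1)$) are all correct, and the case $Q'\in\tilde E$ closes cleanly exactly as you compute. Note, however, that before you can run the case $Q'\notin\tilde E$ you must first rule out $Q\in\bar T$ (the paper's opening step): if $\ell_Q$ is a branch of $T$ at $P$, the map from your pencil to $G$ degenerates and no member of the pencil need pass through a given $Q'$. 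This is easy to supply --- if $Q\in\bar T$ then $3-2m=\bar D\cdot\bar T\ge \mult_Q(\bar D)=m_Q$, so $m+m_Q\le 3-m<5/3<8/3$, contradicting inequality~(\ref{equation:blow-up}) --- but it is a needed hypothesis for your construction of $H$, not merely a side remark.

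The genuine gap is in the subcase where the unique member $H$ of the pencil with $\tilde H\ni Q'$ is \emph{reducible}. Since $T$ is assumed irreducible there is no line of $X$ through $P$, so such an $H$ has the form $H=L+Z$ with $L$ a line, $P\notin L$, and $Z$ an irreducible conic through $P$ tangent to $\ell_Q$; the component of $\tilde H$ through $Q'$ is then $\tilde Z$, and
$$
\tilde D\cdot\tilde Z\;=\;2-m-m_Q\;<\;2-8/3\;<\;0,
$$
so $Z$ is \emph{forced} to lie in $\Supp(D)$. Your proposed remedy --- removing it via Remark~\ref{remark:convexity} --- cannot work: convexity replaces $D$ by another non--log canonical divisor $D'$, but for $D'$ the relevant conic is again forced into the support by the same negativity, so the bound $\mult_{Q'}(\tilde D)\le \tilde D\cdot\tilde H$ is simply unavailable and the clean ``$\omega>1$'' contradiction evaporates. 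The correct repair (and this is precisely the paper's hardest sub-case) is to write $D=\eps Z+\Upsilon$ with $Z\not\subset\Supp(\Upsilon)$, use $1=L\cdot D\ge \eps\,L\cdot Z$ (after arranging $L\not\subset\Supp(D)$ by convexity applied to $L+Z\qlineq -K_X$) to get $\eps\le 1/2$ (or $\eps\le 2/3$ when $L$ meets the node of $X$), and then rerun the multiplicity estimate at $Q'$ for $\eps Z+\Upsilon$, which yields $\eps>2/3$ --- a contradiction. Without this decomposition argument, and without the case distinction for $X$ singular that it entails, the proof is incomplete.
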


\begin{proof}
Suppose that $T$ is irreducible. Then there is a commutative
diagram
$$
\xymatrix{
&U\ar@{->}[dl]_{\pi}\ar@{->}[dr]^{\psi}&\\%
X\ar@{-->}[rr]_{\rho}&&\mathbb{P}^{2}}
$$ %
where $\psi$ is a double cover branched over a quartic curve, and
$\rho$ is the projection from $P\in X$.

Let $\bar{T}$ be the proper transform of $T$ on the surface $U$.
Suppose that $Q\in\bar{T}$. Then
$$
3-2\mathrm{mult}_{P}\big(D\big)=\bar{T}\cdot\bar{D}\geqslant\mathrm{mult}_{Q}\big(\bar{T}\big)\mathrm{mult}_{Q}\big(\bar{D}\big)>\mathrm{mult}_{Q}\big(\bar{T}\big)\Big(8/3-\mathrm{mult}_{P}\big(D\big)\Big)\geqslant 8/3-\mathrm{mult}_{P}\big(D\big),%
$$
which implies that $\mathrm{mult}_{P}(D)\leqslant 1/3$. But
$\mathrm{mult}_{P}(D)>4/3$. Thus, we see that $Q\not\in\bar{T}$.

Let $\tau\in\mathrm{Aut}(U)$ be an involution\footnote{The
involution $\tau$ induces an involution in $\mathrm{Bir}(X)$ that
is called a Geiser involution.} induced by~$\psi$. It follows from
\cite{Ma67} that
$$
\tau^{*}\Big(\pi^{*}\big(-K_{X}\big)\Big)\sim \pi^{*}\big(-2K_{X}\big)-3E,%
$$
and  $\tau(\bar{T})=E$. Put $\breve{Q}=\pi\circ\tau(Q)$. Then
$\breve{Q}\ne P$, because $Q\not\in\bar{T}$.

Let $H$ be the hyperplane section of $X$ that is singular at
$\breve{Q}$. Then $T\ne H$,
because $P\ne \breve{Q}$ and $T$ is smooth outside of the point
$P$. Hence $P\not\in H$, because otherwise
$$
3=H\cdot T\geqslant\mathrm{mult}_{P}\big(H\big)\mathrm{mult}_{P}\big(T\big)+\mathrm{mult}_{\breve{Q}}\big(H\big)\mathrm{mult}_{\breve{Q}}\big(T\big)\geqslant 4.%
$$

Let $\bar{H}$ be the proper transform of $H$ on the surface $U$.
Put $\bar{R}=\tau(\bar{H})$ and $R=\pi(\bar{R})$. Then
$$
\bar{R}\sim \pi^{*}\big(-2K_{X}\big)-3E,%
$$
ant the curve $\bar{R}$ must be singular at the point $Q$.

Suppose that $R$ is irreducible. Taking into account the possible
singularities of $\bar{R}$, we see that
$$
\left(X,\ \frac{3}{8} R\right)
$$
is log canonical. Thus, we may assume that
$R\not\subseteq\mathrm{Supp}(D)$ by Remark~\ref{remark:convexity}.
Then
$$
6-3\mathrm{mult}_{P}\big(D\big)=\bar{R}\cdot\bar{D}\geqslant \mathrm{mult}_{Q}(\bar{R})\mathrm{mult}_{Q}\big(\bar{D}\big)>2\Big(8/3-\mathrm{mult}_{P}\big(D\big)\Big),%
$$
which implies that $\mathrm{mult}_{P}(D)<2/3$. But
$\mathrm{mult}_{P}(D)>4/3$. The curve  $R$ must be reducible.

The curves $R$ and $H$ are reducible. So, there is a line
$L\subset X$ such that $P\not\in L\ni\breve{Q}$.

Let $\bar{L}$ be the proper transform of $L$ on the surface $U$.
Put $\bar{Z}=\tau(\bar{L})$. Then $\bar{L}\cdot E=0$ and
$$
\bar{L}\cdot\bar{T}=\bar{L}\cdot\pi^{*}(-K_{X})=1,%
$$
which implies that $\bar{Z}\cdot E=1$ and
$\bar{Z}\cdot\pi^{*}\big(-K_{X}\big)=2$. We have $Q\in\bar{Z}$.
Then
$$
2-\mathrm{mult}_{P}\big(D\big)=\bar{Z}\cdot\bar{D}\geqslant\mathrm{mult}_{Q}\big(\bar{D}\big)>8/3-\mathrm{mult}_{P}\big(D\big)>2-\mathrm{mult}_{P}\big(D\big)%
$$
in the case when $\bar{Z}\not\subseteq\mathrm{Supp}(\bar{D})$.
Hence, we see that $\bar{Z}\subseteq\mathrm{Supp}(\bar{D})$.

Put $Z=\pi(\bar{Z})$. Then $Z$ is a conic such that $P\in Z$ and
$$
-K_{X}\sim L+Z,
$$
which means that $L\cup Z$ is~cut~out by the plane in
$\mathbb{P}^{3}$ that passes through $Z$. Put
$$
D=\eps Z+\Upsilon,
$$
where $\eps\in\mathbb{Q}$ such that $\eps\geqslant 0$, and
$\Upsilon$ is an effective $\mathbb{Q}$-divisor such that
$Z\not\subset\mathrm{Supp}(\Upsilon)$.

We may assume that $L\not\subseteq\mathrm{Supp}(\Upsilon)$ by
Remark~\ref{remark:convexity}.~Then
$$
1=L\cdot D=\eps Z\cdot L+L\cdot\Upsilon\geqslant \eps
Z\cdot L=\left\{%
\aligned
&2\eps\ \text{if}\ Z\cap\mathrm{Sing}\big(X\big)=\varnothing,\\%
&3\eps/2\ \text{if}\ Z\cap\mathrm{Sing}\big(X\big)\ne\varnothing,\\%
\endaligned\right.% ,
$$
which implies that $\eps\leqslant 2/3$.

Let $\bar{\Upsilon}$ be the proper transform of $\Upsilon$ on the
surface $U$. Then the log pair
$$
\Big(U,\
\eps\omega\bar{Z}+\omega\bar{\Upsilon}+\big(\omega\mathrm{mult}_{P}\big(D\big)-1\big)E\Big)
$$
is not log canonical at $Q\in\bar{Z}$. Then
$$
\omega\bar{\Upsilon}\cdot\bar{Z}+\Big(\omega\mathrm{mult}_{P}\big(D\big)-1\Big)=\Big(\omega\bar{\Upsilon}+\big(\omega\mathrm{mult}_{P}\big(D\big)-1\big)E\Big)\cdot\bar{Z}>1
$$
by Lemma~\ref{lemma:adjunction}, because $\eps\leqslant 2/3$.
In particular, we see that
$$
8/3-\mathrm{mult}_{P}\big(D\big)<\bar{Z}\cdot\bar{\Upsilon}=2-\mathrm{mult}_{P}\big(D\big)-\eps\bar{Z}\cdot\bar{Z}=\left\{%
\aligned
&2-\mathrm{mult}_{P}\big(D\big)+\eps\ \text{if}\ Z\cap\mathrm{Sing}\big(X\big)=\varnothing,\\%
&2-\mathrm{mult}_{P}\big(D\big)+\eps/2\ \text{if}\ Z\cap\mathrm{Sing}\big(X\big)\ne\varnothing,\\%
\endaligned\right.% ,
$$
which implies that $\eps>2/3$. But $\eps\leqslant 2/3$.
\end{proof}

Therefore, there is a line $L_{1}\subset X$ such that $P\in
L_{1}$. Put
$$
D=m_{1}L_{1}+\Omega,
$$
where $m_{1}\in\mathbb{Q}$ such that
$\Omega$ is an effective $\mathbb{Q}$-divisor such that
$L_{1}\not\subset\in\mathrm{Supp}(\Omega)$.~Then
$$
4\slash 3<1\slash\omega<\Omega\cdot L_{1}=1-m_{1}L_{1}\cdot L_{1}=\left\{%
\aligned
&1+m_{1}\ \text{if}\ L_{1}\cap\mathrm{Sing}\big(X\big)=\varnothing,\\%
&1+m_{1}/2\ \text{if}\ L_{1}\cap\mathrm{Sing}\big(X\big)\ne\varnothing.\\%
\endaligned\right.% ,
$$

\begin{corollary}
\label{corollary:cubic-m1} The following inequality holds:
$$
m_{1}>\left\{%
\aligned
&1/3\ \mathrm{if}\ L_{1}\cap\mathrm{Sing}\big(X\big)=\varnothing,\\%
&2/3\ \mathrm{if}\ L_{1}\cap\mathrm{Sing}\big(X\big)\ne\varnothing.\\%
\endaligned\right.%
$$
\end{corollary}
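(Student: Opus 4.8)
\emph{Proof proposal.} The plan is to obtain the corollary directly from the chain of inequalities displayed just before its statement, so that the proof reduces to rearranging that chain; I will, however, spell out how the crucial middle inequality $1/\omega<\Omega\cdot L_{1}$ is produced, since everything hinges on it.

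First I would observe that $(X,\omega D)$ is not log canonical at $P$ while $\mathrm{LCS}(X,\omega D)=P$ by Lemma~\ref{lemma:dp3-point}; in particular the curve $L_{1}$ is not contained in $\mathrm{LCS}(X,\omega D)$, which forces $\omega m_{1}<1$, since otherwise the coefficient of $L_{1}$ in $\omega D$ would be at least $1$ and $L_{1}$ itself would be a center of log canonical singularities, contradicting the fact that the locus is the single point $P$. In particular the hypothesis $a_{1}\leqslant 1$ needed below is met, and I would apply Lemma~\ref{lemma:adjunction} to the log pair $(X,\omega m_{1}L_{1}+\omega\Omega)$ at the point $P\in L_{1}$: this is legitimate because $P\notin\mathrm{Sing}(X)$ by the standing hypothesis on $P$ and $L_{1}$ is a smooth line, so $P\notin\mathrm{Sing}(X)\cup\mathrm{Sing}(L_{1})$. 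The lemma then yields $\omega\,\Omega\cdot L_{1}>1$, that is, $\Omega\cdot L_{1}>1/\omega$.

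It then remains to evaluate $\Omega\cdot L_{1}$ and to feed in the bound on $\omega$. Since $D\qlineq -K_{X}$ and $-K_{X}\cdot L_{1}=1$ for a line on the cubic surface, we get $\Omega\cdot L_{1}=D\cdot L_{1}-m_{1}L_{1}\cdot L_{1}=1-m_{1}L_{1}\cdot L_{1}$, where the self-intersection of a line equals $-1$ if $L_{1}$ avoids the singular point and $-1/2$ if $L_{1}$ passes through the ordinary double point. Using $1/\omega>4/3$, which follows from $\omega<3/4$, the inequality $\Omega\cdot L_{1}>1/\omega$ becomes $1+m_{1}>4/3$ in the first case and $1+m_{1}/2>4/3$ in the second; subtracting $1$, and in the second case multiplying by $2$, gives exactly $m_{1}>1/3$ and $m_{1}>2/3$. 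There is no genuine obstacle here: the only points that require care are the verification $\omega m_{1}<1$ that licenses the use of adjunction and the value $L_{1}\cdot L_{1}=-1/2$ at the $\mathbb{A}_{1}$ point, both of which are routine.
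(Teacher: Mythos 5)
Your argument is correct and is essentially the computation the paper itself displays immediately before the corollary: the bound $\Omega\cdot L_{1}>1/\omega>4/3$ via Lemma~\ref{lemma:adjunction} (licensed by $\omega m_{1}<1$, which follows from Lemma~\ref{lemma:dp3-point}), combined with $\Omega\cdot L_{1}=1-m_{1}L_{1}\cdot L_{1}$ and the values $L_{1}\cdot L_{1}=-1$ or $-1/2$. Nothing is missing.
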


\begin{remark}
\label{remark:six-lines} Suppose that $X$ is singular. Put
$O=\mathrm{Sing}(X)$. It follows from \cite{BW79} that
$$
O=\Gamma_{1}\cap\Gamma_{2}\cap \Gamma_{3}\cap \Gamma_{4}\cap\Gamma_{5}\cap\Gamma_{6},%
$$
where $\Gamma_{1},\ldots,\Gamma_{6}$ are different lines on the
surface $X\subset\mathbb{P}^{3}$. The equivalence
$$
-2K_{X}\sim\sum_{i=1}^{6}\Gamma_{i}
$$
holds. Suppose that $L_{1}=\Gamma_{1}$. Let
$\Pi_{2},\ldots,\Pi_{6}\subset \mathbb{P}^{3}$ be planes such
that
$$
L_{1}\subset\Pi_{i}\supset\Gamma_{i},
$$
and let $\Lambda_{2},\ldots,\Lambda_{6}$ be lines on the surface
$X$ such that
$$
L_{1}\cup\Gamma_{i}\cup\Lambda_{i}=\Pi_{i}\cap X\subset X\subset\mathbb{P}^{3},%
$$
which implies that $-K_{X}\sim L_{1}+\Gamma_{i}+\Lambda_{i}$. Then
$$
-5K_{X}\sim 4L_{1}+\sum_{i=2}^{6}\Lambda_{i}+\Big(L_{1}+\sum_{i=2}^{6}\Gamma_{i}\Big)\sim 4L_{1}+\sum_{i=2}^{6}\Lambda_{i}-2K_{X},%
$$
which implies that $-3K_{X}\sim 4L_{1}+\sum_{i=2}^{6}\Lambda_{i}$.
But the log pair
$$
\left(X,\ L_{1}+\frac{\sum_{i=2}^{6}\Lambda_{i}}{3}\right)
$$
is log canonical at the point $P$. Thus, we may assume that
$$
\mathrm{Supp}\left(\sum_{i=2}^{6}\Lambda_{i}\right)\not\subseteq\mathrm{Supp}\big(D\big)
$$
thanks to Remark~\ref{remark:convexity}, because
$L_{1}\subseteq\mathrm{Supp}(D)$. Then there is $\Lambda_{k}$ such
that
$$
1=D\cdot\Lambda_{k}=\Big(m_{1}L_{1}+\Omega\Big)\cdot\Lambda_{k}=m_{1}+\Omega\cdot\Lambda_{k}\geqslant
m_{1},
$$
because $O\not\in\Lambda_{k}$. Thus, we may assume that
$m_{1}\leqslant 1$ if $L_{1}\cap\mathrm{Sing}(X)\ne\varnothing$.
\end{remark}

Arguing as in the proof of Lemma~\ref{lemma:Cheltsov-Pukhlikov},
we see that $m_{1}\leqslant 1$ if
$L_{1}\cap\mathrm{Sing}(X)=\varnothing$.

\begin{lemma}
\label{lemma:dp3-single-line} There is a line $L_{2}\subset X$
such that $L_{1}\ne L_{2}$ and $P\in L_{2}$.
\end{lemma}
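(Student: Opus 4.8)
The plan is to show that the tangent section $T$ must split into three lines; two of these will then pass through $P$ and provide the required $L_2$. First I would record the geometric reduction: any line of $X$ through $P$ is contained in the tangent plane $T_PX$, hence in $T=X\cap T_PX$. Since $P$ is not an Eckardt point there are at most two lines of $X$ through $P$, and by Lemma~\ref{lemma:dp3-no-lines} the reduced curve $T$ is reducible with $\mult_P(T)=2$. A reduced reducible plane cubic with a double point at $P$ is therefore either a triangle of three lines (two of which pass through $P$, and we are done) or the union of $L_1$ with an irreducible, hence smooth, conic $C$, both passing through $P$. So it suffices to exclude the second possibility; I would assume $T=L_1+C$ and derive a contradiction.

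The key estimate I would extract is a bound on $\mult_P(\Omega)$ coming from the conic. Since $L_1\subseteq\Supp(D)$ while $(X,\omega T)$ is log canonical, Remark~\ref{remark:convexity} lets me assume $C\not\subseteq\Supp(D)$, so $C\not\subseteq\Supp(\Omega)$. As $-K_X\sim L_1+C$ with $L_1\cdot C=2$, one computes $\Omega\cdot C=D\cdot C-m_1 L_1\cdot C=2-2m_1$, and because $C$ is smooth at $P$ this gives $\mult_P(\Omega)\le\Omega\cdot C=2-2m_1$. Applying Lemma~\ref{lemma:adjunction} to $L_1$ (or invoking Corollary~\ref{corollary:cubic-m1}) yields $m_1>1/\omega-1$, and combining this with $\omega<3/4$ produces the crucial inequality $\mult_P(\Omega)<1/\omega$; in particular $\mult_P(D)=m_1+\mult_P(\Omega)\le 2-m_1<2$.

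With this in hand I would return to the blow up $\pi\colon U\to X$ of $P$, where the pair is not log canonical at $Q\in E$, and split into two cases according to whether $Q$ lies on the proper transform $\bar L_1$. If $Q\notin\bar L_1$, then applying Lemma~\ref{lemma:adjunction} to $E$ (whose coefficient $\omega\mult_P(D)-1$ lies in $[0,1)$ thanks to $\mult_P(D)<2$) shows that the local intersection $(\bar\Omega\cdot E)_Q$ exceeds $1/\omega$; but $(\bar\Omega\cdot E)_Q\le\bar\Omega\cdot E=\mult_P(\Omega)<1/\omega$, a contradiction. If $Q\in\bar L_1$, then applying Lemma~\ref{lemma:adjunction} to $\bar L_1$ (coefficient $\omega m_1<1$), using $\bar\Omega\cdot\bar L_1=\Omega\cdot L_1-\mult_P(\Omega)$ and $E\cdot\bar L_1=1$, gives $\omega(1+2m_1)>2$, that is $m_1>1/\omega-1/2$; this is incompatible with $\mult_P(\Omega)=\mult_P(D)-m_1\ge 1/\omega-m_1$ together with $\mult_P(\Omega)\le 2-2m_1$, which force $m_1\le 2-1/\omega<1/\omega-1/2$ for $\omega<3/4$.

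The hard part will be the case $Q\in\bar L_1$: restricting to $E$ discards the information carried by the line, so one must run adjunction along $\bar L_1$ itself and keep careful track of the exceptional coefficient, and the sharper bound $m_1>1/\omega-1/2$ only closes against the conic estimate precisely because $\omega<3/4$. I would also need to treat separately the subcase $L_1\cap\Sing(X)\ne\varnothing$, where $L_1^2=-1/2$ alters the numbers (so $L_1\cdot C=3/2$ and $\Omega\cdot C=2-\tfrac{3}{2}m_1$), but there Corollary~\ref{corollary:cubic-m1} supplies the stronger bound $m_1>2/3$, which makes the same two estimates go through and again forces $T$ to be a triangle, producing the second line $L_2$.
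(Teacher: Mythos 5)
Your reduction to $T=L_{1}+C$ and the two estimates you build on it are sound, and in the case $Q\not\in\bar{L}_{1}$ your argument is a genuine simplification of the paper's. The paper only uses the conic to dispose of $Q\in\bar{C}$ and to obtain $m_{1}<2/3$; for the remaining possibility $Q\not\in\bar{L}_{1}\cup\bar{C}$ it constructs the Geiser-type involution $\tau$, produces an auxiliary line $L$ and conic $Z$ with $-K_{X}\sim L+Z$, and closes with the incompatible bounds $\eps\leqslant 2/3$ and $\eps>2/3$. Your observation that $\mathrm{mult}_{P}(\Omega)\leqslant\Omega\cdot C=2-2m_{1}<1/\omega$ (using $m_{1}>1/\omega-1$ and $\omega<3/4$) kills every $Q\in E\setminus\bar{L}_{1}$ at once: inversion of adjunction along $E$ forces $\mathrm{mult}_{Q}(\bar{\Omega}\vert_{E})>1/\omega$, while $\bar{\Omega}\vert_{E}$ is an effective divisor on $E\cong\mathbb{P}^{1}$ of degree $\mathrm{mult}_{P}(\Omega)$. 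The whole involution apparatus becomes unnecessary. The subcase $Q\in\bar{L}_{1}$ with $L_{1}\cap\mathrm{Sing}(X)=\varnothing$ also closes as you say: $m_{1}>1/\omega-1/2$ against $m_{1}\leqslant 2-1/\omega$, and $2-1/\omega<1/\omega-1/2$ precisely because $\omega<3/4<4/5$.

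The gap is in the subcase $L_{1}\cap\mathrm{Sing}(X)\ne\varnothing$ and $Q\in\bar{L}_{1}$, where ``the same two estimates'' do not go through. There Lemma~\ref{lemma:adjunction} applied along $\bar{L}_{1}$ gives $\omega(1+\frac{3}{2}m_{1})>2$, i.e. $m_{1}>\frac{4}{3\omega}-\frac{2}{3}$, while the conic gives $\mathrm{mult}_{P}(\Omega)\leqslant 2-\frac{3}{2}m_{1}$, which combined with $m_{1}+\mathrm{mult}_{P}(\Omega)>1/\omega$ yields only $m_{1}<4-2/\omega$. These two bounds are compatible whenever $\omega>5/7$, so for $\omega\in(5/7,\,3/4)$ you obtain no contradiction: for instance $\omega=0.74$ leaves the window $1.14<m_{1}<1.30$ open. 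What closes it---and what the paper invokes at exactly this point---is the bound $m_{1}\leqslant 1$ of Remark~\ref{remark:six-lines}, available precisely because $L_{1}$ passes through the node; since $\frac{4}{3\omega}-\frac{2}{3}>10/9>1$, that finishes the argument. So your proof is repairable with material already on the page, but as written the singular subcase fails for $\omega$ close to $3/4$.
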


\begin{proof} Suppose that there is no line $L_{2}\subset X$ such that $L_{1}\ne L_{2}$ and
$P\in L_{2}$. Then
$$
T=L_{1}+C,
$$
where $C$ is an irreducible conic on the surface
$X\subset\mathbb{P}^{3}$ such that $P\in C$.

It follows from Remark~\ref{remark:convexity} that we may assume
that $C\not\subseteq\mathrm{Supp}(\Omega)$, because $m_{1}\ne 0$.

Let $\bar{L}_{1}$ and $\bar{C}$ be the proper transforms of
$L_{1}$ and $C$ on the surface $U$, respectively. Then
$$
\bar{D}\qlineq m_{1}\bar{L}_{1}+\bar{\Omega}\qlineq \pi^{*}\Big(m_{1} L_{1}+\Omega\Big)-\Big(m_{1}+\mathrm{mult}_{P}\big(\Omega\big)\Big)E\qlineq\pi^{*}\big(D\big)-\mathrm{mult}_{P}\big(D\big)E,%
$$
where $\bar{\Omega}$ is the proper transform of the divisor
$\Omega$ on the surface $U$. We have
$$
0\leqslant\bar{C}\cdot\bar{\Omega}=2-\mathrm{mult}_{P}\big(D\big)+m_{1}\bar{C}\cdot
\bar{L}<2/3-m_{1}\bar{C}\cdot \bar{L}_{1}=\left\{%
\aligned
&2/3-m_{1},\ \text{if}\ L_{1}\cap\mathrm{Sing}\big(X\big)=\varnothing,\\%
&2/3-m_{1}/2,\ \text{if}\ L_{1}\cap\mathrm{Sing}\big(X\big)\ne\varnothing,\\%
\endaligned\right.% ,
$$
which implies that $m_{1}<2/3$ if
$L_{1}\cap\mathrm{Sing}(X)=\varnothing$. It follows from
inequality~\ref{equation:blow-up} that
$$
\mathrm{mult}_{Q}\big(\bar{\Omega}\big)>8/3-\mathrm{mult}_{P}\big(\Omega\big)-m_{1}\Big(1+\mathrm{mult}_{Q}\big(\bar{L}_{1}\big)\Big).
$$

Suppose that $Q\in\bar{L}_{1}$. Then it follows from
Lemma~\ref{lemma:adjunction} that
$$
8/3<\bar{L}_{1}\cdot\Big(\bar{\Omega}+\big(\mathrm{mult}_{P}\big(\Omega\big)+m_{1}\big)E\Big)=1-m_{1}\bar{L}_{1}\cdot\bar{L}_{1}=\left\{%
\aligned
&1+2m_{1},\ \text{if}\ L_{1}\cap\mathrm{Sing}\big(X\big)=\varnothing,\\%
&1+3m_{1}/2,\ \text{if}\ L_{1}\cap\mathrm{Sing}\big(X\big)\ne\varnothing,\\%
\endaligned\right.% ,
$$
which is impossible, because $m_{1}\leqslant 1$ if
$L_{1}\cap\mathrm{Sing}(X)\ne\varnothing$ by
Remark~\ref{remark:six-lines}.

We see that $Q\not\in\bar{L}_{1}$. Suppose that $Q\in\bar{C}$.
Then
$$
2-\mathrm{mult}_{P}\big(\Omega\big)-m_{1}-m_{1}\bar{C}\cdot\bar{L}{1}=\bar{C}\cdot\bar{\Omega}>8/3-\mathrm{mult}_{P}\big(\Omega\big)-m_{1},%
$$
which is impossible, because
$m_{1}\bar{C}\cdot\bar{L}_{1}\geqslant 0$. Hence, we see that
$Q\not\in\bar{C}$.

There is a commutative diagram
$$
\xymatrix{
&U\ar@{->}[d]_{\pi}\ar@{->}[rr]^{\zeta}&&W\ar@{->}[d]^{\psi}\\%
&X\ar@{-->}[rr]_{\rho}&&\mathbb{P}^{2}&}
$$ %
where $\zeta$ is a birational morphism that contracts the curve
$\bar{L}_{1}$, the morphism $\psi$ is a double cover branched over
a quartic curve, and $\rho$ is a linear projection from the point
$P\in X$.

Let $\tau$ be the birational involution of $U$ induced
by~$\psi$.~Then
\begin{itemize}
\item the involution $\tau$ is biregular $\iff$ $L_{1}\cap\mathrm{Sing}(X)=\varnothing$,%
\item the involution $\tau$ acts biregularly on $U\setminus\bar{L}_{1}$ if $L_{1}\cap\mathrm{Sing}(X)\ne\varnothing$,%
\item it follows from the construction of $\tau$ that $\tau(E)=\bar{C}$,%
\item if $L_{1}\cap\mathrm{Sing}(X)=\varnothing$, then
$$
\tau^{*}\big(\bar{L}_{1}\big)\sim \bar{L}_{1},\ \tau^{*}\big(E\big)\sim \bar{C},\ \tau^{*}\Big(\pi^{*}\big(-K_{X}\big)\Big)\sim \pi^{*}\big(-2K_{X}\big)-3E-\bar{L}_{1}.%
$$
\end{itemize}

Let $H$ be the hyperplane section of $X$ that is singular at
$\pi\circ\tau(Q)\in C$. Then $P\not\in H$, because $C$ is smooth.
Let $\bar{H}$ be the proper transform of $H$ on the surface $U$.
Then
$$
\bar{L}_{1}\not\subseteq\mathrm{Supp}\big(\bar{H}\big)\not\supseteq\bar{C},
$$
and we can put $\bar{R}=\tau(\bar{H})$ and $R=\pi(\bar{R})$. Then
$\bar{R}$ is singular at the point $Q$, and
$$
\bar{R}\sim \pi^{*}\big(-2K_{X}\big)-3E-\bar{L}_{1},%
$$
because $R$ does not pass through a singular point of $X$ if
$\mathrm{Sing}(X)\ne\varnothing$.

Suppose that $R$ is irreducible. Then $R+L_{1}\sim -2K_{X}$, but
the log pair
$$
\left(X,\ \frac{3}{8}\Big(R+L_{1}\Big)\right)
$$
is log canonical. Thus, we may assume that
$R\not\subseteq\mathrm{Supp}(D)$ by Remark~\ref{remark:convexity}.
Then
$$
5-2\Big(m_{1}+\mathrm{mult}_{P}\big(\Omega\big)\Big)+m_{1}\Big(1+\bar{L}_{1}\cdot\bar{L}_{1}\Big)=\bar{R}\cdot\bar{\Omega}\geqslant 2\mathrm{mult}_{Q}\big(\bar{\Omega}\big)>2\Big(8/3-m_{1}-\mathrm{mult}_{P}\big(\Omega\big)\Big),%
$$
which implies that $m_{1}<0$. The curve $R$ must be reducible.

There is a line $L\subset X$ such that $P\not\in L$ and
$\pi\circ\tau(Q)\in L$. Then
$$
L\cap L_{1}=\varnothing,
$$
because $\pi\circ\tau(Q)\in C$ and $(C+L_{1})\cdot L=T\cdot L=1$.
Thus, there is unique conic $Z\subset X$ such that
$-K_{X}\sim L+Z$
and $P\in Z$. Then $Z$ is irreducible and $P=Z\cap L_{1}$, because
$(L+Z)\cdot L_{1}=1$.

Let $\bar{L}$ and $\bar{Z}$ be the proper transform of $L$ and $Z$
on the surface $U$, respectively. Then
$$
\bar{L}\cdot\bar{C}=\bar{Z}\cdot E=1,\
\bar{L}_{1}\cdot\bar{Z}=\bar{L}\cdot E=\bar{L}\cdot\bar{L}_{1}=0,\ \bar{Z}\cdot\bar{Z}=1-\bar{L}\cdot\bar{Z},\ \bar{L}\cdot\bar{Z}=\left\{%
\aligned
&2\ \text{if}\ L\cap\mathrm{Sing}\big(X\big)=\varnothing,\\%
&3/2\ \text{if}\ L\cap\mathrm{Sing}\big(X\big)\ne\varnothing.\\%
\endaligned\right.% ,
$$

We have $\tau(\bar{Z})=\bar{L}$. Then $Q\in\bar{Z}$. Suppose that
$\bar{Z}\not\subseteq\mathrm{Supp}(\bar{\Omega})$. Then
$$
2-m_{1}-\mathrm{mult}_{P}\big(\Omega\big)=\bar{Z}\cdot\bar{\Omega}>8/3-m_{1}-\mathrm{mult}_{P}\big(\Omega\big),
$$
which is impossible. Thus, we see that
$\bar{Z}\subseteq\mathrm{Supp}(\bar{\Omega})$. But the log pair
$$
\Big(X,\ \omega (L+Z)\Big)
$$
is log canonical at the point $P$. Hence, we may assume that
$\bar{L}\not\subseteq\mathrm{Supp}(\bar{\Omega})$ by
Remark~\ref{remark:convexity}. Put
$$
D=\eps Z+m_{1}L_{1}+\Upsilon,
$$
where $\Upsilon$ is an effective $\mathbb{Q}$-divisor such that
$Z\not\subset\mathrm{Supp}(\Upsilon)\not\supset L_{1}$.~Then
$$
1=L\cdot D=\eps L\cdot Z+m_{1}L\cdot
L_{1}+L\cdot\Upsilon=\eps L\cdot Z+L\cdot\Upsilon\geqslant \eps L\cdot Z=\left\{%
\aligned
&2\eps\ \text{if}\ L\cap\mathrm{Sing}\big(X\big)=\varnothing,\\%
&3\eps/2\ \text{if}\ L\cap\mathrm{Sing}\big(X\big)\ne\varnothing,\\%
\endaligned\right.%
$$
which implies that $\eps\leqslant 2/3$. But
$\bar{Z}\cap\bar{L}_{1}=\varnothing$. Then it follows from
Lemma~\ref{lemma:adjunction} that
$$
2-\mathrm{mult}_{P}\big(D\big)-\eps\bar{Z}\cdot\bar{Z}=\bar{Z}\cdot\bar{\Upsilon}>8/3-\mathrm{mult}_{P}\big(D\big),%
$$
where $\bar{\Upsilon}$ is a proper transform of $\Upsilon$ on the
surface $U$. We deduce that  $\eps>2/3$. But
$\eps\leqslant 2/3$.
\end{proof}

Therefore, we see that $T=L_{1}+L_{2}+L_{3}$, where $L_{3}$ is a
line such that $P\not\in L_{3}$. Put
$$
D=m_{1}L_{1}+m_{2}L_{2}+\Delta,
$$
where $\Delta$ is an effective $\mathbb{Q}$-divisor such that
$L_{2}\not\subseteq\mathrm{Supp}(\Delta)\not\supseteq L_{2}$.

The inequalities $m_{1}>1/3$ and $m_{2}>1/3$ hold by
Corollary~\ref{corollary:cubic-m1}. We may assume~that
$L_{3}\not\subseteq\mathrm{Supp}\big(\Delta\big)$
by Remark~\ref{remark:convexity}.
If the singular point of $X$ (provided that there exists one) is contained
in either $L_1$ or $L_2$, we may assume without loss of generality
that it is contained in $L_1$. Then
$L_{3}\cdot L_{2}=1$ and $L_{3}\cdot L_{1}=1/2$
in the case when $L_{1}\cap\mathrm{Sing}(X)\ne\varnothing$, and
$$
L_{3}\cdot L_{2}=L_{3}\cdot L_{1}=1
$$
in the case when $L_{1}\cap\mathrm{Sing}(X)=\varnothing$. Then
$1-m_{1}L_{1}\cdot L_{3}-m_{2}=L_{3}\cdot\Delta\geqslant 0$.

Let $\bar{L}_{1}$ and $\bar{L}_{3}$ be the proper transforms of
$L_{1}$ and $L_{2}$ on the surface $U$, respectively. Then
$$
m_{1}\bar{L}_{1}+m_{2}\bar{L}_{2}+\bar{\Delta}\qlineq \pi^{*}\Big(m_{1}L_{1}+m_{2}L_{2}+\Delta\Big)-\Big(m_{1}+m_{2}+\mathrm{mult}_{P}\big(\Delta\big)\Big)E,%
$$
where $\bar{\Delta}$ is the proper transform of $\Delta$ on the
surface $U$. The inequality~\ref{equation:blow-up} implies that
\begin{equation}
\label{equation:two-lines}
\mathrm{mult}_{Q}\big(\bar{\Delta}\big)>8/3-\mathrm{mult}_{P}\big(\Delta\big)-m_{1}\Big(1+\mathrm{mult}_{Q}\big(\bar{L}_{1}\big)\Big)-m_{1}\Big(1+\mathrm{mult}_{Q}\big(\bar{L}_{2}\big)\Big).
\end{equation}

\begin{lemma}
\label{lemma:cubic-L2} The curve $\bar{L}_{2}$ does not contain
the point $Q$.
\end{lemma}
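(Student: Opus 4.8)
The plan is to argue by contradiction: assuming $Q\in\bar{L}_2$, I will extract from the failure of log canonicity at $Q$ a lower bound $m_2>5/6$ via adjunction along $\bar{L}_2$, and contradict it with the upper bound $m_2<2/3$ coming from intersecting $D$ with the third line $L_3$. First I would record the geometry on $U$. Since $L_1\ne L_2$ are distinct lines through the smooth point $P$, their proper transforms are disjoint, so $\bar{L}_1\cdot\bar{L}_2=0$ (and in particular $Q\notin\bar{L}_1$), while $E\cdot\bar{L}_2=1$; moreover $\bar{L}_2$ is smooth at $Q$ and $U$ is smooth along $E$ because $P\notin\Sing(X)$. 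In either of the two cases $L_2$ avoids $\Sing(X)$ (had the node lain on $L_2$ it would have been moved onto $L_1$), so on $X$ we have $L_2^{2}=-1$ and $L_1\cdot L_2=1$ (the lines meet transversally at $P$ only). From $D\cdot L_2=-K_X\cdot L_2=1$ I then obtain $\Delta\cdot L_2=1-m_1+m_2$, whence
$$
\bar{\Delta}\cdot\bar{L}_2=\Delta\cdot L_2-\mult_{P}\big(\Delta\big)=1-m_1+m_2-\mult_{P}\big(\Delta\big).
$$

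The key step is adjunction. Because $m_2\le 1$ (by the argument following Remark~\ref{remark:six-lines}, applied to $L_2$) and $\omega<3/4$, the coefficient $\omega m_2$ of $\bar{L}_2$ in $\omega\bar{D}$ is strictly less than $1$. Since $(U,\ \omega\bar{D}+(\omega\mult_{P}(D)-1)E)$ is not log canonical at $Q\in\bar{L}_2$, with $Q\notin\Sing(U)\cup\Sing(\bar{L}_2)$, Lemma~\ref{lemma:adjunction} applied after peeling off $\bar{L}_2$ yields
$$
\Big(\omega m_1\bar{L}_1+\omega\bar{\Delta}+\big(\omega\mult_{P}(D)-1\big)E\Big)\cdot\bar{L}_2>1.
$$
Using $\bar{L}_1\cdot\bar{L}_2=0$ and $E\cdot\bar{L}_2=1$, this simplifies to $\bar{\Delta}\cdot\bar{L}_2+\mult_{P}(D)>2/\omega>8/3$. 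Substituting $\mult_{P}(D)=m_1+m_2+\mult_{P}(\Delta)$ together with the expression for $\bar{\Delta}\cdot\bar{L}_2$ above gives the clean cancellation
$$
\bar{\Delta}\cdot\bar{L}_2+\mult_{P}\big(D\big)=1+2m_2,
$$
so that $1+2m_2>8/3$, i.e. $m_2>5/6$.

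Finally I would contradict this using $L_3$. The inequality $\Delta\cdot L_3\ge 0$ reads $m_1+m_2\le 1$ when $L_1\cap\Sing(X)=\varnothing$ and $m_1/2+m_2\le 1$ when $L_1\cap\Sing(X)\ne\varnothing$; combined with $m_1>1/3$, respectively $m_1>2/3$, from Corollary~\ref{corollary:cubic-m1}, both cases force $m_2<2/3$. Since $5/6>2/3$, this is the desired contradiction, proving $Q\notin\bar{L}_2$. I expect the only real subtlety to be the justification that adjunction may be applied, namely $\omega m_2<1$ and the smoothness of $U$ and $\bar{L}_2$ at $Q$; once that is in place, the cancellation $\bar{\Delta}\cdot\bar{L}_2+\mult_{P}(D)=1+2m_2$ carries the argument. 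It is worth emphasising that the cruder estimate $\mult_{Q}(\bar{\Delta})\le\bar{\Delta}\cdot\bar{L}_2$ fed into inequality~\ref{equation:two-lines} only gives $m_2>5/9$, which is insufficient; the sharp adjunction inequality is precisely what pushes this to $m_2>5/6$ and closes the case.
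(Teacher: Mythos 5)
Your argument is correct and follows the paper's proof essentially verbatim: adjunction along $\bar{L}_{2}$ on $U$ (using $\bar{L}_{1}\cdot\bar{L}_{2}=0$, $E\cdot\bar{L}_{2}=1$ and the cancellation $\bar{\Delta}\cdot\bar{L}_{2}+\mathrm{mult}_{P}(D)=1+2m_{2}$) yields $m_{2}>5/6$, which is then contradicted by $\Delta\cdot L_{3}\geqslant 0$ together with Corollary~\ref{corollary:cubic-m1}. Your side remarks (the reduction to $L_{2}\cap\mathrm{Sing}(X)=\varnothing$, the bound $\omega m_{2}<1$ needed to invoke Lemma~\ref{lemma:adjunction}, and the observation that the cruder estimate via inequality~\ref{equation:two-lines} only gives $m_{2}>5/9$) are all accurate.
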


\begin{proof}
Suppose that $Q\in\bar{L}_{2}$. Then
$$
1-\mathrm{mult}_{P}\big(\Delta\big)-m_{1}+m_{2}=\bar{L}_{2}\cdot\bar{\Delta}>8/3-\mathrm{mult}_{P}\big(\Delta\big)-m_{1}-m_{2}%
$$
by Lemma~\ref{lemma:adjunction}. Thus, we have $m_{2}>5/6$. It
follows from Lemma~\ref{lemma:adjunction} that
$$
1-m_{2}-m_{1}L_{1}\cdot L_{1}=\Delta\cdot L_{1}>4/3-m_{2},
$$
but $L_{1}\cdot L_{1}=-1$ if
$L_{1}\cap\mathrm{Sing}(X)=\varnothing$, and $L_{1}\cdot
L_{1}=-1/2$ if $L_{1}\cap\mathrm{Sing}(X)\ne\varnothing$. Then
$$
m_{1}>\left\{%
\aligned
&1/3\ \text{if}\ L_{1}\cap\mathrm{Sing}\big(X\big)=\varnothing,\\%
&2/3\ \text{if}\ L_{1}\cap\mathrm{Sing}\big(X\big)\ne\varnothing,\\%
\endaligned\right.%
$$
by Corollary~\ref{corollary:cubic-m1}, which is impossible because
$m_{2}>5/6$ and
$$1>m_{1}L_{1}\cdot L_{3}+m_{2}.$$
\end{proof}

\begin{lemma}
\label{lemma:cubic-L1} The curve $\bar{L}_{1}$ does not contain
the point $Q$.
\end{lemma}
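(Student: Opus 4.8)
The plan is to mirror the proof of Lemma~\ref{lemma:cubic-L2}, exchanging the roles of $L_1$ and $L_2$ while exploiting the asymmetry that the singular point of $X$, if it lies on $L_1\cup L_2$, lies on $L_1$. Assume $Q\in\bar L_1$ for contradiction. By Lemma~\ref{lemma:cubic-L2} we already know $Q\notin\bar L_2$, and since $L_1$ and $L_2$ meet only at the smooth point $P$ (so $L_1\cdot L_2=1$), their proper transforms satisfy $\bar L_1\cdot\bar L_2=0$; thus $\bar L_2$ will drop out of the relevant intersection.

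The crucial step is to apply Lemma~\ref{lemma:adjunction} to the curve $\bar L_1$ inside the non-log-canonical pair $(U,\ \omega\bar D+(\omega\,\mathrm{mult}_P(D)-1)E)$ at the point $Q$. This is admissible: the coefficient of $\bar L_1$ equals $\omega m_1\le 1$ because $m_1\le 1$ and $\omega<3/4$; the coefficient $\omega\,\mathrm{mult}_P(D)-1$ of $E$ is non-negative since $\mathrm{mult}_P(D)\ge 1/\omega$; and $Q$ is a smooth point of $U$ and of $\bar L_1$, because the node of $X$, if it lies on $L_1$, sits over a point of $L_1$ other than $P$ and hence misses $Q=E\cap\bar L_1$. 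Using $\bar L_2\cdot\bar L_1=0$ and $E\cdot\bar L_1=1$, Lemma~\ref{lemma:adjunction} gives $\omega\big(\bar\Delta\cdot\bar L_1+\mathrm{mult}_P(D)\big)>2$.

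From $\bar L_1\qlineq\pi^*L_1-E$, $\bar\Delta\qlineq\pi^*\Delta-\mathrm{mult}_P(\Delta)E$, and $L_1\cdot D=L_1\cdot L_2=1$, one computes $\bar\Delta\cdot\bar L_1+\mathrm{mult}_P(D)=1+m_1\big(1-L_1\cdot L_1\big)$. If $L_1\cap\mathrm{Sing}(X)\ne\varnothing$ then $L_1\cdot L_1=-1/2$, so the inequality reads $\omega(1+3m_1/2)>2$, forcing $m_1>10/9$ and contradicting $m_1\le 1$ at once. If $L_1\cap\mathrm{Sing}(X)=\varnothing$ then $L_1\cdot L_1=-1$, so $\omega(1+2m_1)>2$ and hence $m_1>5/6$.

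To finish the remaining smooth case I would use that here $L_1\cdot L_3=1$, so the inequality $L_3\cdot\Delta\ge 0$ becomes $m_1+m_2\le 1$; combining $m_1>5/6$ with the already established bound $m_2>1/3$ yields $m_1+m_2>7/6>1$, a contradiction. I expect the main obstacle to be not the arithmetic but the geometric bookkeeping behind Lemma~\ref{lemma:adjunction}: checking that $Q$ avoids $\mathrm{Sing}(U)$ and correctly recording the two possible values of $L_1\cdot L_1$. Once these are settled, the contradiction in each case is immediate, exactly as in Lemma~\ref{lemma:cubic-L2}.
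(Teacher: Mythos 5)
Your proof is correct and takes essentially the same route as the paper: the paper's own argument rules out the case $L_{1}\cap\mathrm{Sing}(X)=\varnothing$ by saying ``arguing as in the proof of Lemma~\ref{lemma:cubic-L2}'' (which is precisely your combination of $m_{1}>5/6$ from Lemma~\ref{lemma:adjunction}, $m_{2}>1/3$ from Corollary~\ref{corollary:cubic-m1}, and $m_{1}+m_{2}\leqslant 1$ from intersecting with $L_{3}$), and then in the remaining case obtains $m_{1}>10/9$ against $m_{1}\leqslant 1$ from Remark~\ref{remark:six-lines}, exactly as you do. You have merely made the cross-referenced step explicit and verified the hypotheses of Lemma~\ref{lemma:adjunction} more carefully than the paper does.
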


\begin{proof}
Suppose that $Q\in\bar{L}_{1}$. Arguing as in the proof of
Lemma~\ref{lemma:cubic-L2}, we see that
$$
L_{1}\cap\mathrm{Sing}\big(X\big)\ne\varnothing,
$$
which implies that $\bar{L}_{1}\cdot\bar{L}_{1}=-1/2$. Then
$m_{1}>10/9$, because
$$
1+3m_{1}/2=\bar{L}_{2}\cdot\Big(\bar{\Delta}+\big(\mathrm{mult}_{P}\big(\Delta\big)-m_{1}-m_{2}\big)E\Big)>8/3
$$
by Lemma~\ref{lemma:adjunction}. But $m_{1}\leqslant 1$ by
Remark~\ref{remark:six-lines}.
\end{proof}

Therefore, we see that $\bar{L}_{1}\not\ni Q\not\in\bar{L}_{2}$.
There is a commutative diagram
$$
\xymatrix{
&U\ar@{->}[d]_{\pi}\ar@{->}[rr]^{\zeta}&&W\ar@{->}[d]^{\psi}\\%
&X\ar@{-->}[rr]_{\rho}&&\mathbb{P}^{2}&}
$$ %
where $\zeta$ is a birational morphism that contracts the curves
$\bar{L}_{1}$ and $\bar{L}_{2}$, the morphism $\psi$ is a double
cover branched over a quartic curve, and $\rho$ is the projection
from the point $P$.

Let $\tau$ be the birational involution of $U$ induced
by~$\psi$.~Then
\begin{itemize}
\item the involution $\tau$ is biregular $\iff$ $L_{1}\cap\mathrm{Sing}(X)=\varnothing$,%
\item the involution $\tau$ acts biregularly on $U\setminus\bar{L}_{1}$ if $L_{1}\cap\mathrm{Sing}(X)\ne\varnothing$,%
\item the equality $\tau(\bar{L}_{2})=\bar{L}_{2}$ holds,%
\item if $L_{1}\cap\mathrm{Sing}(X)=\varnothing$, then
$\tau(\bar{L}_{1})=\bar{L}_{1}$ and
$$
\tau^{*}\Big(\pi^{*}\big(-K_{X}\big)\Big)\sim \pi^{*}\big(-2K_{X}\big)-3E-\bar{L}_{1}-\bar{L}_{2}.%
$$
\end{itemize}

Let $\bar{L}_{3}$ be a proper transform of $L_{3}$ on the surface
$U$. Then $\tau(E)=\bar{L}_{3}$ and
$$
L_{1}\cup L_{2}\not\ni\pi\circ\tau\big(Q\big)\in L_{3}.
$$

\begin{lemma}
\label{lemma:cubic-no-second-line} The line $L_{3}$ is the only
line on $X$ that passes through the point $\pi\circ\tau(Q)$.
\end{lemma}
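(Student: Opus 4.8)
The plan is to argue by contradiction: suppose there is a line $L \subset X$ with $L \ne L_3$ passing through $Q' := \pi\circ\tau(Q)$. First I would locate $L$ precisely. Since $P$ is neither singular nor an Eckardt point, the only lines of $X$ through $P$ are $L_1$ and $L_2$; as $Q' \notin L_1 \cup L_2$, the line $L$ cannot pass through $P$, so $\bar{L} = \pi^{*}(L)$ and $\bar{L}$ contains the point $\tau(Q)$ lying over $Q'$. Hence $\tau(\bar{L})$ passes through $Q$. The decisive numerical input is that $T \qlineq -K_{X}$ gives $T\cdot L = 1$, while $L_3\cdot L = 1$ because $L$ meets $L_3$ at $Q'$; therefore $L_1\cdot L = L_2\cdot L = 0$, i.e. $L$ is disjoint from $L_1$ and $L_2$. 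I then introduce the residual conic $Z$ with $-K_{X}\sim L + Z$ and $P\in Z$. Using the stated action of $\tau$ (namely $\tau^{*}\pi^{*}(-K_{X})\sim\pi^{*}(-2K_{X})-3E-\bar{L}_1-\bar{L}_2$ and $\tau(E)=\bar{L}_3$), a short Picard/intersection computation identifies $\tau(\bar{L})$ with $\bar{Z}$; in particular $Q\in\bar{Z}$, $\bar{Z}^2=-1$, and $\bar{Z}\cdot\bar{L}_1=\bar{Z}\cdot\bar{L}_2=0$.

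Next I would show $\bar{Z}\subseteq\Supp(\bar{\Delta})$. If not, then since $\mult_{Q}(\bar{Z})=1$ we would have $\bar{Z}\cdot\bar{\Delta}\ge\mult_{Q}(\bar{\Delta})$, whereas a direct computation gives
$$
\bar{Z}\cdot\bar{\Delta}=2-m_1-m_2-\mult_{P}(\Delta).
$$
Combined with inequality~\ref{equation:two-lines} (recall $Q\notin\bar{L}_1\cup\bar{L}_2$, so $\mult_{Q}(\bar{\Delta})>8/3-\mult_{P}(\Delta)-m_1-m_2$) this forces $2>8/3$, a contradiction. Hence $Z\subseteq\Supp(\Delta)\subseteq\Supp(D)$. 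Because $(X,\omega(L+Z))$ is log canonical at $P$, Remark~\ref{remark:convexity} allows me to assume $L\not\subseteq\Supp(D)$, and I write $D=\eps Z+m_1L_1+m_2L_2+\Upsilon$ with $\Upsilon$ effective and $L,Z,L_1,L_2\not\subseteq\Supp(\Upsilon)$. Intersecting with $L$ and using $L\cdot L_1=L\cdot L_2=0$ and $L\cdot Z=2$ yields $1=L\cdot D\ge 2\eps$, so $\eps\le 1/2$.

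Finally I would play $\eps\le 1/2$ against a lower bound from adjunction at $Q$. The pair $(U,\ \omega\bar{D}+(\omega\mult_{P}(D)-1)E)$ is not log canonical at $Q\in\bar{Z}$; since $\bar{Z}$ is disjoint from $\bar{L}_1$ and $\bar{L}_2$ while $\bar{Z}\cdot E=1$, Lemma~\ref{lemma:adjunction} gives $\omega\bar{\Upsilon}\cdot\bar{Z}+(\omega\mult_{P}(D)-1)>1$, that is $\bar{\Upsilon}\cdot\bar{Z}>2/\omega-\mult_{P}(D)>8/3-\mult_{P}(D)$. On the other hand, using $\bar{Z}^2=-1$ and $\bar{D}\cdot\bar{Z}=2-\mult_{P}(D)$,
$$
\bar{\Upsilon}\cdot\bar{Z}=\bar{D}\cdot\bar{Z}-\eps\bar{Z}^2=\big(2-\mult_{P}(D)\big)+\eps,
$$
so $\eps>2/3$, contradicting $\eps\le 1/2$. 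This settles the case $L_1\cap\Sing(X)=\varnothing$.

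The remaining case $L_1\cap\Sing(X)\ne\varnothing$ (with the node placed on $L_1$) runs along the same lines but with the half-integer intersection numbers $L_1^2=-1/2$, $L_1\cdot L_3=1/2$ and the correspondingly modified formula for $\tau^{*}\pi^{*}(-K_{X})$, $\tau$ now being biregular only away from $\bar{L}_1$. I expect the main obstacle to be exactly this bookkeeping: checking that the identification $\tau(\bar{L})=\bar{Z}$ and the disjointness $\bar{Z}\cap(\bar{L}_1\cup\bar{L}_2)=\varnothing$ survive the change of intersection numbers, and ruling out the degenerate possibility that the conic $Z$ splits off a line through $P$ (necessarily $L_1$ or $L_2$), which would have to be handled before the support reductions of Remark~\ref{remark:convexity} can be applied.
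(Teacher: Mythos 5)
Your proposal is correct and follows essentially the same route as the paper: pass to the residual conic $Z$ with $-K_X\sim L+Z$ and $P\in Z$, use $\tau(\bar Z)=\bar L$ to force $Q\in\bar Z$ and hence $Z\subseteq\Supp(D)$, then trap $\eps$ between the bound $1=L\cdot D\geqslant\eps\, L\cdot Z$ and the adjunction inequality $\bar Z\cdot\bar\Upsilon>8/3-\mathrm{mult}_P(D)$. The only bookkeeping remark is that the case split affecting $L\cdot Z$ and $\bar Z^2$ is on whether $L$ (not $L_1$) meets $\Sing(X)$, which the paper handles in one stroke by noting $\eps\leqslant 2/3$ versus $\eps>2/3$ in both subcases.
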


\begin{proof}
Suppose that there is a line $L\subset X$ such that $L\ne L_{3}$
and $\pi\circ\nlb\tau(Q)\in\nlb L$.  Then
$$
L\cap L_{1}=L\cap L_{2}=\varnothing,
$$
because $\pi\circ\tau(Q)\in L_{3}$ and $(L_{1}+L_{2}+L_{3})\cdot
L=1$. Thus, there is unique conic $Z\subset X$ such that
$-K_{X}\sim L+Z$
and $P\in Z$. Then $Z$ is irreducible, because $P\not\in L$ and
$P$ is not an Eckardt point.

Let $\bar{L}$ and $\bar{Z}$ be the proper transform of $L$ and $Z$
on the surface $U$, respectively. Then
$$
\bar{L}\cdot\bar{L}_{3}=\bar{Z}\cdot E=1,\ \bar{Z}\cdot\bar{Z}=1-\bar{L}\cdot\bar{Z},\ \bar{L}\cdot\bar{Z}=\left\{%
\aligned
&2\ \text{if}\ L\cap\mathrm{Sing}\big(X\big)=\varnothing,\\%
&3/2\ \text{if}\ L\cap\mathrm{Sing}\big(X\big)\ne\varnothing,\\%
\endaligned\right.% ,
$$
and $\bar{L}_{1}\cdot\bar{Z}=\bar{L}_{2}\cdot\bar{Z}=\bar{L}\cdot
E=\bar{L}\cdot\bar{L}_{1}=\bar{L}\cdot\bar{L}_{2}=0$.

We have $\tau(\bar{Z})=\bar{L}$. Then $Q\in\bar{Z}$, which implies
that $\bar{Z}\subseteq\mathrm{Supp}(\bar{\Delta})$, because
$$
2-\mathrm{mult}_{P}\big(\Delta\big)-m_{1}-m_{2}=\bar{Z}\cdot\bar{\Omega}>8/3-\mathrm{mult}_{P}\big(\Delta\big)-m_{1}-m_{2}
$$
in the case when
$\bar{Z}\not\subseteq\mathrm{Supp}(\bar{\Delta})$. On the other
hand, the log pair
$$
\Big(X,\ \omega (L+Z)\Big)
$$
is log canonical at the point $P$. Hence, we may assume that
$\bar{L}\not\subseteq\mathrm{Supp}(\bar{\Delta})$ by
Remark~\ref{remark:convexity}. Put
$$
D=\eps Z+m_{1}L_{1}+m_{2}L_{2}+\Upsilon,
$$
where $\Upsilon$ is an effective $\mathbb{Q}$-divisor such that
$Z\not\subseteq\mathrm{Supp}(\Upsilon)$.~Then
$$
1=L\cdot D=\eps L\cdot Z+m_{1}L\cdot
L_{1}+L\cdot\Upsilon=\eps L\cdot Z+L\cdot\Upsilon\geqslant \eps L\cdot Z=\left\{%
\aligned
&2\eps\ \text{if}\ L\cap\mathrm{Sing}\big(X\big)=\varnothing,\\%
&3\eps/2\ \text{if}\ L\cap\mathrm{Sing}\big(X\big)\ne\varnothing,\\%
\endaligned\right.%
$$
which implies that $\eps\leqslant 2/3$. But
$\bar{Z}\cap\bar{L}_{1}=\varnothing$. Then it follows from
Lemma~\ref{lemma:adjunction} that
$$
2-\mathrm{mult}_{P}\big(D\big)-\eps\bar{Z}\cdot\bar{Z}=\bar{Z}\cdot\bar{\Upsilon}>8/3-\mathrm{mult}_{P}\big(D\big),%
$$
where $\bar{\Upsilon}$ is a proper transform of $\Upsilon$ on the
surface $U$. We deduce that  $\eps>2/3$. But
$\eps\leqslant 2/3$.
\end{proof}

Therefore, there is an unique irreducible conic $C\subset X$ such
that
$$
-K_{X}\sim L_{3}+C
$$
and $\pi\circ\tau(Q)\in C$. Then $C+L_{3}$ is a hyperplane section
of $X$ that is singular at $\pi\circ\tau(Q)$.

Let $\bar{C}$ be the proper transform of $C$ on the surface $U$.
Put $\bar{Z}=\tau(\bar{C})$ and $Z=\pi(\bar{Z})$.

\begin{lemma}
\label{lemma:cubic-final} One has
$L_{1}\cap\mathrm{Sing}(X)\ne\varnothing$.
\end{lemma}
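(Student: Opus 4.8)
The plan is to argue by contradiction: suppose instead that $L_{1}\cap\mathrm{Sing}(X)=\varnothing$. By the normalization adopted just before this lemma (if $X$ is singular and its singular point lies on $L_{1}\cup L_{2}$, we have placed it on $L_{1}$), this assumption forces $L_{2}\cap\mathrm{Sing}(X)=\varnothing$ as well. Consequently the involution $\tau$ is biregular, and all the linear equivalences listed for the case $L_{1}\cap\mathrm{Sing}(X)=\varnothing$ are at our disposal, in particular $\tau(\bar{L}_{1})=\bar{L}_{1}$, $\tau(\bar{L}_{2})=\bar{L}_{2}$, $\tau(E)=\bar{L}_{3}$, and $\tau^{*}(\pi^{*}(-K_{X}))\sim\pi^{*}(-2K_{X})-3E-\bar{L}_{1}-\bar{L}_{2}$.

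First I would pin down the class of $\bar{C}$. Since $-K_{X}\sim L_{3}+C$, the conic satisfies $C\sim -K_{X}-L_{3}$; because $C$ is a smooth conic with $P\in C$ and $P\notin L_{3}$, passing to proper transforms on $U$ gives $\bar{C}\sim\pi^{*}(-K_{X})-\bar{L}_{3}-E$. Next I apply $\tau^{*}$. Using the equivalence for $\tau^{*}(\pi^{*}(-K_{X}))$ together with $\tau^{*}(E)=\bar{L}_{3}$ and $\tau^{*}(\bar{L}_{3})=E$ (both coming from $\tau(E)=\bar{L}_{3}$ and $\tau^{2}=\mathrm{id}$), I obtain
$$
\bar{Z}=\tau^{*}\big(\bar{C}\big)\sim\pi^{*}\big(-2K_{X}\big)-4E-\bar{L}_{1}-\bar{L}_{2}-\bar{L}_{3}.
$$
Substituting $\bar{L}_{1}+\bar{L}_{2}+\bar{L}_{3}\sim\pi^{*}(-K_{X})-2E$, which holds because $T=L_{1}+L_{2}+L_{3}\sim -K_{X}$ and $\mathrm{mult}_{P}(T)=2$, collapses this to $\bar{Z}\sim\pi^{*}(-K_{X})-2E$. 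Reading off the $\pi^{*}$-part and the coefficient of $E$ shows that $Z=\pi(\bar{Z})$ is an effective divisor with $Z\sim -K_{X}$ and $\mathrm{mult}_{P}(Z)=2$, i.e. a hyperplane section of $X$ that is singular at $P$.

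I then invoke the uniqueness of the tangent section: $T$ is the unique hyperplane section of $X$ singular at $P$, whence $Z=T$. On the other hand, $\bar{Z}=\tau(\bar{C})$ is the image of the irreducible curve $\bar{C}$ under the biregular involution $\tau$, so $\bar{Z}$ is irreducible (and distinct from $E$, since $\bar{Z}\cdot\pi^{*}(-K_{X})=3\neq 0$), and therefore $Z=\pi(\bar{Z})$ is irreducible too. This contradicts the fact that $T=L_{1}+L_{2}+L_{3}$ is reducible, and the contradiction proves that $L_{1}\cap\mathrm{Sing}(X)\neq\varnothing$.

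I expect the only delicate points to be bookkeeping rather than conceptual: one must be sure that the biregularity of $\tau$ (hence the transfer of irreducibility from $\bar{C}$ to $\bar{Z}$ and the validity of the stated class of $\tau^{*}(\pi^{*}(-K_{X}))$) genuinely follows from $L_{1}\cap\mathrm{Sing}(X)=\varnothing$ after the $L_{1}\leftrightarrow L_{2}$ relabeling, and that the passage $\bar{Z}\sim\pi^{*}(-K_{X})-2E\Rightarrow Z\sim -K_{X},\ \mathrm{mult}_{P}(Z)=2$ uses $P\notin\mathrm{Sing}(X)$ so that $\pi$ is the blow-up of a smooth point. Everything else is a direct intersection-theoretic substitution.
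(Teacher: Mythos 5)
There is a genuine gap, and it occurs at the very first step: you assume that $P\in C$. This is false in the situation at hand. The conic $C$ is defined as the residual of $L_{3}$ in the tangent hyperplane section at $\pi\circ\tau(Q)\in L_{3}$; nothing forces it through $P$. On the contrary, under your hypothesis $L_{1}\cap\mathrm{Sing}(X)=\varnothing$ one has
$$
C\cdot L_{1}+C\cdot L_{2}=\big(L_{1}+L_{2}+L_{3}\big)\cdot C-L_{3}\cdot C=2-2=0,
$$
so $C\cap L_{1}=C\cap L_{2}=\varnothing$ and hence $P\notin C$, since $P=L_1\cap L_2$. Consequently $\bar{C}\sim\pi^{*}(-K_{X})-\bar{L}_{3}$ with no $-E$ term, and applying $\tau^{*}$ gives
$$
\bar{Z}\sim\pi^{*}\big(-2K_{X}\big)-4E-\bar{L}_{1}-\bar{L}_{2},
$$
which is the class stated in the paper. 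Then $Z\sim -2K_{X}-L_{1}-L_{2}$ is an irreducible \emph{quartic} curve with a double point at $P$, not a hyperplane section, and your contradiction with the uniqueness of the tangent hyperplane section $T$ evaporates. A quick sanity check confirms something had to be wrong: your argument never uses the divisor $D$ (only the existence of $Q\in E\setminus(\bar{L}_{1}\cup\bar{L}_{2})$ and the induced point on $L_{3}$), so if it were correct it would show that \emph{no} smooth cubic surface admits a non-Eckardt point lying on exactly two lines, which is absurd.

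Because the purely geometric contradiction is not available, the argument must re-engage the log pair, and this is what the paper does: since $-2K_{X}\sim Z+L_{1}+L_{2}$ and $\big(X,\tfrac{1}{2}(Z+L_{1}+L_{2})\big)$ is log canonical at $P$, Remark~\ref{remark:convexity} allows one to assume $Z\not\subseteq\mathrm{Supp}(D)$; then, using $Q\in\bar{Z}$ and $\bar{Z}\cdot E=2$ together with the inequality~\ref{equation:blow-up}, one gets
$$
4-2\,\mult_{P}\big(D\big)=\bar{Z}\cdot\bar{D}\geqslant\mult_{Q}\big(\bar{D}\big)>8/3-\mult_{P}\big(D\big),
$$
forcing $\mult_{P}(D)<4/3$ and contradicting $\mult_{P}(D)>4/3$. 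Your class computation technique is fine (indeed your bookkeeping of $\tau^{*}$ is exactly what the paper leaves as ``one can easily check''), but the incorrect input $\mult_{P}(C)=1$ propagates into a conclusion that is both false and insufficient, so the proof as written does not establish the lemma.
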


\begin{proof}
Suppose that $L_{1}\cap\mathrm{Sing}(X)=\varnothing$. Then
$$
C\cap L_{1}=C\cap L_{2}=\varnothing,
$$
because $(L_{1}+L_{2}+L_{3})\cdot C=L_{3}\cdot C=2$. One can
easily check that
$$
\bar{Z}\sim \pi^{*}\big(-2K_{X}\big)-4E-\bar{L}_{1}-\bar{L}_{2},
$$
and $Z$ is singular at $P$.  Then $-2K_{X}\sim Z+L_{1}+L_{2}$. But
the log pair
$$
\left(U,\ \frac{1}{2}\Big(Z+L_{1}+L_{2}\Big)\right)
$$
is log canonical at $P$. Thus, we may assume that
$Z\not\subseteq\mathrm{Supp}(D)$ by Remark~\ref{remark:convexity}.

We have $Q\in\bar{Z}$ and $\bar{Z}\cdot E=2$. Then it follows from
the inequality~\ref{equation:blow-up} that
$$
4-2\mathrm{mult}_{P}\big(D\big)=\bar{Z}\cdot\bar{D}\geqslant\mathrm{mult}_{Q}\big(\bar{D}\big)>8/3-\mathrm{mult}_{P}\big(D\big),%
$$
which implies that $\mathrm{mult}_{P}(D)<4/3$. But
$\mathrm{mult}_{P}(D)>4/3$.
\end{proof}

Thus, we see that $L_{1}\cap
L_{3}=\mathrm{Sing}(X)\ne\varnothing$. Then $L_{1}\cap L_{2}\in
C$, which implies that
$$
\bar{Z}\sim \pi^{*}\big(-2K_{X}\big)-4E-2\bar{L}_{1}-\bar{L}_{2},
$$
and $Z$ is smooth rational cubic curve.  Then $-2K_{X}\sim
Z+2L_{1}+L_{2}$. But the log pair
$$
\left(U,\ \frac{1}{2}\Big(Z+2L_{1}+L_{2}\Big)\right)
$$
is log canonical at $P$. Thus, we may assume that
$Z\not\subseteq\mathrm{Supp}(D)$ by Remark~\ref{remark:convexity}.

We have $Q\in\bar{Z}$ and $\bar{Z}\cdot E=\bar{L}_{1}=1$. Then it
follows from the inequality~\ref{equation:blow-up} that
$$
3-\mathrm{mult}_{P}\big(\Delta\big)-2m_{1}-m_{2}=\bar{Z}\cdot\bar{\Delta}\geqslant\mathrm{mult}_{Q}\big(\bar{\Delta}\big)>8/3-\mathrm{mult}_{P}\big(\Delta\big)-m_{1}-m_{2},%
$$
which implies that $m_{1}<1/3$. But $m_{1}>2/3$ by
Corollary~\ref{corollary:cubic-m1}.

The obtained contradiction completes the proof
Theorem~\ref{theorem:cubic-surfaces}.

\section{Del Pezzo surfaces}
\label{section:del-Pezzo}

Let $X$ be a del Pezzo surface that has at most canonical
singularities, let $O$ be a point of the~surface $X$, and let
$B_{X}$ be an effective $\mathbb{Q}$-divisor on the surface $X$.
Suppose that
\begin{itemize}
\item the point $O$ is either smooth or an ordinary double point of $X$,%
\item the surface $X$ is smooth outside the point $O\in X$.%
\end{itemize}

\begin{lemma}
\label{lemma:singular-Del-Pezzo-2} Suppose that $\Sing(X)=O$, $K_{X}^{2}=2$ and
the equivalence
$$
B_{X}\qlineq -\mu K_{X}
$$
holds, where $0<\mu<2/3$. Then
$\mathbb{LCS}(X,\mu B_{X})=\varnothing$.
\end{lemma}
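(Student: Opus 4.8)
The plan is to establish the stronger assertion $\mathbb{LCS}(X,B_X)=\varnothing$; since $0<\mu<1$ the pair $(X,\mu B_X)$ has an even smaller boundary, so log terminality of $(X,B_X)$ immediately yields $\mathbb{LCS}(X,\mu B_X)=\varnothing$. Assume, for a contradiction, that $\mathbb{LCS}(X,B_X)\ne\varnothing$. Since $B_X\qlineq-\mu K_X$ with $\mu<1$, the divisor $-(K_X+B_X)\qlineq(1-\mu)(-K_X)$ is ample, so $\mathrm{LCS}(X,B_X)$ is connected by Theorem~\ref{theorem:connectedness}. Recall that $X$ is a del Pezzo surface of degree $K_X^2=2$ with a single ordinary double point $O$, so $-K_X$ is an ample Cartier divisor and $X$ carries a Geiser involution $\gamma$ with $\gamma^\ast(-K_X)\sim-K_X$. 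The core idea is to contract one $(-1)$-curve to reach a cubic surface with at most one ordinary double point, for which the inequality $\lct\ge 2/3$ is already known.

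First I would rule out that $\mathrm{LCS}(X,B_X)$ contains a curve. If $C\subseteq\mathrm{LCS}(X,B_X)$ is an irreducible curve, then $\mult_C(B_X)\ge 1$; writing $B_X=bC+\Omega$ with $b\ge 1$ and intersecting with the ample Cartier divisor $-K_X$ gives $2\mu=-K_X\cdot B_X\ge b\,(-K_X\cdot C)$, which forces $-K_X\cdot C=1$. The Geiser conjugate $Z:=\gamma(C)$ is then an irreducible curve (irreducibility because $-K_X$ is ample of degree $1$ on it) with $C+Z\sim-K_X$ and $C\cdot Z\ge 3/2$: the value is $2$ if $C$ and $Z$ avoid $O$ and $3/2$ if both pass through $O$, while the mixed case is impossible since $(C+Z)^2=2$ would force a non-integral intersection supported off $O$. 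Using Remark~\ref{remark:convexity} with the log canonical pair $\big(X,\mu(C+Z)\big)$, I may assume $Z\not\subseteq\Supp(B_X)$, whence $2\mu=-K_X\cdot B_X=B_X\cdot Z\ge b\,(C\cdot Z)\ge 3/2$, i.e. $\mu\ge 3/4$, contradicting $\mu<2/3$. Hence $\mathrm{LCS}(X,B_X)$ is zero-dimensional, and by connectedness it is a single point $P$.

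Next I would pass to a cubic surface. On $X$ there is a $(-1)$-curve $C$ with $C\cap(\Sing(X)\cup\{P\})=\varnothing$; contracting it by a birational morphism $\rho\colon X\to\bar X$ produces a del Pezzo surface of degree $3$, i.e. a cubic surface $\bar X\subset\mathbb{P}^3$ with at most one ordinary double point (the node $O$, if present, survives because $C$ misses it), and $\rho$ is an isomorphism near $P$. Since $\rho_\ast K_X=K_{\bar X}$, pushing forward gives $\rho_\ast(B_X)\qlineq-\mu K_{\bar X}$, and because $\rho$ is a local isomorphism at $P$ we obtain $\rho(P)\in\mathbb{LCS}\big(\bar X,\rho_\ast(B_X)\big)$. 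But a cubic surface with at most one ordinary double point satisfies $\lct(\bar X)\ge 2/3$ by Examples~\ref{example:del-Pezzos} and~\ref{example:singular-cubics} (the value is $3/4$, $2/3$, or $2/3$ according as $\bar X$ is smooth without an Eckardt point, smooth with an Eckardt point, or nodal). Writing $\rho_\ast(B_X)=\mu D'$ with $D'\qlineq-K_{\bar X}$ and using $\mu<2/3\le\lct(\bar X)$, the pair $\big(\bar X,\rho_\ast(B_X)\big)$ is log terminal, so $\mathbb{LCS}\big(\bar X,\rho_\ast(B_X)\big)=\varnothing$, a contradiction. This will complete the proof.

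The main obstacle is the existence of the $(-1)$-curve $C$ avoiding both $O$ and the prescribed point $P$. I would verify this through the $(-1)$-curves on the minimal resolution $\widetilde X$ that are disjoint from the $(-2)$-curve lying over $O$ (equivalently, the lines on $X$ not through $O$): these are finite in number and do not all pass through the single point $P$, so at least one misses $P$, and contracting it keeps the unique node and lowers the degree to $3$. The curve-exclusion step of the second paragraph is what removes the complementary danger, namely that the log canonical center could be one-dimensional, in which case no single contraction would reduce it to a point; together with the slack $\mu<2/3<3/4$, these two mechanisms cover all cases.
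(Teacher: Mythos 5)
Your proof is correct and its core mechanism is the same as the paper's: contract a $(-1)$-curve disjoint from the node (and from the non-log-terminal locus) to land on a cubic surface with at most one ordinary double point, where the bound $\lct\geqslant 2/3$ from Examples~\ref{example:del-Pezzos} and~\ref{example:singular-cubics} gives the contradiction. Your preliminary Geiser-involution argument showing that the locus is zero-dimensional is sound but superfluous: the paper simply chooses the line $L$ so that $\LCS(X,\mu B_{X})\not\subseteq L$, and then even a one-dimensional log canonical center survives the contraction and produces a nonempty $\mathbb{LCS}$ on the cubic surface, so there is no need to reduce the center to a point first.
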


\begin{proof}
Suppose that $\mathbb{LCS}(X,\mu B_{X})\neq\varnothing$. There is a
curve $\mathbb{P}^{1}\cong\nlb L\subset\nlb X$ such that
$$
\mathrm{LCS}\Big(X, \mu B_{X}\Big)\not\subseteq L
$$
the equality $L\cdot L=-1$ holds, and
$L\cap\mathrm{Sing}(X)=\varnothing$. Thus, there is a birational
morphism $\pi\colon X\to S$ that contracts the curve $L$. Then
$$
\mathbb{LCS}\Big(S,\ \mu\pi\big(B_{X}\big)\Big)\ne\varnothing
$$
due to the choice of the curve $L\subset X$. But
$-K_{S}\qlineq\nlb\pi(B_{X})$, and $S\subset\mathbb{P}^{3}$ is a
cubic surface that has at most one ordinary double point, which is
impossible (see Examples~\ref{example:singular-cubics}
and~\ref{example:del-Pezzos}).
\end{proof}

\begin{lemma}
\label{lemma:del-Pezzo-quintic} Suppose that $\mathrm{Sing}(X)=\varnothing$,
$K_{X}^{2}=5$, the equivalence
$$
B_{X}\qlineq -\mu K_{X}
$$
holds, where $\mu\in\mathbb{Q}$ is such that $0<\mu<2/3$.
Assume that $\mathbb{LCS}(X, B_X)\neq\varnothing$.
Then
\begin{itemize}
\item either the set $\mathbb{LCS}(X, B_{X})$ contains a curve,%
\item or there are a curve $\mathbb{P}^{1}\cong L\subset X$ and a point $P\in
L$ such that $L\cdot L=-1$ and
$$
\mathrm{LCS}\Big(X,\ B_{X}\Big)=P.
$$
\end{itemize}
\end{lemma}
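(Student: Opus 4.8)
The plan is to assume that $\mathbb{LCS}(X,B_X)$ contains no curve and to deduce that then $\LCS(X,B_X)$ is a single point lying on a $(-1)$-curve. Since $B_X\qlineq-\mu K_X$ with $\mu<2/3<1$, the divisor $-(K_X+B_X)\qlineq(1-\mu)(-K_X)$ is ample, so Theorem~\ref{theorem:connectedness} applies and $\LCS(X,B_X)$ is connected; containing no curve, it must be a single point $P$. Moreover every irreducible component of $B_X$ has coefficient strictly less than $1$, since a component of coefficient $\ge 1$ would be a curve in $\mathbb{LCS}$. As $X$ is smooth, $P$ is a smooth point and $(X,B_X)$ is not log canonical at $P$.

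Next I would use that $X$ is the blow-up $\sigma\colon X\to\mathbb{P}^2$ of four points $p_1,\dots,p_4$ in general position, with $-K_X\sim 3H-\sum E_i$, so that the ten $(-1)$-curves are the $E_i$ and the proper transforms $\ell_{ij}$ of the lines $\overline{p_ip_j}$. Suppose, for contradiction, that $P$ lies on no $(-1)$-curve, i.e.\ $\sigma(P)$ is none of the $p_i$ and lies on none of the lines $\overline{p_ip_j}$. Let $\pi\colon U\to X$ blow up $P$ with exceptional curve $E$ and write, following Example~\ref{example:smooth-point-and-log-pull-back}, $B^U=B_U+(\mult_P(B_X)-1)E$. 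Intersecting $B_X$ with the pull-back of a general line through $\sigma(P)$ (a curve of class $H$ not contained in $\Supp(B_X)$) gives $\mult_P(B_X)\le 3\mu<2$, so $E$ is not a centre of log canonical singularities and there is a point $Q\in E$ at which $(U,B^U)$ is not log canonical.

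The heart of the argument is to produce an irreducible curve $C\subset X$ with $-K_X\cdot C=3$ and $C^2=1$, smooth at $P$ and tangent at $P$ to the direction determined by $Q$. Let $m_Q\subset\mathbb{P}^2$ be the line through $\sigma(P)$ in the direction $Q$. If $m_Q$ avoids $p_1,\dots,p_4$ I take $C=\sigma^{*}(m_Q)$, of class $H$; if $m_Q$ meets exactly one point, say $p_i$, I instead take $C$ to be the proper transform of the conic through $p_j,p_k,p_l$ and $\sigma(P)$ tangent to $m_Q$, of class $2H-E_j-E_k-E_l$; both have $-K_X\cdot C=3$ and $C^2=1$. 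Because $\sigma(P)$ lies on no line $\overline{p_ip_j}$, the line $m_Q$ passes through at most one $p_i$, so one of these choices yields an irreducible $C$. Then $\bar C=\pi^{*}C-E$ satisfies $\bar C^2=0$ and $\bar C\cdot E=1$ and is smooth at $Q$, so with $\nu<1$ the coefficient of $C$ in $B_X$, Lemma~\ref{lemma:adjunction} applied on $U$ to the component $\bar C$ (coefficient $\nu\le 1$) gives $(B^U-\nu\bar C)\cdot\bar C>1$. Since $\bar C^2=0$ the $\nu$-term drops out and $(B^U-\nu\bar C)\cdot\bar C=B^U\cdot\bar C=(3\mu-\mult_P(B_X))+(\mult_P(B_X)-1)=3\mu-1$, whence $\mu>2/3$, a contradiction. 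Therefore $P$ lies on a $(-1)$-curve $L$, the required second alternative.

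The step I expect to be the main obstacle is the construction and irreducibility of this auxiliary curve $C$ tangent to the prescribed direction $Q$: one must verify that every tangent direction is realised by an irreducible curve of class $H$ or $2H-E_j-E_k-E_l$, that its proper transform has $\bar C^2=0$ (this is what makes the coefficient $\nu$ disappear from the adjunction inequality, allowing $C$ to be a component of $B_X$), and that the only way the construction can degenerate is that $\sigma(P)$ already lies on some $\overline{p_ip_j}$, i.e.\ that $P$ lies on a $(-1)$-curve. The value $\lct(X)=1/2$ from Example~\ref{example:del-Pezzos} is consistent with this and shows that the substantive range is $1/2<\mu<2/3$.
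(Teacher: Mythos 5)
Your proof is correct, but it takes a genuinely different route from the paper's. The paper argues globally: since $P$ lies on no $(-1)$-curve, any contraction $\phi\colon X\to\mathbb{P}^{2}$ of four disjoint exceptional curves is an isomorphism near $P$, so $\phi(P)$ is an isolated zero-dimensional component of $\mathrm{LCS}(\mathbb{P}^{2},\phi(B_X))$ with $\phi(B_X)\qlineq-\mu K_{\mathbb{P}^{2}}$ and $\mu<2/3$, which contradicts Lemma~\ref{lemma:plane} (itself a consequence of Theorem~\ref{theorem:connectedness}). You instead argue locally at $P$: blow up, locate the bad point $Q\in E$, and apply inversion of adjunction (Lemma~\ref{lemma:adjunction}) to an explicitly constructed irreducible curve $C$ with $-K_X\cdot C=3$ and $C^{2}=1$ tangent to the direction $Q$; the identity $B^{U}\cdot\bar{C}=3\mu-1$, combined with the fact that $\bar{C}^{2}=0$ kills the coefficient $\nu$, then forces $\mu\geqslant 2/3$. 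Your version is longer but more elementary, effectively reproving the relevant case of Lemma~\ref{lemma:plane} by hand, and it is the same multiplicity-plus-adjunction technique the paper deploys for cubic surfaces in Section~\ref{section:cubic-surfaces}; the paper's version is shorter because it outsources the contradiction to Nadel vanishing. Two small points to tighten: with the paper's definitions, $P\in\mathrm{LCS}(X,B_X)$ means the pair is not log \emph{terminal} at $P$, so the inequalities from Example~\ref{example:smooth-point-and-log-pull-back} and Lemma~\ref{lemma:adjunction} may only come out non-strict, but the resulting $\mu\geqslant 2/3$ still contradicts $\mu<2/3$; and in the conic case you should observe that the conic cannot also pass through the fourth point $p_i$ (otherwise it would meet the tangent line $m_Q$ with total multiplicity at least $3$, hence contain $m_Q$ by B\'ezout and be reducible), so that its proper transform really has class $2H-E_j-E_k-E_l$.
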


\begin{proof}
Suppose that $\mathbb{LCS}(X, B_{X})$ contains no curves. Then it follows from
Theorem~\ref{theorem:connectedness} that
$$
\mathrm{LCS}\Big(X,\ B_{X}\Big)=P,
$$
where $P\in X$ is a point. We may assume that $P$ does not lie on any curve
$\mathbb{P}^{1}\cong L\subset X$~such~that the equality $L\cdot L=-1$ holds.
Then there is a birational morphism
$\phi\colon X\longrightarrow\mathbb{P}^{2}$
such that $\phi$ is an isomorphism in a neighborhood of the point $P$. Note
that
$$
\phi\big(P\big)\in\LCS\Big(\mathbb{P}^{2},\ \phi\big(B_{X}\big)\Big)\ne\varnothing,%
$$
the set $\mathbb{LCS}(\mathbb{P}^{2},\phi(B_{X}))$ contains no curves, and
$$
\phi\big(B_{X}\big)\qlineq -\mu K_{\mathbb{P}^{2}}.
$$
Since $\mu<2/3$, the latter is impossible by
Lemma~\ref{lemma:plane}.
\end{proof}

\begin{example}
\label{example:del-Pezzo-quintic} Suppose that $O=\mathrm{Sing}(X)$ and
$K_{X}^{2}=5$. Let $\alpha\colon V\to X$ be a blow up of $O$,~and let $E$ be
the exceptional divisor of $\alpha$. Then there is a birational morphism
$\omega\colon V\to \mathbb{P}^{2}$ such that
\begin{itemize}
\item the morphism $\omega$ contracts the curves
$E_{1}$, $E_{2}$, $E_{3}$, $E_{4}$,%
\item the curve $\omega(E)$ is a line in $\mathbb{P}^{2}$ that contains
$\omega(E_{1})$, $\omega(E_{2})$, $\omega(E_{3})$, but $\omega(E)\not\ni
\omega(E_{4})$.
\end{itemize}

Let $Z$ be a line in
$\mathbb{P}^{2}$ such that $\omega(E_{1})\in Z\ni \omega(E_{4})$.
Then
$$
2E+\bar{Z}+2E_{1}+E_{2}+E_{3}\sim -K_{V},
$$
where $\bar{Z}\subset V$ is a proper transform of $Z$. One has
$$
\mathrm{lct}\Big(X,\
\alpha\big(\bar{Z}+2\alpha\big(E_{1}\big)+\alpha\big(E_{2}\big)+\alpha\big(E_{3}\big)\Big)=\frac{1}{2},%
$$
which implies $\mathrm{lct}(X)\leqslant 1/2$. Suppose that
$-K_{X}\qlineq 2B_{X}$, but $(X,B_{X})$ is not log canonical. Then
$$
K_{V}+B_{V}+mE\qlineq\alpha^{*}\Big(K_{X}+B_{X}\Big),
$$
for some $m\geqslant 0$, and $B_{V}$ is
the proper transform of $B_{X}$ on the surface $V$. Then
$$
\Big(V,\ B_{V}+mE\Big)
$$
is not log canonical at some point $P\in V$. There is a birational
morphism $\pi\colon V\to U$ such that
\begin{itemize}
\item the surface $U$ is a smooth del Pezzo surface with $K_{U}^{2}=6$,%
\item the morphism $\pi$ is an isomorphism in a neighborhood of $P\in X$,%
\end{itemize}
which implies that $(U,\pi(B_{V})+m\pi(E))$ is not log canonical
at $\pi(P)$. But
$$
\pi\big(B_{V}\big)+m\pi\big(E\big)\qlineq-\frac{1}{2}K_{U},
$$
which is impossible, because $\mathrm{lct}(U)=1/2$ (see
Example~\ref{example:del-Pezzos}). We see that
$\mathrm{lct}(X)=1/2$.
\end{example}

\begin{example}
\label{example:del-Pezzo-quartic} Suppose that $K_{X}^{2}=4$.
Arguing as in Example~\ref{example:del-Pezzo-quintic}, we see that
the equality
$$
\mathrm{lct}\big(X\big)=\left\{%
\begin{aligned}
&1/2\ \text{when}\ O=\mathrm{Sing}\big(X\big),\\
&2/3\ \text{when}\ \mathrm{Sing}\big(X\big)=\varnothing,
\end{aligned}
\right.
$$
holds. Take $\lambda\leqslant 1$. Suppose that
$$
B_{X}\qlineq -K_{X},
$$
and $(X,\lambda B_{X})$ is not log canonical at some point $P\in
X\setminus O$. There is a commutative diagram
$$
\xymatrix{
&V\ar@{->}[dl]_{\alpha}\ar@{->}[dr]^{\beta}&\\%
X\ar@{-->}[rr]_{\psi}&&U}
$$
where $U$ is a cubic surface in $\mathbb{P}^{3}$ that has
canonical singularities, the morphism $\alpha$ is a blow up of the
point $P$, the morphism $\beta$ is birational, and $\psi$ is a
projection from the point $P\in X$. Then
$$
K_{V}+\lambda
B_{V}+\Big(\lambda\mathrm{mult}_{P}\big(B_{X}\big)-1\Big)E\qlineq\alpha^{*}\Big(K_{X}+\lambda
B_{X}\Big),
$$
where $E$ is the exceptional divisor of $\alpha$, and $B_{V}$ is
the proper transform of $B_{X}$. Note that
$$
\Big(V,\ \lambda
B_{V}+\Big(\lambda\mathrm{mult}_{P}\big(B_{X}\big)-1\Big)E\Big)
$$
is not log canonical at some point $Q\in E$ and
$\mathrm{mult}_{P}(B_{X})>1/\lambda$. Then the log pair
$$
\Big(V,\ \lambda
B_{V}+\Big(\lambda\mathrm{mult}_{P}\big(B_{X}\big)-\lambda\Big)E\Big)
$$
is not log canonical at the point $Q\in E$ as well. But the
equivalences
$$
B_{V}+\Big(\mathrm{mult}_{P}\big(B_{X}\big)-1\Big)E\qlineq-K_{V}+\alpha^{*}\Big(K_{X}+B_{X}\Big)\qlineq -K_{V},%
$$
hold. Suppose that $P$ is not contained in any line on the surface
$X$. Then
\begin{itemize}
\item the morphism $\beta\colon V\to U$ is an isomorphism,%
\item the cubic surface $U$ is smooth outside of the point $\psi(O)$,%
\item the point $\psi(O)$ is at most ordinary double point of the surface $U$,%
\end{itemize}
which implies that $\lambda>2/3$ (see
Example~\ref{example:singular-cubics}).

Suppose that $\lambda=3/4$. Then the point
$$
\psi\big(Q\big)\in U\subset\mathbb{P}^{3}
$$
must be an Eckardt point (see
Definition~\ref{definition:Eckardt-point}) of the surface $U$ by
Theorem~\ref{theorem:cubic-surfaces}. But
$$
\beta\big(E\big)\subset U\subset\mathbb{P}^{3}
$$
is a line. So, there are two conics $C_{1}\ne C_{2}$ contained in $X$ such that
$P=C_{1}\cap C_{2}$ and
$$
C_{1}+C_{2}\sim -K_{X}.
$$
\end{example}

\begin{lemma}
\label{lemma:singular-del-Pezzo-sextic} Suppose that $O=\mathrm{Sing}(X)$ and
$K_{X}^{2}=6$ such that there is a diagram
$$
\xymatrix{
&V\ar@{->}[ld]_\alpha\ar@{->}[rd]^\beta&\\
X&&\P^2}
$$
where $\beta$ is a blow up of three points $P_1, P_2, P_3\in\P^2$ lying on a
line $L\subset\mathbb{P}^{2}$, and $\alpha$ is a birational morphism that
contracts an irreducible curve $\bar{L}$ to the point $O$ such that
$\beta(\bar{L})=L$. Then
$$
\mathrm{LCS}\Big(X,\ \lambda B_{X}\Big)=O
$$
in the case when $\mathrm{LCS}(X,\lambda B_{X})\ne\varnothing$,
$B_{X}\qlineq -K_{X}$ and $\lambda<1/2$.
\end{lemma}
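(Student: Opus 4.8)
The plan is to prove the stronger statement that $(X,\lambda B_{X})$ is log canonical at every point of $X\setminus O$; since $\LCS(X,\lambda B_{X})\subseteq\{O\}$ together with $\LCS(X,\lambda B_{X})\neq\varnothing$ forces $\LCS(X,\lambda B_{X})=O$, this suffices. The starting point is the geometry of the crepant morphism $\alpha$. Because $P_{1},P_{2},P_{3}$ are collinear, the proper transform $\bar{L}$ of $L$ is the unique $(-2)$-curve on $V$, and the only $(-1)$-curves are $E_{1},E_{2},E_{3}$; these are pairwise disjoint and each meets $\bar{L}$ in one point. Hence $X$ contains exactly three lines $L_{i}=\alpha(E_{i})$, all passing through $O$, and the relation $\alpha^{*}L_{i}=E_{i}+\frac{1}{2}\bar{L}$ gives $L_{i}^{2}=-1/2$ and $L_{i}\cdot L_{j}=1/2$ for $i\neq j$, so that $L_{i}\cap L_{j}=O$. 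The conics on $X$ are the images of the three base-point-free pencils $|H-E_{i}|$, and through any point $P$ whose $\alpha$-preimage avoids $\bar{L}\cup E_{1}\cup E_{2}\cup E_{3}$ (that is, with $P\neq O$ and $P\notin L_{1}\cup L_{2}\cup L_{3}$) there pass exactly three conics $C^{(1)},C^{(2)},C^{(3)}$, one from each pencil, smooth at $P$ and with pairwise distinct tangent directions (downstairs in $\P^2$ these correspond to the three distinct lines joining the image of $P$, which avoids $L$, to $P_{1},P_{2},P_{3}$).

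Suppose $(X,\lambda B_{X})$ fails to be log canonical at some $P\neq O$; as $P$ is a smooth point, $\mult_{P}(B_{X})>1/\lambda>2$. Treat first the case $P\notin L_{1}\cup L_{2}\cup L_{3}$, where the three conics $C^{(j)}$ are available. Since $\sum_{i}(H-E_{i})=-K_{V}$, the divisor $\Delta=C^{(1)}+C^{(2)}+C^{(3)}$ satisfies $\Delta\qlineq -K_{X}\qlineq B_{X}$, and $(X,\lambda\Delta)$ is log canonical at $P$ because three smooth branches with distinct tangents have log canonical threshold $2/3>\lambda$. By Remark~\ref{remark:convexity}, applied to $\lambda\Delta\qlineq\lambda B_{X}$, we may therefore replace $B_{X}$ by a $\Q$-linearly equivalent effective divisor, still not log canonical at $P$, for which some $C^{(j)}$ is not a component. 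For that conic $2=-K_{X}\cdot C^{(j)}=B_{X}\cdot C^{(j)}\ge\mult_{P}(B_{X})>2$, a contradiction.

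It remains to treat $P\in L_{i}\setminus O$, which by $L_{i}\cap L_{j}=O$ lies on a single line. Write $B_{X}=m_{i}L_{i}+\Omega$ with $L_{i}\not\subseteq\Supp(\Omega)$. Intersecting with a general member $C$ of the pencil $|H-E_{i}|$, for which $C\cdot L_{i}=1$ and $C\not\subseteq\Supp(\Omega)$, gives $m_{i}\le B_{X}\cdot C=-K_{X}\cdot C=2$, whence $\lambda m_{i}<1$. Thus Lemma~\ref{lemma:adjunction} applies with $B_{1}=L_{i}$ (a smooth curve, with $P\notin\Sing(X)\cup\Sing(L_{i})$) and yields $\Omega\cdot L_{i}>1/\lambda$. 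On the other hand $\Omega\cdot L_{i}=-K_{X}\cdot L_{i}-m_{i}L_{i}^{2}=1+m_{i}/2\le 2$, so $1/\lambda<2$, i.e. $\lambda>1/2$, contradicting $\lambda<1/2$.

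The decisive input is the self-intersection $L_{i}^{2}=-1/2$ of a line through the node $O$: it is precisely this value, combined with the bound $m_{i}\le 2$ coming from the conic pencil, that makes the adjunction inequality $1+m_{i}/2>1/\lambda$ incompatible with $\lambda<1/2$. I expect the only genuinely delicate part to be the preliminary classification of the low-degree curves on the singular sextic del Pezzo\t the three lines through $O$ and the three conic pencils\t and the verification of their intersection numbers on $X$ through the crepant morphism $\alpha$; once these are in place, the two contradictions above are short and are modelled on the intersection-theoretic arguments of Section~\ref{section:cubic-surfaces}.
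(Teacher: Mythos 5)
Your proof is correct, but it takes a genuinely different route from the paper's. The paper argues globally: assuming $\LCS(X,\lambda B_{X})\ne O$, it first invokes Theorem~\ref{theorem:connectedness} with an auxiliary general conic $\alpha(\bar{M})\qlineq -\frac{1}{2}K_{X}$ passing through $O$ to force $\mathbb{LCS}(X,\lambda B_{X})$ to contain a curve $C$, and then kills that curve in one stroke: writing $B_{X}=\eps C+\Omega$ with $\eps>2$ and intersecting with the same conics $\Gamma_{i}$ that you use (proper transforms of general lines through $P_{i}$), which satisfy $\Gamma_{1}+\Gamma_{2}+\Gamma_{3}\qlineq -K_{X}$, it finds some $\Gamma_{m}$ with $C\cdot\Gamma_{m}>0$ and obtains $2=B_{X}\cdot\Gamma_{m}\geqslant\eps>2$. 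Your argument is pointwise and never uses connectedness: off the three lines you combine the three transverse conics through $P$ with Remark~\ref{remark:convexity} and the multiplicity bound $\mult_{P}(B_{X})>2$, and on $L_{i}\setminus O$ you combine Lemma~\ref{lemma:adjunction} with $L_{i}^{2}=-1/2$ and the bound $m_{i}\leqslant 2$. This buys a stronger local conclusion (the pair is even klt at every point of $X\setminus O$) at the price of a case division and a preliminary classification of lines and conic pencils on $X$, whereas the paper's proof is shorter but handles curves and isolated points away from $O$ in a single global step. One small point of wording: in this paper $\LCS$ denotes the non-klt locus, so ``log canonical at every point of $X\setminus O$'' is not literally the same as $\LCS(X,\lambda B_{X})\subseteq\{O\}$; however, all your final inequalities rest on the strict bound $1/\lambda>2$ and therefore already exclude the log-canonical-but-not-klt case (one gets $\mult_{P}(B_{X})\geqslant 1/\lambda>2$ and $\Omega\cdot L_{i}\geqslant 1/\lambda>2$ respectively), so the conclusion stands as stated.
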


\begin{proof}
Suppose that $B_{X}\qlineq -K_{X}$ and
$$
\varnothing\ne\mathrm{LCS}\Big(X,\ \lambda B_{X}\Big)\ne O,
$$
let $M\subset\P^2$ be a general line, and let $\bar{M}\subset V$ be its proper
transform. Then
$$
-K_X\sim 2\alpha\big(\bar{M}\big)
$$
and $O\in \alpha(\bar{M})$. Thus, the set $\mathbb{LCS}(X, \lambda B_{X})$
contains a curve, because otherwise the locus
$$
\LCS\Big(X,\ \lambda B_{X} +\alpha\big(\bar{M}\big)\Big)
$$
would be disconnected, which is impossible by
Theorem~\ref{theorem:connectedness}.

Let $C\subset X$ be an irreducible curve such that $C\subseteq\LCS(X, \lambda
B_{X})$. Then
$$
B_{X}=\eps C+\Omega,
$$
where $\eps>2$, and $\Omega$ is an
effective $\mathbb{Q}$-divisor such that $C\not\subset\mathrm{Supp}(\Omega)$.

Let $\Gamma_{i}\subset X$ be a proper transform of a general line in
$\mathbb{P}^{2}$ that passes through  $P_{i}$. Then
$$
O\not\in\Gamma_{1}\cup\Gamma_{2}\cup\Gamma_{3}
$$
and $-K_{X}\cdot\Gamma_{1}=-K_{X}\cdot\Gamma_{2}=-K_{X}\cdot\Gamma_{3}=2$. But
$$
-K_{X}\qlineq\Gamma_{1}+\Gamma_{2}+\Gamma_{3},
$$
which implies that there is $m\in\{1,2,3\}$ such that $C\cdot\Gamma_{m}\ne 0$.
Then
$$
2=B_{X}\cdot\Gamma_{m}=\Big(\eps C+\Omega\Big)\cdot\Gamma_{m}\geqslant\eps C\cdot\Gamma_{m}\geqslant\eps>2,%
$$
because $\Gamma_{m}\not\subset\mathrm{Supp}(B_{X})$. The obtained contradiction
completes the proof.
\end{proof}

\begin{remark}
\label{remark:del-Pezzo-sextic} Suppose that $O=\mathrm{Sing}(X)$
and $K_{X}^{2}=6$. Let $\alpha\colon V\to X$
be a blow up of the point $O\in X$, and let $E$ be the exceptional divisor of
$\alpha$. Then
$$
K_{V}+B_{V}+mE\qlineq\alpha^{*}\Big(K_{X}+B_{X}\Big)
$$
for some $m\geqslant 0$, and $B_{V}$ is the proper
transform of $B_{X}$ on  $V$.
Note that $\lct(X)\le 1/3$.
Suppose that $\lct(X)<1/3$, i.\,e. there exists an effective $\Q$-divisor
$B_{X}\qlineq -K_{X}$, such that the log pair
$(X, 1/3 B_{X})$ is not log canonical. Then the log pair
$$
\Big(V, \frac{1}{3}\big(B_{V}+ mE\big)\Big)
$$
is not log canonical at some point $P\in V$. There is a birational morphism
$\pi\colon V\longrightarrow U$
such that $U$ is either $\mathbb{F}_{1}$ or
$\mathbb{P}^{1}\times\mathbb{P}^{1}$ and $\pi$ is an isomorphism in a
neighborhood of $P\in X$. Then the log pair
$$
\Big(U, \frac{1}{3}\big(\pi(B_{V})+m\pi(E)\big)\Big)
$$
is not log canonical
at the point $\pi(P)$. But we know that
$$
-K_{U}\qlineq\pi\big(B_{V}\big)+m\pi\big(E\big),
$$
so the latter contradicts Example~\ref{example:del-Pezzos}. Hence
$\mathrm{lct}(X)=1/3$.
\end{remark}

\begin{lemma}
\label{lemma:quadric-cone-far-from-vertex} Suppose that
$X\cong\mathbb{P}(1,1,2)$ and $B_{X}\qlineq -K_X$, but there is a
point $P\in X$ such that
$$
O\ne P\in\mathrm{LCS}\Big(X,\ \lambda B_{X}\Big)
$$
for some $\lambda<1/2$. Take
$L\in|\mathcal{O}_{\mathbb{P}(1,1,2)}(1)|$ such that $P\in L$. Then
$L\subseteq\mathrm{LCS}(X, \lambda B_{X})$.
\end{lemma}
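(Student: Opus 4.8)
The plan is to reduce to a one-dimensional inversion-of-adjunction computation along the ruling $L$, in the spirit of Lemma~\ref{lemma:adjunction}. First I would record the intersection numbers on $X\cong\mathbb{P}(1,1,2)$. Since $-K_X\sim\mathcal{O}_{\mathbb{P}(1,1,2)}(4)$, $L\sim\mathcal{O}_{\mathbb{P}(1,1,2)}(1)$, and $\mathcal{O}_{\mathbb{P}(1,1,2)}(a)\cdot\mathcal{O}_{\mathbb{P}(1,1,2)}(b)=ab/2$, we obtain
$$
L\cdot L=\frac{1}{2},\qquad B_X\cdot L=-K_X\cdot L=2.
$$
Writing $B_X=mL+\Omega$ with $m=\mathrm{mult}_L(B_X)\ge 0$ and $L\not\subseteq\mathrm{Supp}(\Omega)$, it suffices to prove $\lambda m\ge 1$: then the coefficient of $L$ in $\lambda B_X$ is at least $1$, so $L\subseteq\mathrm{LCS}(X,\lambda B_X)$ directly by the definition of the locus of log canonical singularities.

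So I would argue by contradiction, assuming $\lambda m<1$. The key point is that $P\ne O=\mathrm{Sing}(X)$, so $P$ is a smooth point of $X$; and since the ruling $L\cong\mathbb{P}^1$ is singular as a curve on $X$ only possibly at the vertex, $P$ is also a smooth point of $L$. Thus $P\notin\mathrm{Sing}(X)\cup\mathrm{Sing}(L)$. Writing $\lambda B_X=(\lambda m)L+\lambda\Omega$ with leading coefficient $\lambda m\le 1$ and applying Lemma~\ref{lemma:adjunction} at the point $P\in L$, where $(X,\lambda B_X)$ is not log canonical by hypothesis, I obtain
$$
\lambda\,\Omega\cdot L>1.
$$

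Finally I would substitute $\Omega\cdot L=(B_X-mL)\cdot L=2-m/2\le 2$, using $m\ge 0$, to conclude $\lambda\,\Omega\cdot L\le 2\lambda<1$ because $\lambda<1/2$. This contradicts the inequality coming from adjunction, so $\lambda m\ge 1$ and hence $L\subseteq\mathrm{LCS}(X,\lambda B_X)$, as required. I do not expect a serious obstacle here; the only steps requiring care are verifying that $P$ lies in the smooth locus of both $X$ and $L$, so that Lemma~\ref{lemma:adjunction} is applicable, and evaluating $L\cdot L=1/2$ correctly on the singular surface $\mathbb{P}(1,1,2)$.
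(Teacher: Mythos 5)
Your proof is correct in substance but follows a genuinely different route from the paper's. The paper first rules out any curve $\Gamma\ne L$ through $P$ lying in $\mathbb{LCS}(X,\lambda B_X)$ by a degree count (such a $\Gamma$ satisfies $\Gamma\sim mL$ with $m\ge 2$, which caps its multiplicity in $B_X$ at $2<1/\lambda$), then assumes $L\not\subseteq\mathrm{LCS}(X,\lambda B_X)$, deduces from Theorem~\ref{theorem:connectedness} that the locus is the single point $P$, and finally derives a contradiction by adding a general member of $|\mathcal{O}_{\mathbb{P}(1,1,2)}(1)|$ and invoking connectedness again. You instead prove directly that $\mathrm{mult}_L(\lambda B_X)\ge 1$ (which for a curve on a surface is equivalent to the assertion $L\subseteq\mathrm{LCS}(X,\lambda B_X)$) by inversion of adjunction along $L$ at the smooth point $P$, using $L\cdot L=1/2$ and $B_X\cdot L=2$. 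Your route is shorter, purely local, and avoids the connectedness theorem entirely; the intersection numbers on $\mathbb{P}(1,1,2)$ are computed correctly, and your care in checking that $P\notin\mathrm{Sing}(X)\cup\mathrm{Sing}(L)$ is exactly what makes Lemma~\ref{lemma:adjunction} applicable.

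One step needs repair. You assert that $(X,\lambda B_X)$ "is not log canonical at $P$ by hypothesis", but the hypothesis $P\in\mathrm{LCS}(X,\lambda B_X)$ only says the pair is not log \emph{terminal} at $P$, whereas Lemma~\ref{lemma:adjunction} as stated requires non-log-canonicity. The fix is easy: since $X$ is smooth at $P\ne O$, every non-klt place over $P$ has positive multiplicity along $B_X$, so $(X,\lambda' B_X)$ is genuinely not log canonical at $P$ for every $\lambda'$ with $\lambda<\lambda'<1/2$. Running your argument with such a $\lambda'$ gives $\lambda'\,\Omega\cdot L>1$ from Lemma~\ref{lemma:adjunction}, while your computation gives $\lambda'\,\Omega\cdot L\le 2\lambda'<1$, and the contradiction survives. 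With this adjustment the argument is complete.
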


\begin{proof}
Suppose that there is a curve $\Gamma\in\LCS(X, \lambda B_{X})$ such that
$P\in\Gamma\ne L$. Then
$$
B_{X}=\mu\Gamma+\Omega,
$$
where $\mu>2$, and $\Omega$ is an effective
$\mathbb{Q}$-divisor such that $\Gamma\not\subset\mathrm{Supp}(\Omega)$.
Hence
$$
\mu\Gamma+\Omega\qlineq 4L
$$
and $\Gamma\sim mL$, where $m\in\mathbb{Z}_{>0}$. But $m\geqslant
2$, because $P\in\Gamma\ne L$, which is a contradiction.

Suppose that $L\not\subseteq\mathrm{LCS}(X, \lambda B_{X})$. Then it follows
from Theorem~\ref{theorem:connectedness} that
$$
\mathrm{LCS}\Big(X, \lambda B_{X}\Big)=P,%
$$
because we proved that $\mathbb{LCS}(X, \lambda B_{X})$ contains
no curves passing through $P\in X$.

Let $C\in|\mathcal{O}_{\mathbb{P}(1,1,2)}(1)|$ be a general curve. Then
$$
\mathrm{LCS}\Big(X, \lambda B_{X}+C\Big)=P\cup C,%
$$
which is impossible by Theorem~\ref{theorem:connectedness}.
\end{proof}

\begin{lemma}
\label{lemma:F1} Suppose that $X\cong\mathbb{F}_{1}$. Then there
are $0\leqslant\mu\in\mathbb{Q}\ni\lambda\geqslant 0$ such that
$$
B_{X}\qlineq \mu C+\lambda L,
$$
where $C$ and $L$ are irreducible curves on the surface $X$ such
that
$$
C\cdot C=-1,\ C\cdot L=1
$$
and $L\cdot L=0$. Suppose that $\mu<1$ and $\lambda<1$. Then
$\mathbb{LCS}(X, B_{X})=\varnothing$.
\end{lemma}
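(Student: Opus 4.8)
\emph{Existence and set-up.} Since the N\'eron--Severi group of $\mathbb{F}_1$ is freely generated by $C$ and $L$, every $\mathbb{Q}$-divisor class has the form $\mu C+\lambda L$; pairing the effective divisor $B_X$ with the nef classes $L$ and $C+L$ gives $\mu=B_X\cdot L\geqslant 0$ and $\lambda=B_X\cdot (C+L)\geqslant 0$, which accounts for the first assertion. For the main assertion I would argue exactly as in Lemma~\ref{lemma:elliptic-times-P1}. Assume $\mathbb{LCS}(X,B_X)\neq\varnothing$ and pick a point $P$ at which $(X,B_X)$ fails to be log terminal; then $\mathrm{mult}_P(B_X)\geqslant 1$. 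As all fibres of the ruling $X\to\mathbb{P}^1$ are linearly equivalent, I may take $L$ to be the fibre through $P$, so that $P\in L$, the curve $L$ is smooth with $L\cdot L=0$, and $L$ meets $C$ transversally in a single point.

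\emph{Convexity reduction.} The divisor $\mu C+\lambda L$ has simple normal crossings and its coefficients $\mu,\lambda$ are both strictly less than $1$, so the log pair $(X,\mu C+\lambda L)$ is log terminal; since it is $\mathbb{Q}$-linearly equivalent to $B_X$, Remark~\ref{remark:convexity} allows me to assume that at least one of $C$ and $L$ is not contained in $\mathrm{Supp}(B_X)$. This is the decisive step, since it rules out the possibility that $C$ and the fibre $L$ through $P$ lie in the support of $B_X$ simultaneously.

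\emph{Three cases.} If $L\not\subseteq\mathrm{Supp}(B_X)$, then, since $L$ is smooth at $P$, one gets $\mathrm{mult}_P(B_X)\leqslant B_X\cdot L=\mu<1$, a contradiction. If $C\not\subseteq\mathrm{Supp}(B_X)$ and $P\in C$, then likewise $\mathrm{mult}_P(B_X)\leqslant B_X\cdot C=\lambda-\mu<1$, again impossible. In the remaining case $C\not\subseteq\mathrm{Supp}(B_X)$ and $P\notin C$, I would split off the fibre by writing $B_X=\nu L+\Omega$ with $\nu\geqslant 0$ and $L\not\subseteq\mathrm{Supp}(\Omega)$, so that $\Omega\qlineq\mu C+(\lambda-\nu)L$. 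Intersecting with $C$ gives $\nu\leqslant B_X\cdot C=\lambda-\mu$, while $\mathrm{mult}_P(\Omega)\leqslant\Omega\cdot L=\mu$; as $P\in L\setminus C$ this yields $\mathrm{mult}_P(B_X)=\nu+\mathrm{mult}_P(\Omega)\leqslant(\lambda-\mu)+\mu=\lambda<1$, the final contradiction.

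\emph{Main obstacle.} The one feature absent from Lemma~\ref{lemma:elliptic-times-P1} is the negative self-intersection $C\cdot C=-1$, which is precisely what prevents a direct intersection estimate against $C$ from controlling $\mathrm{mult}_P(B_X)$ once $C\subseteq\mathrm{Supp}(B_X)$ and $P\notin C$. I expect the convexity reduction of the second paragraph to be the key device that circumvents this difficulty: after it, $C$ and $L$ never share the support, and the transversality $C\cdot L=1$ together with the vanishing $L\cdot L=0$ collapses every remaining case to an elementary multiplicity bound.
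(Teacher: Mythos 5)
Your argument is correct, but it follows a genuinely different path from the paper's. The paper's proof is global: it first observes that $\mathbb{LCS}(X,B_X)$ contains no curves because $C$ and $L$ generate the effective cone (an irreducible curve occurring in $B_X$ with coefficient $\geqslant 1$ would be numerically $aC+bL$ with integers $0\leqslant a\leqslant\mu<1$ and $0\leqslant b\leqslant\lambda<1$, forcing $a=b=0$), and then disposes of a residual isolated point by Theorem~\ref{theorem:Shokurov-vanishing}: since $-K_X\qlineq 2C+3L$ and $(1-\mu)C+(2-\lambda)L$ is ample, one obtains a surjection $0=H^{0}(\mathcal{O}_{X}(-L-C))\to H^{0}(\mathcal{O}_{\mathcal{L}(X,\,B_{X})})\neq 0$, which is absurd. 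You instead run the convexity reduction of Remark~\ref{remark:convexity} against $\Delta=\mu C+\lambda L$ (with $L$ the fibre through the non-klt point) and finish with elementary multiplicity-versus-intersection bounds, in the spirit of Lemma~\ref{lemma:elliptic-times-P1}; this avoids the vanishing theorem at the cost of a three-case analysis, and your case computations all check out. Two small points you should make explicit: Remark~\ref{remark:convexity} is stated for pairs that are not log canonical, whereas you only know that $(X,B_X)$ fails to be log terminal at $P$ --- the same affine-in-the-boundary computation of discrepancies yields the statement you need, but it deserves a sentence; and when $\mu=0$ or $\lambda=0$ the support of $\Delta$ degenerates to a single curve, so the reduction removes only that curve from $\mathrm{Supp}(B_X)$ --- your Case 1 (resp.\ Cases 2--3) still applies, but the trichotomy as written tacitly assumes $\mu>0$ and $\lambda>0$.
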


\begin{proof}
The set $\mathbb{LCS}(X, B_{X})$ contains no curves, because $L$
and $C$ generate the~cone of effective divisors of the surface
$X$. Suppose that $\mathbb{LCS}(X, B_{X})$ contains a point $O\in
X$. Then
$$
K_{X}+B_{X}+\Big(\big(1-\mu\big)C+\big(2-\lambda\big)L\Big)\qlineq -\big(L+C\big),%
$$
because  $-K_{X}\qlineq 2C+3L$. But it follows from
Theorem~\ref{theorem:Shokurov-vanishing} that the map
$$
0=H^{0}\Big(\mathcal{O}_{X}\big(-L-C\big)\Big)\longrightarrow
 H^{0}\Big(\mathcal{O}_{\mathcal{L}(X,\,B_{X})}\Big)\ne 0%
$$
is surjective, because the divisor $(1-\mu)C+(2-\lambda)L$ is
ample.
\end{proof}

\begin{lemma}
\label{lemma:del-Pezzo-septic}  Suppose that
$\mathrm{Sing}(X)=\varnothing$ and $K_{X}^{2}=7$. Then
$$
L_{1}\cdot L_{1}=L_{2}\cdot L_{2}=L_{3}\cdot L_{3}=-1,\ L_{1}\cdot L_{2}=L_{2}\cdot L_{3}=1,\ L_{1}\cdot L_{3}=0%
$$
where $L_{1}$, $L_{2}$, $L_{3}$ are exceptional curves on $X$.
Suppose that $\mathrm{LCS}(X, B_{X})\ne\varnothing$ and
$$
B_{X}\qlineq -\mu K_{X},
$$
where $\mu<1/2$. Then
$\mathrm{LCS}(X, B_{X})=L_{2}$.
\end{lemma}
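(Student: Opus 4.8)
The plan is to realize $X$ as the blow-up $\sigma\colon X\to\mathbb{P}^{2}$ of two distinct points $p_{1},p_{2}$, so that $L_{1},L_{3}$ are the $\sigma$-exceptional curves and $L_{2}=\sigma^{*}(\ell)-L_{1}-L_{3}$ is the strict transform of the line $\ell=\overline{p_{1}p_{2}}$; these are the only $(-1)$-curves, and $-K_{X}\sim 2L_{1}+3L_{2}+2L_{3}$. I would record the two conic bundle structures $\phi_{1},\phi_{2}\colon X\to\mathbb{P}^{1}$ with fibre classes $F_{1}=L_{2}+L_{3}$ and $F_{2}=L_{1}+L_{2}$ (pencils of lines through $p_{1}$, resp. $p_{2}$); each has a single reducible fibre, namely $L_{2}\cup L_{3}$, resp. $L_{1}\cup L_{2}$, and $L_{2}$ is the unique irreducible curve vertical for both. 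Since $\mathrm{lct}(X)=1/3$ by Example~\ref{example:del-Pezzos}, the hypothesis $\mathrm{LCS}(X,B_{X})\neq\varnothing$ already forces $\mu\geqslant 1/3$, which I would keep in mind throughout.

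First I would pin down the curve. Because $-(K_{X}+B_{X})\qlineq(1-\mu)(-K_{X})$ is ample, Theorem~\ref{theorem:connectedness} gives that $\mathrm{LCS}(X,B_{X})$ is connected. For a general fibre $F\cong\mathbb{P}^{1}$ of $\phi_{1}$ we have $\deg(B_{X}|_{F})=-\mu K_{X}\cdot F=2\mu<1$, so $\mathrm{lct}(F,[B_{X}|_{F}])=1/(2\mu)>1$ since $\mathrm{lct}(\mathbb{P}^{1})=1/2$ (Example~\ref{example:hypersurface-index-big}); as a general fibre cannot be contained in a fixed locus, Lemma~\ref{lemma:Hwang} yields $\mathrm{lct}_{F}(X,B_{X})\geqslant 1/(2\mu)>1$, i.e. $(X,B_{X})$ is log terminal near $F$. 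Hence $\mathrm{LCS}(X,B_{X})$ meets no general fibre of $\phi_{1}$ and is therefore contained in the reducible fibre $L_{2}\cup L_{3}$; running the same argument for $\phi_{2}$ confines it to $L_{1}\cup L_{2}$, so $\mathrm{LCS}(X,B_{X})\subseteq(L_{2}\cup L_{3})\cap(L_{1}\cup L_{2})=L_{2}$.

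It remains to upgrade this inclusion to an equality, i.e. to rule out that $\mathrm{LCS}(X,B_{X})$ is a single point $P\in L_{2}$ (the only connected proper closed subsets of $L_{2}\cong\mathbb{P}^{1}$ being points). In that case $\mathrm{mult}_{L_{2}}(B_{X})<1$, and writing $B_{X}=\mathrm{mult}_{L_{2}}(B_{X})L_{2}+\Omega$ and applying inversion of adjunction along $L_{2}$ (Theorem~\ref{theorem:adjunction}, after saturating the coefficient of $L_{2}$ to $1$, or Lemma~\ref{lemma:adjunction}) gives $\Omega\cdot L_{2}=\mu+\mathrm{mult}_{L_{2}}(B_{X})>1$. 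If $P$ is a general point of $L_{2}$, i.e. $P\notin L_{1}\cup L_{3}$, the morphism $\sigma$ is an isomorphism near $P$, the push-forward $\sigma_{*}B_{X}\qlineq-\mu K_{\mathbb{P}^{2}}$ has $\sigma(P)$ as an isolated zero-dimensional component of its log canonical locus, and since $\mu<1/2<2/3$ this contradicts Lemma~\ref{lemma:plane}.

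\textbf{The main obstacle} is the remaining case $P\in\{L_{1}\cap L_{2},\,L_{2}\cap L_{3}\}$: here every birational contraction of $X$ to $\mathbb{P}^{2}$ must contract a curve through $P$ (the only disjoint pair of $(-1)$-curves is $\{L_{1},L_{3}\}$), and contracting $L_{1}$ makes the pair \emph{less} singular at $P$ since $\mu<1$, so the reduction to $\mathbb{P}^{2}$ is unavailable. I would treat, say, $P=L_{1}\cap L_{2}$ directly: writing $B_{X}=m_{1}L_{1}+m_{2}L_{2}+\Omega_{0}$, the intersections $B_{X}\cdot F_{1}=B_{X}\cdot F_{2}=2\mu$ and $B_{X}\cdot H=3\mu$ (with $H=\sigma^{*}\mathcal{O}(1)$) give $m_{1}\leqslant 2\mu$ and $m_{2}\leqslant 3\mu$, while inversion of adjunction along $L_{1}$ and along $L_{2}$ forces $m_{1},m_{2}>1-\mu$. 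I expect to reach a contradiction by blowing up $P$ and analysing the resulting non-log-canonical point $Q$ on the exceptional curve $E$: using that $Q$ must lie on the strict transform of $L_{1}$ or $L_{2}$, applying Lemma~\ref{lemma:adjunction} on the blow-up, and bounding $\mathrm{mult}_{P}(\Omega_{0})\leqslant\min(\Omega_{0}\cdot L_{1},\Omega_{0}\cdot L_{2})\leqslant\mu$, iterating along the non-log-canonical tower as in the proof of Theorem~\ref{theorem:cubic-surfaces}. The delicate point, and where $\mu<1/2$ is essential, is that the local tangency of $\Omega_{0}$ to $L_{2}$ needed to create an isolated non-log-canonical point is incompatible with the global bound $[\Omega_{0}]=-\mu K_{X}-m_{1}L_{1}-m_{2}L_{2}$ on its class; I would make this quantitative via the intersection estimates above to close the case and conclude $\mathrm{LCS}(X,B_{X})=L_{2}$.
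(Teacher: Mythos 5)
Your containment $\mathrm{LCS}(X,B_X)\subseteq L_2$ is essentially correct, though you reach it by a different route from the paper: you intersect the two reducible conic-bundle fibres, whereas the paper simply contracts $L_2$ to $\mathbb{P}^1\times\mathbb{P}^1$ and invokes $\mathrm{lct}(\mathbb{P}^1\times\mathbb{P}^1)=1/2>\mu$. One small imprecision: "meets no general fibre, hence is contained in the reducible fibre" skips the non-general irreducible fibres; to apply Lemma~\ref{lemma:Hwang} to an arbitrary irreducible fibre $F$ of $\phi_1$ you should note that $\mathrm{mult}_F(B_X)\leqslant B_X\cdot F'=2\mu<1$ for a general fibre $F'$ of $\phi_2$, which rules out the second alternative of that lemma. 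This is a cosmetic fix.

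The genuine gap is exactly where you flag it: the case $\mathrm{LCS}(X,B_X)=P$ with $P=L_1\cap L_2$ or $P=L_2\cap L_3$ is never closed. Your diagnosis of why the easy reductions fail is sound (contracting a $(-1)$-curve through $P$ increases discrepancies there since $\mu<1$; and contracting the disjoint $(-1)$-curve lands on $\mathbb{F}_1$ with boundary $\qlineq\mu(2C+3L)$, where Lemma~\ref{lemma:F1} needs $3\mu<1$ and so fails precisely in the relevant range $\mu\in[1/3,1/2)$). But your proposed substitute is only a plan: the bounds you record ($m_1\leqslant 2\mu$, $m_2\leqslant 3\mu$, $m_i>1-\mu$) are mutually consistent for $\mu$ near $1/2$, and the promised blow-up-and-iterate analysis is not carried out, so the argument does not close as written. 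The paper avoids the case division entirely. After arranging $P\notin L_3$ (possible since $L_1\cap L_3=\varnothing$ forces $P$ to lie on at most one of $L_1,L_3$), take $C_1$ and $C_3$ to be proper transforms of general lines through the two blown-up points; then $-K_X\sim C_1+2C_3+L_3$ while $C_1\cdot L_2=C_3\cdot L_2=0$, so $P\notin C_1\cup C_3\cup L_3$, and
$$
C_3\cup P\ \subseteq\ \mathrm{LCS}\left(X,\ B_X+\frac{1}{2}\big(C_1+2C_3+L_3\big)\right)\ \subseteq\ C_3\cup P\cup L_3
$$
is disconnected, whereas $-(K_X+B_X+\frac{1}{2}(C_1+2C_3+L_3))\qlineq (1/2-\mu)(-K_X)$ is ample because $\mu<1/2$; this contradicts Theorem~\ref{theorem:connectedness}. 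You should replace your unfinished blow-up analysis with this (or an equivalent) auxiliary-divisor argument, which in fact also subsumes your separate treatment of the case $P\notin L_1\cup L_3$.
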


\begin{proof}
Let $P$ be a point in $\mathrm{LCS}(X, B_{X})$. Then $P\in L_{2}$,
because $\mathrm{lct}(\mathbb{P}^{1}\times\mathbb{P}^{1})=1/2$,
and there is a birational morphism
$X\to\mathbb{P}^{1}\times\mathbb{P}^{1}$ that contracts only the
curve $L_{2}$.

Suppose that $\mathrm{LCS}(X, B_{X})\ne L_{2}$. Then
$\mathrm{LCS}(X, B_{X})=P$ by Theorem~\ref{theorem:connectedness}.

We may assume that $P\not\in L_3$. There is a birational morphism
$\phi\colon X\to\mathbb{P}^{2}$
that contracts the curves $L_{1}$ and $L_{3}$. Let $C_{1}$ and
$C_{3}$ be proper transforms on $X$ of sufficiently general lines
in $\mathbb{P}^{2}$ that pass through the points $\phi(L_{1})$ and
$\phi(L_{3})$, respectively. Then
$$
-K_{X}\sim C_{1}+2C_{3}+L_{3},
$$
and $C_{1}\not\ni P\not\in C_{2}$. Therefore, we see that
$$
C_{2}\cup P\subseteq\mathrm{LCS}\left(X,\ \lambda
D+\frac{1}{2}\Big(C_{1}+2C_{2}+L_{3}\Big)\right)\subseteq C_{2}\cup
P\cup L_3,%
$$
which is impossible by Theorem~\ref{theorem:connectedness},
because $P\not\in L_{3}$.
\end{proof}

\begin{lemma}
\label{lemma:quadric-cone} Suppose that $O=\mathrm{Sing}(X)$, the equality
$K_{X}^{2}=7$ holds, the equivalence
$$
B_{X}\qlineq C+\frac{4}{3}L
$$
holds, where $L\cong\mathbb{P}^{1}\cong C$ are curves on the surface $X$ such
that
$$
L\cdot L=-1/2,\ C\cdot C=-1,\ C\cdot L=1,
$$
but the log pair $(X, B_{X})$ is not log canonical at some point $P\in C$.
Then~$P\in L$.
\end{lemma}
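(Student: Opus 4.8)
The plan is to argue by contradiction, assuming that $(X,B_{X})$ is not log canonical at a point $P\in C$ with $P\notin L$, and to derive a numerical impossibility from the two intersection numbers
$$
B_{X}\cdot C=C\cdot C+\tfrac{4}{3}\,L\cdot C=-1+\tfrac{4}{3}=\tfrac{1}{3},\qquad
B_{X}\cdot L=C\cdot L+\tfrac{4}{3}\,L\cdot L=1-\tfrac{2}{3}=\tfrac{1}{3},
$$
which depend only on the numerical class $B_{X}\qlineq C+\tfrac{4}{3}L$. First I would dispose of the singular point: since $L\cdot L=-1/2$ forces $O\in L$, the case $P=O$ already yields $P\in L$, so I may assume $P\neq O$. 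Then $P$ is a smooth point of $X$ lying on the smooth rational curve $C$; indeed $C\cdot C=-1\in\Z$ shows that $C$ avoids $O$, so $\mathrm{mult}_{P}(C)=1$ and $P\notin\Sing(X)\cup\Sing(C)$.

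The key reduction is to normalize the support of $B_{X}$ near $P$. Because $P\notin L$, in a neighbourhood of $P$ the divisor $C+\tfrac{4}{3}L$ coincides with the smooth curve $C$ taken with coefficient one, so the pair $\big(X,\ C+\tfrac{4}{3}L\big)$ is log canonical at $P$. Since $C+\tfrac{4}{3}L\qlineq B_{X}$, Remark~\ref{remark:convexity} lets me assume, after replacing $B_{X}$ by a $\Q$-linearly equivalent effective divisor that is still not log canonical at $P$, that either $C\not\subseteq\Supp(B_{X})$ or $L\not\subseteq\Supp(B_{X})$. The two intersection numbers above are unchanged by this replacement, and I treat the two cases separately.

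If $C\not\subseteq\Supp(B_{X})$, then $C$ is not a component of $B_{X}$, so $\mathrm{mult}_{P}(B_{X})\leqslant B_{X}\cdot C=\tfrac{1}{3}<1$; hence $(X,B_{X})$ is log canonical (indeed log terminal) at the smooth point $P$, a contradiction. If instead $L\not\subseteq\Supp(B_{X})$, I write $B_{X}=aC+\Omega$ with $a=\mathrm{mult}_{C}(B_{X})\geqslant 0$ and $\Omega$ an effective divisor containing neither $C$ nor $L$ in its support. Intersecting with $L$ gives $a+\Omega\cdot L=\tfrac{1}{3}$ with $\Omega\cdot L\geqslant 0$, so $a\leqslant\tfrac{1}{3}\leqslant 1$. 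Then Lemma~\ref{lemma:adjunction}, applied with $B_{1}=C$ at the point $P$, yields $\Omega\cdot C>1$; but $\Omega\cdot C=B_{X}\cdot C-a\,C\cdot C=\tfrac{1}{3}+a\leqslant\tfrac{2}{3}$, again a contradiction. The step I expect to require the most care is the convexity reduction, since its role is precisely to expel one of the two negative curves $C$, $L$ from $\Supp(B_{X})$ so that the computations $B_{X}\cdot C=B_{X}\cdot L=\tfrac{1}{3}$ become decisive; once the support is controlled, each case closes by a one-line estimate.
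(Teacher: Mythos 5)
Your proof is correct and follows essentially the same route as the paper's: the convexity reduction of Remark~\ref{remark:convexity} to expel $C$ or $L$ from $\mathrm{Supp}(B_{X})$, the multiplicity bound $\mathrm{mult}_{P}(B_{X})\leqslant B_{X}\cdot C=1/3$ in the first case, and Lemma~\ref{lemma:adjunction} giving $1<\Omega\cdot C=1/3+a\leqslant 2/3$ in the second. Your extra preliminary step ruling out $P=O$ via $L\cdot L=-1/2\notin\mathbb{Z}$ is a sound (and welcome) justification of the smoothness hypotheses that the paper leaves implicit.
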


\begin{proof}
Let $S$ be a quadratic cone in $\mathbb{P}^{3}$. Then $S\cong\mathbb{P}(1,1,2)$
and there is a birational morphism
$$
\phi\colon X\longrightarrow S\subset\mathbb{P}^{3}
$$
that contracts the curve $C$ to a smooth point $Q\in S$. Then
$Q\in\phi(L)\in|\mathcal{O}_{\mathbb{P}(1,1,2)}(1)|$.

Suppose that $P\not\in L$. Then it follows from Remark~\ref{remark:convexity}
that to complete the~proof we may assume that either
$C\not\subset\mathrm{Supp}(B_{X})$ or $L\not\subset\mathrm{Supp}(B_{X})$,
because the log pair
$$
\left(X,\ C+\frac{4}{3}L\right)
$$
is log canonical in the point $P\in X$. Suppose that
$C\not\subset\mathrm{Supp}(B_{X})$. Then
$$
\frac{1}{3}=B_{X}\cdot C\ge\mathrm{mult}_{P}\big(B_{X}\big)>1,
$$
which is impossible. Therefore, we see that $C\subset\mathrm{Supp}(B_{X})$.
Then  $L\not\subset\mathrm{Supp}(B_{X})$. Put
$$
B_{X}=\eps C+\Omega,
$$
where $\Omega$ is an
effective $\mathbb{Q}$-divisor such that $C\not\subset\mathrm{Supp}(\Omega)$.
Then
$$
\frac{1}{3}=B_{X}\cdot L=\eps+\Omega\cdot L\geqslant \eps,
$$
which implies that $\eps\leqslant 1/3$. Then
$$
1<\Omega\cdot C=1/3+\eps\leqslant 2/3
$$
by Lemma~\ref{lemma:adjunction}, which
is a contradiction.
\end{proof}

\section{Toric varieties}
\label{section:toric}

The purpose of this section is to prove Lemma~\ref{lemma:toric}
(cf. \cite{BaSe99}, \cite{So05}).

Let $N=\mathbb{Z}^{n}$ be a lattice of rank $n$, and let
$M=\mathrm{Hom}(N,\mathbb{Z})$ be the dual lattice. Put
$M_{\mathbb{R}}=M\otimes_\mathbb{Z}\mathbb{R}$ and
$N_\mathbb{R}=N\otimes_\mathbb{Z}\mathbb{R}$. Let $X$ be a toric
variety defined by a complete fan $\Sigma\subset N_{\mathbb{R}}$,
let
$$
\Delta_1=\big\{v_1,\ldots, v_m\big\}
$$
be a set of generators of one-dimensional cones of the fan
$\Sigma$. Put
$$
\Delta=\Big\{w\in M\ \Big\vert\ \big\langle w,
v_i\big\rangle\geqslant -1\ \text{for all}\ i=1,
 \ldots, m\Big\}.%
$$

Put $T=(\mathbb{C}^{*})^{n}\subset\Aut(X)$. Let $\mathcal{N}$ be
the normalizer of $T$ in $\mathrm{Aut}(X)$ and
$\mathcal{W}=\mathcal{N}/T$.

\begin{lemma}
\label{lemma:toric} Let $G\subset\mathcal{W}$ be a subgroup.
Suppose that $X$ is $\mathbb{Q}$-factorial. Then
$$
\mathrm{lct}\Big(X,\ G\Big)=\frac{1}{1+\mathrm{max}\Big\{\big\langle w, v\big\rangle\ \big\vert\ w\in\Delta^G,\ v\in\Delta_1\Big\}},%
$$
where $\Delta^G$ is the set of the points in $\Delta$ that are
fixed by the group $G$.
\end{lemma}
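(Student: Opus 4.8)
The plan is to translate the statement into convex geometry on the polytope $\Delta$, reduce the computation of $\lct(X,G)$ to torus-invariant anticanonical divisors, and evaluate those by the standard toric discrepancy formula. First I would set up the dictionary between $\Delta$ and torus-invariant divisors. Writing $D_1,\dots,D_m$ for the prime $T$-invariant divisors attached to the rays $v_1,\dots,v_m$, one has $-K_X\sim\sum_{i=1}^m D_i$, and the principal divisor of the character $\chi^w$ ($w\in M_{\mathbb Q}$) is $\sum_i\langle w,v_i\rangle D_i$. Hence
$$
D_w:=\sum_{i=1}^m\big(1+\langle w,v_i\rangle\big)D_i\qlineq -K_X,
$$
and $D_w$ is effective precisely when $w\in\Delta$. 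Since $g\in\mathcal W$ permutes the rays and acts dually on $M$, a direct check gives $g_*D_w=D_{g\cdot w}$, so $D_w$ is $G$-invariant exactly when $w\in\Delta^G$; thus $\Delta^G\cap M_{\mathbb Q}$ parametrizes the $G$-invariant torus-invariant members of $|-K_X|_{\mathbb Q}$.

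Next comes the core computation, namely that $\lct(X,D_w)=1/(1+\max_{v\in\Delta_1}\langle w,v\rangle)$. Setting $a_i=1+\langle w,v_i\rangle$, I would pass to a simplicial refinement of $\Sigma$ and use that log canonicity of a toric pair is tested on the toric valuations $v=\sum_i c_i v_i$ with $c_i\ge 0$. The log discrepancy of such a valuation with respect to $(X,\lambda D_w)$ equals $\sum_i c_i(1-\lambda a_i)=\sum_i c_i-\lambda\big(\sum_i c_i+\langle w,v\rangle\big)$, by the elementary blow-up computation underlying Definition~\ref{definition:log canonical-singularities}. Non-negativity for all $v$ forces $\lambda\le\sum_i c_i/(\sum_i c_i+\langle w,v\rangle)$; minimizing the right-hand side amounts to maximizing $\langle w,v\rangle/\sum_i c_i$, and since this is a linear functional on the compact cross-section $\{\sum_i c_i=1\}$ of the support of $\Sigma$, its maximum is attained at a vertex, that is at some $v_i\in\Delta_1$. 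This yields the claimed value, and in passing Example~\ref{example:WPS}.

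For the upper bound I would simply exhibit an optimal competitor. Choosing $w_0\in\Delta^G$ with $\max_{v\in\Delta_1}\langle w_0,v\rangle$ equal to $\mu:=\max\{\langle w,v\rangle: w\in\Delta^G,\ v\in\Delta_1\}$, the divisor $D_{w_0}$ is an effective $G$-invariant $\mathbb Q$-divisor with $D_{w_0}\qlineq -K_X$, and the previous step gives $\lct(X,D_{w_0})=1/(1+\mu)$; hence $\lct(X,G)\le 1/(1+\mu)$.

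The lower bound is the heart of the matter. Given an arbitrary $G$-invariant effective $D\qlineq -K_X$ and $\lambda<1/(1+\mu)$, I want $(X,\lambda D)$ log canonical. The plan is to degenerate $D$ to a torus-invariant divisor without raising the threshold: for a one-parameter subgroup of $T$ the flat limit $D_0=\lim_{t\to0}t\cdot D$ is invariant under that subgroup, satisfies $D_0\qlineq -K_X$, and obeys $\lct(X,D_0)\le\lct(X,D)$ by lower semicontinuity of the log canonical threshold in the resulting $\mathbb C^{\ast}$-family. Iterating over a flag of subgroups produces a fully $T$-invariant limit $D_w$ with $\lct(X,D_w)\le\lct(X,D)$, and then $1/(1+\max_i\langle w,v_i\rangle)=\lct(X,D_w)\ge 1/(1+\mu)$ would close the argument. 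The main obstacle, and the step that genuinely uses $G$ rather than the torus alone, is to perform this degeneration $G$-equivariantly so that the limiting weight $w$ lands in $\Delta^G$ and not merely in $\Delta$; here I would exploit that for $G$-invariant $D$ the Gr\"obner-type fan governing the possible limits is itself $G$-invariant, and select the degenerating cocharacters inside the $G$-fixed sublattice $N^G$, refining the transverse directions in a $G$-symmetric fashion. Controlling this compatibility is precisely what blocks a naive torus-averaging argument, which by convexity moves the threshold in the wrong direction, from being applied directly.
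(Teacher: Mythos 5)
Your overall strategy is the same as the paper's: the upper bound comes from the extremal torus-invariant divisor $D_{w_0}$ with $w_0\in\Delta^G$, and the lower bound from degenerating an arbitrary $G$-invariant competitor to a torus-invariant one, invoking the Demailly--Koll\'ar semicontinuity of log canonical thresholds in the family, and then the fact that a torus-invariant pair $(X,\sum b_iD_i)$ with all $b_i\le 1$ is log canonical. (The paper cites \cite{Ful93} for the toric computation you carry out by hand with toric valuations; that part of your argument, as well as the dictionary between $\Delta^G$ and $T$-invariant $G$-invariant anticanonical $\mathbb{Q}$-divisors, is correct and more explicit than the paper.)

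The gap is in the step you yourself flag as the main obstacle, and your proposed remedy does not close it. Degenerating along one-parameter subgroups of $T$ cannot in general produce a limit that is simultaneously $T$-invariant and $G$-invariant, because the $T$-orbit closure of a $G$-invariant divisor need not contain any such point. Already for $X=\mathbb{P}^1$ with $G\subset\mathcal{W}\cong\mathbb{Z}/2$ generated by the class of $[x:y]\mapsto[y:x]$ one has $N^G=0$ and $\Delta^G=\{0\}$, so $\mu=1$, while the $G$-invariant divisor $D=\mathrm{div}(x^2+y^2)$ has as its only nontrivial torus degenerations $2\,\mathrm{div}(x)$ and $2\,\mathrm{div}(y)$: neither is $G$-invariant, and both have a component of multiplicity $2>\mu$, so the toric criterion yields nothing. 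Choosing cocharacters in $N^G$ is vacuous here, and iterating limits along a $G$-orbit of commuting cocharacters does not help either, since iterated limits depend on the order and the outcome is still not $G$-invariant. What the paper actually asserts (with a reference to \cite{DeKo01}) is the existence of a family $\{D_t\}$ of $G$-invariant divisors with $D_t=\phi_t(D)$ for \emph{varying} automorphisms $\phi_t$, not a one-parameter subgroup, whose limit $D_0$ is torus-invariant; in the example this is the family $[it:1]+[1:it]$, which leaves the $T$-orbit closure of $D$ and limits to $\mathrm{div}(xy)$. Producing such a family in general (or otherwise reducing to torus-invariant competitors) is the real content of the lower bound, and your Gr\"obner-fan sketch does not yet supply it.
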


\begin{proof}
Put $\mu=1+\mathrm{max}\{\langle w, v\rangle\ \vert\
w\in\Delta^G,\ v\in\Delta_1\}$. Then $\mu\in\mathbb{Q}$ is the
largest number~such~that
$$
-K_X\sim_{\mathbb{Q}}\mu R+H,
$$
where $R$ is an integral $T\rtimes G$-invariant effective divisor,
and $H$ is an effective $\mathbb{Q}$-divisor. Hence
$$
\mathrm{lct}\big(X, G\big)\leqslant\frac{1}{\mu}.
$$

Suppose that $\mathrm{lct}(X, G)<1/\mu$. Then there is an
effective $G$-invariant $\mathbb{Q}$-divisor
$D\qlineq -K_{X}$
such~that the log pair $(X,\lambda D)$ is not log canonical for
some rational $\lambda<1/\mu$.

There exists a family $\{D_t \mid t\in\mathbb{C}\}$ of
$G$-invariant effective $\Q$-divisors such that
\begin{itemize}
\item the equivalence $D_t\qlineq D$ holds for every $t\in\mathbb{C}$,%
\item the equality $D_1=D$ holds,%
\item for every $t\neq 0$ there is $\phi_{t}\in\mathrm{Aut}(X)$
such that
$$
D_t=\phi_{t}\big(D\big)\cong D,
$$
\item the divisor $D_0$ is $T$-invariant,%
%\item the family $\{D_t\ \vert\ t\in\mathbb{C}\}$ is flat,%
\end{itemize}
which implies that $(X, \lambda D_0)$ is not log canonical (see
\cite{DeKo01}).

On the other hand, the divisor $D_{0}$ does not have components
with multiplicity greater than~$\mu$, which implies that  $(X,
\lambda D_{0})$ is log canonical (see \cite{Ful93}), which is a
contradiction.
\end{proof}

\begin{corollary}\label{corollary:projectivization}
Let $X=\P_{\P^n}(\O_{\P^n}\oplus\O_{\P^n}(-a_1)\oplus\ldots\oplus
\O_{\P^n}(-a_k))$,
$a_i\ge 0$ for $i=1, \ldots, k$.
%Proektivizaciya: tochka = giperploskost' v sloe
Then
$$
\lct(X)=\frac{1}{1+\mathrm{max}\Big\{k,
 n+\sum_{i=1}^{k}a_i\Big\}}.%
$$
\end{corollary}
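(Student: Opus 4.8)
The plan is to exhibit $X$ as a smooth projective toric variety of dimension $n+k$ and Picard rank~$2$, and then to apply Lemma~\ref{lemma:toric} with the trivial group $G=\{1\}$. For $G=\{1\}$ one has $\Delta^{G}=\Delta$ and $\lct(X,\{1\})=\lct(X)$, and since $X$ is smooth it is $\Q$-factorial, so Lemma~\ref{lemma:toric} applies and reduces the statement to the purely combinatorial identity
$$
\max\Big\{\big\langle w,v\big\rangle\ \Big|\ w\in\Delta,\ v\in\Delta_1\Big\}=\max\Big\{k,\ n+\sum_{i=1}^{k}a_i\Big\}.
$$

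First I would write down the fan of $X$. In $N=\Z^{n+k}$ with basis $e_1,\dots,e_n$ (base directions) and $f_1,\dots,f_k$ (fibre directions) the primitive ray generators are
$$
\Delta_1=\Big\{e_1,\dots,e_n,\ -\sum_{i=1}^{n}e_i-\sum_{j=1}^{k}a_jf_j,\ f_1,\dots,f_k,\ -\sum_{j=1}^{k}f_j\Big\},
$$
so that $m=n+k+2$. The delicate point \t and the one I expect to be the main obstacle \t is to place the twist by the $a_i$ in the correct ray and with the correct sign; I would pin this down from the exact sequence $0\to M\to\Z^{m}\to\mathrm{Cl}(X)\to 0$, using the grading $\deg x_\ell=(1,0)$ for $\ell=0,\dots,n$, $\deg y_0=(0,1)$ and $\deg y_j=(a_j,1)$ for $j=1,\dots,k$ of the Cox ring (with $x_\ell$ the base and $y_j$ the fibre coordinates). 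Two sanity checks confirm the normalisation: for $a_i=0$ the variety is $\P^n\times\P^k$ and the formula returns $\min(1/(n+1),1/(k+1))$, in agreement with Lemma~\ref{lemma:lct-product}; and for $n=k=1$, $a_1=c$ it gives $\lct(\F_c)=1/(c+2)$, which matches Example~\ref{example:del-Pezzos} at $c=1$.

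Finally, writing $w=(p_1,\dots,p_n,q_1,\dots,q_k)$ with $\langle w,e_i\rangle=p_i$ and $\langle w,f_j\rangle=q_j$, the polytope of Lemma~\ref{lemma:toric} is
$$
\Delta=\Big\{(p,q)\ \Big|\ p_i\ge -1,\ q_j\ge -1,\ \sum_{j=1}^{k}q_j\le 1,\ \sum_{i=1}^{n}p_i+\sum_{j=1}^{k}a_jq_j\le 1\Big\}.
$$
It then remains to maximise $\langle w,v\rangle$ over $\Delta$ ray by ray, and each bound is a one-line consequence of these inequalities: for the base rays $\langle w,e_i\rangle=p_i\le n+\sum_j a_j$ and $\langle w,-\sum_i e_i-\sum_j a_jf_j\rangle=-\sum_i p_i-\sum_j a_jq_j\le n+\sum_j a_j$ (using $p_i\ge -1$ and $a_jq_j\ge -a_j$), with equality at suitable vertices; for the fibre rays $\langle w,f_j\rangle=q_j\le k$ and $\langle w,-\sum_j f_j\rangle=-\sum_j q_j\le k$ (using $\sum_j q_j\le 1$ and $q_j\ge -1$), likewise attained. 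Since all these extrema are realised at lattice points of $\Delta$, the maximum over $\Delta\cap M$ equals $\max\{k,\ n+\sum_i a_i\}$, and Lemma~\ref{lemma:toric} yields $\lct(X)=1/(1+\max\{k,\ n+\sum_i a_i\})$, as claimed.
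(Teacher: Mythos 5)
Your argument is correct and is essentially the paper's own: the proof of Corollary~\ref{corollary:projectivization} consists of listing the same set of ray generators (written in the opposite coordinate order) and invoking Lemma~\ref{lemma:toric}, while you additionally write out the polytope maximization explicitly. The only slip is the claim that the bound $\langle w,f_j\rangle\le k$ is attained for each individual fibre ray $f_j$ --- for large $a_j$ the point with $q_j=k$ and all other coordinates $-1$ can violate the constraint $\sum_i p_i+\sum_j a_jq_j\le 1$ --- but this is harmless, since the value $k$ is attained on the ray $-\sum_j f_j$ at the all-$(-1)$ lattice point, so the overall maximum is still $\max\bigl\{k,\ n+\sum_i a_i\bigr\}$ and the conclusion stands.
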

\begin{proof}
Note that $X$ is a toric variety, and $\Delta_1$ consists of the
following vectors:
\begin{gather*}
(\overbrace{1, 0, \ldots, 0}^k, \overbrace{0, 0, \ldots, 0}^n),
\ldots,
(0, \ldots, 0, 1, 0, 0, \ldots, 0),\\
(-1, \ldots, -1, 0, 0, \ldots, 0), \\
(0, 0, \ldots, 0, 1, 0, \ldots, 0), \ldots,
(0, 0, \ldots, 0, 0, \ldots, 0, 1),\\
(-a_1, \ldots, -a_k, -1, \ldots, -1),
\end{gather*}
which implies the required assertion by Lemma~\ref{lemma:toric}.
\end{proof}

Applying Corollary~\ref{corollary:projectivization}, we obtain the
following result.

\begin{corollary}\label{corollary:2-33-2-35-2-36}
In the notation of section~\ref{section:intro} one has
$\mathrm{lct}(X)=1/4$, whenever $\gimel(X)\in\{2.33, 2.35\}$, and
$\mathrm{lct}(X)=1/5$, whenever $\gimel(X)=2.36$.
\end{corollary}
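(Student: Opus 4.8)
The plan is to recognize each of the three threefolds as the projectivization of a split bundle over a projective space, rewrite it in the normal form required by Corollary~\ref{corollary:projectivization}, and then simply read off the value from the formula. All three families are toric, so Corollary~\ref{corollary:projectivization} applies directly and no further geometry is needed once the bundle is identified.

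First I would recall the description of these deformation families. The threefold with $\gimel(X)=2.35$ is $V_7$, the blow up of $\P^3$ at a point; projecting away from the point exhibits it as the $\P^1$-bundle $\P_{\P^2}(\O_{\P^2}\oplus\O_{\P^2}(1))$. The threefold with $\gimel(X)=2.36$ is $\P_{\P^2}(\O_{\P^2}\oplus\O_{\P^2}(2))$ by definition. The threefold with $\gimel(X)=2.33$ is the blow up of $\P^3$ along a line; projection from the line exhibits it as a $\P^2$-bundle over $\P^1$, since the proper transform of each plane in the pencil through the line is a copy of $\P^2$, and a standard computation identifies it with $\P_{\P^1}(\O_{\P^1}\oplus\O_{\P^1}\oplus\O_{\P^1}(1))$.

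Next I would normalize each bundle using the isomorphism $\P(\mathcal{E})\cong\P(\mathcal{E}\otimes\mathcal{L})$, twisting to reach the shape $\O\oplus\O(-a_1)\oplus\cdots\oplus\O(-a_k)$ with $a_i\ge 0$ appearing in Corollary~\ref{corollary:projectivization}. For $\gimel(X)=2.35$ this gives $n=2$, $k=1$, $a_1=1$; for $\gimel(X)=2.36$ it gives $n=2$, $k=1$, $a_1=2$; and for $\gimel(X)=2.33$, twisting by $\O(-1)$ turns $\O\oplus\O\oplus\O(1)$ into $\O\oplus\O(-1)\oplus\O(-1)$, so $n=1$, $k=2$, $a_1=a_2=1$. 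Substituting into the formula then yields
$$
\lct(X)=\frac{1}{1+\max\{1,\,2+1\}}=\frac14 \quad\text{for}\ \gimel(X)=2.35,
$$
$$
\lct(X)=\frac{1}{1+\max\{2,\,1+2\}}=\frac14 \quad\text{for}\ \gimel(X)=2.33,
$$
$$
\lct(X)=\frac{1}{1+\max\{1,\,2+2\}}=\frac15 \quad\text{for}\ \gimel(X)=2.36.
$$

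The only genuine subtlety, and the step I would verify most carefully, is the geometric identification together with the correct normalizing twist: one must check that $2.33$ really is the bundle $\P_{\P^1}(\O\oplus\O\oplus\O(1))$ rather than some other splitting type, so that $a_1=a_2=1$, and that the projection descriptions for $2.35$ and $2.36$ produce exactly the claimed bundles. An incorrect twist or splitting type would change the value of the maximum in the denominator and hence the final threshold; once these identifications are pinned down, the computation is entirely mechanical.
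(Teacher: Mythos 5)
Your proposal is correct and follows exactly the paper's route: the paper derives this corollary by applying Corollary~\ref{corollary:projectivization} to the same three bundle descriptions, and your identifications (including the splitting type $\O_{\P^1}\oplus\O_{\P^1}\oplus\O_{\P^1}(1)$ for the blow up of $\P^3$ along a line, which normalizes to $a_1=a_2=1$ and is indeed the one delicate point) and the resulting arithmetic are all right. You simply spell out the bundle identifications and twists that the paper leaves implicit.
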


On the other hand, straightforward computations using
Lemma~\ref{lemma:toric} imply the following result.

\begin{corollary}\label{corollary:other-toric}
In the notation of section~\ref{section:intro} one has
$$
\mathrm{lct}\big(X\big)=\left\{%
\begin{aligned}
&1/3\ \text{whenever}\ \gimel(X)\in\{3.25, 3.31, 4.9, 4.11, 5.2\},\\
&1/4\ \text{whenever}\ \gimel(X)\in\{3.26, 3.30, 4.12\},\\
&1/5\ \text{whenever}\ \gimel(X)=3.29.
\end{aligned}
\right.
$$
\end{corollary}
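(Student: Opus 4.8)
The plan is to apply Lemma~\ref{lemma:toric} with the trivial subgroup $G=\{1\}$, for which every linear system is $G$-invariant, so that $\Delta^{G}=\Delta$ and $\lct(X,\{1\})=\lct(X)$. Each of the nine deformation families listed contains a smooth toric Fano threefold; such a threefold is rigid, hence is the unique member of its family, and being smooth it is in particular $\Q$-factorial, so the hypotheses of Lemma~\ref{lemma:toric} hold. The lemma reduces the computation of $\lct(X)$ to that of the single rational number
$$
\mu=\mathrm{max}\Big\{\ang{w,v}\ \Big\vert\ w\in\Delta,\ v\in\Delta_{1}\Big\},
$$
after which $\lct(X)=1/(1+\mu)$.

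To evaluate $\mu$ for a given $X$ I would first record the primitive ray generators $v_{1},\ldots,v_{m}$ of its fan $\Sigma$, read off from the classification of smooth toric Fano threefolds. The associated polytope
$$
\Delta=\Big\{w\in M_{\mathbb{R}}\ \Big\vert\ \ang{w,v_{i}}\geqslant -1\ \text{for every}\ i\Big\}
$$
is bounded, since $\Sigma$ is complete, and it is the reflexive polytope attached to $-K_{X}$; in particular its vertices are lattice points. For each fixed $v\in\Delta_{1}$ the function $w\mapsto\ang{w,v}$ is linear, so its maximum over $\Delta$ is attained at a vertex. Hence $\mu$ is the maximum of $\ang{w,v}$ as $w$ ranges over the finitely many vertices of $\Delta$ and $v$ over $\Delta_{1}$: a finite arithmetic task. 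As a check that the recipe gives the right normalization, for $X=\P^{3}$ the rays are $e_{1},e_{2},e_{3},-(e_{1}+e_{2}+e_{3})$, the vertices of $\Delta$ are $(-1,-1,-1)$, $(3,-1,-1)$, $(-1,3,-1)$, $(-1,-1,3)$, one finds $\mu=3$, and indeed $\lct(\P^{3})=1/4$.

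Carrying this out family by family is expected to give $\mu=2$ for $\gimel(X)\in\{3.25,3.31,4.9,4.11,5.2\}$, $\mu=3$ for $\gimel(X)\in\{3.26,3.30,4.12\}$, and $\mu=4$ for $\gimel(X)=3.29$, which are exactly the asserted values $1/3$, $1/4$, $1/5$. For those families that are products $\P^{1}\times S$ with $S$ a smooth toric del Pezzo surface the resulting threshold can be cross-checked against $\mathrm{min}(1/2,\lct(S))$ using Lemma~\ref{lemma:lct-product} and Example~\ref{example:del-Pezzos}; this independently confirms the value $1/3$ in those cases. The main obstacle is organizational rather than conceptual: one must extract the nine fans correctly from the classification and enumerate the vertices of the nine polytopes $\Delta$ without sign or lattice-pairing errors, since any such slip produces a wrong threshold. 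Beyond this bookkeeping each maximization is an elementary linear program over a bounded reflexive polytope, so no genuine difficulty arises.
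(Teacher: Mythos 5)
Your proposal follows exactly the paper's route: the paper derives this corollary from Lemma~\ref{lemma:toric} by "straightforward computations," i.e. by computing $1+\max\{\langle w,v\rangle\}$ over the reflexive polytope and ray generators of each of the nine (rigid, hence unique-in-family) smooth toric threefolds, just as you describe. The only quibble is that your proposed cross-check via Lemma~\ref{lemma:lct-product} is vacuous here, since none of the nine listed families is actually a product $\mathbb{P}^{1}\times S$ (e.g.\ 4.11 is a blow up of $\mathbb{P}^{1}\times\mathbb{F}_{1}$, not the product itself).
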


\begin{remark}
\label{remark:symmetric-Fanos} Suppose that the toric variety $X$
is symmetric, i.e., $\Delta^{\mathcal{W}}=\{0\}$ (see
\cite{BaSe99}). Then it follows from Lemma~\ref{lemma:toric} that
$\lct(X, \mathcal{W})=1$.  Note that the equality $\lct(X,
\mathcal{W})=1$ is proved in \cite{BaSe99} and \cite{So05} under
an additional assumption that $X$ is smooth.
\end{remark}

\section{Del Pezzo threefolds}
\label{section:del-Pezzos}

We use the assumptions and notation of Theorem~\ref{theorem:main}.
Suppose that $-K_{X}\sim 2H$, where $H$ is a Cartier divisor that
is indivisible in $\mathrm{Pic}(X)$.

The purpose of this section is to prove the following result.

\begin{theorem}\label{theorem:del-Pezzo}
The equality $\lct(X)=1/2$ holds, unless $\gimel(X)=2.35$ when
$\lct(X)=1/4$.
\end{theorem}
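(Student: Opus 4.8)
The plan is to establish the two bounds $\mathrm{lct}(X)\le 1/2$ and $\mathrm{lct}(X)\ge 1/2$ for every family except $\gimel(X)=2.35$, and to dispose of the two easy families first. For the upper bound, a general $S\in|H|$ is an irreducible reduced del Pezzo surface with canonical singularities, and $2S\qlineq -K_X$ since $-K_X\sim 2H$; hence $\mathrm{lct}(X)\le\mathrm{lct}(X,2S)=1/2$. The family $\gimel(X)=2.35$ (the blow up $V_7$ of $\P^3$ at a point) is toric, so $\mathrm{lct}(X)=1/4$ is already given by Corollary~\ref{corollary:2-33-2-35-2-36}. The family $\gimel(X)=3.27$ is $\P^1\times\P^1\times\P^1$, and applying Lemma~\ref{lemma:lct-product} twice gives $\mathrm{lct}(X)=\mathrm{min}\big(\mathrm{lct}(\P^1),\mathrm{lct}(\P^1\times\P^1)\big)=\mathrm{min}(1/2,1/2)=1/2$ (both values by Example~\ref{example:del-Pezzos}). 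It thus remains to prove $\mathrm{lct}(X)\ge 1/2$ for the six families with $H^3\in\{1,2,3,4,5,6\}$ and $\mathrm{rk}\,\mathrm{Pic}(X)\le 2$.

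For the lower bound I argue by contradiction. If $\mathrm{lct}(X)<1/2$, there is an effective divisor $B_X\qlineq -\lambda K_X\qlineq 2\lambda H$ with $\lambda<1/2$ and $\mathrm{LCS}(X,B_X)\ne\varnothing$; set $c=2\lambda<1$, so $B_X\qlineq cH$. The first step is to show that $\mathrm{LCS}(X,B_X)$ contains no surface. Indeed, if $T$ were such a surface then $\mathrm{mult}_T(B_X)\ge 1$, while every effective integral divisor on $X$ has class $kH$ with $k\ge 1$ when $\mathrm{rk}\,\mathrm{Pic}(X)=1$, and class $\alpha H_1+\beta H_2$ with $\mathrm{max}(\alpha,\beta)\ge 1$ when $\gimel(X)=2.32$ (where $H=(H_1+H_2)|_X$ and $X$ is the homogeneous flag threefold). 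Comparing classes with $B_X\qlineq cH$ forces $\mathrm{mult}_T(B_X)\le c<1$, a contradiction. Hence $\mathrm{dim}\,\mathrm{LCS}(X,B_X)\le 1$.

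With surfaces excluded, the degrees $2$ and $3$ are immediate: for the cubic threefold $\gimel(X)=1.13$ smoothness gives $\mathrm{Sing}(X)=\varnothing$, so Lemma~\ref{lemma:singular-cubic-threefold}, which would force $\mathrm{LCS}(X,B_X)$ to be a line meeting $\mathrm{Sing}(X)$, yields a contradiction; the double cover $\gimel(X)=1.12$ is handled identically by Lemma~\ref{lemma:double-covers}. For the remaining degrees $1,4,5,6$ I would mimic the disconnection technique of Lemmas~\ref{lemma:P3},~\ref{lemma:quadric-curves},~\ref{lemma:V7} and~\ref{lemma:P1xP2} to prove the analogue "$\mathrm{LCS}(X,B_X)$ contains a surface'', which then contradicts the first step. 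Concretely: since $-(K_X+B_X)\qlineq(2-c)H$ is ample, Theorem~\ref{theorem:connectedness} keeps $\mathrm{LCS}$ connected, and adding general and tangent members of $|H|$ (through a general line or conic of $X$) disconnects it unless $\mathrm{LCS}$ is one-dimensional; restricting $B_X$ to a general $S\in|H|$ — a del Pezzo surface of degree $H^3$ with $B_X|_S\qlineq -cK_S$ — and feeding $\mathrm{LCS}(S,B_X|_S)$ into the results of Section~\ref{section:del-Pezzo} (Lemma~\ref{lemma:del-Pezzo-quintic} for $H^3=5$) and Example~\ref{example:del-Pezzos}, together with Remark~\ref{remark:hyperplane-reduction}, then pins the surviving one-dimensional $\mathrm{LCS}$ down to lines and conics, which are eliminated by intersecting the components of $B_X$ with lines and conics meeting them and using $H\cdot Z\le 2$ with $c<1$.

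The main obstacle is the degrees $H^3=5$ and $H^3=6$ (and the upper part of the range $\lambda\in[1/3,1/2)$ when $H^3=4$). Here the del Pezzo surface section $S$ itself has $\mathrm{lct}(S)=1/2$, exactly the target value, and the hypotheses of the Section~\ref{section:del-Pezzo} lemmas (for instance $\mu<2/3$, i.e. $\lambda<1/3$, in Lemma~\ref{lemma:del-Pezzo-quintic}) fail once $\lambda\ge 1/3$, so restriction to $S$ gives only structural information about $\mathrm{LCS}(S,B_X|_S)$ rather than its emptiness and cannot finish on the surface alone. For this range I expect to need the ambient geometry of $X$ in an essential way — the two-dimensional families of lines and conics covering $X$, together with suitable tangent hyperplane sections — to locate and then contradict any one-dimensional $\mathrm{LCS}(X,B_X)$ via Theorem~\ref{theorem:connectedness}; this is where the bulk of the case analysis lies.
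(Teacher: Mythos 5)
Your reduction of the easy cases is sound and matches the paper: the upper bound from $-K_X\sim 2H$, the toric computation for $\gimel(X)=2.35$, Lemma~\ref{lemma:lct-product} for $\gimel(X)=3.27$, and Lemmas~\ref{lemma:double-covers} and~\ref{lemma:singular-cubic-threefold} for $\gimel(X)\in\{1.12,1.13\}$. The exclusion of surfaces from $\mathrm{LCS}(X,\lambda D)$ via the class comparison is essentially Remark~\ref{remark:rho-1-common}. (For $\gimel(X)=2.32$ you are overcomplicating matters: applying Theorem~\ref{theorem:Hwang} to the two $\mathbb{P}^{1}$-bundle projections $X\to\mathbb{P}^{2}$ finishes that case immediately, since each fiber has anticanonical degree $2$.)

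The genuine gap is in the cases $\gimel(X)\in\{1.11,1.14,1.15\}$, which you explicitly leave open. The disconnection technique cannot ``prove that $\mathrm{LCS}$ contains a surface'': any curve $C\subseteq\mathrm{LCS}(X,\lambda D)$ meets every member of $|H|$, so adding a general $S\in|H|$ never disconnects the locus. What the argument actually yields (this is the content of Remark~\ref{remark:rho-1-common}) is that $\mathrm{LCS}(X,\lambda D)$ is a single irreducible curve $C$ with $H\cdot C=1$, and restricting further to hyperplane sections cannot eliminate it, because the global log canonical threshold of the section equals the target value $1/2$. Your proposed elimination by ``intersecting with lines and conics meeting $C$'' fails as stated, since all such curves could lie in $\mathrm{Supp}(D)$. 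The paper's way out is family-specific: for $V_{4}$ and $V_{5}$ one uses the double projection from the line $C$ (Proposition~3.4.1 of \cite{IsPr99}), which produces a surface $\bar{S}\sim 2H$ with $\mathrm{mult}_{C}(\bar{S})=3$ such that $(X,\frac{1}{2}\bar{S})$ is log canonical; Remark~\ref{remark:convexity} then allows one to assume $\bar{S}\not\subset\mathrm{Supp}(D)$, after which a line $\bar{L}\subset\bar{S}$ meeting $C$ gives $1=\bar{L}\cdot D\geqslant\mathrm{mult}_{C}(D)>1$, a contradiction. For $V_{1}$ the contradiction comes from the explicit equation in $\mathbb{P}(1,1,1,2,3)$: the unique surface $S\in|H|$ containing $C$ would have to be singular along $C$, which forces $X$ itself to be singular. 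None of these inputs appears in your outline, so the proof is incomplete precisely where the real work lies.
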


It follows from Theorems~3.1.14 and~3.3.1 in \cite{IsPr99} that
$$
\gimel(X)\in\Big\{1.11, 1.12, 1.13, 1.14, 1.15, 2.32, 2.35, 3.27\Big\},%
$$
and by~\cite{Ch01b} and \cite{Ch08a} (see also
Lemma~\ref{lemma:double-covers}) one has $\lct(X)=1/2$ if
$\gimel(X)\in\{1.12, 1.13\}$.
It follows from Lemma~\ref{lemma:lct-product} that $\lct(X)=1/2$
when $\gimel(X)=3.27$.
Lemma~\ref{lemma:toric} implies that $\lct(X)=1/4$ when $\gimel(X)=2.35$.

%\begin{lemma}
%\label{lemma:2-35} Suppose that $\gimel(X)=2.35$. Then
%$\mathrm{lct}(X)=1/4$.
%\end{lemma}
%
%\begin{proof}
%There is a birational morphism $\pi\colon X\to\mathbb{P}^{3}$ that
%contracts a surface $E\cong\mathbb{P}^{2}$ to a point
%$P\in\mathbb{P}^{3}$. Hence $\mathrm{lct}(X)\leqslant 1/4$,
%because
%$$
%-K_{X}\sim 2E+4T,
%$$
%where $T$ is the proper transform of a plane in $\mathbb{P}^{3}$
%that passes through $P\in\mathbb{P}^{3}$.
%
%Suppose that $\mathrm{lct}(X)<1/4$. Then there is an effective
%$\mathbb{Q}$-divisor $D$ such that
%$$
%D\qlineq \frac{1}{2}E+T,
%$$
%but the log pair $(X,\lambda D)$ is not log canonical for some
%rational number $\lambda<1$.
%
%Let $R$ be a proper transform on $X$ of a general plane in
%$\mathbb{P}^{3}$. Then
%$$
%\mathrm{LCS}\Big(X,\ \lambda D+R\Big)
%$$
%must be connected by Theorem~\ref{theorem:connectedness}, because
%$-(K_{X}+\lambda D+R)$ is ample. Thus, we see that
%\begin{itemize}
%\item the subscheme $\mathcal{L}(X,\lambda D)$ is not zero-dimensional,%
%\item the locus $\mathrm{LCS}(X,\lambda D)$ is not contained in $E$,%
%\end{itemize}
%which implies that $(\mathbb{P}^{3},\lambda\pi(D))$ is not log
%canonical. But $\mathrm{lct}(\mathbb{P}^{3})=1/4$ and
%$\pi(D)\qlineq\mathcal{O}_{\mathbb{P}^{3}}(1)$, which is a
%contradiction.
%\end{proof}
%
%\begin{remark}
%Actually, the assertion of Lemma~\ref{lemma:2-35} is contained in
%Corollary~\ref{corollary:2-33-2-35-2-36}, but we still prefer to
%give a detailed proof that may have further applications.
%\end{remark}

We are left with the cases
$$
\gimel(X)\in\Big\{1.11, 1.14, 1.15, 2.32\Big\},
$$
while the inequality $\mathrm{lct}(X)\leqslant 1/2$ is obvious,
because $|H|$ is not empty.

\begin{lemma}
\label{lemma:P2-P2} Suppose that $\gimel(X)=2.32$. Then
$\lct(X)=1/2$.
\end{lemma}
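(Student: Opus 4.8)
The plan is to exploit the two morphisms $p_1,p_2\colon X\to\P^2$ that exhibit $X$, a divisor of bidegree $(1,1)$ in $\P^2\times\P^2$, as a $\P^1$-bundle over $\P^2$ in two different ways. Writing $H_i=p_i^{*}(\mathcal{O}_{\P^2}(1))$, one has $H\sim H_1+H_2$ and $-K_X\qlineq 2H_1+2H_2$; if $F_i$ denotes a fibre of $p_i$, then $H_1\cdot F_1=H_2\cdot F_2=0$, $H_1\cdot F_2=H_2\cdot F_1=1$, and in particular $-K_X\cdot F_1=-K_X\cdot F_2=2$. The inequality $\lct(X)\le 1/2$ is immediate, since $2S\qlineq -K_X$ for any $S\in|H|$. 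To prove $\lct(X)\ge 1/2$ I would assume for contradiction that there are an effective divisor $D\qlineq -K_X$ and a rational number $\lambda<1/2$ such that $(X,\lambda D)$ is not log canonical.

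First I would rule out surfaces in $\mathbb{LCS}(X,\lambda D)$. If $M$ were such a surface, then $D=\mu M+\Omega$ with $\mu\ge 1/\lambda>2$, with $\Omega$ effective and $M\not\subseteq\Supp(\Omega)$. Writing $M\sim aH_1+bH_2$ with integers $a,b\ge 0$ and intersecting the effective divisor $\Omega$ with general fibres of $p_1$ and of $p_2$ gives $\mu b\le 2$ and $\mu a\le 2$, hence $a=b=0$; this is absurd, so $\dim\mathbb{LCS}(X,\lambda D)\le 1$.

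Next I would choose a point $P$ at which $(X,\lambda D)$ fails to be log canonical and let $F$ be the fibre of $p_1$ through $P$. Since $F\cong\P^1$ and $D|_{F}\qlineq -K_{F}$ has degree $2$, Example~\ref{example:hypersurface-index-big} gives $\lct(F,[D|_{F}])=\lct(\P^1)=1/2$. The morphism $p_1$ is flat with connected fibres, all of which are smooth (so Gorenstein canonical), and $\P^2$ is smooth, so Theorem~\ref{theorem:Hwang} applies. As $(X,\lambda D)$ is not log canonical at $P\in F$ with $\lambda<1/2$, the alternative $\lct_{F}(X,D)\ge 1/2$ cannot hold, so there is $\eps<1/2$ with $F\subseteq\LCS(X,\eps D)$. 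In particular $F\subseteq\Supp(D)$ and $\mult_{F}(D)\ge 1/\eps>2$.

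Finally I would close the argument by intersecting $D$ with the fibres of the other projection. For a general point $x\in F$ the fibre $F'_{x}=p_2^{-1}(p_2(x))$ meets $F$ transversally in the single point $x$ and is not contained in $\Supp(D)$, while $\mult_{x}(D)=\mult_{F}(D)$ because $x$ is a general point of $F$. Therefore
$$2=-K_X\cdot F'_{x}=D\cdot F'_{x}\ge\mult_{x}(D)=\mult_{F}(D)>2,$$
a contradiction. The only delicate point is the degenerate possibility that every $p_2$-fibre meeting $F$ lies in $\Supp(D)$, i.e. that $p_2^{-1}(p_2(F))\in|H_2|$ is a component of $D$; this case is symmetric in $p_1$ and $p_2$ and is excluded by repeating the argument with the roles of the two projections exchanged, or by first removing the common vertical component using Remark~\ref{remark:convexity}. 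The main obstacle is the correct application of Theorem~\ref{theorem:Hwang} to force an entire fibre into the locus of log canonical singularities; once $\mult_{F}(D)>2$ is in hand, the fibre-intersection computation finishes the proof.
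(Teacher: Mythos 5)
Your strategy is the one the paper uses: its entire proof of this lemma consists of applying Theorem~\ref{theorem:Hwang} to the two $\P^1$-bundle projections, and everything you write up to and including $\mult_F(D)>2$ is correct (the exclusion of surfaces, the computation $\lct(F,[D\vert_F])=1/2$, and the application of Theorem~\ref{theorem:Hwang} to $p_1$). The gap is in the last step. The ``degenerate possibility'' you mention, namely that $S=p_2^{-1}(p_2(F))\in|H_2|$ is a component of $D$ so that every $p_2$-fibre meeting $F$ lies in $\Supp(D)$, is a genuine case, and neither of your proposed fixes closes it. Symmetry does not help: running the argument with $p_1$ and $p_2$ exchanged can end in the same degenerate situation, leaving $D=\mu_1T+\mu_2S+\Omega$ with $T\in|H_1|$, $S\in|H_2|$ and $0<\mu_i\le 2$, and since $F\cup G\subseteq S\cap T$ the inequalities $\mult_F(D)>2$, $\mult_G(D)>2$, $\mu_i\le 2$ are mutually consistent (take $\mu_1=\mu_2=3/2$), so no contradiction results. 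Remark~\ref{remark:convexity} is not set up either: it needs an auxiliary boundary $\Delta\qlineq D$ that is log canonical at $P$ and is supported inside $\Supp(D)$, and since $2S\qlineq 2H_2\ne -K_X$ you would have to complete it by a member of $|2H_1|$, which you cannot force into $\Supp(D)$.

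The gap is repaired by applying Theorem~\ref{theorem:Hwang} to the \emph{second} projection as well, and at every point of $F$ rather than only at $P$ --- which is what the paper's phrase ``applying Theorem~\ref{theorem:Hwang} to $\pi_1$ and $\pi_2$'' is doing. Since $F\subseteq\LCS(X,\eps D)$ with $\eps<1/2$, every point $Q\in F$ is a non--log canonical point of $(X,\eps' D)$ for any $\eps<\eps'<1/2$, so Theorem~\ref{theorem:Hwang} applied to $p_2$ and the fibre $G_Q=p_2^{-1}(p_2(Q))$ yields $\mult_{G_Q}(D)>2$ for \emph{every} $Q\in F$. A component of $\Supp(D)$ other than $S$ can contain only finitely many of the fibres $G_Q$ (otherwise it would contain the surface they sweep out, which is $S$), so for all but finitely many $Q\in F$ one has $\mult_{G_Q}(D)=\mult_{S}(D)$, whence $\mult_{S}(D)>2$; on the other hand $2=D\cdot F'\ge \mult_S(D)\,(S\cdot F')=\mult_S(D)$ for a general fibre $F'$ of $p_1$, a contradiction. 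Note that this version needs no case distinction: it proves that the degenerate configuration is forced and simultaneously rules it out.
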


\begin{proof}
We may suppose that $\mathrm{lct}(X)<1/2$. Then there is an
effective $\mathbb{Q}$-divisor $D\qlineq H$  such~that the log
pair $(X,\lambda D)$ is not log canonical for some positive
rational number $\lambda<1$.

The threefold  $X$ is a divisor on $\P^2\times\P^2$ of bi-degree
$(1, 1)$. There are two natural $\P^1$-bundles $\pi_1\colon
X\to\P^2$ and $\pi_2\colon X\to\P^2$, and applying
Theorem~\ref{theorem:Hwang} to $\pi_1$ and~$\pi_2$ we immediately
obtain a contradiction.
\end{proof}

\begin{remark}
\label{remark:rho-1-common} Suppose that $\mathrm{Pic}(X)=\Z[H]$,
and there is an effective $\mathbb{Q}$-divisor $D\qlineq H$
such~that the log pair $(X,\lambda D)$ is not log canonical for
some positive rational number $\lambda<1$. Put
$$
D=\eps S+\Omega\qlineq H,
$$
where $S$ is an irreducible surface and $\Omega$ is an effective
$\mathbb{Q}$-divisor such that
$$
\mathrm{Supp}\big(\Omega\big)\not\supset S.
$$
Then $\eps\leqslant 1$, because $\mathrm{Pic}(X)=\Z[H]$, which
implies that the set $\mathbb{LCS}(X,\lambda D)$ contains no
surfaces. Moreover, for any choice of $H\in |H|$ the locus
$$
\mathrm{LCS}\Big(X,\ \lambda D+H\Big)
$$
is connected by Theorem~\ref{theorem:connectedness}. Let $H$ be a
general surface in $|H|$. Since $\LCS(X, \lambda D+H)$ is
connected, one obtains that the locus $\LCS(X, \lambda D+ H)$ has
no isolated zero-dimensional components outside the base locus of
the linear system $|H|$. Note that $|H|$ has no base points at
all, unless $\gimel(X)=1.11$ when $\mathrm{Bs}|H|$ consists of a
single point $O$. Note that in the latter case $O\not\in\LCS(X,
\lambda D)$, since $X$ is covered by the curves of anticanonical
degree $2$ passing through $O$. Hence the locus $\LCS(X, \lambda
D)$ never has isolated zero-dimensional components; in particular,
it contains an (irreducible) curve $C$. Put $D\vert_{H}=\bar{D}$.
Then
$$
-K_{H}\sim H\big\vert_{H}\qlineq \bar{D},
$$
but $(H, \lambda\bar{D})$ is not log canonical in every point of
the intersection $H\cap C$. The locus
$\mathrm{LCS}(H, \lambda\bar{D})$
is connected by Theorem~\ref{theorem:connectedness}. But the
scheme $\mathcal{L}(H, \lambda\bar{D})$ is zero-dimensional. We
see that
$$
H\cdot C=\big|H\cap C\big|=1,
$$
and the locus $\mathrm{LCS}(X,\lambda D)$ contains no curves
besides the irreducible curve $C$.
\end{remark}

\begin{lemma}
\label{lemma:V4} Let $\gimel(X)=1.14$. Then $\lct(X)=1/2$.
\end{lemma}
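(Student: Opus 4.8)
The inequality $\lct(X)\le 1/2$ is clear: for a general $S\in|H|$ one has $2S\qlineq -K_X$, and since $S$ is smooth, $\lct(X,S)=1$, whence $\lct(X,2S)=1/2$. To prove the reverse inequality I argue by contradiction. Assuming $\lct(X)<1/2$, there is an effective $\Q$-divisor $D\qlineq H$ and a rational $\lambda<1$ with $(X,\lambda D)$ not log canonical. Since $\Pic(X)=\Z[H]$ and $|H|$ is base point free for $\gimel(X)=1.14$, Remark~\ref{remark:rho-1-common} shows that $\LCS(X,\lambda D)=C$ for an irreducible curve $C$ with $H\cdot C=1$; as $H$ is the hyperplane class of $X\subset\P^5$, the curve $C$ is a line. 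Put $m=\mult_C(D)$. Because $C$ is a centre of log canonical singularities, $(X,\lambda D)$ is not log canonical at the generic point of $C$, so $\lambda m>1$ and in particular $m>1$.

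I first bound $m$ from above. Let $S\in|H|$ be a general surface through $C$; then $S$ is a smooth del Pezzo surface with $K_S^2=4$, the curve $C$ is a $(-1)$-curve on it, and $D|_S\qlineq H|_S\qlineq -K_S$. By Remark~\ref{remark:convexity} we may assume $S\not\subseteq\Supp(D)$, so $D|_S=mC+G$ with $G$ effective and $C\not\subseteq\Supp(G)$. Since $\lct(S)=2/3$ by Example~\ref{example:del-Pezzos}, the pair $(S,\frac{2}{3}D|_S)$ is log canonical, and comparing the coefficient of $C$ gives $\frac{2}{3}m\le 1$, i.e. $m\le 3/2$. Combined with $\lambda m>1$ this yields both $\lambda>1/m\ge 2/3$ and $\lambda m<2$.

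Next I blow up the line. Let $\sigma\colon\tilde X\to X$ be the blow up of $C$ with exceptional divisor $E$ and let $\tilde D$ be the strict transform of $D$. Then $K_{\tilde X}+\lambda\tilde D+(\lambda m-1)E=\sigma^*(K_X+\lambda D)$, so $(\tilde X,\lambda\tilde D+(\lambda m-1)E)$ is not log canonical, its non log canonical locus lies over $C$, and $0<\lambda m-1<1$. Projection of $X\subset\P^5$ from $C$ resolves on $\tilde X$ to a birational morphism $\beta\colon\tilde X\to\P^3$ with $\beta^*\O_{\P^3}(1)\sim\sigma^*H-E$; since $N_{C/X}$ has degree $0$, a direct check shows that $\beta$ maps $E$ birationally onto a quadric surface, so $\beta_*E\sim\O_{\P^3}(2)$, and the only $\beta$-exceptional divisor is the surface $R$ swept out by the lines meeting $C$. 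Using $\tilde D\qlineq\sigma^*H-mE$ and the projection formula one computes
\[
\beta_*\Big(\lambda\tilde D+(\lambda m-1)E\Big)\qlineq\O_{\P^3}\big(3\lambda-2\big)\qlineq-\lambda' K_{\P^3},\qquad \lambda'=\frac{3\lambda-2}{4},
\]
and $0<\lambda'<1/2$ because $2/3<\lambda<1$. This divisor is effective, and, the non log canonical locus upstairs meeting $E$ away from $R$, the pair $(\P^3,\beta_*(\lambda\tilde D+(\lambda m-1)E))$ is still not log canonical. By Lemma~\ref{lemma:P3} its locus of log canonical singularities must then contain a surface. But every prime component of the pushforward enters with coefficient strictly less than $1$: the quadric $\beta_*E$ has coefficient $\lambda m-1<1$, while every other component is the image of a component of $D$, hence of coefficient at most $\lambda<1$. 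Thus the locus contains no surface — a contradiction, so $\lct(X)=1/2$.

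The hard part is the geometric input of the last paragraph: one must verify that projection from $C$ is birational with $E$ mapping onto a quadric, and, crucially, that the non log canonical singularity genuinely descends to $\P^3$. The delicate point is that the lines on $S$ meeting $C$ are forced into $\Supp(D)$ (they satisfy $G\cdot C'=1-m<0$), so $R\subseteq\Supp(D)$ in general; before pushing down I would therefore first apply Remark~\ref{remark:convexity} to arrange $R\not\subseteq\Supp(D)$ (or else argue directly that the non log canonical locus avoids the $\beta$-contracted curve), so that the pushed-forward pair on $\P^3$ is indeed not log canonical and Lemma~\ref{lemma:P3} applies.
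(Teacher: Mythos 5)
Your reduction to a single line $C=\LCS(X,\lambda D)$ via Remark~\ref{remark:rho-1-common}, the bound $m=\mult_C(D)\le 3/2$ obtained by restricting to a general quartic del Pezzo surface through $C$, and the identification of the link (blow up of $C$, contraction of the surface $R$ swept out by the lines meeting $C$, with $\beta_*E$ a quadric and $\beta_*\tilde D\qlineq\O_{\P^3}(3-2m)$) are all correct and consistent with the geometry the paper imports from \cite[Proposition~3.4.1]{IsPr99}. The gap is exactly the step you flag at the end: the claim that $(\P^3,\beta_*(\lambda\tilde D+(\lambda m-1)E))$ is not log canonical. Since $\lambda m-1<1$, the non-log-canonical locus of the crepant pullback is a \emph{proper} closed subset of $E$, and nothing you have established prevents it from lying inside $E\cap R$, which $\beta$ maps into the blown-up curve $Z$. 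Writing $K_{\tilde X}+\Theta=\beta^*\big(K_{\P^3}+\beta_*\Theta\big)+(1+c-\mu)R$, where $c$ is the coefficient of $R$ in $\Theta$ and $\mu=\mult_Z(\beta_*\Theta)$, non-log-canonicity descends at points of $R$ only when $\mu\ge 1+c$. Even after arranging $c=0$ by Remark~\ref{remark:convexity} you would still need $\mult_Z(\beta_*\Theta)\ge 1$, and since the quadric $\beta_*E$ enters with coefficient $\lambda m-1<1/2$ and you have no control on $\mult_Z(\beta_*\tilde D)$, this can fail. So removing $R$ from $\Supp(D)$ does not make the pushed-forward pair non-log-canonical, and ``arguing directly that the non-lc locus avoids the contracted curves'' is not available cheaply: all you know is that this locus sits somewhere in $E$.

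The irony is that once you have applied Remark~\ref{remark:convexity} to arrange $R\not\subseteq\Supp(D)$ --- which is legitimate, since $R\sim 2H$ and $\big(X,\tfrac{1}{2}R\big)$ is log canonical even though $\mult_C(R)=3$ --- the proof finishes in one line without ever descending to $\P^3$, and this is what the paper does: a general line $\bar{L}\subset R$ meets $C$ and is not contained in $\Supp(D)$, so
$1=H\cdot\bar{L}=D\cdot\bar{L}\ge\mult_C(D)>1/\lambda>1$,
a contradiction. Your bound $m\le 3/2$ and the entire pushforward computation then become unnecessary. I would keep your first two paragraphs and replace everything from the blow-up onwards by this intersection computation.
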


\begin{proof}
We may suppose that $\mathrm{lct}(X)<1/2$. Then there is an
effective $\mathbb{Q}$-divisor $D\qlineq H$  such~that the log
pair $(X,\lambda D)$ is not log canonical for some positive
rational number $\lambda<1$.

The linear system $|H|$ induces an embedding
$X\subset\mathbb{P}^{5}$ such that $X$ is a complete intersection
of two quadrics. Then the locus $\mathrm{LCS}(X,\lambda D)$
consists of a single line $C\subset X\subset\mathbb{P}^{5}$ by
Remark~\ref{remark:rho-1-common}.

It follows from \cite[Proposition~3.4.1]{IsPr99} that there is a
commutative diagram
$$
\xymatrix{
&&V\ar@{->}[dl]_{\alpha}\ar@{->}[dr]^{\beta}&&&\\%
&X\ar@{-->}[rr]_{\psi}&&\mathbb{P}^{3}&}
$$
where $\psi$ is a projection from $C$, the morphism $\alpha$ is a
blow up of the line $C$, and $\beta$ is a blow up of a smooth
curve $Z\subset\mathbb{P}^{3}$ of degree $5$ and genus $2$.

Let $S$ be the exceptional divisor of $\beta$, and let $L$ be a
fiber of the morphism $\beta$ over a general point of the curve
$Z$. Put $\bar{S}=\alpha(S)$ and $\bar{L}=\alpha(L)$. Then
$\bar{S}\sim 2H$,
the curve $\bar{L}$ is a line, and $\mathrm{mult}_{C}(\bar{S})=3$.
But the log pair $(X, 1/2\bar{S})$ is log canonical, which implies
that we may assume that $\mathrm{Supp}(D)\not\supset\bar{S}$ by
Remark~\ref{remark:convexity}. Then
$$
1=\bar{L}\cdot D\geqslant\mathrm{mult}_{C}\big(D\big)>1/\lambda>1,
$$
which is a contradiction.
\end{proof}

\begin{remark}
\label{remark:singular-V4} Let $V\subset\mathbb{P}^{5}$ be a
complete intersection of two quadric hypersurfaces that has
isolated singularities, and let $B_{V}$ be an effective
$\mathbb{Q}$-divisor on $V$ such that $B_{V}\qlineq -K_{V}$~and
$$
\mathrm{LCS}\Big(V, \mu B_{V}\Big)\ne\varnothing,
$$
where $\mu<1/2$. Arguing as in the proof of Lemma~\ref{lemma:V4},
we see that
$$
\mathrm{LCS}\Big(V, \mu B_{V}\Big)\subseteq L,
$$
where $L\subset V$ is a line such that
$L\cap\Sing(V)\ne\varnothing$.
\end{remark}

\begin{lemma}
\label{lemma:V5} Suppose that $\gimel(X)=1.15$. Then
$\lct(X)=1/2$.
\end{lemma}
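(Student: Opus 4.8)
The bound $\lct(X)\le 1/2$ is clear because $|H|\ne\varnothing$, so the plan is to prove $\lct(X)\ge 1/2$ by imitating the proof of Lemma~\ref{lemma:V4}. Suppose $\lct(X)<1/2$; then there is an effective $\Q$-divisor $D\qlineq H$ such that $(X,\lambda D)$ is not log canonical for some rational $\lambda<1$. Since $\Pic(X)=\Z[H]$, Remark~\ref{remark:rho-1-common} applies verbatim and shows that $\LCS(X,\lambda D)$ is a single irreducible curve $C$ with $H\cdot C=1$; under the embedding $X\subset\P^6$ defined by $|H|$ this curve is a line on the quintic del Pezzo threefold $X=V_5$, and $\mult_C(D)>1/\lambda>1$.

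The geometric input I would use is the structure of lines on $V_5$: the lines are parametrised by $\P^2$, three of them pass through a general point, and hence the lines distinct from $C$ that meet $C$ form a one-dimensional family sweeping out a surface $\Sigma\supseteq C$ through whose general point of $C$ there passes such a line $L$ with $H\cdot L=1$. Concretely I would realise this family through the Iskovskikh--Prokhorov diagram for the projection from $C$, writing $\alpha\colon V\to X$ for the blow up of $C$ and $\beta\colon V\to Y$ for the other extremal contraction, with exceptional divisor $S$: the $\alpha$-images of the fibres of $\beta$ meeting $\alpha^{-1}(C)$ are the lines $L$, and $\Sigma=\alpha(S)$ is an explicit surface with $\Sigma\qlineq aH$ and a computable multiplicity along $C$.

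The conclusion is then an intersection computation. After checking that $(X,\frac{1}{a}\Sigma)$ is log canonical, Remark~\ref{remark:convexity} lets me assume $\Sigma\not\subseteq\Supp(D)$, so that a general member $L$ of the family satisfies $L\not\subseteq\Supp(D)$, $H\cdot L=1$ and $L\cap C\ne\varnothing$. Intersecting $D$ with such an $L$ then yields
$$
1=H\cdot L=D\cdot L\ge\mult_C(D)>1,
$$
a contradiction. The step I expect to be the main obstacle is the geometric one: pinning down the surface $\Sigma$, its class $aH$ and its behaviour along $C$ precisely enough to justify the reduction $\Sigma\not\subseteq\Supp(D)$, and in particular treating separately the special lines $C$ (those with normal bundle $\O(1)\oplus\O(-1)$), for which the projection diagram degenerates. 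Once that geometry is recorded, the remaining estimates are routine manipulations with $\mult_C(D)>1$.
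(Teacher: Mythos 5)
Your argument is exactly the paper's: the paper proves this case by declaring it ``analogous to Lemma~\ref{lemma:V4}'', whose proof is precisely your scheme --- Remark~\ref{remark:rho-1-common} reduces $\mathrm{LCS}(X,\lambda D)$ to a single line $C$ with $H\cdot C=1$, the Iskovskikh--Prokhorov projection-from-$C$ diagram produces the surface $\Sigma=\alpha(S)$ swept out by lines meeting $C$, Remark~\ref{remark:convexity} removes $\Sigma$ from $\mathrm{Supp}(D)$, and intersecting $D$ with a general such line gives $1\geqslant\mathrm{mult}_{C}(D)>1$. The geometric data you flag as the remaining work (here $\Sigma\sim H$, singular along $C$, cf.\ the proof of Lemma~\ref{lemma:2-26}) is exactly what the paper records in the $V_{4}$ case, so your proposal matches the intended proof.
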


\begin{proof}
Analogous to that of Lemma~\ref{lemma:V4}.
\end{proof}

\begin{lemma}
Suppose that $\gimel(X)=1.11$. Then $\lct(X)=1/2$.
\end{lemma}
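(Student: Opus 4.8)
The plan is to argue exactly as in the previous del Pezzo threefold cases, exploiting that $\gimel(X)=1.11$ is the del Pezzo threefold of degree one, so that $H^{3}=1$ and $|H|$ defines a rational map to $\P^{2}$ with a single base point $O$. First I would assume $\lct(X)<1/2$; then there is an effective $\Q$-divisor $D\qlineq H$ and a rational number $\lambda<1$ such that $(X,\lambda D)$ is not log canonical. By Remark~\ref{remark:rho-1-common} the locus $\LCS(X,\lambda D)$ is a single irreducible curve $C$ with $H\cdot C=1$, it contains no surface and no other curve, and $O\notin\LCS(X,\lambda D)$, so that $O\notin C$. Restricting $\lambda D$ to a general surface germ transversal to $C$ then shows $\mult_{C}(D)>1/\lambda>1$.

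Next I would resolve the base point. Let $\sigma\colon\tilde X\to X$ be the blow up of $O$ with exceptional divisor $E$; since $O$ is a smooth point, $E\cong\P^{2}$, and $|\sigma^{*}H-E|$ is base point free and defines an elliptic fibration $\pi\colon\tilde X\to\P^{2}$ for which $E$ is a section (one checks $(\sigma^{*}H-E)\cdot e=1$ for a line $e\subset E$, and $K_{F}=0$ on a general fibre $F$). Because $O\notin C$, the strict transform $\tilde C$ is disjoint from $E$, so $\tilde C\cdot(\sigma^{*}H-E)=H\cdot C=1$; hence $\tilde C$ is not contracted by $\pi$, and $\pi|_{\tilde C}$ maps it birationally onto a line $\ell\subset\P^{2}$, i.e. $C$ is a $1$-section of $\pi$ over $\ell$. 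The fibres $\bar F_{t}:=\sigma(\pi^{-1}(t))$, $t\in\ell$, then form a one-parameter family of curves with $H\cdot\bar F_{t}=1$, each meeting $C$ in the unique point of $C$ lying over $t$, and sweeping out the surface $S_{\ell}:=\sigma(\pi^{-1}(\ell))\in|H|$, a degree-one del Pezzo surface through $O$.

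Finally I would intersect. Write $D=aS_{\ell}+\Omega$ with $a\ge 0$ and $S_{\ell}\not\subseteq\Supp(\Omega)$; since $\Pic(X)=\Z[H]$ and $\Omega$ is effective, one has $a\le 1$ and $\Omega\qlineq(1-a)H$. As $S_{\ell}$ is reduced and $C\subset S_{\ell}$, we have $\mult_{C}(S_{\ell})=1$, so $\mult_{C}(\Omega)=\mult_{C}(D)-a>1-a$. Choosing $t\in\ell$ general, $\bar F_{t}$ is smooth at its intersection point with $C$, avoids $O$, and is not a component of $\Omega$; therefore
$$
1-a=(1-a)\,H\cdot\bar F_{t}=\Omega\cdot\bar F_{t}\ge\mult_{C}(\Omega)>1-a,
$$
a contradiction. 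This forces $\lct(X)\ge 1/2$, while $\lct(X)\le 1/2$ is clear because $|H|\ne\varnothing$; hence $\lct(X)=1/2$.

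I would expect the main obstacle to be the geometric identification of $C$ as a section of the elliptic fibration, together with the verification that the fibres over $\ell$ genuinely meet $C$ and sweep out a member of $|H|$; once this is in place the numerical contradiction is immediate, and the decomposition $D=aS_{\ell}+\Omega$ (rather than an appeal to Remark~\ref{remark:convexity}) conveniently disposes of the case in which the swept surface $S_{\ell}$ happens to be a component of $D$.
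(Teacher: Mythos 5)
Your reduction to a single curve $C$ with $H\cdot C=1$ and $O\notin C$, the elliptic fibration obtained by blowing up the base point $O$, and the identification of $C$ as a section over a line $\ell\subset\P^2$ all agree with the paper's setup (the paper works with the rational projection $\psi\colon X\dasharrow\P^2$ rather than its resolution, but the geometry is the same, including the fact that every fibre meeting $C$ lies in $\Supp(D)$ and hence that the surface $S_{\ell}$ swept by them appears in $D$). The gap is the assertion that $\mult_{C}(S_{\ell})=1$ because $S_{\ell}$ is reduced. That implication is false: a reduced irreducible surface in a smooth threefold can have multiplicity $\ge 2$ along a curve, e.g.\ a cuspidal edge, locally $w^{2}=t^{3}$ with $C=\{w=t=0\}$. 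If $\mult_{C}(S_{\ell})=m\ge 2$ and $a>0$, then $\mult_{C}(\Omega)=\mult_{C}(D)-am$ need only exceed $1-am$, and your inequality $1-a=\Omega\cdot\bar F_{t}\ge\mult_{C}(\Omega)$ yields no contradiction. The extreme case $D=S_{\ell}$ with a transversal cusp along $C$ is exactly the dangerous one: there $\Omega=0$ and $(X,\lambda S_{\ell})$ genuinely fails to be log canonical along $C$ for $5/6<\lambda<1$, so your argument cannot dispose of it. (A minor, harmless slip elsewhere: the fibres $\bar F_{t}$ do not avoid $O$ — they all pass through the base point of $|H|$ — but since $C\cap\bar F_{t}\ne O$ this does not affect the intersection estimate.)

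Excluding the cuspidal case is precisely the substance of the paper's proof. Writing $X$ as $w^{2}=t^{3}+t^{2}f_{2}(x,y,z)+tf_{4}(x,y,z)+f_{6}(x,y,z)$ in $\P(1,1,1,2,3)$, with $S=\{x=0\}$ and $C=\{w=t=x=0\}$, the hypothesis that $S$ is singular along $C$ forces $f_{4}=xf_{3}$ and $f_{6}=xf_{5}$, and then every point of the locus $x=f_{5}=t=w=0$ is a singular point of $X$, contradicting smoothness. Some argument of this kind, using the smoothness of $X$ in an essential way, is needed to justify $\mult_{C}(S_{\ell})=1$; without it your proof does not close.
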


\begin{proof}
We may suppose that $\mathrm{lct}(X)<1/2$. Then there is an
effective $\mathbb{Q}$-divisor $D\qlineq H$  such~that the log
pair $(X,\lambda D)$ is not log canonical for some positive
rational number $\lambda<1$.

Recall that the threefold $X$ can be given by an equation
$$
w^{2}=t^{3}+t^{2}f_{2}\big(x,y,z\big)+tf_{4}\big(x,y,z\big)+f_{6}\big(x,y,z\big)\subset\mathbb{P}\big(1,1,1,2,3\big)\cong\mathrm{Proj}\Big(\mathbb{C}\big[x,y,z,t,w\big]\Big),
$$
where $\mathrm{wt}(x)=\mathrm{wt}(y)=\mathrm{wt}(z)=1$,
$\mathrm{wt}(t)=2$, $\mathrm{wt}(w)=3$, and $f_{i}$ is a
polynomial of degree $i$.

The locus $\mathrm{LCS}(X,\lambda D)$ consists of a single curve
$C\subset X$ such that $H\cdot C=1$ by
Remark~\ref{remark:rho-1-common}.

Let $\psi\colon X\dasharrow\mathbb{P}^{2}$ be the natural
projection. Then $\psi$ is not defined in a point $O$ that is cut
out by $x=y=z=0$. The curve $C$ does not contain the point $O$,
because otherwise we get
$$
1=\Gamma\cdot D\geqslant\mathrm{mult}_{O}\big(D\big)\mathrm{mult}_{O}\big(\Gamma\big)\geqslant\mathrm{mult}_{C}\big(D\big)>1/\lambda>1,%
$$
where $\Gamma$ is a general fiber of the projection $\psi$. Thus,
we see that $\psi(C)\subset\P^2$ is a line.

Let $S$ be the unique surface in $|H|$ such that $C\subset S$. Let
$L$ be a sufficiently general fiber of the rational map $\psi$
that intersects the curve $C$. Then $L\subset\Supp(D)$, since
otherwise
$$1=D\cdot L\ge \mult_C(D)>1/\lambda>1.$$

We may assume that $D=S$ by Remark~\ref{remark:convexity}. Then
$S$ has a cuspidal singularity along $C$.

We may assume that the surface $S$ is cut out on $X$ by the
equation $x=0$, and we may assume that the curve $C$ is given by
$w=t=x=0$. Then $S$ is given by
$$
w^{2}=t^{3}+t^{2}f_{2}\big(0,y,z\big)+tf_{4}\big(0,y,z\big)\subset\mathbb{P}\big(1,1,2,3\big)\cong\mathrm{Proj}\Big(\mathbb{C}\big[y,z,t,w\big]\Big),
$$
and $f_{6}(x,y,z)=xf_{5}(x,y,z)$, where $f_{5}(x,y,z)$ is a
homogeneous polynomial of degree $5$.

Since the surface $S$ is singular along the curve $C$, one has
$$
f_{4}\big(x,y,z\big)=xf_{3}\big(x,y,z\big),
$$
where $f_{3}(x,y,z)$ is a homogeneous polynomial of degree $3$.
Then every point of the set
$$
x=f_{5}\big(x,y,z\big)=t=w=0\subset\mathbb{P}\big(1,1,1,2,3\big)
$$
must be singular on $X$, which is a contradiction, because $X$ is
smooth.
\end{proof}

The assertion of Theorem~\ref{theorem:del-Pezzo} is completely
proved.

\section{Fano threefolds with $\rho=2$}
\label{section:rho-2}

We use the assumptions and notation introduced in
section~\ref{section:intro}.

\begin{lemma}\label{lemma:2-1-2-3}
Suppose that $\gimel(X)=2.1$ or $\gimel(X)=2.3$. Then
$\lct(X)=1/2$.
\end{lemma}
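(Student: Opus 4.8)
The plan is to exploit the two extremal contractions of $X$. One checks from the Mori--Mukai description that for $\gimel(X)=2.1$ (respectively $2.3$) the threefold carries a birational contraction $\sigma\colon X\to V$ onto the del Pezzo threefold $V=V_{d}$ with $-K_{V}\sim 2H$ and $d=H^{3}\in\{1,2\}$, contracting a divisor $E$ to the elliptic curve $C=H_{1}\cap H_{2}$ cut out by two general members of the pencil $|H|$. Writing $F=\sigma^{*}H-E$ for the proper transform of a member of that pencil through $C$, one gets $-K_{X}\sim 2\sigma^{*}H-E\sim 2F+E$, while the pencil $|F|$ is base-point free and defines a del Pezzo fibration $\pi\colon X\to\mathbb{P}^{1}$ whose general fibre is a smooth del Pezzo surface of degree $d$. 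Since $2F_{0}+E\qlineq -K_{X}$ for every fibre $F_{0}$ of $\pi$ and the pair $(X,F_{0}+\tfrac12 E)$ is log canonical, this already yields $\lct(X)\le 1/2$; the whole content is the reverse inequality.

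For the lower bound I would assume $\lct(X)<1/2$ and produce an effective $D\qlineq -K_{X}$ with $(X,\lambda D)$ not log canonical at some point $P$, for a rational $\lambda<1/2$. Let $F_{0}$ be the fibre through $P$ and write $D=mF_{0}+D'$ with $m=\mult_{F_{0}}(D)$ and $F_{0}\not\subseteq\Supp(D')$, so that $D'\qlineq(2-m)F+E$. Two multiplicity estimates are needed. Restricting $D$ to a general fibre $F$, on which $E|_{F}\sim -K_{F}$ because $E\cap F$ is the copy of the elliptic curve $C\sim -K_{F}$, and intersecting with the ample class $-K_{F}$ gives $\mult_{E}(D)\le 1$. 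A ruling $\ell$ of the conic bundle $E\to C$ is a section of $\pi$ with $F\cdot\ell=1$, $E\cdot\ell=-1$ and $-K_{X}\cdot\ell=1$; intersecting $D-mF_{0}\qlineq(2-m)F+E$ with a general such $\ell$ and feeding in $\mult_{E}(D)\le 1$ yields $m\le 2$. Combined with the fact that the pseudoeffective cone of $X$ is spanned by $E$ and $F$, these bounds exclude any surface from $\LCS(X,\lambda D)$.

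The key step is then adjunction along $F_{0}$. Because $m\le 2$ and $\lambda<1/2$ we have $\lambda m<1$, so it is enough to show that $(X,F_{0}+\lambda D')$ is log canonical near $F_{0}$; by Theorem~\ref{theorem:adjunction} this is equivalent to log canonicity of $(F_{0},\lambda D'|_{F_{0}})$. As the fibres of $\pi$ are disjoint, $F|_{F_{0}}=0$ and hence $D'|_{F_{0}}\qlineq -K_{F_{0}}$. Thus whenever $\lambda<\lct(F_{0})$ the pair $(F_{0},\lambda D'|_{F_{0}})$ is log canonical and we reach a contradiction. For a smooth fibre $F_{0}$, a del Pezzo surface of degree $1$ or $2$, Example~\ref{example:del-Pezzos} gives $\lct(F_{0})\ge 3/4>1/2>\lambda$, and the same reasoning (equivalently, a direct appeal to Theorem~\ref{theorem:Hwang}, whose second alternative is blocked by $\mult_{F_{0}}(D)\le 2<1/\lambda$) disposes of every smooth fibre, and of every fibre not meeting $\Supp(D)$.

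The hard part will be the finitely many singular fibres of $\pi$, where $\lct(F_{0})$ could a priori drop to $1/2$ or below and where $F_{0}$ need not have canonical Gorenstein singularities, so that neither Example~\ref{example:del-Pezzos} nor the adjunction step applies verbatim. I expect to handle these exactly in the spirit of Lemma~\ref{lemma:2-29-singular}: first observe that, since $X$ is smooth, the singular members of the pencil carry only mild (at worst Du Val) singularities; then bound the threshold of the actual restricted divisor $\lct(F_{0},[D'|_{F_{0}}])$ using the del Pezzo surface results of Section~\ref{section:del-Pezzo} (for $d=2$, Lemma~\ref{lemma:singular-Del-Pezzo-2} together with Example~\ref{example:del-Pezzos}); and, when the restriction argument is inconclusive, locate $\LCS(X,\lambda D)$ inside such a fibre and contradict Theorem~\ref{theorem:connectedness} by adjoining a general member of $|F|$ disjoint from $F_{0}$ to disconnect the locus of log canonical singularities. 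Checking that no admissible singular fibre of the degree-$1$ or degree-$2$ fibration can force $\lct(F_{0})<1/2$ is the crux on which the entire argument rests.
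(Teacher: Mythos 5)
Your upper bound and the preliminary multiplicity estimates ($\mult_{E}(D)\le 1$, $\mult_{F_{0}}(D)\le 2$, exclusion of surfaces from $\mathrm{LCS}(X,\lambda D)$) are fine, but the argument has a genuine gap exactly where you locate "the crux", and it cannot be closed along the lines you sketch. The fibres of $\pi$ are the proper transforms of the members of the pencil generated by $H_{1}$ and $H_{2}$ in $|H|$ on $V=V_{1}$ or $V_{2}$; since the lemma is asserted for \emph{every} threefold in the families $2.1$ and $2.3$, with no generality hypothesis, this pencil may contain members with worse than du Val singularities (for $d=1$ these are degree-$6$ hypersurfaces in $\mathbb{P}(1,1,2,3)$, for $d=2$ degree-$4$ hypersurfaces in $\mathbb{P}(1,1,1,2)$, and nothing rules out an elliptic or even non-normal singular point). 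Your claim that smoothness of $X$ forces the singular fibres to be du Val is unjustified, and without it neither Example~\ref{example:del-Pezzos} nor the adjunction step applies to $F_{0}$; moreover, even for a del Pezzo surface of degree $1$ with a single node the bound $\lct\geqslant 1/2$ is established nowhere in the paper (Lemma~\ref{lemma:singular-Del-Pezzo-2} treats only degree $2$ with one node). This is precisely why the paper reserves the fibration argument for $\gimel(X)\in\{2.5,2.10,2.14\}$, where the fibres have degree at least $3$ and a generality assumption on them is imposed.

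The paper's proof avoids the fibres altogether. Since $\alpha(D)\qlineq -K_{V}$ and $\lct(V)=1/2$ by Theorem~\ref{theorem:del-Pezzo}, one gets $\varnothing\ne\mathrm{LCS}(X,\lambda D)\subseteq E$; then, using $\mathcal{N}_{C/V}\cong\mathcal{O}_{C}(H\vert_{C})\oplus\mathcal{O}_{C}(H\vert_{C})$, one identifies $E\cong C\times\mathbb{P}^{1}$ with $E\vert_{E}\sim -Z+kL$, writes $D=\mu E+\Omega$, applies Theorem~\ref{theorem:adjunction} to conclude that $(E,\lambda\Omega\vert_{E})$ is not log canonical, and derives a contradiction from Lemma~\ref{lemma:elliptic-times-P1}, because $\Omega\vert_{E}\qlineq(1+\mu)Z+k(1-\mu)L$ and both coefficients, after multiplication by $\lambda<1/2$, are at most $1$. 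If you want to salvage your setup, the quickest repair is to import this step: your own estimate $\mult_{E}(D)\le 1$ supplies exactly the bound $\mu\le 1$, hence $\lambda(1+\mu)\le 1$, that makes Lemma~\ref{lemma:elliptic-times-P1} applicable.
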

\begin{proof}
There is a birational morphism $\alpha\colon X\to V$ that
contracts a surface $E\subset X$ to a smooth elliptic curve
$C\subset V$, where  $V$ is one of the following Fano threefolds:
\begin{itemize}
\item smooth hypersurface in $\mathbb{P}(1,1,1,2,3)$ of degree $6$;%
\item smooth hypersurface in $\mathbb{P}(1,1,1,1,2)$ of degree $4$.%
\end{itemize}

The curve $C$ is contained in a surface $H\subset V$ such that
$$
\mathrm{Pic}\big(V\big)=\mathbb{Z}\big[H\big]
$$
and $-K_{X}\sim 2H$. Then $C$ is a complete intersection of two
surfaces in $|H|$, and
$$
-K_{X}\sim 2\bar{H}+E,
$$
where $E$ is the exceptional divisor of the birational morphism
$\alpha$, and $\bar{H}$ is a proper transform of the surface $H$
on the threefold $X$. In particular, the inequality $\lct(X)\le
1/2$ holds.

We suppose that $\mathrm{lct}(X)<1/2$. Then there exists an
effective $\mathbb{Q}$-divisor $D\qlineq -K_{X}$ such that the log
pair $(X,\lambda D)$ is not log canonical for some positive
rational number $\lambda<1/2$. Then
$$
\varnothing\ne\mathrm{LCS}\Big(X,\lambda D\Big)\subseteq E,
$$
since $\mathrm{lct}(V)=1/2$ by Theorem~\ref{theorem:del-Pezzo} and
$\alpha(D)\qlineq 2H\sim -K_{V}$.

Put $k=H\cdot C$. Then $k=H^{3}\in\{1,2\}$. Note that
$$
\mathcal{N}_{C\slash
 V}\cong\mathcal{O}_{C}\Big(H\big\vert_{C}\Big)\oplus\mathcal{O}_{C}\Big(H\big\vert_{C}\Big),%
$$
which implies that $E\cong C\times\mathbb{P}^{1}$. Let $Z\cong C$
and $L\cong\mathbb{P}^{1}$ be curves on $E$ such that
$$
Z\cdot Z=L\cdot L=0
$$
and $Z\cdot L=1$. Then $\alpha^{*}(H)\vert_{E}\sim kL$, and since
$$
-2Z\sim K_E\sim \Big(K_X+E\Big)\Big\vert_{E}\sim
 \Big(2E-2\alpha^{*}\big(H\big)\Big)\Big\vert_{E}\sim -2kL+2E\Big\vert_E,%
$$
we see that $E\vert_E\sim -Z+kL$. Put
$$
D=\mu E+\Omega,
$$
where $\Omega$ is an effective $\mathbb{Q}$-divisor on $X$ such
that $E\not\subset\mathrm{Supp}(\Omega)$.

The pair $(X, E+\lambda\Omega)$ is not log canonical in the
neighborhood of $E$. Hence, the log pair
$$
\Big(E,\ \lambda\Omega\Big\vert_{E}\Big)
$$
is also not log canonical by Theorem~\ref{theorem:adjunction}. But
$$
\Omega\Big\vert_{E}\qlineq\Big(-K_X-\mu E\Big)\Big\vert_{E}\qlineq
\Big(2\alpha^{*}\big(H\big)-\big(1+\mu\big)E\Big)\Big\vert_{E}\qlineq
\big(1+\mu\big)Z+k\big(1-\mu\big)L,%
$$
and $0\le\lambda k(1-\mu)\le 1$, which contradicts
Lemma~\ref{lemma:elliptic-times-P1}.
\end{proof}

\begin{lemma}
\label{lemma:2-4} Suppose that $\gimel(X)=2.4$ and $X$ is general.
Then $\mathrm{lct}(X)=3/4$.
\end{lemma}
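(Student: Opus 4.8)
The plan is to exploit the two extremal contractions of $X$. For general $X$ with $\gimel(X)=2.4$ there is a blow-up $\alpha\colon X\to\P^{3}$ of a smooth curve $C\subset\P^{3}$ that is a complete intersection of two cubic surfaces, and the pencil of cubics through $C$ induces a fibration $\beta\colon X\to\P^{1}$ whose fibres are the proper transforms of the cubics in the pencil. Writing $H=\alpha^{*}(\mathcal{O}_{\P^{3}}(1))$, letting $E$ be the exceptional divisor of $\alpha$ and $F=3H-E$ the class of a fibre of $\beta$, one computes $-K_{X}\sim 4H-E\qlineq\frac{4}{3}F+\frac{1}{3}E$. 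Since a general fibre is a smooth cubic surface and $F$ occurs here with coefficient $4/3>1$, at a general point of a fixed fibre this divisor is locally just $\frac{4}{3}F$, which gives the upper bound $\lct(X)\le 3/4$; note this half needs no genericity. For general $X$ the fibration $\beta$ has only finitely many singular fibres, each a cubic surface with a single ordinary double point whose node lies off $C$ (hence off $E$), and no fibre carries an Eckardt point.

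For the lower bound I would suppose that there are $D\qlineq -K_{X}$ and $\lambda<3/4$ with $(X,\lambda D)$ not log canonical, and derive a contradiction. First I would show that no fibre of $\beta$ lies in $\LCS(X,\lambda D)$: intersecting $D$ with a general ruling $\ell$ of $E\to C$ gives $D\cdot\ell=(H+F)\cdot\ell=1$ while $F\cdot\ell=1$, so the multiplicity of $D$ along any fibre is at most $1<1/\lambda$; a similar intersection argument together with Remark~\ref{remark:convexity} (using $-K_{X}\qlineq\frac{4}{3}F+\frac{1}{3}E$) rules out the remaining surface components, so $\LCS(X,\lambda D)$ contains no surface and we may assume the relevant fibre $F_{0}$ is not a component of $D$. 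Applying Theorem~\ref{theorem:Hwang} to $\beta$, whose fibres are Gorenstein canonical cubic surfaces, together with the first step, the non log canonical point $P$ must lie on a fibre $F_{0}$ with $\lct(F_{0},[-K_{F_{0}}])<3/4$. By Example~\ref{example:del-Pezzos} a smooth cubic surface without Eckardt points has threshold $3/4$, so for general $X$ the fibre $F_{0}$ is forced to be one of the singular fibres, a cubic surface with a single node.

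Now I would restrict to $F_{0}$. Writing $D=\mu F_{0}+\Omega$ with $\mu\le 1$ and $F_{0}\not\subset\Supp(\Omega)$, inversion of adjunction (Theorem~\ref{theorem:adjunction}, exactly as in the proof of Lemma~\ref{lemma:Hwang}) shows that $(F_{0},\lambda\,\Omega|_{F_{0}})$ is not log canonical at $P$, where $\Omega|_{F_{0}}\qlineq -K_{F_{0}}$ since $F_{0}|_{F_{0}}\equiv 0$. As $\lambda<3/4$, Theorem~\ref{theorem:cubic-surfaces} applied to the one-nodal cubic surface $F_{0}$ (which for general $X$ has no Eckardt points) forces $P$ to be exactly the node $p_{0}$ of $F_{0}$. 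Ruling out the node is the heart of the matter: $F_{0}$ alone does not suffice, since a nodal cubic has threshold $2/3$, so one must genuinely use the ambient threefold. The plan is to split off from $\Omega$ the finitely many lines $\widetilde L$ of $F_{0}$ through $p_{0}$, to bound their coefficients using $-K_{X}\cdot\widetilde L=1$ and $F\cdot\widetilde L=0$, and to combine these bounds with the intersection of $\Omega$ against the fibre and against a ruling through $p_{0}$ to reach a numerical contradiction, in the spirit of Lemma~\ref{lemma:dp3-point} and Corollary~\ref{corollary:cubic-m1}. The genericity of $X$ enters precisely here, controlling the configuration of lines through $p_{0}$ and guaranteeing $p_{0}\notin C$. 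I expect this local analysis at the node to be the main obstacle, with the rest following formally from Theorems~\ref{theorem:Hwang},~\ref{theorem:cubic-surfaces} and~\ref{theorem:connectedness}.
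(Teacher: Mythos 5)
Your upper bound and the reduction via Theorem~\ref{theorem:Hwang} and Theorem~\ref{theorem:cubic-surfaces} to a single point $O$ on a special fibre match the paper, but there are two problems with the rest. First, the genericity claim that ``no fibre carries an Eckardt point'' is false: cubic surfaces with an Eckardt point form a hypersurface in the $\mathbb{P}^{19}$ of cubics, so a general pencil meets it in finitely many points and the fibration $\beta$ will have fibres with Eckardt points no matter how general $X$ is. The paper's generality condition is weaker and is the correct one: the base curve $C$ contains no Eckardt points and no singular points of any member of the pencil, so that the distinguished point $O$ produced by Theorem~\ref{theorem:cubic-surfaces} (which may be either a node of a singular fibre \emph{or} an Eckardt point of some fibre) does not lie on the exceptional divisor $E$.

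Second, and more seriously, the step you yourself identify as ``the heart of the matter'' --- ruling out the node --- is not carried out; you only propose a plan involving the six lines of the nodal cubic through $p_{0}$ and admit it is the main obstacle. The paper needs no such local analysis. Once Theorem~\ref{theorem:connectedness} gives $\mathrm{LCS}(X,\lambda D)=O$ with $O\notin E$, one pushes forward to $\mathbb{P}^{3}$: the point $\alpha(O)$ is then an isolated zero-dimensional component of $\mathrm{LCS}\bigl(\mathbb{P}^{3},\lambda\alpha(D)\bigr)$ with $\alpha(D)\qlineq-K_{\mathbb{P}^{3}}$ and $\lambda<3/4=n/(n+1)$ for $n=3$, which contradicts Lemma~\ref{lemma:plane}. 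This single step disposes of the node and the Eckardt case simultaneously, which is precisely why the weaker generality condition suffices. Without this (or a genuinely completed substitute for your line-splitting computation), your argument does not close.
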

\begin{proof}
There is a commutative diagram
$$
\xymatrix{
&&X\ar@{->}[dl]_{\alpha}\ar@{->}[dr]^{\beta}&&&\\%
&\mathbb{P}^3\ar@{-->}[rr]_{\psi}&&\mathbb{P}^{1}&}
$$
where $\psi$ is a rational map, $\alpha$ is a blow~up of a smooth
curve $C\subset\mathbb{P}^{3}$ such that
$$
C=H_{1}\cdot H_{2}
$$
for some $H_{1}, H_2\in |\mathcal{O}_{\mathbb{P}^{3}}(3)|$, and
$\beta$ is a fibration into cubic surfaces.

Let $\mathcal{P}$ be a pencil in
$|\mathcal{O}_{\mathbb{P}^{3}}(3)|$  generated by $H_{1}$ and
$H_{2}$. Then $\psi$ is given by $\mathcal{P}$.

We assume that $X$ satisfies the following generality conditions:
\begin{itemize}
\item every surface in $\mathcal{P}$ has at most one ordinary double point;%
\item the curve $C$ contains no Eckardt points\footnote{
Note that $C$ does not contain singular points of the surfaces in
$\mathcal{P}$ since $C$ is a complete intersection of two surfaces from
$\mathcal{P}$.} (see Definition~\ref{definition:Eckardt-point}) of any surface in $\mathcal{P}$.%
\end{itemize}

Let $E$  be the exceptional divisor of the birational morphism
$\alpha$. Then
$$
\frac{4}{3}\bar{H}_{1}+\frac{1}{3}E\qlineq \frac{4}{3}\bar{H}_{2}+\frac{1}{3}E\qlineq -K_{X},%
$$
where $\bar{H}_{i}$ is a proper transform of $H_{i}$ on the
threefold $X$. We see that $\mathrm{lct}(X)\leqslant 3/4$.

We suppose that $\mathrm{lct}(X)<3/4$. Then there exists an
effective $\mathbb{Q}$-divisor $D\qlineq -K_{X}$ such that the log
pair $(X,\lambda D)$ is not log canonical for some positive
rational number $\lambda<3/4$.

Suppose that the set $\mathbb{LCS}(X,\lambda D)$ contains a
(irreducible) surface $S\subset X$. Then
$$
D=\eps S+\Delta,
$$
where $\eps\geqslant 1/\lambda$, and $\Delta$ is an effective
$\mathbb{Q}$-divisor such that
$S\not\subset\mathrm{Supp}(\Delta)$. Then
$$
\Big(\bar{H}_{1},\ D\Big\vert_{\bar{H}_{1}}\Big)
$$
is not log canonical by Remark~\ref{remark:hyperplane-reduction}
if $S\cap\bar{H}_1\ne\varnothing$. But
$$
D\Big\vert_{\bar{H}_{1}}\qlineq -K_{\bar{H}_{1}}.
$$
We can choose $\bar{H}_{1}$ to be a smooth cubic surface in
$\mathbb{P}^{3}$. Thus, it follows from
Theorem~\ref{theorem:cubic-surfaces} that
$$
S\cap\bar{H}_{1}=\varnothing,
$$
which implies that $S\sim\bar{H}_{1}$. Thus, we see that
$\alpha(S)$ is a cubic surface in $\mathcal{P}$. Then
$$
\eps\alpha\big(S\big)+\alpha\big(\Delta\big)\qlineq\mathcal{O}_{\mathbb{P}^{3}}\big(4\big),
$$
which is impossible, because $\eps\geqslant 1/\lambda>4/3$.

Let $F$ be a fiber of $\beta$ such that
$F\cap\mathrm{LCS}(X,\lambda D)\ne\varnothing$. Put
$$
D=\mu F+\Omega,
$$
where $\Omega$ is an effective $\mathbb{Q}$-divisor such that
$F\not\subset\mathrm{Supp}(\Omega)$. Then the log pair
$(F,\lambda\Omega\vert_F)$ is not log canonical by
Theorem~\ref{theorem:adjunction}, because $\lambda\mu<1$.  It
follows from Theorem~\ref{theorem:cubic-surfaces} that
$$
\mathrm{LCS}\Big(F,\ \lambda\Omega\Big\vert_{F}\Big)=O,
$$
where either $O$ is an Eckardt point of the surface $F$, or
$O=\mathrm{Sing}(F)$. By Theorem~\ref{theorem:connectedness}
$$
\mathrm{LCS}\Big(X,\ \lambda D\Big)=\mathrm{LCS}\Big(X,\ \lambda\mu F+\lambda\Omega D\Big)=O,%
$$
because it follows from Theorem~\ref{theorem:adjunction} that $(X,
F+\lambda\Omega D)$ is not log canonical at $O$ and is log
canonical in a punctured neighborhood of $O$. But $O\not\in E$ by
our generality assumptions. Then
$$
\alpha\big(O\big)\subset\mathrm{LCS}\Big(\mathbb{P}^{3},\ \lambda\alpha\big(D\big)\Big)\subseteq\alpha(O)\cup C,%
$$
where $\alpha(O)\not\in C$. But $\lambda<3/4$, which contradicts
Lemma~\ref{lemma:plane}.
\end{proof}

\begin{lemma}
\label{lemma:2-5-and-2-10-and-2-14} Suppose that
$\gimel(X)\in\{2.5, 2.10, 2.14\}$ and $X$ is general. Then
$\mathrm{lct}(X)=1/2$.
\end{lemma}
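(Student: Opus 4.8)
The plan is to use the del Pezzo fibration on $X$ coming from its second extremal ray. For $\gimel(X)\in\{2.5,2.10,2.14\}$ the threefold $X$ is the blow up $\alpha\colon X\to V$, with exceptional divisor $E$, of a del Pezzo threefold $V=V_d$ of degree $d=3,4,5$ respectively (for $d=3$ this $V$ is the cubic threefold), so $-K_V\sim 2H$ and $\mathrm{Pic}(V)=\mathbb{Z}[H]$, along the elliptic curve $C=V\cap\Lambda$ cut out by a codimension-two linear section, equivalently $C=H_1\cap H_2$ for two general $H_1,H_2\in|H|$. The pencil $\langle H_1,H_2\rangle$ induces a morphism $\beta\colon X\to\mathbb{P}^1$ whose fibre $F_t$ is the proper transform of the hyperplane section $S_t=V\cap H_t$; a general $F_t$ is a smooth del Pezzo surface of degree $d$. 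Since $\alpha^{*}(H)\sim F_t+E$ one gets $-K_X\sim 2F_t+E$, and as $(X,F_t+\frac{1}{2}E)$ is log canonical while any larger multiple gives $F_t$ a coefficient exceeding $1$, this yields $\mathrm{lct}(X)\le 1/2$. It remains to prove $\mathrm{lct}(X)\ge 1/2$.

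Suppose $\mathrm{lct}(X)<1/2$, so there is an effective $\mathbb{Q}$-divisor $D\qlineq -K_X$ and a rational $\lambda<1/2$ with $(X,\lambda D)$ not log canonical at some point $P$; let $F=\beta^{-1}(t_0)$ be the fibre containing $P$. The decisive input, and the only place the generality of $X$ enters, is that for general $C$ the pencil $\langle H_1,H_2\rangle$ is a Lefschetz pencil on $V$ whose base locus is disjoint from the singular members; hence every fibre $F\cong S_t$ is a del Pezzo surface of degree $d$ with at worst one ordinary double point. By Example~\ref{example:del-Pezzos} together with Example~\ref{example:singular-cubics} (for $d=3$), Example~\ref{example:del-Pezzo-quartic} (for $d=4$) and Example~\ref{example:del-Pezzo-quintic} (for $d=5$), this gives $\mathrm{lct}(F)\ge 1/2$ for every $t$, the surfaces being at worst Du Val and hence log terminal.

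I would then conclude by restriction to $F$, exactly as in the proofs of Lemma~\ref{lemma:2-1-2-3} and Lemma~\ref{lemma:lct-product}. Write $D=cF+\Omega$ with $F\not\subseteq\mathrm{Supp}(\Omega)$. Pushing forward by $\alpha$ and using $\alpha_{*}F\sim H$, $\alpha_{*}D\qlineq 2H$ and $\mathrm{Pic}(V)=\mathbb{Z}[H]$ gives $c\le 2$, hence $\lambda c<1$. Then $(X,F+\lambda\Omega)$ is also not log canonical at $P$, and Theorem~\ref{theorem:adjunction} shows that $(F,\lambda\Omega|_{F})$ is not log canonical at $P$. But $F|_{F}\equiv 0$ gives $\Omega|_{F}\qlineq -K_{F}$, so $(F,\lambda\Omega|_{F})$ must in fact be log canonical, because $\lambda<1/2\le\mathrm{lct}(F)=\mathrm{lct}(F,[-K_{F}])$ by Remark~\ref{remark:lct-class}. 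This contradiction shows $(X,\lambda D)$ is log canonical along every fibre, hence everywhere, proving $\mathrm{lct}(X)=1/2$.

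The main obstacle is the uniform lower bound $\mathrm{lct}(F)\ge 1/2$ over all fibres: one must prevent any fibre from acquiring a singularity worse than a node, since such a surface could have a much smaller log canonical threshold. This is precisely why the elementary estimate of Lemma~\ref{lemma:2-1-2-3} — which controls $\Omega|_{E}\qlineq(1+\mu)Z+d(1-\mu)L$ directly on $E\cong C\times\mathbb{P}^1$ — breaks down once $d\ge 3$, and why the argument must be routed through the del Pezzo fibration and the genericity (Lefschetz) property of the pencil. Verifying that property and matching each admissible fibre singularity against the computations of Section~\ref{section:del-Pezzo} is the technical heart of the proof.
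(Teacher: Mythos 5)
Your proposal is correct and follows essentially the same route as the paper: the bound $\mathrm{lct}(X)\leqslant 1/2$ from $-K_{X}\sim 2\bar{H}_{1}+E$, the generality assumption that every fiber of the del Pezzo fibration has at most one ordinary double point, the estimate $\mu\leqslant 2$ obtained by pushing forward to $V$ and using $\mathrm{Pic}(V)=\mathbb{Z}[H]$, and the contradiction via Theorem~\ref{theorem:adjunction} with $\mathrm{lct}(F)\geqslant 1/2$ from Examples~\ref{example:del-Pezzos}, \ref{example:singular-cubics}, \ref{example:del-Pezzo-quartic} and~\ref{example:del-Pezzo-quintic}. The only cosmetic difference is that the paper first localizes $\mathrm{LCS}(X,\lambda D)$ inside $E$ using $\mathrm{lct}(V)=1/2$, a step that is not actually needed for the final contradiction and which you rightly omit.
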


\begin{proof}
There is a commutative diagram
$$
\xymatrix{
&&X\ar@{->}[dl]_{\alpha}\ar@{->}[dr]^{\beta}&&&\\%
&V\ar@{-->}[rr]_{\psi}&&\mathbb{P}^{1}&}
$$
where $V$ is a smooth Fano threefold such that $-K_{V}\sim 2H$ for
some $H\in\mathrm{Pic}(V)$ and
$$
\gimel(V)\in\big\{1.13,\ 1.14,\ 1.15\big\},
$$
the morphism $\alpha$ is a blow~up of a smooth curve $C\subset V$
such that
$$
C=H_{1}\cdot H_{2}
$$
for some $H_{1}, H_{2}\in |H|$, $H_1\neq H_2$, the morphism
$\beta$ is a del Pezzo fibration, and $\psi$ is a linear
projection.

Let $E$  be the exceptional divisor of the birational morphism
$\alpha$. Then
$$
2\bar{H}_{1}+E\sim 2\bar{H}_{2}+E\sim -K_{X},
$$
where $\bar{H}_{i}$ is a proper transform of $H_{i}$ on the
threefold $X$. We see that $\mathrm{lct}(X)\leqslant 1/2$.

We suppose that $\mathrm{lct}(X)<1/2$. Then there exists an
effective $\mathbb{Q}$-divisor $D\qlineq -K_{X}$ such that the~log
pair $(X,\lambda D)$ is not log canonical for some positive
rational number $\lambda<1/2$. Then
$$
\varnothing\ne\mathrm{LCS}\Big(X,\lambda D\Big)\subseteq E,
$$
because $\alpha(D)\qlineq -K_V$ and $\mathrm{lct}(V)=1/2$ by
Theorem~\ref{theorem:del-Pezzo}.

We assume that the threefold $X$ satisfies the following
generality condition: every fiber of the del Pezzo fibration
$\beta$ has at most one singular point that is an ordinary double
point.

Let $F$ be a fiber of $\beta$ such that
$F\cap\mathrm{LCS}(X,\lambda D)\ne\varnothing$. Put
$$
D=\mu F+\Omega,
$$
where $\Omega$ is an effective $\mathbb{Q}$-divisor on $X$ such
that $F\not\subset\mathrm{Supp}(\Omega)$. Then
$$
\alpha\big(D\big)=\mu\alpha\big(F\big)+\alpha\big(\Omega\big)\qlineq 2\alpha\big(F\big)\qlineq -K_{V},%
$$
which implies that $\mu\leqslant 2$. Then
$(F,\lambda\Omega\vert_F)$ is also not log canonical by
Theorem~\ref{theorem:adjunction}. But
$$
\Omega\Big\vert_{F}\qlineq -K_{F},
%\qlineq -K_{X}\Big\vert_{F},
$$
which implies that $\mathrm{lct}(F)\leqslant \lambda<1/2$. But $F$
has at most one ordinary double point and
$$
K_{F}^{2}=H^{3}\leqslant 5,
$$
which implies that $\mathrm{lct}(F)\geqslant 1/2$ (see
Examples~\ref{example:del-Pezzos}, \ref{example:singular-cubics},
\ref{example:del-Pezzo-quintic}~and~\ref{example:del-Pezzo-quartic}),
which is a contradiction.
\end{proof}

\begin{lemma}
\label{lemma:2-8} Suppose that $\gimel(X)=2.8$ and $X$ is general.
Then $\mathrm{lct}(X)=1/2$.
\end{lemma}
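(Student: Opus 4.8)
The plan is to exploit the two extremal contractions of $X$, the non-trivial input being a reduction to a nodal quartic double solid. Recall from~\cite{IsPr99} that if $\gimel(X)=2.8$ then $X$ is a double cover $\pi\colon X\to V_7$ of $V_7=\P_{\P^2}(\O_{\P^2}\oplus\O_{\P^2}(1))$, the blow up of $\P^3$ at a point $p$, branched over a smooth surface $B\in|{-K_{V_7}}|$. Composing $\pi$ with the two morphisms of $V_7$ produces a conic bundle $X\to\P^2$ and a birational morphism $\beta\colon X\to X'$ contracting an irreducible surface $E\cong\P^1\times\P^1$ to a point $q\in X'$; here $\beta$ is the blow up of $q$, and $X'$ is a double cover $\tau'\colon X'\to\P^3$ branched over a quartic $R'\subset\P^3$ with an ordinary double point at $q$. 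One computes $-K_X\qlineq\beta^*(-K_{X'})-E$, so that $-K_X|_E\qlineq\O_{\P^1\times\P^1}(1,1)$, while $E|_E\qlineq\O_{\P^1\times\P^1}(-1,-1)$. Taking the proper transform $S$ of a general hyperplane section of $X'$ through $q$ gives $-K_X\qlineq 2S+E$, whence $\lct(X)\le 1/2$. Since $X$ is general I may assume $R'$ is an irreducible and reduced quartic whose only singularity is the node $q$.

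For the reverse inequality suppose $\lct(X)<1/2$, so that there are an effective $\Q$-divisor $D\qlineq -K_X$ and a rational $\lambda<1/2$ with $(X,\lambda D)$ not log canonical. The surface $E$ and the class $S$ generate the cone of effective divisors of $X$ (they span the two extremal rays, corresponding to $\beta$ and to the conic bundle), and writing $D=cE+\Omega$ with $c=\mult_E(D)$ and $E\not\subset\Supp(\Omega)$ one has $\Omega\qlineq 2S+(1-c)E$, which is effective only if $c\le 1$; restricting gives $\Omega|_E\qlineq(1+c)\,\O_{\P^1\times\P^1}(1,1)$. The decisive step is then a restriction to $E$. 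Suppose first that $\LCS(X,\lambda D)$ meets $E$, say $(X,\lambda D)$ is not log canonical at a point $P\in E$. Since $c\lambda<1$, the pair $(X,E+\lambda\Omega)$ is also not log canonical at $P$, and $E\cong\P^1\times\P^1$ is a smooth Cartier divisor, so by Theorem~\ref{theorem:adjunction} the pair $(E,\lambda\Omega|_E)$ is not log canonical at $P$. But $\lambda\Omega|_E\qlineq\lambda(1+c)(\ell_1+\ell_2)$, where $\ell_1,\ell_2$ are the two rulings of $E$, and $\lambda(1+c)<1$ because $\lambda<1/2$ and $c\le 1$; hence Lemma~\ref{lemma:elliptic-times-P1} (equivalently $\lct(\P^1\times\P^1)=1/2$, see Example~\ref{example:del-Pezzos}) shows that $(E,\lambda\Omega|_E)$ is log terminal, a contradiction. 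It therefore suffices to prove that $\LCS(X,\lambda D)\cap E\ne\varnothing$.

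To this end I pass to $X'$. If $\LCS(X,\lambda D)\cap E=\varnothing$, then, as $\beta$ restricts to an isomorphism between $X\setminus E$ and $X'\setminus\{q\}$, the pair $(X',\lambda D')$ with $D'=\beta_*(D)\qlineq -K_{X'}$ is not log canonical at the image of a non-log-canonical point, so $\LCS(X',\lambda D')\ne\varnothing$. As $\lambda<1/2$ and $X'$ is a quartic double solid with a single ordinary double point, Lemma~\ref{lemma:double-covers} yields $\LCS(X',\lambda D')=L'$, an irreducible curve with $-K_{X'}\cdot L'=2$ passing through $q$; in particular $(X',\lambda D')$ is not log canonical at $q$. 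Carrying out the discrepancy bookkeeping for the blow up $\beta$ (whose exceptional divisor $E$ has discrepancy $1$), a non-log-canonical point of $(X',\lambda D')$ over $q$ forces a divisor of discrepancy $<-1$ whose centre on $X$ lies in $\beta^{-1}(q)=E$; thus $\LCS(X,\lambda D)$ meets $E$ after all, contradicting our assumption. This contradiction proves $\lct(X)=1/2$.

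The main obstacle is the passage to the singular model $X'$: one must verify the genericity that makes $X'$ a quartic double solid with exactly one ordinary double point, so that Lemma~\ref{lemma:double-covers} applies, and then track discrepancies through $\beta$ carefully enough to guarantee that a non-log-canonical point of $(X',\lambda D')$ at $q$ produces one of $(X,\lambda D)$ on $E$. By comparison the remaining ingredients — the bound $\mult_E(D)\le 1$, the identity $-K_X|_E\qlineq\O_{\P^1\times\P^1}(1,1)$, and the adjunction to $E$ — are routine once the geometry is in place.
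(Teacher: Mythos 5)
Your proof is correct and follows essentially the same route as the paper: the contraction of $E\cong\P^1\times\P^1$ to the node of a quartic double solid, Lemma~\ref{lemma:double-covers} to force the non-log-canonical locus to meet $E$, the bound $\mult_E(D)\le 1$ (which the paper gets by intersecting $D$ with a general fiber of the conic bundle, the same computation underlying your effective-cone remark), and adjunction to $E$ combined with Lemma~\ref{lemma:elliptic-times-P1}. The only cosmetic difference is that the paper invokes Lemma~\ref{lemma:double-covers} directly to conclude $\LCS(X,\lambda D)\cap E\ne\varnothing$, rather than running your contradiction with discrepancy bookkeeping through $\beta$.
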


\begin{proof}
Let $O\in\mathbb{P}^{3}$ be a point, and let $\alpha\colon
V_{7}\to\mathbb{P}^{3}$ be a blow up of the point $O$. Then
$$
V_{7}\cong\mathbb{P}\Big(\mathcal{O}_{\mathbb{P}^{2}}\oplus\mathcal{O}_{\mathbb{P}^{2}}\big(1\big)\Big),
$$
and there is a $\mathbb{P}^{1}$-bundle $\pi\colon
V_{7}\to\mathbb{P}^{2}$. Let $E$ be the exceptional divisor of
$\alpha$. Then $E$ is a section~of~$\pi$.

There is a quartic surface $R\subset\mathbb{P}^{3}$ such that
$\mathrm{Sing}(R)=O$, the point $O$ is an isolated double point of
the surface $R$, and there is a commutative diagram
$$
\xymatrix{
&&&&X\ar@{->}[dl]_{\eta}\ar@/_1pc/@{->}[dlll]_{\beta}\ar@{->}[dd]^{\phi}\\%
&V_{2}\ar@{->}[rd]_{\omega}&&V_{7}\ar@{->}[dl]_{\alpha}\ar@{->}[dr]^{\pi}\\%
&&\mathbb{P}^{3}\ar@{-->}^{\psi}[rr]&&\mathbb{P}^{2}}
$$
where $\omega$ is a double cover branched in $R$, the morphism
$\eta$ is a double cover branched in the proper transform of $R$,
the morphism $\beta$ is a birational morphism that contracts a
surface $\bar{E}$ such that $\eta(\bar{E})=E$ to the singular
point of $V_{2}$ and
$$
\omega\Big(\mathrm{Sing}\big(V_{2}\big)\Big)=O,
$$
the map $\psi$~is~a~projection from the point $O$, and $\phi$ is a
conic bundle.

We assume that $X$ satisfies the following mild generality
condition: the point $O$ is an \emph{ordinary} double point of the
surface $R$. Then $
\bar{E}\cong\mathbb{P}^{1}\times\mathbb{P}^{1}$.

Let $\bar{H}$ be the proper transform on $X$ of the general plane
in $\mathbb{P}^{3}$ that passes through $O$. Then
$$
-K_{X}\sim 2\bar{H}+\bar{E},
$$
which implies that $\mathrm{lct}(X)\leqslant 1/2$.

We suppose that $\mathrm{lct}(X)<1/2$. Then there exists an
effective $\mathbb{Q}$-divisor $D\qlineq -K_{X}$ such that the log
pair $(X,\lambda D)$ is not log canonical for some positive
rational number $\lambda<1/2$.

It follows from Lemma~\ref{lemma:double-covers} that
$\mathrm{LCS}(X,D)\cap\bar{E}\ne\varnothing$. Put
$$
D=\mu\bar{E}+\Omega,
$$
where $\Omega$ is an effective $\mathbb{Q}$-divisor on $X$ such
that $\bar{E}\not\subset\mathrm{Supp}(\Omega)$. Then
$$
2=D\cdot\Gamma=\Big(\mu\bar{E}+\Omega\Big)\cdot\Gamma=2\mu+\Omega\cdot\Gamma\geqslant 2\mu,%
$$
where $\Gamma$ is a general fiber of the conic bundle $\phi$.
Hence $\mu\leqslant 1$. Thus, the log pair
$$
\Big(\bar{E},\ \lambda\Omega\Big\vert_{\bar{E}}\Big)
$$
is not log canonical by Theorem~\ref{theorem:adjunction}, because
$\mathrm{LCS}(X,D)\cap\bar{E}\ne\varnothing$. But
$$
\Omega\Big\vert_{\bar{E}}\qlineq -\frac{1+\mu}{2}K_{\bar{E}},
$$
which is impossible by Lemma~\ref{lemma:elliptic-times-P1}.
\end{proof}

\begin{lemma}
\label{lemma:2-11} Suppose that $\gimel(X)=2.11$ and $X$ is
general. Then $\lct(X)=1/2$.
\end{lemma}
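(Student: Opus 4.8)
The plan is to realize $X$ as the blow-up $\alpha\colon X\to V$ of a general cubic threefold $V=V_3\subset\mathbb{P}^4$ along a general line $\ell\subset V$, with exceptional divisor $E$. Since $\ell$ is general I may assume $\mathcal{N}_{\ell/V}\cong\mathcal{O}_{\ell}\oplus\mathcal{O}_{\ell}$, so that $E\cong\mathbb{P}^1\times\mathbb{P}^1$, and the projection from $\ell$ induces a conic bundle $\beta\colon X\to\mathbb{P}^2$. Writing $H=\mathcal{O}_{\mathbb{P}^4}(1)\vert_{V}$ and letting $\bar{H}$ be the proper transform of a general hyperplane section of $V$ containing $\ell$, one has $-K_X\sim 2\alpha^{*}H-E\sim 2\bar{H}+E$; since the coefficient of the prime divisor $\bar{H}$ in $\lambda(2\bar{H}+E)$ exceeds $1$ for $\lambda>1/2$, this already yields the bound $\mathrm{lct}(X)\leqslant 1/2$.

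Suppose for contradiction that $\mathrm{lct}(X)<1/2$. Then there is an effective $\mathbb{Q}$-divisor $D\qlineq -K_X$ such that $(X,\lambda D)$ is not log canonical for some $\lambda<1/2$. First I would note that $V$ is a smooth hypersurface of degree $3<4$ in $\mathbb{P}^4$, so $\mathrm{lct}(V)=1/2$ by Example~\ref{example:hypersurface-index-big}. Because $\alpha$ is an isomorphism outside $E$ and $\alpha(D)\qlineq -K_V$, the pair $(V,\lambda\alpha(D))$ is log canonical as $\lambda<\mathrm{lct}(V)$; hence every non log canonical point of $(X,\lambda D)$ lies on $E$, i.e. $\varnothing\ne\mathrm{LCS}(X,\lambda D)\subseteq E$.

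Next I would write $D=\mu E+\Omega$ with $E\not\subset\mathrm{Supp}(\Omega)$ and intersect with a general fibre $\Gamma$ of the conic bundle $\beta$. Since $-K_X\cdot\Gamma=2$ and $\alpha^{*}H\cdot\Gamma=2$, one computes $E\cdot\Gamma=2$, so $2=D\cdot\Gamma\geqslant\mu\,E\cdot\Gamma=2\mu$ and therefore $\mu\leqslant 1$. Consequently $(X,E+\lambda\Omega)$ is not log canonical in a neighbourhood of $E$, and Theorem~\ref{theorem:adjunction} shows that $(E,\lambda\Omega\vert_{E})$ is not log canonical. On $E\cong\mathbb{P}^1\times\mathbb{P}^1$ let $Z$ and $L$ be the two rulings, with $L$ the class of a fibre of $\alpha\vert_{E}$; then $\alpha^{*}H\vert_{E}\sim L$, and using $K_E=(K_X+E)\vert_{E}$ together with $K_X=-2\alpha^{*}H+E$ one finds $E\vert_{E}\sim -Z$. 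Hence $\Omega\vert_{E}\qlineq\big(2\alpha^{*}H-(1+\mu)E\big)\vert_{E}\qlineq (1+\mu)Z+2L$. Since $\lambda<1/2$ and $\mu\leqslant 1$ we have $2\lambda<1$ and $\lambda(1+\mu)<1$, so Lemma~\ref{lemma:elliptic-times-P1} forces $(E,\lambda\Omega\vert_{E})$ to be log terminal, which is the desired contradiction.

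The main obstacle is the geometric input at the start: identifying the family $2.11$ with the blow-up of a cubic threefold along a line, and justifying that for a general member the normal bundle of $\ell$ is balanced so that $E\cong\mathbb{P}^1\times\mathbb{P}^1$ (the lines of the second type, with $\mathcal{N}_{\ell/V}\cong\mathcal{O}(1)\oplus\mathcal{O}(-1)$, form a proper subvariety of the Fano surface of lines and must be excluded by the generality hypothesis). Everything after that is the same intersection-theoretic bookkeeping as in Lemmas~\ref{lemma:2-1-2-3} and~\ref{lemma:2-8}: the only points needing care are the normalisations $\alpha^{*}H\vert_{E}\sim L$ and $E\vert_{E}\sim -Z$ and the fibre computation $E\cdot\Gamma=2$, all of which are designed to feed Lemma~\ref{lemma:elliptic-times-P1} with both coefficients strictly below $1$.
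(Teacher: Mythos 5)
Your proposal is correct and follows essentially the same route as the paper's own proof: the same decomposition $-K_X\sim 2\bar H+E$ for the upper bound, the same use of $\mathrm{lct}(V_3)=1/2$ to confine $\mathrm{LCS}(X,\lambda D)$ to $E$, the same bound $\mu\leqslant 1$ from intersecting with a conic-bundle fibre, and the same application of Theorem~\ref{theorem:adjunction} and Lemma~\ref{lemma:elliptic-times-P1} on $E\cong\mathbb{P}^1\times\mathbb{P}^1$ (the paper likewise imposes the generality condition $\mathcal{N}_{L/V}\cong\mathcal{O}_L\oplus\mathcal{O}_L$). No gaps.
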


\begin{proof}
Let $V$ be a cubic hypersurface in $\P^4$. Then there is a
commutative diagram
$$
\xymatrix{
&&V\ar@{->}[dl]_{\alpha}\ar@{->}[dr]^{\beta}&&&\\%
&X\ar@{-->}[rr]_{\psi}&&\mathbb{P}^{2}&}
$$
such that $\alpha$ contracts a surface $E\subset V$ to a line
$L\subset X$, the map $\psi$ is a linear projection from the line
$L$, the morphism $\beta$ is a conic bundle.

We assume that $X$ satisfies the following generality condition:
the normal bundle $\mathcal{N}_{L\slash V}$
to the line $L$ on the variety $V$ is isomorphic to
$\mathcal{O}_{L}\oplus\mathcal{O}_{L}$.

Let $H$ be a hyperplane section of $V$ such that $L\subset
H$.~Then
$$
-K_{X}\sim 2\bar{H}+E,
$$
where $\bar{H}\subset X$ is the proper transform of the surface
$H$. In particular, $\lct(X)\le 1/2$.

We suppose that $\mathrm{lct}(X)<1/2$. Then there exists an
effective $\mathbb{Q}$-divisor $D\qlineq -K_{X}$ such that the log
pair $(X,\lambda D)$ is not log canonical for some positive
rational number $\lambda<1/2$. Then
$$
\varnothing\ne\mathrm{LCS}\Big(X,\lambda D\Big)\subseteq E,
$$
since $\mathrm{lct}(V)=1/2$ and $\alpha(D)\qlineq -K_V$. Note that
$E\cong\P^1\times\mathbb{P}^{1}$ by the generality condition.

Let $F\subset E$ be a fiber of the induced projection $E\to L$,
let $Z\subset E$ be a section of this projection such that $Z\cdot
Z=0$. Then $\alpha^{*}(H)\vert_{E}\sim F$ and $E\vert_{E}\sim -Z$,
because
$$
-2Z-2F\sim K_E\sim\Big(K_X+E\Big)\Big\vert_{E}\sim
2\Big(E-\alpha^{*}\big(H\big)\Big)\Big\vert_{E}\sim -2F+2E\Big\vert_{E}.%
$$

Put $D=\mu E+\Omega$,
where $\Omega$ is an effective $\mathbb{Q}$-divisor on $X$ such
that $E\not\subset\mathrm{Supp}(\Omega)$. Then
$$
2=D\cdot\Gamma=\mu E\cdot\Gamma+\Omega\cdot\Gamma\geqslant\mu E\cdot\Gamma=2\mu,%
$$
where $\Gamma$ is a general fiber of the conic bundle $\beta$.
Thus, we see that $\mu\leqslant 1$.

The log pair $(E,\lambda\Omega\vert_E)$ is not log canonical by
Theorem~\ref{theorem:adjunction}. But
$$
\Omega\Big\vert_{E}\qlineq \Big(-K_X-\mu
E\Big)\Big\vert_{E}\qlineq\big(1+\mu\big)Z+2F,
$$
which contradicts Lemma~\ref{lemma:elliptic-times-P1}, because
$\mu\leqslant 1$ and $\lambda<1/2$.
\end{proof}

\begin{lemma}
\label{lemma:2-15} Suppose that $\gimel(X)=2.15$ and $X$ is
general. Then $\mathrm{lct}(X)=1/2$.
\end{lemma}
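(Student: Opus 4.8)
The plan is to exploit the two extremal contractions of $X$. For $\gimel(X)=2.15$ there is a birational morphism $\alpha\colon X\to\P^3$ blowing up a smooth curve $C=Q\cap S$, the complete intersection of a quadric $Q$ and a cubic $S$ (so $C$ has degree $6$ and genus $4$), with exceptional divisor $E$; and there is a second birational morphism $\beta\colon X\to V$ contracting the proper transform $\bar{Q}$ of $Q$ to a point, where $V\subset\P^4$ is a cubic threefold having $\bar{Q}\cong\P^1\times\P^1$ (with $\bar{Q}\vert_{\bar{Q}}\sim(-1,-1)$) as exceptional divisor over an ordinary double point. This is the classical resolution of the projection of a nodal cubic threefold from its node, and for general $X$ the threefold $V$ has only finitely many singular points. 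One computes $-K_X\sim 2\bar{Q}+E$, so $\lct(X)\le 1/2$; the work is the reverse inequality.

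Suppose $\lct(X)<1/2$. Then there is $D\qlineq -K_X$ and $\lambda<1/2$ with $(X,\lambda D)$ not log canonical, so $\LCS(X,\lambda D)\ne\varnothing$ and, since $-(K_X+\lambda D)\qlineq(1-\lambda)(-K_X)$ is ample, this locus is connected by Theorem~\ref{theorem:connectedness}. First I would bound $\nu:=\mult_{\bar{Q}}(D)$: intersecting $D$ with a fiber $\ell$ of $E\to C$ gives $D\cdot\ell=-K_X\cdot\ell=1$ and $\bar{Q}\cdot\ell=1$, whence $\nu\le 1$. The crucial step is to prove $\LCS(X,\lambda D)\cap\bar{Q}=\varnothing$. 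Writing $D=\nu\bar{Q}+\Psi$ with $\bar{Q}\not\subseteq\Supp(\Psi)$, if $(X,\lambda D)$ were non log canonical at a point of $\bar{Q}$ then, since $\lambda\nu<1$, the pair $(X,\bar{Q}+\lambda\Psi)$ would also be non log canonical there, so by Theorem~\ref{theorem:adjunction} the restriction $(\bar{Q},\lambda\Psi\vert_{\bar{Q}})$ would be non log canonical. But on $\bar{Q}\cong\P^1\times\P^1$ one has $E\vert_{\bar{Q}}\sim(3,3)$, so $\Psi\vert_{\bar{Q}}\qlineq\big((2-\nu)\bar{Q}+E\big)\vert_{\bar{Q}}\sim(1+\nu)(1,1)$, and the coefficients $(1+\nu)\lambda<1$ of $\lambda\Psi\vert_{\bar{Q}}$ on the two rulings contradict Lemma~\ref{lemma:elliptic-times-P1}.

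With $\LCS(X,\lambda D)$ disjoint from $\bar{Q}$, the morphism $\beta$ is an isomorphism near it, so $\LCS(V,\lambda\beta_{*}(D))\ne\varnothing$ while $\beta_{*}(D)\qlineq -K_V$. Applying Lemma~\ref{lemma:singular-cubic-threefold} to the cubic threefold $V$ yields $\LCS(V,\lambda\beta_{*}(D))=L$, a line meeting $\Sing(V)$, hence passing through the node. Pulling back, $(X,\lambda D)$ is then non log canonical at every point of $\bar{L}\setminus\bar{Q}$, where $\bar{L}$ is the proper transform of $L$; since $L$ passes through the node, $\bar{L}$ meets $\bar{Q}$, and as $\LCS(X,\lambda D)$ is closed it must contain the point $\bar{L}\cap\bar{Q}\in\bar{Q}$. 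This contradicts the previous step. I expect the main obstacle to be exactly that second step, namely justifying $\LCS(X,\lambda D)\cap\bar{Q}=\varnothing$: it rests on the adjunction computation on $\bar{Q}\cong\P^1\times\P^1$ combined with the bound $\nu\le 1$ and the inequality $\lambda<1/2$ (which is what makes both coefficients strictly less than $1$); once the locus is pushed off $\bar{Q}$, the nodal cubic $V$ together with Lemma~\ref{lemma:singular-cubic-threefold} closes the argument.
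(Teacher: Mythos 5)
Your overall strategy uses the same ingredients as the paper's proof (the decomposition $-K_X\sim 2\bar{Q}+E$, adjunction to $\bar{Q}\cong\P^1\times\P^1$ combined with Lemma~\ref{lemma:elliptic-times-P1}, Lemma~\ref{lemma:singular-cubic-threefold} applied to the nodal cubic $V$, and intersection with fibers of $E\to C$), but there is a genuine gap at exactly the step you flagged as crucial. The claimed bound $\nu=\mult_{\bar{Q}}(D)\le 1$ does not follow from intersecting $D$ with a fiber $\ell$ of $E\to C$: that computation only yields $\nu\le D\cdot\ell=1$ if $\ell\not\subset\Supp(D-\nu\bar{Q})$, and this fails whenever $E$ is a component of $D$, since then every fiber $\ell$ lies in the support and $E\cdot\ell=-1$ can make the residual intersection negative. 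In fact $\nu\le 1$ is simply false for an arbitrary $D\qlineq -K_X$: the divisor $D=2\bar{Q}+E$ has $\mult_{\bar{Q}}(D)=2$, and the only a priori bound is $\nu\le 2$, obtained by pushing $D$ forward to $\P^3$ and comparing degrees. With $\nu\le 2$ your adjunction step collapses, because $\lambda(1+\nu)$ need not be less than $1$, so Lemma~\ref{lemma:elliptic-times-P1} no longer forces $\LCS(X,\lambda D)\cap\bar{Q}=\varnothing$.

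The missing ingredient is Remark~\ref{remark:convexity}: since $(X,\lambda(2\bar{Q}+E))$ is log canonical and $2\bar{Q}+E\qlineq D$, one may assume that either $\bar{Q}$ or $E$ is not a component of $D$; in the first case $\nu=0$, and in the second the general fiber of $E\to C$ is no longer contained in $\Supp(D-\nu\bar{Q})$, so the intersection computation genuinely gives $\nu\le 1$ and your adjunction argument then goes through. This is precisely the extra step the paper performs, albeit in the opposite logical order: the paper first shows via Lemma~\ref{lemma:singular-cubic-threefold} that $\LCS(X,\lambda D)$ must meet $\bar{Q}$, deduces $\mu>1$ from adjunction and Lemma~\ref{lemma:elliptic-times-P1}, and only then invokes Remark~\ref{remark:convexity} together with the fiber of $E\to C$ to force $\mu\le 1$ and reach a contradiction. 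Once you insert the convexity step to justify $\nu\le 1$, your rearrangement becomes a valid variant of the same proof.
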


\begin{proof}
There is a birational morphism $\alpha\colon X\to\mathbb{P}^3$
that contracts a surface $E\subset X$ to a smooth curve
$C\subset\mathbb{P}^{3}$ that is complete intersection of an
irreducible quadric $Q\subset\mathbb{P}^{3}$ and a cubic
$F\subset\mathbb{P}^{3}$.

We assume that $X$ satisfies the following generality condition:
the quadric $Q$ is smooth.

Let $\bar{Q}$ be a proper transform of $Q$ on the threefold $X$.
Then there is a commutative diagram
$$
\xymatrix{
&X\ar@{->}[dl]_{\alpha}\ar@{->}[dr]^{\beta}&\\%
\mathbb{P}^{3}&&V\ar@{-->}^{\gamma}[ll]}
$$
where $V$ is a cubic in $\mathbb{P}^{4}$ that has one ordinary
double point $P\in V$, the morphism~$\beta$~contracts the~surface
$\bar{Q}$ to the point $P$, and $\gamma$ is a linear projection
from the point $P$.

Let $E$ be the exceptional divisor of $\alpha$. Then
$$
-K_{X}\sim 2\bar{Q}+E
$$
and $\beta(E)\subset V$ is a surface that contains all lines on
$V$ that pass through $P$. We see that $\mathrm{lct}(X)\leqslant
1/2$.

We suppose that $\mathrm{lct}(X)<1/2$. Then there exists an
effective $\mathbb{Q}$-divisor $D\qlineq -K_{X}$ such that the log
pair $(X,\lambda D)$ is not log canonical for some positive
rational number $\lambda<1/2$.

It follows from Lemma~\ref{lemma:singular-cubic-threefold} that
either
$$
\varnothing\ne\mathrm{LCS}\Big(X,\ \lambda D\Big)\subseteq\bar{Q},
$$
or the set $\mathrm{LCS}(X,\lambda D)$ contains a fiber of the
natural projection $E\to C$. In both cases
$$
\mathrm{LCS}\Big(X,\ \lambda D\Big)\cap\bar{Q}\ne\varnothing.
$$

We have $\bar{Q}\cong\mathbb{P}^{1}\times\mathbb{P}^{1}$. Put
$$
D=\mu\bar{Q}+\Omega,
$$
where $\Omega$ is an effective $\mathbb{Q}$-divisor on $X$ such
that $\bar{Q}\not\subset\mathrm{Supp}(\Omega)$. Then
$$
\alpha\big(D\big)\qlineq \mu Q+\alpha\big(\Omega\big)\qlineq -K_{\mathbb{P}^{3}},%
$$
which gives $\mu\leqslant 2$. The log pair
$(\bar{Q},\lambda\Omega\vert_{\bar{Q}})$ is not log canonical by
Theorem~\ref{theorem:adjunction}. But
$$
\Omega\Big\vert_{\bar{Q}}\qlineq -\frac{1+\mu}{2}K_{\bar{Q}},
$$
which implies that $\mu>1$ by Lemma~\ref{lemma:elliptic-times-P1}.

It follows from Remark~\ref{remark:convexity} that we may assume
that $E\not\subset\mathrm{Supp}(D)$. Then
$$
1=D\cdot F=\mu \bar{Q}\cdot F+\Omega\cdot F=\mu+ \Omega\cdot F\geqslant \mu,%
$$
where $F$ is a general fiber the natural projection $E\to C$. But
$\mu>1$, which is a contradiction.
\end{proof}

\begin{lemma}
\label{lemma:2-18} Suppose that  $\gimel(X)=2.18$. Then
$\lct(X)=1/2$.
\end{lemma}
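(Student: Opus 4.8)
The plan is to exploit the two Mori fibre space structures carried by a threefold in family $2.18$. Recall that such an $X$ is a double cover $\pi\colon X\to\P^1\times\P^2$ branched over a smooth divisor $B$ of bidegree $(2,2)$; composing $\pi$ with the two projections produces a del Pezzo fibration $\psi\colon X\to\P^1$, whose general fibre is a smooth quadric surface $\P^1\times\P^1$, and a conic bundle $\phi\colon X\to\P^2$. Writing $H_1=\psi^{*}(\mathcal{O}_{\P^1}(1))$ and $H_2=\phi^{*}(\mathcal{O}_{\P^2}(1))$, the adjunction formula for the double cover gives $-K_X\qlineq H_1+2H_2$. Choosing a prime surface $S\in|H_2|$ and a fibre $f\in|H_1|$ of $\psi$ yields $-K_X\qlineq 2S+f$, so the coefficient $2$ along the prime divisor $S$ already forces $\lct(X)\le 1/2$; it therefore remains to prove $\lct(X)\ge 1/2$.

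Assume to the contrary that there is an effective $D\qlineq -K_X$ and a rational number $\lambda<1/2$ with $(X,\lambda D)$ not log canonical. First I would show that $\LCS(X,\lambda D)$ contains no surface: if $T$ were such a surface, then $D=aT+\Omega$ with $a\ge 1/\lambda>2$, and intersecting with a general fibre $\ell$ of the conic bundle, for which $-K_X\cdot\ell=2$, forces $T\cdot\ell=0$, so that $T\qlineq dH_2$ with $d\ge 1$; restricting $\Omega$ to a general fibre $f\cong\P^1\times\P^1$ of $\psi$ then produces an effective divisor in the anti-effective class $(2-ad)H_2|_{f}$, a contradiction. Since $-(K_X+\lambda D)$ is ample, Theorem~\ref{theorem:connectedness} shows that $\LCS(X,\lambda D)$ is connected. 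Applying Theorem~\ref{theorem:Hwang} to $\psi$ at a general fibre $f$, where $\lct(f,[\lambda D|_{f}])=\lct(\P^1\times\P^1)/\lambda>1$, the second alternative would place the surface $f$ inside $\LCS$ and is excluded by the previous step; hence the first alternative holds and $(X,\lambda D)$ is log terminal in a neighbourhood of every general fibre. Connectedness now confines $\LCS(X,\lambda D)$ to a single special fibre $F$ of $\psi$, which, since $X$ is smooth, is a quadric cone $F\cong\P(1,1,2)$.

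The analysis on this cone fibre is the crux, and is where I expect the main difficulty. Because $\lct(\P(1,1,2))=1/4<1/2$ by Example~\ref{example:WPS}, the del Pezzo fibration \emph{alone} does not yield the bound $1/2$, so one must use that $D|_{F}$ is the restriction of a global divisor. By Theorem~\ref{theorem:Hwang} the pair $(F,\lambda D|_{F})$ is not log canonical with $D|_{F}\qlineq -K_{F}$. If its non log canonical locus contains a point off the vertex, then Lemma~\ref{lemma:quadric-cone-far-from-vertex} produces a whole ruling $L\subseteq\LCS(F,\lambda D|_{F})$ with $\mult_{L}(D)\ge 1/\lambda>2$, and a general fibre $\ell$ of $\phi$ meeting $L$ gives $2=D\cdot\ell\ge\mult_{L}(D)\,(\ell\cdot L)>2$, a contradiction. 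The remaining, and hardest, case is that $\LCS(X,\lambda D)$ is the single vertex $O\in F$: here I would blow up $O$ and examine the induced pair on the exceptional $\P^2$, bounding $\mult_{O}(D)$ by intersecting $D$ with the rulings of $F$, all of which pass through $O$, and with the fibres of $\phi$ through $O$, in the spirit of the blow-up arguments of Section~\ref{section:cubic-surfaces}. Controlling this vertex contribution is the principal obstacle; once it is ruled out the contradiction is complete, and together with the upper bound this gives $\lct(X)=1/2$.
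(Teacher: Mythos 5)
Your reduction is sound as far as it goes, and up to the point where the non-log-canonical locus is confined to a single singular fibre $F\cong\P(1,1,2)$ of the quadric fibration it coincides with the paper's argument (the paper obtains the absence of surfaces in $\mathbb{LCS}(X,\lambda D)$ slightly differently, but your intersection computation works). The proof then breaks down in two places. First, in the non-vertex case you pass from $L\subseteq\LCS(F,\lambda D|_{F})$ to $\mult_{L}(D)\ge 1/\lambda>2$. This does not follow: writing $D=\mu F+\Omega$, Lemma~\ref{lemma:quadric-cone-far-from-vertex} only controls the order of the \emph{restricted} divisor, $\mathrm{ord}_{L}(\Omega|_{F})\ge 1/\lambda$, and this can strictly exceed $\mult_{L}(\Omega)$ (a component of $\Omega$ tangent to $F$ along $L$ inflates the restricted order without inflating the multiplicity in $X$). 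To intersect with a conic fibre $\ell$ you would need $L$ to be a centre of non-log-canonical singularities of the \emph{threefold} pair, which you have not established; moreover the fibres of $\phi$ meeting $L$ all lie in the single surface $\phi^{-1}(\phi(L))$, which may be contained in $\Supp(D)$, so the bound $2=D\cdot\ell$ is not automatically available either. Second, and decisively, you leave the vertex case open; the blow-up you sketch will not close it, since intersecting $D$ with the rulings of $F$ only yields $\Omega|_{F}\cdot L=2$ against $L^{2}=1/2$ on $F$, which leaves far too much room.

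The ingredient you are missing is the second fibration. The paper also applies Theorem~\ref{theorem:Hwang} to the conic bundle $\phi_{2}$, which forces $\LCS(X,\lambda D)\subseteq Q\cap R$, where $R$ is the surface swept out by the degenerate conics; adding $\frac{1}{2}(\bar{H}_{1}+2\bar{H}_{2})$ and invoking Theorem~\ref{theorem:connectedness} rules out a zero-dimensional locus, so $\mathbb{LCS}(X,\lambda D)$ contains a curve $\Gamma\subseteq Q\cap R$, identified as a ruling of the cone by adjunction and Lemma~\ref{lemma:quadric-cone-far-from-vertex}. The contradiction is then obtained not from multiplicities but from the structure of the conic bundle: $\phi_{2}(\Gamma)$ is a line inside the quartic discriminant, which therefore splits as a line plus a reduced cubic meeting it in three points, and by \cite{Sa80} the induced double cover of the line is unramified away from these nodes --- impossible for a double cover of $\P^{1}$ branched at three points. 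Without this use of the discriminant neither of your cases reaches a contradiction.
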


\begin{proof}
There is a smooth divisor $B\subset\P^1\times\P^2$ of bidegree
$(2, 2)$ such that the diagram
$$
\xymatrix{
&& X\ar@{->}[d]_{\pi}\ar@{->}[lld]_{\phi_1}\ar@{->}[rrd]^{\phi_2} & \\
\P^1 &&\P^1\times\P^2\ar@{->}[ll]^{\pi_1}\ar@{->}[rr]_{\pi_2}&&
\P^2}
$$
commutes, where $\pi$ is a double cover that is  branched in $B$,
the morphisms $\pi_1$ and $\pi_2$ are natural projections, the
morphism $\phi_1$ is a quadric fibration, and $\phi_2$ is a conic
bundle.

Let $H_1$ be a general fiber of $\pi_1$, and let $H_2$ be a
general surface in $|\pi_2^{*}(\mathcal{O}_{\mathbb{P}^{2}}(1))|$.
Then $B\sim 2H_1+2H_2$.

Let $\bar{H}_1$ be a general fiber of $\phi_1$, and let
$\bar{H}_2$ be a general surface in
$|\phi_2^{*}(\mathcal{O}_{\mathbb{P}^{2}}(1))|$. Then
$$
-K_X\sim \bar{H_1}+2\bar{H_2},
$$
which implies that $\lct(X)\leqslant 1/2$.

We suppose that $\mathrm{lct}(X)<1/2$. Then there exists an
effective $\mathbb{Q}$-divisor $D\qlineq -K_{X}$ such that the log
pair $(X,\lambda D)$ is not log canonical for some positive
rational number $\lambda<1/2$.

Applying Lemma~\ref{lemma:Hwang} to the fibration $\phi_1$ we see
that
$$
\varnothing\neq\LCS\Big(X,\ \lambda D\Big)\subseteq Q,
$$
where $Q$ is a singular fiber of $\phi_1$. Moreover, applying
Theorem~\ref{theorem:Hwang} to the fibration $\phi_2$, we see that
$$
\varnothing\neq\LCS\Big(X,\ \lambda D\Big)\subseteq Q\cap R,
$$
where $R\subset X$ be an irreducible surface that is swept out  by
singular fibers of $\phi_2$. In particular, the set
$\mathbb{LCS}(X, \lambda D)$ contains no surfaces.

Suppose that $\LCS(X, \lambda D)$ is zero-dimensional. Then
$$
\LCS\left(X,\ \lambda
D+\frac{1}{2}\Big(\bar{H}_1+2\bar{H}_2\Big)\right)=\LCS(X,\
\lambda D)\cup
\bar{H}_2,%
$$
which is impossible by Theorem~\ref{theorem:connectedness}.

We see that the set $\mathbb{LCS}(X, \lambda D)$ contains a curve
$\Gamma\subset Q\cap R$. Put
$$
D=\mu Q+\Omega,
$$
where $\Omega$ is an effective $\mathbb{Q}$-divisor such that
$Q\not\subset\mathrm{Supp}(\Omega)$. Then
$$
\Big(Q,\ \lambda\Omega\Big\vert_Q\Big)
$$
is not log canonical along $\Gamma$ by
Theorem~\ref{theorem:adjunction}. But
$$
\Omega\Big\vert_{Q}\qlineq \Big(-K_X-\mu
Q\Big)\Big\vert_{Q}\qlineq -K_{Q},
$$
which implies that $\Gamma$ is a ruling of the cone
$Q\subset\mathbb{P}^{3}$ by
Lemma~\ref{lemma:quadric-cone-far-from-vertex}. Then
$\phi_2(\Gamma)\subset\mathbb{P}^{2}$ is a line, and
$$
\phi_2\big(\Gamma\big)\subseteq\phi_2\big(R\big).
$$
But $\phi_2(R)\subset\P^2$ is a curve of degree $4$. Thus, we see
that
$$
\phi_2\big(R\big)=\phi_2\big(\Gamma\big)\cup Z,
$$
where $Z\subset\mathbb{P}^{2}$ is a reduced cubic curve. Then
$\phi_{2}$ induces a double cover of
$$
\phi_2\big(\Gamma\big)\setminus\Big(\phi_2\big(\Gamma\big)\cap
Z\Big)
$$
that must be unramified (see \cite{Sa80}). But the quartic curve
$\phi_2(R)$ has at most ordinary double points (see
\cite{Sa80}).~Then
$$
\big|\phi_2\big(\Gamma\big)\cap Z|=3,
$$
which is impossible, because $\phi_2(\Gamma)\cong\mathbb{P}^{1}$.
\end{proof}

\begin{lemma}
\label{lemma:2-19} Suppose that $\gimel(X)=2.19$ and $X$ is
general. Then $\mathrm{lct}(X)=1/2$.
\end{lemma}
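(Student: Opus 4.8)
The plan is to use the description of $X$, for general $X$ with $\gimel(X)=2.19$, as the blow up $\alpha\colon X\to V$ of the del Pezzo threefold $V$ with $\gimel(V)=1.14$ — the complete intersection of two quadrics in $\mathbb{P}^5$, with $-K_V\sim 2H$ and $H^3=4$ — along a line $L\subset V$. The reason to prefer this presentation is that $\lct(V)=1/2$ by Theorem~\ref{theorem:del-Pezzo}. I would impose the generality assumption that $\mathcal{N}_{L/V}\cong\mathcal{O}_L\oplus\mathcal{O}_L$, so that the exceptional divisor $E$ of $\alpha$ is $\mathbb{P}^1\times\mathbb{P}^1$. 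Letting $\bar H$ be the proper transform of a general member of $|H|$ through $L$, one has $\alpha^{*}(H)\sim\bar H+E$ and hence $-K_X\sim 2\bar H+E$; the coefficient $2$ on $\bar H$ forces $\lct(X)\leqslant 1/2$, so all the content lies in the reverse inequality.

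Suppose then $\lct(X)<1/2$, and take $D\qlineq -K_X$ with $(X,\lambda D)$ not log canonical for some $\lambda<1/2$. Since $\alpha_{*}D\qlineq -K_V$ and $\lambda<\lct(V)=1/2$, the pair $(V,\lambda\alpha_{*}D)$ is log canonical; as $\alpha$ is an isomorphism off $E$, this forces $\varnothing\neq\mathrm{LCS}(X,\lambda D)\subseteq E$. Write $D=\mu E+\Omega$ with $\Omega$ effective and $E\not\subseteq\mathrm{Supp}(\Omega)$. The crucial estimate is $\mu\leqslant 1$: since $V$ is general, the lines of $V$ meeting $L$ form a one–dimensional family, and for the proper transform $\ell$ of a general such line one computes $E\cdot\ell=1$ and, since $\alpha^{*}(2H)\cdot\ell=2$, also $-K_X\cdot\ell=1$. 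Assuming (via Remark~\ref{remark:convexity} if necessary) that $\ell\not\subseteq\mathrm{Supp}(\Omega)$, the relation $1=D\cdot\ell=\mu\,E\cdot\ell+\Omega\cdot\ell\geqslant\mu$ gives $\mu\leqslant 1$.

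Finally I would restrict to $E$. As $\lambda\mu<1$, the pair $(X,E+\lambda\Omega)$ is not log canonical in a neighbourhood of $E$, so $(E,\lambda\Omega\big\vert_E)$ is not log canonical by Theorem~\ref{theorem:adjunction}. Writing $s$ for the section of $E\to L$ with $s^2=0$ and $f$ for the fibre, the adjunction computation $K_E\sim(K_X+E)\big\vert_E$ yields $\alpha^{*}(H)\big\vert_E\sim f$ and $E\big\vert_E\sim -s$, whence
$$
\Omega\big\vert_E\qlineq\big(-K_X-\mu E\big)\big\vert_E\qlineq(1+\mu)s+2f.
$$
Because $\mu\leqslant 1$ and $\lambda<1/2$, the two coefficients $\lambda(1+\mu)$ and $2\lambda$ are both strictly less than $1$, so Lemma~\ref{lemma:elliptic-times-P1} makes $(E,\lambda\Omega\big\vert_E)$ log terminal — a contradiction. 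The genuinely delicate point is the bound $\mu\leqslant 1$: unlike the analogous step for $\gimel(X)=2.11$, here the second extremal contraction of $X$ is birational (projection of $V$ from $L$ is birational onto $\mathbb{P}^3$), so there is no conic bundle fibre to intersect against, and one must instead produce transverse curves of minimal anticanonical degree from lines through $L$. This is exactly where generality of $X$ is used, consistent with the fact that $\lct$ may fall below $1/2$ for special members of the family.
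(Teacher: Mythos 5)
Your proposal is correct and follows essentially the same route as the paper: the same presentation of $X$ as the blow up of $V_4$ along a line with $E\cong\mathbb{P}^1\times\mathbb{P}^1$, the same reduction of $\mathrm{LCS}(X,\lambda D)$ into $E$ via $\lct(V)=1/2$, the same adjunction computation $\Omega\vert_E\qlineq(1+\mu)Z+2F$ with Lemma~\ref{lemma:elliptic-times-P1}, and the same curves (proper transforms of lines meeting $L$, i.e.\ fibers of the second exceptional divisor $R$ over the quintic curve of genus $2$) to force $\mu\leqslant 1$. The only point to make precise is your parenthetical appeal to Remark~\ref{remark:convexity}: one should note that $-K_X\sim R+2E$ with $(X,\lambda(R+2E))$ log canonical, so that either $E\not\subseteq\mathrm{Supp}(D)$ (whence $\mu=0$) or $R\not\subseteq\mathrm{Supp}(D)$ (whence a general $\ell$ avoids $\mathrm{Supp}(D)$), exactly as the paper does.
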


\begin{proof}
It follows from \cite[Proposition~3.4.1]{IsPr99} that there is a
commutative diagram
$$
\xymatrix{
&&X\ar@{->}[dl]_{\alpha}\ar@{->}[dr]^{\beta}&&&\\%
&V\ar@{-->}[rr]_{\psi}&&\mathbb{P}^{3}&}
$$
where $V$ is a complete intersection of two quadric fourfolds in
$\mathbb{P}^{5}$, the morphism $\alpha$ is a blow~up~of a line
$L\subset V$, the morphism $\beta$ is a blow up of a smooth curve
$C\subset\mathbb{P}^{3}$ of degree $5$ and genus~$2$, and the map
$\psi$ is a linear projection from the line $L$.

Let $E$ and $R$ be the exceptional divisors of $\alpha$ and
$\beta$, respectively. Then
\begin{itemize}
\item the surface $\beta(E)\subset\mathbb{P}^{3}$ is an irreducible quadric,%
\item the surface $\alpha(R)\subset V$ is swept out by lines that intersect the line $L$.%
\end{itemize}

We assume that $X$ satisfies the following generality condition:
the surface $\beta(E)$ is smooth.

Let $H$ be any hyperplane section of $V\subset\mathbb{P}^{5}$ such
that $L\subset H$. Then
$$
2\bar{H}+E\sim R+2E\sim -K_{X},
$$
where $\bar{H}$ is a proper transform of $H$ on the threefold $X$.
We see that $\mathrm{lct}(X)\leqslant 1/2$.

We suppose that $\mathrm{lct}(X)<1/2$. Then there exists an
effective $\mathbb{Q}$-divisor $D\qlineq -K_{X}$ such that the log
pair $(X,\lambda D)$ is not log canonical for some positive
rational number $\lambda<1/2$. Then
$$
\varnothing\ne\mathrm{LCS}\Big(X,\lambda D\Big)\subseteq E\cong\P^1\times\mathbb{P}^{1},%
$$
because $\alpha(D)\qlineq -K_V$ and $\mathrm{lct}(V)=1/2$ by
Theorem~\ref{theorem:del-Pezzo}.

Let $F$ be a fiber of the projection $E\to L$, and let $Z$ be a
section of this projection such that $Z\cdot Z=0$. Then
$\alpha^{*}(H)\vert_{E}\sim F$ and $E\vert_{E}\sim -Z$, because
$$
-2Z-2F\sim K_E\sim\Big(K_X+E\Big)\Big\vert_{E}\sim
2\Big(E-\alpha^{*}\big(H\big)\Big)\Big\vert_{E}\sim 2E\Big\vert_{E}-2F.%
$$

By Remark~\ref{remark:convexity}, we may assume that either
$E\not\subset\mathrm{Supp}(D)$, or $R\not\subset\mathrm{Supp}(D)$,
because the log pair
$$
\Big(X, \lambda\big(R+2E\big)\Big)
$$
is log canonical and $-K_{X}\sim R+2E$. Put
$$
D=\mu E+\Omega,
$$
where $\Omega$ is an effective $\mathbb{Q}$-divisor on $X$ such
that $E\not\subset\mathrm{Supp}(\Omega)$.

Suppose that $\mu\leqslant 1$. Then $(X, E+\lambda\Omega)$ is not
log canonical, which implies that
$$
\Big(E,\ \lambda\Omega\Big\vert_E\Big)
$$
is also not log canonical by Theorem~\ref{theorem:adjunction}. But
$$
\Omega\Big\vert_{E}\qlineq \Big(-K_X-\mu
E\Big)\Big\vert_{E}\qlineq\big(1+\mu\big)Z+2F,
$$
which contradicts Lemma~\ref{lemma:elliptic-times-P1}, because
$\mu\leqslant 1$ and $\lambda<1/2$.

Thus, we see that $\mu>1$. Then we may assume that
$R\not\subset\mathrm{Supp}(D)$.

Let $\Gamma$ be a general fiber of the projection $R\to C$. Then
$\Gamma\not\subset\mathrm{Supp}(D)$ and
$$
1=-K_{X}\cdot \Gamma=\mu E\cdot
\Gamma+\Omega\cdot\Gamma=\mu+\Omega\cdot\Gamma\geqslant\mu,
$$
which is a contradiction.
\end{proof}

\begin{lemma}
\label{lemma:2-23} Suppose that $\gimel(X)=2.23$ and $X$ is
general. Then $\lct(X)=1/3$.
\end{lemma}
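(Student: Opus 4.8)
The plan is to realize $X$ explicitly and to reduce the computation of $\lct(X)$ to the fibre-by-fibre analysis that already appears in Lemma~\ref{lemma:2-29-singular}. A general threefold in the family $2.23$ is the blow up $\alpha\colon X\to Q$ of a smooth quadric threefold $Q\subset\mathbb{P}^{4}$ along a smooth elliptic curve $C$ of degree $4$, where $C=A\cap B$ with $A\in|\mathcal{O}_{Q}(1)|$ and $B\in|\mathcal{O}_{Q}(2)|$. Writing $E$ for the exceptional divisor of $\alpha$ and $\bar{A}$ for the proper transform of the hyperplane section $A\supset C$, one has $\alpha^{*}(A)\sim\bar{A}+E$ and $-K_{X}\sim\alpha^{*}(-K_{Q})-E\sim 3\alpha^{*}(A)-E$, so that
$$
-K_{X}\sim 3\bar{A}+2E.
$$
Since $(X,\ \bar{A}+\tfrac{2}{3}E)$ is log canonical, this already gives the bound $\lct(X)\leqslant 1/3$, and it remains to prove the reverse inequality.

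Suppose $\lct(X)<1/3$. Then there is an effective $\mathbb{Q}$-divisor $D\qlineq -K_{X}$ such that $(X,\lambda D)$ is not log canonical for some rational $\lambda<1/3$. I would exploit the companion extremal contraction of $X$: blowing up the elliptic quartic produces a second morphism $\beta\colon X\to\mathbb{P}^{1}$ whose general fibre is the quadric surface $\mathbb{P}^{1}\times\mathbb{P}^{1}$ and whose singular fibres are quadric cones. To exclude surfaces from $\mathbb{LCS}(X,\lambda D)$, restrict to a general fibre $F$ of $\beta$: exactly as in Lemma~\ref{lemma:2-29-singular}, any irreducible surface in $\mathbb{LCS}(X,\lambda D)$ must be a fibre of $\beta$, and then $\alpha(D)\qlineq -K_{Q}\sim 3\alpha(F)$ forces its coefficient to be $\leqslant 3$, contradicting that it is $\geqslant 1/\lambda>3$. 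With surfaces excluded, Theorem~\ref{theorem:Hwang} applied to $\beta$ together with Theorem~\ref{theorem:connectedness} confines $\mathrm{LCS}(X,\lambda D)$ to a single fibre $F$. Because $\lambda<1/3<1/2=\lct(\mathbb{P}^{1}\times\mathbb{P}^{1})$ (see Example~\ref{example:del-Pezzos}), a smooth fibre can carry no center of log canonical singularities, so $F$ must be an irreducible quadric cone.

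It then remains to analyse this cone fibre, and this is where the main work lies. Using Remark~\ref{remark:convexity} and the fact that $(X,\bar{A}+\tfrac{2}{3}E)$ is log canonical, I would arrange that either $F\not\subseteq\mathrm{Supp}(D)$ or $E\not\subseteq\mathrm{Supp}(D)$, apply Lemma~\ref{lemma:singular-quadric-threefold} to the cone $F$ to push $\mathrm{LCS}(X,\lambda D)$ onto the conic $\Gamma=E\cap F$, and then intersect $D$ both with a general ruling of the cone $F$ and with a general fibre of $E\to C$ to conclude that $\Gamma\not\subseteq\mathrm{LCS}(X,\lambda D)$; hence $\mathrm{LCS}(X,\lambda D)$ is a single point of $\Gamma$. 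Finally, adding to $\lambda D$ a suitable combination of $\bar{A}$ and a general member of $|\alpha^{*}(A)|$ disconnects the locus of log canonical singularities, contradicting Theorem~\ref{theorem:connectedness} since $\lambda<1/3$. The main obstacle is precisely this last cone analysis together with the bookkeeping of the exceptional divisor $E$: one must reproduce, in the smooth-quadric setting, the delicate localization of Lemma~\ref{lemma:2-29-singular}, and one must isolate the genericity hypotheses on $C$ and on $X$ (every fibre of $\beta$ has at worst one cone point, $C$ meets the singular fibres transversally, and $E$ carries no unexpected centers) that guarantee no further components of $\mathrm{LCS}$ appear.
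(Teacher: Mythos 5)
Your upper bound is correct and agrees with the paper: $C=A\cap B$ with $A\in|\mathcal{O}_{Q}(1)|$, $B\in|\mathcal{O}_{Q}(2)|$, and $-K_{X}\sim 3\bar{A}+2E$ gives $\lct(X)\leqslant 1/3$. But the lower bound rests on a structural claim that is false for this family: there is no morphism $\beta\colon X\to\mathbb{P}^{1}$ with quadric surface fibres. Such a fibration would come from a pencil of divisors on $Q$ with base locus $C$, and here $A$ and $B$ lie in different linear systems; the elliptic quartic $C$ spans the hyperplane $\P^3\supset A$, so $A$ is the \emph{unique} hyperplane section of $Q$ containing $C$ and $|\bar{A}|$ is a single divisor, not a pencil. (You appear to be importing the geometry of family $2.25$, where the elliptic quartic in $\mathbb{P}^{3}$ really is the base locus of a pencil of quadrics, or of $2.29$, where the hyperplanes through the plane of the conic give a quadric fibration.) Since $\mathrm{rk}\,\mathrm{Pic}(X)=2$, the only contraction other than $\alpha$ is the one the paper uses: a \emph{divisorial} contraction $\beta\colon X\to V$ of $\bar{A}\cong\mathbb{P}^{1}\times\mathbb{P}^{1}$ to an ordinary double point of a complete intersection $V$ of two quadrics in $\mathbb{P}^{5}$. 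Everything in your argument downstream of the fibration — excluding surfaces fibrewise, confining $\mathrm{LCS}$ to a cone fibre, invoking Lemma~\ref{lemma:singular-quadric-threefold} — therefore has nothing to attach to.

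The paper's actual route is shorter and worth comparing. From $\beta(D)\qlineq -K_{V}$ and $\lambda<1/3<1/2$, Remark~\ref{remark:singular-V4} localizes $\mathrm{LCS}(V,\lambda\beta(D))$ on a line through the node, which translates on $X$ into: either $\mathrm{LCS}(X,\lambda D)\subseteq\bar{A}$ or it contains a fibre of $E\to C$; in both cases it meets $\bar{A}$. Writing $D=\mu\bar{A}+\Omega$, one has $\mu\leqslant 3$ from $\alpha(D)\qlineq -K_{Q}$, so Theorem~\ref{theorem:adjunction} applies and $\bigl(\bar{A},\lambda\Omega\vert_{\bar{A}}\bigr)$ is not log canonical; since $\Omega\vert_{\bar{A}}\qlineq -\frac{1+\mu}{2}K_{\bar{A}}$ on $\bar{A}\cong\mathbb{P}^{1}\times\mathbb{P}^{1}$, Lemma~\ref{lemma:elliptic-times-P1} forces $\mu>1$. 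On the other hand, Remark~\ref{remark:convexity} (using that $(X,\lambda(3\bar{A}+2E))$ is log canonical) lets one assume $E\not\subset\mathrm{Supp}(D)$, and intersecting $D$ with a general fibre of $E\to C$ gives $1=\mu+\Omega\cdot F\geqslant\mu$, a contradiction. The only generality hypothesis needed is that the hyperplane section $A$ be a smooth quadric surface, so that $\bar{A}\cong\mathbb{P}^{1}\times\mathbb{P}^{1}$.
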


\begin{proof}
There is a birational morphism $\alpha\colon X\to Q$ such that
$Q\subset\mathbb{P}^{4}$ is a smooth quadric~threefold, and
$\alpha$ contracts a surface $E\subset X$ to a smooth curve
$C\subset Q$ that is a complete intersection~of a hyperplane
section $H\subset Q$ and a divisor $F\in |\O_Q(2)|$.

We assume that $X$ satisfies the following generality condition:
the quadric surface $H$ is smooth.

Let $\bar{H}$ be a proper transform of $H$ on the threefold $X$.
Then there is a commutative diagram
$$
\xymatrix{
&X\ar@{->}[dl]_{\alpha}\ar@{->}[dr]^{\beta}&\\%
Q&&V\ar@{-->}^{\gamma}[ll]}
$$
where $V$ is a complete intersection of two quadrics in $\P^{5}$
such that $V$ has one ordinary double point $P\in V$, the
morphism~$\beta$ contracts $\bar{H}$ to the point $P$, and
$\gamma$ is a projection from~$P$.

Let $E$ be the exceptional divisor of $\alpha$. Then
$$
-K_{X}\sim 3\bar{H}+2E
$$
and $\beta(E)\subset V$ is a surface that contains all lines that
pass through $P$. In particular, $\lct(X)\le 1/3$.

We suppose that $\lct(X)<1/3$. Then there exists an effective
$\Q$-divisor $D\qlineq -K_{X}$ such that the log pair $(X,\lambda
D)$ is not log canonical for some positive rational number
$\lambda<1/3$.

It follows from Remark~\ref{remark:singular-V4} that either
$$
\varnothing\ne\mathrm{LCS}\Big(X,\ \lambda D\Big)\subseteq\bar{H},
$$
or the set $\mathrm{LCS}(X,\lambda D)$ contains a fiber of the
natural projection $E\to C$. In both cases
$$
\mathrm{LCS}\Big(X,\ \lambda D\Big)\cap\bar{H}\ne\varnothing.
$$

We have $\bar{H}\cong\mathbb{P}^{1}\times\mathbb{P}^{1}$. There is
a non-negative rational number $\mu$ such that
$$
D=\mu\bar{H}+\Omega,
$$
where $\Omega$ is an effective $\Q$-divisor on $X$ such that
$\bar{H}\not\subset\mathrm{Supp}(\Omega)$. Then
$$
\alpha\big(D\big)\qlineq \mu H+\alpha\big(\Omega\big)\qlineq -K_{Q},%
$$
which gives $\mu\leqslant 3$. The log pair
$(\bar{H},\lambda\Omega\vert_{\bar{H}})$ is not log canonical by
Theorem~\ref{theorem:adjunction}. But
$$
\Omega\Big\vert_{\bar{H}}\qlineq -\frac{1+\mu}{2}K_{\bar{H}},
$$
which implies that $\mu>1$ by Lemma~\ref{lemma:elliptic-times-P1}.

It follows from Remark~\ref{remark:convexity} that we may assume
that $E\not\subset\mathrm{Supp}(D)$, because the log pair
$$
\Big(X, \lambda\big(3\bar{H}+2E\big)\Big)
$$
is log canonical. Let $F$ be a general fiber the natural
projection $E\to C$. Then
$$
1=D\cdot F=\mu \bar{H}\cdot F+\Omega\cdot F=\mu+ \Omega\cdot F\geqslant \mu,%
$$
which is a contradiction, because $\mu>1$.
\end{proof}

\begin{lemma}
\label{lemma:2-24} Suppose that $\gimel(X)=2.24$ and  $X$ is
general. Then $\mathrm{lct}(X)=1/2$.
\end{lemma}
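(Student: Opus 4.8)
My plan is to exploit the two fibration structures carried by $X$. Recall that for $\gimel(X)=2.24$ the threefold $X$ is a divisor of bidegree $(1,2)$ on $\P^2\times\P^2$; writing $L_1,L_2$ for the restrictions to $X$ of the pullbacks of $\O_{\P^2}(1)$ under the two projections, adjunction gives $-K_X\sim 2L_1+L_2$. First I would record that the first projection restricts to a conic bundle $\pi_1\colon X\to\P^2$, whose fibres are the conics cut out in the second factor, while the second projection restricts to a morphism $\pi_2\colon X\to\P^2$ whose fibre over a point is the line cut out in the first factor; since the defining equation is linear in the first factor, this line degenerates to a whole plane only where three quadrics in the second factor vanish simultaneously, which does not occur for general $X$, so $\pi_2$ is a $\P^1$-bundle. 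For the easy inequality I would take general $S_1\in|L_1|=\pi_1^{*}|\O_{\P^2}(1)|$ (irreducible, since $\pi_1$ has connected fibres) and general $S_2\in|L_2|$, so that $-K_X\sim 2S_1+S_2$; as $S_1$ is a prime divisor appearing with coefficient $2$, for every $\lambda>1/2$ the pair $(X,\lambda(2S_1+S_2))$ is not log canonical, whence $\lct(X)\le 1/2$, and it remains to prove $\lct(X)\ge 1/2$.

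For the reverse inequality I would argue by contradiction: assume there is an effective $\Q$-divisor $D\qlineq -K_X$ and a rational number $\lambda<1/2$ with $(X,\lambda D)$ not log canonical at a point $P$, and apply Theorem~\ref{theorem:Hwang} to the $\P^1$-bundle $\pi_2$ and the fibre $F=\pi_2^{-1}(\pi_2(P))$. Because $-K_X\cdot F=2=\deg(-K_F)$, one gets $D|_F\qlineq -K_F$, so $\lct(F,[D|_F])=\lct(\P^1)=1/2$ by Example~\ref{example:hypersurface-index-big}. Since $\lct_F(X,D)\le\lambda<1/2=\lct(F,[D|_F])$, the first alternative of Theorem~\ref{theorem:Hwang} is excluded, and therefore the whole fibre satisfies $F\subseteq\LCS(X,\eps D)$ for some $\eps<1/2$. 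I would then convert this into a multiplicity bound: the pair $(X,\eps D)$ fails to be log terminal at the generic point of $F$, and restricting to a general smooth surface $S\in|L_1|$, which meets the section $F$ transversally in a single point $x$, Remark~\ref{remark:hyperplane-reduction} places $x$ in $\LCS(S,\eps D|_S)$; as $(S,\eps D|_S)$ is not log terminal at the smooth point $x$, its multiplicity there is at least $1$, whence $\mult_F(D)=\mult_x(D|_S)\ge 1/\eps>2$.

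The contradiction is then intersection-theoretic. The fibre $F=\pi_2^{-1}(q)$ is the line $\{p:(p,q)\in X\}$ in the first factor, and $\pi_1$ maps it isomorphically onto a line $\ell\subset\P^2$, so $F$ is a section of the conic bundle $\pi_1$ over $\ell$. Choosing a general point $p\in\ell$, the fibre $C=\pi_1^{-1}(p)$ is a smooth conic with $C\not\subseteq\Supp(D)$ that meets $F$ in the single point $y=(p,q)$, and then
$$
2=-K_X\cdot C=D\cdot C\ge\mult_y(D)\ge\mult_F(D)>2,
$$
which is absurd. This would complete the proof that $\lct(X)=1/2$.

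The step I expect to be most delicate is the passage $F\subseteq\LCS(X,\eps D)\Rightarrow\mult_F(D)>2$: I must verify that the transverse surface $S$ is smooth along $F$ and genuinely meets $F$ transversally, so that no multiplicity is lost in the restriction, and I must invoke the surface fact that a pair on a smooth surface failing to be log terminal at a point has multiplicity at least $1$ there. A secondary bookkeeping task will be to pin down the mild genericity hypotheses on $X$ guaranteeing that $\pi_2$ is a genuine $\P^1$-bundle and that the general $\pi_1$-fibre over $\ell$ is a smooth conic disjoint from $\Supp(D)$, so that both Theorem~\ref{theorem:Hwang} and the final intersection estimate are legitimate.
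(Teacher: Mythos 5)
Your setup and the first half of your argument run parallel to the paper's proof: the upper bound from $-K_X\sim 2L_1+L_2$, and the application of Theorem~\ref{theorem:Hwang} to the $\P^1$-bundle $\pi_2$ to produce a fibre $F$ with, in effect, $\mult_F(D)>2$, are both essentially sound (modulo the transversality checks you flag yourself). The genuine gap is in your final intersection computation. You need a general fibre $C=\pi_1^{-1}(p)$, $p\in\ell=\pi_1(F)$, with $C\not\subseteq\Supp(D)$; but every such $C$ lies on the single surface $T=\pi_1^{-1}(\ell)\in|\pi_1^{*}\mathcal{O}_{\P^2}(1)|$, and nothing you have proved prevents $T$ from being a component of $D$ — intersecting $D$ with a general fibre of $\pi_2$ only bounds its coefficient $a$ by $2$. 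If $a>0$, then $C\subseteq T\subseteq\Supp(D)$ for \emph{every} $p\in\ell$, and since $T\cdot C=\deg(\pi_1^{*}\mathcal{O}_{\P^2}(1)|_C)=0$ the component $aT$ is invisible to the number $D\cdot C=2$: writing $D=aT+D'$ you only get $2=D'\cdot C\ge\mult_F(D')=\mult_F(D)-a\,\mult_F(T)$, i.e.\ $\mult_F(D)\le 2+a\,\mult_F(T)$, which is no contradiction. Concretely, $D=2T+\pi_2^{-1}(m)$ with $q=\pi_2(F)\in m$ satisfies $D\qlineq -K_X$, $\mult_F(D)=3>2$ and $D\cdot C=2$, so the implication you rely on is false as stated. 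A secondary unaddressed point: you also need the general fibre over $\ell$ to be a smooth conic, i.e.\ $\ell\not\subseteq\Delta$ where $\Delta$ is the discriminant cubic of $\pi_1$; your generality condition (that $\pi_2$ is a $\P^1$-bundle) does not ensure this.

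The paper closes both holes by using the conic bundle a second time. After ruling out surface centres by intersecting with both families of fibres, it applies Theorem~\ref{theorem:Hwang} to $\pi_1$ at each smooth fibre $C_p$, $p\in\ell\setminus\Delta$, passing through the non--log-canonical locus; this forces $\mult_{C_p}(D)>2$ for general $p\in\ell$ and hence forces the coefficient of $T$ in $D$ to exceed $2$, contradicting $D\cdot F_2=2$. The only escape is $\ell\subseteq\Delta$, and this is exactly where the hypothesis that $X$ is general enters: the paper assumes $\Delta$ is an irreducible cubic, so it contains no line. Your argument needs this second application of Theorem~\ref{theorem:Hwang} (or some equivalent device for disposing of the vertical surface $T=\pi_1^{-1}(\ell)$), and correspondingly needs a generality hypothesis on the discriminant rather than on $\pi_2$.
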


\begin{proof}
The threefold $X$ is a divisor on $\P^2\times\P^2$ of bidegree
$(1, 2)$. Let $H_{i}$ be a surface in
$$
\Big|\pi_{i}^*\Big(\mathcal{O}_{\mathbb{P}^{2}}\big(1\big)\Big)\Big|,
$$
where $\pi_i\colon X\to\mathbb{P}^{2}$ is a projection of $X$ onto
the $i$-th factor of $\P^2\times\P^2$, $i\in\{1, 2\}$. Then
$$
-K_X\sim 2H_1+H_2,
$$
which implies that $\lct(X)\leqslant 1/2$. Note that $\pi_1$ is a
conic bundle, and $\pi_2$ is a $\mathbb{P}^{1}$-bundle.

Let $\Delta\subset\P^2$ be the degeneration curve of the conic
bundle $\pi_{1}$. Then $\Delta$ is a cubic curve.

We suppose that $X$ satisfies the following generality condition:
the curve $\Delta$ is irreducible.

Assume that $\mathrm{lct}(X)<1/2$. Then there exists an effective
$\mathbb{Q}$-divisor $D\qlineq -K_{X}$ such that the log pair
$(X,\lambda D)$ is not log canonical for some positive rational
number $\lambda<1/2$.

Suppose that the set $\mathbb{LCS}(X,\lambda D)$ contains a
surface $S\subset X$. Then
$$
D=\mu S+\Omega,
$$
where $\Omega$ is an effective $\mathbb{Q}$-divisor such that
$S\not\subset\mathrm{Supp}(\Omega)$,
and $\mu>1/\lambda$. Let $F_i$ be a general fiber of $\pi_i$,
$i\in\{1, 2\}$. Then
$$
2=D\cdot F_{i}=\mu S\cdot F_{i}+\Omega\cdot F_{i}\geqslant\mu
S\cdot F_{i},
$$
but either $S\cdot F_{1}\geqslant 1$ or $S\cdot F_{2}\geqslant 1$.
Thus, we see that $\mu\leqslant 2$, which is a contradiction.

By Theorem~\ref{theorem:Hwang} and
Theorem~\ref{theorem:connectedness}, there is a fiber $\Gamma_{2}$
of the $\mathbb{P}^{1}$-bundle $\pi_{2}$ such that
$$
\varnothing\ne\mathrm{LCS}\Big(X,\ \lambda D\Big)=\Gamma_{2},
$$
because the set $\mathbb{LCS}(X,\lambda D)$ contains no surfaces.

Applying Theorem~\ref{theorem:Hwang} to the conic bundle $\pi_1$,
we see that
$$
\pi_1\big(\Gamma_2\big)\subset\Delta,
$$
which is impossible, because $\Delta\subset\P^2$ is an irreducible
cubic curve and $\pi_{1}(\Gamma_{2})\subset\P^2$ is a line.
\end{proof}

\begin{lemma}
\label{lemma:2-25} Suppose that $\gimel(X)=2.25$. Then
$\mathrm{lct}(X)=1/2$.
\end{lemma}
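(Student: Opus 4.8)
The plan is to exploit the two natural contractions of $X$. Recall that $X$ is the blow-up $\alpha\colon X\to\mathbb{P}^3$ of a smooth elliptic curve $C$ of degree $4$, the complete intersection of two quadrics, and that the pencil of quadrics through $C$ resolves to a quadric-surface fibration $\beta\colon X\to\mathbb{P}^1$ whose general fibre is a smooth quadric $\tilde{Q}\cong\P^1\times\P^1$. Writing $E$ for the exceptional divisor of $\alpha$ and $\tilde{Q}$ for a fibre of $\beta$ (which, as $C$ is a Cartier divisor on each quadric of the pencil, is isomorphic to the quadric itself), one has $-K_X\qlineq 2\tilde{Q}+E$, so the bound $\lct(X)\le 1/2$ is immediate. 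I suppose for contradiction that $\lct(X)<1/2$, so that there is an effective $D\qlineq -K_X$ and a rational $\lambda<1/2$ with $(X,\lambda D)$ not log canonical at some point.

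First I would localise $\LCS(X,\lambda D)$ inside a degenerate fibre. A general fibre $F=\tilde{Q}\cong\P^1\times\P^1$ satisfies $D|_F\qlineq -K_F$ and $\lct(\P^1\times\P^1)=1/2>\lambda$, so Theorem~\ref{theorem:Hwang} applied to $\beta$ gives, for each smooth fibre $F$, either $\lct_F(X,D)\ge 1/2$ or $F\subseteq\LCS(X,\eps D)$ with $\eps<1/2$. The latter forces $\mult_F(D)>2$; but intersecting $D$ with the proper transform $\tilde{\ell}$ of a general line of $\mathbb{P}^3$ meeting $C$ in one point — a bisection of $\beta$ with $-K_X\cdot\tilde{\ell}=3$ — gives $2\,\mult_F(D)\le 3$, hence $\mult_F(D)\le 3/2$. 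Thus $(X,\lambda D)$ is log canonical near every smooth fibre, so $\LCS(X,\lambda D)$ lies in the finitely many cone fibres $Q\cong\mathbb{P}(1,1,2)$. Since distinct fibres are disjoint and $\LCS(X,\lambda D)$ is connected by Theorem~\ref{theorem:connectedness} (the divisor $-(K_X+\lambda D)\qlineq(1-\lambda)(-K_X)$ being ample), the whole locus lies in a single cone fibre $Q$.

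Next I would run the cone analysis. Writing $D=\mu Q+\Omega$ with $Q\not\subseteq\Supp(\Omega)$, the same bisection bound yields $\mu\le 3/2$, so $\lambda\mu<1$ and Theorem~\ref{theorem:adjunction} shows that $(Q,\lambda\Omega|_Q)$ fails to be log canonical, where $\Omega|_Q\qlineq -K_Q$ since $Q|_Q\qlineq 0$. If the failure occurred at the vertex of $Q$ alone, a connectedness argument (adding a general surface through the vertex, in the spirit of Lemmas~\ref{lemma:P1xP2} and~\ref{lemma:2-18}) would make the log canonical locus disconnected, contradicting Theorem~\ref{theorem:connectedness}. Otherwise Lemma~\ref{lemma:quadric-cone-far-from-vertex} shows that an entire ruling $L$ of the cone lies in $\LCS(X,\lambda D)$.

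The final step, and the main obstacle, is to contradict $L\subseteq\LCS(X,\lambda D)$. Every ruling of the cone $\alpha(Q)$ passes through its vertex and meets $C$ in $\alpha(L)\cdot C=2$ points, so $L$ meets the exceptional divisor $E$; intersecting $D$ with a general fibre $\Gamma$ of $E\to C$ through such a point — for which $-K_X\cdot\Gamma=1$ — is meant to bound $\mult_L(D)$ and contradict the large multiplicity forced by $L\subseteq\LCS(X,\lambda D)$. The delicate part is the bookkeeping of multiplicities along $L$ inside the single cone fibre: a priori the failure of log canonicity along $L$ on the surface $Q$ could be caused by a tangency of $\Omega$ to $Q$ rather than by a large $\mult_L(D)$, so one must first combine the surface estimate on $Q$ (using $\Omega|_Q\cdot L'=2$ for a second ruling $L'$ to cap the coefficient of $L$) with Remark~\ref{remark:convexity} (to arrange that $E\not\subseteq\Supp(D)$) before the intersection with $\Gamma$ closes the argument. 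Unlike family~$2.18$, here there is no second conic-bundle structure to intersect against, so this multiplicity analysis on the lone cone fibre is where the real work lies.
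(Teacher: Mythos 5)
Your reduction to a single singular fibre of the quadric fibration agrees with the paper, but the endgame has a genuine gap, which you yourself flag as ``the main obstacle.'' The specific problems are these. First, Lemma~\ref{lemma:quadric-cone-far-from-vertex} only tells you that the \emph{surface} pair $(Q,\lambda\Omega\vert_{Q})$ is not log canonical along a ruling $L$, i.e.\ that the coefficient $c$ of $L$ in $\Omega\vert_{Q}$ exceeds $1/\lambda>2$; this does not transfer to $\mathrm{mult}_{L}(D)>2$ on $X$, precisely because of the tangency issue you mention. Your proposed fix does not close this: intersecting $\Omega\vert_{Q}\qlineq-K_{Q}\sim\mathcal{O}_{\mathbb{P}(1,1,2)}(4)$ with a second ruling $L'$ (so $L\cdot L'=1/2$) only gives $c/2\leqslant 2$, i.e.\ $c\leqslant 4$, which is perfectly compatible with $c>2$. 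Second, even granting $\mathrm{mult}_{L}(D)>2$, the fibres $\Gamma$ of $E\to C$ that meet $L$ sit over the two points of $\alpha(L)\cap C$ and are therefore \emph{specific} curves which may well lie in $\mathrm{Supp}(D)$, so the bound $1=D\cdot\Gamma\geqslant\mathrm{mult}_{L}(D)$ is not available; Remark~\ref{remark:convexity} removes the divisor $E$ from $\mathrm{Supp}(D)$, not curves inside it. Third, the vertex case is only gestured at; a coefficient $1-2\lambda<1$ on a general member of $|\bar{Q}|$ does not create a new non-lc centre, so it is not clear what auxiliary divisor you would add. (A minor slip along the way: the proper transform of a general line meeting $C$ once satisfies $\bar{Q}\cdot\tilde{\ell}=2-1=1$, so it is a section, not a bisection, and yields $\mathrm{mult}_{F}(D)\leqslant 3$ rather than $3/2$; the weaker bound $\mathrm{mult}_{F}(D)\leqslant 2$ from a line disjoint from $C$ still suffices for your adjunction step.)

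The paper avoids the cone analysis entirely. Having located the non-lc locus $Z$ inside one singular fibre $\bar{Q}_{1}$, it picks a \emph{different} singular fibre $\bar{Q}_{2}$, a plane $H$ tangent to the cone $\alpha(\bar{Q}_{2})$ along a ruling $L$, and a very general plane $R$, and forms
$$
\Delta=\lambda D+\frac{1}{2}\Big(\big(1+\eps\big)\bar{Q}_{2}+\big(2-\eps\big)\bar{H}+3\eps\bar{R}\Big)
\qlineq\frac{1+\eps+2\lambda}{2}\big(-K_{X}\big)
$$
with $0<\eps<1-2\lambda$, so that $-(K_{X}+\Delta)$ is ample. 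The tangency $\mathrm{mult}_{L}(\bar{Q}_{2}\cdot\bar{H})=2$ together with the coefficient sum $\frac{(1+\eps)+(2-\eps)}{2}=\frac{3}{2}$ forces the proper transform $\bar{L}$ into $\mathrm{LCS}(X,\Delta)$, while $Z\subset\bar{Q}_{1}$ remains there; since the two fibres are disjoint, Theorem~\ref{theorem:connectedness} gives the contradiction. If you want to salvage your single-fibre approach you would need a genuinely new multiplicity argument; otherwise the two-fibre connectedness trick is the way to finish.
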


\begin{proof}
Recall that $X$ is a blow up $\alpha\colon X\to\mathbb{P}^3$ along
a normal elliptic curve $C$ of degree~$4$.

Let $Q\subset\P^3$ be a general quadric containing $C$, and let
$\bar{Q}\subset X$ be a proper transform of $Q$. Then
$$
-K_{X}\sim 2\bar{Q}+E,
$$
where $E$ is the exceptional divisor of $\alpha$. In particular,
$\mathrm{lct}(X)\leqslant 1/2$.

We suppose that $\mathrm{lct}(X)<1/2$. Then there exists an
effective $\mathbb{Q}$-divisor $D\qlineq -K_{X}$ such that the log
pair $(X,\lambda D)$ is not log canonical for some positive
rational number $\lambda<1/2$.

Note that the linear system $|\bar{Q}|$ defines a quadric
fibration
$$
\phi\colon X\longrightarrow\mathbb{P}^1,
$$
such that every fiber of $\phi$ is irreducible. Then the log pair
$(X,\lambda D)$ is log canonical along every nonsingular fiber
$\tilde{Q}$ of the fibration $\phi$ by Theorem~\ref{theorem:Hwang}
since $\mathrm{lct}(\tilde{Q})=1/2$ (see
Example~\ref{example:del-Pezzos}).

The locus $\mathrm{LCS}(X, \lambda D)$ does not contain any fiber
of $\phi$, because $\alpha(D)\qlineq 2Q$ and every fiber of $\phi$
is irreducible. Therefore, we see that
$\mathrm{dim}(\mathrm{LCS}(X, \lambda D))\leqslant 1$.

Let $Z$ be an element in $\mathbb{LCS}(X, \lambda D)$. There is a
singular fiber $\bar{Q}_{1}$ of the fibration $\phi$ such that
$Z\subset \bar{Q}_{1}$.
Note that $\phi$ has $4$ singular fibers and each of them is a
proper transform of a quadric cone in $\mathbb{P}^3$ with vertex
outside $C$.

Let $\bar{Q}_2$ be a singular fiber of $\phi$ such that
$\bar{Q}_{1}\ne\bar{Q}_{2}$, let $\bar{H}$ be a proper transform
of a general plane in $\mathbb{P}^{3}$ that is tangent to the cone
$\alpha(\bar{Q}_2)\subset\mathbb{P}^{3}$ along one of its rulings
$L\subset\alpha(\bar{Q}_2)$, and let $\bar{R}$ be a proper
transform of a very general plane in $\P^3$. Put
$$
\Delta=\lambda D +\frac{1}{2}\Big(\big(1+\eps\big)\bar{Q}_2+\big(2-\eps\big)\bar{H}+3\eps R\Big)%
$$
for some positive rational number $\eps<1-2\lambda$. Then
$$
\Delta\qlineq
-\Big(\lambda+\frac{1}{2}\big(1+\eps\big)\Big)K_X\qlineq
 -\frac{1+\eps+2\lambda}{2}K_X,%
$$
which implies that $-(K_{X}+\Delta)$ is ample.

Let $\bar{L}$ be a proper transform on $X$ of the line $L$. Then
$$
Z\cup\bar{L}\subset\mathrm{LCS}\big(X,\Delta\big)\subset\bar{Q}_{1}\cup\bar{Q}_{2},
$$
which is impossible by Theorem~\ref{theorem:connectedness},
because  $-(K_{X}+\Delta)$ is ample.
\end{proof}

\begin{lemma}
\label{lemma:2-26} Suppose that $\gimel(X)=2.26$ and $X$ is
general. Then $\mathrm{lct}(X)=1/2$.
\end{lemma}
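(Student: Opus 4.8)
The plan is to reproduce, for the blow up of a line, the argument already carried out for $\gimel(X)=2.11$ (see Lemma~\ref{lemma:2-11}). First I would fix the birational model: $X$ is the blow up $\alpha\colon X\to V$ of a line $L$ lying on the quintic del Pezzo threefold $V=V_{5}$, so that $\gimel(V)=1.15$, $\mathrm{Pic}(V)=\mathbb{Z}[H]$ and $-K_{V}\sim 2H$; the linear projection from $L$ induces the second extremal contraction $\beta\colon X\to\mathbb{P}^{2}$, which for general $X$ is a conic bundle. Writing $E$ for the $\alpha$-exceptional divisor and $\bar{H}$ for the proper transform of a general member of $|H|$ containing $L$, one has $\bar{H}\sim\alpha^{*}(H)-E$, hence
$$
-K_{X}\sim 2\bar{H}+E,
$$
and the inequality $\mathrm{lct}(X)\le 1/2$ follows at once.

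To prove the opposite inequality I would assume $\mathrm{lct}(X)<1/2$ and choose an effective $\mathbb{Q}$-divisor $D\qlineq -K_{X}$ such that $(X,\lambda D)$ is not log canonical for some rational $\lambda<1/2$. Because $\alpha(D)\qlineq -K_{V}$ and $\mathrm{lct}(V)=1/2>\lambda$ by Theorem~\ref{theorem:del-Pezzo}, the log pair $(V,\lambda\alpha(D))$ is log canonical, so that
$$
\varnothing\ne\mathrm{LCS}\Big(X,\ \lambda D\Big)\subseteq E.
$$
The generality hypothesis on $X$ enters here: I would require $L$ to be a line of the first type, i.e. $\mathcal{N}_{L/V}\cong\mathcal{O}_{L}\oplus\mathcal{O}_{L}$, so that $E\cong\mathbb{P}^{1}\times\mathbb{P}^{1}$.

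The remaining step is the intersection computation on $E$, identical in spirit to Lemmas~\ref{lemma:2-11} and~\ref{lemma:elliptic-times-P1}. Let $F$ be a fiber of the projection $E\to L$ and $Z$ the section with $Z\cdot Z=0$; from $K_{E}=(K_{X}+E)\vert_{E}$ one gets $\alpha^{*}(H)\vert_{E}\sim F$ and $E\vert_{E}\sim -Z$. Writing $D=\mu E+\Omega$ with $E\not\subset\mathrm{Supp}(\Omega)$ and intersecting with a general fibre $\Gamma$ of the conic bundle $\beta$, for which $E\cdot\Gamma=2$, gives
$$
2=D\cdot\Gamma=\mu\, E\cdot\Gamma+\Omega\cdot\Gamma\ge 2\mu,
$$
so $\mu\le 1$. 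By Theorem~\ref{theorem:adjunction} the pair $(E,\lambda\Omega\vert_{E})$ is not log canonical, whereas
$$
\Omega\big\vert_{E}\qlineq\big(-K_{X}-\mu E\big)\big\vert_{E}\qlineq(1+\mu)Z+2F;
$$
since $\mu\le 1$ and $\lambda<1/2$, both coefficients of $\lambda\Omega\vert_{E}$ are strictly smaller than $1$, contradicting Lemma~\ref{lemma:elliptic-times-P1}.

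I expect the only genuinely nonroutine point to be establishing the birational picture for $\gimel(X)=2.26$ — namely that the second extremal contraction is the announced conic bundle, that a general fibre $\Gamma$ satisfies $E\cdot\Gamma=2$, and that for general $X$ the blown-up line has balanced normal bundle so that $E\cong\mathbb{P}^{1}\times\mathbb{P}^{1}$. Once these structural facts are recorded, the divisorial and adjunction analysis goes through essentially verbatim as for $\gimel(X)=2.11$.
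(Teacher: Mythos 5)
Your argument breaks down at the very first structural claim, and the gap is fatal. For $\gimel(X)=2.26$ the projection from the line $L\subset V_{5}\subset\mathbb{P}^{6}$ is \emph{not} a conic bundle onto $\mathbb{P}^{2}$: it maps $V_{5}$ birationally onto a smooth quadric threefold $Q\subset\mathbb{P}^{4}$, and the induced morphism $\beta\colon X\to Q$ is the blow up of a twisted cubic $C\subset Q$ (this is \cite[Proposition~3.4.1]{IsPr99}, which is exactly how the paper's proof sets up the geometry). Hence $X$ has Picard rank $2$ with \emph{both} extremal contractions divisorial, there is no fibration on $X$ at all, and the general fibre $\Gamma$ with $E\cdot\Gamma=2$ that you intersect with $D$ to obtain $\mu\le 1$ does not exist. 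The paper instead reduces to $\mathrm{LCS}(X,\lambda D)\subseteq E$, writes $D=\mu E+\Omega$, and splits into the cases $\mu\le 2$ (adjunction on $E\cong\mathbb{P}^{1}\times\mathbb{P}^{1}$ together with Lemma~\ref{lemma:elliptic-times-P1}) and $\mu>2$ (intersection with a fibre of the $\beta$-exceptional divisor $S\to C$, giving $1=-K_{X}\cdot\Gamma\ge\mu$); the surface $S$, which you never introduce, is indispensable.

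Worse, the inequality you are trying to prove is false. The $\beta$-exceptional divisor $S$ satisfies $\alpha(S)\sim H$ with $\mathrm{mult}_{L}(\alpha(S))=2$, so $S\sim\alpha^{*}(H)-2E$ and
$$
-K_{X}\sim 2\alpha^{*}\big(H\big)-E\sim 2S+3E.
$$
Since $E$ occurs with coefficient $3$, the pair $(X,\lambda(2S+3E))$ is not log canonical for any $\lambda>1/3$, whence $\lct(X)\le 1/3<1/2$, and no argument can establish $\lct(X)\ge 1/2$. This is precisely what the paper's own proof does: it first derives $\lct(X)\le 1/3$ from $-K_{X}\sim 2S+3E$ and then proves $\lct(X)\ge 1/3$, so the proof in the text establishes $\lct(X)=1/3$; the value $1/2$ in the statement of the lemma is inconsistent with its own proof. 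Your bound $-K_{X}\sim 2\bar{H}+E$ is correct but not sharp, and your plan of copying Lemma~\ref{lemma:2-11} transplants the geometry of the cubic threefold (where projection from a line genuinely yields a conic bundle over $\mathbb{P}^{2}$) to a family where it does not apply.
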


\begin{proof}
Let $V$ be a smooth Fano threefold such that $-K_{V}\sim 2H$ and
$$
\mathrm{Pic}\big(V\big)=\mathbb{Z}\big[H\big],
$$
where $H$ is a Cartier divisor such that $H^{3}=5$ (i.\,e.
$\gimel(V)=1.15$). Then $|H|$ induces an embedding
$X\subset\mathbb{P}^{6}$.

It follows from \cite[Proposition~3.4.1]{IsPr99} that there is a
line $L\subset V\subset\mathbb{P}^{6}$ such that there is a
commutative diagram
$$
\xymatrix{
&&X\ar@{->}[dl]_{\alpha}\ar@{->}[dr]^{\beta}&&&\\%
&V\ar@{-->}[rr]_{\psi}&&Q&}
$$
where $Q$ is a smooth quadric hypersurface in $\mathbb{P}^{4}$,
the morphism $\alpha$ is a blow up of the line $L\subset V$, the
morphism $\beta$ is a blow up of a twisted cubic curve
$\mathbb{P}^{1}\cong C\subset Q$, and $\psi$ is a projection from
the~line~$L$.

Let $S$ be the exceptional divisor of the morphism $\beta$. Put
$\bar{S}=\alpha(S)$. Then $\bar{S}\sim H$, and $\bar{S}$ is
singular along the line $L$. Let $E$ be the exceptional divisor of
the morphism $\alpha$. Then
$$
\beta\big(E\big)\sim\mathcal{O}_{\mathbb{P}^{4}}\big(1\big)\Big\vert_{Q},
$$
which implies that $\beta(E)$ is an irreducible quadric surface.

We suppose that $X$ satisfies the following generality condition:
the surface $\beta(E)$ is smooth.

The equivalence $-K_{X}\sim 2S+3E$ holds. Moreover, the log pair
$$
\left(X,\ \frac{1}{3}\Big(2S+3E\Big)\right)
$$
is log canonical but not log terminal. Thus, we see that
$\lct(X)\leqslant 1/3$.

We suppose that $\mathrm{lct}(X)<1/3$. Then there exists an
effective $\mathbb{Q}$-divisor $D\qlineq -K_{X}$ such that the log
pair $(X,\lambda D)$ is not log canonical for some positive
rational number $\lambda<1/3$. Then
$$
\varnothing\ne\mathrm{LCS}\Big(X,\ \lambda D\Big)\subseteq E,
$$
because $\alpha(D)\qlineq -K_V$ and $\mathrm{lct}(V)=1/2$ by
Theorem~\ref{theorem:del-Pezzo}.

Note that $E\cong\mathbb{P}^{1}\times\mathbb{P}^{1}$ by our
generality condition. Let $F$ be a fiber of the projection $E\to
L$, and let $Z$ be a section of this projection such that $Z\cdot
Z=0$. Then $\alpha^{*}(H)\vert_{E}\sim F$ and $E\vert_{E}\sim -Z$,
because
$$
-2Z-2F\sim K_E\sim\Big(K_X+E\Big)\Big\vert_{E}\sim
2\Big(E-\alpha^{*}\big(H\big)\Big)\Big\vert_{E}\sim 2E\Big\vert_{E}-2F.%
$$

By Remark~\ref{remark:convexity}, we may assume that either
$E\not\subset\mathrm{Supp}(D)$, or $S\not\subset\mathrm{Supp}(D)$.
Put
$$
D=\mu E+\Omega,
$$
where $\Omega$ is an effective $\mathbb{Q}$-divisor on $X$ such
that $E\not\subset\mathrm{Supp}(\Omega)$.

Suppose that $\mu\leqslant 2$. Then $(X, E+\lambda\Omega)$ is not
log canonical, which implies that
$$
\Big(E,\ \lambda\Omega\Big\vert_E\Big)
$$
is also not log canonical by Theorem~\ref{theorem:adjunction}. But
$$
\Omega\Big\vert_{E}\qlineq \Big(-K_X-\mu E\Big)\Big\vert_{E}\qlineq\big(1+\mu\big)Z+2F,%
$$
which contradicts Lemma~\ref{lemma:elliptic-times-P1}, because
$\mu\leqslant 2$ and $\lambda<1/3$.

Thus, we see that $\mu>2$. Then we may assume that
$S\not\subset\mathrm{Supp}(D)$.

Let $\Gamma$ be a general fiber of the projection $S\to C$. Then
$\Gamma\not\subset\mathrm{Supp}(D)$ and
$$
1=-K_{X}\cdot \Gamma=\mu E\cdot
\Gamma+\Omega\cdot\Gamma=\mu+\Omega\cdot\Gamma\geqslant\mu,
$$
which is a contradiction.
\end{proof}

\begin{lemma}
\label{lemma:2-27} Suppose that $\gimel(X)=2.27$. Then
$\mathrm{lct}(X)=1/2$.
\end{lemma}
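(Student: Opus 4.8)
The plan is to play the two extremal contractions of $X$ against each other. Recall that $X$ is the blow up $\alpha\colon X\to\P^3$ of a twisted cubic $C\subset\P^3$, with exceptional divisor $E$, and that the net of quadrics through $C$ resolves to a second morphism $\beta\colon X\to\P^2$ whose fibres are the proper transforms of the secant lines of $C$; thus $\beta$ is a $\P^1$-bundle and $\beta^{*}(\mathcal{O}_{\P^2}(1))\sim 2\alpha^{*}(H)-E\sim\bar{Q}$, where $H$ is a plane and $\bar{Q}$ is the proper transform of a quadric containing $C$ (this $\bar Q$ is smooth, so $\bar Q\cong\P^1\times\P^1$). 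Since $-K_X\sim 4\alpha^{*}(H)-E\sim 2\bar{Q}+E$, the effective divisor $2\bar{Q}+E\qlineq -K_X$ already shows $\lct(X)\le 1/2$, and it remains to prove the reverse inequality. Assume to the contrary that $\lct(X)<1/2$, so that there are an effective $D\qlineq -K_X$ and a rational $\lambda<1/2$ with $(X,\lambda D)$ not log canonical.

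The first step I would carry out is that $\mathbb{LCS}(X,\lambda' D)$ contains no surface for any $\lambda'<1/2$. Indeed, if $S$ were such a surface then $D=\kappa S+\Omega$ with $\kappa\ge 1/\lambda'>2$ and $S\not\subseteq\Supp(\Omega)$; intersecting with a general fibre $F'$ of $\beta$ gives $2=-K_X\cdot F'=D\cdot F'\ge\kappa\,(S\cdot F')$, forcing $S\cdot F'=0$. Hence $S$ is vertical for $\beta$, i.e. $S\sim d\bar{Q}$ with $d\ge 1$, so $\alpha(S)\sim 2dH$; pushing $D$ forward to $\P^3$ yields $4H\sim\alpha_{*}(D)\ge 2\kappa d\,H$, whence $\kappa\le 2/d\le 2$, a contradiction.

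The heart of the argument is to transport the non-log-canonicity down to $\P^3$, where Lemma~\ref{lemma:P3} becomes available. The main obstacle is that a direct surface-adjunction on $X$ is too weak: restricting $D$ to a quadric $\bar{Q}\cong\P^1\times\P^1$ through a bad fibre, one computes $-K_X\vert_{\bar{Q}}\sim 3F+2G$ (the self-intersection $\bar Q\vert_{\bar Q}$ contributes an extra ruling $F$), and the coefficient $3$ along the fibre direction only produces the bound $\lambda\le 1/3$. I would instead proceed as follows. The fibres $F\cong\P^1$ of $\beta$ satisfy $\lct(F,[D\vert_F])=\lct(\P^1)=1/2$, because $D\vert_F$ and $-K_F$ both have degree $-K_X\cdot F=2$; applying Theorem~\ref{theorem:Hwang} to $\beta$ at a non-log-canonical point shows that an entire fibre lies in $\LCS(X,\lambda' D)$ for some rational $\lambda'<1/2$. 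Since such a fibre is the transform of a secant line, it meets $E$ in at most two points, so there is a non-log-canonical point $x\in F\setminus E$ of $(X,\lambda' D)$.

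Near $x$ the morphism $\alpha$ is an isomorphism, so $(\P^3,\lambda' D_3)$ is not log canonical at $\alpha(x)$, where $D_3=\alpha_{*}(D)\sim 4H$. Because $\lambda' D_3\qlineq -\lambda' K_{\P^3}$ with $\lambda'<1/2$, Lemma~\ref{lemma:P3} guarantees that $\mathbb{LCS}(\P^3,\lambda' D_3)$ contains a surface $S_3$, so that $\mult_{S_3}(D_3)\ge 1/\lambda'$. Its proper transform $\bar{S}_3\subset X$ is not $\alpha$-exceptional, hence $\mult_{\bar{S}_3}(D)\ge\mult_{S_3}(D_3)\ge 1/\lambda'$, which means that $\mathbb{LCS}(X,\lambda' D)$ contains the surface $\bar{S}_3$. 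This contradicts the first step and proves $\lct(X)\ge 1/2$, so $\lct(X)=1/2$.
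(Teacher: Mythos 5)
Your proof is correct and follows essentially the same route as the paper's: the bound $\lct(X)\le 1/2$ from $-K_X\sim 2\bar Q+E$, the exclusion of surfaces from $\mathbb{LCS}(X,\lambda D)$ by intersecting with a general fibre of the $\mathbb{P}^1$-bundle to $\mathbb{P}^2$, Theorem~\ref{theorem:Hwang} applied to that bundle to force a whole fibre (the transform of a secant line) into the non-klt locus, and finally Lemma~\ref{lemma:P3} on $\mathbb{P}^3$ to reach a contradiction. Your only deviation is cosmetic — you transport a single point of $F\setminus E$ down to $\mathbb{P}^3$ and pull the resulting surface back up, and you track the auxiliary threshold $\lambda'$ from Theorem~\ref{theorem:Hwang} a little more carefully than the paper does — but the argument is the same.
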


\begin{proof}
There is a morphism $\alpha\colon X\to\mathbb{P}^3$ contracting a
surface $E$ to a twisted cubic curve~$C\subset \mathbb{P}^{3}$,
and $X\cong\mathbb{P}(\mathcal{E})$, where $\mathcal{E}$ is a
stable rank two vector bundle  on $\mathbb{P}^{2}$  with
$c_{1}(\mathcal{E})=0$ and $c_{1}(\mathcal{E})=2$ such that the
sequence
$$
0\longrightarrow\mathcal{O}_{\mathbb{P}_{2}}\big(-1\big)\oplus\mathcal{O}_{\mathbb{P}_{2}}\big(-1\big)\longrightarrow\mathcal{O}_{\mathbb{P}_{2}}\oplus\mathcal{O}_{\mathbb{P}_{2}}\oplus \mathcal{O}_{\mathbb{P}_{2}}\oplus\mathcal{O}_{\mathbb{P}_{2}}\longrightarrow \mathcal{E}\otimes\mathcal{O}_{\mathbb{P}_{2}}\big(1\big)\longrightarrow 0%
$$
is exact (see \cite[Application~1]{SzWi90}).

Let $Q\subset\mathbb{P}^{3}$ be a general quadric containing $C$,
and let $\bar{Q}\subset X$ be a proper transform of~$Q$. Then
$$
-K_{X}\sim 2\bar{Q}+E,
$$
where $E$ is the exceptional divisor of $\alpha$. In particular,
we see that $\mathrm{lct}(X)\leqslant 1/2$.

We suppose that $\lct(X)<1/2$. Then there exists an effective
$\mathbb{Q}$-divisor $D\qlineq -K_{X}$ such that the log pair
$(X,\lambda D)$ is not log canonical for some positive rational
number $\lambda<1/2$.

Suppose that the set $\mathbb{LCS}(X,\lambda D)$ contains a
surface $S\subset X$. Put
$$
D=\mu F+\Omega,
$$
where $\mu\geqslant 1/\lambda$ and $\Omega$ is an effective
$\mathbb{Q}$-divisor such that
$F\not\subset\mathrm{Supp}(\Omega)$.

Let $\phi\colon X\to \mathbb{P}^{2}$ be the natural
$\mathbb{P}^{1}$-bundle. Then
$$
2=D\cdot \Gamma=\mu F\cdot \Gamma+\Omega\cdot \Gamma=\mu F\cdot\Gamma+\Omega\cdot F\geqslant \mu F\cdot\Gamma,%
$$
where $\Gamma$ is a general fiber of $\phi$. Thus, we see that $F$
is swept out by the fibers of $\phi$. Then
$$
\alpha\big(F\big)\sim\mathcal{O}_{\mathbb{P}^{3}}\big(d\big)
$$
and $d\geqslant 2$. But $\alpha(D)\qlineq
\mu\alpha(F)+\alpha(\Omega)\qlineq\mathcal{O}_{\mathbb{P}^{3}}(4)$,
which is a contradiction.

We wee that the locus $\mathrm{LCS}(X,\lambda D)$ contains no
surfaces. Applying Theorem~\ref{theorem:Hwang} to $(X,\lambda D)$
and $\phi$, we see that
$$
L\subseteq\mathrm{LCS}\Big(X,\ \lambda D\Big),
$$
where $L$ is a fiber of $\phi$. Note that $\alpha(L)$ is a secant
line of the curve $C\subset\P^3$. One has
$$
\alpha\big(L\big)\subseteq\mathrm{LCS}\Big(\mathbb{P}^{3}, \lambda\alpha\big(D\big)\Big)\subseteq \alpha\Big(\mathrm{LCS}\big(X,\ \lambda D\big)\Big)\cup C,%
$$
which is impossible by Lemma~\ref{lemma:P3}.
\end{proof}

\begin{lemma}
\label{lemma:2-28} Suppose that $\gimel(X)=2.28$. Then
$\mathrm{lct}(X)=1/4$.
\end{lemma}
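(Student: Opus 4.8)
The plan is to use the description of $X$ as the blow-up $\alpha\colon X\to\P^3$ of a smooth plane cubic curve $C\subset\Pi\cong\P^2$, and to exploit the two surfaces visible in this picture: the exceptional divisor $E$, which is ruled over the elliptic curve $C$, and the proper transform $\bar\Pi\cong\P^2$ of the plane. Writing $H=\alpha^{*}(\O_{\P^3}(1))$ one has $K_X=\alpha^{*}K_{\P^3}+E$, so $\bar\Pi\qlineq H-E$ and
$$
-K_X\sim 4H-E\sim 4\bar\Pi+3E .
$$
Since $\bar\Pi$ and $E$ are smooth and meet transversally along $C$, the pair $(X,\bar\Pi+\frac{3}{4}E)$ is log canonical, so $\lct(X,4\bar\Pi+3E)=1/4$ and hence $\lct(X)\le 1/4$. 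For the reverse inequality I assume $\lct(X)<1/4$, producing $D\qlineq -K_X$ and a rational $\lambda<1/4$ with $(X,\lambda D)$ not log canonical, and aim for a contradiction.

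First I would bound the coefficient $c$ of $E$ in $D$. Comparing classes gives $c=m-1$, where $m=\mult_C(\alpha_*D)$; restricting the degree-$4$ divisor $\alpha_*D$ to $\Pi$ and using that $C$ is an irreducible cubic in $\Pi$ shows $m\le 4$, whence $c\le 3$. This, together with intersecting $D$ with the proper transform of a general line of $\P^3$ (of anticanonical degree $4$), excludes any surface from $\mathbb{LCS}(X,\lambda D)$: a component of coefficient $>1/\lambda>4$ would have to be $E$, ruled out by $c\le 3$. Since $-(K_X+\lambda D)$ is ample, $\mathbb{LCS}(X,\lambda D)$ is connected by Theorem~\ref{theorem:connectedness}. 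Because $\lct(\P^3)=1/4>\lambda$ (Example~\ref{example:hypersurface-index-big}), the pair $(\P^3,\lambda\alpha_*D)$ is klt, so any point of $\mathbb{LCS}(X,\lambda D)$ off $E\cup\bar\Pi$ would descend to the empty locus $\mathbb{LCS}(\P^3,\lambda\alpha_*D)$. Restricting to $\bar\Pi\cong\P^2$, where $-K_X|_{\bar\Pi}=\O(1)$, forces the relevant restriction of $D$ to have degree $\le 4$, and a multiplicity count then shows $(X,\lambda D)$ is log canonical at every point of $\bar\Pi\setminus C$. Together these reductions give $\mathbb{LCS}(X,\lambda D)\subseteq E$.

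The key structural point is that $C$ is elliptic. By Lemma~\ref{lemma:rational-tree} every one-dimensional component of $\mathbb{LCS}(X,\lambda D)$ is a smooth rational curve, and a smooth rational curve in the ruled surface $E$ cannot dominate $C$, so each such component is a fibre of $E\to C$. As distinct fibres are disjoint, connectedness forces $\mathbb{LCS}(X,\lambda D)$ to be a single fibre $\ell$, or a single point lying on a fibre $\ell\subseteq\Supp(D)$ (if $\ell\not\subseteq\Supp(D)$ then $\mult_p(D)\le D\cdot\ell=1<1/\lambda$, excluding the point). Writing $D=cE+D'$ with $E\not\subseteq\Supp(D')$ and applying Theorem~\ref{theorem:adjunction} to $(X,E+\lambda D')$ along $E$, the pair $(E,\lambda D'|_E)$ fails to be log canonical at a general point of $\ell$. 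Since $\ell^2=0$ and $D'\cdot\ell=1+c$, Lemma~\ref{lemma:adjunction} on the surface $E$ yields $\lambda(1+c)>1$, that is $c>1/\lambda-1>3$, contradicting $c\le 3$ and finishing the proof.

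The hard part will be the last step precisely when the fibre $\ell$, or the $(-6)$-curve $C=E\cap\bar\Pi$ on $E$, occurs in $\Supp(D|_E)$ with coefficient exceeding $1/\lambda$: then Lemma~\ref{lemma:adjunction} does not apply verbatim and one must first strip these components off. I would treat this by invoking Remark~\ref{remark:convexity} against the log canonical divisor $\lambda(4\bar\Pi+3E)\qlineq\lambda D$, so as to assume that $E$ and $\bar\Pi$ are not both in $\Supp(D)$, and by using the intersection numbers $-K_X\cdot C=3$ and $C^2=-6$ on $E$ to bound the offending coefficients. Handling these boundary cases cleanly, rather than the generic computation above, is where the genuine work lies.
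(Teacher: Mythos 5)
Your setup, the upper bound via $-K_X\sim 4\bar\Pi+3E$, and the reduction of $\mathbb{LCS}(X,\lambda D)$ to a subset of $E$ that is either a single fibre $\ell$ of $E\to C$ or a single point on such a fibre all agree with the paper (the paper gets the bound $c\le 3$ on the coefficient of $E$ by intersecting $D$ with the proper transform of a general line meeting $C$, which has anticanonical degree $3$, rather than via $\mathrm{mult}_{C}(\alpha_{*}D)$, but the outcome is the same, and the elliptic-base argument via Lemma~\ref{lemma:rational-tree} is exactly the paper's). The divergence, and the problem, is your endgame. To apply Lemma~\ref{lemma:adjunction} on $E$ with $B_1=\ell$ you need the coefficient of $\ell$ in $\lambda D'\vert_E$ to be at most $1$; when it exceeds $1$ the desired conclusion $\lambda(1+c)>1$ is simply not available (if $D'\vert_E$ were supported on $\ell$ alone, the residual part would meet $\ell$ in degree $0$). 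You flag this, but the repairs you sketch do not close it: Remark~\ref{remark:convexity} applied against $\lambda(4\bar\Pi+3E)$ can only remove $\bar\Pi$ or $E$ from $\mathrm{Supp}(D)$, never the fibre $\ell$, since $\ell$ is not a component of that log canonical divisor; and the coefficient of $\ell$ in the restriction $D'\vert_E$ is not controlled by $\mathrm{mult}_{\ell}(D')$ (components of $D'$ tangent to $E$ along $\ell$ inflate it), so the intersection numbers $-K_X\cdot C=3$ and $C^2=-6$ on $E$ do not obviously yield the bound you need. As written the proof is incomplete exactly where you say the genuine work lies.

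The paper sidesteps all of this by staying global. Once $\varnothing\ne\mathrm{LCS}(X,\lambda D)\subseteq E$ is known and the locus is a point or a fibre, it takes a general cubic cone $R\subset\mathbb{P}^3$ over $C$ and a general plane $H$ through its vertex; then $\frac{3}{4}\big(\bar R+\bar H\big)\qlineq\frac{3}{4}\big(-K_X\big)$, the point of $X$ over the vertex is a centre of log canonical singularities of $\big(X,\lambda D+\frac{3}{4}(\bar R+\bar H)\big)$ lying off $E$, and $-\big(K_X+\lambda D+\frac{3}{4}(\bar R+\bar H)\big)\qlineq(\lambda-1/4)K_X$ is ample, so Theorem~\ref{theorem:connectedness} forces the locus to be connected and gives a contradiction, with no coefficient bookkeeping on $E$ at all. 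I would either adopt that argument or supply an actual bound on the coefficient of $\ell$ in $D'\vert_E$ before your adjunction step.
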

\begin{proof}
There is a blow up $\alpha\colon X\to \mathbb{P}^3$ along a plane
cubic curve $C\subset\mathbb{P}^{3}$. One has
$$
-K_{X}\sim 4G+3E,
$$
where $E$ is the exceptional divisor of $\alpha$ and $G$ is a
proper transform of the plane in $\mathbb{P}^{3}$ that contains
the curve $C$. In particular, we see that the inequality
$\mathrm{lct}(X)\leqslant 1/4$ holds.

We suppose that $\lct(X)<1/4$. Then there exists an effective
$\mathbb{Q}$-divisor $D\qlineq -K_X$ such that the log pair $(X,
\lambda D)$ is not log canonical for some rational number
$\lambda<1/4$. One has
$$
\varnothing\ne\mathrm{LCS}\Big(X,\ \lambda D\Big)\subseteq E,
$$
since $\mathrm{lct}(\mathbb{P}^{4})=1/4$. Computing the
intersections with a strict transform of a general line in $\P^3$
intersecting the curve $C$, one obtains that $\LCS(X, \lambda D)$
does not contain the divisor $E$. Moreover,  any curve
$\Gamma\in\mathbb{LCS}(X, \lambda D)$ must be a fiber of the
natural projection
$$
\psi\colon E\longrightarrow C
$$
by Lemma~\ref{lemma:rational-tree}. Therefore, we see that either
the locus $\mathrm{LCS}(X, \lambda D)$ consists of a single point,
or the locus $\mathrm{LCS}(X, \lambda D)$ consists of a single
fiber of the projection $\psi$ by
Theorem~\ref{theorem:connectedness}.

Let $R$ be a sufficiently  general cone in $\mathbb{P}^3$ over the
curve $C$, and let $H$ be a sufficiently general plane in
$\mathbb{P}^{3}$ that passes through the point $\Sing(R)$. Then
$$
\mathrm{LCS}\left(X,\ \lambda
D+\frac{3}{4}\Big(\bar{R}+\bar{H}\Big)\right)=\mathrm{LCS}\Big(X,\
 \lambda D\Big)\bigcup\Sing\big(\bar{R}\big),%
$$
where $\bar{R}$ and $\bar{H}$ are proper transforms of $R$ and $H$
on the threefold $X$. But the divisor
$$
-\left(K_{X}+\lambda
D+\frac{3}{4}\Big(\bar{R}+\bar{H}\Big)\right)\qlineq
 \big(\lambda-1/4\big)K_{X}%
$$
is ample, which contradicts Theorem~\ref{theorem:connectedness}.
\end{proof}

\begin{lemma}
\label{lemma:2-29} Suppose that $\gimel(X)=2.29$. Then
$\mathrm{lct}(X)=1/3$.
\end{lemma}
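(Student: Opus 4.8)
The threefold with $\gimel(X)=2.29$ is the blow-up $\alpha\colon X\to Q$ of a smooth quadric threefold $Q\subset\P^4$ along a smooth conic $C$. Let $\Pi\cong\P^2$ be the plane spanned by $C$, so that $\Pi\cap Q=C$ transversally. From the sequence $0\to\mathcal N_{C/Q}\to\mathcal N_{C/\P^4}\to\mathcal N_{Q/\P^4}\vert_C\to 0$ and transversality of $\Pi$ and $Q$ one gets $\mathcal N_{C/Q}\cong\O(2)^{\oplus 2}$, so the exceptional divisor is $E\cong\P^1\times\P^1$ for \emph{every} member of the family. Writing $H$ for the hyperplane class on $Q$ one has $-K_X\sim 3\alpha^*H-E$, and the proper transform $\bar S=\alpha^*H-E$ of a quadric surface section through $C$ gives $-K_X\sim 3\bar S+2E$. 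The hyperplane sections of $Q$ containing $\Pi$ form a pencil whose members are all irreducible (an irreducible conic cannot lie on a reducible quadric surface), so $|\bar S|$ is a base-point-free pencil defining a second contraction $\beta\colon X\to\P^1$, a fibration into quadric surfaces whose general fibre is $\P^1\times\P^1$ and whose finitely many singular fibres are irreducible quadric cones $\cong\P(1,1,2)$; transversality forces the vertices of these cones off $C$, hence off $E$. The upper bound is immediate: $(X,\bar S+\tfrac23E)$ is log canonical, so $\lct(X)\le\lct(X,3\bar S+2E)=1/3$.

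For the reverse inequality the plan is to argue by contradiction: suppose $\lct(X)<1/3$, so there is $D\qlineq -K_X$ and a rational $\lambda<1/3$ with $(X,\lambda D)$ not log canonical. Since $\alpha(D)\qlineq -K_Q$ and $\lct(Q)=1/(4+1-2)=1/3$ by Example~\ref{example:hypersurface-index-big}, the pair $(Q,\lambda\alpha(D))$ is log canonical; as $\alpha$ is an isomorphism away from $E$, this confines the non-log-canonical locus to the exceptional divisor, i.e. $\varnothing\ne\LCS(X,\lambda D)\subseteq E$.

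The next step is to locate the bad points on the fibration $\beta$. For the fibre $F$ of $\beta$ through a point $P\in\LCS(X,\lambda D)$, write $D=cF+D'$ with $F\not\subseteq\Supp(D')$; intersecting with a general $f$-ruling of $E$, which is a section of $\beta$ with $-K_X\cdot f=1=F\cdot f$, gives $c\le 1$, so $\lambda c<1$. Each fibre $F$ is a Cartier divisor with log terminal singularities, so Theorem~\ref{theorem:adjunction} applied to $(X,F+\lambda D')$ shows that $(F,\lambda D'\vert_F)$ is not log canonical at $P$, where $D'\vert_F\qlineq -K_F$. A smooth fibre has $\lct(\P^1\times\P^1)=1/2>\lambda$ by Example~\ref{example:del-Pezzos}, a contradiction; hence $F$ must be a singular fibre $F\cong\P(1,1,2)$, and $P$ lies on the curve $E\cap F$, away from the vertex of the cone.

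Finally I would restrict to the cone $F\cong\P(1,1,2)$. Since $\lambda<1/2$ and $P$ is not the vertex, Lemma~\ref{lemma:quadric-cone-far-from-vertex} shows that the ruling $L\in|\O_{\P(1,1,2)}(1)|$ through $P$ lies in $\LCS(F,\lambda D'\vert_F)$; but $L\not\subseteq E$, because $E\cap F$ is a conic (class $\O_F(2)$), not a ruling. This should contradict $\LCS(X,\lambda D)\subseteq E$, and I expect it to be the main obstacle: transferring the non-log-canonicity of the restricted pair $(F,\lambda D'\vert_F)$ along $L$ back to $(X,\lambda D)$ is not automatic, since Theorem~\ref{theorem:adjunction} reintroduces $F$ with coefficient $1$ and so may create singularities along $L$ that $(X,\lambda D)$ does not have. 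I anticipate this requires either a multiplicity estimate for the components of $D$ containing $L$, or — following the pattern of Lemma~\ref{lemma:2-25} — adding an auxiliary effective divisor $\qlineq\varepsilon(-K_X)$ (with $\lambda+\varepsilon<1$) that forces a ruling disjoint from $E\cap F$ into the log canonical singularities locus, thereby disconnecting it and contradicting Theorem~\ref{theorem:connectedness}. A parallel route for this final step is to restrict $D$ to $E\cong\P^1\times\P^1$ by Theorem~\ref{theorem:adjunction}: bounding the coefficient $\mu$ of $E$ by $\mu\le 2$ (via lines of $Q$ meeting $C$) gives $\Omega\vert_E\qlineq(1+\mu)s+(4-2\mu)f$ where $D=\mu E+\Omega$; the $s$-coefficient is then $<1$, so Lemma~\ref{lemma:elliptic-times-P1} forces the $f$-coefficient large, whence $\mu<1/2$, a constraint that must again be played against the fibration $\beta$ to finish.
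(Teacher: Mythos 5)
Your setup coincides with the paper's: the same decomposition $-K_X\sim 3\bar S+2E$ for the upper bound, the same reduction $\varnothing\ne\LCS(X,\lambda D)\subseteq E$ via $\lct(Q)=1/3$, and the same use of the quadric fibration $\beta$ together with Theorem~\ref{theorem:adjunction}/Theorem~\ref{theorem:Hwang} to confine the non-log-canonical locus to $\Gamma=E\cap S$ for a singular fibre $S$. But the argument stops exactly where the real work begins, and you say so yourself: the conclusion of Lemma~\ref{lemma:quadric-cone-far-from-vertex} lives on the fibre $S$ and cannot be pulled back to $(X,\lambda D)$, and neither of the two repairs you sketch is carried out. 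The bound $\mu<1/2$ from restricting to $E$ is correct but, as written, leads nowhere; and ``adding an auxiliary divisor that forces a ruling into the LCS'' is not a proof.

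The paper closes the gap in two concrete moves that are absent from your proposal. First, since $(X,\ S+\frac{2}{3}E)$ is log canonical and $3S+2E\qlineq D$, Remark~\ref{remark:convexity} lets one assume that either $S\not\subset\Supp(D)$ or $E\not\subset\Supp(D)$; intersecting $D$ with the strict transform of a general ruling of the cone $\alpha(S)$ (which meets $\Gamma$ and has $D$-degree $2$) in the first case, or with a general fibre of $E\to C$ (which meets $\Gamma$ and has $D$-degree $1$) in the second, gives $\mult_{\Gamma}(D)\leqslant 2<1/\lambda$, so $\Gamma\not\subseteq\LCS(X,\lambda D)$ and by Theorem~\ref{theorem:connectedness} the locus is a single point $O\in\Gamma$. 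Second, for a general $R\in|\alpha^{*}(H)|$ not through $O$ one has $-K_X\sim\bar S+2R$, and the locus $\LCS(X,\ \lambda D+\frac{1}{2}(\bar S+2R))=R\cup O$ is disconnected while the relevant anti-log-canonical divisor is still ample (here $\lambda<1/3$ is used), contradicting Theorem~\ref{theorem:connectedness}. Without these two steps (or a worked-out substitute) the lower bound $\lct(X)\geqslant 1/3$ is not established.
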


\begin{proof}
There is a birational morphism $\alpha\colon X\to Q$ such that $Q$
is a smooth quadric hypersurface, and $\alpha$ is a blow up along
a smooth conic $C\subset Q$.

Let $H$ be a general hyperplane section of
$Q\subset\mathbb{P}^{4}$ that contains $C$, and let $\bar{H}$ be a
proper transform of the surface $H$ on the threefold $X$. Then
$$
-K_{X}\sim 3\bar{H}+2E,
$$
where $E$ is the exceptional divisor of $\alpha$. In particular,
the inequality $\mathrm{lct}(X)\leqslant 1/3$ holds.

We suppose that $\mathrm{lct}(X)<1/3$. Then there exists an
effective $\mathbb{Q}$-divisor $D\qlineq -K_{X}$ such that the log
pair $(X,\lambda D)$ is not log canonical for some rational
$\lambda<1/3$. Then
$$
\varnothing\ne\mathrm{LCS}\Big(X,\lambda D\Big)\subseteq E,
$$
since $\mathrm{lct}(Q)=1/3$ (see
Example~\ref{example:hypersurface-index-big}) and
$\alpha(D)\qlineq -K_{Q}$.

The linear system $|\bar{H}|$ has no base points and defines a
morphism $\beta\colon X\to\mathbb{P}^{1}$,
whose general fiber is a smooth quadric surface. Then the log pair
$(X,\lambda D)$ is log canonical along the smooth fibers of
$\beta$ by Theorem~\ref{theorem:Hwang} (see
Example~\ref{example:del-Pezzos}).

It follows from Theorem~\ref{theorem:connectedness} that there is
a singular fiber $S\sim\bar{H}$ of the morphism $\beta$ such that
$$
\varnothing\ne\mathrm{LCS}\Big(X,\lambda D\Big)\subseteq E\cap S,
$$
and $\alpha(S)\subset\mathbb{P}^3$ is a quadratic cone. Put
$\Gamma=E\cap S$. Then $\Gamma$ is an irreducible conic, the log
pair
$$
\left(X,\ S+\frac{2}{3}E\right)
$$
has log canonical singularities, and $3S+2E\qlineq D$. Therefore,
it follows from Remark~\ref{remark:convexity} that to complete the
proof we may assume that either $S\not\subset\mathrm{Supp}(D)$ or
$E\not\subset\mathrm{Supp}(D)$.

Intersecting the divisor $D$ with a strict transform of a general
ruling of the cone $\alpha(S)\subset\mathbb{P}^{3}$ and with a
general fiber of the projection $E\to C$, we see that
$$
\Gamma\not\subseteq\mathrm{LCS}\Big(X,\lambda D\Big),
$$
which implies that $\mathrm{LCS}(X,\lambda D)$ consists of a
single point $O\in\Gamma$ by Theorem~\ref{theorem:connectedness}.

Let $R$ be a general (not passing through $O$) surface in
$|\alpha^{*}(H)|$. Then
$$
\mathrm{LCS}\left(X,\ \lambda D+\frac{1}{2}\Big(\bar{H}+2R\Big)\right)=R\cup O,%
$$
which is impossible by Theorem~\ref{theorem:connectedness}, since
$-K_{X}\sim\bar{H}+2R\qlineq D$ and $\lambda<1/3$.
\end{proof}

\begin{lemma}
\label{lemma:2-30} Suppose that $\gimel(X)=2.30$. Then
$\mathrm{lct}(X)=1/4$.
\end{lemma}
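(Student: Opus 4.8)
The plan is to exploit the two extremal contractions of $X$. Recall that $X$ is the blow up $\alpha\colon X\to\mathbb{P}^3$ of a smooth conic $C$, which lies in a unique plane $\Pi\subset\mathbb{P}^3$; let $E$ be the $\alpha$-exceptional divisor and let $G\cong\mathbb{P}^2$ be the proper transform of $\Pi$. Since $G$ represents $\alpha^*\mathcal{O}(1)-E$, one gets $-K_X\sim 4G+3E$, so that $\mathrm{lct}(X)\leqslant 1/4$ because $(X,\lambda(4G+3E))$ is not log canonical for $\lambda>1/4$. The other ray is generated by the lines of $G$ (of class $\alpha^*\ell-2f$, where $f$ is a fibre of $E\to C$), and since $G|_G\cong\mathcal{O}_{\mathbb{P}^2}(-1)$ it is contracted by a morphism $\beta\colon X\to Y$ that blows $G$ down to a smooth point $P$. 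The relation $\beta^*(-K_Y)=-K_X+2G=3(2\alpha^*\mathcal{O}(1)-E)$ shows that $Y$ is a Fano threefold of index $3$ with $\rho(Y)=1$, i.e. a smooth quadric $Q\subset\mathbb{P}^4$ (this exhibits $X$ as the projection of $Q$ from a point of $Q$). It remains to prove $\mathrm{lct}(X)\geqslant 1/4$.

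Suppose $\mathrm{lct}(X)<1/4$, so there is $D\sim_{\mathbb{Q}}-K_X$ with $(X,\lambda D)$ not log canonical for some $\lambda<1/4$. First I would localise the non--log canonical locus. Since $\alpha_*D\sim_{\mathbb{Q}}-K_{\mathbb{P}^3}$ and $\lambda<1/4=\mathrm{lct}(\mathbb{P}^3)$, the pair $(\mathbb{P}^3,\lambda\alpha(D))$ is log canonical, whence $\mathrm{LCS}(X,\lambda D)\subseteq E$; likewise $\beta_*D\sim_{\mathbb{Q}}-K_Q$ and $\lambda<1/3=\mathrm{lct}(Q)$ (Example~\ref{example:hypersurface-index-big}), so $(Q,\lambda\beta(D))$ is log canonical and $\mathrm{LCS}(X,\lambda D)\subseteq G$. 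Hence $\mathrm{LCS}(X,\lambda D)\subseteq E\cap G=:\sigma$, where $\sigma$ is an irreducible smooth rational curve, namely the section of $E\to C$ corresponding to $C$ under $G\cong\Pi$. As $-(K_X+\lambda D)\sim_{\mathbb{Q}}(1-\lambda)(-K_X)$ is ample, $\mathrm{LCS}(X,\lambda D)$ is connected by Theorem~\ref{theorem:connectedness}, so it equals $\sigma$ or a single point. Using $\lambda D\sim_{\mathbb{Q}}\lambda(4G+3E)$ and that $(X,4\lambda G+3\lambda E)$ is log canonical for $\lambda<1/4$, Remark~\ref{remark:convexity} lets me assume $G\not\subseteq\mathrm{Supp}(D)$ or $E\not\subseteq\mathrm{Supp}(D)$.

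If $\mathrm{LCS}(X,\lambda D)=\sigma$, then $\mathrm{mult}_\sigma(D)>1/\lambda>4$. But a general fibre $f$ of $E\to C$ meets $\sigma$ transversally in one point with $D\cdot f=-K_X\cdot f=1$, and a general line $\ell\subset G$ meets $\sigma$ in two points with $D\cdot\ell=(4\alpha^*\mathcal{O}(1)-E)\cdot\ell=2$; whichever of $E,G$ is not contained in $\mathrm{Supp}(D)$ supplies a curve forcing $\mathrm{mult}_\sigma(D)\leqslant 1$, a contradiction. If instead $\mathrm{LCS}(X,\lambda D)=\{O\}$ with $O\in\sigma\subset G$, I would produce a second, disjoint log canonical centre: taking a general quadric cone $R\supset C$ with vertex $v\notin\Pi$ and two general planes $H_1,H_2$ through $v$, the divisor $\Delta=\tfrac34(\bar R+\bar H_1+\bar H_2)\sim_{\mathbb{Q}}\tfrac34(-K_X)$ satisfies $\mathrm{mult}_{\bar v}(\Delta)=\tfrac34(2+1+1)=3$, so the discrepancy of the blow up of $\bar v=\alpha^{-1}(v)$ is at most $-1$ and $\bar v\in\mathrm{LCS}(X,\lambda D+\Delta)$. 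For general $R,H_1,H_2$ the support of $\Delta$ avoids $O$, so $\mathrm{LCS}(X,\lambda D+\Delta)$ contains the two distinct points $O$ and $\bar v$ and is disconnected, while $-(K_X+\lambda D+\Delta)\sim_{\mathbb{Q}}(\tfrac14-\lambda)(-K_X)$ is ample, contradicting Theorem~\ref{theorem:connectedness}.

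The main obstacle is the reduction $\mathrm{LCS}(X,\lambda D)\subseteq\sigma$ in the second paragraph. Unlike the plane cubic case of Lemma~\ref{lemma:2-28}, here $C$ is rational, so the components of a one--dimensional $\mathrm{LCS}$ need not be fibres of $E\to C$ and Lemma~\ref{lemma:rational-tree} alone does not confine them. It is precisely the identification of the second contraction as a blow down to the quadric $Q$, together with $\mathrm{lct}(Q)=1/3$, that forces $\mathrm{LCS}(X,\lambda D)$ into the single curve $\sigma$. The remaining delicate point is the disconnectedness in the zero--dimensional case, i.e. checking that $\Delta$ introduces no spurious centres joining $\bar v$ to $O$; this is where the genericity of $R,H_1,H_2$ is used.
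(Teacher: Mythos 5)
Your proof is correct and follows the same strategy as the paper's: the upper bound from $-K_X\sim 4G+3E$, the localisation $\mathrm{LCS}(X,\lambda D)\subseteq E\cap G$ via the two contractions to $\mathbb{P}^3$ and to the quadric $Q$, the exclusion of the whole curve $E\cap G$ by intersecting $D$ with fibres of $E\to C$ and with lines in $G$ after the reduction of Remark~\ref{remark:convexity}, and a final disconnectedness contradiction via Theorem~\ref{theorem:connectedness}. The only (harmless) deviation is in the last step: the paper adds $\frac{1}{2}(F+2R)$ with $R$ a general member of $|\alpha^{*}(H)|$, so that $R$ itself is the second connected component of the locus, whereas you import the cone construction of Lemma~\ref{lemma:2-28} to manufacture an isolated zero-dimensional centre at the vertex; both choices work.
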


\begin{proof}
There is a commutative diagram
$$
\xymatrix{
&X\ar@{->}[dl]_{\alpha}\ar@{->}[dr]^{\beta}&\\%
\mathbb{P}^{3}&&Q\ar@{-->}^{\gamma}[ll]}
$$
where $Q$ is a smooth quadric threefold in $\mathbb{P}^{4}$, the
morphism $\alpha$ is a blow up of a smooth conic
$C\subset\mathbb{P}^{3}$, the morphism $\beta$ is a blow up of a
point, and $\gamma$ is a projection from a point.

Let $G$ be a proper transform on the variety $X$ of the unique
plane in $\mathbb{P}^{3}$ that contains the conic $C$. Then the
surface $G$ is contracted by the morphism $\beta$, and
$$
-K_{X}\sim 4G+3E,
$$
where $E$ is the exceptional divisor of the blow up $\alpha$.
Thus, we see that $\mathrm{lct}(X)\leqslant 1/4$.

We suppose that $\mathrm{lct}(X)<1/4$. Then there exists an
effective $\mathbb{Q}$-divisor $D\qlineq -K_{X}$ such that the~log
pair $(X,\lambda D)$ is not log canonical for some rational
$\lambda<1/4$. Then
$$
\varnothing\ne\mathrm{LCS}\Big(X,\lambda D\Big)\subseteq E\cap G,
$$
because $\mathrm{lct}(\mathbb{P}^{4})=1/4$ and
$\mathrm{lct}(Q)=1/3$.

We may assume that either $G\not\subset\mathrm{Supp}(X)$ or
$E\not\subset\mathrm{Supp}(X)$ by Remark~\ref{remark:convexity}.

Intersecting $D$ with lines in $G\cong\mathbb{P}^{2}$ and with
fibers of the projection $E\to C$, we see that
$$
\mathrm{LCS}\Big(X,\lambda D\Big)\subsetneq E\cap G
$$
which implies that there is a point $O\in E\cap G$ such that
$\mathrm{LCS}(X,\lambda D)=O$ by
Theorem~\ref{theorem:connectedness}.

Let $R$ be a general surface in $|\alpha^{*}(H)|$ and $F$ a
general surface in $|\alpha^{*}(2H)-E|$. Then
$$
\mathrm{LCS}\left(X,\ \lambda D+\frac{1}{2}\Big(F+2R\Big)\right)=R\cup O,%
$$
which is impossible by Theorem~\ref{theorem:connectedness} since
$-K_{X}\sim F+2R\qlineq D$ and $\lambda<1/4$.
\end{proof}

\begin{lemma}
\label{lemma:2-31} Suppose that $\gimel(X)=2.31$. Then
$\mathrm{lct}(X)=1/3$.
\end{lemma}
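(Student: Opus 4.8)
The plan is to argue exactly as in Lemmas~\ref{lemma:2-29} and~\ref{lemma:2-30}. Recall first the geometry. The threefold $X$ carries a birational morphism $\alpha\colon X\to Q$ onto a smooth quadric threefold $Q\subset\mathbb{P}^{4}$ contracting its exceptional divisor $E$ to a line $L\subset Q$, and since $\mathrm{Aut}(Q)$ acts transitively on the lines of $Q$, the family $\gimel(X)=2.31$ is a single isomorphism class, so no generality hypothesis is needed. Writing $H$ for the hyperplane class of $Q$ and $\bar{H}$ for the proper transform of a hyperplane section of $Q$ containing $L$, one has $\bar{H}\qlineq\alpha^{*}(H)-E$ and
$$
-K_{X}\sim 3\bar{H}+2E,
$$
so that $\mathrm{lct}(X)\leqslant\mathrm{lct}(X,3\bar{H}+2E)=1/3$. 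Moreover $N_{L/Q}\cong\mathcal{O}_{L}\oplus\mathcal{O}_{L}(1)$, whence $E\cong\mathbb{F}_{1}$, and the base-point-free net $|\bar{H}|$ defines a conic bundle $\beta\colon X\to\mathbb{P}^{2}$ whose fibres are the proper transforms of the residual lines of the conics $\Pi\cap Q$ for planes $\Pi\supset L$.

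For the reverse inequality I would suppose $\mathrm{lct}(X)<1/3$ and fix an effective $D\qlineq -K_{X}$ and a rational $\lambda<1/3$ with $(X,\lambda D)$ not log canonical. Since $\alpha(D)\qlineq -K_{Q}$ and $\mathrm{lct}(Q)=1/3$ by Example~\ref{example:hypersurface-index-big}, the pair $(Q,\lambda\alpha(D))$ is log canonical, and as $\alpha$ is an isomorphism over $Q\setminus L$ this forces
$$
\varnothing\ne\mathrm{LCS}\big(X,\lambda D\big)\subseteq E.
$$
Writing $D=\mu E+\Omega$ with $E\not\subset\mathrm{Supp}(\Omega)$ and intersecting with a general fibre $\ell$ of $\beta$ (for which $-K_{X}\cdot\ell=2$ and $E\cdot\ell=1$) yields $\mu\leqslant 2$; hence $\mu\lambda<1$ and $\mathbb{LCS}(X,\lambda D)$ contains no surface.

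Next I would localise the non-log-canonical locus inside a fibre. Applying Theorem~\ref{theorem:Hwang} to the conic bundle $\beta$ — whose fibres are reduced conics, hence Gorenstein canonical, with $\mathrm{lct}(F,[D|_{F}])=\mathrm{lct}(\mathbb{P}^{1},[\mathcal{O}(2)])=1/2>\lambda$ — and using that $(X,\lambda D)$ is not log canonical, one obtains a fibre $F$ and a rational $\eps<1/2$ with $F\subseteq\mathrm{LCS}(X,\eps D)$. Pushing down by $\alpha$, the line $L'=\alpha(F)\subset Q$ lies in $\mathrm{LCS}(Q,\eps\alpha(D))$, so Lemma~\ref{lemma:quadric-curves} shows that $\mathbb{LCS}(Q,\eps\alpha(D))$ contains a surface $M$; as $\mathrm{Pic}(Q)=\mathbb{Z}[H]$ and $\alpha(D)\qlineq 3H$, necessarily $M\in|H|$ with $\mathrm{mult}_{M}(\alpha(D))>2$. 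In parallel, after using Remark~\ref{remark:convexity} with $-K_{X}\sim 3\bar{H}+2E$ to arrange $E\not\subset\mathrm{Supp}(D)$ or $\bar{H}\not\subset\mathrm{Supp}(D)$, inversion of adjunction (Theorem~\ref{theorem:adjunction}) applied to $E\cong\mathbb{F}_{1}$ shows that $(E,\lambda\Omega|_{E})$ is not log canonical, where $\Omega|_{E}\qlineq 3f+(1+\mu)s$ for the ruling $f$ and the $(-1)$-section $s$ of $\mathbb{F}_{1}$.

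The main obstacle is the final numerical step. Both reductions leave a residual case in which one component (the quadric $M$ downstairs, equivalently a fibre of $E\to L$ upstairs) carries multiplicity in the half-open range $(2,3]$, which is precisely the slack produced by the index-type relation $-K_{X}\sim 3\bar{H}+2E$; the intersection estimates above do not by themselves exclude it. I would close this gap exactly as at the end of Lemmas~\ref{lemma:2-28},~\ref{lemma:2-29} and~\ref{lemma:2-30}: first reduce $\mathrm{LCS}(X,\lambda D)$ to a single point $O\in E$ by Theorem~\ref{theorem:connectedness} together with Lemma~\ref{lemma:rational-tree} (isolating $O$ on $E\cong\mathbb{F}_{1}$ by intersecting $D$ with the ruling of $E$ and with the section $s$), and then add to $\lambda D$ a suitable effective divisor $\qlineq -K_{X}$ — of the shape $\frac{1}{2}(\bar{H}+2R)$ with $R\in|\alpha^{*}(H)|$ general and $O\notin R$ — so that the resulting locus $R\cup O$ is disconnected while $-(K_{X}+\lambda D+\frac{1}{2}(\bar{H}+2R))$ remains ample, contradicting Theorem~\ref{theorem:connectedness}. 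It is this disconnection argument, rather than the multiplicity bookkeeping, that I expect to be delicate, and where the remaining computation concentrates.
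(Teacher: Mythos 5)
Your setup, the upper bound $\mathrm{lct}(X)\leqslant 1/3$, the reduction $\varnothing\ne\mathrm{LCS}(X,\lambda D)\subseteq E$, the bound $\mu\leqslant 2$ obtained by intersecting with a general fibre of $\beta$, and the adjunction computation $\Omega\vert_{E}\qlineq 3f+(1+\mu)s$ all agree with the paper. The problem is that you stop one step short of the conclusion and then substitute an ending that does not work. Since $\mu\leqslant 2$ and $\lambda<1/3$, the boundary $\lambda\Omega\vert_{E}\qlineq 3\lambda f+(1+\mu)\lambda s$ has both coefficients strictly less than $1$ (indeed $3\lambda<1$ and $(1+\mu)\lambda\leqslant 3\lambda<1$), so Lemma~\ref{lemma:F1} says that $(E,\lambda\Omega\vert_{E})$ is log terminal, which directly contradicts the non-log-canonicity you derived from Theorem~\ref{theorem:adjunction}. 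There is no ``residual case with multiplicity in $(2,3]$'' left to exclude: the inequality $\mu\leqslant 2$ is precisely what puts you inside the hypotheses of Lemma~\ref{lemma:F1}. This is the paper's proof, and every ingredient of it is already on your page; the detour through Lemma~\ref{lemma:quadric-curves} and the surface $M\in|H|$ is a red herring.

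The replacement ending you propose has a genuine defect. First, $\mathrm{LCS}(X,\lambda D)$ cannot be reduced to a single point by intersecting $D$ with the ruling and the section of $E$: the paper shows, by applying Theorem~\ref{theorem:Hwang} to the $\mathbb{P}^{1}$-bundle $\beta$ (whose fibre over the point $\beta(Z)$ is the $(-1)$-section $Z$ itself), that $\mathrm{LCS}(X,\lambda D)=Z$, and for $\mu\in(1,2]$ the intersection estimates do not exclude $Z$. Second, the disconnection step then fails: every $R\in|\alpha^{*}(H)|$ satisfies $\alpha^{*}(H)\cdot Z=H\cdot L=1>0$, and in fact $R\cap E$ is a fibre of $E\to L$, which meets $Z$; hence the locus $\mathrm{LCS}\bigl(X,\ \lambda D+\frac{1}{2}(\bar{H}+2R)\bigr)\supseteq Z\cup R$ stays connected and no contradiction with Theorem~\ref{theorem:connectedness} arises. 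So the final argument you flag as ``delicate'' is not merely delicate but unworkable for the one centre ($Z$) that actually occurs, whereas the Lemma~\ref{lemma:F1} argument disposes of it immediately.
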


\begin{proof}
There is a birational morphism $\alpha\colon X\to Q$ such that $Q$
is a smooth quadric hypersurface, and $\alpha$ is a blow up of the
quadric $Q$ along a line $L\subset Q$.

Let $H$ be a sufficiently general hyperplane section of the
quadric $Q\subset\mathbb{P}^{4}$ that passes through the line $L$,
and let $\bar{H}$ be a proper transform of the surface $H$ on the
threefold $X$. Then
$$
-K_{X}\sim 3\bar{H}+2E,
$$
where $E$ is the exceptional divisor of $\alpha$. In particular,
$\mathrm{lct}(X)\leqslant 1/3$.

We suppose that $\mathrm{lct}(X)<1/3$. Then there exists an
effective $\mathbb{Q}$-divisor $D\qlineq -K_{X}$ such that the log
pair $(X,\lambda D)$ is not log canonical for some rational
$\lambda<1/3$. Then
$$
\varnothing\ne\mathrm{LCS}\Big(X,\lambda D\Big)\subseteq E,
$$
since $\mathrm{lct}(Q)=1/3$ and $\alpha(D)\qlineq -K_{Q}$.

The linear system $|\bar{H}|$ defines a $\mathbb{P}^1$-bundle
$\phi\colon X\to\mathbb{P}^2$
such that the induced morphism
$E\cong\mathbb{F}_{1}\to\mathbb{P}^{2}$ contracts an irreducible
curve $Z\subset E$. One has
$$
\mathrm{LCS}\Big(X,\lambda D\Big)=Z\subset E
$$
by Theorem~\ref{theorem:Hwang}. Put
$$
D=\mu E+\Omega,
$$
where $\Omega$ is an effective $\mathbb{Q}$-divisor on $X$ such
that $E\not\subset\mathrm{Supp}(\Omega)$. Then
$$
2=D\cdot F=\mu E\cdot F+\Omega\cdot F=\mu+ \Omega\cdot F\geqslant \mu,%
$$
where $F$ is a general fiber of $\phi$. Note that the log pair
$$
\Big(X,\ E+\lambda\Omega\Big)
$$
is not log canonical, because $\lambda<1/3$. Then
$(E,\lambda\Omega\vert_{E})$ is not log canonical by
Theorem~\ref{theorem:adjunction}.

Let $C$ be a fiber of the natural projection $E\to L$. Then
$$
\Omega\Big\vert_{E}\qlineq 3C+\big(1+\mu\big)Z,
$$
which implies that $(E, \lambda\Omega\vert_{E})$ is log canonical
by Lemma~\ref{lemma:F1}, which is a contradiction.
\end{proof}

%%%%%%%%%%%%%%%%%%%%%%%%%%%%%%%%%%%%%%%%%%%%%%%%%%%%%%%%%%%%%%%%%%%%%

\section{Fano threefolds with $\rho=3$}
\label{section:rho-3}

We use the assumptions and notation introduced in
section~\ref{section:intro}.

\begin{lemma}
\label{lemma:3-1} Suppose that $\gimel(X)=3.1$ and $X$ is general.
Then $\lct(X)=3/4$.
\end{lemma}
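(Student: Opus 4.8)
The plan is to follow the strategy used for $\gimel(X)=2.4$ in Lemma~\ref{lemma:2-4}, since the value $3/4$ is precisely the global log canonical threshold of a smooth cubic surface without Eckardt points (Example~\ref{example:del-Pezzos}). The threefold $X$ carries a del Pezzo fibration $\beta\colon X\to\P^1$ whose general fibre is a smooth cubic surface, together with a birational contraction $\alpha$ to a simpler variety; writing $-K_X$ in terms of the fibration and the $\alpha$-exceptional locus (in the shape $\tfrac43\bar H+\tfrac13 E$ of the model of Lemma~\ref{lemma:2-4}) exhibits an anticanonical $\Q$-divisor of threshold $3/4$, so that $\lct(X)\le 3/4$ is immediate. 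I would impose, exactly as in Lemma~\ref{lemma:2-4}, the generality hypotheses that every fibre of $\beta$ has at worst one ordinary double point and that no fibre contains an Eckardt point (Definition~\ref{definition:Eckardt-point}); both hold for a general $X$ in this deformation family.

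For the reverse inequality, suppose $\lct(X)<3/4$, so there is $D\qlineq -K_X$ and a rational $\lambda<3/4$ with $(X,\lambda D)$ not log canonical. First I would show that $\mathbb{LCS}(X,\lambda D)$ contains no surface: restricting to a general smooth cubic-surface fibre $F$ and invoking Theorem~\ref{theorem:cubic-surfaces} forces any such surface $S$ to satisfy $S\cap F=\varnothing$, hence $S$ is itself a fibre of $\beta$, and then the coefficient $\eps\ge 1/\lambda>4/3$ of $S$ in $D$ contradicts $\alpha_{*}D\qlineq -K$ of the base, just as in Lemma~\ref{lemma:2-4}. Next, for a fibre $F$ meeting $\mathbb{LCS}(X,\lambda D)$, I would write $D=\mu F+\Omega$ with $F\not\subset\mathrm{Supp}(\Omega)$ and apply Theorem~\ref{theorem:adjunction}: since $\lambda\mu<1$ the pair $(F,\lambda\Omega|_F)$ is not log canonical and $\Omega|_F\qlineq -K_F$, so Theorem~\ref{theorem:cubic-surfaces} yields $\mathrm{LCS}(F,\lambda\Omega|_F)=O$, where $O$ is an Eckardt point or a node of $F$.

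Then Theorem~\ref{theorem:connectedness} together with Theorem~\ref{theorem:adjunction} upgrades this to $\mathrm{LCS}(X,\lambda D)=O$, a single point lying off the $\alpha$-exceptional locus by the generality hypotheses. Pushing forward by $\alpha$, the point $\alpha(O)$ becomes an isolated zero-dimensional centre of the locus of log canonical singularities of a pair proportional to the anticanonical class on the base, which is impossible by Lemma~\ref{lemma:plane} because $\lambda<3/4$. This contradiction would complete the proof.

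The hard part, as in Lemma~\ref{lemma:2-4}, is the localization step rather than the numerology: I must rule out Eckardt points and worse-than-nodal fibres of $\beta$ for a general $X$, and control the non--log canonical locus precisely where it could meet the degenerate fibres and the exceptional divisor of $\alpha$, so that after restriction to a cubic surface the full force of Theorem~\ref{theorem:cubic-surfaces} applies and the threshold cannot drop below $3/4$. Concretely, the step that must be checked against the detailed geometry of the family $\gimel(X)=3.1$ is that the base locus of the pencil defining $\beta$ and the $\alpha$-exceptional curves do not pass through the putative centre $O$; establishing this genericity for a general member is where the specific description of this deformation family is genuinely needed.
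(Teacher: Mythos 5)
Your proposal is built on the wrong threefold. For $\gimel(X)=3.1$ the variety $X$ is a double cover $\omega\colon X\to\mathbb{P}^1\times\mathbb{P}^1\times\mathbb{P}^1$ branched over a divisor of tridegree $(2,2,2)$; it has no cubic-surface fibration and no birational contraction $\alpha$ of the kind appearing in Lemma~\ref{lemma:2-4}, so the decomposition $-K_X\qlineq\frac{4}{3}\bar H+\frac{1}{3}E$ that you use for the upper bound does not exist here, and the final step of pushing the putative centre forward by $\alpha$ and invoking Lemma~\ref{lemma:plane} has nothing to act on. The three fibrations $\pi_i\colon X\to\mathbb{P}^1$ induced by the projections have as fibres del Pezzo surfaces of degree $4$, not cubic surfaces, so Theorem~\ref{theorem:cubic-surfaces} cannot be applied directly to a fibre; the coincidence that $3/4$ is also the threshold of a general cubic surface is what led you astray. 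In the paper the upper bound comes instead from a singular fibre $R_1$ of $\pi_1$ with node $Q$ and the fibres $R_2,R_3$ of $\pi_2,\pi_3$ through $Q$: one has $-K_X\sim R_1+R_2+R_3$ and $\mathrm{mult}_Q(R_1+R_2+R_3)=4$, whence $\lct(X)\le 3/4$.

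For the reverse inequality the paper's localization is also different from yours in an essential way. Given a point $P$ where $(X,\lambda D)$ fails to be log canonical, generality lets one choose (after renumbering) a fibre $S_1\ni P$ that is smooth at $P$, has at most one node, contains no line through $P$, and contains no pair of conics $C_1+C_2\sim -K_{S_1}$ meeting at $P$. Writing $D=\mu S_1+\Omega$, the coefficient bound $\mu\le 1$ is obtained by intersecting with a general fibre of the conic bundle $\phi\colon X\to\mathbb{P}^1\times\mathbb{P}^1$ given by $|S_2+S_3|$ (there is no base of a birational contraction to intersect with). Adjunction then makes $(S_1,\lambda\Omega|_{S_1})$ non--log canonical with $\Omega|_{S_1}\qlineq -K_{S_1}$, and the contradiction comes from the degree-$4$ del Pezzo analysis of Example~\ref{example:del-Pezzo-quartic} (which reduces to Theorem~\ref{theorem:cubic-surfaces} only after projecting $S_1$ from the point $P$ to a cubic surface). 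Your overall template --- restrict to a fibre, bound its coefficient by a transverse fibration, apply adjunction and a surface-level threshold result --- is the right shape, but every concrete input (the fibration, the auxiliary morphism, the surface lemma, and the extremal divisor for the upper bound) must be replaced before the argument applies to this deformation family.
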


\begin{proof}
There is a double cover
$$
\omega\colon X\longrightarrow \mathbb{P}^1\times\mathbb{P}^1\times\mathbb{P}^1%
$$
branched over a divisor of tridegree $(2, 2, 2)$. The projection
$$
\mathbb{P}^1\times\mathbb{P}^1\times\mathbb{P}^1\longrightarrow\mathbb{P}^1
$$
onto the $i$-th factor induces a morphism $\pi_{i}\colon
X\to\mathbb{P}^1$, whose  fibers are del Pezzo surfaces of
degree~$4$.

Let $R_{1}$ be a singular fiber of the fibration $\pi_{1}$, let
$Q$ be a singular point of the surface $R_{1}$, and let $R_{2}$
and $R_{3}$ be fibers of $\pi_{2}$ and $\pi_{3}$ such that
$$
R_{2}\ni Q\in R_{3},
$$
respectively. Then $\mathrm{mult}_{Q}(R_{1}+R_{2}+R_{3})=4$, which
implies that the log pair
$$
\left(X,\ \frac{3}{4}\Big(R_{1}+R_{2}+R_{3}\Big)\right)
$$
is not log terminal at $Q$. But $-K_{X}\sim R_{1}+R_{2}+R_{3}$.
Thus, we see that $\mathrm{lct}(X)\leqslant 3/4$.

%We suppose that the threefold $X$ satisfies the~following
%generality condition: for an arbitrary point $O\in X$, there is
%$k\in\{1,2,3\}$~such~that
%\begin{itemize}
%\item the fiber $F_{k}$ of the fibration $\pi_{k}$ that contains $O$ is smooth at the point $O$,%
%\item the singularities of the surface $F_{k}$
%consist of at most one ordinary double point,%
%\item for every smooth curve $\Gamma\subset F_{k}$ such that $-K_{F_{k}}\cdot\Gamma=1$, we have $O\not\in\Gamma$,%
%\item for every smooth curves $\Delta_{1}\subset S_{k}\supset
%\Delta_{2}$ such that
%$$
%-K_{F_{k}}\cdot \Delta_{1}=-K_{F_{k}}\cdot \Delta_{2}=2
%$$
%and $\Delta_{1}+\Delta_{2}\sim -K_{F_{k}}$,
%we have $O\ne \Delta_{1}\cap \Delta_{2}$.%
%\end{itemize}

We suppose that $\mathrm{lct}(X)<3/4$. Then there exists an
effective $\mathbb{Q}$-divisor $D\qlineq -K_{X}$ such that the~log
pair $(X,\lambda D)$ is not log canonical at some point $P\in X$
for some rational number~$\lambda<3/4$.

Let $S_{i}$ be a fiber of $\pi_{i}$ such that $P\in S_{i}$.
If $X$ is general we may assume (after a possible renumeration) that
\begin{itemize}
\item the surface $S_{1}$ is smooth at the point $P$,%
\item the singularities of the surface $S_{1}$ consist of at most one ordinary double point,%
\item for every smooth curve $L\subset S_{1}$ such that $-K_{S_{1}}\cdot L=1$, we have $P\not\in L$,%
\item for every smooth curves $C_{1}\subset S_{1}\supset C_{2}$
such that
$$
-K_{S_{1}}\cdot C_{1}=-K_{S_{1}}\cdot C_{2}=2
$$
and $C_{1}+C_{2}\sim -K_{S_{1}}$, we have $P\ne C_{1}\cap C_{2}$.%
\end{itemize}

The surface $S_{1}$ is a del Pezzo surface of degree $4$.  One has
$$
D=\mu S_{1}+\Omega,
$$
where $\Omega$ is an effective $\mathbb{Q}$-divisor on $X$ such
that $S_{1}\not\subset\mathrm{Supp}(\Omega)$.

Let $\phi\colon X\to\mathbb{P}^{1}\times\mathbb{P}^{1}$ be a
natural conic bundle induced by the linear system
$$
\Big|S_{2}+S_{3}\Big|,
$$
and let $\Gamma$ be a general fiber of the conic bundle $\phi$.
Then
$$
2=D\cdot\Gamma=\mu S_{1}\cdot\Gamma+\Omega\cdot\Gamma=2\mu+\Omega\cdot \Gamma\geqslant 2\mu,%
$$
which implies that $\mu\leqslant 1$. Then $(X,
S_{1}+\lambda\Omega)$ is not canonical at the point $P$. Hence
$$
\Big(S_{1},\ \lambda\Omega\Big\vert_{S_{1}}\Big)
$$
is not log canonical at the point $P$ by
Theorem~\ref{theorem:adjunction}. But
$$
\Omega\Big\vert_{S_{1}}\qlineq D\Big\vert_{S_{1}}\qlineq
-K_{S_{k}},
$$
which is impossible (see Example~\ref{example:del-Pezzo-quartic}
and mind the generality assumption).
\end{proof}

\begin{lemma}
\label{lemma:3-2} Suppose that $\gimel(X)=3.2$ and $X$ is general.
Then $\mathrm{lct}(X)=1/2$.
\end{lemma}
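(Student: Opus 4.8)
The plan is to follow the uniform strategy of this section: combine the two elementary contractions carried by a general $X$ in family $3.2$ with the relative log canonical threshold machinery (Theorem~\ref{theorem:Hwang}), adjunction (Theorem~\ref{theorem:adjunction}) and the connectedness of the locus of log canonical singularities (Theorem~\ref{theorem:connectedness}). First I would record the geometry. A general $X$ admits a del Pezzo fibration $\beta\colon X\to\mathbb{P}^{1}$ together with a second contraction $\alpha\colon X\to V$ onto a Fano threefold $V$ whose threshold $\mathrm{lct}(V)=1/2$ is already known from Sections~\ref{section:rho-2} and~\ref{section:del-Pezzos}, and from the diagram one reads off an effective decomposition $-K_{X}\qlineq 2A+B$ with $A$ an irreducible fibre-type surface. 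Since $A$ occurs with coefficient $2$, the pair $(X,\lambda(2A+B))$ fails to be log canonical along $A$ as soon as $2\lambda>1$, so $\mathrm{lct}(X)\le\mathrm{lct}(X,2A+B)\le 1/2$; it remains to prove the reverse inequality.

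Next, assuming $\mathrm{lct}(X)<1/2$, I would fix an effective $D\qlineq -K_{X}$ with $(X,\lambda D)$ not log canonical for some $\lambda<1/2$. Pushing forward by $\alpha$ gives $\alpha(D)\qlineq -K_{V}$, and since $\lambda<1/2=\mathrm{lct}(V)$ the pair $(V,\lambda\alpha(D))$ is log canonical; hence $\varnothing\ne\mathrm{LCS}(X,\lambda D)\subseteq E$, where $E$ is the $\alpha$-exceptional divisor. I would then impose the generality hypothesis that every fibre of $\beta$ has at most one ordinary double point, so that $\mathrm{lct}(F)\ge 1/2$ for each fibre $F$ by Examples~\ref{example:del-Pezzos},~\ref{example:del-Pezzo-quintic} and~\ref{example:del-Pezzo-quartic}. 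Choosing a fibre $F$ of $\beta$ that meets $\mathrm{LCS}(X,\lambda D)$ and writing $D=\mu F+\Omega$ with $F\not\subseteq\mathrm{Supp}(\Omega)$, I would bound $\mu$ by pushing to $V$ (where $-K_{V}\qlineq 2\alpha(F)$) so that $\lambda\mu<1$; then Theorem~\ref{theorem:adjunction} applied to $(X,F+\lambda\Omega)$ shows $(F,\lambda\Omega|_{F})$ is not log canonical while $\Omega|_{F}\qlineq -K_{F}$. This forces $\mathrm{lct}(F)\le\lambda<1/2$, contradicting $\mathrm{lct}(F)\ge 1/2$.

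The case that needs extra care is when $\mathrm{LCS}(X,\lambda D)$ is one-dimensional and lies inside $E$ (or inside the surface swept out by the singular fibres, if the relevant second structure is a conic bundle), rather than meeting the del Pezzo fibres as above. Here I would argue as in Remark~\ref{remark:singular-V4} and Lemma~\ref{lemma:2-23}: using Remark~\ref{remark:convexity} to assume $E\not\subseteq\mathrm{Supp}(D)$, write $D=\mu E+\Omega$, restrict to $E\cong\mathbb{P}^{1}\times\mathbb{P}^{1}$, compute $\Omega|_{E}\qlineq(1+\mu)Z+cF$ from the numerical class of $E|_{E}$, and invoke Lemma~\ref{lemma:elliptic-times-P1} to contradict $\lambda<1/2$ once $\mu$ is bounded by intersecting $D$ with a fibre of $E\to\beta(E)$ not contained in $\mathrm{Supp}(D)$. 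Adding a suitable multiple of a member of $|A|$ to $\lambda D$ and applying Theorem~\ref{theorem:connectedness} eliminates the residual zero-dimensional and isolated-curve configurations, exactly as in the neighbouring lemmas.

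The main obstacle will be the fibre analysis of $\beta$. I must choose the generality conditions on the general member of family $3.2$ so that simultaneously (i) no fibre of $\beta$ acquires singularities worse than a single ordinary double point, which is precisely what keeps the fibrewise threshold at least $1/2$ and makes the reduction of the second paragraph close, and (ii) the locus $\mathrm{LCS}(X,\lambda D)$ cannot degenerate to an isolated ruling or a section inside $E$ that slips through the intersection estimates. Balancing the fibrewise log canonical threshold reduction against the $\mathbb{P}^{1}\times\mathbb{P}^{1}$ restriction via Lemma~\ref{lemma:elliptic-times-P1}, while keeping exact track of the class of $E|_{E}$ and of the coefficient $\mu$, is the delicate part of the argument.
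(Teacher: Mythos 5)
Your lower-bound argument rests on a structural assumption that fails for family $3.2$: you posit a divisorial contraction $\alpha\colon X\to V$ onto a Fano threefold $V$ with $\mathrm{lct}(V)=1/2$ ``already known from Sections~\ref{section:rho-2} and~\ref{section:del-Pezzos}'', with exceptional divisor $E$ over a curve, and you deduce $\mathrm{LCS}(X,\lambda D)\subseteq E$. But $X$ with $\gimel(X)=3.2$ is a \emph{primitive} Fano threefold (a divisor in the $\mathbb{P}^{2}$-bundle $\mathbb{P}(\mathcal{O}\oplus\mathcal{O}(-1,-1)\oplus\mathcal{O}(-1,-1))$ over $\mathbb{P}^{1}\times\mathbb{P}^{1}$, i.e.\ a conic bundle over $\mathbb{P}^{1}\times\mathbb{P}^{1}$): it is not a blow up of a smooth Fano threefold along a curve. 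The only birational contraction available, $\alpha\colon X\to V$, collapses a surface $S\cong\mathbb{P}^{1}\times\mathbb{P}^{1}$ to an ordinary double point of a \emph{singular} Fano threefold $V$ with $\mathrm{Pic}(V)=\mathbb{Z}[-K_{V}]$ and $-K_{V}^{3}=16$, whose global log canonical threshold is computed nowhere in the paper. Consequently the step $\varnothing\ne\mathrm{LCS}(X,\lambda D)\subseteq E$ has no foundation, and everything built on it --- the restriction to $E\cong\mathbb{P}^{1}\times\mathbb{P}^{1}$, the use of Lemma~\ref{lemma:elliptic-times-P1}, the bound on $\mu$ --- collapses.

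What the paper actually exploits is that $X$ carries \emph{two} del Pezzo fibrations over $\mathbb{P}^{1}$ (one into sextic del Pezzo surfaces, one into cubic surfaces), in addition to the quadric fibration and the degree-$4$ fibration. Lemma~\ref{lemma:Hwang} applied to the first fibration localizes $\mathrm{LCS}(X,\lambda D)$ inside a \emph{singular} fiber $S_{1}$ of that fibration (smooth sextic del Pezzo surfaces have threshold exactly $1/2$, so smooth fibers cannot carry the non-log-canonical locus, but singular ones can, since a nodal sextic del Pezzo has threshold $1/3$). The contradiction then comes from applying Lemma~\ref{lemma:Hwang} to the \emph{other} del Pezzo fibration, under the generality condition that its singular fibers have at most $\mathbb{A}_{1}$ singularities, so that Example~\ref{example:singular-cubics} gives fiberwise threshold $2/3>\lambda$. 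Your single del Pezzo fibration $\beta$ together with fibrewise adjunction reproduces only half of this mechanism; on its own it cannot exclude the case where $\mathrm{LCS}(X,\lambda D)$ sits inside a singular fiber of $\beta$ whose own threshold drops below $1/2$. You would need to identify the second fibration and run the two in tandem, which is the actual content of the paper's proof.
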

\begin{proof}
The threefold $X$ is a primitive Fano threefold (see
\cite[Definition~1.3]{MoMu83}). Put
$$
U=\mathbb{P}\Big(\mathcal{O}_{\P^1\times\P^1}\oplus\mathcal{O}_{\P^1\times\P^1}\big(-1,-1\big)\oplus\mathcal{O}_{\P^1\times\P^1}\big(-1,-1\big)\Big),%
$$
let $\pi\colon U\to\P^1\times\P^1$ be a natural projection, and
let $L$ be a tautological line bundle on $U$. Then
$$
X\in\Big|2L+\pi^*\Big(\mathcal{O}_{\P^1\times\P^1}\big(2,
3\big)\Big)\Big|.
$$

Let us show that $\lct(X)\le 1/2$. Let $E_1$ and $E_2$ be surfaces
in $X$ such that
$$
\pi\big(E_{1}\big)\subset\P^1\times\P^1\supset\pi\big(E_{2}\big)
$$
are divisors on $\P^1\times\P^1$ of bi-degree $(1,0)$ and $(0,1)$,
respectively. Then
$$
-K_X\sim L\Big\vert_{X}+2E_1+E_2,
$$
which implies that $\lct(X)\le 1/2$.

%$$K_{\P}\qlineq -3L-4\pi^*e_1-4\pi^*e_2,$$

We suppose that $\mathrm{lct}(X)<1/2$. Then there exists an
effective $\mathbb{Q}$-divisor $D\qlineq -K_{X}$ such that the~log
pair $(X,\lambda D)$ is not log canonical at some point $P\in X$
for some rational number~$\lambda<1/2$.

It follows from \cite[Proposition~3.8]{JaPeRa07} that there is a
commutative diagram
$$
\xymatrix{
U_{1}\ar@{->}[dd]_{\psi_{1}}\ar@{->}[rr]^{\gamma_{1}}&&V&&U_{2}\ar@{->}[dd]^{\psi_{2}}\ar@{->}[ll]_{\gamma_{2}} \\
&&X\ar@{->}[d]_{\omega}\ar@{->}[lld]_{\phi_1}\ar@{->}[rrd]^{\phi_2}\ar@{->}[u]_{\alpha}\ar@{->}[ull]^{\beta_{1}}\ar@{->}[urr]_{\beta_{2}}&& \\
\P^1 &&\P^1\times\P^1\ar@{->}[ll]^{\pi_1}\ar@{->}[rr]_{\pi_2}&& \P^1}%
$$
where $V$ is a Fano threefold that has one ordinary double point
$O\in V$ such that
$$
\mathrm{Pic}\big(V\big)=\mathbb{Z}\big[-K_{V}\big]
$$
and $-K_{V}^{3}=16$, the morphism $\alpha$ contracts a unique
surface
$$
\mathbb{P}^{1}\times \mathbb{P}^{1}\cong S\subset X
$$
such that $S\sim L\vert_{X}$ to the point $O\in V$, the morphism
$\beta_{i}$ contracts $S$ to a smooth rational curve, the~morphism
$\gamma_{i}$ contracts the curve $\beta_{i}(S)$ to the point $O\in
V$ so that the rational~map
$$
\gamma_{2}\circ\gamma_{1}^{-1}\colon U_{1}\dasharrow U_{2}
$$
is a flop in $\beta_{1}(S)\cong\mathbb{P}^{1}$,
the morphism $\psi_2$ is a quadric fibration, and the morphisms
$\psi_{1}$, $\phi_{1}$, $\phi_{2}$ are fibrations whose fibers are del Pezzo
surfaces of degree $4$, $3$ and $6$, respectively.
The morphisms $\pi_1$ and $\pi_2$ are natural projections, and
$\omega=\pi\vert_{X}$. Then
$$
\mathrm{Cl}\big(V\big)=\mathbb{Z}\big[\alpha\big(E_{1}\big)\big]\oplus\mathbb{Z}\big[\alpha\big(E_{2}\big)\big],
$$
and $\omega$ is a conic bundle. The curve $\beta_{1}(S)$ is a
section of $\psi_{1}$, and $\beta_{2}(S)$ is a $2$-section of
$\psi_{2}$.

We assume that the threefold $X$ satisfies the following mild
generality condition: every singular fiber of the del Pezzo
fibration $\phi_2$ has at most $\mathbb{A}_1$ singularities.

Applying Lemma~\ref{lemma:Hwang} to the fibration $\phi_1$, we see
that
$$
\varnothing\neq\LCS\Big(X,\ \lambda D\Big)\subseteq S_{1}
$$
where $S_{1}$ is a singular fiber of the del Pezzo fibration
$\phi_1$, because the global log canonical threshold of a smooth
del Pezzo surface of degree $6$ is equal to $1/2$ by
Example~\ref{example:del-Pezzos}.

Applying Lemma~\ref{lemma:Hwang} to $\phi_2$, we obtain a
contradiction by Example~\ref{example:singular-cubics}.
\end{proof}

\begin{lemma}
\label{lemma:3-3} Suppose that $\gimel(X)=3.3$ and $X$ is general.
Then $\lct(X)=2/3$.
\end{lemma}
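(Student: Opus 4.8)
The plan is to exploit the three natural projections of $X$. Recall that $X$ is a smooth divisor of tridegree $(1,1,2)$ in $\P^1\times\P^1\times\P^2$, so that $-K_X\sim H_1+H_2+H_3$, where $H_1,H_2$ are pullbacks of points under the two projections to $\P^1$ and $H_3$ is the pullback of a line under the projection $p_3\colon X\to\P^2$. The projections $p_1,p_2\colon X\to\P^1$ are del Pezzo fibrations whose general fibres are smooth quintic del Pezzo surfaces (each fibre being a divisor of bidegree $(1,2)$ in $\P^1\times\P^2$), while $p_3$ is a conic bundle, and the preimage $T_\ell=p_3^{-1}(\ell)\in|H_3|$ of a general line $\ell\subset\P^2$ is a smooth quartic del Pezzo surface with $-K_{T_\ell}\sim(H_1+H_2)|_{T_\ell}$. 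First I would fix the generality conditions: the general members of $|H_1|$, $|H_2|$ and $|H_3|$ are smooth del Pezzo surfaces, the discriminant curve of $p_3$ is irreducible, and the singular fibres of $p_1,p_2,p_3$ are as mild as possible (at worst one node).

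For the upper bound $\lct(X)\le 2/3$ I would use that $\lct$ of a quartic del Pezzo surface equals $2/3$ (Example~\ref{example:del-Pezzos}, Example~\ref{example:del-Pezzo-quartic}). Choosing a suitable slice $T=T_\ell\in|H_3|$ and an anticanonical $\Q$-divisor on it whose log canonical threshold is exactly $2/3$ (one supported on the $(-1)$-curve configuration of $T$ that realises this value), I would extend it to an effective $D\qlineq -K_X$ restricting to that divisor and conclude $\lct(X,D)\le2/3$ by inversion of adjunction (Theorem~\ref{theorem:adjunction}). Equivalently, one produces $D\qlineq -K_X$ with $\mult_\Gamma(D)\ge 3$ along an appropriate fibre curve $\Gamma$ of the conic bundle $p_3$, which forces $\lct(X,D)\le2/3$ after blowing up $\Gamma$.

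For the lower bound suppose $D\qlineq -K_X$ and $(X,\lambda D)$ is not log canonical for some $\lambda<2/3$. Applying Theorem~\ref{theorem:Hwang} to the quintic del Pezzo fibrations $p_1$ and $p_2$ (whose general fibres have $\lct=1/2$), together with an intersection estimate ruling out $\mult_F(D)$ being large for a fibre $F$, I would first reduce to the range $1/2\le\lambda<2/3$ and localise $\mathbb{LCS}(X,\lambda D)$. In this range, for a general fibre $F$ of $p_1$ one has $D|_F\qlineq -K_F$, and Lemma~\ref{lemma:del-Pezzo-quintic} together with Remark~\ref{remark:hyperplane-reduction} forces the trace of $\mathbb{LCS}(X,\lambda D)$ on $F$ to be a single point lying on a $(-1)$-curve of $F$. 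Running the same analysis for $p_2$ and invoking the connectedness of the locus of log canonical singularities (Theorem~\ref{theorem:connectedness}), I would pin $\mathbb{LCS}(X,\lambda D)$ down to an irreducible curve contained in the $(-1)$-curve loci of both pencils, and then reach a contradiction by intersecting $D$ with the fibres of $p_3$ and with the relevant ruling curves, using the generality assumptions and Remark~\ref{remark:convexity}.

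The hard part will be the lower bound: controlling how the ten $(-1)$-curves of the quintic del Pezzo fibres of the two pencils $p_1,p_2$ sweep out $X$, and using the generality conditions to exclude the single configuration that survives the connectedness argument. The sharp upper-bound divisor likewise hinges on the explicit $(-1)$-curve geometry of a quartic del Pezzo slice, which is where the value $2/3$ (rather than the fibrewise $1/2$ coming from the quintic del Pezzo fibres) ultimately enters.
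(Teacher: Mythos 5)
Your geometric setup is accurate (in fact more accurate than the paper's own wording: the fibres of the two projections to $\mathbb{P}^1$ are quintic del Pezzo surfaces, and the preimages of lines under $p_3$ are quartic del Pezzos), but both halves of your argument have real gaps. For the upper bound, neither mechanism you offer actually produces a divisor with $\mathrm{lct}(X,D)\le 2/3$. Inversion of adjunction (Theorem~\ref{theorem:adjunction}) requires the slice $T=T_\ell$ to appear with coefficient exactly $1$, so a non-lc anticanonical pair on $T$ at threshold $2/3$ does not transfer to a pair $(X,\tfrac23 D)$; moreover $(-K_X)\vert_T\qlineq -K_T+H_3\vert_T\ne -K_T$, so the restriction of an anticanonical divisor of $X$ is not anticanonical on $T$. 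And you never exhibit an effective $D\qlineq -K_X$ with $\mathrm{mult}_\Gamma(D)\ge 3$ along a conic-bundle fibre; since $-K_X\cdot\Gamma=2$ this is far from automatic. The paper's upper bound is elementary and comes from elsewhere: the birational morphism $\alpha_2\colon X\to\mathbb{P}^1\times\mathbb{P}^2$ contracts a divisor $E_2$ with $H_1\sim H_2+2R-E_2$, whence
$$
-K_{X}\qlineq\frac{3}{2}H_{1}+\frac{1}{2}H_{2}+\frac{1}{2}E_{2},
$$
an effective divisor in which a fibre of one del Pezzo pencil carries coefficient $3/2$. That coefficient, not the quartic del Pezzo slice, is where $2/3$ enters.

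For the lower bound, the opening move fails: Theorem~\ref{theorem:Hwang} applied to $p_1$ or $p_2$ only produces a contradiction or a localisation when $\lambda<\mathrm{lct}(\text{fibre})=1/2$, so in the critical range $1/2\le\lambda<2/3$ it says nothing and your reduction never starts; likewise Remark~\ref{remark:hyperplane-reduction} on a \emph{general} fibre sees nothing if the non-lc locus sits in special fibres. The paper instead works with the specific fibre $S_1$ of $\pi_1$ through the non-log-canonical point $P$: it writes $D=\mu S_1+\Omega$, bounds $\mu$ by restricting $D$ to a fibre of the other pencil, applies Theorem~\ref{theorem:adjunction} to conclude that $(S_1,\lambda\Omega\vert_{S_1})$ with $\Omega\vert_{S_1}\qlineq -K_{S_1}$ is not log canonical at $P$, and then invokes the \emph{pointwise} classification of such pairs on the del Pezzo fibre (the content of Example~\ref{example:del-Pezzo-quartic} and Lemma~\ref{lemma:del-Pezzo-quintic}) together with the generality hypothesis that $P$ avoids the curves of anticanonical degree $1$ on $S_1$. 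It is this pointwise refinement, rather than the threshold $\mathrm{lct}(S_1)$ itself, that pushes the argument past $\lambda=1/2$; your sketch gestures at the right lemma but deploys it on the wrong fibres and without the multiplicity bound that makes adjunction applicable.
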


\begin{proof}
The threefold $X$ is a divisor on
$\mathbb{P}^1\times\mathbb{P}^1\times\mathbb{P}^2$ of tridegree
$(1, 1, 2)$. In particular,
$$
-K_{X}\sim
\pi_{1}^{*}\Big(\mathcal{O}_{\mathbb{P}^{1}}\big(1\big)\Big)+\pi_{2}^{*}\Big(\mathcal{O}_{\mathbb{P}^{1}}\big(1\big)\Big)+\phi^{*}\Big(\mathcal{O}_{\mathbb{P}^{2}}\big(1\big)\Big),
$$
where $\pi_{1}\colon X\to\mathbb{P}^{1}$ and $\pi_{1}\colon
X\to\mathbb{P}^{1}$ are  fibrations into del Pezzo surfaces of
degree $4$ that are induced by the projections of the variety
$\mathbb{P}^1\times\mathbb{P}^1\times\mathbb{P}^2$ onto its first
and second factor, respectively, and $\phi\colon X\to\mathbb{P}^2$
is conic bundle that is induced by the projection
$\mathbb{P}^1\times\mathbb{P}^1\times\mathbb{P}^2\to
\mathbb{P}^2$.

Let $\alpha_{2}\colon X\to\mathbb{P}^{1}\times\mathbb{P}^{2}$ be a
birational morphism induced by the linear system
$$
\Big|\pi_{2}^{*}\Big(\mathcal{O}_{\mathbb{P}^{1}}\big(1\big)\Big)+\phi^{*}\Big(\mathcal{O}_{\mathbb{P}^{2}}\big(1\big)\Big)\Big|,
$$
let $H_{i}\in|\pi_{i}^{*}(\mathcal{O}_{\mathbb{P}^{1}}(1))|$ and
$R\in|\phi^{*}(\mathcal{O}_{\mathbb{P}^{2}}(1))|$ be general
surfaces. Then
$$
H_{1}\sim H_{2}+2R-E_{2},
$$
where $E_{2}$ is the exceptional divisor of the birational
morphism $\alpha_{2}$. Hence
$$
-K_{X}\sim
H_1+H_2+R\qlineq\frac{3}{2}H_{1}+\frac{1}{2}H_{2}+\frac{1}{2}E_{2},
$$
which implies that $\mathrm{lct}(X)\leqslant 2/3$.

%We suppose that the threefold $X$ satisfies the~following
%generality conditions: for an arbitrary point $O\in X$, there is
%$k\in\{1,2\}$ such that
%\begin{itemize}
%\item the fiber $F_{k}$ of the fibration $\pi_{k}$ that contains
%the point $O$ is smooth at the point $O$,%
%\item the singularities of the surface $F_{k}$
%consist of at most one ordinary double point,%
%\item for every smooth curve $\Gamma\subset F_{k}$
%such that $-K_{F_{k}}\cdot \Gamma=1$,
%we have $O\not\in \Gamma$ if $\mathrm{Sing}(F_{k})\ne\varnothing$.%
%\end{itemize}

We suppose that $\mathrm{lct}(X)<2/3$. Then there exists an
effective $\mathbb{Q}$-divisor $D\qlineq -K_{X}$ such that the~log
pair $(X,\lambda D)$ is not log canonical at some point $P\in X$
for some rational number~$\lambda<2/3$.

Let $S_{i}$ be a fiber of $\pi_{i}$ such that $P\in S_{i}$.
If $X$ is general we may assume (after a possible renumeration) that
\begin{itemize}
\item the surface $S_{1}$ is smooth at the point $P$,%
\item the singularities of the surface $S_{1}$ consist of at most one ordinary double point,%
\item for every smooth curve $L\subset S_{1}$ such that $-K_{S_{1}}\cdot L=1$, we have $P\not\in L$ if $\mathrm{Sing}(S_{1})\ne\varnothing$.%
\end{itemize}

Put $D=\mu S_{1}+\Omega$, where $\Omega$ is an effective
$\mathbb{Q}$-divisor such that
$S_{1}\not\subset\mathrm{Supp}(\Omega)$. Then
$$
\Big(H_{2},\ \lambda\mu
S_{1}\Big\vert_{H_{2}}+\lambda\Omega\Big\vert_{H_{2}}\Big)
$$
is log canonical because $\mathrm{lct}(H_{2})=2/3$. Thus, we see
that $\mu\leqslant 1/\lambda$. Hence
$$
\Big(S_{1},\ \lambda\Omega\Big\vert_{S_{1}}\Big)
$$
is not log canonical at the point $P$ by
Theorem~\ref{theorem:adjunction}. But
$$
\Omega\Big\vert_{S_{1}}\qlineq-K_{S_{1}},
$$
which is impossible (see Example~\ref{example:del-Pezzo-quartic}).
\end{proof}

\begin{lemma}
\label{lemma:3-4} Suppose that $\gimel(X)=3.4$. Then
$\lct(X)=1/2$.
\end{lemma}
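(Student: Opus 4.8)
The plan is to imitate the proofs of Lemmas~\ref{lemma:2-1-2-3}, \ref{lemma:2-19} and~\ref{lemma:2-26}, where a Fano threefold is realized as the blow up of an index~$2$ del Pezzo threefold along an elliptic curve. Recall that there is a blow up $\alpha\colon X\to W$ of the del Pezzo threefold $W$ with $\gimel(W)=2.32$ and $-K_{W}\sim 2H$, the centre being a smooth elliptic curve $C$ cut out on $W$ by two members of the pencil they span inside $|H|$. Writing $E$ for the exceptional divisor and $\bar{H}$ for the proper transform of a member of $|H|$ passing through $C$, one has $E\cong C\times\P^{1}$ (since $C$ is a complete intersection, $\mathcal{N}_{C/W}$ splits into two copies of $\mathcal{O}_{C}(H|_{C})$) and
\[
-K_{X}\sim 2\bar{H}+E .
\]
This equivalence already gives $\lct(X)\le 1/2$, so only $\lct(X)\ge 1/2$ has to be proved.

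First I would suppose $\lct(X)<1/2$ and fix an effective $\Q$-divisor $D\qlineq -K_{X}$ for which $(X,\lambda D)$ is not log canonical for some rational $\lambda<1/2$. Since $\alpha(D)\qlineq 2H\qlineq -K_{W}$ and $\lct(W)=1/2>\lambda$ by Theorem~\ref{theorem:del-Pezzo}, the pair $(W,\lambda\alpha(D))$ is log canonical off $C$, whence $\varnothing\ne\LCS(X,\lambda D)\subseteq E$. Put $D=\mu E+\Omega$ with $E\not\subseteq\Supp(\Omega)$, and let $Z\cong C$ and $L\cong\P^{1}$ be the two rulings of $E$, so that $Z^{2}=L^{2}=0$, $Z\cdot L=1$, $\alpha^{*}(H)|_{E}\sim kL$ and $E|_{E}\sim -Z+kL$, where $k=H\cdot C$. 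Then
\[
\Omega\big|_{E}\qlineq\big(-K_{X}-\mu E\big)\big|_{E}\qlineq(1+\mu)Z+k(1-\mu)L ,
\]
and effectivity of $\Omega|_{E}$ forces $0\le\mu\le 1$; moreover $(E,\lambda\Omega|_{E})$ is not log canonical by Theorem~\ref{theorem:adjunction}.

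The main obstacle is that here $k=H^{3}=6$ is too large. Lemma~\ref{lemma:elliptic-times-P1} yields a contradiction only once both coefficients $\lambda(1+\mu)$ and $\lambda k(1-\mu)$ are $\le 1$; while $\lambda(1+\mu)\le 2\lambda<1$ is automatic, the coefficient $\lambda k(1-\mu)$ can exceed $1$ when $\mu$ is small, so (unlike in Lemmas~\ref{lemma:2-1-2-3} and~\ref{lemma:2-19}, where $k\le 2$) the argument does not close directly. To overcome this I would exploit the two $\P^{1}$-bundle structures $W\to\P^{2}$: after $\alpha$ they induce conic bundle structures $X\to\P^{2}$ in which the fibres meet $C$, and hence $E$, in low degree. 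Applying Theorem~\ref{theorem:Hwang} and Lemma~\ref{lemma:Hwang} to these conic bundles together with Theorem~\ref{theorem:connectedness}, I expect to reduce $\LCS(X,\lambda D)$ to a single ruling $L$ of $E$ or to a single point, ruling out any component in the class $Z$ by intersecting $D$ with a general ruling and with a general fibre of $E\to C$, and discarding the possibility that $\LCS$ is a surface via Remark~\ref{remark:convexity} applied to $-K_{X}\sim 2\bar{H}+E$. Once $\LCS(X,\lambda D)$ is localized in this way, restricting to the relevant conic-bundle fibre (whose global log canonical threshold is $\ge 1/2$ by Example~\ref{example:del-Pezzos}) and to $E$ should contradict the non-log-canonicity, exactly as in the proofs of Lemmas~\ref{lemma:2-18} and~\ref{lemma:2-24}. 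Carrying out this localization — trading the large self-intersection degree of $E$ for the fibrewise geometry of the conic bundles — is where the real work lies.
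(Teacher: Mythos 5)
There is a genuine gap, and it starts before the first line of the argument: you have misidentified the variety. The threefold with $\gimel(X)=3.4$ is \emph{not} the blow up of the divisor $W\subset\P^2\times\P^2$ of bidegree $(1,1)$ (i.e.\ $\gimel(W)=2.32$) along an elliptic curve cut out by two members of $|H|$ -- that construction gives the threefold with $\gimel=3.7$ (note also the degrees: your construction has $-K_X^3=24$, whereas $\gimel(X)=3.4$ has $-K_X^3=18$). The threefold $3.4$ is the blow up $\alpha\colon X\to V$ of the double cover $V$ of $\P^1\times\P^2$ branched in a divisor of bidegree $(2,2)$ (so $\gimel(V)=2.18$) along a \emph{smooth fiber of the conic bundle} $V\to\P^2$, which is a rational curve, not an elliptic one. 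This is not just bookkeeping: the statement for $3.7$ is only proved in the paper for a \emph{general} member of the family (a generality condition on the singular fibers of the degree-$6$ del Pezzo fibration is imposed), whereas Lemma~\ref{lemma:3-4} is asserted for every $X$ in its family, so transplanting the $3.7$-type argument cannot give the stated result even if it were carried out.

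For the record, the paper's proof for $3.4$ runs along different lines from what you sketch: writing $-K_X\sim H+2S+E$ with $H$ a general fiber of the induced degree-$6$ del Pezzo fibration $\eta_1$ and $S$ a general fiber of the degree-$4$ del Pezzo fibration $\phi$, one gets $\lct(X)\le 1/2$; then, assuming $(X,\lambda D)$ is not log canonical for some $\lambda<1/2$, the equality $\lct(V)=1/2$ (Lemma~\ref{lemma:2-18}) confines $\LCS(X,\lambda D)$ to the exceptional divisor $E$, Theorem~\ref{theorem:Hwang} applied to the conic bundle $\eta_2$ forces a whole conic fiber $\Gamma\subseteq\LCS(X,\lambda D)$, and restricting $D$ to a general fiber $H$ of $\eta_1$ (which meets $\Gamma$ and satisfies $D|_H\qlineq -K_H$ with $\lct(H)=1/2$) gives the contradiction. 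Finally, even inside your own (incorrect) setup the argument is not closed: you acknowledge that the coefficient $\lambda k(1-\mu)$ with $k=6$ can exceed $1$ and that the promised localization of $\LCS(X,\lambda D)$ via the two conic bundle structures is ``where the real work lies''; that work is precisely what is missing.
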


\begin{proof}
Let $O$ be a point in $\mathbb{P}^{2}$. Then there is a
commutative diagram
$$
\xymatrix{
&&X\ar@{->}[d]_{\alpha}\ar@/_1pc/@{->}[lldd]_{\eta_{1}}\ar@/_1pc/@{->}[rrd]^{\eta_{2}}\ar@/^1pc/@{->}[rrrrd]^{\phi}&&\\
&&V\ar@{->}[d]_{\omega}\ar@{->}[lld]_{\gamma_{1}}\ar@{->}[rrd]^{\gamma_{2}}&&\mathbb{F}_{1}\ar@{->}[d]^{\beta}\ar@{->}[rr]_{\upsilon}&&\mathbb{P}^1\\
\mathbb{P}^{1}&&\mathbb{P}^{1}\times\mathbb{P}^{2}\ar@{->}[ll]^{\pi_{1}}\ar@{->}[rr]_{\pi_{2}}&&\mathbb{P}^{2}&&}
$$
such that $\pi_{i}$ and $\upsilon$ are natural projections,
$\omega$ is a double cover branched over a divisor
$B\subset\P^1\times\P^2$
of bi-degree $(2, 2)$, the morphism $\gamma_{1}$ is a fibration
into quadrics, $\gamma_{2}$ and $\eta_{2}$ are conic~bundles,
the~morphism $\beta$ is a blow up of the point $O$, the morphism
$\alpha$ is a blow up of a smooth curve that is a fiber of
$\gamma_{2}$ over the point $O$, the morphism $\eta_{1}$ is a
fibration into del Pezzo surfaces~of~degree~$6$, and $\phi$ is a
fibration into del Pezzo surfaces of degree $4$.

Let $H$ be a general fiber of $\eta_{1}$, and let $S$ be a general
fiber of $\phi$. Then
$$
-K_{X}\sim H+2S+E,
$$
where $E$ is the exceptional divisor of $\alpha$. Thus, we see
that $\mathrm{lct}(X)\leqslant 1/2$.

We suppose that $\mathrm{lct}(X)<1/2$. Then there exists an
effective $\mathbb{Q}$-divisor $D\qlineq -K_{X}$ such that the~log
pair $(X,\lambda D)$ is not log canonical for some positive
rational number $\lambda<1/2$. Then
$$
\varnothing\ne\mathrm{LCS}\Big(X,\ \lambda D\Big)\subseteq E,
$$
because $\alpha(D)\qlineq -K_V$ and $\mathrm{lct}(V)=1/2$ by
Lemma~\ref{lemma:2-18}.

Let $\Gamma$ be a fiber of $\eta_{2}$ such that
$\Gamma\cap\mathrm{LCS}(X,\lambda D)\ne\varnothing$. Then
$$
\Gamma\subseteq\mathrm{LCS}\Big(X,\ \lambda D\Big)\subseteq E,
$$
by Theorem~\ref{theorem:Hwang}. Hence $(H,\lambda D\vert_{H})$ is
not log canonical at the points $H\cap\Gamma$. But
$$
D\Big\vert_{H}\qlineq -K_{X}\Big\vert_{H}\sim -K_{H}
$$
and $\mathrm{lct}(H)=1/2$, because $H$ is a del Pezzo surface of
degree $6$, which is a contradiction.
\end{proof}

\begin{lemma}
\label{lemma:3-5} Suppose that $\gimel(X)=3.5$ and $X$ is general.
Then $\lct(X)=1/2$.
\end{lemma}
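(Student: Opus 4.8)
The plan is to follow the template already used for the neighbouring cases (Lemmas~\ref{lemma:3-2}, \ref{lemma:3-3}, \ref{lemma:3-4}): first exhibit an anticanonical divisor forcing $\lct(X)\le 1/2$, and then rule out $\lct(X)<1/2$ by localizing the non--log--canonical locus onto a fibre of a suitable del Pezzo fibration and invoking the surface computations of Section~\ref{section:del-Pezzo}. So I would begin by recalling from the Mori--Mukai classification the description of a general $X$ with $\gimel(X)=3.5$ together with its morphisms: there is a birational contraction $\alpha\colon X\to V$ onto a previously treated threefold with $\lct(V)=1/2$ (so that Theorem~\ref{theorem:del-Pezzo} or Lemma~\ref{lemma:2-18} applies) contracting a divisor $E$, and there are two extremal fibrations, namely a del Pezzo fibration $\phi\colon X\to\P^1$ whose general fibre $F$ is a del Pezzo surface with $\lct(F)=1/2$, and a conic bundle $\psi$. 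Writing $-K_X$ as a combination of the shape $H_1+2H_2+E$ dictated by the geometry, with a component of coefficient $2$, produces a divisor $D_0\qlineq -K_X$ with $\lct(X,D_0)\le 1/2$, which gives the easy inequality $\lct(X)\le 1/2$.

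For the reverse inequality I would assume $\lct(X)<1/2$ and fix an effective $D\qlineq -K_X$ and $\lambda<1/2$ with $(X,\lambda D)$ not log canonical. First I would exclude surfaces from $\mathbb{LCS}(X,\lambda D)$: if $S$ is such a surface then $D=\eps S+\Omega$ with $\eps\ge 1/\lambda>2$, and intersecting with a general fibre $\Gamma$ of the conic bundle $\psi$ (a curve with $-K_X\cdot\Gamma=2$), together with the ruling of the other fibration, contradicts $\eps>2$. With no surfaces in the locus, I would apply Theorem~\ref{theorem:Hwang} to $\phi$: since $\lct(F)=1/2>\lambda$ on the general fibre, $\mathrm{LCS}(X,\lambda D)$ must lie in a single special fibre $S$ of $\phi$. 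Writing $D=\mu S+\Omega$ with $S\not\subseteq\Supp(\Omega)$, bounding $\mu$ by a suitable intersection computation, and, if necessary, using Remark~\ref{remark:convexity} to reduce the coefficient of $S$ to $1$, the pair $(X,S+\lambda\Omega)$ is not log canonical near $S$; hence by Theorem~\ref{theorem:adjunction} the restriction $(S,\lambda\Omega|_S)$ is not log canonical, while $\Omega|_S\qlineq D|_S\qlineq -K_S$.

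The final step is to contradict this using $\lct(S)\ge 1/2$, and here the generality of $X$ is essential: I would impose that every degenerate fibre of $\phi$ has at worst a single ordinary double point, so that $S$ is a del Pezzo surface of the relevant degree with canonical singularities, whence Examples~\ref{example:del-Pezzos}, \ref{example:singular-cubics}, \ref{example:del-Pezzo-quartic} and \ref{example:del-Pezzo-quintic} yield $\lct(S)\ge 1/2>\lambda$, contradicting the failure of log canonicity of $(S,\lambda\Omega|_S)$. The main obstacle, as in Lemmas~\ref{lemma:3-1}--\ref{lemma:3-4}, will be the bookkeeping around the special fibre: one must check that the required generality (at worst an ODP on each degenerate fibre, and control of $E\cong\P^1\times\P^1$) genuinely holds for general $X$, and one must bound the multiplicity $\mu$ precisely enough that Theorem~\ref{theorem:adjunction} applies with coefficient $\le 1$. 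Should the locus instead collapse to a point lying on $E$, or should $S$ prove more singular than an ODP, I would fall back on peeling off $E$ via Remark~\ref{remark:convexity} and producing a disconnected log canonical locus from the ampleness of $-(K_X+\lambda D+\text{auxiliary})$ through Theorem~\ref{theorem:connectedness}, exactly as in the proofs of Lemmas~\ref{lemma:2-28}, \ref{lemma:2-29} and \ref{lemma:2-30}.
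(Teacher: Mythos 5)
Your proposal follows essentially the same route as the paper: the upper bound comes from $-K_X\sim 2\bar{H}_1+\bar{H}_2+\bar{Q}$ (coefficient $2$ on a fiber of the degree-$4$ del Pezzo fibration $\pi_1\circ\alpha$), surfaces are excluded from $\mathbb{LCS}(X,\lambda D)$ by intersecting with fibers of the conic bundle $\pi_2\circ\alpha$ and restricting to a general fiber $\bar{H}_1$, and the contradiction is obtained by applying Lemma~\ref{lemma:Hwang} to the del Pezzo fibration together with Example~\ref{example:del-Pezzo-quartic}, under the same generality hypothesis that every fiber has at most one ordinary double point. The only slip is in your setup: for $\gimel(X)=3.5$ the contraction $\alpha$ maps onto $\P^1\times\P^2$, whose global log canonical threshold is $1/3$, so the step confining $\mathrm{LCS}(X,\lambda D)$ to the exceptional divisor $E$ is unavailable --- but your argument never actually relies on it, and the fibration argument you give is exactly the paper's.
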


\begin{proof}
There is a birational morphism $\alpha\colon X\to\P^1\times\P^2$
that contracts a  surface $E\subset X$~to~a~curve $C\subset
\P^1\times\P^2$ of bidegree $(5, 2)$. Let
$\pi_{1}\colon\P^1\times\P^2\to\P^1$ and
$\pi_{2}\colon\P^1\times\P^2\to\P^2$ be natural projections. There
is
$$
Q\in\Big|\pi_{2}^{*}\Big(\mathcal{O}_{\mathbb{P}^{1}}\big(2\big)\Big)\Big|
$$
such that $C\subset Q$. Let $H_1$ be a general fiber of $\pi_1$,
let $H_2$ be a surface in
$|\pi_{2}^{*}(\mathcal{O}_{\mathbb{P}^{1}}(1))|$. We have
$$
-K_X\sim 2\bar{H_1}+\bar{H_2}+\bar{Q},
$$
where $\bar{H_1}, \bar{H_2},\bar{Q}\subset X$ are proper
transforms of $H_1, H_2, Q$, respectively. In particular,
$\lct(X)\le 1/2$.

We suppose that $X$ satisfies the following generality condition:
every fiber $F$ of $\pi_1\circ\alpha$ is singular at most at one
ordinary double point.

Assume that $\lct(X)<1/2$. Then there exists an effective
$\Q$-divisor $D\qlineq -K_{X}$ such that the log pair $(X,\lambda
D)$ is not log canonical for some positive rational number
$\lambda<1/2$.

Let $S$ be an irreducible surface on the threefold $X$. Put
$$
D=\mu S+\Omega,
$$
where $\Omega$ is an effective $\mathbb{Q}$-divisor  such that
$S\not\subset\mathrm{Supp}(\Omega)$. Then
$$
\left(\bar{H}_{1},\ \frac{1}{2}\Big(\mu S+\Omega\Big)\Big\vert_{\bar{H}_{1}}\right)%
$$
is log canonical (see Example~\ref{example:del-Pezzos}). Thus,
either $\mu\leqslant 2$, or $S$ is a fiber of
$\pi_{1}\circ\alpha$.

Let $\Gamma\cong\mathbb{P}^{1}$ be a general fiber of the conic
bundle $\pi_{2}\circ\alpha$. Then
$$
2=D\cdot\Gamma=\mu S\cdot\Gamma+\Omega\cdot\Gamma\geqslant\mu S\cdot\Gamma,%
$$
which implies that  $\mu\leqslant 2$ in the case when $S$ is a
fiber of $\pi_{1}\circ\alpha$.

We see that the set $\mathbb{LCS}(X, \lambda D)$ contains no
surfaces. Now, applying Lemma~\ref{lemma:Hwang} to
$\pi_{1}\circ\alpha$, we obtain a contradiction with
Example~\ref{example:del-Pezzo-quartic}.
\end{proof}

\begin{lemma}\label{lemma:3-6}
Suppose that $\gimel(X)=3.6$ and $X$ is general. Then
$\lct(X)=1/2$.
\end{lemma}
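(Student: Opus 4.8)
Recall that a threefold $X$ with $\gimel(X)=3.6$ is the blow-up $\alpha\colon X\to\mathbb{P}^3$ of a disjoint union of a line $L$ and an elliptic curve $C$ of degree $4$ (a complete intersection of two quadrics); write $E_L, E_C$ for the exceptional divisors and $H=\alpha^{*}(\mathcal{O}_{\mathbb{P}^3}(1))$, so that $-K_X\sim 4H-E_L-E_C$. The plan is to exploit two morphisms to $\mathbb{P}^1$: the projection $\beta\colon X\to\mathbb{P}^1$ from $L$, whose general fiber is the proper transform of a plane through $L$, blown up at the four points where it meets $C$, i.e. a del Pezzo surface of degree $5$; and the quadric fibration $\beta'\colon X\to\mathbb{P}^1$ given by $|2H-E_C|$, whose general fiber is a smooth quadric surface $\bar{Q}\cong\mathbb{P}^1\times\mathbb{P}^1$ (as in Lemma~\ref{lemma:2-25}). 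Writing $S_1\sim H-E_L$ for a fiber of $\beta$ and $\bar{Q}\sim 2H-E_C$ for a fiber of $\beta'$, one has
$$
-K_X\sim 2S_1+\bar{Q}+E_L,
$$
so the coefficient $2$ on $S_1$ already gives $\mathrm{lct}(X)\le 1/2$.

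For the reverse inequality I would argue exactly as in Lemma~\ref{lemma:3-3}. Suppose $\mathrm{lct}(X)<1/2$, so there is an effective $D\qlineq -K_X$ with $(X,\lambda D)$ not log canonical at a point $P$ for some $\lambda<1/2$, and impose the generality assumption that every fiber of $\beta$ has at most one ordinary double point. Let $S_1$ be the fiber of $\beta$ through $P$; by Examples~\ref{example:del-Pezzos} and \ref{example:del-Pezzo-quintic} a del Pezzo quintic with at most one node has $\mathrm{lct}=1/2$, hence $\mathrm{lct}(S_1)=1/2>\lambda$. Write $D=\mu S_1+\Omega$ with $S_1\not\subset\mathrm{Supp}(\Omega)$. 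To bound $\mu$, restrict to a general (hence smooth) fiber $\bar{Q}$ of $\beta'$: since $\lambda<1/2=\mathrm{lct}(\mathbb{P}^1\times\mathbb{P}^1)$ the pair $(\bar{Q},\lambda D|_{\bar{Q}})$ is log canonical, while every component of the curve $S_1|_{\bar{Q}}$ occurs in $D|_{\bar{Q}}$ with coefficient at least $\mu$; therefore $\lambda\mu\le 1$, i.e. $\mu\le 1/\lambda$.

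Since $\lambda\mu\le 1$, the pair $(X,S_1+\lambda\Omega)$ is still not log canonical at $P$, so by the adjunction Theorem~\ref{theorem:adjunction} the pair $(S_1,\lambda\Omega|_{S_1})$ is not log canonical at $P$. But $S_1|_{S_1}=0$, whence
$$
\Omega\big|_{S_1}\qlineq D\big|_{S_1}\qlineq -K_{S_1},
$$
and $\lambda<1/2=\mathrm{lct}(S_1)$ forces $(S_1,\lambda\Omega|_{S_1})$ to be log canonical, a contradiction. I expect the main obstacle to be the bookkeeping of the generality hypotheses rather than the divisorial estimates: one must guarantee that the fibers of $\beta$ are at worst nodal, so that the quintic value $\mathrm{lct}=1/2$ of Example~\ref{example:del-Pezzo-quintic} applies at \emph{every} possible $P$ (and not the smaller values attained by more degenerate del Pezzo quintics), and that $\beta'$ has a smooth fiber $\bar{Q}$ meeting $S_1$ transversally, which is what makes the coefficient estimate bounding $\mu$ legitimate. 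These are exactly the places where the generality of $X$ is used.
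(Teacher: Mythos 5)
Your proof is correct and follows essentially the same route as the paper's: the same decomposition $-K_X\sim 2\bar{H}+\bar{Q}+E$ for the upper bound, and for the lower bound the same reduction to the degree-$5$ del Pezzo fibration via adjunction (which the paper packages as Lemma~\ref{lemma:Hwang}) combined with Example~\ref{example:del-Pezzo-quintic} and the at-most-one-node generality condition on the fibers. The only cosmetic difference is that the paper first confines $\mathrm{LCS}(X,\lambda D)$ to an exceptional divisor using $\lct=1/2$ for the blow-up of $\P^3$ along $C$ (Lemma~\ref{lemma:2-25}), whereas you instead bound the multiplicity $\mu$ of the fiber directly by restricting to a general fiber of the quadric fibration -- both devices serve the same purpose of excluding the degenerate alternative in the Hwang-type argument.
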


\begin{proof}
Let $\eps\colon V\to\mathbb{P}^{3}$ be a blow up of a line
$L\subset\mathbb{P}^{3}$. Then
$$
V\cong\mathbb{P}\Big(\mathcal{O}_{\mathbb{P}^{1}}\oplus\mathcal{O}_{\mathbb{P}^{1}}\oplus\mathcal{O}_{\mathbb{P}^{1}}\big(1\big)\Big)
$$
and there is a natural $\mathbb{P}^{2}$-bundle $\eta\colon
V\to\mathbb{P}^{1}$. There is a smooth elliptic curve
$C\subset\mathbb{P}^{3}$ of degree~$4$ such that $L\cap
C=\varnothing$ and there is a commutative diagram
$$
\xymatrix{
Y\ar@{->}[d]_{\eps}&&X\ar@{->}[ll]_{\gamma}\ar@{->}[d]^{\beta}\ar@{->}[rr]^{\phi}&&\P^1\\
\P^3&&V\ar@{->}[ll]^{\delta}\ar@{->}[rru]_{\eta}&&}
$$
where $\delta$ is a blow up of $C$,  the morphism $\beta$ is a
blow up of the proper transform of the line $L$, the~morphism
$\gamma$ is a blow up of the proper transform of the curve $C$,
and $\phi$ is a~del~Pezzo~fibration.

We suppose that $X$ satisfies the following generality condition:
for every fiber $F$ of $\phi$, the~surface $F$ has at most one
singular point that is an ordinary double point of the surface
$F$.

Let $E$ and $G$ be the exceptional surfaces of $\beta$ and
$\gamma$, respectively, let $H\subset\P^3$ be a general plane that
passes through $L$, and let $Q\subset\P^3$ a quadric surface that
passes through $C$. Then
$$
-K_{X}\sim 2\bar{H}+\bar{Q}+E,
$$
where $\bar{H}\subset X\supset\bar{Q}$ are proper transforms of
$H$ and $Q$, respectively. We have $\lct(X)\le 1/2$.

We suppose that $\mathrm{lct}(X)<1/2$. Then there exists an
effective $\mathbb{Q}$-divisor $D\qlineq -K_{X}$ such that the log
pair $(X,\lambda D)$ is not log canonical for some positive
rational number $\lambda<1/2$.

It follows from Lemma~\ref{lemma:2-25} that $\lct(V)=1/2$.
Therefore, we see that
$$
\varnothing\ne\mathrm{LCS}\Big(X,\ \lambda D\Big)\subseteq G.
$$
Note that every fiber of the fibration $\phi$ is a del Pezzo
surface of degree $5$ that has at most one ordinary double point.
Thus, applying Lemma~\ref{lemma:Hwang} to $\phi$, we obtain a
contradiction with Example~\ref{example:del-Pezzo-quintic}.
\end{proof}

\begin{lemma}
\label{lemma:3-7} Suppose that $\gimel(X)=3.7$ and $X$ is general.
Then $\lct(X)=1/2$.
\end{lemma}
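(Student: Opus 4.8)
The plan is to follow the template already used for Lemmas~\ref{lemma:3-5} and~\ref{lemma:3-6}. First I would record the birational/fibration geometry of $X$ for the family $\gimel(X)=3.7$: there should be a birational morphism $\alpha\colon X\to V$ contracting a divisor $E$ onto a curve, where $V$ is a Fano threefold whose global log canonical threshold has already been computed to be $1/2$ (so that $\alpha$ can be used to push divisors down to $V$), together with a del Pezzo fibration $\phi\colon X\to\P^1$ coming from the relevant pencil. The exceptional divisor $E$ then appears in an effective decomposition of $-K_X$ alongside the general member $\bar H$ of the pencil defining $\phi$, of the shape $-K_X\sim 2\bar H+E$; since $|\bar H|\neq\varnothing$, this yields the upper bound $\lct(X)\le 1/2$ at once.

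Next I would argue by contradiction. Suppose $\lct(X)<1/2$, so there is an effective $D\qlineq -K_X$ and a rational $\lambda<1/2$ with $(X,\lambda D)$ not log canonical. Since $\alpha(D)\qlineq -K_V$ and $\lct(V)=1/2$, the pair $(V,\lambda\alpha(D))$ is log canonical, whence $\varnothing\ne\LCS(X,\lambda D)\subseteq E$. I would then apply Lemma~\ref{lemma:Hwang} (or Theorem~\ref{theorem:Hwang}) to $\phi$: for a fibre $F$ meeting $\LCS(X,\lambda D)$, either $\lct_F(X,\lambda D)\ge\lct(F,[D\vert_F])$, or $F\subseteq\LCS(X,\eps D)$ for some $\eps<\lct(F,[D\vert_F])$. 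Writing $D=\mu F+\Omega$ with $F\not\subset\Supp(\Omega)$ and bounding $\mu\le 2$ by intersecting with a fibre of a second (conic-bundle or del Pezzo) fibration, Theorem~\ref{theorem:adjunction} forces $(F,\lambda\Omega\vert_F)$ to fail to be log canonical while $\Omega\vert_F\qlineq -K_F$. This gives $\lct(F)\le\lambda<1/2$. If instead $\LCS(X,\lambda D)$ were zero-dimensional, I would add a general member of $|\bar H|$ and invoke Theorem~\ref{theorem:connectedness} to produce a disconnected locus of log canonical singularities of an anticanonical-type pair, a contradiction — exactly as in Lemmas~\ref{lemma:3-4} and~\ref{lemma:2-18}.

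The crux is therefore to show $\lct(F)\ge 1/2$ for the fibres $F$ that arise. The main obstacle is that for special $X$ a fibre can acquire singularities severe enough that $\lct(F)$ drops below $1/2$ (this is precisely what the generality hypothesis excludes and what would make the statement fail off the general locus). I would impose the mild generality condition that every fibre of $\phi$ has at most one ordinary double point, so that each $F$ is a del Pezzo surface with at worst a node; the required bound $\lct(F)\ge 1/2$ then follows from Examples~\ref{example:del-Pezzos}, \ref{example:singular-cubics}, \ref{example:del-Pezzo-quintic} and~\ref{example:del-Pezzo-quartic} according to the degree $K_F^2=H^3$ of the fibre. Should the relevant fibres be of a degree where the nodal case has threshold below $1/2$, I would instead cut $\LCS(X,\lambda D)$ down further by applying the Hwang-type localization to the second fibration, reducing to a curve in the intersection of two special fibres and finishing with Lemma~\ref{lemma:elliptic-times-P1} or Lemma~\ref{lemma:quadric-cone-far-from-vertex}; in either route the contradiction closes the proof, and combined with $\lct(X)\le 1/2$ gives $\lct(X)=1/2$.
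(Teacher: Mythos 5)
Your overall scaffolding (upper bound from $-K_X\sim 2\bar H_1+E$, localization $\LCS(X,\lambda D)\subseteq E$ via $\lct(W)=1/2$, then Lemma~\ref{lemma:Hwang} applied to the pencil fibration) matches the paper, but your main closing step has a genuine gap. For $\gimel(X)=3.7$ the threefold is the blow up of the flag variety $W\subset\P^2\times\P^2$ along an elliptic curve $H_1\cap H_2$, and the induced fibration $\omega$ is into del Pezzo surfaces of degree~$6$. Your proposed generality condition (``every fibre has at most one ordinary double point'') does \emph{not} give $\lct(F)\geqslant 1/2$ here: by Remark~\ref{remark:del-Pezzo-sextic} a sextic del Pezzo surface with one ordinary double point has global log canonical threshold $1/3<1/2$, so the list of Examples you cite cannot close the argument, and indeed $\omega$ does have singular fibres. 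This is exactly the degenerate case you flag as a possible obstacle, so your primary route fails for this family.

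Your fallback is also not carried out correctly. The paper's actual resolution is Lemma~\ref{lemma:singular-del-Pezzo-sextic}: for the relevant singular sextic del Pezzo surfaces the locus $\LCS(F,\lambda\Omega\vert_F)$ is concentrated at the singular point of $F$; combined with Theorem~\ref{theorem:adjunction} and the observation that $F$ is \emph{smooth} along the curve $E\cap F$ (where the non-log-canonical point is forced to lie), this yields the contradiction, and the generality hypothesis is precisely that every singular fibre of $\omega$ satisfies the hypotheses of that lemma. Your alternative of ``reducing to a curve in the intersection of two special fibres and finishing with Lemma~\ref{lemma:elliptic-times-P1} or Lemma~\ref{lemma:quadric-cone-far-from-vertex}'' invokes tools that do not apply to a nodal sextic del Pezzo fibre (those lemmas concern products of curves and $\P(1,1,2)$, respectively). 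There is a second viable route, recorded in Remark~\ref{remark:3-7}, which localizes $\LCS(X,\lambda D)$ inside $E\cap F\cap\bar M_1\cap\bar M_2$ using the two $\P^1$-bundle structures and concludes that this intersection is empty for general $X$; if you intend the fallback in that spirit you must actually identify the surfaces $\bar M_1,\bar M_2$ and verify the emptiness, rather than appeal to the cited lemmas.
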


\begin{proof}
Let $W$ be a divisor on $\P^2\times\P^2$ of bi-degree $(1, 1)$.
Then $-K_{W}\sim 2H$, where $H$ is a Cartier divisor on $W$. There
is a commutative diagram
$$
\xymatrix{
&X\ar@{->}[ld]_\beta\ar@{->}[dd]_\alpha\ar@{->}[rd]^\gamma\ar@{->}`[rr]`[ddd]^{\omega}[ddd]&&\\
\P^1\times\P^2\ar@{->}[d]_{\phi}&&\P^1\times\P^2\ar@{->}[d]^{\psi}&\\
\P^2&W\ar@{-->}[d]_{\rho}\ar@{->}[l]_{\xi}\ar@{->}[r]^{\zeta}&\P^2&\\
&\mathbb{P}^{1}&& }
$$
where $\phi$ and $\psi$ are natural projections, $\alpha$ is a
blow up of a smooth curve $C\subset W$ such that
$$
C=H_{1}\cap H_{2},
$$
where $H_{1}\ne H_{2}$ are surfaces in $|H|$, the map $\rho$ is
induced by the pencil generated by $H_{1}$~and~$H_{2}$, the
morphism $\omega$ is a del Pezzo fibration of degree $6$,  the
morphisms $\zeta$ and $\xi$ are $\P^1$-bundles, while~$\beta$~and
$\gamma$ contract surfaces $\bar{M}_1\subset X\supset\bar{M}_{2}$
such that
$\phi\circ\beta(\bar{M}_{1})=\xi(C)$~and~$\psi\circ\gamma(\bar{M}_{2})=
\zeta(C)$.

Note that $\lct(X)\le 1/2$, because
$$
-K_{X}\sim 2\bar{H}_{1}+E,
$$
where $\bar{H}_{1}\subset X$ is the proper transform of $H_{1}$,
and $E$ is the exceptional surface of $\alpha$.

We suppose that $X$ satisfies the following generality condition:
all singular fibers of the~fibration $\omega$ satisfy the
hypotheses of Lemma~\ref{lemma:singular-del-Pezzo-sextic}.

Assume that $\mathrm{lct}(X)<1/2$. Then there exists an effective
$\mathbb{Q}$-divisor $D\qlineq -K_{X}$ such that the log pair
$(X,\lambda D)$ is not log canonical for some positive rational
number $\lambda<1/2$. Then
$$
\varnothing\ne\mathrm{LCS}\Big(X,\ \lambda D\Big)\subseteq E,
$$
because $\lct(W)=1/2$ by Theorem~\ref{theorem:del-Pezzo}. Applying
Lemma~\ref{lemma:Hwang}, we see that
$$
\varnothing\ne\mathrm{LCS}\Big(X,\ \lambda D\Big)\subseteq E\cap
F,
$$
where $F$ is a singular fiber of $\omega$. Note that $F$ is a del
Pezzo surface of degree $6$. Put
$$
D=\mu F+\Omega,
$$
where $\Omega$ is an effective $\mathbb{Q}$-divisor  such that
$F\not\subset\mathrm{Supp}(\Omega)$. Then
$$
\Omega\Big\vert_{F}\qlineq D\Big\vert_{F}\qlineq -K_{F},
$$
and the surface $F$ is smooth along the curve $E\cap F$. But the
log pair $(F,\lambda\Omega\vert_{F})$ is~not~log~canonical at some
point $P\in E\cap F$ by Theorem~\ref{theorem:adjunction}, which is
impossible by Lemma~\ref{lemma:singular-del-Pezzo-sextic}.
\end{proof}

\begin{remark}
\label{remark:3-7} Let us use the notation and assumptions of the
proof of Lemma~\ref{lemma:3-7}. Then
$$
\varnothing\ne\mathrm{LCS}\Big(X,\ \lambda D\Big)\subseteq E\cap F,%
$$
where $F$ is a singular fiber of the fibration $\omega$. Applying
Theorem~\ref{theorem:Hwang} to $\phi$ and $\psi$, we see that
$$
\varnothing\ne\mathrm{LCS}\Big(X,\ \lambda D\Big)\subseteq E\cap F\cap \bar{M_1}\cap\bar{M_2},%
$$
by Lemma~\ref{lemma:P1xP2}. Regardless to how singular $F$ is, if
the threefold $X$ is sufficiently general, then
$$
E\cap F\cap \bar{M_1}\cap\bar{M_2}=\varnothing,
$$
which implies that an alternative generality condition can be used
in Lemma~\ref{lemma:3-7}.
\end{remark}

\begin{lemma}
\label{lemma:3-8} Suppose that $\gimel(X)=3.8$ and $X$ is general.
Then $\lct(X)=1/2$.
\end{lemma}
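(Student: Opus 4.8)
The plan is to mimic the strategy that works for all the neighbouring families in this section (Lemmas~\ref{lemma:3-5}, \ref{lemma:3-6} and~\ref{lemma:3-7}): realize a general $X$ with $\gimel(X)=3.8$ through a commutative diagram in which one of the contractions is a birational morphism $\alpha\colon X\to V$ onto a Fano threefold $V$ with $\lct(V)=1/2$ already known, and another is a del Pezzo fibration $\phi\colon X\to\P^1$. First I would record the geometry of such a diagram and use the generators of $\Pic(X)$ to write $-K_X$ as an effective combination of prime divisors in which one component appears with coefficient $2$ and the others with coefficient $1$; the corresponding log pair is log canonical but not log terminal at the relevant intersection point, which gives the upper bound $\lct(X)\le 1/2$. (As in the earlier lemmas, this bound is in any case immediate because the primitive divisor class involved has nonempty linear system.)

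Next I would argue by contradiction, assuming $\lct(X)<1/2$, so that there is an effective $\Q$-divisor $D\qlineq -K_X$ with $(X,\lambda D)$ not log canonical for some rational $\lambda<1/2$. Pushing $D$ forward through $\alpha$ and using $\alpha(D)\qlineq -K_V$ together with $\lct(V)=1/2$ (supplied by Theorem~\ref{theorem:del-Pezzo} or one of Lemmas~\ref{lemma:2-25}, \ref{lemma:2-26} for the appropriate $V$), I would confine the non log canonical locus to the exceptional surface: $\varnothing\ne\LCS(X,\lambda D)\subseteq E$. Then, under the standing generality hypothesis that every fiber of $\phi$ has at most one ordinary double point, I would apply Lemma~\ref{lemma:Hwang} to the fibration $\phi$ to pin $\LCS(X,\lambda D)$ down to (a subset of) a single fiber $F$. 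Writing $D=\mu F+\Omega$ with $F\not\subset\Supp(\Omega)$, an intersection of $D$ with a fiber of the auxiliary conic bundle bounds $\mu\le 2$, so Theorem~\ref{theorem:adjunction} shows that $(F,\lambda\Omega|_F)$ is not log canonical, while $\Omega|_F\qlineq -K_F$. This contradicts the fact that a del Pezzo surface of the relevant degree with at most an ordinary double point has global log canonical threshold at least $1/2$ (Examples~\ref{example:del-Pezzos}, \ref{example:singular-cubics}, \ref{example:del-Pezzo-quintic} and~\ref{example:del-Pezzo-quartic}).

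The hard part will be the bookkeeping in the situation where $\LCS(X,\lambda D)$ lies inside $E$ and simultaneously meets a \emph{singular} fiber $F$ of $\phi$. There I must control the multiplicity of $D$ along every vertical surface (exactly as in Lemmas~\ref{lemma:3-5} and~\ref{lemma:2-26}), verify that the intersection curve $E\cap F$ avoids the singular point of $F$ so that the surface del Pezzo thresholds apply without loss, and check that the numerical estimate forcing $\mu\le 2$ closes up. These are precisely the points at which the generality assumption on the singular fibers of $\phi$ is consumed, and getting all the inequalities to align is the only real obstacle; the rest is a routine transcription of the template above.
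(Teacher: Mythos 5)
Your core argument coincides with the paper's: the decisive step in both is to apply Lemma~\ref{lemma:Hwang} to the fibration $\phi\colon X\to\P^1$ into del Pezzo surfaces of degree $5$ and derive a contradiction with Example~\ref{example:del-Pezzo-quintic}, under the generality assumption that every fiber of $\phi$ has at most one ordinary double point; the upper bound $\lct(X)\leqslant 1/2$ obtained by writing $-K_X\sim E+2L+H$ with one component of coefficient $2$ is also exactly the paper's. Where you diverge is the preliminary reduction: the paper does \emph{not} first contract $X$ onto a Fano threefold $V$ with $\lct(V)=1/2$ in order to confine $\LCS(X,\lambda D)$ to an exceptional surface --- it applies Lemma~\ref{lemma:Hwang} to $\phi$ directly, so the ``hard part'' you anticipate (locating $E\cap F$ relative to $\Sing(F)$) never arises. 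If you insist on that detour, be aware that for $\gimel(X)=3.8$ the relevant birational contraction is $(\alpha\circ\pi_1,\pi_2)\colon X\to V\subset\P^2\times\P^2$ onto a divisor of bidegree $(1,2)$, so the input you need is Lemma~\ref{lemma:2-24} (which requires $V$ general), not Theorem~\ref{theorem:del-Pezzo} or Lemmas~\ref{lemma:2-25}, \ref{lemma:2-26} as you cite. Your explicit verification that no fiber of $\phi$ occurs in $D$ with multiplicity exceeding $2$ (by intersecting with a fiber of the conic bundle $\pi_2$) is a point the paper leaves implicit and is worth retaining.
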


\begin{proof}
Let $\pi_{1}\colon\F_1\times\P^2\to \F_1$ and
$\pi_{2}\colon\F_1\times\P^2\to\P^2$ be natural projections. Then
$$
X\in\Big|\big(\alpha\circ\pi_1\big)^*\Big(\O_{\P^2}\big(1\big)\Big)\otimes \pi_2^*\Big(\O_{\P^2}\big(2\big)\Big)\Big|,%
$$
where $\alpha\colon\F_1\to\P^2$ is a blow up of a point. Let $H$
be a surface in $|\pi_2^*(\O_{\P^2}(1))|$. Then
$$
-K_X\sim E+2L+H,
$$
where $E\subset X\supset L$ are irreducible surfaces such that
$\pi_{1}(E)\subset\mathbb{F}_{1}$ is the exceptional curve of
$\alpha$, and $\pi_{1}(L)\subset\mathbb{F}_{1}$ is a fiber of the
natural projection $\mathbb{F}_{1}\to\mathbb{P}^{2}$. We have
$\lct(X)\le 1/2$.

The projection $\pi_{1}$ induces a fibration $\phi\colon X\to\P^1$
into del Pezzo surfaces~of~degree~$5$.

We suppose that $X$ satisfies the following generality condition:
for every fiber $F$ of $\phi$, the~surface $F$ has at most one
singular point that is an ordinary double point of the surface
$F$.

Assume that $\mathrm{lct}(X)<1/2$. Then there exists an effective
$\mathbb{Q}$-divisor $D\qlineq -K_{X}$ such that the log pair
$(X,\lambda D)$ is not log canonical for some positive rational
number $\lambda<1/2$.

Applying Lemma~\ref{lemma:Hwang} to the morphism $\phi$, we obtain
a contradiction with Example~\ref{example:del-Pezzo-quintic}.
\end{proof}

\begin{lemma}
\label{lemma:3-9} Suppose that $\gimel(X)=3.9$. Then
$\lct(X)=1/3$.
\end{lemma}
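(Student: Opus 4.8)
The plan is to reproduce, for the family $3.9$, the localisation-and-connectedness scheme that settled Lemmas~\ref{lemma:2-29},~\ref{lemma:2-30} and~\ref{lemma:2-31}. First I would fix the standard birational model of $X$: a morphism $\alpha\colon X\to Y$ contracting an exceptional divisor $E$ onto the blown-up centre, where $Y$ is a Fano threefold whose threshold $\lct(Y)=1/3$ is already known (a quadric threefold as in Example~\ref{example:hypersurface-index-big}, or one of the rank-two Fano threefolds of Lemmas~\ref{lemma:2-29},~\ref{lemma:2-31}). Reading the anticanonical class off this model produces an effective divisor of the shape $-K_X\sim 3S+2E$ with $S$ a prime divisor, so that $\bigl(X,\tfrac{1}{3}(3S+2E)\bigr)$ is log canonical but not log terminal; hence $\lct(X)\le 1/3$, and the content of the lemma is the reverse inequality.

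For the lower bound, suppose to the contrary that there is an effective $D\qlineq -K_X$ and a rational $\lambda<1/3$ with $(X,\lambda D)$ not log canonical. Since $\alpha(D)\qlineq -K_Y$ and $\lambda<1/3=\lct(Y)$, the pair $(Y,\lambda\alpha(D))$ is log canonical, so the non-log-canonical locus cannot lie off the centre of $\alpha$: one gets $\varnothing\ne\mathrm{LCS}(X,\lambda D)\subseteq E$. Next I would exclude surfaces from $\mathbb{LCS}(X,\lambda D)$. Writing $D=\mu E+\Omega$ with $E\not\subset\mathrm{Supp}(\Omega)$ and intersecting $D$ with a general fibre $\Gamma$ of the natural ruling of $E$ bounds $\mu$ by a constant strictly below $1/\lambda>3$, which is incompatible with $\mu\ge 1/\lambda$ for a surface component; so $\mathbb{LCS}(X,\lambda D)$ is a union of curves and points lying in $E$.

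The decisive step, and the main obstacle, is to shrink $\mathrm{LCS}(X,\lambda D)$ to a single point and then contradict Theorem~\ref{theorem:connectedness}. Using the fibration structure carried by $X$ (a $\mathbb{P}^1$- or quadric-fibration whose restriction to $E$ is the projection to the centre), I would apply Theorem~\ref{theorem:adjunction} to a suitable fibre $F$: from $\Omega|_F\qlineq -\tfrac{1+\mu}{2}K_F$, or $\Omega|_F\qlineq (1+\mu)Z+kL$ in the ruled case, Lemma~\ref{lemma:elliptic-times-P1} (or Lemma~\ref{lemma:F1}) forces the locus to be at most one point $O\in E$; throughout, Remark~\ref{remark:convexity} allows the normalisation $E\not\subset\mathrm{Supp}(D)$, and intersecting with rulings of $E$ and with fibres over the centre removes any residual curve. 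Finally, adding a well-chosen auxiliary anticanonical boundary — a general $R\in|\alpha^{*}(H)|$ together with a member of $|S|$, exactly as in Lemma~\ref{lemma:2-29} — renders $\mathrm{LCS}$ disconnected while $-(K_X+\cdots)$ remains ample, contradicting Theorem~\ref{theorem:connectedness}. The delicate points will be the exact numerical bookkeeping (checking that $\lambda<1/3$ genuinely forces each strict inequality) and confirming that the chosen fibre meets $\mathrm{LCS}(X,\lambda D)$ so that the adjunction restriction retains a non-log-canonical point.
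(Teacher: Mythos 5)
There is a genuine gap, and it is at the very first step of your localisation scheme. For the family $3.9$ the threefold $X$ is the blow up of the cone $W_{4}\subset\mathbb{P}^{6}$ over the Veronese surface, i.e.\ of $V\cong\mathbb{P}(1,1,1,2)$, along the disjoint union of the vertex and a quartic curve; the only birational contractions available are $X\to\mathbb{P}(1,1,1,2)$ and the intermediate $X\to U_{i}\cong\mathbb{P}(\mathcal{O}_{\mathbb{P}^{2}}\oplus\mathcal{O}_{\mathbb{P}^{2}}(2))$ (family $2.36$). By Example~\ref{example:WPS} and Corollary~\ref{corollary:2-33-2-35-2-36} these targets have global log canonical threshold $1/5$, not $1/3$, so the inference ``$\alpha(D)\qlineq -K_{Y}$ and $\lambda<1/3=\lct(Y)$, hence $\mathrm{LCS}(X,\lambda D)\subseteq E$'' is simply unavailable: there is no contraction of $X$ onto a Fano threefold with threshold at least $1/3$, and the non-log-canonical locus cannot be confined to the exceptional divisor over the vertex in this way. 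The same difficulty infects your later steps, since they all presuppose that the locus already sits inside $E$ and that the relevant fibrations restrict to $E$ as its ruling.

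The paper's actual argument is forced to be quite different. It exploits the fact that $X$ admits \emph{two} such structures simultaneously (two vertices $O_{1},O_{2}$ of two isomorphic copies of $\mathbb{P}(1,1,1,2)$, with exceptional divisors $E_{1},E_{2}\cong\mathbb{P}^{2}$ and divisors $G_{1},G_{2}$ over the quartic curves): writing $D=\mu_{1}E_{1}+\mu_{2}E_{2}+\Omega$ and intersecting with a fibre of the conic bundle $\gamma_{1}\circ\beta_{1}\colon X\to\mathbb{P}^{2}$ gives $\mu_{1}+\mu_{2}\leqslant 2$, hence $\mu_{1}\leqslant 1$ after reordering; surfaces in $\mathbb{LCS}(X,\lambda D)$ are excluded by restricting to $E_{1}\cong\mathbb{P}^{2}$ and using Theorem~\ref{theorem:adjunction} with $\lambda<1/3=\lct(\mathbb{P}^{2})$; Theorem~\ref{theorem:Hwang} applied to the $\mathbb{P}^{1}$-bundle $\gamma_{1}$ together with Lemma~\ref{lemma:plane} then pushes the locus into $G_{1}$ (the divisor over the quartic curve, not over the vertex); and the proof concludes with a two-case analysis according to whether $\mathrm{LCS}(X,\lambda D)\subseteq G_{1}\cap G_{2}$, the first case killed by Theorem~\ref{theorem:connectedness} with an auxiliary boundary, the second by restricting to a general member of $|\alpha_{2}^{*}(\mathcal{O}_{\mathbb{P}(1,1,1,2)}(2))|\cong\mathbb{P}^{2}$ and invoking Lemma~\ref{lemma:plane} again. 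None of this is recoverable from the template of Lemmas~\ref{lemma:2-29}--\ref{lemma:2-31}, so your proposal as written does not close.
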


\begin{proof}
Let $O_{1}\in V_{1}\cong V_{2}\ni O_{2}$ be singular points of
$V_{1}\cong V_{2}\cong\mathbb{P}(1,1,1,2)$, respectively, let
$$
O_{1}\not\in
S_{1}\in\Big|\mathcal{O}_{\mathbb{P}(1,1,1,2)}(2)\Big|
$$
be a smooth surface, and let $C_{1}\subset
S_{1}\cong\mathbb{P}^{2}$ be a smooth quartic curve.
Then~the~diagram
$$
\xymatrix{
&&&&X\ar@{->}[dll]_{\beta_{1}}\ar@{->}[drr]^{\beta_{2}}&&&&\\
&&U_{1}\ar@{->}[d]^{\alpha_{1}}\ar@/^1pc/@{->}[ddrr]^{\gamma_{1}}&&&&U_{2}\ar@{->}[d]_{\alpha_{2}}\ar@/_1pc/@{->}[ddll]_{\gamma_{2}}&&\\
&&V_{1}\ar@{-->}[drr]_{\psi_{1}}&&&&V_{2}\ar@{-->}[dll]^{\psi_{2}}&&\\
&&&&\mathbb{P}^{2}&&&&}
$$
commutes, where $\psi_{i}$ is a natural projection, $\alpha_{i}$
is a blow up of the point $O_{i}$ with weights $(1,1,1)$, the
morphism $\gamma_{i}$~is~a~$\mathbb{P}^{1}$-bundle, and
$\beta_{i}$ is a birational morphism that contracts a surface
$$
\mathbb{P}^{1}\times C_{1}\cong G_{i}\subset X
$$
to a smooth curve $C_{1}\cong C_{i}\subset U_{i}$.

Let $E_{i}\subset X$ be the proper transform of the exceptional
divisor of $\alpha_{i}$. Then
$$
S_{1}=\alpha_{1}\circ\beta_{1}\big(E_{2}\big)\subset V_{1}\cong\mathbb{P}\big(1,1,1,2\big)\cong V_{2}\supset\alpha_{2}\circ\beta_{2}\big(E_{1}\big)%
$$
are surfaces in $|\mathcal{O}_{\mathbb{P}(1,1,1,2)}(2)|$ that
contain the curves $C_{1}$ and $C_{2}$, respectively. On the other
hand,
$$
\alpha_{1}\circ\beta_{1}\big(G_{2}\big)\subset V_{1}\cong\mathbb{P}\big(1,1,1,2\big)\cong V_{2}\supset\alpha_{2}\circ\beta_{2}\big(G_{1}\big)%
$$
are surfaces in $|\mathcal{O}_{\mathbb{P}(1,1,1,2)}(4)|$ that
contain $O_{1}\cup C_{1}$ and $O_{2}\cup C_{2}$, respectively.

Let $\bar{H}\subset X$ be the proper transform of a general
surface in $|\mathcal{O}_{\mathbb{P}(1,1,1,2)}(1)|$.~Then
$$
-K_{X}\sim 3\bar{H}+E_{2}+E_{1},
$$
which gives $\mathrm{lct}(X)\leqslant 1/3$.

Suppose that $\mathrm{lct}(X)<1/3$. Then there is an effective
$\mathbb{Q}$-divisor
$$
D\qlineq -K_{X}\qlineq
\frac{5}{2}\Big(G_{1}+G_{2}\Big)-5\Big(E_{1}+E_{2}\Big)
$$
such that the log pair $(X,\lambda D)$ is not log canonical for
some $\lambda<1/3$. Put
$$
D=\mu_{1}E_{1}+\mu_{2}E_{2}+\Omega,
$$
where $\Omega$ is an effective $\mathbb{Q}$-divisor on $X$ such
that
$$
E_{1}\not\subseteq\mathrm{Supp}\big(\Omega\big)\not\supseteq
E_{2}.
$$

Let $\Gamma$ be a general fiber of the conic bundle
$\gamma_{1}\circ\beta_{1}$. Then
$$
2=\Gamma\cdot D=\Gamma\cdot\Big(\mu_{1}E_{1}+\mu_{2}E_{2}+\Omega\Big)=\mu_{1}+\mu_{2}+\Gamma\cdot\Omega\geqslant\mu_{1}+\mu_{2},%
$$
and without loss of generality we may assume that
$\mu_{1}\leqslant\mu_{2}$. Then $\mu_{1}\leqslant 1$.

Suppose that there is a surface $S\in\mathbb{LCS}(X,\lambda D)$.
Then $S\ne E_{1}$ and $S\ne G_{1}$, because
$\alpha_{2}\circ\beta_{2}(G_{1})\in|\mathcal{O}_{\mathbb{P}(1,1,1,2)}(4)|$
and
$\alpha_{2}\circ\beta_{2}(D)\in|\mathcal{O}_{\mathbb{P}(1,1,1,2)}(5)|$.
Hence $S\cap E_{1}\ne\varnothing$. But
$$
-\frac{1}{3}K_{E_{1}}\qlineq D\Big\vert_{E_{1}}=-\frac{2\mu_{1}}{3}K_{E_{1}}+\Omega\Big\vert_{E_{1}}%
$$
and $E_{1}\cong\mathbb{P}^{2}$, which is impossible by
Theorem~\ref{theorem:adjunction}, because
$\lambda<1/3=\mathrm{lct}(\P^2)$.

We see that the set $\mathbb{LCS}(X,\lambda D)$ contains no
surfaces. Let $P\in\mathrm{LCS}(X,\lambda D)$ be a point. Suppose
that $P\not\in G_{1}$. Let $Z$ be a fiber of $\gamma_{1}$ such
that $\beta_{1}(P)\in Z$. Then
$$
Z\subseteq\mathrm{LCS}\Big(U_{1},\
\lambda\beta_{1}\big(D\big)\Big)
$$
by Theorem~\ref{theorem:Hwang}. Put
$\bar{E}_{1}=\beta_{1}(E_{1})$. Then we have
$$
Z\cap \bar{E}_{1}\in\mathrm{LCS}\Big(\bar{E}_{1},\ \lambda\Omega\Big\vert_{\bar{E}_{1}}\Big)%
$$
by Theorem~\ref{theorem:adjunction}, which is impossible by
Lemma~\ref{lemma:plane}, because $\mu_{1}\leqslant 1$. Hence
$\mathrm{LCS}(X, \lambda D)\subsetneq G_{1}$.

Suppose that $\mathrm{LCS}(X, \lambda D)\subseteq G_{1}\cap
G_{2}$. Then
$$
\Big|\mathrm{LCS}\Big(X,\ \lambda D\Big)\Big|=1
$$
by Lemma~\ref{lemma:rational-tree} and
Theorem~\ref{theorem:connectedness}. One has
$$
\mathrm{LCS}\Big(X,\ \lambda D\Big)\cup\bar{H}\subseteq \mathrm{LCS}\left(X,\ \lambda D+\frac{1}{3}\Big(E_{2}+E_{2}\Big)+\bar{H}\right)\subset\mathrm{LCS}\Big(X,\ \lambda D\Big)\cup\bar{H}\cup E_{1}\cup E_{1}, %
$$
which contradicts Theorem~\ref{theorem:connectedness}, because
$\bar{H}$ is a general surface in
$|(\beta_{1}\circ\gamma_{1})^{*}(\mathcal{O}_{\mathbb{P}^{2}}(1))|$
and
$$
\lambda D+\frac{1}{3}\Big(E_{2}+E_{2}\Big)+\bar{H}\qlineq \big(\lambda-1/3\big)K_{X}.%
$$

Thus, we see that $G_{1}\supsetneq\mathbb{LCS}(X,\lambda
D)\not\subseteq G_{1}\cap G_{2}$. Then
$$
\varnothing\ne\mathrm{LCS}\Big(U_{2},\ \lambda\beta_{2}\big(D\big)\Big)\subsetneq\beta_{2}\big(G_{1}\big),%
$$
and it follows from Theorems~\ref{theorem:connectedness}
and~\ref{theorem:Hwang} that there is a fibre $L$ of the fibration
$\gamma_{2}$ such that
$$
\mathrm{LCS}\Big(U_{2},\ \lambda\beta_{2}\big(D\big)\Big)=L.
$$

Let $B$ be a general surface in
$|\alpha_{2}^{*}(\mathcal{O}_{\mathbb{P}(1,1,1,2)}(2))|$. Then
$\beta_{2}(D)\vert_{B}\qlineq\mathcal{O}_{\mathbb{P}^{2}}(5)$ and
$B\cong\mathbb{P}^{2}$.~But
$$
\mathrm{LCS}\Big(B,\
\lambda\beta_{2}\big(D\big)\Big\vert_{B}\Big)=L\cap B
$$
and $|L\cap B|=1$, which is impossible by Lemma~\ref{lemma:plane},
because $\lambda<1/3$.
\end{proof}

\begin{lemma}
\label{lemma:3-10} Suppose that $\gimel(X)=3.10$. Then
$\mathrm{lct}(X)=1/2$.
\end{lemma}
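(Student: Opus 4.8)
The plan is to use the contraction $\alpha\colon X\to Q$ onto a smooth quadric threefold $Q\subset\mathbb{P}^4$ that blows up a disjoint union of two smooth conics $C_1,C_2\subset Q$, together with the two del Pezzo fibrations it induces. Writing $E_1,E_2$ for the exceptional divisors and $H$ for the hyperplane class of $Q$, one has $-K_X\sim\alpha^{*}(3H)-E_1-E_2$. The projection of $Q$ from the plane $\langle C_i\rangle\cong\mathbb{P}^2$ spanned by $C_i$ resolves on $X$ to a morphism $\beta_i\colon X\to\mathbb{P}^1$ given by the base point free system $|\alpha^{*}(H)-E_i|$, whose general fibre $F_i$ is a smooth quadric surface section of $Q$ through $\langle C_i\rangle$ blown up at the two points in which $C_{3-i}$ meets it, hence a smooth del Pezzo surface of degree $6$. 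For the upper bound I would record the anticanonical decomposition $-K_X\sim 2F_1+F_2+E_1$, which one checks at once from $F_i\sim\alpha^{*}(H)-E_i$; since the prime divisor $F_1$ enters with coefficient $2$, this gives $\lct(X)\leqslant\lct(X,\,2F_1+F_2+E_1)\leqslant 1/2$.

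For the lower bound, suppose $\lct(X)<1/2$, so there is an effective $\mathbb{Q}$-divisor $D\qlineq -K_X$ and a rational $\lambda<1/2$ with $(X,\lambda D)$ not log canonical. First I would rule out surfaces in $\mathbb{LCS}(X,\lambda D)$. A general fibre $F_1$ of $\beta_1$ is a smooth del Pezzo sextic with $D\big\vert_{F_1}\qlineq -K_{F_1}$, and $\lct(F_1)=1/2>\lambda$ by Example~\ref{example:del-Pezzos}, so $(F_1,\lambda D\vert_{F_1})$ is log canonical; the same holds for $\beta_2$. Thus a surface $S\subseteq\mathbb{LCS}(X,\lambda D)$ cannot meet a general fibre of $\beta_1$ in a curve, so $S$ is itself a fibre of $\beta_1$; but then $S$ meets the general fibre of $\beta_2$ in a curve, which is again excluded. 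Hence $\mathbb{LCS}(X,\lambda D)$ contains no surface. Applying Theorem~\ref{theorem:Hwang} (via Lemma~\ref{lemma:Hwang}) to $\beta_1$ and then to $\beta_2$, and using that the non log canonical locus carries no surface, I would conclude that $\mathbb{LCS}(X,\lambda D)$ lies in a singular fibre $S_1$ of $\beta_1$ and in a singular fibre $S_2$ of $\beta_2$, hence in the curve $S_1\cap S_2$.

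The main obstacle is that a singular fibre $S_i$ need not satisfy $\lct(S_i)\geqslant 1/2$ — a degenerate del Pezzo sextic can have threshold as small as $1/3$ (cf.\ Remark~\ref{remark:del-Pezzo-sextic}) — so one cannot finish by adjunction on a single fibre as in the general-member arguments of Lemmas~\ref{lemma:2-5-and-2-10-and-2-14} and~\ref{lemma:3-7}, and a proof valid for \emph{every} member must use both fibrations at once. The point of forcing $\mathbb{LCS}(X,\lambda D)$ into $S_1\cap S_2$ is that, intersecting $D$ with the rulings of the quadric cones $\alpha(S_1),\alpha(S_2)$ and with the fibres of $E_i\to C_i$ (and using Theorem~\ref{theorem:adjunction} and Remark~\ref{remark:convexity}), this curve cannot be contained in $\mathbb{LCS}(X,\lambda D)$; by Theorem~\ref{theorem:connectedness} the locus then reduces to a single point $O$. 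To conclude I would proceed exactly as in Lemmas~\ref{lemma:2-29}--\ref{lemma:2-31}: add to $\lambda D$ a general member $R$ of $|\alpha^{*}(H)|$ and a general fibre of $\beta_i$ through $O$, with coefficients chosen so that the total boundary is $\mathbb{Q}$-linearly equivalent to $-K_X$ times a factor less than $1$; this makes $\mathbb{LCS}$ equal to the disconnected set $O\cup R$ while the corresponding anticanonical boundary stays anti-ample because $\lambda<1/2$, contradicting Theorem~\ref{theorem:connectedness}. The genuinely delicate step is the intersection-theoretic bookkeeping that pins $\mathbb{LCS}$ down to a point, where one must use that the vertices of the two pencils of quadric sections lie off $C_1\cup C_2$.
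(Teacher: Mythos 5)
Your setup, your upper bound $-K_X\sim 2F_1+F_2+E_1$, and your use of the two degree-six del Pezzo fibrations to exclude surfaces from $\mathbb{LCS}(X,\lambda D)$ and to confine the locus to the intersection of two singular fibres all match the paper's argument. The gap is in your concluding step. You propose to add to $\lambda D$ a general $R\in|\alpha^{*}(H)|$ with coefficient at least $1$ (so that $R$ enters the locus of log canonical singularities) together with fibres of the two fibrations, keeping the total boundary $\Delta$ $\Q$-linearly equivalent to $\mu(-K_X)$ with $\mu<1-\lambda$. This is impossible on $X$ itself: if $R+c(F_1+F_2)\qlineq\mu(-K_X)$, then comparing coefficients of $\alpha^{*}(H)$, $E_1$, $E_2$ forces $c=\mu=1$; more generally, fitting a coefficient-one copy of $\alpha^{*}(H)$ under $\mu(-K_X)$ requires $\mu(-K_X)-\alpha^{*}(H)=(3\mu-1)\alpha^{*}(H)-\mu(E_1+E_2)$ to be effective (or at least nef and big after subtracting the rest of $\Delta$), which fails because $|\alpha^{*}(H)-E_1-E_2|=\varnothing$ — no hyperplane of $\P^{4}$ contains the planes of both conics — and indeed $(\alpha^{*}(H)-E_1-E_2)^{3}=-2<0$. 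So either $R$ gets coefficient $<1$ and drops out of the locus, or $-(K_X+\lambda D+\Delta)$ is no longer nef and big, and Theorem~\ref{theorem:connectedness} cannot be invoked. The same numerical obstruction explains why the trick of Lemma~\ref{lemma:2-29} does not transplant directly to the two-conic blow up.

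The paper closes the argument differently, and you need both of its extra ingredients. First, since $\mathbb{LCS}(X,\lambda D)$ contains no surfaces, Lemma~\ref{lemma:quadric-curves} applied to $(Q,\lambda\alpha(D))$ forces $\LCS(X,\lambda D)\subseteq E_1\cup E_2$; intersected with the curve $S_1\cap S_2$ this is a finite set, so Theorem~\ref{theorem:connectedness} gives a single point $P$, say $P\in E_1$ (your ``intersect with rulings'' step is not needed and does not by itself locate $P$ inside an exceptional divisor). Second, one contracts $E_2$ via $\beta_2\colon X\to Y_1$, the blow up of $Q$ along the single conic $C_1$; since $P\notin\beta_2(E_2)$, connectedness gives $\LCS(Y_1,\lambda\beta_2(D))=\beta_2(P)$, and on $Y_1$ the decomposition $-K_{Y_1}\sim\tilde{H}_1+2\tilde{H}$ does exist, with $\tilde{H}$ a general hyperplane section and $\tilde{H}_1$ one through $C_1$. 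Adding $\frac{1}{2}(\tilde{H}_1+2\tilde{H})$ then produces the disconnected locus $\beta_2(P)\cup\tilde{H}$ while $-(K_{Y_1}+\lambda\beta_2(D)+\frac{1}{2}(\tilde{H}_1+2\tilde{H}))\qlineq(\frac{1}{2}-\lambda)(-K_{Y_1})$ remains ample, giving the contradiction. Without the localization of $P$ in $E_1\cup E_2$ and the passage to the one-conic blow up, your final step does not go through.
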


\begin{proof}
Let $Q\subset\mathbb{P}^{4}$ be a smooth quadric hypersurface.
Let $C_{1}\subset Q\supset C_{2}$ be disjoint (irreducible) conics. Then
there is a commutative diagram
$$
\xymatrix{
&&&&X\ar@{->}[lldd]^{\beta_{2}}\ar@{->}[rrdd]_{\beta_{1}}\ar@{->}[lllldd]_{\phi_{1}}\ar@{->}[rrrrdd]^{\phi_{2}}&&&&\\
&&&&&&&&\\
\P^1&&Y_{1}\ar@{->}[rr]_{\alpha_{1}}\ar@{->}[ll]^{\psi_{1}}&&Q&&Y_{2}\ar@{->}[ll]^{\alpha_{2}}\ar@{->}[rr]_{\psi_{2}}&&\P^1}
$$
where the morphism $\alpha_{i}$ is a blow up along the conic
$C_{i}$, the morphism $\beta_{i}$ is a blow up along the~proper
transform~of the conic $C_{i}$, the morphism $\psi_{i}$ is a
natural fibration into quadric surfaces,
and~$\phi_{i}$~is~fibration, whose general fiber is isomorphic to
a smooth del Pezzo surfaces of degree $6$.

Let $E_{i}$ be the exceptional divisor of the morphism
$\beta_{i}$, and let $H_i$ be a sufficiently general hyperplane
section of the quadric $Q$ that passes through the conic $C_i$.
Then
$$
-K_{X}\sim \bar{H_1}+2\bar{H_2}+E_2,
$$
where $\bar{H}_i\subset X$ is the proper transform of the surface
$H_i$. We see that $\lct(X)\le 1/2$.

We suppose that $\mathrm{lct}(X)<1/2$. Then there exists an
effective $\mathbb{Q}$-divisor $D\qlineq -K_{X}$ such that the log
pair $(X,\lambda D)$ is not log canonical for some positive
rational number $\lambda<1/2$.

Using Example~\ref{example:del-Pezzos} and applying
Lemma~\ref{theorem:Hwang}, we see that
$$
\varnothing\ne\LCS\Big(X,\ \lambda D\Big)\subseteq S_{1}\cap S_{2},%
$$
where $S_{i}$ is a singular fiber of $\phi_{i}$. Hence, the set
$\mathbb{LCS}(X, \lambda D)$ contains no surfaces.

It follows from Theorem~\ref{theorem:connectedness} that either
$\LCS(X, \lambda D)$ is a point in $E_1\cup E_{2}$, or
$$
\LCS\Big(X,\ \lambda D\Big)\cap\Big(X\setminus\big(E_1\cup E_2\big)\Big)\ne\varnothing,%
$$
which implies that we may assume that $\LCS(X, \lambda D)$ is a
point in $E_1$ by Lemma~\ref{lemma:quadric-curves}.

Since $\beta_{2}$ is an isomorphism on $X\setminus E_2$, we see
that
$$
P\in\LCS\Big(Y_{1},\ \lambda\beta_{2}\big(D\big)\Big)\subset P\cup\beta_{2}\big(E_2\big)%
$$
for some point $P\in E_1$. Then $\LCS(Y_{1},
\lambda\beta_{2}(D))=P$ by Theorem~\ref{theorem:connectedness},
because $P\not\in\beta_{2}(E_{2})$.

Let $H$ be a general hyperplane section of the quadric $Q$. Then
$$
-K_{Y_{1}}\sim\tilde{H}_1+2\tilde{H}\qlineq\beta_{2}\big(D\big),
$$
where $\tilde{H}\subset Y_{1}\supset\tilde{H}_1$ are proper
transforms of $H$ and $H_1$, respectively. But
$$
\LCS\left(Y_{1},\ \lambda\beta_{2}\big(D\big)+\frac{1}{2}\Big(\tilde{H}_1+2\tilde{H}\Big)\right)=P\cup\tilde{H},%
$$
which is impossible by Theorem~\ref{theorem:connectedness},
because $\lambda<1/2$.
\end{proof}

\begin{lemma}
\label{lemma:3-11} Suppose that $\gimel(X)=3.11$. Then $\mathrm{lct}(X)=1/2$.%
\end{lemma}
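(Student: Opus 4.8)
The plan is to exploit the two natural morphisms carried by a threefold with $\gimel(X)=3.11$. Such an $X$ is the blow up $\alpha\colon X\to V_7$ of $V_7=\mathbb{P}(\mathcal{O}_{\mathbb{P}^2}\oplus\mathcal{O}_{\mathbb{P}^2}(1))$ along a smooth elliptic curve $C$ which is the intersection of two members of $|-\frac{1}{2}K_{V_7}|$; equivalently $X$ is obtained from $\mathbb{P}^3$ by blowing up a point together with an elliptic quartic passing through it. Writing $H'=-\frac{1}{2}K_{V_7}$ and letting $\bar H$ be the proper transform of a general member of $|H'|$, one gets $-K_X\sim 2\bar H+E$, where $E$ is the $\alpha$-exceptional divisor; since $\bar H$ occurs with coefficient $2$, this already yields $\mathrm{lct}(X)\le 1/2$. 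Because $C$ is a complete intersection of two members of $|H'|$, its normal bundle is $\mathcal{O}_C(H')^{\oplus 2}$, so $E\cong C\times\mathbb{P}^1$, exactly the situation of Lemma~\ref{lemma:elliptic-times-P1} and of the proof of Lemma~\ref{lemma:2-1-2-3}. The second structure is the conic bundle $\phi\colon X\to\mathbb{P}^2$ induced by the $\mathbb{P}^1$-bundle $V_7\to\mathbb{P}^2$: the curve $C$ maps birationally onto a plane cubic $\Gamma\subset\mathbb{P}^2$, which is the discriminant of $\phi$, and over each point of $\Gamma$ the fibre degenerates into the union of a ruling of $E$ and the proper transform of a fibre of $V_7\to\mathbb{P}^2$.

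To prove the lower bound I would argue by contradiction: suppose $\mathrm{lct}(X)<1/2$, so there is an effective $D\qlineq -K_X$ and a rational $\lambda<1/2$ with $(X,\lambda D)$ not log canonical. The naive strategy of Lemma~\ref{lemma:2-1-2-3} fails here, since $\alpha(D)\qlineq -K_{V_7}$ but $\mathrm{lct}(V_7)=1/4$ by Corollary~\ref{corollary:2-33-2-35-2-36}, so pushing down to $V_7$ does not force $\mathrm{LCS}(X,\lambda D)\subseteq E$. Instead I would localise through $\phi$: its general fibre is a smooth conic with $\mathrm{lct}=1/2>\lambda$, so Theorem~\ref{theorem:Hwang} together with Theorem~\ref{theorem:connectedness} confines $\mathrm{LCS}(X,\lambda D)$ to the fibres of $\phi$ over $\Gamma$. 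A connectedness argument adding a general surface from $|\bar H|$, in the spirit of Lemma~\ref{lemma:2-18}, shows that $\mathbb{LCS}(X,\lambda D)$ contains no surface and has no isolated zero-dimensional component, hence contains a curve lying in one such degenerate fibre.

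Once a degenerate fibre is reached I would use Remark~\ref{remark:convexity} and intersection with the two rulings making up the fibre to show that the offending curve must be the ruling of $E$, so that $\mathrm{LCS}(X,\lambda D)\cap E\ne\varnothing$. From here the endgame is parallel to Lemmas~\ref{lemma:2-1-2-3} and~\ref{lemma:2-25}: writing $D=\mu E+\Omega$ with $E\not\subset\mathrm{Supp}(\Omega)$ and intersecting with a ruling of $E$ bounds $\mu\le 1$; then $(X,E+\lambda\Omega)$ is not log canonical near $E$, so by Theorem~\ref{theorem:adjunction} the pair $(E,\lambda\Omega\vert_{E})$ is not log canonical. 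Computing $\Omega\vert_{E}$ in the basis of curves $Z\cong C$ and $L\cong\mathbb{P}^1$ on $E\cong C\times\mathbb{P}^1$ with $Z^2=L^2=0$ and $Z\cdot L=1$, one finds the coefficients of $\lambda\,\Omega\vert_{E}$ are strictly less than $1$ because $\lambda<1/2$ and $\mu\le 1$, contradicting Lemma~\ref{lemma:elliptic-times-P1}.

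I expect the main obstacle to be precisely the localisation step. Since $\mathrm{lct}(V_7)=1/4$ is too small to localise $\mathrm{LCS}$ by pushing forward, the argument rests entirely on controlling $\mathrm{LCS}(X,\lambda D)$ through the conic bundle $\phi$, whose fibres over $\Gamma$ are \emph{reducible}. The delicate points are to show that the one-dimensional part of $\mathrm{LCS}$ lands on $E$ rather than on the complementary line of a degenerate conic, and to exclude isolated points of $\mathbb{LCS}$ using the base-point-free system $|\bar H|$ together with Theorem~\ref{theorem:connectedness}. After that, the product structure $E\cong C\times\mathbb{P}^1$ makes Lemma~\ref{lemma:elliptic-times-P1} close the argument cleanly.
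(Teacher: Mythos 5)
Your route is genuinely different from the paper's, and unfortunately it breaks at the endgame. The decisive numerical point is the degree of $H'=-\frac{1}{2}K_{V_7}$ on the curve $C$: since $C$ is a complete intersection of two members of $|H'|$, one has $H'\cdot C=(H')^3=\tfrac{1}{8}(-K_{V_7})^3=7$. Hence, on $E\cong C\times\mathbb{P}^1$ with $Z\cong C$, $L\cong\mathbb{P}^1$, $Z^2=L^2=0$, $Z\cdot L=1$, one gets $\alpha^*(H')\vert_E\sim 7L$, $E\vert_E\sim -Z+7L$, and therefore
$$
\Omega\Big\vert_{E}\qlineq\Big(2\alpha^{*}H'-(1+\mu)E\Big)\Big\vert_{E}\qlineq(1+\mu)Z+7(1-\mu)L .
$$
To contradict Lemma~\ref{lemma:elliptic-times-P1} you would need $7\lambda(1-\mu)\le 1$, i.e.\ $\mu\ge 1-\frac{1}{7\lambda}$, which is about $5/7$ for $\lambda$ close to $1/2$; nothing in your argument gives a lower bound on $\mu$, and in the extreme case $E\not\subset\mathrm{Supp}(D)$ the restriction is $\qlineq Z+7L$ with $7\lambda>1$, so no contradiction arises. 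This is exactly why the computation of Lemma~\ref{lemma:2-1-2-3} closes (there $k=H^3\le 2$) but does not transplant to $\gimel(X)=3.11$. A secondary error: you cannot get $\mu\le 1$ by intersecting with a ruling $L$ of $E$, because $E\cdot L=-1$, so $1=D\cdot L=-\mu+\Omega\cdot L$ gives no upper bound on $\mu$ (one can still obtain $\mu\le 1$, e.g.\ by intersecting with the components of the degenerate conics not contained in $E$, or from the effective cone, but that does not rescue the final step).

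For comparison, the paper localises in the opposite direction: it factors $X\to U\to\mathbb{P}^3$, where $U$ is the blow up of $\mathbb{P}^3$ along the elliptic quartic (the threefold $2.25$, with $\mathrm{lct}(U)=1/2$ by Lemma~\ref{lemma:2-25}), which forces $\mathrm{LCS}(X,\lambda D)$ into the exceptional divisor $\bar{E}$ lying over the \emph{point} $O$ — not into the divisor over the curve $C$ that you target. It then cuts $\mathrm{LCS}(X,\lambda D)$ down to a single point using Theorem~\ref{theorem:Hwang} for the $\mathbb{P}^1$-bundle $X\to\mathbb{P}^2$ together with Lemma~\ref{lemma:P1xP2} applied to the auxiliary contraction $X\to\mathbb{P}^1\times\mathbb{P}^2$, and concludes by a connectedness argument (Theorem~\ref{theorem:connectedness}) after adding a suitable surface. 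If you want to keep your conic-bundle localisation, you would still need a replacement for the $C\times\mathbb{P}^1$ step — for instance the paper's trick of confining the non-lc locus to a point and invoking connectedness — since the product-surface lemma simply does not apply here.
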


\begin{proof}
Let $O\in\P^3$ be a point, let
$\delta\colon V_{7}\to\mathbb{P}^{3}$
be a blow up of the point $O$, and let $E$ be the exceptional
divisor of $\delta$. Then
$$
V_{7}\cong\mathbb{P}\Big(\mathcal{O}_{\mathbb{P}^{2}}\oplus\mathcal{O}_{\mathbb{P}^{2}}\big(1\big)\Big),
$$
there is a natural $\mathbb{P}^{1}$-bundle $\eta\colon
V_{7}\to\mathbb{P}^{2}$, and $E$ is a section of $\eta$. There is a
linearly normal elliptic curve $O\in C|subset\P^3$ such that the
diagram
$$
\xymatrix{
U\ar@/_/@{->}[ddr]_{\omega}\ar@{->}[rr]^{\gamma}&&\mathbb{P}^{3}&&V_{7}\ar@{->}[ll]_{\delta}\ar@{->}[d]^{\eta}\\%
&&X\ar@{->}[ull]^{\alpha}\ar@{->}[urr]_{\beta}\ar@{->}[dr]^{\upsilon}\ar@{->}[dl]_{\phi}&&\mathbb{P}^{2}\\%
&\mathbb{P}^{1}&&\mathbb{P}^{1}\times\mathbb{P}^{2}\ar@{->}[ll]^{\pi_{1}}\ar@/_/@{->}[ur]_{\pi_{2}}&}%
$$
commutes, where $\pi_{1}$ and $\pi_{2}$ are natural projections,
the morphism $\gamma$ contracts a surface
$$
C\times\mathbb{P}^{1}\cong G\subset U
$$
to the curve $C$, the~morphism $\alpha$ is a blow up of the fiber
of the morphism $\gamma$ over the point~$O\in\mathbb{P}^{3}$, the
morphism $\beta$ is a blow up of the proper transform of $C$, the
morphism $\omega$~is~a~fibration into quadric surfaces, the
morphism $\phi$ is a fibration into del Pezzo surfaces of degree
$7$, and $\upsilon$ contracts~a~surface
$$
C\times\mathbb{P}^{1}\cong F\subset X
$$
to an elliptic curve $Z\subset\mathbb{P}^{1}\times\mathbb{P}^{2}$
such that $-K_{\mathbb{P}^{1}\times\mathbb{P}^{2}}\cdot Z=13$ and
$Z\cong C$.

Let $H_{1}$ be a general fiber of $\phi$, and let $H_{2}$ be a
general surface in
$|(\eta\circ\beta)^{*}(\mathcal{O}_{\mathbb{P}^{2}}(1))|$. Then
$$
-K_{X}\sim H_{1}+2H_{2},
$$
which implies that $\mathrm{lct}(X)\leqslant 1/2$.

We suppose that $\mathrm{lct}(X)<1/2$. Then there exists an
effective $\mathbb{Q}$-divisor $D\qlineq -K_{X}$ such that the log
pair $(X,\lambda D)$ is not log canonical for some positive
rational number $\lambda<1/2$. Note that
$$
\varnothing\ne\mathrm{LCS}\Big(X,\ \lambda D\Big)\subseteq\bar{E},%
$$
where $\bar{E}$ is the exceptional divisor of $\alpha$, because
$\mathrm{lct}(U)=1/2$ by Lemma~\ref{lemma:2-25}.

Let $\Gamma\cong\mathbb{P}^{2}$ be the general fiber of
$\pi_{2}\circ\upsilon$. Then
$$
2=-K_{X}\cdot\Gamma=D\cdot\Gamma=2\bar{E}\cdot\Gamma,
$$
which implies that $\bar{E}\not\subset\mathrm{LCS}(X,\lambda D)$.
Applying Lemma~\ref{lemma:Hwang} to the log pair
$$
\Big(V_{7},\ \lambda \beta\big(D\big)\Big)%
$$
we see that $\mathrm{LCS}(X,\lambda D)\subseteq\bar{E}\cap G$.
Applying Lemma~\ref{lemma:P1xP2} to the log pair
$$
\Big(\mathbb{P}^{1}\times\mathbb{P}^{2},\ \lambda \upsilon\big(D\big)\Big)%
$$
we see that $\mathrm{LCS}(X,\lambda D)=\bar{E}\cap F\cap G$, where
$|\bar{E}\cap F\cap G|=1$. Hence
$$
\mathrm{LCS}\Big(X,\ \lambda D+H_{2}\Big)=\mathrm{LCS}\Big(X,\ \lambda D\Big)\cup H_{2},%
$$
and $H_{2}\cap\mathrm{LCS}(X,\lambda D)=\varnothing$. But the
divisor
$$
-\Big(K_{X}+\lambda D+H_{2}\Big)=\left(\lambda-\frac{1}{2}\right)K_{X}+\frac{1}{2}H_{1}%
$$
is ample, which is impossible by
Theorem~\ref{theorem:connectedness}.
\end{proof}

\begin{lemma}
\label{lemma:3-12} Suppose that $\gimel(X)=3.12$. Then
$\mathrm{lct}(X)=1/2$.
\end{lemma}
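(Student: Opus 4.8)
The plan is to follow the pattern of the other rank-three blow-ups treated above (compare Lemmas~\ref{lemma:3-6},~\ref{lemma:3-10} and~\ref{lemma:3-11}). For $\gimel(X)=3.12$ the threefold $X$ is the blow up $\pi\colon X\to\mathbb{P}^3$ of a disjoint union of a line $L$ and a twisted cubic curve $C$; write $E_L$ and $E_C$ for the exceptional divisors over $L$ and $C$. Contracting $E_L$ gives a birational morphism $\alpha\colon X\to W$, where $W$ is the blow up of $\mathbb{P}^3$ along $C$, so that $\gimel(W)=2.27$ and $\lct(W)=1/2$ by Lemma~\ref{lemma:2-27}. Projection from $L$ induces a fibration $\phi\colon X\to\mathbb{P}^1$ whose general fibre is the blow up of a plane at the three points where $C$ meets it, i.e.\ a smooth del Pezzo surface of degree $6$ (with $\lct=1/2$ by Example~\ref{example:del-Pezzos}); the net of quadrics through $C$ induces a conic bundle $X\to\mathbb{P}^2$, and $E_L\cong\mathbb{P}^1\times\mathbb{P}^1$.

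First I would verify $\lct(X)\le 1/2$. Writing $\bar\Pi\qlineq\pi^{*}H-E_L$ for the proper transform of a plane through $L$ and $\bar Q\qlineq 2\pi^{*}H-E_C$ for that of a quadric through $C$, one has
\[
-K_X\qlineq 2\bar\Pi+\bar Q+E_L ,
\]
an effective anticanonical divisor in which the reduced irreducible surface $\bar\Pi$ occurs with coefficient $2$. Hence $\big(X,\lambda(2\bar\Pi+\bar Q+E_L)\big)$ is not log canonical once $2\lambda>1$, so $\lct(X)\le 1/2$.

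For the reverse inequality I would suppose $\lct(X)<1/2$, so that there is an effective $D\qlineq -K_X$ with $(X,\lambda D)$ not log canonical for some $\lambda<1/2$. Since $\alpha(D)\qlineq -K_W$ and $\lct(W)=1/2>\lambda$, the pair $(W,\lambda\alpha(D))$ is log canonical, whence
\[
\varnothing\ne\mathrm{LCS}\big(X,\lambda D\big)\subseteq E_L .
\]
Next I would exclude $E_L\subseteq\mathrm{LCS}(X,\lambda D)$ by intersecting $D$ with a ruling of $E_L$ meeting it positively, and then localise what remains using $\phi$: for a fibre $F$ through a non log canonical point one has $D|_F\qlineq -K_F$ with $\lct(F)\ge 1/2>\lambda$, so that by Lemma~\ref{lemma:Hwang} either $(X,\lambda D)$ is log canonical near $F$ (contradicting the choice of the point) or a whole fibre lies in $\mathrm{LCS}$, which is impossible since $\mathrm{LCS}\subseteq E_L$ contains no surface and the anticanonical coefficient of a fibral divisor is too small. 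Combined with Lemma~\ref{lemma:rational-tree} and Theorem~\ref{theorem:connectedness} this should confine $\mathrm{LCS}(X,\lambda D)$ to $E_L\cap F$ for one fibre, and in fact to a single point, exactly as in Lemmas~\ref{lemma:3-10} and~\ref{lemma:3-11}. A final application of Theorem~\ref{theorem:connectedness}, adding to $\lambda D$ a general member of $|\bar\Pi|$ (so $\mathrm{LCS}$ acquires a disjoint surface while $-(K_X+\cdots)$ remains ample), would give the contradiction.

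The hard part will be the fibre analysis of $\phi$: because the statement is asserted for \emph{every} $X$ in the family, I must control the degenerate fibres over those planes through $L$ that are tangent to, or osculating, $C$. Since a line meets $C$ in at most two points, no plane cuts out three collinear points, so these fibres are del Pezzo surfaces of degree $6$ with at worst a single Du Val point; the crux is either to show each still has log canonical threshold $\ge 1/2$, or — more robustly — to bypass this by the connectedness argument above, never invoking the threshold of a singular fibre. Establishing $E_L\not\subseteq\mathrm{LCS}(X,\lambda D)$ and pinning $\mathrm{LCS}$ to one point, the step that uses the precise intersection numbers of $E_L\cong\mathbb{P}^1\times\mathbb{P}^1$ with its rulings and with the fibres of $\phi$, is where the genuine bookkeeping lies.
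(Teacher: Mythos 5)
Your skeleton matches the paper's: the same decomposition $-K_{X}\sim 2\bar{\Pi}+\bar{Q}+E_{L}$ for the upper bound, the same first reduction $\varnothing\ne\mathrm{LCS}(X,\lambda D)\subseteq E_{L}$ via $\mathrm{lct}$ of the threefold $2.27$, and the same use of the degree-$6$ del Pezzo fibration $\phi$ together with Theorem~\ref{theorem:Hwang} (which, as you hope, bypasses the lct of the singular fibres). But there are two genuine gaps at the end. First, after Theorem~\ref{theorem:Hwang} confines the locus to $E_{L}\cap S_{\phi}$ for a singular fibre $S_{\phi}$, this is a whole curve (a section of $E_{L}\cong\mathbb{P}^{1}\times\mathbb{P}^{1}\to L$ of class $Z_{1}$ with $Z_{1}^{2}=0$), and neither Theorem~\ref{theorem:connectedness} nor Lemma~\ref{lemma:rational-tree} rules out that $\mathrm{LCS}(X,\lambda D)$ is this entire rational curve; intersecting $D$ with the natural test curves (rulings of $E_{L}$, proper transforms of lines meeting $L$) only bounds the relevant multiplicities by quantities compatible with $\mathrm{mult}>2$. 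The paper kills this possibility with exactly the structure you mention but never use: the contraction $\omega\colon X\to\mathbb{P}^{1}\times\mathbb{P}^{2}$ of the surface $F$ swept by secant lines of $C$ meeting $L$ (the conic-bundle direction). Applying Theorem~\ref{theorem:Hwang} to $(\mathbb{P}^1\times\mathbb{P}^2,\lambda\omega(D))$ and $\pi_{2}$ forces $\mathrm{LCS}\subseteq E_{L}\cap S_{\phi}\cap F$, and the computation $F\vert_{E_{L}}\sim Z_{1}+3Z_{2}$, $S_{\phi}\vert_{E_{L}}\sim Z_{1}$ with $F\cap E_{L}$ irreducible shows this triple intersection is finite, whence a single point by connectedness. "Exactly as in Lemmas~\ref{lemma:3-10} and~\ref{lemma:3-11}" hides precisely this step, and those lemmas use different auxiliary morphisms.

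Second, your closing move fails as stated: for a general $S\in|\bar{\Pi}|$ one has
$$
-\Big(K_{X}+\lambda D+S\Big)\qlineq\big(1-\lambda\big)\big(-K_{X}\big)-\bar{\Pi},
$$
and this class meets a ruling $f$ of $E_{L}\to L$ in $(1-\lambda)(-K_{X})\cdot f-\bar{\Pi}\cdot f=(1-\lambda)-1=-\lambda<0$, so it is not nef and Theorem~\ref{theorem:connectedness} does not apply. The paper avoids this by pushing the isolated point down to $V$, the blow up of $\mathbb{P}^{3}$ along $L$ alone (where $-K_{V}$ is ample and the exceptional divisor over $C$ has been forgotten), and adding $\tfrac12(\tilde{H}_{1}+3\tilde{H})\qlineq\tfrac12(-K_{V})$ with $\tilde{H}$ a general plane; the component $\tilde{H}$ then enters with coefficient $3/2>1$ and disconnects the locus. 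You should either reproduce that descent or choose an auxiliary divisor on $X$ whose defect stays nef, which the pencil $|\bar{\Pi}|$ alone does not provide.
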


\begin{proof}
Let $\eps\colon V\to\mathbb{P}^{3}$ be a blow up of a line
$L\subset\mathbb{P}^{3}$. There is a natural
$\mathbb{P}^{2}$-bundle $\eta\colon V\to\mathbb{P}^{1}$, there is
a smooth rational cubic curve $C\subset\mathbb{P}^{3}$ such that
$L\cap C=\varnothing$, and the diagram
$$
\xymatrix{
\P^1&&&\P^1\times\P^2\ar@{->}[rrd]^{\pi_2}\ar@{->}[lll]_{\pi_{1}}&&\\
&&X\ar@{->}[lld]^{\beta}\ar@{->}[rd]_{\gamma}\ar@{->}[llu]^{\phi}\ar@{->}[ru]_{\omega}&&&\P^2\\
V\ar@{->}[rrd]_\eps\ar@{->}[uu]^{\eta}&&&Y\ar@{->}[ld]^\alpha\ar@{->}[rru]_{\psi}&&\\
&&\P^3&&& }
$$
commutes, where $\alpha$ and $\beta$ are blow ups of the curve $C$
and its proper transform, respectively, the~morphism $\gamma$ is a
blow up of the proper transform of the line $L$, the morphism
$\psi$ is a $\P^1$-bundle, the morphism $\omega$ is a birational
contraction of a surface $F\subset X$ to a curve such that
$$
C\cup L\subset\alpha\circ\gamma\big(F\big)\subset\mathbb{P}^{3},
$$
and $\alpha\circ\gamma(F)$ consists of secant lines of
$C\subset\P^3$ that intersect $L$, the morphism $\phi$ is a
fibration into del Pezzo surfaces of degree $6$, the morphisms
$\pi_1$ and $\pi_2$ are natural projections.

Let $E$ and $G$ be exceptional divisors of $\beta$ and $\gamma$,
respectively, let $Q\subset\P^3$ be a general quadric surface that
passes through $C$, let $H\subset\P^3$ be a general plane that
passes through $L$. Then
$$
-K_{X}\sim \bar{Q}+2\bar{H}+G,
$$
where $\bar{Q}\subset X\supset\bar{H}$ are proper transforms of
$Q\subset\P^3\supset H$, respectively. In particular,
$\lct(X)\le\nolinebreak 1/2$.

We suppose that $\lct(X)<1/2$. Then there exists an effective
$\Q$-divisor $D\qlineq -K_{X}$ such that the log pair $(X,\lambda
D)$ is not log canonical for some positive rational number
$\lambda<1/2$. Note that
$$
\varnothing\neq\LCS\Big(X,\ \lambda D\Big)\subset G,
$$
since $\lct(Y)=1/2$ by Lemma~\ref{lemma:2-27}. Applying
Theorem~\ref{theorem:Hwang} to $\phi$ we see that
$$
\varnothing\neq\LCS\Big(X,\ \lambda D\Big)\subset G\cap S_{\phi},
$$
where $S_{\phi}$ is a singular fiber of the del Pezzo fibration
$\phi$ (see Example~\ref{example:del-Pezzos}). Then we see that
$$
\varnothing\neq\LCS\Big(X,\ \lambda D\Big)\subset G\cap
S_{\phi}\cap F,
$$
by applying Theorem~\ref{theorem:Hwang} to the log pair
$(\mathbb{P}^{1}\times\mathbb{P}^{2},\lambda\omega(D))$ and to the
$\mathbb{P}^{1}$-bundle $\pi_2$.

Let $Z_{1}\cong\P^1$ be a section of the natural projection
$$
\P^1\times\P^1\cong G\longrightarrow L\cong\P^1
$$
such that $Z_1\cdot Z_1=0$, and let $Z_2$ a fiber of this
projection. Then
$$
F\Big\vert_{G}\sim Z_{1}+3Z_{2}
$$
and $S_{\phi}\vert_{G}\sim Z_{1}$. The curve $F\cap G$ is
irreducible. Thus, we see that
$$
\Big|G\cap F\cap S_{\phi}\Big|<+\infty,
$$
which implies that the set $\LCS(X, \lambda D)$ consists of a
single point $P\in G$ by Theorem~\ref{theorem:connectedness}.

The log pair $(V, \lambda\beta(D))$ is not log canonical. Since
$\beta$ is an isomorphism on $X\setminus E$, we see that
$$
\beta\big(P\big)\in\LCS\Big(V,\ \lambda\beta\big(D\big)\Big)\subseteq \beta\big(P\big)\cup\beta\big(E\big),%
$$
which implies that  $\LCS(V, \lambda\beta(D))=\beta(P)$ by
Theorem~\ref{theorem:connectedness}. Let $H\subset\P^3$ be a
general plane.~Then
$$
\LCS\left(V,\ \lambda\beta\big(D\big)+\frac{1}{2}\Big(\tilde{H}_1+3\tilde{H}\Big)\right)=\beta(P)\cup\tilde{H},%
$$
where $\tilde{H}\subset V\supset\tilde{H}_1$ are proper transforms
of $H\subset\P^3\supset H_1$, respectively. But
$$
-K_V\sim \tilde{H}_1+3\tilde{H}\qlineq\beta\big(D\big),
$$
which contradicts Theorem~\ref{theorem:connectedness}, because
$\lambda<1/2$.
\end{proof}

\begin{lemma}
\label{lemma:3-14} Suppose that $\gimel(X)=3.14$. Then
$\lct(X)=1/2$.
\end{lemma}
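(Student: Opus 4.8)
The plan is to mimic the arguments for the neighbouring families treated in Lemmas~\ref{lemma:3-10}, \ref{lemma:3-11} and \ref{lemma:3-12}: first write down the birational and fibration structures carried by $X$, read off the bound $\lct(X)\le 1/2$ from an explicit member of $|-K_X|$, and then exclude $\lct(X)<1/2$ by localising the locus of non-log-canonical singularities with the connectedness theorem (Theorem~\ref{theorem:connectedness}), the adjunction theorem (Theorem~\ref{theorem:adjunction}) and the fibration lemmas (Lemma~\ref{lemma:Hwang}, Theorem~\ref{theorem:Hwang}) of Section~\ref{section:preliminaries}.

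First I would record the geometry. The threefold $X$ admits a birational morphism $\alpha\colon X\to V$ contracting an irreducible divisor $E$ onto a smooth curve, where $V$ is a Fano threefold with $\mathrm{rk}\,\Pic(V)=2$ whose global log canonical threshold equals $1/2$ (the relevant rank-two family, among those in Lemmas~\ref{lemma:2-25} and~\ref{lemma:2-27}, is identified in the setup), together with a fibration $\phi\colon X\to\P^1$ whose general fibre $F$ is a smooth del Pezzo surface with $\lct(F)=1/2$ (Example~\ref{example:del-Pezzos}). Choosing the proper transform $\bar H$ of a suitable divisor on $V$ so that $-K_X\qlineq 2\bar H+E$, and noting that $|\bar H|\neq\varnothing$, gives $\lct(X)\le 1/2$ immediately.

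For the reverse inequality I would argue by contradiction: suppose $\lct(X)<1/2$, so there is an effective $D\qlineq -K_X$ and a rational $\lambda<1/2$ with $(X,\lambda D)$ not log canonical. Since $\alpha(D)\qlineq -K_V$ and $\lct(V)=1/2$, the locus $\LCS(X,\lambda D)$ is non-empty and contained in $E$. Applying Lemma~\ref{lemma:Hwang} (or Theorem~\ref{theorem:Hwang}) to $\phi$, using that every \emph{smooth} fibre has threshold $1/2$, forces $\varnothing\neq\LCS(X,\lambda D)\subseteq E\cap F_0$ for a single singular fibre $F_0$ of $\phi$; in particular $\LCS(X,\lambda D)$ contains no surface. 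Writing $D=\mu F_0+\Omega$ with $F_0\not\subset\Supp(\Omega)$ and intersecting with a curve lying in a fibre bounds $\mu$ (typically $\mu\le 1$), after which Theorem~\ref{theorem:adjunction} shows that $(F_0,\lambda\Omega|_{F_0})$ is not log canonical while $\Omega|_{F_0}\qlineq -K_{F_0}$. One then finishes either by the surface estimates of Section~\ref{section:del-Pezzo} (for instance Lemma~\ref{lemma:elliptic-times-P1} or Lemma~\ref{lemma:singular-del-Pezzo-sextic}, depending on the type of $F_0$), or by adding a general member of $|\bar H|$ to produce a disconnected log canonical locus and contradicting Theorem~\ref{theorem:connectedness}. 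Remark~\ref{remark:convexity} would be used throughout to assume that the fixed components $E$ and $F_0$ are not contained in $\Supp(D)$.

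The hard part will be the control of the \emph{singular} fibres of $\phi$. Because the statement is asserted for every member of the family $3.14$ (there is no generality hypothesis), one cannot simply assume the fibres are mildly singular; the contradiction must be extracted uniformly, so the delicate point is showing that for every singular fibre $F_0$ the induced pair on $F_0$ still has threshold at least $1/2$ away from finitely many points, and that the residual zero-dimensional locus can always be broken by an auxiliary ample adjustment as in Lemmas~\ref{lemma:2-25} and~\ref{lemma:3-12}. Verifying that $E\cap F_0$ (and, after a second projection if $X$ also carries a conic-bundle structure, the intersection of $E\cap F_0$ with the relevant exceptional surface) is either empty or a single reduced point for all $X$ is where the geometry of the family must be used in earnest.
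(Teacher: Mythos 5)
There is a genuine gap, and it is fatal: the inequality $\lct(X)\geqslant 1/2$ that you set out to prove cannot hold for this family. Recall that for $\gimel(X)=3.14$ the threefold $X$ is the blow up of $\mathbb{P}^{3}$ along a disjoint union of a plane cubic $C\subset\Pi$ and a point $P\not\in\Pi$. Writing $G$ and $E$ for the exceptional divisors over $C$ and over $P$, $\bar{\Pi}$ for the proper transform of $\Pi$ and $\bar{H}$ for the proper transform of a general plane through $P$, one has
$$
-K_{X}\sim 4H^{*}-2E-G\sim\bar{\Pi}+3\bar{H}+E,
$$
so that $\lct(X)\leqslant 1/3<1/2$. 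Consequently the divisor $D=\bar{\Pi}+3\bar{H}+E\qlineq -K_{X}$ is itself a counterexample to your intended conclusion for every $\lambda\in(1/3,1/2)$, and no combination of Theorem~\ref{theorem:connectedness}, Theorem~\ref{theorem:adjunction} and the fibration lemmas can produce the contradiction you are after. (The paper's own proof of this lemma in fact establishes $\lct(X)=1/3$: it first exhibits the decomposition above to get the upper bound $1/3$, and then shows $\lct(X)\geqslant 1/3$; the value $1/2$ printed in the statement is inconsistent with the proof body.)

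The geometric setup you borrow from Lemmas~\ref{lemma:3-10}--\ref{lemma:3-12} is also unavailable here. The elementary contractions of $X$ land on the blow up of $\mathbb{P}^{3}$ along a plane cubic ($\gimel=2.28$, with $\lct=1/4$), on $V_{7}$ ($\gimel=2.35$, with $\lct=1/4$), and on $\mathbb{P}(\mathcal{O}_{\mathbb{P}^{2}}\oplus\mathcal{O}_{\mathbb{P}^{2}}(2))$; there is no birational contraction onto a rank-two Fano threefold with threshold $1/2$, and the fibration structures on $X$ are $\mathbb{P}^{1}$-bundles over $\mathbb{P}^{2}$ rather than a del Pezzo fibration over $\mathbb{P}^{1}$. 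So both the localization step ``$\LCS(X,\lambda D)\subseteq E$ because $\lct(V)=1/2$'' and the step ``$\LCS(X,\lambda D)\subseteq E\cap F_{0}$ for a singular fibre $F_{0}$ of $\phi\colon X\to\mathbb{P}^{1}$'' rest on structures that do not exist for this deformation family. The correct lower-bound argument (as in the paper) localizes $\LCS(X,\lambda D)$ for $\lambda<1/3$ inside $E\cup G$ by intersecting $D$ with lines meeting $C$ and with fibres of the two $\mathbb{P}^{1}$-bundles, and then derives a contradiction using a cone over $C$ and a tangent plane at an inflection point together with Theorem~\ref{theorem:connectedness}.
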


\begin{proof}
Let $P\in\mathbb{P}^{3}$ be a point, and let $\alpha\colon
V_{7}\to\mathbb{P}^{3}$ be a blow up of the point $P$. Then there
is a natural $\mathbb{P}^{1}$-bundle $\pi\colon
V_{7}\to\mathbb{P}^{2}$.

Let $\zeta\colon Z\to\mathbb{P}(1,1,1,2)$ be a blow up of the
singular point of $\mathbb{P}(1,1,1,2)$. Then
$$
Z\cong\mathbb{P}\Big(\mathcal{O}_{\mathbb{P}^{2}}\oplus\mathcal{O}_{\mathbb{P}^{2}}\big(2\big)\Big),
$$
and there is a natural $\mathbb{P}^{1}$-bundle $\phi\colon
Z\to\mathbb{P}^{2}$.

There is a plane $\Pi\subset\mathbb{P}^{3}$ and a smooth cubic
curve $C\subset\Pi$ such that $P\not\in\Pi$ and the diagram
$$
\xymatrix{
&&&X\ar@{->}[ld]_{\gamma}\ar@{->}[rd]^{\beta}\ar@{->}[rr]^{\omega}&&Z\ar@{->}[dd]^{\phi}\ar@{->}[rrddd]^{\zeta}&&\\
&&Y\ar@{->}[rd]_{\eps}\ar@{->}[ldd]_{\eta}&&V_{7}\ar@{->}[ld]_{\alpha}\ar@{->}[rd]^{\pi}& \\
&&&\P^3\ar@{-->}[rr]_{\xi}&&\P^2\\
&U\ar@{^{(}->}[rr]&&\mathbb{P}(1,1,1,1,2)\ar@{-->}[rrrr]_{\upsilon}\ar@{-->}[u]_{\nu}&&&&\mathbb{P}(1,1,1,2)\ar@{-->}[llu]^{\psi}}
$$
commutes (see \cite[Example~3.6]{Stef96}), where we have the
following notation:
\begin{itemize}
\item the morphism $\eps$ is a blow up of the curve $C$;%
\item the threefold $U$ is a cubic hypersurface in $\mathbb{P}(1,1,1,1,2)$;%
\item the rational map $\xi$ is a projection from the point $P$;%
\item the morphism~$\gamma$~is~a~blow~up of the point that dominates $P$;%
\item the~morphism~$\beta$~is~a~blow~up of the proper transform of the curve $C$;%
\item the morphism $\eta$ contracts the proper transform of $\Pi$ to the point $\mathrm{Sing}(U)$,%
\item the~morphism $\omega$ contracts a surface $R\subset X$ to a
curve~such~that
$$
\beta\circ\alpha\big(R\big)\subset\mathbb{P}^{3}
$$
is a cone over the curve $C$ whose vertex is the point $P$;%
\item the rational maps $\psi$ and $\nu$ are natural projections;%
\item the rational map $\upsilon$ is a linear projection from a
point.
\end{itemize}

Let $E$ and $G$ be exceptional divisors of $\gamma$ and $\beta$,
respectively, and let $\bar{H}\subset X$ be a proper transform of
a general plane in $\P^3$ that passes through the point $P$. Then
$$
-K_{X}\sim \bar{\Pi}+3\bar{H}+G,
$$
where $\bar{\Pi}\subset X$ is the proper transform of the plane
$\Pi$. Thus, we see that $\lct(X)\leqslant 1/3$.

We suppose that $\mathrm{lct}(X)<1/3$. Then there exists an
effective $\mathbb{Q}$-divisor $D\qlineq -K_{X}$ such that the log
pair $(X,\lambda D)$ is not log canonical for some positive
rational number $\lambda<1/3$.

Let $\bar{L}\subset X$ be a proper transform of a general line in
$\P^3$ that intersects the curve $C$. Then
$$
D\cdot\bar{L}=\bar{\Pi}\cdot\bar{L}+3\bar{H}\cdot\bar{L}+G\cdot\bar{L}=3\bar{H}\cdot\bar{L}=3,
$$
which implies that $\mathbb{LCS}(X, \lambda D)$ contains no
surfaces except possibly $\bar{\Pi}$ and $E$.

Let $\Gamma$ be a general fiber of $\pi\circ\beta$. Then
$$
D\cdot\Gamma=\bar{\Pi}\cdot\Gamma+3\bar{H}\cdot\Gamma+G\cdot\Gamma=\bar{\Pi}\cdot\Gamma+G\cdot\Gamma=2,
$$
which implies that $\mathbb{LCS}(X, \lambda D)$ does not contain
$\bar{\Pi}$ and $E$. Thus, by Lemma~\ref{lemma:P3}, we have
$$
\varnothing\ne\LCS\Big(X,\ \lambda D\Big)\subsetneq E\cup G.
$$

Suppose that $\LCS(X,\lambda D)\subseteq E$. Then
$$
\varnothing\ne\LCS\Big(V_{7},\ \lambda \beta\big(D\big)\Big)\subseteq \beta\big(E\big),%
$$
which contradicts  Theorem~\ref{theorem:Hwang}, because $\beta(E)$
is a section of $\pi$. We see that $\LCS(X,\lambda D)\subsetneq
G$.

Applying Theorem~\ref{theorem:Hwang} to $(Z, \lambda\omega(D))$
and $\phi$ and Theorem~\ref{theorem:connectedness} to $(X,\lambda
D)$, we see that
$$
\varnothing\ne\LCS\Big(X,\ \lambda D\Big)\subseteq F,
$$
where $F$ is a fiber of the natural projection $G\to \beta(G)$.
Then
$$
\varnothing\ne\LCS\Big(Y,\ \lambda\gamma\big(D\big)\Big)\subseteq\gamma(F),%
$$
where $\gamma(F)$ is a fiber of the blow up $\eps$ over a point in
the curve $C$.

Let $S\subset\P^3$ be a general cone over the curve $C$, and let
$O\in C$ be an inflection point such that
$$
\eps\circ\gamma\big(F\big)\ne O.
$$
Let $L\subset S$ be a line that passes through the point $O$, and
let $H\subset\P^3$ be a plane that is tangent to the cone $S$
along the line $L$. Since $O$ is an inflection point of the curve
$C$, the equality
$$
\mathrm{mult}_{L}\Big(S\cdot H\Big)=3
$$
holds. Let $\breve{S}$, $\breve{H}$ and $\breve{L}$ be the proper
transforms of $S$, $H$ and $L$ on the threefold $Y$. Then
$$
\LCS\left(Y,\ \lambda\gamma(D)+\frac{2}{3}\Big(\breve{S}+\breve{H}\Big)\right)=\LCS\Big(Y,\ \lambda\gamma\big(D\big)\Big)\cup \breve{L}%
$$
due to generality in the choice of $S$. But $-K_Y\sim
\breve{S}+\breve{H}$, which is impossible by
Theorem~\ref{theorem:connectedness}.
\end{proof}

\begin{lemma}
\label{lemma:3-15} Suppose that  $\gimel(X)=3.15$. Then
$\mathrm{lct}(X)=1/2$.
\end{lemma}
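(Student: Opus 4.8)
The plan is to exploit the standard birational model of the threefold $X$ with $\gimel(X)=3.15$, namely a birational morphism $\alpha\colon X\to\mathbb{P}^3$ contracting two disjoint exceptional divisors $E_L$ and $E_C$ onto a line $L$ and a smooth conic $C$ in $\mathbb{P}^3$ with $L\cap C=\varnothing$ (see \cite{IsPr99}). Projection from the line $L$ resolves to a morphism $\phi\colon X\to\mathbb{P}^1$ whose fibre over $t$ is the proper transform of the plane $\Pi_t\supset L$; since $\Pi_t$ meets $C$ in two points, the general fibre is a smooth del Pezzo surface of degree $7$, and a fibre degenerates only when $\Pi_t$ is tangent to $C$, in which case it is a singular del Pezzo surface of degree $7$. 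Writing $\bar{\Pi}_L$ for the proper transform of a plane through $L$, $\bar{Q}$ for that of a general quadric through $C$, and $E_L$ for the exceptional divisor over $L$, one has $-K_X\qlineq 2\bar{\Pi}_L+\bar{Q}+E_L$ (check: $\bar{\Pi}_L\sim\bar{H}-E_L$, $\bar{Q}\sim 2\bar{H}-E_C$, and $-K_X\sim 4\bar{H}-E_L-E_C$). As the prime divisor $\bar{\Pi}_L$ occurs with coefficient $2$, this gives $\mathrm{lct}(X)\le 1/2$.

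For the reverse inequality I would argue by contradiction: assume $\mathrm{lct}(X)<1/2$, so there is an effective $D\qlineq -K_X$ and a rational $\lambda<1/2$ with $(X,\lambda D)$ not log canonical. First I would show $\mathrm{LCS}(X,\lambda D)$ contains no surface, by intersecting $D$ with a general fibre of $\phi$ and with the rulings of $E_L$ and $E_C$; this bounds the coefficient of every prime component of $D$ and forces $\dim\mathrm{LCS}(X,\lambda D)\le 1$. Applying Theorem~\ref{theorem:Hwang} (and Lemma~\ref{lemma:Hwang}) to $\phi$, some fibre $F$ meets $\mathrm{LCS}(X,\lambda D)$; after writing $D=\mu F+\Omega$ with $F\not\subseteq\mathrm{Supp}(\Omega)$ and bounding $\mu$ via a fibre intersection, Theorem~\ref{theorem:adjunction} shows that $(F,\lambda\Omega|_F)$ is not log canonical, while $\Omega|_F\qlineq -K_F$ and $\lambda<1/2$.

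This is where the del Pezzo analysis of Section~\ref{section:del-Pezzo} enters. For a general (smooth) fibre, Lemma~\ref{lemma:del-Pezzo-septic} forces $\mathrm{LCS}(F,\lambda\Omega|_F)$ to be the central $(-1)$-curve $\ell_F\subset F$; for a tangent-plane fibre the analogous singular statement of Lemma~\ref{lemma:quadric-cone} applies. In either case $\ell_F$ is the proper transform of the line $\Pi_C\cap\Pi_t$ joining the two points of $C\cap\Pi_t$, where $\Pi_C$ is the plane of $C$. As $t$ varies these lines form the pencil through $L\cap\Pi_C$ and sweep out $\Pi_C$, so $\mathrm{LCS}(X,\lambda D)$ is contained in the proper transform $\bar{\Pi}_C\cong\mathbb{P}^2$ of that plane. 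It then remains to reach a contradiction on $\bar{\Pi}_C$: using Remark~\ref{remark:convexity} to assume $\bar{\Pi}_C\not\subseteq\mathrm{Supp}(D)$, I would restrict and invoke Lemma~\ref{lemma:P3} together with $\mathrm{lct}(\mathbb{P}^3)=1/4$ from Example~\ref{example:hypersurface-index-big}, or equivalently add a general member of $|\bar{\Pi}_L|$ and use Theorem~\ref{theorem:connectedness}, the point being that for $\lambda<1/2$ the resulting locus is disconnected.

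The main obstacle I anticipate is the bookkeeping for the degenerate fibres of $\phi$: one must verify that the tangent-plane fibres genuinely fall under the singular del Pezzo statement (Lemma~\ref{lemma:quadric-cone}, via their weak del Pezzo structure) with $\mathrm{LCS}$ still landing on the distinguished curve, and that the resulting curves assemble into $\bar{\Pi}_C$ rather than leaking into $E_L\cup E_C$; controlling the behaviour of $D$ along $E_L\cup E_C$ and along the finitely many degenerate fibres, so that the connectedness contradiction at the end is clean, is the delicate part, whereas the numerical inequalities bounding $\mu$ and the application of Example~\ref{example:del-Pezzos} are routine.
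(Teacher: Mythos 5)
Your argument is built on a misidentification of the threefold. The deformation family $\gimel(X)=3.15$ is the blow up of a smooth quadric threefold $Q\subset\mathbb{P}^4$ along a disjoint union of a line and a conic (see Table~\ref{table:Fanos}); the blow up of $\mathbb{P}^{3}$ along a disjoint union of a line and a conic, which is the variety you work with throughout, is the family $\gimel(X)=3.18$, and for that threefold the paper proves $\mathrm{lct}(X)=1/3$, not $1/2$. So the statement you are actually arguing for is false. Concretely, on the blow up of $\mathbb{P}^{3}$ one has the better decomposition
$$
-K_{X}\sim 4H-E_{L}-E_{C}\sim 3\bar{\Pi}_{L}+\bar{\Pi}_{C}+2E_{L},
$$
where $\bar{\Pi}_{L}\sim H-E_{L}$ is the proper transform of a single plane through $L$ and $\bar{\Pi}_{C}\sim H-E_{C}$ is that of the plane of $C$; the coefficient $3$ already forces $\mathrm{lct}(X)\leqslant 1/3$. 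This also pinpoints where your lower-bound sketch must break: the step ``$\mathbb{LCS}(X,\lambda D)$ contains no surface'' fails, because a fiber $\bar{\Pi}_{L}$ of your pencil $\phi$ can occur in $D\qlineq -K_{X}$ with multiplicity $3>1/\lambda$ (intersecting with a general fiber of $\phi$ gives no bound on fiber components, and intersecting with a ruling $f$ of $E_{L}$ gives no bound either, since $E_{L}\cdot f=-1$ absorbs the excess). The subsequent fibre-by-fibre analysis via Theorem~\ref{theorem:Hwang} is then unavailable for $1/3<\lambda<1/2$, as the fibres are del Pezzo surfaces of degree $7$ with $\mathrm{lct}=1/3$.

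For the genuine $3.15$ (the blow up of $Q$), the correct anticanonical decomposition is $-K_{X}\sim\bar{H}_{1}+2\bar{H}_{2}+E_{2}\sim\bar{H}_{2}+2\bar{H}_{1}+E_{1}$ with $\bar{H}_{i}$ proper transforms of hyperplane sections of $Q$ through the line and the conic respectively, and the maximal coefficient is only $2$, which is why $1/2$ is the right answer there. The paper's proof then combines the degree-$7$ del Pezzo fibration $\phi$ (where Lemma~\ref{lemma:del-Pezzo-septic} does give information for $\lambda<1/2$) with two further structures absent from your picture: the quadric fibration $\eta$ on the blow up $V$ of $Q$ along the conic, and the contraction $\omega$ of the surface $F$ swept by lines of $Q$ meeting both centres onto $\mathbb{P}^{1}\times\mathbb{P}^{2}$, to which Lemma~\ref{lemma:P1xP2} and Theorem~\ref{theorem:Hwang} are applied before the final connectedness contradiction. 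You would need to restart from the correct model of $X$ before any of your subsequent steps can be salvaged.
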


\begin{proof}
Let $Q\subset\mathbb{P}^{4}$ be a smooth quadric hypersurface, let
$C\subset Q$ be a smooth conic, and let $\eps\colon V\to Q$ be a
blow up of the conic $C\subset Q$. Then there is a natural
morphism $\eta\colon V\to\mathbb{P}^{1}$
induced by the projection $Q\dasharrow\mathbb{P}^{1}$ from the
two-dimensional linear subspace in $\mathbb{P}^{4}$ that contains
the conic $C\subset Q$. Then a general fiber of $\eta$ is a smooth
quadric surface in $\mathbb{P}^{3}$.

Take a line $L\subset Q$ such that $L\cap C=\varnothing$; then
there is a commutative diagram
$$
\xymatrix{
\P^1&&&\P^1\times\P^2\ar@{->}[rrd]^{\pi_2}\ar@{->}[lll]_{\pi_{1}}&&\\
&&X\ar@{->}[lld]^{\beta}\ar@{->}[rd]_{\gamma}\ar@{->}[llu]^{\phi}\ar@{->}[ru]_{\omega}&&&\P^2\\
V\ar@{->}[rrd]_\eps\ar@{->}[uu]^{\eta}&&&Y\ar@{->}[ld]^\alpha\ar@{->}[rru]_{\psi}&&\\
&&Q&&& }
$$
where $\alpha$ and $\beta$ are blow ups of the line $L\subset Q$
and its proper transform, respectively, the~morphism $\gamma$ is a
blow up of the proper transform of the conic $C$, the morphism
$\psi$ is a $\P^1$-bundle, the morphism $\omega$ is a birational
contraction of a surface $F\subset X$ to a curve such that
$$
C\cup L\subset\alpha\circ\gamma\big(F\big)\subset Q,
$$
and $\alpha\circ\gamma(F)$ consists of all lines in $Q\subset\P^4$
that intersect $L$ and $C$, the morphism $\phi$ is a fibration
into del Pezzo surfaces of degree $7$, the morphisms $\pi_1$ and
$\pi_2$ are natural projections.

Let $E_1$ and $E_{2}$ be exceptional surfaces of $\beta$ and
$\gamma$, respectively, let $H_{1}, H_2\subset Q$ be general
hyperplane sections that pass through the curves $L$ and $C$,
respectively. We have
$$
-K_{X}\sim \bar{H}_1+2\bar{H}_2+E_2\sim \bar{H}_2+2\bar{H}_1+E_1,
$$
where $\bar{H}_1\subset X\supset\bar{H}_2$ are proper transforms
of $H_1\subset Q\supset H_2$, respectively. In particular,
$\lct(X)\le 1/2$.

We suppose that $\mathrm{lct}(X)<1/2$. Then there exists an
effective $\mathbb{Q}$-divisor $D\qlineq -K_{X}$ such that the log
pair $(X,\lambda D)$ is not log canonical for some positive
rational number $\lambda<1/2$.

Let $S$ be an irreducible surface on the threefold $X$. Put
$$
D=\mu S+\Omega,
$$
where $\Omega$ is an effective $\mathbb{Q}$-divisor  such that
$S\not\subset\mathrm{Supp}(\Omega)$. Then
$$
\mathrm{LCS}\left(\bar{H}_{2},\ \frac{1}{2}\Big(\mu
S+\Omega\Big)\Big\vert_{\bar{H}_{2}}\right)\subset E_{1}\cap\bar{H}_{2}%
$$
by Lemma~\ref{lemma:del-Pezzo-septic}. Thus, if $\mu\leqslant 2$,
then either $S=E_{1}$, or $S$ is a fiber of $\phi$.

Let $\Gamma\cong\mathbb{P}^{1}$ be a general fiber of the conic
bundle $\psi\circ\gamma$. Then
$$
2=D\cdot\Gamma=\mu S\cdot\Gamma+\Omega\cdot\Gamma\geqslant\mu S\cdot\Gamma,%
$$
which implies that  $\mu\leqslant 2$ in the case when either
$S=E_{1}$, or $S$ is a fiber of $\phi$.

Therefore, we see that $\mathbb{LCS}(X, \lambda D)$ does not
contain surfaces.

Applying Theorem~\ref{theorem:Hwang} to the log pair
$(Y,\lambda\gamma(D))$ and $\psi$, we see that
$$
\varnothing\ne\LCS\Big(X,\ \lambda D\Big)\subsetneq
E_2\cup\bar{L},
$$
where $\mathbb{P}^1\cong\bar{L}\subset X$ is a curve such that
$\gamma(\bar{L})$ is a fiber of the conic bundle $\psi$.

Suppose that $\bar{L}\not\subset E_{1}$ and
$\bar{L}\subset\mathrm{LCS}(X, \lambda D)$. Then
$$
\alpha\circ\gamma\big(\bar{L}\big)\subseteq\LCS\Big(Q,\ \lambda \alpha\circ\gamma\big(D\big)\Big)\subseteq\alpha\circ\gamma\big(\bar{L}\big)\cup C\cup L,%
$$
which is impossible by Lemma~\ref{lemma:quadric-curves}. Hence by
Theorem~\ref{theorem:connectedness} we see that
\begin{itemize}
\item either $\LCS(X, \lambda D)\subsetneq E_2$,%
\item or $\LCS(X, \lambda D)\subseteq\bar{L}$ and $\bar{L}\subset
E_{1}$.
\end{itemize}

We may assume that $\bar{L}\subset E_{1}$. Note that
$E_{1}\cong\mathbb{F}_{1}$. One has $\bar{L}\cdot\bar{L}=-1$ on
the surface $E_{1}$.

Applying Lemma~\ref{lemma:P1xP2} to the log pair $(\P^1\times\P^2,
\lambda\omega(D))$, we see that $\LCS(X, \lambda D)\subset F$,
because
$$
\omega\big(D\big)\qlineq -K_{\P^1\times\P^2}
$$
and $\lambda<1/2$. Applying Lemma~\ref{lemma:Hwang} to the log
pair $(V, \lambda\beta(D))$ and the fibration $\eta$, we see that
$$
\varnothing\ne\LCS\Big(X,\ \lambda D\Big)\subsetneq E_1\cup
S_{\phi},
$$
where $S_{\phi}$ is a singular fiber of $\phi$, because
$\mathrm{lct}(\P^1\times\P^1)=1/2$ (see
Example~\ref{example:del-Pezzos}).

We have $F\cap\bar{L}=\varnothing$ and $|F\cap\bar{S_{\phi}}\cap
E_2|<+\infty$. Thus, there is point $P\in E_{2}$ such that
$$
\LCS\Big(X,\ \lambda D\Big)=P\in E_{2}
$$
by Theorem~\ref{theorem:connectedness}. But
$\beta(E_{1})\cap\beta(P)=\varnothing$. Thus, it follows from
Theorem~\ref{theorem:connectedness} that
$$
\LCS\Big(V,\ \lambda\beta\big(D\big)\Big)=\beta\big(P\big).
$$

Let $\tilde{H}_1\subset V\supset\tilde{H}_2$ be the proper
transforms of $H_1\subset Q\supset H_2$, respectively. Then
$$
-K_V\sim \tilde{H}_2+2\tilde{H}_1\qlineq\beta\big(D\big),
$$
but it follows from the generality of $H_{1}$ and $H_{2}$ that
$$
\LCS\left(V,\
\lambda\beta\big(D\big)+\frac{1}{2}\Big(\tilde{H}_2+2\tilde{H}_1\Big)\right)=\beta\big(P\big)\cup\tilde{H}_1,
$$
which is impossible by Theorem~\ref{theorem:connectedness},
because $\lambda<1/2$.
\end{proof}

\begin{lemma}
\label{lemma:3-16} Suppose that  $\gimel(X)=3.16$. Then
$\mathrm{lct}(X)=1/2$.
\end{lemma}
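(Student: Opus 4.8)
The plan is to follow the template established for the neighbouring rank-$3$ families (Lemmas~\ref{lemma:3-12},~\ref{lemma:3-14},~\ref{lemma:3-15}). First I would record the birational geometry of $X$ supplied by the Mori--Mukai classification: $X$ sits in a commutative diagram in which a divisorial contraction $\alpha\colon X\to V$ blows down an exceptional divisor $E$ onto a smooth curve $C$ in a lower-rank Fano threefold $V$ whose global threshold is already known to equal $1/2$ (by Theorem~\ref{theorem:del-Pezzo}, or by one of Lemmas~\ref{lemma:2-25},~\ref{lemma:2-27}), and in which $X$ also carries an extremal fibration $\phi\colon X\to\mathbb{P}^1$ whose general fibre is a del Pezzo surface of degree $\ge 5$, together with an auxiliary conic-bundle or $\mathbb{P}^1$-bundle projection. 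Writing $-K_X$ as an explicit effective combination of the proper transform of a generator of $\mathrm{Pic}(V)$ and the exceptional divisors (exactly as in the cited lemmas) produces a reducible anticanonical divisor that forces a non--log-canonical pair at coefficient $1/2$, hence the easy bound $\mathrm{lct}(X)\le 1/2$.

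For the reverse inequality I would argue by contradiction: assume there is $D\qlineq -K_X$ and $\lambda<1/2$ with $(X,\lambda D)$ not log canonical. Since $\alpha(D)\qlineq -K_V$ and $\mathrm{lct}(V)=1/2$, the pair $(V,\lambda\alpha(D))$ is log canonical away from $C$, so $\varnothing\ne\mathrm{LCS}(X,\lambda D)\subseteq E$. The next step localises this locus further: applying Lemma~\ref{lemma:Hwang} (resp. Theorem~\ref{theorem:Hwang}) to the del Pezzo fibration $\phi$ and invoking the value $1/2$ of the threshold of a smooth del Pezzo surface of degree $\ge 5$ from Example~\ref{example:del-Pezzos} confines the locus into a singular fibre $S$ of $\phi$, so that $\mathrm{LCS}(X,\lambda D)\subseteq E\cap S$. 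Intersecting $D$ with the fibres of the auxiliary conic bundle / $\mathbb{P}^1$-bundle then bounds the multiplicity $\mu$ of $E$ in $D$ (typically $\mu\le 1$ or $\mu\le 2$), which via Theorem~\ref{theorem:adjunction} reduces the problem to a surface computation on $E$.

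The endgame splits along the geometry of $E$. If $E\cong\mathbb{P}^1\times\mathbb{P}^1$, writing $D=\mu E+\Omega$ and restricting, one gets $\Omega|_E\qlineq -\frac{1+\mu}{2}K_E$, which contradicts Lemma~\ref{lemma:elliptic-times-P1} once $\mu$ is bounded as above. If instead the localisation only yields a point or a single curve, I would perturb by an auxiliary divisor (a general member of a base-point-free subsystem together with fibres of the fibrations) chosen so that $-(K_X+\Delta)$ stays ample while $\mathrm{LCS}(X,\Delta)$ becomes disconnected, contradicting Theorem~\ref{theorem:connectedness}; Lemmas~\ref{lemma:quadric-curves} and~\ref{lemma:P1xP2} are the natural tools for excluding curves that project to lines under the conic-bundle map. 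The main obstacle I anticipate is the bookkeeping around the singular fibres: I must impose generality hypotheses on $X$ guaranteeing that every singular fibre of $\phi$ has only an ordinary double point and that the intersection $E\cap S$ (and its further intersection with the surface swept out by singular conic-bundle fibres) is empty or zero-dimensional, in the spirit of Remark~\ref{remark:3-7}. Verifying that these intersections behave as required, and that no low-degree curve in $E$ can support the non--log-canonical locus, is where the real work lies.
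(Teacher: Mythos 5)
Your general philosophy (localise $\mathrm{LCS}(X,\lambda D)$ using the known thresholds of the images of $X$ under its extremal contractions, then finish with Theorem~\ref{theorem:connectedness}) is the right one, but the concrete template you import from Lemmas~\ref{lemma:3-12},~\ref{lemma:3-14},~\ref{lemma:3-15} does not fit this family, and the central step of your argument fails. For $\gimel(X)=3.16$ there is \emph{no} del Pezzo fibration $\phi\colon X\to\mathbb{P}^1$: the threefold is the blow up of $V_7$ along the proper transform of a twisted cubic through the centre of $V_7\to\mathbb{P}^3$, and its only nontrivial structures are three divisorial contractions (onto $V_7$, onto $U=$ the blow up of $\mathbb{P}^3$ along the twisted cubic, and onto the flag threefold $W\subset\mathbb{P}^2\times\mathbb{P}^2$) together with maps to $\mathbb{P}^2$; nothing fibres over $\mathbb{P}^1$. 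So the step ``apply Lemma~\ref{lemma:Hwang} to the del Pezzo fibration $\phi$ and confine the locus to a singular fibre $S$'' has no object to apply to, and the subsequent bound on the multiplicity $\mu$ of $E$ in $D$ and the restriction $\Omega|_E\qlineq-\frac{1+\mu}{2}K_E$ never get off the ground. Two further mismatches: the exceptional divisor $E$ of $X\to U$ is $\mathbb{F}_1$, not $\mathbb{P}^1\times\mathbb{P}^1$, so the Lemma~\ref{lemma:elliptic-times-P1} endgame is the wrong branch; and you cannot impose generality hypotheses on the singular fibres, because the statement is for \emph{every} $X$ with $\gimel(X)=3.16$ (no $\star$ in Table~\ref{table:Fanos}).

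What the paper actually does is use all three contractions at once. Since $\mathrm{lct}(U)=1/2$ and $\mathrm{lct}(W)=1/2$, the locus $\mathrm{LCS}(X,\lambda D)$ is forced into $E\cap F$, where $E$ is contracted by $X\to U$ and $F$ is the divisor contracted by $\upsilon\colon X\to W$. Then Lemma~\ref{lemma:V7}, applied to $(V_7,\lambda\beta(D))$, pins the locus down to the single point $E\cap F\cap G$ (with $G$ the exceptional divisor of $X\to V_7$). Finally, writing $-K_X\sim H_1+2H_2$ with $H_i$ pulled back from the two maps to $\mathbb{P}^2$, one adds $H_2$ (which misses that point) and observes that $-(K_X+\lambda D+H_2)\qlineq(\lambda-\tfrac12)K_X+\tfrac12 H_1$ is ample while the locus $\mathrm{LCS}(X,\lambda D+H_2)$ is disconnected, contradicting Theorem~\ref{theorem:connectedness}. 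Your proposal does gesture at this kind of connectedness finish as a fallback, but without the $E\cap F$ localisation via $W$ and the use of Lemma~\ref{lemma:V7} you have no way to reduce the locus to a point disjoint from a suitable auxiliary divisor.
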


\begin{proof} Let $\mathbb{P}^{1}\cong C\subset\mathbb{P}^{3}$ be a twisted
cubic curve, let $O\in C$ be a point. There is a commutative
diagram
$$
\xymatrix{
\mathbb{P}\big(\mathcal{E}\big)&U\ar@{=}[l]\ar@/_/@{->}[ddr]_{\omega}\ar@{->}[rr]^{\gamma}&&\mathbb{P}^{3}&&V_{7}\ar@{->}[ll]_{\delta}\ar@{->}[d]^{\eta}\ar@{=}[r]&\mathbb{P}\Big(\mathcal{O}_{\mathbb{P}^{2}}\oplus\mathcal{O}_{\mathbb{P}^{2}}\big(1\big)\Big)\\%
&&&X\ar@{->}[ull]^{\alpha}\ar@{->}[urr]_{\beta}\ar@{->}[dr]^{\upsilon}&&\mathbb{P}^{2}&\\%
&&\mathbb{P}^{2}&&W\ar@{->}[ll]^{\pi_{1}}\ar@/_/@{->}[ur]_{\pi_{2}}&&}%
$$
where $\mathcal{E}$ is a stable rank two vector bundle on
$\mathbb{P}^{2}$ (see the proof of Lemma~\ref{lemma:2-27}), and we
have the following notation:
\begin{itemize}
\item the morphism $\delta$ is a blow up of the point $O$;%
\item the morphism $\gamma$ contracts a surface $G\subset U$ to the curve $C\subset\mathbb{P}^{3}$;%
\item the~morphism $\alpha$ contracts a surface $E\cong\mathbb{F}_{1}$ to the fiber of $\gamma$ over the point~$O\in\mathbb{P}^{3}$;%
\item the morphism $\beta$ is a blow up of the proper transform of the curve $C$;%
\item the variety $W$ is a smooth divisor on $\mathbb{P}^{2}\times\mathbb{P}^{2}$ of bi-degree $(1,1)$;%
\item the morphisms $\pi_{1}$ and $\pi_{2}$ are natural projections;%
\item the morphisms $\omega$ and $\eta$ are natural $\mathbb{P}^{1}$-bundles;%
\item the morphism $\upsilon$ contracts~a~surface $F\subset X$ to
a curve
$$
\mathbb{P}^{1}\cong Z\subset W
$$
such that $\omega\circ\alpha(E)=\pi_{1}(Z)$ and $\eta\circ\beta(G)=\pi_{2}(Z)$.%
\end{itemize}

Take general surfaces
$H_{1}\in|(\omega\circ\alpha)^{*}(\mathcal{O}_{\mathbb{P}^{2}}(1))|$
and
$H_{2}\in|(\eta\circ\beta)^{*}(\mathcal{O}_{\mathbb{P}^{2}}(1))|$.~Then
$$
-K_{X}\sim H_{1}+2H_{2},
$$
which implies that $\mathrm{lct}(X)\leqslant 1/2$.

We suppose that $\mathrm{lct}(X)<1/2$. Then there exists an
effective $\mathbb{Q}$-divisor $D\qlineq -K_{X}$ such that the log
pair $(X,\lambda D)$ is not log canonical for some positive
rational number $\lambda<1/2$. Note that
$$
\varnothing\ne\mathrm{LCS}\Big(X,\ \lambda D\Big)\subseteq E\cap F,%
$$
because $\mathrm{lct}(U)=1/2$ by Lemma~\ref{lemma:2-25} and
$\mathrm{lct}(W)=1/2$ by Theorem~\ref{theorem:del-Pezzo}.

Applying Lemma~\ref{lemma:V7} to the log pair $(V_{7},\lambda
\beta(D))$, we see that
$$
\mathrm{LCS}\Big(X,\lambda D\Big)=E\cap F\cap G,
$$
where $|E\cap F\cap G|=1$. Then
$$
\mathrm{LCS}\Big(X,\ \lambda D+H_{2}\Big)=\mathrm{LCS}\Big(X,\ \lambda D\Big)\cup H_{2},%
$$
where $H_{2}\cap\mathrm{LCS}(X,\lambda D)=\varnothing$. But the
divisor
$$
-\Big(K_{X}+\lambda D+H_{2}\Big)\qlineq\left(\lambda-\frac{1}{2}\right)K_{X}+\frac{1}{2}H_{1}%
$$
is ample, which is impossible by
Theorem~\ref{theorem:connectedness}.
\end{proof}

\begin{lemma}
\label{lemma:3-17} Suppose that $\gimel(X)=3.17$. Then
$\mathrm{lct}(X)=1/2$.
\end{lemma}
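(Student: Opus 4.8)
The plan is to realize $X$ as a divisor of tridegree $(1,1,1)$ on $\P^1\times\P^1\times\P^2$ and to exploit its three projections. Write $H_{i}=\pi_{i}^{*}(\mathcal{O}_{\P^1}(1))$ for the two projections $\pi_{1},\pi_{2}\colon X\to\P^1$ and $R=\pi_{3}^{*}(\mathcal{O}_{\P^2}(1))$ for $\pi_{3}\colon X\to\P^2$. Then $\pi_{1}$ and $\pi_{2}$ are fibrations into del Pezzo surfaces of degree $8$ (the general fibre being $\mathbb{F}_{1}$), $\pi_{3}$ is a conic bundle, and the product map $p\colon X\to\P^1\times\P^1$ is a $\P^1$-bundle. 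By adjunction $-K_{X}\sim H_{1}+H_{2}+2R$, so taking $R_{0}=\pi_{3}^{-1}(\ell)$ for a general line $\ell\subset\P^2$ together with general fibres of $\pi_{1},\pi_{2}$ exhibits an anticanonical divisor whose threshold is $1/2$; hence $\lct(X)\le 1/2$.

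For the reverse inequality I would assume $\lct(X)<1/2$ and fix $D\qlineq -K_{X}$ and $\lambda<1/2$ with $(X,\lambda D)$ not log canonical at a point $P$. First I would rule out surfaces in $\LCS(X,\lambda D)$: a component $S$ of $D$ with coefficient $\ge 1/\lambda>2$ must satisfy $S\cdot\Gamma=0$ for a general fibre $\Gamma$ of $p$ (as $-K_{X}\cdot\Gamma=2$), so $S$ is pulled back from $\P^1\times\P^1$ and lies in $\langle H_{1},H_{2}\rangle$, which is incompatible with $D\qlineq H_{1}+H_{2}+2R$. The key step is then to apply Theorem~\ref{theorem:Hwang} to the $\P^1$-bundle $p$: its fibres $F\cong\P^1$ are smooth, so $\lct(F,[D\vert_{F}])=\lct(\P^1)=1/2$ by Example~\ref{example:hypersurface-index-big}; since $(X,\lambda D)$ fails to be log canonical at $P\in F$ with $\lambda<1/2$, the first alternative of Theorem~\ref{theorem:Hwang} is impossible, and we obtain $\eps<1/2$ with $F\subseteq\LCS(X,\eps D)$.

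Now $F=S_{1}\cap S_{2}$, where $S_{i}$ is the fibre of $\pi_{i}$ through $P$, and on $S_{1}\cong\mathbb{F}_{1}$ the curve $F$ is a fibre $f$ of the ruling. Writing $D=\mu_{1}S_{1}+\Omega$ with $S_{1}\not\subseteq\Supp(\Omega)$, intersection with a conic fibre of $\pi_{3}$ gives $\mu_{1}\le 2$, hence $\eps\mu_{1}<1$; raising the coefficient of $S_{1}$ to $1$ and applying Theorem~\ref{theorem:adjunction} shows that $(S_{1},\eps\Omega\vert_{S_{1}})$ is not log canonical along $f$, where $\Omega\vert_{S_{1}}\qlineq -K_{\mathbb{F}_{1}}\sim 2e+3f$ and $e$ is the $(-1)$-section. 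Thus the coefficient $m$ of $f$ in $\Omega\vert_{S_{1}}$ exceeds $2$; intersecting $\Omega\vert_{S_{1}}-mf$ with $e$ forces $m\le 1$ as soon as $e\not\subseteq\Supp(\Omega\vert_{S_{1}})$, a contradiction. The main obstacle is exactly that the del Pezzo fibres are $\mathbb{F}_{1}$ with $\lct(\mathbb{F}_{1})=1/3<1/2$ (Example~\ref{example:del-Pezzos}), so one cannot simply quote the fibrewise threshold for $\pi_{1}$; the whole argument hinges on first forcing an entire ruling fibre into the non-log-canonical locus via $p$, and the remaining difficulty is the case $e\subseteq\Supp(\Omega\vert_{S_{1}})$, i.e. when the section locus (the exceptional divisor of the contraction $X\to\P^1\times\P^2$) lies in $\Supp(D)$. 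I expect to dispose of this last case using Remark~\ref{remark:convexity} to replace $D$ by an anticanonical divisor not containing that locus and then intersecting with $e$ directly on $X$, where $-K_{X}\cdot e=1$ while $F\subseteq\LCS$ forces $\mult_{F}(D)>2$.
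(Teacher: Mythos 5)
Your first half---the upper bound, the exclusion of surfaces from $\mathbb{LCS}(X,\lambda D)$, and the application of Theorem~\ref{theorem:Hwang} to the $\P^1$-bundle $p\colon X\to\P^1\times\P^1$ to force an entire fibre $F$ into the non-log-terminal locus---is exactly the paper's argument. The second half, however, has a genuine gap. On $S_1\cong\mathbb{F}_1$ you have $\Omega\vert_{S_1}\qlineq 2e+3f$ and $m=\mult_{F}(\Omega\vert_{S_1})>2$; but then $(\Omega\vert_{S_1}-mf)\cdot e=1-m<0$, so $e$ is \emph{always} a component of $\Omega\vert_{S_1}$, and the case you actually resolve ($e\not\subseteq\Supp(\Omega\vert_{S_1})$, giving $m\le 1$) is vacuous. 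Worse, the surface computation cannot be pushed further: $\Omega\vert_{S_1}=3f+2e$ is consistent with everything you have established (it is effective, it lies in $|-K_{\mathbb{F}_1}|$, and $(S_1,\eps(3f+2e))$ really is non-log-canonical along $f$ once $\eps>1/3$), so no contradiction is available from $S_1$ alone. Your fallback---using Remark~\ref{remark:convexity} to remove ``the section locus'' from $\Supp(D)$ and then intersecting with $e$ on $X$---does not work either: the component of $D$ whose trace on $S_1$ contains $e$ need not be an exceptional divisor $E_i$ of $X\to\P^1\times\P^2$ (it can be any surface through $e$, for instance the $\pi_3$-preimage of a curve through $\pi_3(e)$), and Remark~\ref{remark:convexity} only discards some unspecified component of a chosen auxiliary log canonical divisor, not a prescribed component of $D$.

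The paper closes the argument by a different global mechanism. Having shown $\LCS(X,\lambda D)=F$, it applies Theorem~\ref{theorem:Hwang} to the conic bundle $\pi_3\colon X\to\P^2$: a smooth conic fibre meeting $F$ would either have log canonical threshold $1/2>\lambda$ along it (impossible, since it meets $F$) or be entirely contained in the non-klt locus (impossible, since that locus is $F$). Hence every fibre of $\pi_3$ over the line $\pi_3(F)$ is reducible, so the line $\pi_3(F)$ lies in the degeneration curve of $\pi_3$, which is the irreducible conic $\omega_1(C_1)=\omega_2(C_2)$---a contradiction. Some such genuinely three-dimensional input is needed; restricting to the $\mathbb{F}_1$-fibres of $\pi_1$ is not enough.
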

\begin{proof}
The threefold $X$ is a divisor in $\P^1\times\P^1\times\P^2$ of
tri-degree $(1, 1, 1)$. Take general surfaces
$$
H_{1}\in\Big|\pi_{1}^{*}\Big(\mathcal{O}_{\mathbb{P}^{1}}\big(1\big)\Big)\Big|,\ H_{2}\in\Big|\pi_{2}^{*}\Big(\mathcal{O}_{\mathbb{P}^{1}}\big(1\big)\Big)\Big|,\ H_{3}\in\Big|\pi_{3}^{*}\Big(\mathcal{O}_{\mathbb{P}^{2}}\big(1\big)\Big)\Big|,%
$$
where $\pi_i$ is a natural projection of the threefold $X$ onto
the $i$-th factor of $\P^1\times\P^1\times\P^2$. Then
$$
-K_X\sim H_1+H_2+2H_3,
$$
which implies that $\lct(X)\le 1/2$. There is a commutative
diagram
$$
\xymatrix{
\mathbb{P}^{1}&&\mathbb{P}^{1}\times\mathbb{P}^{1}\ar@{->}[ll]_{\upsilon_{1}}\ar@{->}[rr]^{\upsilon_{2}}&&\mathbb{P}^{1}\\%
&&X\ar@{->}[u]_{\zeta}\ar@{->}[dll]_{\alpha_{1}}\ar@{->}[ull]_{\pi_{1}}\ar@{->}[d]^{\pi_3}\ar@{->}[drr]^{\alpha_{2}}\ar@{->}[urr]^{\pi_{2}}&&\\%
\mathbb{P}^{1}\times\mathbb{P}^{2}\ar@{->}[rr]_{\omega_{1}}\ar@{->}[uu]^{\eta_{1}}&&\mathbb{P}^{2}&&\mathbb{P}^{1}\times\mathbb{P}^{2}\ar@{->}[ll]^{\omega_{2}}\ar@{->}[uu]_{\eta_{2}}}
$$
where $\omega_{i}$, $\eta_{i}$ and $\upsilon_{i}$ are natural
projections, $\zeta$ is a $\mathbb{P}^{1}$-bundle, and
$\alpha_{i}$~is~a~birational morphism that contracts a surface
$E_{i}\subset X$ to a smooth curve
$C_{i}\subset\mathbb{P}^{1}\times\mathbb{P}^{2}$ such that
$\omega_{1}(C_{1})=\omega_{2}(C_{2})$ is a (irreducible) conic.

Note that $E_{2}\sim H_{1}+H_{3}-H_{2}$ and $E_{1}\sim
H_{2}+H_{3}-H_{1}$.

We suppose that $\mathrm{lct}(X)<1/2$. Then there exists an
effective $\mathbb{Q}$-divisor $D\qlineq -K_{X}$ such that the log
pair $(X,\lambda D)$ is not log canonical for some positive
rational number $\lambda<1/2$.

Suppose that the set $\mathbb{LCS}(X, \lambda D)$ contains a
(irreducible) surface $S\subset X$. Put
$$
D=\mu S+\Omega,
$$
where $\mu\ge 1/\lambda$ and $\Omega$ is an effective
$\mathbb{Q}$-divisor such that
$S\not\subset\mathrm{Supp}(\Omega)$. Then
$$
2=D\cdot\Gamma=\mu S\cdot\Gamma+\Omega\cdot\Gamma\geqslant\mu S\cdot\Gamma,%
$$
where $\Gamma\cong\mathbb{P}^{1}$ is a general fiber of $\zeta$.
Hence $S\cdot\Gamma=0$, which implies that $E_{2}\ne S\ne E_{1}$.
One also has
$$
2=D\cdot\Delta=\mu S\cdot\Delta+\Omega\cdot\Delta\geqslant\mu S\cdot\Delta,%
$$
where $\Delta\cong\mathbb{P}^{1}$ is a general fiber of the conic
bundle $\pi_{2}$. Hence $S\cdot\Delta=0$, which implies that
$$
S\in\Big|\pi_{3}^{*}\Big(\mathcal{O}_{\mathbb{P}^{2}}\big(m\big)\Big)\Big|
$$
for some $m\in\mathbb{Z}_{>0}$, because $E_{2}\ne S\ne E_{1}$ and
$S$ is an irreducible surface. Then
$$
0=S\cdot\Gamma=m\ne 0,
$$
which is a contradiction. Thus, we see that the set
$\mathbb{LCS}(X, \lambda D)$ contains no surfaces.

Applying Theorem~\ref{theorem:Hwang} to $\zeta$ and using
Theorem~\ref{theorem:connectedness}, we see that
$$
\mathrm{LCS}\Big(X,\ \lambda D\Big)=F\cong\mathbb{P}^{1},
$$
where $F$ is a fiber of the $\mathbb{P}^{1}$-bundle $\zeta$.
Applying Theorem~\ref{theorem:Hwang} to the conic bundle
$\pi_3$,~we~see~that every fiber of the conic bundle $\pi_3$ that
intersects $F$ must be reducible. This means that
$$
\pi_3\big(F\big)\subset\omega_{1}\big(C_{1}\big)=
\omega_{2}\big(C_{2}\big)\subset\P^2,
$$
which is impossible, because $\pi_{3}(F)$ is a line, and
$\omega_{1}(C_{1})=\omega_{2}(C_{2})$~is~an irreducible conic.
\end{proof}

\begin{lemma}
\label{lemma:3-18} Suppose that $\gimel(X)=3.18$. Then
$\mathrm{lct}(X)=1/3$.
\end{lemma}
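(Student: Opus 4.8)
To prove $\lct(X)=1/3$ for $\gimel(X)=3.18$, I would follow the template established throughout Section~\ref{section:rho-3}: locate a geometric description of $X$ as a blow-up (or a chain of blow-ups) fitting into a commutative diagram with at least one fibration structure, write down an explicit anticanonical decomposition establishing the upper bound $\lct(X)\le 1/3$, and then rule out any $D\qlineq -K_X$ with $(X,\lambda D)$ non-log-canonical for $\lambda<1/3$. First I would invoke the classification of $\mathrm{rk}\,\Pic=3$ Fano threefolds to describe $X$: the number $3.18$ in the Mori--Mukai list is (up to the conventions of Table~\ref{table:Fanos}) a blow-up of a variety $V$ with $\mathrm{rk}\,\Pic(V)=2$ along a curve, typically admitting a birational morphism $\alpha\colon X\to V$ contracting a divisor $E$ to a curve $C$, where $V$ is one of the index-$2$ or conic-bundle threefolds handled earlier. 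The exceptional structure should supply an effective decomposition of the form $-K_X\sim 3\bar H+2E$ (or an analogous expression with a coefficient $3$ on some boundary surface), from which $\lct(X)\le 1/3$ is immediate since the corresponding log pair is log canonical but not log terminal.

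For the lower bound, I would suppose for contradiction that there is $D\qlineq -K_X$ with $(X,\lambda D)$ not log canonical for some $\lambda<1/3$, and then localize the locus $\LCS(X,\lambda D)$. The key mechanism is to push $D$ forward to $V$ and use that $\lct(V)\ge 1/3$ (which we may quote from the appropriate earlier lemma in Section~\ref{section:rho-2}, e.g. Lemma~\ref{lemma:2-29}, Lemma~\ref{lemma:2-31}, or from Theorem~\ref{theorem:del-Pezzo}) to conclude
$$
\varnothing\ne\LCS\big(X,\lambda D\big)\subseteq E.
$$
Then, applying either Theorem~\ref{theorem:Hwang} or Lemma~\ref{lemma:Hwang} to the fibration induced on $X$, together with Theorem~\ref{theorem:connectedness}, I would argue that $\LCS(X,\lambda D)$ must be a single fiber, a single curve in $E$, or a point. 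The standard intersection computations—pairing $D$ against a general fiber of the natural projection $E\to C$ and against the fibers of the ambient fibration—should force $E\not\subset\LCS(X,\lambda D)$ and then pin the locus down to something too small to be connected in the required sense, or directly contradict a multiplicity bound like $\mu\le$ (anticanonical degree).

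The main obstacle, as in Lemmas~\ref{lemma:2-30} and~\ref{lemma:2-31}, will be the endgame once $\LCS(X,\lambda D)$ has been cut down to a single point $O$ lying on $E$ (or on an intersection $E\cap G$ of two exceptional divisors). At that stage the clean way to finish is to exhibit an auxiliary effective divisor—typically a general member $R$ of a base-point-free linear system avoiding $O$, combined with a boundary through the relevant special point—so that
$$
\LCS\Big(X,\ \lambda D+\tfrac12\big(\bar H+2R\big)\Big)=R\cup O
$$
is disconnected while $-(K_X+\lambda D+\tfrac12(\bar H+2R))\qlineq(\lambda-1/3)(-K_X)$ remains ample, contradicting Theorem~\ref{theorem:connectedness}. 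Producing the right auxiliary pencil and verifying the ampleness inequality with the correct numerical coefficients is where the delicate bookkeeping lives; everything upstream is routine once the precise diagram for $3.18$ is fixed. I would also keep Lemma~\ref{lemma:elliptic-times-P1} in reserve: if $E\cong C\times\P^1$ for an elliptic or rational curve $C$, the restriction argument via Theorem~\ref{theorem:adjunction} gives an immediate contradiction after bounding the multiplicity $\mu$ along $E$ by intersecting with a general conic-bundle fiber.
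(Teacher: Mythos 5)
Your outline matches the paper's template: for $3.18$ the threefold sits in a chain $X\xrightarrow{\beta}V\xrightarrow{\alpha}Q$, where $V$ is the threefold $2.29$ (blow-up of a smooth quadric along a conic $C$) and $\beta$ blows up the fiber of the exceptional divisor over a point $O\in C$; the decomposition is $-K_X\sim 3\bar H+2E+G$ with $\bar H$ a fiber of the induced degree-$7$ del Pezzo fibration, giving $\lct(X)\le 1/3$. The first localization is exactly as you predict: $\lct(V)=1/3$ by Lemma~\ref{lemma:2-29} forces $\LCS(X,\lambda D)\subseteq G$, and Lemma~\ref{lemma:Hwang} applied to the del Pezzo fibration (whose smooth fibers have $\lct=1/3$) together with Theorem~\ref{theorem:connectedness} cuts the locus down to $G\cap S$ for a unique singular fiber $S$.

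The genuine gap is in your endgame. After this localization the locus is only known to sit inside the \emph{curve} $G\cap S$, and neither a ``multiplicity bound'' nor a generic auxiliary divisor of the form $\tfrac12(\bar H+2R)$ finishes from there: to run the disconnectedness trick you must first reduce $\LCS(X,\lambda D)$ to a single point $P$ \emph{and} certify that $P$ avoids both $E$ and a general fiber $\bar H$. The paper achieves this with Lemma~\ref{lemma:quadric-cone}, the adjunction statement for the singular degree-$7$ del Pezzo surface $S$ (the resolution of the quadric cone $\beta(S)\subset\P^3$): writing $D=\mu S+\Omega$ and restricting, the non-log-canonical point of $(S,\lambda\Omega\vert_S)$ on the $(-1)$-curve $G\cap S$ is forced to lie on the unique ruling $L$ of the cone meeting $\beta(G)$, so $P=L\cap G$ with $L\cap E=\varnothing$; only then does
$$
\LCS\Big(X,\ \lambda D+\bar H+\tfrac{2}{3}E\Big)\supseteq \bar H\cup P
$$
become visibly disconnected while $-(K_X+\lambda D+\bar H+\tfrac{2}{3}E)$ stays ample. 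Your proposal never identifies this step, and your reserve tool (Lemma~\ref{lemma:elliptic-times-P1}) is inapplicable here since $G$ and $E$ are rational ruled surfaces rather than products with an elliptic curve, so as written the argument does not close.
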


\begin{proof}
Let $Q\subset\mathbb{P}^{4}$ be a smooth quadric hypersurface,
$C\subset Q$ an irreducible conic, and  $O\in C$ a point. Then there is a
commutative diagram
$$
\xymatrix{
Y\ar@/^1pc/@{->}[drrrrrr]^{\eta}\ar@/_1pc/@{->}[dddrr]_{\tau}&&&&&&&&\\
&&X\ar@{->}[d]_{\gamma}\ar@{->}[ull]_{\sigma}\ar@{->}[rr]^{\beta}&&V\ar@{->}[d]^{\alpha}\ar@{->}[rr]^{\omega}&&\mathbb{P}^{1}\\
&&U\ar@{->}[d]_{\upsilon}\ar@{->}[rr]_{\zeta}&&Q\ar@{-->}[dll]^{\psi}\ar@{-->}[urr]_{\phi}&&\\
&&\P^3\ar@/_2.5pc/@{-->}[rrrruu]_{\xi}&&&&}
$$
where $\zeta$ is a blow up of the point $O$, the morphisms
$\alpha$ and $\gamma$ are blow ups of the conic $C$ and its proper
transform, respectively, $\beta$ is a blow up of the fiber of the
morphism $\alpha$ over~the~point~$O$, the map $\psi$ is a
projection from~$O$, the map $\phi$ is induced by the projection
from the two-dimen\-si\-onal linear subspace that contains the
conic $C$, the morphism $\tau$ is a blow up of the line $\psi(C)$,
the morphism $\upsilon$ is a blow up of an irreducible conic
$Z\subset\mathbb{P}^{3}$ such that
$$
\psi\big(C\big)\cap Z\ne\varnothing,
$$
and $Z$ and $\psi(C)$ are not contained in one plane, the morphism
$\sigma$ is a blow up of the proper transform of the conic $Z$,
the map $\xi$ is a projection from $\psi(C)$, the morphism $\eta$
is a $\mathbb{P}^{1}$-bundle, and $\omega$ is a fibration into
quadric surfaces.

Let $\bar{H}$ be a general fiber of $\omega\circ\beta$. Then
$\bar{H}$ is a del Pezzo surface such that $K_{\bar{H}}^{2}=7$,
and
$$
-K_{X}\sim 3\bar{H}+2E+G,
$$
where $G$ and $E$ are the exceptional divisors of $\beta$ and
$\gamma$, respectively. In particular, $\mathrm{lct}(X)\leqslant
1/3$.

We suppose that $\mathrm{lct}(X)<1/3$. Then there exists an
effective $\mathbb{Q}$-divisor $D\qlineq -K_{X}$ such that the log
pair $(X,\lambda D)$ is not log canonical for some positive
rational number $\lambda<1/3$. Note that
$$
\varnothing\ne\mathrm{LCS}\Big(X,\lambda D\Big)\subseteq G,
$$
since $\mathrm{lct}(V)=1/3$ by Lemma~\ref{lemma:2-29} and
$\beta(D)\qlineq -K_{V}$.

Applying Lemma~\ref{lemma:Hwang} to the del Pezzo fibration
$\omega\circ\beta$ and using Theorem~\ref{theorem:connectedness},
we see that there is a unique singular fiber $S$ of the fibration
$\omega\circ\beta$ such that
$$
\varnothing\ne\mathrm{LCS}\Big(X,\lambda D\Big)\subseteq G\cap S,
$$
because the equality $\mathrm{lct}(\bar{H})=1/3$ holds (see
Example~\ref{example:del-Pezzos}).

Let $P\in G\cap S$ be an arbitrary point of the locus
$\mathrm{LCS}(X,\lambda D)$. Put
$$
D=\mu S+\Omega,
$$
where $\Omega$ is an effective $\mathbb{Q}$-divisor such that
$S\not\subset\mathrm{Supp}(\Omega)$. Then
$$
P\in\mathrm{LCS}\Big(S,\ \lambda \Omega\Big\vert_{S}\Big)%
$$
by Theorem~\ref{theorem:adjunction}.

We can identify the surface $\beta(S)$ with an irreducible quadric
cone in $\mathbb{P}^3$. Note that $G\cap S$ is an exceptional
curve on $S$, so that there is a unique ruling of the cone
$\beta(S)$ intersecting the curve $\beta(G)$. Let $L\subset S$ be
a proper transform of this ruling. Then $L\cap G\ne\varnothing$
(moreover, $|L\cap G\cap S|=1$), while $L\cap E=\varnothing$.
Hence $P=L\cap G$ by Lemma~\ref{lemma:quadric-cone}. We see that
$\mathrm{LCS}(X,\lambda D)=P$. One has
$$
\bar{H}\cup P\subseteq\mathrm{LCS}\left(X,\ \lambda D+\bar{H}+\frac{2}{3}E\right)\subseteq\bar{H}\cup P\cup E,%
$$
because $\bar{H}$ is a general fiber of the fibration
$\omega\circ\beta$. Therefore, the locus
$$
\varnothing\neq\mathrm{LCS}
\left(X,\ \lambda D+\bar{H}+\frac{2}{3}E\right)\subset X%
$$
must be disconnected, because $P\not\in\bar{H}$ and $P\not\in E$.
But
$$
-\left(K_{X}+\lambda
D+\bar{H}+\frac{2}{3}E\right)\qlineq\bar{H}+\frac{2}{3}\Big(E+G\Big)+\big(\lambda-1/3)K_{X}
$$
is an ample divisor, which is impossible by
Theorem~\ref{theorem:connectedness}.
\end{proof}

The proof of Lemma~\ref{lemma:3-18} implies the following
corollary.

\begin{corollary}
\label{corollary:4-4-and-5-1} Suppose that $\gimel(X)=4.4$ or
$\gimel(X)=5.1$. Then $\mathrm{lct}(X)=1/3$.
\end{corollary}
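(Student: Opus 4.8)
The plan is to transport the argument of Lemma~\ref{lemma:3-18} essentially verbatim, after producing for each of the two families the analogue of the commutative diagram used there. Concretely, I would first exhibit a birational morphism $\beta\colon X\to V$ contracting an irreducible divisor $G$, where $V$ is a threefold whose global log canonical threshold equals $1/3$ as already established earlier (of type $2.29$, where $\lct(V)=1/3$ by Lemma~\ref{lemma:2-29}), together with a del Pezzo fibration $\omega\circ\beta\colon X\to\P^1$ whose general fibre $\bar H$ satisfies $\lct(\bar H)=1/3$ by Example~\ref{example:del-Pezzos}, and whose singular fibres are proper transforms of quadric cones. The extra Picard number of families $4.4$ and $5.1$ is to be accounted for by additional blow-downs in the diagram that leave both the fibration $\omega\circ\beta$ and the contraction $\beta$ intact. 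From the resulting decomposition $-K_X\sim 3\bar H+2E+G$, with $E$ an auxiliary exceptional divisor, one reads off $\lct(X)\le 1/3$, so the content of the corollary is the reverse inequality.

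For the reverse inequality I would argue by contradiction exactly as in Lemma~\ref{lemma:3-18}. Assuming $\lct(X)<1/3$, choose an effective $D\qlineq -K_X$ and a rational $\lambda<1/3$ with $(X,\lambda D)$ not log canonical. Since $\beta(D)\qlineq -K_V$ and $\lct(V)=1/3$, the locus $\LCS(X,\lambda D)$ is nonempty and contained in $G$. Applying Lemma~\ref{lemma:Hwang} to the del Pezzo fibration $\omega\circ\beta$ together with Theorem~\ref{theorem:connectedness} confines $\LCS(X,\lambda D)$ to $G\cap S$ for a single singular fibre $S$. Writing $D=\mu S+\Omega$ with $S\not\subset\Supp(\Omega)$ and invoking Theorem~\ref{theorem:adjunction}, the restricted pair $\bigl(S,\lambda\Omega\vert_S\bigr)$ fails to be log canonical at the relevant points, so the problem descends to the surface $S$.

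The decisive step, again identical to Lemma~\ref{lemma:3-18}, is to identify $\beta(S)$ with an irreducible quadric cone in $\P^3$ and to apply Lemma~\ref{lemma:quadric-cone}: letting $L\subset S$ be the proper transform of the unique ruling meeting $\beta(G)$, one gets $L\cap G\neq\varnothing$ while $L\cap E=\varnothing$, which forces $\LCS(X,\lambda D)$ to be the single point $P=L\cap G$. Adding a general fibre $\bar H$ and the divisor $\tfrac{2}{3}E$ then produces
$$
\bar H\cup P\subseteq\LCS\!\left(X,\ \lambda D+\bar H+\frac{2}{3}E\right)\subseteq \bar H\cup P\cup E,
$$
a disconnected locus; since $-\bigl(K_X+\lambda D+\bar H+\tfrac{2}{3}E\bigr)$ is ample (using $\lambda<1/3$), this contradicts Theorem~\ref{theorem:connectedness} because $P\notin\bar H\cup E$.

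The hard part will not be the log canonical threshold estimate, which copies Lemma~\ref{lemma:3-18} line by line once the diagram is available, but rather the preliminary geometric bookkeeping. One must verify from the Mori--Mukai description of families $4.4$ and $5.1$ that each genuinely carries the same configuration --- a contraction $\beta$ onto a threefold of type $2.29$ and a del Pezzo fibration with quadric-cone singular fibres --- and that the extremal contractions responsible for the higher Picard number interact trivially with this configuration, so that the anticanonical decomposition witnessing $\lct(X)\le 1/3$ is preserved. Once that diagram is in place, every remaining step is supplied by the results already proved (Lemmas~\ref{lemma:2-29}, \ref{lemma:Hwang} and \ref{lemma:quadric-cone}, together with Theorems~\ref{theorem:adjunction} and \ref{theorem:connectedness}).
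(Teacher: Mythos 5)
Your proposal is correct and is essentially the paper's own argument: the paper derives Corollary~\ref{corollary:4-4-and-5-1} with the single remark that the proof of Lemma~\ref{lemma:3-18} carries over, which is exactly the transport you describe (both $4.4$ and $5.1$ are obtained from a threefold of type $2.29$ by blowing up two, respectively three, fibres of the exceptional divisor over the conic, so the anticanonical decomposition and the quadric-cone fibre analysis persist verbatim). The only cosmetic adjustment is that the contracted locus $G$ is a disjoint union of two or three irreducible divisors rather than one, which changes nothing in the argument.
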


\begin{lemma}
\label{lemma:3-19} Suppose that $\gimel(X)=3.19$. Then
$\mathrm{lct}(X)=1/3$.
\end{lemma}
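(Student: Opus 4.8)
The plan is to exploit the birational geometry of the threefold $X$ with $\gimel(X)=3.19$. Following the pattern established in the preceding lemmas (for instance Lemmas~\ref{lemma:3-10} and \ref{lemma:3-15}), the first step is to identify the Mori contractions emanating from $X$. A threefold of type $3.19$ has Picard rank three and sits in a diagram of blow-ups over lower-rank Fano threefolds. Concretely, I expect $X$ to admit a birational morphism $\alpha\colon X\to Q$ onto the smooth quadric threefold $Q\subset\mathbb{P}^4$ together with compatible fibration structures; the exceptional divisors will be contracted to conics or their proper transforms, as in the $2.29$--$2.31$ and $3.10$ cases. First I would write down the explicit anticanonical decomposition $-K_X\sim a\bar{H}+\sum b_iE_i$ in terms of a proper transform $\bar{H}$ of a hyperplane section of $Q$ (or of $\mathbb{P}^3$) and the exceptional divisors $E_i$. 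Reading off the coefficients immediately yields the upper bound $\mathrm{lct}(X)\le 1/3$, since the corresponding boundary divisor realizes a log canonical pair that is not log terminal.

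\textbf{The reduction step.} For the reverse inequality I would argue by contradiction: assume $\mathrm{lct}(X)<1/3$, so there is an effective $\mathbb{Q}$-divisor $D\qlineq -K_X$ and a rational number $\lambda<1/3$ with $(X,\lambda D)$ not log canonical. The key is to use $\mathrm{lct}(Q)=1/3$ (Example~\ref{example:hypersurface-index-big}) together with the fact that $\alpha(D)\qlineq -K_Q$: this forces
$$
\varnothing\ne\mathrm{LCS}\big(X,\lambda D\big)\subseteq E,
$$
where $E$ is the relevant exceptional divisor, confining the locus of log canonical singularities. Next I would apply Lemma~\ref{lemma:Hwang} or Theorem~\ref{theorem:Hwang} to the fibration structures on $X$ (a del Pezzo fibration and/or a quadric fibration) to cut the locus down further, combined with Theorem~\ref{theorem:connectedness} to control connectedness. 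The goal is to localize $\mathrm{LCS}(X,\lambda D)$ either to a single point or to a curve lying in an intersection $E\cap S$ of the exceptional divisor with a singular fiber, exactly as in Lemma~\ref{lemma:3-18}.

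\textbf{Closing the argument.} Once the locus is pinned down, I would run the standard multiplicity estimate. Writing $D=\mu E+\Omega$ with $E\not\subset\mathrm{Supp}(\Omega)$, intersection with a general fiber $\Gamma$ of an appropriate projection gives a numerical bound on $\mu$ (typically $\mu\le 1$ or $\mu\le 2$). Then Theorem~\ref{theorem:adjunction} transfers the failure of log canonicity to the restriction $(E,\lambda\Omega|_E)$, where $E$ is a surface like $\mathbb{P}^1\times\mathbb{P}^1$ or $\mathbb{F}_1$ whose relevant threshold is known; applying Lemma~\ref{lemma:elliptic-times-P1}, Lemma~\ref{lemma:F1}, or the del Pezzo lemmas of Section~\ref{section:del-Pezzo} yields a contradiction. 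Alternatively, if the locus is a point $O$, I would add a general member of $|\alpha^*(H)|$ (or a suitable cone/tangent-plane combination) to produce a divisor whose associated $\mathrm{LCS}$ is disconnected while $-(K_X+\cdots)$ stays ample, contradicting Theorem~\ref{theorem:connectedness}.

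\textbf{The main obstacle.} The hard part will be determining the precise birational model and the exact normal bundles of the curves being blown up, since these govern the numerical coefficients in the anticanonical decomposition and the isomorphism types of the exceptional divisors; a wrong identification of $E$ (say $\mathbb{F}_1$ versus $\mathbb{P}^1\times\mathbb{P}^1$) changes which restriction lemma applies. A secondary difficulty is that, with three contractions available, I must choose the fibration to which Theorem~\ref{theorem:Hwang} is applied so that the confinement of $\mathrm{LCS}(X,\lambda D)$ is genuinely to a zero- or one-dimensional set; if the first fibration only yields containment in a surface, I would need to iterate using a second fibration, as in the $3.10$ and $3.15$ proofs. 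I expect the generality assumptions on $X$ (smoothness of an auxiliary quadric or at-worst-ordinary-double-point singular fibers) to be exactly what makes the final numerical contradiction go through.
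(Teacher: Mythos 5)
Your overall architecture matches the paper's: the upper bound from an explicit anticanonical decomposition, the confinement $\varnothing\ne\mathrm{LCS}(X,\lambda D)\subseteq E_1\cup E_2$ via $\mathrm{lct}(Q)=1/3$, and a final contradiction through Theorem~\ref{theorem:Hwang} and Theorem~\ref{theorem:connectedness}. But there are two genuine problems. First, you misidentify the geometry: for $\gimel(X)=3.19$ the threefold is the blow up of $Q$ at \emph{two non-collinear points} $P_1,P_2=L\cap Q$, not along conics, so the exceptional divisors are $E_i\cong\mathbb{P}^2$ with $-K_X\sim 3\bar H+E_1+E_2$. This is not a cosmetic issue: your proposed closing moves (restricting to $E$ and invoking Lemma~\ref{lemma:elliptic-times-P1}, Lemma~\ref{lemma:F1}, or the del Pezzo lemmas) have nothing to bite on when $E\cong\mathbb{P}^2$. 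Indeed, if you write $D=\mu E_1+\Omega$ and bound $\mu$ by intersecting with the proper transform of a general line of $Q$ through $P_1$, you only get $\mu\le 2$, whence $\Omega\vert_{E_1}\qlineq\mathcal{O}_{\mathbb{P}^2}(2+\mu)$ with $2+\mu$ possibly as large as $4$; since $\lambda(2+\mu)$ can then exceed the threshold $\mathcal{O}_{\mathbb{P}^2}(1)$-degree $3$, adjunction to $E_1\cong\mathbb{P}^2$ yields no contradiction.

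Second, and more importantly, you do not supply the decisive step. After reducing (by connectedness) to $\mathrm{LCS}(X,\lambda D)\subseteq E_1$, the paper contracts the surface $\bar G_2$ --- the proper transform of the quadric cone over $P_2$, which is \emph{disjoint} from $E_1$ because $L\not\subset Q$ --- obtaining the model $\mathbb{P}(\mathcal{O}_{\mathbb{P}^2}\oplus\mathcal{O}_{\mathbb{P}^2}(1))$ on which the non-log-canonicity persists and the image of $E_1$ becomes a \emph{section} of the $\mathbb{P}^1$-bundle $\omega_1$. Only then does Theorem~\ref{theorem:Hwang} give a contradiction, since it would force a whole fiber into the $\mathrm{LCS}$, which a section cannot contain. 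On $X$ itself the only available fibration over $\mathbb{P}^2$ is the conic bundle coming from projection from $L$, whose reducible fibers prevent the clean application of Theorem~\ref{theorem:Hwang} you envisage. So the plan as written stalls exactly where you flag the ``main obstacle'': identifying the auxiliary birational model and the disjointness $\bar G_2\cap E_1=\varnothing$ is the content of the proof, not a detail to be filled in afterwards.
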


\begin{proof}
Let $Q\subset\mathbb{P}^4$ be a smooth quadric, and let
$L\subset\mathbb{P}^{4}$ be a line such that
$$
L\cap Q=P_{1}\cup P_{2},
$$
where $P_{1}$ and $P_{2}$ are different points. Let $\eta\colon
Q\dasharrow\mathbb{P}^{2}$ be the projection from $L$. The diagram
$$
\xymatrix{
&&X\ar@{->}[ddl]^{\beta_{2}}\ar@{->}[ddr]_{\beta_{1}}\ar@{->}[dddll]_{\gamma_{1}}\ar@{->}[dddrr]^{\gamma_{2}}&&\\
&&&&&&&\\
&U_{1}\ar@{->}[d]_{\delta_{1}}\ar@{->}[r]^{\alpha_{1}}&Q\ar@{-->}[dd]^{\eta}\ar@{-->}[dl]^{\xi_{1}}\ar@{-->}[dr]_{\xi_{2}}&U_{2}\ar@{->}[d]^{\delta_{2}}\ar@{->}[l]_{\alpha_{2}}&&\\
\mathbb{P}\Big(\mathcal{O}_{\mathbb{P}^{2}}\oplus\mathcal{O}_{\mathbb{P}^{2}}\big(1\big)\Big)\ar@{->}[r]^(0.7){\pi_{2}}\ar@{->}[drr]_{\omega_{2}}&\mathbb{P}^{3}\ar@{-->}[dr]^{\zeta_{2}}&&\mathbb{P}^{3}\ar@{-->}[dl]_{\zeta_{1}}&\mathbb{P}\Big(\mathcal{O}_{\mathbb{P}^{2}}\oplus\mathcal{O}_{\mathbb{P}^{2}}\big(1\big)\ar@{->}[l]_(0.7){\pi_{1}}\ar@{->}[dll]^{\omega_{1}}\Big)\\
&&\mathbb{P}^{2}&&}
$$
commutes, where $\alpha_{i}$ is a blow up of the point $P_{i}$,
the morphism $\beta_{i}$ contracts a surface
$$
\mathbb{P}^{2}\cong E_{i}\subset X
$$
to the point that dominates $P_{i}\in Q$, the map $\xi_{i}$ is a
projection from $P_{i}$, the map $\zeta_{i}$ is a projection from
the image of $P_{i}$, the morphism $\delta_{i}$ is a contraction
of a surface
$$
\mathbb{F}_{2}\cong G_{i}\subset U_{i}
$$
to a conic $C_{i}\subset\mathbb{P}^{3}$, the morphism $\pi_{i}$ is
a blow up of the image of $P_{i}$, the morphism $\gamma_{i}$
contracts the proper transform of $G_{i}$ to the proper transform
of $C_{i}$, and $\omega_{i}$ is a natural projection.

The map $\gamma_{1}\circ\gamma_{2}^{-1}$ is an elementary
transformation of a conic bundle (see \cite{Sa80}), and
$$
\delta_{1}\circ\beta_{2}\big(E_{1}\big)\subset\mathbb{P}^{3}\supset\delta_{2}\circ\beta_{1}\big(E_{2}\big)
$$
are planes that contain the conics $C_{1}$ and $C_{2}$,
respectively.

Let $H$ be a general hyperplane section of $Q$ such that $P_1\in
H\ni P_{2}$. Then
$$
-K_{X}\sim 3\bar{H}+E_{1}+E_{2},
$$
where $\bar{H}$ is the proper transform of $H$ on the threefold
$X$. We see that $\mathrm{lct}(X)\leqslant 1/3$.

We suppose that $\mathrm{lct}(X)<1/3$. Then there exists an
effective $\mathbb{Q}$-divisor $D\qlineq -K_{X}$ such that the log
pair $(X,\lambda D)$ is not log canonical for some $\lambda<1/3$.
Note that
$$
\varnothing\ne\mathrm{LCS}\Big(X,\ \lambda D\Big)\subseteq E_{1}\cup E_{2},%
$$
because $\mathrm{lct}(Q)=1/3$. By
Theorem~\ref{theorem:connectedness}, we may assume that
$$
\varnothing\ne\mathrm{LCS}\Big(X,\ \lambda D\Big)\subseteq E_{1}.%
$$

Let $\bar{G}_{2}\subset X$ be a proper transform of $G_{2}$. Then
$\bar{G}_{2}\cap E_{1}=\varnothing$, because
$\alpha_{2}(G_{2})\subset Q$~is~a~quadric cone whose vertex is the
point $P_{2}$, and the line $L$ is not contained in $Q$. Hence
$$
\varnothing\ne\mathrm{LCS}\Big(\mathbb{P}\Big(\mathcal{O}_{\mathbb{P}^{2}}\oplus\mathcal{O}_{\mathbb{P}^{2}}\big(1\big)\Big),\ \lambda\gamma_{2}\big(D\big)\Big)\subseteq \gamma_{2}\big(E_{1}\big),%
$$
where $\gamma_{2}(E_{1})$ is a section of $\omega_{1}$. Applying
Theorem~\ref{theorem:Hwang} to $\omega_{1}$, we obtain a
contradiction.
\end{proof}

\begin{lemma}
\label{lemma:3-20} Suppose that $\gimel(X)=3.20$. Then
$\mathrm{lct}(X)=1/3$.
\end{lemma}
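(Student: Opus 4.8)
The plan is to realize $X$ as the blow-up $\alpha\colon X\to Q$ of a smooth quadric threefold $Q\subset\mathbb{P}^{4}$ along two disjoint lines $L_{1}$ and $L_{2}$, with exceptional divisors $E_{1}$ and $E_{2}$. First I would record the geometry that is needed. Since $L_{1}$ and $L_{2}$ are disjoint they span a hyperplane $\Lambda\cong\mathbb{P}^{3}$, and $S=Q\cap\Lambda$ is a quadric surface containing both lines; moreover the normal bundle of a line in $Q$ is $\mathcal{O}\oplus\mathcal{O}(1)$, so that $E_{i}\cong\mathbb{F}_{1}$ and $E_{1}\cap E_{2}=\varnothing$. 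Writing $H=\mathcal{O}_{Q}(1)$, so that $-K_{X}\sim 3\alpha^{*}H-E_{1}-E_{2}$, and using $\bar{S}\sim\alpha^{*}H-E_{1}-E_{2}$ for the proper transform $\bar{S}$ of $S$, I obtain the effective anticanonical divisor
$$
-K_{X}\sim 3\bar{S}+2E_{1}+2E_{2}.
$$
Since the pair $(X,\ \bar{S}+\frac{2}{3}E_{1}+\frac{2}{3}E_{2})$ is log canonical but not log terminal along $\bar{S}$, this gives the easy bound $\mathrm{lct}(X)\leqslant 1/3$.

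For the reverse inequality I would argue by contradiction: suppose there is an effective $D\qlineq -K_{X}$ such that $(X,\lambda D)$ is not log canonical for some rational $\lambda<1/3$. Because $\alpha(D)\qlineq -K_{Q}$ and $\mathrm{lct}(Q)=1/3$ by Example~\ref{example:hypersurface-index-big}, the locus $\mathrm{LCS}(X,\lambda D)$ is nonempty and contained in $E_{1}\cup E_{2}$. As $E_{1}\cap E_{2}=\varnothing$ and $\mathrm{LCS}(X,\lambda D)$ is connected by Theorem~\ref{theorem:connectedness}, it lies entirely in one exceptional divisor, and by the symmetry of $L_{1}$ and $L_{2}$ I may assume
$$
\varnothing\ne\mathrm{LCS}\Big(X,\ \lambda D\Big)\subseteq E_{1}.
$$

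Next I would exploit the conic bundle $\phi_{1}\colon X\to\mathbb{P}^{2}$ arising from the projection of $Q$ away from $L_{1}$, whose general fibre $\Gamma$ is the proper transform of a residual line $L'$ meeting $L_{1}$ once; thus $\Gamma\cdot E_{1}=1$ and $-K_{X}\cdot\Gamma=2$. Writing $D=\mu E_{1}+\Omega$ with $E_{1}\not\subseteq\mathrm{Supp}(\Omega)$, intersection with $\Gamma$ gives $2=\mu+\Omega\cdot\Gamma\geqslant\mu$, so $\mu\leqslant 2$; in particular $E_{1}\not\subseteq\mathrm{LCS}(X,\lambda D)$, because an irreducible surface in the non-log-canonical locus would have coefficient at least $1/\lambda>3$. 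Since $\lambda\mu<2/3<1$, the pair $(X,E_{1}+\lambda\Omega)$ is not log canonical in a neighbourhood of $E_{1}$, and Theorem~\ref{theorem:adjunction} transfers this to non-log-canonicity of $(E_{1},\lambda\Omega|_{E_{1}})$. Restricting and using $E_{2}|_{E_{1}}=0$, I would compute $\Omega|_{E_{1}}\qlineq 3f+(1+\mu)Z$ on $E_{1}\cong\mathbb{F}_{1}$, where $f$ is a fibre and $Z$ is the $(-1)$-section; since $\mu\leqslant 2$ and $\lambda<1/3$ force both coefficients $3\lambda$ and $(1+\mu)\lambda$ to be strictly less than $1$, Lemma~\ref{lemma:F1} shows that $(E_{1},\lambda\Omega|_{E_{1}})$ is in fact log canonical, a contradiction. (If preferred, one may first apply Theorem~\ref{theorem:Hwang} to $\phi_{1}$ to pin $\mathrm{LCS}(X,\lambda D)$ to a single fibre of $E_{1}\to L_{1}$ and reach a contradiction with Lemma~\ref{lemma:plane} via push-forward to $\mathbb{P}^{2}$, exactly as in the $1/3$-arguments of Lemmas~\ref{lemma:2-31} and~\ref{lemma:3-18}.)

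The main obstacle I expect is the bookkeeping for $\phi_{1}$: unlike the single-line case of Lemma~\ref{lemma:2-31}, the second blown-up line $L_{2}$ makes $\phi_{1}$ only a conic bundle rather than a genuine $\mathbb{P}^{1}$-bundle, so fibres over the image of $L_{2}$ degenerate, and one must verify that $\mathrm{LCS}(X,\lambda D)$ cannot hide in a reducible fibre and that the residual-line family still covers $E_{1}$ with $\Gamma\cdot E_{1}=1$. I would neutralise this by invoking Remark~\ref{remark:convexity} to assume $E_{1}\not\subseteq\mathrm{Supp}(D)$ or $\bar{S}\not\subseteq\mathrm{Supp}(D)$ before restricting, which keeps the numerical class $\Omega|_{E_{1}}$ on $E_{1}\cong\mathbb{F}_{1}$ exactly in the form required by Lemma~\ref{lemma:F1}, after which the argument closes just as in the cited $1/3$-computations.
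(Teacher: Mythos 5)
Your proof is correct, but the lower bound is obtained by a genuinely different route from the paper's. The paper realizes $X$ as dominating three auxiliary Fano threefolds at once: the two blow-ups $V_1$ and $V_2$ of $Q$ along a single line (deformation type 2.31, with $\mathrm{lct}(V_i)=1/3$ by Lemma~\ref{lemma:2-31}) and the flag threefold $W$ (with $\mathrm{lct}(W)=1/2$ by Theorem~\ref{theorem:del-Pezzo}), whose exceptional divisors on $X$ are $E_2$, $E_1$ and the surface you call $\bar{S}$, respectively. Pushing a hypothetical non-log-canonical divisor down each of these contractions forces
$$
\varnothing\ne\mathrm{LCS}\Big(X,\ \lambda D\Big)\subseteq E_1\cap E_{2}\cap\bar{S}=\varnothing,
$$
and the proof ends in one line. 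You instead use only the contraction to $Q$ (via $\mathrm{lct}(Q)=1/3$) together with Theorem~\ref{theorem:connectedness} and the disjointness $E_1\cap E_2=\varnothing$ to localize the locus inside a single $E_i\cong\mathbb{F}_{1}$, bound the multiplicity $\mu$ of $E_i$ in $D$ by $2$ by intersecting with the proper transforms of the residual lines, and then run inversion of adjunction (Theorem~\ref{theorem:adjunction}) and Lemma~\ref{lemma:F1} on $E_i$ --- in effect re-proving the relevant case of Lemma~\ref{lemma:2-31} on $X$ itself. Your numerical bookkeeping checks out ($N_{L_i/Q}\cong\mathcal{O}\oplus\mathcal{O}(1)$ so $E_i\cong\mathbb{F}_1$, $E_i\vert_{E_i}\sim -Z$, hence $\Omega\vert_{E_i}\qlineq 3f+(1+\mu)Z$ with both coefficients of $\lambda\Omega\vert_{E_i}$ below $1$), and the degenerate fibers of the conic bundle over the image of $L_2$ are harmless since only a general fiber is needed for the bound $\mu\leqslant 2$. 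What you lose relative to the paper is brevity; what you gain is a self-contained argument that does not require identifying the intermediate threefolds $V_1$, $V_2$, $W$ or quoting their global log canonical thresholds.
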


\begin{proof}
Let $Q\subset\mathbb{P}^{4}$ be a smooth quadric threefold, and
let
$$
W\subset\mathbb{P}^{2}\times\mathbb{P}^{2}
$$
be a smooth divisor of bi-degree $(1,1)$. Let
$L_{1}\subset Q\supset L_{2}$ be lines such that~$L_{1}\cap
L_{2}=\varnothing$; then there exists a commutative diagram
$$
\xymatrix{
&&&&W\ar@{->}[llllddd]_{\upsilon_{1}}\ar@{->}[rrrrddd]^{\upsilon_{2}}&&&&\\
&&&&X\ar@{->}[ld]_{\beta_{2}}\ar@{->}[rd]^{\beta_{1}}\ar@{->}[u]_{\omega}&&&&\\
&&&V_{1}\ar@{->}[llld]_{\pi_{1}}\ar@{->}[rd]_{\alpha_{1}}&&V_{2}\ar@{->}[ld]^{\alpha_{2}}\ar@{->}[rrrd]^{\pi_{2}}&&&\\
\P^2&&&&Q\ar@{-->}[rrrr]_{\psi_{2}}\ar@{-->}[llll]^{\psi_{1}}&&&&\P^2}
$$
where the morphisms $\alpha_{i}$ and $\beta_{i}$ are blow ups of
the line $L_{i}$ and its proper transform, respectively, the
morphism $\omega$ is a blow up of a smooth curve $C\subset W$ of bi-degree
$(1, 1)$, the morphisms
$\upsilon_{i}$ and $\pi_{i}$ are natural $\mathbb{P}^{1}$-bundles,
and the map $\psi_{i}$ is a linear projection from the line
$L_{i}$.

Let $\bar{H}$ be the exceptional divisor of $\omega$, and let
$E_i$ be the exceptional divisor of $\beta_{i}$. Then
$$
-K_{X}\sim 3\bar{H}+2E_1+2E_2,
$$
because $\alpha_{2}\circ\beta_{1}(\bar{H})\subset Q$ is a
hyperplane section that contains $L_{1}$ and $L_{2}$. Hence
$\mathrm{lct}(X)\leqslant 1/3$.

We suppose that $\mathrm{lct}(X)<1/3$. Then there exists an
effective $\Q$-divisor $D\qlineq -K_{X}$ such that the log pair
$(X,\lambda D)$ is not log canonical for some positive rational
number $\lambda<1/3$. Note that
$$
\varnothing\ne\mathrm{LCS}\Big(X,\ \lambda D\Big)\subseteq E_1\cap E_{2}\cap\bar{H}=\varnothing,%
$$
because $\mathrm{lct}(V_{1})=\mathrm{lct}(V_{2})=1/3$ by
Lemma~\ref{lemma:2-31} and $\mathrm{lct}(W)=1/2$ by
Theorem~\ref{theorem:del-Pezzo}, which gives a contradiction.
\end{proof}

\begin{lemma}\label{lemma:3-21}
Suppose that $\gimel(X)=3.21$. Then $\mathrm{lct}(X)=1/3$.
\end{lemma}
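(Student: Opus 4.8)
The plan is to follow the template of Lemmas~\ref{lemma:3-18}--\ref{lemma:3-20}. First I would fix the geometry: the threefold with $\gimel(X)=3.21$ is the blow up $\alpha\colon X\to\P^1\times\P^2$ of a smooth rational curve $C$ with $C\cdot H_1=2$ and $C\cdot H_2=1$, where $H_1=\alpha^{*}p_1^{*}\O_{\P^1}(1)$ and $H_2=\alpha^{*}p_2^{*}\O_{\P^2}(1)$; thus $C$ meets every fibre $\{t\}\times\P^2$ of $p_1$ in two points while $p_2$ maps $C$ isomorphically onto a line $\ell\subset\P^2$. Let $E$ be the exceptional divisor and let $\bar H$ be the proper transform of the unique surface $\P^1\times\ell\in|p_2^{*}\O_{\P^2}(1)|$ containing $C$, so that $\bar H\qlineq H_2-E$. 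Then
\[
-K_X\sim 2H_1+3H_2-E\sim 2H_1+3\bar H+2E,
\]
an effective divisor in which the prime divisor $\bar H$ occurs with coefficient $3$; hence $(X,\frac{1}{3}(2H_1+3\bar H+2E))$ is not log terminal and $\lct(X)\le 1/3$.

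For the reverse inequality I would assume there is an effective $D\qlineq -K_X$ with $(X,\lambda D)$ not log canonical for some $\lambda<1/3$. Since $\alpha(D)\qlineq -K_{\P^1\times\P^2}$ and $\lct(\P^1\times\P^2)=\min(1/2,1/3)=1/3$ by Lemma~\ref{lemma:lct-product}, the pair $(\P^1\times\P^2,\lambda\alpha(D))$ is log terminal, so $\LCS(\P^1\times\P^2,\lambda\alpha(D))=\varnothing$. As $\alpha$ is an isomorphism over the complement of $C$, this forces
\[
\varnothing\neq\LCS\big(X,\ \lambda D\big)\subseteq E.
\]

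The extra structure I would exploit is the del Pezzo fibration $\phi=p_1\circ\alpha\colon X\to\P^1$, whose fibre over $t$ is $\P^2$ blown up at the length-two scheme $C\cap(\{t\}\times\P^2)$, i.e. a (weak) del Pezzo surface of degree $7$. A general fibre $F$ is a smooth degree-$7$ del Pezzo with $\lct(F)=1/3>\lambda$ (Example~\ref{example:del-Pezzos}), so by Theorem~\ref{theorem:Hwang} the pair $(X,\lambda D)$ is log canonical along every such $F$; combined with Theorem~\ref{theorem:connectedness} this confines $\LCS(X,\lambda D)$ to $E\cap S$ for a single special fibre $S$, namely one of the two fibres over the branch points of $p_1|_C$, where the two $\alpha$-exceptional curves collide into a $(-2)$-curve. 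Writing $D=\mu S+\Omega$ with $S\not\subseteq\Supp(\Omega)$ and bounding $\mu\le 2$ by pushing forward to $\P^1\times\P^2$ (so $\lambda\mu<1$), Theorem~\ref{theorem:adjunction} produces a non--log-canonical point of $(S,\lambda\Omega|_S)$ lying on $E\cap S$. Passing to the anticanonical model of $S$, a nodal del Pezzo surface of degree $7$, I would invoke Lemma~\ref{lemma:quadric-cone} (with Lemma~\ref{lemma:del-Pezzo-septic} governing the smooth comparison) to pin this locus to a single distinguished curve of $S$; a final intersection of $D$ with the ruling $E\to C$ and with that curve, followed by one more connectedness argument after adding a general member of $|H_2|$, should then contradict $\lambda<1/3$.

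The hard part will be this surface-level analysis on the special fibres. I must verify that, for the relevant family, every fibre of $\phi$ is a degree-$7$ del Pezzo surface with at worst one ordinary double point, so that the hypotheses of Lemmas~\ref{lemma:del-Pezzo-septic} and~\ref{lemma:quadric-cone} are genuinely met, identify $E\cap S$ with the distinguished $(-1)$-curve appearing in those lemmas, and track the coefficient $\mu$ precisely enough that the residual $\Q$-divisor used at the end of the connectedness step stays anti-ample. Determining the Hirzebruch type of $E$ and the way the exceptional curves of each fibre embed into it is where the bookkeeping is most delicate; it is conceivable that running the final step through the conic bundle $p_2\circ\alpha\colon X\to\P^2$ (whose discriminant is the line $\ell$) rather than through $\phi$ would shorten the argument.
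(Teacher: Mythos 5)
Your opening moves coincide with the paper's: the decomposition $-K_X\sim 2H_1+3\bar H+2E$ for the upper bound, the reduction $\varnothing\ne\LCS(X,\lambda D)\subseteq E$ via $\lct(\P^1\times\P^2)=1/3$, and the use of the degree-$7$ del Pezzo fibration $\pi_1\circ\alpha$ together with Lemma~\ref{lemma:Hwang} and Theorem~\ref{theorem:connectedness} to confine $\LCS(X,\lambda D)$ to the single ruling $L=E\cap S$ of $E\to C$ sitting over a ramification point of $C\to\P^1$ are all exactly the paper's first half. The gap is in your endgame on the nodal fibre $S$. On that surface one has $E\vert_S=2L$ numerically and $-K_S\sim 4L+3\Lambda$, where $L$ is the $(-1/2)$-curve through the $A_1$-point and $\Lambda$ is the $(-1)$-curve disjoint from the node; consequently there exist effective divisors $B\qlineq -K_S$ with $(S,\lambda B)$ non--log canonical along \emph{all} of $L$ for every $\lambda>1/4$. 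So the fibrewise datum $\Omega\vert_S\qlineq -K_S$ cannot exclude the case $\LCS(X,\lambda D)=L$, and the two lemmas you cite do not cover it: Lemma~\ref{lemma:del-Pezzo-septic} is about the smooth degree-$7$ surface, while Lemma~\ref{lemma:quadric-cone} constrains non--log canonical points lying on its curve ``$C$'' (our $\Lambda$), not on $L$, which is where your locus actually sits.

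The paper closes precisely this case by global means that have no counterpart in your sketch. It contracts $\bar H_2\cong\P^1\times\P^1$ to the ordinary double point of an index-two Fano threefold $V$ with $-K_V^3=40$ via $\gamma\colon X\to V$, and uses the small modification $U_2\cong\P(\mathcal E)$ with its $\P^1$-bundle $\omega_2\colon U_2\to\P^2$; Theorem~\ref{theorem:Hwang} applied to $\omega_2$ splits the problem into $\LCS=L$ and $\LCS=\{P\}$ with $P=L\cap\bar H_2$. In the first case $\gamma(L)$ is a line through $\Sing(V)$, and general hyperplane sections of $V\subset\P^6$ are nodal quintic del Pezzo surfaces on which Lemma~\ref{lemma:del-Pezzo-quintic} yields the contradiction; in the second the paper exhibits an explicit anticanonical divisor $\bar F_1+\bar F_2+\bar T$, with $\bar T$ the proper transform of a hyperplane section of $V$ through a line avoiding $\Sing(V)$, whose log canonical locus disconnects $P$ from a curve $\bar Z$, contradicting Theorem~\ref{theorem:connectedness}. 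Until you supply an argument of this genuinely three-dimensional kind — in particular some use of the contraction of $\bar H_2$ or an equivalent device to handle $\LCS=L$ — the proof does not close.
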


\begin{proof}
Let $\pi_{1}\colon \P^1\times\P^2\to\P^1$ and $\pi_{2}\colon
\P^1\times\P^2\to\P^2$ be natural projections. There is a morphism
$$
\alpha\colon X\longrightarrow\P^1\times\P^2
$$
that contracts a surface $E$ to a curve $C$ such that
$\pi_{1}^{*}(\mathcal{O}_{\mathbb{P}^{1}}(1))\cdot C=2$ and
$\pi_{2}^{*}(\mathcal{O}_{\mathbb{P}^{2}}(1))\cdot C=1$.

The curve $\pi_{2}(C)\subset\mathbb{P}^{2}$ is a line. Therefore,
there is a unique surface
$$
H_{2}\in\Big|\pi_{2}^{*}\Big(\mathcal{O}_{\mathbb{P}^{2}}\big(1\big)\Big)\Big|
$$
such that $C\subset H_{2}$. Let $H_{1}$ be a fiber of the
$\mathbb{P}^{2}$-bundle $\pi_{1}$. Then
$$
-K_{X}\sim 2\bar{H_1}+3\bar{H}_{2}+2E,
$$
where $\bar{H}_{i}\subset X$ is a proper transform of the surface
$H_{i}$. In particular, $\lct(X)\leqslant 1/3$.

We suppose that $\mathrm{lct}(X)<1/3$. Then there exists an
effective $\Q$-divisor $D\qlineq -K_{X}$ such that the log pair
$(X,\lambda D)$ is not log canonical for some rational
$\lambda<1/3$. Note that
$$
\LCS\Big(X,\ \lambda D\Big)\subseteq E,
$$
because $\mathrm{lct}(\P^1\times\P^2)=1/3$ by
Lemma~\ref{lemma:lct-P1-product}. There is a commutative diagram
$$
\xymatrix{
&&V&&\\%
U_{1}\ar@{->}[urr]^{\delta_{1}}\ar@{->}[d]_{\omega_{1}}&&X\ar@{->}[u]_{\gamma}\ar@{->}[d]_{\alpha}\ar@{->}[ll]_{\beta_{1}}\ar@{->}[rr]^{\beta_{2}}&&U_{2}\ar@{->}[d]^{\omega_{2}}\ar@{->}[ull]_{\delta_{2}}\\%
\mathbb{P}^{1}&&\mathbb{P}^{1}\times\mathbb{P}^{2}\ar@{->}[ll]_{\pi_{1}}\ar@{->}[rr]^{\pi_{2}}&&\mathbb{P}^{2}}
$$
where $V$ is a Fano threefold of index $2$ with one ordinary
double point $O\in V$ such that $-K_{V}^{3}=40$, the birational
morphism $\beta_{i}$ is a contraction of the surface
$\bar{H}_{2}\cong\mathbb{P}^{1}\times\mathbb{P}^{1}$ to a smooth
rational curve, the~morphism $\delta_{i}$ contracts the curve
$\beta_{i}(\bar{H}_{2})$ to the point $O\in V$ such that the
rational~map
$$
\delta_{2}\circ\delta_{1}^{-1}\colon U_{1}\dasharrow U_{2}
$$
is a standard flop in $\beta_{1}(\bar{H}_{2})\cong\mathbb{P}^{1}$,
the morphism $\omega_{1}$ is a fibration whose general fiber is
$\mathbb{P}^{1}\times\mathbb{P}^{1}$, the morphism $\omega_{2}$ is
a $\mathbb{P}^{1}$-bundle, and $\gamma$ is a birational morphism
such that $\gamma(\bar{H}_{2})=O\in V$.

The variety $V$ is a section of
$\mathrm{Gr}(2,5)\subset\mathbb{P}^9$ by a linear subspace of
codimension $3$. One has
$$
-K_{V}\sim 2\Big(\gamma\big(\bar{H_1}\big)+\gamma\big(E\big)\Big),
$$
and the divisor $\gamma(\bar{H_1})+\gamma(E)$ is very ample. There
is a commutative diagram
$$
\xymatrix{
X\ar@{->}[d]_{\alpha}\ar@{->}[rr]^{\gamma}&&V\ar@{^{(}->}[rr]_{\zeta}&&\mathbb{P}^{6}\ar@{-->}[d]^{\xi}\\%
\mathbb{P}^{1}\times\mathbb{P}^{2}\ar@{^{(}->}[rrrr]_{\eta}&&&&\mathbb{P}^{5}}
$$
such that the embedding $\zeta$ is given by the linear system
$|\gamma(\bar{H_1})+\gamma(E)|$, the map $\xi$ is a linear
projection from the point $O$, the embedding $\eta$ is given by
the linear system $|H_{1}+H_{2}|$.

It follows from \cite[Theorem~3.6]{JaPe06} (see
\cite[Theorem~3.13]{JaPeRa07}) that
$U_{2}\cong\mathbb{P}\big(\mathcal{E}\big)$,
where where $\mathcal{E}$ is a stable rank two vector bundle  on
$\mathbb{P}^{2}$ such that the sequence
$$
0\longrightarrow\mathcal{O}_{\mathbb{P}_{2}}\longrightarrow\mathcal{E}\otimes\mathcal{O}_{\mathbb{P}_{2}}\big(1\big)\longrightarrow \mathcal{I}\otimes\mathcal{O}_{\mathbb{P}_{2}}\big(1\big)\longrightarrow 0%
$$
is exact, where $\mathcal{I}$ is an ideal sheaf of two general
points on $\mathbb{P}^{2}$. One has $c_{1}(\mathcal{E})=-1$ and
$c_{1}(\mathcal{E})=2$. It follows from \cite[Theorem~3.5]{JaPe06}
that
$$
U_{1}\subset\mathbb{P}\Big(\mathcal{O}_{\mathbb{P}^{1}}\oplus\mathcal{O}_{\mathbb{P}^{1}}\big(1\big)\oplus\mathcal{O}_{\mathbb{P}^{1}}\big(1\big)\oplus\mathcal{O}_{\mathbb{P}^{1}}\big(1\big)\Big),
$$
and $U_{1}\in|2T-F|$, where $T$ is a tautological bundle on
$\mathbb{P}(\mathcal{O}_{\mathbb{P}^{1}}\oplus\mathcal{O}_{\mathbb{P}^{1}}(1)\oplus\mathcal{O}_{\mathbb{P}^{1}}(1)\oplus\mathcal{O}_{\mathbb{P}^{1}}(1))$,
and $F$ is a fiber of the projection
$\mathbb{P}(\mathcal{O}_{\mathbb{P}^{1}}\oplus\mathcal{O}_{\mathbb{P}^{1}}(1)\oplus\mathcal{O}_{\mathbb{P}^{1}}(1)\oplus\mathcal{O}_{\mathbb{P}^{1}}(1))\to\mathbb{P}^{1}$.

Either $\bar{H}_{1}$ is a smooth del Pezzo surface such that
$K_{\bar{H}_{1}}^{2}=7$, or
$$
\big|H_{1}\cap C\big|=1,
$$
because $H_{1}\cdot C=2$. Applying Lemma~\ref{lemma:Hwang} to the
morphism $\omega_{1}\circ\beta_{1}$ and the surface $\bar{H}_{1}$,
we see that
\begin{itemize}
\item either $|H_{1}\cap C|=1$,%
\item or $H_{1}\cap\LCS(X,\lambda D)=\varnothing$,
\end{itemize}
because $\mathrm{lct}(\bar{H}_{1})=1/3$ if $\bar{H}_{1}$ is
smooth. So, there is a fiber $L$ of the projection $E\to C$ such
that
$$
\LCS\Big(X,\ \lambda D\Big)\subseteq L
$$
by Theorem~\ref{theorem:connectedness}. Put
$\bar{C}=\bar{H}_{2}\cap E$ and $P=L\cap\bar{C}$. Applying
Theorem~\ref{theorem:Hwang} to $\omega_{2}$ and
$$
\Big(U_{2},\ \lambda\beta_{2}\big(D\big)\Big),
$$
we see that either $\LCS(X,\lambda D)=P$ or $\LCS(X,\lambda D)=L$
by Theorem~\ref{theorem:connectedness}.

Suppose that $\LCS(X,\lambda D)=L$. Then
$$
\LCS\Big(V,\ \lambda\gamma\big(D\big)\Big)=\gamma\big(L\big)\subset V\subset\mathbb{P}^{6},%
$$
where $\gamma(L)\subset V\subset\mathbb{P}^{6}$ is a line, because
$-K_{V}\cdot\gamma(L)=2$ and $-K_{V}\qlineq\gamma(D)$. We have
$\mathrm{Sing}(V)=O\in\gamma(L)$.

Let $S\subset V$ be a general hyperplane section of
$V\subset\mathbb{P}^{6}$ such that $\gamma(L)\subset S$. Then
\begin{itemize}
\item the surface $S$ is a del Pezzo surface such that $K_{S}^{2}=5$,%
\item the point $O$ is an ordinary double point of the surface $S$,%
\item the surface $S$ is smooth outside of the point $O\in\gamma(L)$,%
\item the equivalence $K_{S}\sim\mathcal{O}_{\mathbb{P}^{6}}(1)\vert_{S}$ holds,%
\end{itemize}
which implies that $S$ contains finitely many lines that intersect
the line $\gamma(L)$.

Let $H\subset V$ be a general hyperplane section of
$V\subset\mathbb{P}^{6}$. Put $Q=\gamma(L)\cap H$. Then
$$
\LCS\Big(H,\ \lambda\gamma\big(D\big)\Big\vert_{H}\Big)=Q,%
$$
by Remark~\ref{remark:hyperplane-reduction}, which contradicts
Lemma~\ref{lemma:del-Pezzo-quintic}, because $\lambda<1/3$.

Thus, we see that $\LCS(X,\lambda D)=P\in\bar{C}$. Let $F_{1}$ be
a general fiber of $\pi_{1}$. Then
$$
F_{1}\cap C=P_{1}\cup P_{2}\not\ni\alpha\big(P\big),
$$
where $P_{1}\ne P_{2}$ are two points of the curve $C$. One has
$$
P_{1}\cup P_{2}\subset H_{2}\cap F_{1},
$$
because $C\subset H_{2}$. Let $Z$ be a general line in
$F_{1}\cong\mathbb{P}^{2}$ such that $P_{1}\in Z$. Then there is a
surface
$$
F_{2}\in\Big|\pi_{2}^{*}\Big(\mathcal{O}_{\mathbb{P}^{2}}\big(1\big)\Big)\Big|
$$
such that $Z\subset F_{2}$. Let $\bar{F}_{1}\subset
X\supset\bar{F}_{2}$ be the proper transforms of $F_{1}$ and
$F_{2}$, respectively. Then
$$
P\not\in\bar{F}_{1}\cup\bar{F}_{2}.
$$

Let $\bar{Z}\subset X$ be the proper transform of the curve $Z$.
Then $-K_{X}\cdot\bar{Z}=2$ and
$$
\bar{Z}\subset\bar{F}_{1}\cap\bar{F}_{2},
$$
but $\bar{Z}\cap\bar{H}_{2}=\varnothing$. Thus, the curve
$\gamma(\bar{Z})$ is a line on $V\subset\mathbb{P}^{6}$ such that
$\mathrm{Sing}(V)=O\not\in\gamma(\bar{Z})$.

Let $T$ be a general hyperplane section of the threefold
$V\subset\mathbb{P}^{6}$ such that $\gamma(\bar{Z})\subset T$.
Then
$$
\bar{T}\sim 2\bar{H}_{2}+\bar{H}_{1}+E\sim 2\bar{H}_{2}+\bar{F}_{1}+E\sim 2\bar{F}_{2}+\bar{F}_{1}-E,%
$$
where $\bar{T}$ is the proper transform of the surface $T$ on the
threefold $X$. Hence
$$
\bar{F}_{1}+\bar{F}_{2}+\bar{T}\sim 3\bar{F}_{2}+2\bar{F}_{1}-E\sim 2\bar{H}_{2}+2\bar{H}_{1}+2E\sim -K_{X},%
$$
and applying Theorem~\ref{theorem:connectedness}, we see that the
locus
$$
P\cup\bar{Z}=\LCS\left(X,\ \lambda D+\frac{2}{3}\Big(\bar{F}_{1}+\bar{F}_{2}+\bar{T}\Big)\right)%
$$
must be connected. But $P\not\in\bar{Z}$, which is a
contradiction.
\end{proof}

\begin{lemma}\label{lemma:3-22}
Suppose that $\gimel(X)=3.22$. Then $\mathrm{lct}(X)=1/3$.
\end{lemma}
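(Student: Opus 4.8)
The plan is to realise $X$ as the blow-up $\alpha\colon X\to\P^1\times\P^2$ along a smooth conic $C$ lying in a single fibre $G=\{pt\}\times\P^2$ of the first projection $\pi_1$ (Mori--Mukai $\No 3.22$), and to exploit the feature that, in contrast with the curve of bidegree $(2,1)$ blown up in Lemma~\ref{lemma:3-21}, here the centre $C$ is contained in one $\pi_1$-fibre. Writing $H_i=\pi_i^*(\O_{\P^2}(1))$, letting $E$ be the exceptional divisor, $\bar{G}\cong\P^2$ the proper transform of $G$, and $\bar{H}_2=(\pi_2\circ\alpha)^*(\text{line})$, I would first record that $\alpha^*H_1\sim\bar{G}+E$ because $C$ is a Cartier divisor on the smooth surface $G$, whence $-K_X\sim 2\bar{G}+3\bar{H}_2+E$. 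Since $\bar{H}_2$ is irreducible, reduced, and appears with coefficient $3$ in this effective anticanonical divisor, the inequality $\mathrm{lct}(X)\le 1/3$ follows at once.

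For the reverse inequality, suppose $\mathrm{lct}(X)<1/3$, so that there is an effective $D\qlineq -K_X$ and a rational $\lambda<1/3$ for which $(X,\lambda D)$ is not log canonical. Because $\alpha(D)\qlineq -K_{\P^1\times\P^2}$ and $\mathrm{lct}(\P^1\times\P^2)=1/3$ by Lemma~\ref{lemma:lct-P1-product}, the pair $(\P^1\times\P^2,\lambda\alpha(D))$ is log canonical; as $\alpha$ is an isomorphism away from $E$, this confines the locus, giving $\varnothing\ne\mathrm{LCS}(X,\lambda D)\subseteq E$, and in particular $(X,\lambda D)$ is log terminal outside $E$. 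The decisive geometric observation is that $E$ lies over $C\subset G$, i.e. $E\subseteq\psi^{-1}(pt)$ where $\psi=\pi_1\circ\alpha\colon X\to\P^1$; hence a general fibre $F=\psi^{-1}(t)\cong\P^2$ with $t\ne pt$ is disjoint from $E$, and therefore from $\mathrm{LCS}(X,\lambda D)$.

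I would then run the connectedness trick of Lemmas~\ref{lemma:3-18} and~\ref{lemma:2-28}. As $F\not\subset\mathrm{Supp}(D)$ and $D|_F\qlineq -K_F\sim\O_{\P^2}(3)$, the restriction $(F,\lambda D|_F)$ is log terminal because $\lambda<1/3=\mathrm{lct}(\P^2)$ (Example~\ref{example:hypersurface-index-big}); so by Theorem~\ref{theorem:adjunction} the pair $(X,\lambda D+F)$ is log canonical in a neighbourhood of $F$, with $\mathrm{LCS}$ there equal to $F$. Consequently $\mathrm{LCS}(X,\lambda D+F)=\mathrm{LCS}(X,\lambda D)\sqcup F$ is disconnected, its two pieces lying in the disjoint sets $E$ and $F$. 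On the other hand $-(K_X+\lambda D+F)\qlineq (1-\lambda)(-K_X)-\alpha^*H_1$ is ample, which I would verify by testing positivity against the generators of $\overline{NE}(X)$: the ruling $\ell$ of $E\to C$, a line $m\subset\bar{G}$, a fibre of the conic bundle $\pi_2\circ\alpha$, and a line of a general $\psi$-fibre give values $1-\lambda$, $1-\lambda$, $1-2\lambda$, and $3(1-\lambda)$ respectively, all positive since $\lambda<1/2$. This contradicts Theorem~\ref{theorem:connectedness} and completes the argument.

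The steps requiring care are essentially bookkeeping: confirming the coefficient $3$ in $-K_X\sim 2\bar{G}+3\bar{H}_2+E$ and the disjointness $E\cap F=\varnothing$ (both flowing from $C$ sitting in a single $\pi_1$-fibre), and checking the ampleness of the auxiliary divisor exactly. I do not expect a serious obstacle here; unlike Lemma~\ref{lemma:3-21}, no flop or analysis of a singular Fano model should be needed, precisely because the exceptional divisor $E$ is vertical for $\psi$, which is what makes a single general fibre $F$ available to break connectedness.
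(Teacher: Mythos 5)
Your upper bound is fine and agrees with the paper: $-K_X\sim 2\bar G+3\bar H_2+E$ gives $\lct(X)\le 1/3$. The lower bound, however, has a genuine gap at the ampleness claim. Write $A=-(K_X+\lambda D+F)\qlineq(1-\lambda)(-K_X)-\alpha^*H_1$. Let $q\in\pi_2(C)$ and let $\tilde f\subset X$ be the proper transform of the ruling $\mathbb{P}^1\times\{q\}$ of the surface $Q=\pi_2^{-1}(\pi_2(C))$; this ruling meets $C$ in one point, so $\alpha^*H_1\cdot\tilde f=1$, $\alpha^*H_2\cdot\tilde f=0$, $E\cdot\tilde f=1$, hence $-K_X\cdot\tilde f=1$ and
$$
A\cdot\tilde f=(1-\lambda)\cdot 1-1=-\lambda<0.
$$
So $A$ is not nef (let alone ample), and Theorem~\ref{theorem:connectedness} cannot be applied to $(X,\lambda D+F)$. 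The four curves you test do not generate $\overline{NE}(X)$: in $N_1(X)$ one has $f=\tilde f+\ell$ for the general fiber $f$ of $\pi_2\circ\alpha$ and $s=m+2\ell$ for a line $s$ of a general $\psi$-fiber, so neither $f$ nor $s$ is extremal, and the ray $\mathbb{R}_{\ge 0}[\tilde f]$ — the one contracted by the birational morphism $\beta\colon X\to\mathbb{P}(\mathcal{O}_{\mathbb{P}^2}\oplus\mathcal{O}_{\mathbb{P}^2}(2))$ that contracts $\bar Q$ — lies outside the cone spanned by your test curves. Since the coefficient of $F$ must be $1$ for $F$ to enter the LCS locus, you cannot shrink it to restore nefness, so the "disjoint general fiber" trick collapses here.

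This is precisely why the paper's proof is long: after confining $\mathrm{LCS}(X,\lambda D)$ to $E$, it passes to the other birational models $\mathbb{P}(\mathcal{O}_{\mathbb{P}^2}\oplus\mathcal{O}_{\mathbb{P}^2}(2))$ and $\mathbb{P}(1,1,1,2)$, applies Theorem~\ref{theorem:Hwang} to the $\mathbb{P}^1$-bundle $\phi$ to pin the locus down to $(E\cap\bar Q)\cup F$ for a fiber $F$ of $E\to C$, rules out the case $\mathrm{LCS}(X,\lambda D)\subset E\cap\bar Q$ by a carefully chosen (and genuinely nef and big) auxiliary divisor, and finally excludes $F\subseteq\mathrm{LCS}(X,\lambda D)$ by restricting to a general member of $|\mathcal{O}_{\mathbb{P}(1,1,1,2)}(2)|\cong\mathbb{P}^2$ and invoking Lemma~\ref{lemma:plane}. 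Your argument would need to be replaced by something of this kind.
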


\begin{proof}
Let $\pi_{1}\colon \P^1\times\P^2\to\P^1$ and $\pi_{2}\colon
\P^1\times\P^2\to\P^2$ be natural projections. There is a morphism
$$
\alpha\colon X\longrightarrow\P^1\times\P^2
$$
that contracts a surface $E$ to a curve $C$ contained in a fiber
$H_{1}$ of $\pi_1$ such that $\pi_{2}(C)$ is a conic.

We have $E\cong\mathbb{F}_{2}$. Let $H_2$ be a general surface in
$|\pi_{2}^{*}(\mathcal{O}_{\mathbb{P}^{2}}(1))|$. The equivalence
$$
-K_{X}\sim 2\bar{H_1}+3\bar{H_2}+E
$$
holds, where $\bar{H}_{i}\subset X$ is a proper transform of the
surface $H_{i}$. Hence $\lct(X)\leqslant 1/3$.

We suppose that $\mathrm{lct}(X)<1/3$. Then there exists an
effective $\Q$-divisor $D\qlineq -K_{X}$ such that the log pair
$(X,\lambda D)$ is not log canonical for some rational
$\lambda<1/3$. Note that
$$
\LCS\Big(X,\lambda D\Big)\subseteq E,
$$
since $\mathrm{lct}(\P^1\times\P^2)=1/3$ by
Lemma~\ref{lemma:lct-P1-product}.

Let $Q$ be the unique surface in
$|\pi_{2}^{*}(\mathcal{O}_{\mathbb{P}^{2}}(2))|$ such that
$C\subset Q$, and let $\bar{Q}\subset X$ be the proper transform
of the surface $Q$. Then $\bar{Q}\cap \bar{H_1}=\varnothing$, and
there is a commutative diagram
$$
\xymatrix{
X\ar@{->}[d]_{\alpha}\ar@{->}[rr]^{\beta}&&\mathbb{P}\Big(\mathcal{O}_{\mathbb{P}^2}\oplus\mathcal{O}_{\mathbb{P}^2}\big(2\big)\Big)\ar@{->}[d]^{\phi}\ar@{->}[drr]^{\gamma}&&\\%
\mathbb{P}^{1}\times\mathbb{P}^{2}\ar@{->}^{\pi_{2}}[rr]&&\mathbb{P}_{2}&&\mathbb{P}\big(1,1,1,2\big)\ar@{-->}[ll]^{\psi}}
$$
such that $\beta$ is a contraction of $\bar{Q}$ to a curve,
$\gamma$ is a contraction of the surface
$\beta(\bar{H}_{1})$~to~a~point, the~morphism $\phi$ is a natural
$\P^1$-bundle, and the map $\psi$ is a natural projection. One has
$$
\gamma\circ\beta\big(D\big)\qlineq
\frac{5\gamma\circ\beta\big(E\big)}{2}\qlineq
 -K_{\mathbb{P}(1,1,1,2)}\qlineq\mathcal{O}_{\mathbb{P}(1,1,1,2)}\big(5\big),%
$$
which implies that $E\not\subseteq\mathrm{LCS}(X,\lambda D)$,
because $\lambda<1/3$.

Applying Theorem~\ref{theorem:Hwang} to $\phi$, we see that there
is a fiber $F$ of the projection $E\to C$ such that
$$
\varnothing\ne \LCS\Big(X,\ \lambda D\Big)\subseteq \Big(E\cap \bar{Q}\Big)\cup F,%
$$
including the possibility that  $\LCS(X, \lambda D)\subset
E\cap\bar{Q}$.

Suppose that $\LCS(X, \lambda D)\subset E\cap\bar{Q}$. Let
$M\subset\P^1\times\P^2$ be a general surface in $|H_1+H_2|$, and
let $\bar{M}\subset X$ be the proper transform of the surface $M$.
Then
$$
\bar{M}\cap\bar{H_1}=L,
$$
where $L$ is a line on $\bar{H_1}\cong\P^2$. Let $R$ be the unique
surface in $|\pi_{2}^{*}(\mathcal{O}_{\mathbb{P}^{2}}(1))|$ such
that $\alpha(L)\subset R$, and let $\bar{R}$ be a proper transform
of the surface $R$ on the threefold $X$. Then
$$
\LCS\Big(X, \lambda D\Big)\cup L\subseteq\LCS\left(X, \lambda
 D+\frac{2}{3}\Big(\bar{M}+\bar{H_1}+\bar{R}+\bar{H_2}\Big)\right)\subseteq\LCS\Big(X, \lambda D\Big)\cup L\cup \bar{H}_{1},%
$$
but $L\cap E\cap\bar{Q}=\bar{Q}\cap\bar{H}_{1}=\varnothing$ and
$-K_{X}\sim\bar{M}+\bar{H_1}+\bar{R}+\bar{H_2}$, which contradicts
Theorem~\ref{theorem:connectedness}.

Therefore, we see that $F\subseteq\mathrm{LCS}(X,\lambda D)$. Put
$\breve{F}=\gamma\circ\beta(F)$ and
$\breve{D}=\gamma\circ\beta(D)$. Then
$$
\breve{F}\subseteq\LCS\Big(\mathbb{P}\big(1,1,1,2\big),\ \lambda
\breve{D}\Big)\subseteq
 \breve{C}\cup\breve{F},%
$$
where
$\breve{C}=\gamma\circ\beta(\bar{Q})\subset\mathbb{P}(1,1,1,2)$ is
a curve such that $\psi(\breve{C})=\pi_{2}(C)$.

Let $S$ be a general surface in
$|\mathcal{O}_{\mathbb{P}(1,1,1,2)}(2)|$. Then
$S\cong\mathbb{P}^{2}$ and
$$
\breve{F}\cap S\subseteq\LCS\Big(S,\ \lambda
\breve{D}\Big\vert_{S}\Big)\subseteq
 \Big(\breve{C}\cup\breve{F}\Big)\cap S;%
$$
but $3D\vert_{S}\qlineq -5K_{S}$, which is impossible by
Lemma~\ref{lemma:plane}.
\end{proof}

\begin{lemma}
\label{lemma:3-23} Suppose that  $\gimel(X)=3.23$. Then
$\mathrm{lct}(X)=1/4$.
\end{lemma}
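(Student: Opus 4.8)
The plan is to follow the same template that has worked for the neighbouring Fano threefolds of Picard rank three (for instance Lemmas~\ref{lemma:3-18}, \ref{lemma:3-19}, \ref{lemma:3-20}). First I would identify the explicit birational geometry of the threefold $X$ with $\gimel(X)=3.23$. According to the Mori--Mukai classification this threefold admits a birational morphism $\alpha\colon X\to V$ onto a Fano threefold of smaller Picard rank (typically a blow up along a curve, fitting into a commutative diagram with a second contraction $\beta$ and a fibration structure), together with an anticanonical decomposition of the form
$$
-K_{X}\sim 4\bar{H}+3E+\cdots
$$
read off from the exceptional divisors and the pullbacks of ample generators. Writing down this decomposition immediately gives the upper bound $\mathrm{lct}(X)\le 1/4$, since the log pair built from the summand of multiplicity $4$ fails to be log canonical along a point or curve where two components meet. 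The main preparatory work is to fix the correct diagram and to record the intersection numbers that the later estimates will consume.

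\emph{The lower bound argument.} For the reverse inequality I would assume for contradiction that $\mathrm{lct}(X)<1/4$, so that there is an effective $\mathbb{Q}$-divisor $D\qlineq -K_{X}$ and a rational $\lambda<1/4$ with $(X,\lambda D)$ not log canonical. The strategy is to localize $\mathrm{LCS}(X,\lambda D)$ progressively. Pushing forward by $\alpha$ and invoking the known value $\mathrm{lct}(V)$ (which will be one of $1/3$, $1/4$, or $1/2$, supplied by Lemma~\ref{lemma:2-29}, \ref{lemma:2-31}, Example~\ref{example:hypersurface-index-big} or Corollary~\ref{corollary:other-toric}, depending on the precise $V$), I would force
$$
\varnothing\ne\mathrm{LCS}\big(X,\ \lambda D\big)\subseteq E,
$$
and then, by computing $D\cdot\bar{L}$ and $D\cdot\Gamma$ against a general line $\bar L$ meeting the blown-up centre and a general fibre $\Gamma$ of the projection $E\to C$, rule out that the exceptional divisor itself lies in the locus. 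This leaves at most a curve or a point inside $E$, and Lemma~\ref{lemma:rational-tree} together with Theorem~\ref{theorem:connectedness} restricts the possible curves to fibres of $E\to C$.

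\textbf{The final contradiction.} Having confined $\mathrm{LCS}(X,\lambda D)$ to a single fibre $F$ (or a single point), I would apply Theorem~\ref{theorem:Hwang} or Lemma~\ref{lemma:Hwang} along the relevant fibration to pin the locus down further, and then construct an auxiliary divisor --- a general member of some base-point-free or nearly base-point-free linear system, added with suitable coefficients so that the total divisor is still $\mathbb{Q}$-linearly proportional to $-K_X$ while $-(K_X+\lambda D+\Delta)$ stays ample. The point is to arrange that
$$
\mathrm{LCS}\Big(X,\ \lambda D+\Delta\Big)
$$
becomes disconnected (the isolated point $F$ plus a disjoint general surface), contradicting Theorem~\ref{theorem:connectedness}. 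I expect the principal obstacle to be bookkeeping rather than conceptual: getting the anticanonical decomposition and the intersection numbers exactly right for this particular $V$, and choosing the coefficients of the auxiliary divisor so that ampleness is preserved precisely because $\lambda<1/4$. Remark~\ref{remark:convexity} will be used repeatedly to assume that one of the natural components is not contained in $\mathrm{Supp}(D)$, which is what makes the intersection estimates tight enough to close the argument.
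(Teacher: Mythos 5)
Your overall architecture (anticanonical decomposition for the upper bound, then localization of $\mathrm{LCS}(X,\lambda D)$ followed by a disconnectedness contradiction via Theorem~\ref{theorem:connectedness}) is the right template, and the upper bound step is exactly what the paper does: for $\gimel(X)=3.23$ one has $-K_{X}\sim 4\bar{\Pi}+2\bar{E}+3G$, where $\bar{\Pi}$ is the proper transform of the plane containing the blown-up conic $C\ni O$, and $\bar{E}$, $G$ are the exceptional surfaces over the point $O$ and over the conic. The genuine gap is in the localization. You propose to push forward along a \emph{single} contraction, confine the locus to one exceptional divisor, and then whittle it down with intersection computations, Lemma~\ref{lemma:rational-tree} and Theorem~\ref{theorem:Hwang}; none of these later steps is verified for this particular threefold, and the paper uses none of them here. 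What actually makes the proof work is that $X$ simultaneously dominates \emph{three} threefolds with already-known thresholds: $V_{7}$ (with $\mathrm{lct}=1/4$ by Theorem~\ref{theorem:del-Pezzo}), the threefold $Y$ with $\gimel(Y)=2.30$ (with $\mathrm{lct}=1/4$ by Lemma~\ref{lemma:2-30}), and the threefold $U$ with $\gimel(U)=2.31$ (with $\mathrm{lct}=1/3$ by Lemma~\ref{lemma:2-31}). Since $\lambda<1/4$ lies below all three thresholds, pushing forward along all three contractions at once gives
$$
\varnothing\ne\mathrm{LCS}\Big(X,\ \lambda D\Big)\subseteq\bar{E}\cap\bar{\Pi}\cap G,
$$
which is already a finite set; no Hwang-type argument, no tree lemma and no multiplicity estimates are needed. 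Identifying this triple of contractions is the missing idea, not bookkeeping.

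Your closing move is also not quite the one that works. You suggest adding a general member of a linear system so that a whole general surface joins the non-log-canonical locus. The paper instead takes a quadric cone $R\subset\mathbb{P}^{3}$ over $C$ with a \emph{general} vertex $P$, a general plane $H_{1}$ through $O$ and $P$, and a general plane $H_{2}$ through $P$; then $-K_{X}\sim\bar{R}+\bar{H}_{1}+\bar{H}_{2}$, the pair $\lambda D+\frac{3}{4}(\bar{R}+\bar{H}_{1}+\bar{H}_{2})$ acquires a new isolated non-log-canonical point at $P$ (the multiplicity there is $\frac{3}{4}(2+1+1)=3=\dim(X)$), this point is disjoint from the finite set $\bar{E}\cap\bar{\Pi}\cap G$ by generality, and $-(K_{X}+\lambda D+\frac{3}{4}(\bar{R}+\bar{H}_{1}+\bar{H}_{2}))\qlineq(\lambda-1/4)K_{X}$ stays ample precisely because $\lambda<1/4$. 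Your variant could perhaps be arranged, but it still presupposes that the locus has already been confined to a set that the added divisor can avoid, i.e. it presupposes the localization you have not carried out.
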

\begin{proof}
Let $O\in\mathbb{P}^{3}$ be a point, let $C\subset\mathbb{P}^{3}$
be a conic such that~$O\in C$, let $\Pi\subset\mathbb{P}^{3}$ be a
unique plane such that $C\subset\Pi$, and let
$Q\subset\mathbb{P}^{4}$ be a smooth quadric threefold.~Then the
diagram
$$
\xymatrix{
&&&X\ar@{->}[llldd]_{\delta}\ar@{->}[d]^{\phi}\ar@{->}[rrrdd]^{\eta}&&&\\
&&&U\ar@{->}[ld]_{\omega}\ar@{->}[rd]^{\upsilon}&&&\\
V_{7}\ar@{->}[ddrrr]_{\alpha}\ar@{->}[rr]^{\pi}&&\mathbb{P}^{2}&&Q\ar@{-->}[ll]^{\zeta}\ar@{-->}[ddl]^{\xi}&&Y\ar@{->}[ddlll]^{\beta}\ar@{->}[ll]_{\gamma}\\
&&&&&&\\
&&&\mathbb{P}^{3}\ar@{-->}[uul]^{\psi}&&&}
$$
commutes, where we have the following notation:
\begin{itemize}
\item the morphism $\alpha$ is a blow up of the point $O$ with an
exceptional divisor $E$;
\item the morphism $\pi$ is a natural $\mathbb{P}^{1}$-bundle;%
\item the morphisms $\beta$ and $\delta$
are blow ups of $C$ and its proper transform, respectively;%
\item the~morphism $\gamma$ contracts the proper transform of the plane $\Pi$ to a point;%
\item the~morphism $\phi$ contracts the proper transform of the plane $\Pi$ to a curve;%
\item  the morphism $\eta$ contracts the proper transform of $E$
to a curve $L\subset Y$ such~that
$$
\gamma\big(\Pi\big)\in\gamma\big(L\big)\subset Q\subset\mathbb{P}^{4}%
$$
and $\gamma(L)$ is a line in $\P^4$;%
\item  the morphism $\omega$ is a natural $\mathbb{P}^{1}$-bundle;%
\item the morphism $\upsilon$ is a blow up of the line $\gamma(L)$;%
\item the maps $\psi$, $\xi$ and $\zeta$ are projections from $O$, $\gamma(\Pi)$ and $\gamma(L)$, respectively.%
\end{itemize}

Note that $E$ is a section of~$\pi$.

Let $\bar{\Pi}\subset X$ be a proper transform of
$\Pi\subset\P^3$. Then $\mathrm{lct}(X)\leqslant 1/4$, because
$$
-K_{X}\sim 4\bar{\Pi}+2\bar{E}+3G,
$$
where $\bar{E}$ and $G$ are exceptional surfaces of $\eta$ and
$\delta$, respectively.

We suppose that $\mathrm{lct}(X)<1/4$. Then there exists an
effective $\mathbb{Q}$-divisor $D\qlineq -K_{X}$ such that the~log
pair $(X,\lambda D)$ is not log canonical for some positive
rational number $\lambda<1/4$. Note that
$$
\varnothing\ne\mathrm{LCS}\Big(X,\ \lambda D\Big)\subseteq\bar{E}\cap\bar{\Pi}\cap G,%
$$
because
$\mathrm{lct}(V_{7})=1/4$ by Theorem~\ref{theorem:del-Pezzo},
$\mathrm{lct}(Y)=1/4$ by Lemma~\ref{lemma:2-30} and
$\mathrm{lct}(U)=1/3$ by Lemma~\ref{lemma:2-31}.

Let $R\subset\mathbb{P}^{3}$ be a general cone over $C$ whose
vertex is $P\in\mathbb{P}^{3}$, let $H_1\subset\P^3$ be a general
plane such that $O\in H_{1}\ni P$, and let $H_2\subset\P^3$ be a
general plane such that $P\in H_{2}$. Then
$$
\bar{R}\sim \big(\alpha\circ\delta\big)^*\big(R\big)-\bar{E}-G,\ \bar{H_1}\sim \big(\alpha\circ\delta\big)^*\big(H_{1}\big)-\bar{E},\ \bar{H_2}\sim\big(\alpha\circ\delta\big)^*\big(H_{2}\big),%
$$
where $\bar{R}$, $\bar{H}_1$, $\bar{H}_2$ are proper transforms of
$R$, $H_{1}$, $H_{2}$ on the threefold $X$, respectively. One has
$$
-K_{X}\sim\bar{Q}+\bar{H}_1+\bar{H}_2,
$$
but it follows from the generality of $R$, $H_{1}$, $H_{2}$ that
the locus
$$
\LCS\left(X,\ \lambda{D}+\frac{3}{4}\Big(\bar{Q}+\bar{H}_1+\bar{H}_2\Big)\right)=\mathrm{LCS}\Big(X,\ \lambda D\Big)\cup P,%
$$
is disconnected, which is impossible by
Theorem~\ref{theorem:connectedness}.
\end{proof}

\begin{lemma}
\label{lemma:3-24} Suppose that $\gimel(X)=3.24$. Then
$\lct(X)=1/3$.
\end{lemma}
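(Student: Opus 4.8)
The plan is to realise $X$ through its Mori--Mukai description and then run the same localisation-and-connectedness scheme used in Lemmas~\ref{lemma:3-18}--\ref{lemma:3-22}. Recall that $X$ is the fibre product $\mathbb{F}_{1}\times_{\mathbb{P}^{2}}W$, where $W$ is the del Pezzo threefold of degree $6$ with $\gimel(W)=2.32$ (a divisor of bidegree $(1,1)$ on $\mathbb{P}^{2}\times\mathbb{P}^{2}$) and $\mathbb{F}_{1}\to\mathbb{P}^{2}$ is the blow up of a point. Equivalently, there is a birational morphism $\alpha\colon X\to W$ contracting a surface $E\cong\mathbb{P}^{1}\times\mathbb{P}^{1}$ onto a fibre $\ell$ of one of the two $\mathbb{P}^{1}$-bundle structures $W\to\mathbb{P}^{2}$, together with a $\mathbb{P}^{1}$-bundle $\phi\colon X\to\mathbb{F}_{1}$ and a conic bundle $\psi\colon X\to\mathbb{P}^{2}$ restricting on $E$ to the two projections of $E\cong\mathbb{P}^{1}\times\mathbb{P}^{1}$. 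First I would record $-K_{X}$ in terms of $\alpha^{*}(-K_{W})$ and $E$ and of the natural base-point-free systems on $\mathbb{F}_{1}$ and $\mathbb{P}^{2}$, and read off an effective anticanonical $\mathbb{Q}$-divisor $D_{0}\qlineq -K_{X}$ carrying a smooth prime component with coefficient $3$ (exactly as the coefficient-$3$ summand produces the bound in Lemmas~\ref{lemma:3-18} and~\ref{lemma:3-22}); this gives $\lct(X)\le 1/3$.

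For the reverse inequality I would assume $\lct(X)<1/3$, so that there is $D\qlineq -K_{X}$ and a rational $\lambda<1/3$ with $(X,\lambda D)$ not log canonical. Since $\alpha(D)\qlineq -K_{W}$ and $\lct(W)=1/2>\lambda$ by Theorem~\ref{theorem:del-Pezzo} (equivalently Lemma~\ref{lemma:P2-P2}), the pair $(W,\lambda\alpha(D))$ is log canonical, and the usual discrepancy comparison for $\alpha$ forces
$$
\varnothing\ne\LCS\big(X,\ \lambda D\big)\subseteq E .
$$
Intersecting $D$ with a fibre of $\phi$ and with a general fibre of $\psi$ I would next show that $E\not\subseteq\LCS(X,\lambda D)$ and that $\LCS(X,\lambda D)$ has dimension $\le 1$; writing $D=\mu E+\Omega$ with $E\not\subseteq\Supp(\Omega)$ and bounding $\mu$ by the same intersection numbers, Theorem~\ref{theorem:adjunction} together with Lemma~\ref{lemma:elliptic-times-P1} on $E\cong\mathbb{P}^{1}\times\mathbb{P}^{1}$ will rule out the case where the non-log-canonical locus is $E$ itself or a ruling transverse to the other projection.

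This leaves the possibility that $\LCS(X,\lambda D)$ is a single ruling $F$ of $E$ or a point of $E$, and this is the step I expect to be the main obstacle. Here I would invoke Theorem~\ref{theorem:Hwang} for the conic bundle $\psi$ (using $\lct(\mathbb{P}^{1})=1/2$, cf. Example~\ref{example:del-Pezzos}) to force $\psi(F)$ into the discriminant curve of $\psi$, respectively to pin any isolated point down to $E$, and then produce a contradiction with Theorem~\ref{theorem:connectedness}: adding to $\lambda D$ a suitable effective anticanonical $\mathbb{Q}$-divisor supported on a general member of a base-point-free pencil (and on the $\alpha$-exceptional data, which may be removed by Remark~\ref{remark:convexity}) makes the resulting locus of log canonical singularities disconnected while keeping $-(K_{X}+\cdots)$ ample. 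Alternatively, restricting to a general surface mapping isomorphically to $\mathbb{P}^{2}$ and applying Lemma~\ref{lemma:plane} (legitimate since $\lambda<1/3=\lct(\mathbb{P}^{2})$) yields the same contradiction. The delicate points will be the exact multiplicity bookkeeping on $E$ and the verification that the auxiliary divisor separates $F$ (or the point) from the newly added component; the intersection-theoretic input coming from the two bundle structures $\phi$ and $\psi$ is what makes both reductions work.
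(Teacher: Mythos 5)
Your reduction to $\varnothing\ne\LCS(X,\lambda D)\subseteq E$ and the upper bound $\lct(X)\le 1/3$ both match the paper, but the final step --- the only hard one --- does not close, for two concrete reasons. First, the conic bundle $\psi\colon X\to\mathbb{P}^{2}$ gives you nothing on $E$: the surface $E$ maps onto the discriminant line of $\psi$ (it is the blown-up section of the second $\mathbb{P}^{1}$-bundle over that line), so every fiber of $\psi$ through a point of $E$ is already a degenerate conic. Theorem~\ref{theorem:Hwang} cannot "force $\psi(F)$ into the discriminant" of anything that is already there, and for a nodal fiber the relevant threshold $\lct(F,[D\vert_F])$ is not $1/2$ (the restricted class can be concentrated at the node), so no contradiction arises. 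Second, the multiplicity bound you need for the adjunction step is not supplied by the curves you name: a general fiber of $\phi$ and a general fiber of $\psi$ are both disjoint from $E$, hence give $\mu\cdot 0\le 2$ and no control on $\mu=\mathrm{mult}_{E}(D)$. This matters because $\mu$ can genuinely equal $3$ (witness $2F+2\bar{G}+3E\qlineq -K_{X}$), in which case $\Omega\vert_{E}$ has a ruling-degree $\lambda(1+\mu)$ that may exceed $1$ and Lemma~\ref{lemma:elliptic-times-P1} no longer applies. To salvage your route you would have to either invoke Remark~\ref{remark:convexity} against the divisor $2F+2\bar{G}+3E$, or intersect $D$ with the strict transform of a fiber of the second projection of $W$ through a point of the contracted curve (which meets $E$ once and has anticanonical degree $1$, giving $\mu\le 1$); neither appears in your plan.

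The paper avoids all of this with a tool you never mention: the fibration $\zeta\colon X\to\mathbb{P}^{1}$ obtained by composing the $\mathbb{P}^{1}$-bundle $X\to\mathbb{F}_{1}$ with the ruling $\mathbb{F}_{1}\to\mathbb{P}^{1}$. Its fibers are del Pezzo surfaces of degree $8$ isomorphic to $\mathbb{F}_{1}$, with $\lct(\mathbb{F}_{1})=1/3>\lambda$ by Example~\ref{example:del-Pezzos}; Lemma~\ref{lemma:Hwang} then forces an entire two-dimensional fiber $F\cong\mathbb{F}_{1}$ into $\LCS(X,\lambda D)\subseteq E\cong\mathbb{P}^{1}\times\mathbb{P}^{1}$, which is absurd since $F\ne E$. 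That one line replaces your entire case analysis of curves and points in $E$. As it stands, your proposal is missing the idea that makes the lemma work.
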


\begin{proof}
Let $W$ is a divisor of bi-degree $(1,1)$ on
$\mathbb{P}^{2}\times\mathbb{P}^{2}$. There is a commutative
diagram
$$
\xymatrix{
&&X\ar@{->}[dll]_{\zeta}\ar@{->}[d]^{\pi}\ar@{->}[rr]^{\alpha}&&W\ar@{->}[d]^{\omega_{1}}\\%
\mathbb{P}^{1}&&\mathbb{F}_{1}\ar@{->}^{\xi}[ll]\ar@{->}^{\gamma}[rr]&&\mathbb{P}^{2}}
$$
where $\omega_{1}$ is a natural $\mathbb{P}^{1}$-bundle, the
morphism $\alpha$ contracts a smooth surface
$$
E\cong\mathbb{P}^{1}\times\mathbb{P}^{1}
$$
to a fiber $L$ of $\omega_{1}$, $\gamma$ is a blow up of the point
$\omega_1(L)$, the morphism $\xi$ is a $\mathbb{P}^{1}$-bundle,
and $\zeta$ is a $\mathbb{F}_{1}$-bundle.

Let $\omega_{2}\colon X\to\mathbb{P}^{2}$ be a natural
$\mathbb{P}^{1}$-bundle that is different from $\omega_{1}$. Then
there is a surface
$$
G\in\Big|\omega_{2}^{*}\Big(\mathcal{O}_{\mathbb{P}^{2}}\big(1\big)\Big)\Big|
$$
such that $L\subset G$, because $\omega_{2}(L)$ is a line. Let
$\bar{G}\subset X$ be a proper transform of $G$. Then
$$
-K_{X}\sim 2F+2\bar{G}+3E,
$$
where $E$ is the exceptional divisor of $\alpha$, and $F$ is a
fiber of $\zeta$. We see that $\lct(X)\leqslant 1/3$.

We suppose that $\mathrm{lct}(X)<1/3$. Then there exists an
effective $\mathbb{Q}$-divisor $D\qlineq -K_{X}$ such that the~log
pair $(X,\lambda D)$ is not log canonical for some positive
rational number $\lambda<1/3$. Note that
$$
\varnothing\ne\mathrm{LCS}\Big(X,\lambda D\Big)\subseteq E,
$$
since $\mathrm{lct}(W)=1/2$ by Theorem~\ref{theorem:del-Pezzo}. We
may assume that $F\cap\mathrm{LCS}(X,\lambda D)\ne\varnothing$.
Then
$$
\mathbb{F}_{1}\cong F\subseteq\mathrm{LCS}\Big(X,\lambda
D\Big)\subseteq
 E\cong\mathbb{P}^{1}\times\mathbb{P}^{1}%
$$
by Lemma~\ref{lemma:Hwang}, because $\mathrm{lct}(F)=1/3$ (see
Example~\ref{example:del-Pezzos}), which is a contradiction.
\end{proof}

%%%%%%%%%%%%%%%%%%%%%%%%%%%%%%%%%%%%%%%%%%%%%%%%%%%%%%%%%%%%%%%%%%%%%

\section{Fano threefolds with $\rho\ge 4$}
\label{section:rho-4}

We use the assumptions and notation introduced in
section~\ref{section:intro}.

\begin{lemma}
\label{lemma:4-1} Suppose that $\gimel(X)=4.1$. Then
$\mathrm{lct}(X)=1/2$.
\end{lemma}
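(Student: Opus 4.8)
The plan is as follows. The threefold $X$ is a smooth divisor of multidegree $(1,1,1,1)$ in $\mathbb{P}^1\times\mathbb{P}^1\times\mathbb{P}^1\times\mathbb{P}^1$, so that $-K_{X}\sim H_{1}+H_{2}+H_{3}+H_{4}$, where $H_{i}$ is the class of a fibre of the projection $\pi_{i}\colon X\to\mathbb{P}^1$ onto the $i$-th factor. Each $\pi_{i}$ is a fibration whose general fibre is a smooth del Pezzo surface of degree $6$ (a divisor of tridegree $(1,1,1)$ in $(\mathbb{P}^1)^3$), and for each pair $\{j,k\}$ the projection $\pi_{jk}\colon X\to\mathbb{P}^1\times\mathbb{P}^1$ is a conic bundle whose general fibre $\Gamma$ satisfies $-K_{X}\cdot\Gamma=2$, with $H_{l}\cdot\Gamma=1$ for $l\notin\{j,k\}$ and $H_{j}\cdot\Gamma=H_{k}\cdot\Gamma=0$. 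For the upper bound, let $F$ be a general fibre of $\pi_{1}$: it is a smooth sextic del Pezzo surface, so $\lct(F)=1/2$ by Example~\ref{example:del-Pezzos}, and there is an effective $\Theta\qlineq -K_{F}$ with $(F,\tfrac12\Theta)$ log canonical but not Kawamata log terminal. Since $|H_{1}|$ has no base points, extending $\Theta$ to an effective $D\qlineq -K_{X}$ with $D\vert_{F}=\Theta$ and invoking Remark~\ref{remark:hyperplane-reduction} shows that $(X,\tfrac12 D)$ is not log terminal, so $\lct(X)\le 1/2$.

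For the lower bound I would argue by contradiction: suppose $\lct(X)<1/2$, so there is an effective $D\qlineq -K_{X}$ and a rational $\lambda<1/2$ with $(X,\lambda D)$ not log canonical at some point $P\in X$. The key observation is that the four projections are jointly immersive: since $T_{P}X$ injects into $\bigoplus_{i=1}^{4}T\mathbb{P}^1$ and no nonzero subspace lies in the intersection of all four coordinate kernels, at least one $\pi_{i}$ is a submersion at $P$; fix such an $i$ and let $F=F_{i}$ be its fibre through $P$, so that $F$ is smooth at $P$. Write $D=\mu F+\Omega$ with $F\not\subseteq\Supp(\Omega)$. Intersecting with a general fibre $\Gamma$ of some conic bundle $\pi_{jk}$ with $\{j,k\}\subseteq\{1,2,3,4\}\setminus\{i\}$ gives $2=-K_{X}\cdot\Gamma=\mu(F\cdot\Gamma)+\Omega\cdot\Gamma\ge\mu$, hence $\mu\le 2$ and $\lambda\mu<1$. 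Therefore $(X,F+\lambda\Omega)$ is also not log canonical at $P$, and since $F$ is a Cartier divisor that is log terminal near $P$, Theorem~\ref{theorem:adjunction} shows that $(F,\lambda\Omega\vert_{F})$ is not log canonical at $P$. As $\Omega\vert_{F}\qlineq D\vert_{F}\qlineq -K_{F}$, this exhibits an anticanonical pair on the sextic del Pezzo surface $F$ that fails to be log canonical at the \emph{smooth} point $P$ with $\lambda<1/2$.

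It remains to contradict this, i.e. to establish the purely two-dimensional statement: on a del Pezzo surface $F$ of degree $6$ with at worst du Val singularities, every anticanonical pair $(F,\lambda B)$ with $\lambda<1/2$ is log canonical at each smooth point of $F$, its non--log--canonical locus being contained in $\Sing(F)$. When $F$ is smooth this is exactly $\lct(F)=1/2$ (Example~\ref{example:del-Pezzos}), and when $F$ acquires one ordinary double point it is the content of Lemma~\ref{lemma:singular-del-Pezzo-sextic}, whose conclusion is precisely that the non--log--canonical locus reduces to the singular point. The main obstacle is therefore twofold: (a) showing that for \emph{every} smooth member $X$ of the family, not merely the general one, each fibre of every $\pi_{i}$ is an irreducible sextic del Pezzo surface with at worst du Val singularities — so that Theorem~\ref{theorem:adjunction} applies and the surface statement is available — and (b) extending the sextic del Pezzo analysis of Section~\ref{section:del-Pezzo} to whatever du Val configurations can actually occur as such fibres. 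The redundancy of the four projections is what makes this tractable: because some $\pi_{i}$ is always a submersion at $P$, the offending point can always be placed at a smooth point of a sextic del Pezzo fibre, so only the robust smooth--point case of the surface statement is ever needed, even though individual fibres may themselves be singular.
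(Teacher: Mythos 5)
Your reduction of the lower bound to a surface statement is not complete, and the missing part is precisely where the difficulty of this lemma lives. The observation that at every point $P$ at least one of the four fibrations $\pi_{i}\colon X\to\mathbb{P}^{1}$ is submersive (because $T_{P}X$ is $3$-dimensional and the four coordinate kernels intersect trivially) is correct, and the bound $\mu\leqslant 2$ from a conic fibre is fine. But the fibre $F$ you then restrict to is an arbitrary divisor of tridegree $(1,1,1)$ in $\mathbb{P}^{1}\times\mathbb{P}^{1}\times\mathbb{P}^{1}$, and for special (still smooth) members $X$ of the family such a fibre can be reducible or can have singularities worse than a single ordinary double point; the surface inputs available in the paper (Example~\ref{example:del-Pezzos} for smooth sextic del Pezzo surfaces and Lemma~\ref{lemma:singular-del-Pezzo-sextic} for one node) do not cover these cases, Theorem~\ref{theorem:adjunction} as stated requires the divisor $F$ to be log terminal, and the relation $\Omega\vert_{F}\qlineq -K_{F}$ is not usable in the form you need when $F$ is reducible. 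Since the lemma carries no generality hypothesis, you cannot discard these fibres: the items you label (a) and (b) are exactly the unproven content, so the argument as written does not close. The upper bound also has a gap: an effective $\Theta\qlineq -K_{F}$ on a fibre need not extend to an effective $D\qlineq -K_{X}$ with $D\vert_{F}=\Theta$, and even given such a $D$, non--log--canonicity of $(F,\tfrac12\Theta)$ does not by itself force non--log--canonicity of $(X,\tfrac12 D)$.

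The paper's proof avoids all fibre analysis by using the \emph{other} four projections, namely $\pi_{i}\colon X\to\mathbb{P}^{1}\times\mathbb{P}^{1}\times\mathbb{P}^{1}$ forgetting one factor. Writing the defining multilinear form as $F=x_{1}G+y_{1}H$, each such projection is a birational contraction of a divisor $E_{i}$ (the blow up of the curve $G=H=0$), and $-K_{X}\sim 2H_{1}+E_{1}$ gives $\lct(X)\leqslant 1/2$ explicitly. For the lower bound, $\lct(\mathbb{P}^{1}\times\mathbb{P}^{1}\times\mathbb{P}^{1})=1/2$ (Lemma~\ref{lemma:lct-product}) forces $\LCS(X,\lambda D)\subseteq E_{i}$ for every $i$, and $E_{1}\cap E_{2}\cap E_{3}\cap E_{4}$ is the common zero locus of all eight partial derivatives of $F$, i.e.\ $\Sing(X)=\varnothing$. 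This works for every smooth $X$ with $\gimel(X)=4.1$ and requires no classification of degenerate fibres; if you want to salvage your approach, you would have to supply that classification and the corresponding surface-level log canonical threshold bounds yourself.
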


\begin{proof}
The threefold $X$ is a divisor on
$\mathbb{P}^1\times\mathbb{P}^1\times\mathbb{P}^1\times\mathbb{P}^1$
of multidegree $(1, 1, 1, 1)$. Let
$$
\Big[\big(x_{1}:y_{1}\big),\ \big(x_{2}:y_{2}\big),\ \big(x_{3}:y_{3}\big),\ \big(x_{4}:y_{4}\big)\Big]%
$$
be coordinates on
$\mathbb{P}^1\times\mathbb{P}^1\times\mathbb{P}^1\times\mathbb{P}^1$.
Then $X$ is given by the equation
$$
F\big(x_{1},y_{1},x_{2},y_{2},x_{3},y_{3},x_{4},y_{4}\big)=0,
$$
where $F$ is a of multidegree $(1, 1, 1, 1)$.

Let $\pi_{1}\colon
X\to\mathbb{P}^{1}\times\mathbb{P}^{1}\times\mathbb{P}^{1}$ be a
projection given by
$$
\Big[\big(x_{1}:y_{1}\big),\ \big(x_{2}:y_{2}\big),\
\big(x_{3}:y_{3}\big),\
 \big(x_{4}:y_{4}\big)\Big]\mapsto \Big[\big(x_{2}:y_{2}\big),\ \big(x_{3}:y_{3}\big),\
 \big(x_{4}:y_{4}\big)\Big]\in \mathbb{P}^{1}\times\mathbb{P}^{1}\times\mathbb{P}^{1},%
$$
and let $\pi_{2}$, $\pi_{3}$ and $\pi_{4}\colon
X\to\mathbb{P}^{1}\times\mathbb{P}^{1}\times\mathbb{P}^{1}$ be
projections defined in a similar way. Put
$$
F=x_{1}G\big(x_{2},y_{2},x_{3},y_{3},x_{4},y_{4}\big)+y_{1}H\big(x_{2},y_{2},x_{3},y_{3},x_{4},y_{4}\big),
$$
where $G(x_{2},y_{2},x_{3},y_{3},x_{4},y_{4})$ and
$H(x_{2},y_{2},x_{3},y_{3},x_{4},y_{4})$ are multi-linear forms
that do not depend on $x_{1}$ and $y_{1}$. Then $\pi_{1}$ is a
blow up of a curve
$C_{1}\subset\mathbb{P}^{1}\times\mathbb{P}^{1}\times\mathbb{P}^{1}$
given by the equations
$$
G\big(x_{2},y_{2},x_{3},y_{3},x_{4},y_{4}\big)=H\big(x_{2},y_{2},x_{3},y_{3},x_{4},y_{4}\big)=0,
$$
which define a surface
$E_{1}\subset\mathbb{P}^1\times\mathbb{P}^1\times\mathbb{P}^1\times\mathbb{P}^1$
that is contracted by $\pi_{1}$. The equations
$$
x_{1}=H\big(x_{2},y_{2},x_{3},y_{3},x_{4},y_{4}\big)=0%
$$
define a divisor $H_{1}\subset X$ such that $-K_{X}\sim
2H_{1}+E_{1}$, which implies that $\mathrm{lct}(X)\leqslant 1/2$.

We suppose that $\mathrm{lct}(X)<1/2$. Then there exists an
effective $\mathbb{Q}$-divisor $D\qlineq -K_{X}$ such that the~log
pair $(X,\lambda D)$ is not log canonical for some positive
rational number $\lambda<1/2$.

Let $E_{2}$, $E_{3}$, $E_{4}$ be surfaces in $X$ defined in a way
similar to $E_{1}$. Then
$$
\varnothing\ne\mathrm{LCS}\Big(X,\ \lambda D\Big)\subseteq
E_{1}\cap E_{2}\cap E_{3}\cap E_{4},
$$
because
$\mathrm{lct}(\mathbb{P}^1\times\mathbb{P}^1\times\mathbb{P}^1)=1/2$
by Lemma~\ref{lemma:lct-P1-product}. But
$E_{i}\subset\mathbb{P}^1\times\mathbb{P}^1\times\mathbb{P}^1\times\mathbb{P}^1$
is given by
$$
\frac {\partial
F\big(x_{1},y_{1},x_{2},y_{2},x_{3},y_{3},x_{4},y_{4}\big)}
{\partial x_{i}}=\frac
 {\partial F\big(x_{1},y_{1},x_{2},y_{2},x_{3},y_{3},x_{4},y_{4}\big)}
 {\partial y_{i}}=0,%
$$
which implies that the intersection $E_{1}\cap E_{2}\cap E_{3}\cap
E_{4}$ is given by the equations
$$
\frac {\partial F} {\partial x_{1}}=\frac {\partial F} {\partial
y_{1}}= \frac {\partial F}
 {\partial x_{2}}=\frac {\partial F} {\partial y_{2}}=\frac {\partial F} {\partial x_{3}}=\frac
 {\partial
 F} {\partial y_{3}}=\frac {\partial F} {\partial x_{4}}=\frac {\partial F}
 {\partial y_{4}}=0.%
$$
Hence $E_{1}\cap E_{2}\cap E_{3}\cap
E_{4}=\mathrm{Sing}(X)=\varnothing$, and $\mathrm{LCS}(X,\ \lambda
D)=\varnothing$.
\end{proof}

\begin{lemma}
\label{lemma:4-2} Suppose that $\gimel(X)=4.2$. Then
$\lct(X)=1/2$.
\end{lemma}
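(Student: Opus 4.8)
The plan is to run the same three-step template used for the other Picard-rank-four cases, and in particular to imitate Lemma~\ref{lemma:4-1} as closely as the geometry allows. First I would record the structure of $X$: being of Picard rank four, the Mori--Mukai model of this deformation family presents $X$ with several extremal contractions at once, namely a collection of birational contractions $\alpha_i\colon X\to V_i$ onto Fano threefolds of lower Picard rank whose global log canonical thresholds are already known from the preceding sections, together with the fibration structures (conic bundles, quadric or del Pezzo fibrations) coming from the remaining rays. From the associated anticanonical decomposition, which will have the shape $-K_X\qlineq 2S+E+(\cdots)$ with $S$ a semiample surface and $E$ a contracted divisor entering with coefficient one, the upper bound $\lct(X)\le 1/2$ is immediate: the effective divisor $2S+E+(\cdots)\qlineq -K_X$ fails to be log terminal along $E$ when scaled by $1/2$.

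For the reverse inequality I would argue by contradiction, assuming $\lct(X)<1/2$ and fixing an effective $D\qlineq -K_X$ and a rational $\lambda<1/2$ with $(X,\lambda D)$ not log canonical, so that $\LCS(X,\lambda D)\ne\varnothing$. The first localization step is to push $D$ forward along those contractions $\alpha_i$ whose target satisfies $\lct(V_i)\ge 1/2$. Since $\alpha_i(D)\qlineq -K_{V_i}$ and $\lambda<1/2\le\lct(V_i)$, the pair $(V_i,\lambda\alpha_i(D))$ is log canonical, which forces $\LCS(X,\lambda D)\subseteq E_i$ for each such $i$; the relevant threshold values $\lct(V_i)\ge 1/2$ are supplied by Lemma~\ref{lemma:lct-P1-product} (for base threefolds of the form $\P^1\times\P^1\times\P^1$), by Theorem~\ref{theorem:del-Pezzo}, and by the rank-two and rank-three computations already carried out. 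Combining these inclusions yields $\LCS(X,\lambda D)\subseteq\bigcap_i E_i$.

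The heart of the matter is then to identify this intersection. As in Lemma~\ref{lemma:4-1}, I expect that in the explicit model $\bigcap_i E_i$ is cut out by the simultaneous degeneration conditions of the several fibrations and coincides with $\Sing(X)=\varnothing$, so that $\LCS(X,\lambda D)=\varnothing$ outright, a contradiction. Should the common locus fail to be empty and instead reduce to a curve or a point, the endgame would use the fibration structures directly: apply Lemma~\ref{lemma:Hwang} or Theorem~\ref{theorem:Hwang} to a conic bundle or del Pezzo fibration to see that $\LCS$ cannot sit inside a fiber $F$ with $\lct(F)\ge 1/2$ (Example~\ref{example:del-Pezzos}); alternatively, write $D=\mu E+\Omega$ with $E\not\subset\Supp(\Omega)$, bound $\mu\le 1$ by intersecting $D$ with a general fiber of a conic bundle, and derive a contradiction from $(E,\lambda\Omega|_E)$ via Theorem~\ref{theorem:adjunction} and Lemma~\ref{lemma:elliptic-times-P1} when $E\cong\P^1\times\P^1$. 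Throughout, Remark~\ref{remark:convexity} is available to reduce to the case where one of the contracted divisors is not a component of $D$, and Theorem~\ref{theorem:connectedness} rules out isolated zero-dimensional components (and can be used, after adding a general member of a base-point-free system, to contradict connectedness).

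The main obstacle I anticipate is geometric rather than conceptual: fixing the precise Mori--Mukai model for this family, writing down the correct set of contractions together with the exact anticanonical decomposition, and then verifying that $\bigcap_i E_i$ is empty (or that its residual curve or point cannot carry the log canonical locus). Once the model and this intersection computation are in hand, the log canonical threshold bookkeeping is entirely parallel to Lemma~\ref{lemma:4-1}, and no new analytic input beyond the connectedness, adjunction, and Hwang-type restriction results is required.
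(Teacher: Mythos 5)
Your template is calibrated on Lemma~\ref{lemma:4-1}, but the first localization step fails for this family. The threefold with $\gimel(X)=4.2$ is the blow up of a quadric cone $Q_1\subset\mathbb{P}^4$ along the disjoint union of the vertex and an elliptic curve, and the natural birational contractions $\beta_i\colon X\to U_i$ land on blow ups of quadric cones at their vertices, i.e.\ on threefolds of type $3.31$ with $\mathrm{lct}(U_i)=1/3$ (Corollary~\ref{corollary:other-toric}); the cones themselves have $\mathrm{lct}\leqslant 1/3$ as well. So for $\lambda\in(1/3,1/2)$ the pushforward $(U_i,\lambda\beta_i(D))$ need not be log canonical, and you cannot conclude $\mathrm{LCS}(X,\lambda D)\subseteq\bigcap_i E_i$; nor is that intersection the (empty) singular locus here, unlike in Lemma~\ref{lemma:4-1}. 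The paper never localizes this way. Instead it writes $D=\mu_1E_1+\mu_2E_2+\Omega$, bounds $\mu_1+\mu_2\leqslant 2$ by intersecting with a general fiber of the conic bundle $\gamma_1\circ\beta_1\colon X\to\mathbb{P}^1\times\mathbb{P}^1$, excludes surfaces from $\mathbb{LCS}(X,\lambda D)$ by restricting to $E_1\cong\mathbb{P}^1\times\mathbb{P}^1$ via Theorem~\ref{theorem:adjunction} and Lemma~\ref{lemma:elliptic-times-P1}, and then splits into cases according to whether a non-log-canonical point $P$ lies on the surface $G_1\cong\mathbb{P}^1\times C_1$ contracted by $\beta_1$.

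Your fallback paragraph does list the right tools (conic-bundle intersection bounds, adjunction to $\mathbb{P}^1\times\mathbb{P}^1$, Theorem~\ref{theorem:Hwang} applied to fibrations), and the case $P\not\in G_1$ is handled exactly as you sketch. But the decisive case $P\in G_1$ rests on a specific geometric fact you do not identify: $X$ carries two fibrations $\zeta_1,\zeta_2$ into del Pezzo surfaces of degree $6$, each with four singular fibers, and a point lying over the elliptic curve $C_1$ cannot lie on singular fibers of both, so one of the two fibers through $P$ is a smooth degree-$6$ del Pezzo surface with $\mathrm{lct}=1/2$, contradicting Lemma~\ref{lemma:Hwang}. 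Without pinning down this model and this dichotomy, the argument does not close; as written, the proposal's primary route is not available and its secondary route is a menu rather than a proof.
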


\begin{proof}
Let $Q_{1}\subset\mathbb{P}^4\supset Q_{2}$ be quadric cones,
whose vertices are $O_{1}\in \mathbb{P}^4\ni O_{2}$, respectively.
Let
$$
O_{1}\not\in S_{1}\subset Q_{1}\subset\mathbb{P}^{4}
$$
be a hyperplane section of $Q_{1}$, and let $C_{1}\subset
|-K_{S_{1}}|$ be a smooth elliptic curve. Then~the~diagram
$$
\xymatrix{
&&&&X\ar@{->}[dll]_{\beta_{1}}\ar@{->}[drr]^{\beta_{2}}\ar@/^3.5pc/@{->}[dddrrrr]^{\zeta_{2}}\ar@/_3.5pc/@{->}[dddllll]_{\zeta_{1}}&&&&\\
&&U_{1}\ar@{->}[d]^{\alpha_{1}}\ar@/^1pc/@{->}[ddrr]^{\gamma_{1}}\ar@{->}[ddll]^{\eta_{1}}&&&&U_{2}\ar@{->}[d]_{\alpha_{2}}\ar@/_1pc/@{->}[ddll]_{\gamma_{2}}\ar@{->}[ddrr]_{\eta_{2}}&&\\
&&Q_{1}\ar@{-->}[drr]_{\psi_{1}}&&&&Q_{2}\ar@{-->}[dll]^{\psi_{2}}&&\\
\mathbb{P}^{1}&&&&\mathbb{P}^{1}\times\mathbb{P}^{1}\ar@{->}[llll]^{\pi_{1}}\ar@{->}[rrrr]_{\pi_{2}}&&&&\mathbb{P}^{1}}
$$
commutes, where $\pi_{1}\ne\pi_{2}$ are natural projections, the
map $\psi_{i}$ is a projection from $O_{i}\in
Q_{i}\subset\mathbb{P}^{4}$, the~morphism $\alpha_{i}$ is a blow
up of the vertex $O_{i}$, the morphism $\beta_{i}$ contracts a
surface
$$
\mathbb{P}^{1}\times C_{1}\cong G_{i}\subset X
$$
to a curve $C_{1}\cong C_{i}\subset U_{i}$, the morphism
$\eta_{i}$ is an $\mathbb{F}_{1}$-bundle,
$\gamma_{i}$~is~a~$\mathbb{P}^{1}$-bundle, and $\zeta_{i}$ is a
fibration into del Pezzo surfaces of degree $6$ that has $4$
singular fibers.

Let $E_{i}\subset X$ be the proper transform of the exceptional
divisor of $\alpha_{i}$. Then
$$
S_{1}=\alpha_{1}\circ\beta_{1}\big(E_{2}\big)\subset Q_{1}\subset\mathbb{P}^{4}\supset Q_{2}\supset\alpha_{2}\circ\beta_{2}\big(E_{1}\big)%
$$
are hyperplane sections that contain $C_{1}$ and $C_{2}$,
respectively. It is also easy to see that
$$
\alpha_{1}\circ\beta_{1}\big(G_{2}\big)\subset Q_{1}\subset\mathbb{P}^{4}\supset Q_{2}\supset\alpha_{2}\circ\beta_{2}\big(G_{1}\big)%
$$
are the cones in $\mathbb{P}^{4}$ over the curves $C_{1}$ and
$C_{2}$, respectively.

Let $\bar{H}\subset X$ be the proper transform of a hyperplane
section of $Q_{1}\subset\mathbb{P}^{4}$ that contains
$O_{1}$.~Then
$$
-K_{X}\sim 2\bar{H}+E_{2}+E_{1},
$$
which gives $\mathrm{lct}(X)\leqslant 1/2$. Suppose that
$\mathrm{lct}(X)<1/2$. Then there is an effective
$\mathbb{Q}$-divisor
$$
D\qlineq -K_{X}\sim E_{1}+E_{2}+G_{1}+G_{2}
$$
such that the log pair $(X,\lambda D)$ is not log canonical for
some $\lambda<1/2$. Put
$$
D=\mu_{1}E_{1}+\mu_{2}E_{2}+\Omega,
$$
where $\Omega$ is an effective $\mathbb{Q}$-divisor on $X$ such
that
$$
E_{1}\not\subseteq\mathrm{Supp}\big(\Omega\big)\not\supseteq
E_{2}.
$$

Let $\Gamma$ be a general fiber of the conic bundle
$\gamma_{1}\circ\beta_{1}$. Then
$$
2=\Gamma\cdot D=\Gamma\cdot\Big(\mu_{1}E_{1}+\mu_{2}E_{2}+\Omega\Big)=\mu_{1}+\mu_{2}+\Gamma\cdot\Omega\geqslant\mu_{1}+\mu_{2},%
$$
and without loss of generality we may assume that
$\mu_{1}\leqslant\mu_{2}$. Then $\mu_{1}\leqslant 1$.

Suppose that there is a surface $S\in\mathbb{LCS}(X,\lambda D)$.
Then $S\ne E_{1}$. Moreover, we have $S\ne G_{1}$, because
$\alpha_{2}\circ\beta_{2}(G_{1})$ is a quadric surface and
$\lambda<1/2$. Hence $S\cap E_{1}\ne\varnothing$. But
$$
-\frac{1}{2}K_{E_{1}}\qlineq D\Big\vert_{E_{1}}\qlineq
-\frac{\mu_{1}}{2}K_{E_{1}}+\Omega\Big\vert_{E_{1}},%
$$
and $E_{1}\cong\mathbb{P}^{1}\times\mathbb{P}^{1}$, which is
impossible by Theorem~\ref{theorem:adjunction} and
Lemma~\ref{lemma:elliptic-times-P1}.

We see that the set $\mathbb{LCS}(X,\lambda D)$ contains no
surfaces. Let $P\in\mathrm{LCS}(X,\lambda D)$ be a point.

Suppose that $P\not\in G_{1}$. Let $Z$ be a fiber of $\gamma_{1}$
such that $\beta_{1}(P)\in Z$. Then
$$
Z\subseteq\mathrm{LCS}\Big(U_{1},\
\lambda\beta_{1}\big(D\big)\Big)
$$
by Theorem~\ref{theorem:Hwang}. Put
$\bar{E}_{1}=\beta_{1}(E_{1})$. Then we have
$$
Z\cap \bar{E}_{1}\in\mathrm{LCS}\Big(\bar{E}_{1},\ \lambda\Omega\Big\vert_{\bar{E}_{1}}\Big)%
$$
by Theorem~\ref{theorem:adjunction}, which is impossible by
Lemma~\ref{lemma:elliptic-times-P1}, because $\mu_{1}\leqslant 1$.

Thus, we see that $P\in G_{1}$. Let $F_{1}\subset X\supset F_{2}$
be fibers of $\zeta_{1}$ and $\zeta_{2}$ passing through the point
$P$. Then either $F_{1}$ or $F_{2}$ is smooth, because
$\alpha_1(P)\in C_{1}$. But
$$
\mathrm{lct}\big(F_{i}\big)=1/2
$$
in the case when $F_{i}$ is smooth (see
Example~\ref{example:del-Pezzos}), which contradicts
Lemma~\ref{lemma:Hwang}.
\end{proof}

\begin{lemma}
\label{lemma:4-3} Suppose that $\gimel(X)=4.3$. Then
$\lct(X)=1/2$.
\end{lemma}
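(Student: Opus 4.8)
The plan is to follow the template already used for Lemmas~\ref{lemma:4-1} and~\ref{lemma:4-2}. First I would record the birational and fibration structure of $X$: as a Fano threefold with $\mathrm{rk}\,\mathrm{Pic}(X)=4$ it carries a birational contraction $\alpha\colon X\to V$ onto a Fano threefold $V$ of smaller Picard rank whose global log canonical threshold has already been computed to equal $1/2$ (via Theorem~\ref{theorem:del-Pezzo} or one of the lemmas of Section~\ref{section:rho-2}), together with several extremal contractions onto $\mathbb{P}^1$ and $\mathbb{P}^2$ giving del Pezzo fibrations and conic bundles. Writing $E$ for the $\alpha$-exceptional divisor and $\bar{H}$ for the proper transform of a suitable divisor downstairs, I expect a relation of the shape $-K_X\sim 2\bar{H}+E$, which forces $\lct(X)\leqslant 1/2$ because the corresponding boundary already fails to be log terminal along $E$.

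For the reverse inequality I would suppose $\lct(X)<1/2$ and choose an effective $\mathbb{Q}$-divisor $D\qlineq -K_X$ such that $(X,\lambda D)$ is not log canonical for some rational $\lambda<1/2$. Since $\alpha(D)\qlineq -K_V$ and $\lct(V)=1/2$, the non-log-canonical locus is confined to the exceptional divisor, i.e.\ $\varnothing\ne\mathrm{LCS}(X,\lambda D)\subseteq E$. Next I would exclude the possibility that $\mathbb{LCS}(X,\lambda D)$ contains a surface: writing $D=\mu E+\Omega$ with $E\not\subset\mathrm{Supp}(\Omega)$ and intersecting with a general fibre $\Gamma$ of one of the conic bundles yields $2=D\cdot\Gamma\geqslant\mu\,E\cdot\Gamma$, whence $\mu\leqslant 1$; combined with the degree of $\alpha(D)$ this rules $E$ (and any other surface) out of the locus.

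It then remains to exclude curves and points. Here I would apply Theorem~\ref{theorem:adjunction} to the pair $(X,E+\lambda\Omega)$, reducing to the assertion that $(E,\lambda\Omega|_E)$ is not log canonical, and then invoke Lemma~\ref{lemma:elliptic-times-P1} (when $E\cong\mathbb{P}^1\times C$) or the explicit structure of $E$ together with a relation of the form $\Omega|_E\qlineq -\frac{1+\mu}{2}K_E$ to obtain a contradiction, exactly as in Lemma~\ref{lemma:4-2}. For any residual zero-dimensional components I would use Theorem~\ref{theorem:Hwang} applied to the del Pezzo fibrations, together with the equality $\lct=1/2$ for smooth del Pezzo surfaces of the relevant degree (Example~\ref{example:del-Pezzos}) and Lemma~\ref{lemma:P1xP2} where a projection to $\mathbb{P}^1\times\mathbb{P}^2$ is available. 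The final contradiction should come from Theorem~\ref{theorem:connectedness}: adding a general member of $|\bar{H}|$ (or an analogous movable configuration) to $\lambda D$ disconnects $\mathrm{LCS}$ while keeping the relevant divisor anti-ample.

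The main obstacle I anticipate is bookkeeping the exact birational geometry rather than any new idea: identifying the precise contracted divisor $E$ and its isomorphism type, selecting the fibration that gives the sharp bound $\mu\leqslant 1$, and handling the singular fibres of the del Pezzo fibrations. Since the assertion must hold for \emph{every} member of the family (no genericity is assumed), the singular-fibre analysis cannot be dismissed by a generality hypothesis and must instead be controlled through the rigid multiple-fibration structure, precisely as was done for $\gimel(X)=4.2$.
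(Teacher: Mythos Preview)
Your outline has the right opening moves --- the contraction $\alpha\colon X\to(\mathbb{P}^1)^3$ onto a threefold with $\lct=1/2$ does confine $\mathrm{LCS}(X,\lambda D)$ to the exceptional divisor $E$, and the upper bound $\lct(X)\leqslant 1/2$ comes from $-K_X\sim 2\bar G+E+\bar F_3$ with $G\in|F_1+F_2|$ the unique member containing the centre $C$. But the middle of your plan diverges from the paper and would not work as written.

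The step ``adjoin to $E$ and use $\Omega|_E\qlineq -\tfrac{1+\mu}{2}K_E$ together with Lemma~\ref{lemma:elliptic-times-P1}'' is the problem. Here $C$ is a \emph{rational} curve of tridegree $(1,1,2)$, so $E$ is a Hirzebruch surface, not a product; and the formula you quote is the one that arises for blow-ups of \emph{points} (as in Lemmas~\ref{lemma:2-8} or~\ref{lemma:4-2}, where $E_i\cong\mathbb{P}^1\times\mathbb{P}^1$), not for blow-ups of curves. There is no analogue of Lemma~\ref{lemma:elliptic-times-P1} available for $E$ in this case.

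The paper takes a different route. It produces two \emph{further} divisorial contractions $\beta\colon X\to V$ and $\gamma\colon X\to U$, contracting the proper transforms $\bar G$ and $\bar H$ of surfaces $G\in|F_1+F_2|$ and $H\in|3F_1+F_3|$ both containing $C$; each of $V,U$ carries a $\mathbb{P}^1$-bundle over $\mathbb{P}^1\times\mathbb{P}^1$. Since $\bar G\cap\bar H=\varnothing$, one of the pushed-forward pairs is still not log canonical, and Theorem~\ref{theorem:Hwang} together with Theorem~\ref{theorem:connectedness} pins $\mathrm{LCS}(X,\lambda D)$ down to a single fibre $\Gamma$ of $E\to C$. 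Only then does the paper adjoin --- but to the fibre $\bar F_3$ of the third projection through $\alpha(\Gamma)$, a del Pezzo sextic, not to $E$. Writing $D=\mu\bar F_3+\Omega$ one gets $\mu\leqslant 2$, and Theorem~\ref{theorem:adjunction} forces $(\bar F_3,\lambda\Omega|_{\bar F_3})$ to be non--log-canonical along the whole curve $\Gamma$. The decisive input you are missing is Lemma~\ref{lemma:singular-del-Pezzo-sextic}: when $\bar F_3$ acquires its single ordinary double point, the LCS locus on it can only be that node, never a curve. This is exactly what handles the non-generic fibres you flagged as the main obstacle.
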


\begin{proof}
Let $F_{1}\cong F_{2}\cong F_{3}\cong\P^1\times\P^1$ be fibers of
three different projections
$$
\mathbb{P}^1\times\mathbb{P}^1\times\mathbb{P}^1\longrightarrow\mathbb{P}^{1},
$$
respectively. There is a contraction $\alpha\colon X\to
\mathbb{P}^1\times\mathbb{P}^1\times\mathbb{P}^1$ of a surface
$E\subset X$ to a curve
$$
C\subset\mathbb{P}^1\times\mathbb{P}^1\times\mathbb{P}^1
$$
such that $C\cdot F_{1}=C\cdot F_{2}=1$ and $C\cdot F_{3}=2$.
There is a smooth surface
$$
\mathbb{P}^1\times\mathbb{P}^1\cong G\in \big|F_{1}+F_{2}\big|
$$
such that $C\subset G$. In particular, we see that
$$
-K_{X}\sim 2\bar{G}+E+\bar{F}_{3},
$$
where $\bar{F}_{3}$ and $\bar{G}$ are proper transforms of $F_{3}$
and $G$, respectively. Hence $\mathrm{lct}(X)\leqslant 1/2$.

We suppose that $\mathrm{lct}(X)<1/2$. Then there exists an
effective $\mathbb{Q}$-divisor $D\qlineq -K_{X}$ such that the~log
pair $(X,\lambda D)$ is not log canonical for some positive
rational number $\lambda<1/2$. Note that
$$
\varnothing\ne\mathrm{LCS}\Big(X,\lambda D\Big)\subseteq E,%
$$
because
$\mathrm{lct}(\mathbb{P}^1\times\mathbb{P}^1\times\mathbb{P}^1)=1/2$
and
$\alpha(D)\qlineq-K_{\mathbb{P}^1\times\mathbb{P}^1\times\mathbb{P}^1}$.

There is a smooth surface $H\in |3F_{1}+F_{3}\big|$ such that
$C=G\cap H$. Let $\bar{H}$ be a proper transform of the surface
$H$ on the threefold $X$. Then $\bar{H}\cap\bar{G}=\varnothing$
and there is a commutative diagram
$$
\xymatrix{%
&U\ar@{->}[d]_{\phi}&&X\ar@{->}[d]^{\alpha}\ar@{->}[ll]_{\gamma}\ar@{->}[rr]^{\beta}&&V\ar@{->}[d]^{\pi}&&\\
&\mathbb{P}^{1}\times\mathbb{P}^{1}&&\mathbb{P}^{1}\times\mathbb{P}^{1}\times\mathbb{P}^{1}\ar@{->}[ll]_{\zeta}\ar@{->}[rr]^{\xi}&&\mathbb{P}^{1}\times\mathbb{P}^{1}}
$$
such that $\beta$ and $\gamma$ are contractions of the surfaces
$\bar{G}$ and $\bar{H}$ to smooth curves, the~morphisms $\pi$ and
$\phi$ are $\mathbb{P}^{1}$-bundles, the morphisms $\zeta$ and
$\xi$ are projections that are given~by the linear systems
$|F_{1}+F_{2}|$ and $|F_{1}+F_{3}|$, respectively.

It follows from $\bar{H}\cap\bar{G}=\varnothing$ that
\begin{itemize}
\item either the log pair $(V,\lambda\beta(D))$ is not log canonical,%
\item or the log pair $(U,\lambda\gamma(D))$ is not log canonical.
\end{itemize}

Applying Theorem~\ref{theorem:Hwang} to the log pairs
$(V,\lambda\beta(D))$ or $(U,\lambda\gamma(D))$ (and the
fibrations $\pi$ or $\phi$, respectively) and using
Theorem~\ref{theorem:connectedness}, we see that
$$
\mathrm{LCS}\Big(X,\ \lambda D\Big)=\Gamma,
$$
where $\Gamma$ is a fiber of the~natural projection $E\to C$.

We may assume that $\alpha(\Gamma)\in F_{3}$. Let
$\bar{F}_{3}\subset X$ be the proper transform of the surface
$F_{3}$. Put
$$
D=\mu\bar{F}_{3}+\Omega,
$$
where $\Omega$ is an effective $\mathbb{Q}$-divisor on $X$ such
that $\bar{F}_{3}\not\subset\mathrm{Supp}(\Omega)$. Then
$$
\mu F_{3}+\alpha\big(\Omega\big)\qlineq 2\Big(F_{1}+F_{2}+F_{3}\Big),%
$$
which gives $\mu\leqslant 2$. Hence the log pair
$(\bar{F}_{3},\lambda\Omega\vert_{\bar{F}_{3}})$ is not log
canonical along $\Gamma\subset\bar{F}_{3}$ by
Theorem~\ref{theorem:adjunction}. But
$$
\Omega\Big\vert_{\bar{F}_{3}}\qlineq -K_{\bar{F}_{3}},
$$
and $\bar{F}_{3}$ is a del Pezzo surface such that
$K_{\bar{F}_{3}}^{2}=6$ and
\begin{itemize}
\item either $\bar{F}_{3}$ is smooth and $|C\cap F_{3}|=2$;%
\item or $\bar{F}_{3}$ has one ordinary double point and $|C\cap F_{3}|=1$.%
\end{itemize}

We have $\mathrm{lct}(\bar{F}_{3})\leqslant\lambda$. Then
$\bar{F}_{3}$ is singular by Example~\ref{example:del-Pezzos}. It
follows from Lemma~\ref{lemma:singular-del-Pezzo-sextic} that
$$
\mathrm{LCS}\Big(\bar{F}_{3},\
\lambda\Omega\vert_{\bar{F}_{3}}\Big)=\mathrm{Sing}\big(\bar{F}_{3}\big),
$$
but the log pair $(\bar{F}_{3},\lambda\Omega\vert_{\bar{F}_{3}})$
is not log canonical along the whole curve
$\Gamma\subset\bar{F}_{3}$, which is a contradiction.
\end{proof}

\begin{lemma}
\label{lemma:4-5} Suppose that $\gimel(X)=4.5$. Then
$\mathrm{lct}(X)=3/7$.
\end{lemma}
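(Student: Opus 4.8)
We must show $\mathrm{lct}(X)=3/7$ for the smooth Fano threefold with $\gimel(X)=4.5$. The upper bound should come cheaply from the geometry, and the lower bound $\mathrm{lct}(X)\geqslant 3/7$ will be the substance of the argument. Following the uniform pattern of Section~\ref{section:rho-4}, the plan is to first identify the extremal contractions of $X$ (it has Picard rank $4$), write $-K_X$ explicitly as a positive combination of boundary divisors to read off $\mathrm{lct}(X)\leqslant 3/7$, and then argue by contradiction that if $(X,\lambda D)$ fails to be log canonical for some $D\qlineq -K_X$ and some $\lambda<3/7$, then $\mathrm{LCS}(X,\lambda D)$ is forced into a contradiction with Theorem~\ref{theorem:connectedness} or with one of the del~Pezzo/surface lemmas of Sections~\ref{section:cubic-surfaces}--\ref{section:del-Pezzo}.

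\textbf{Setting up the diagram.} First I would record the $\mathbb{P}^1$-fibration and blow-up structure of $X$, exhibiting $X$ inside a suitable product (or blow-up of a product) of projective spaces, together with the several fibration morphisms $\phi_i\colon X\to B_i$ that its Mori structure provides. From a convenient boundary presentation
$$
-K_X\sim a\,S + (\text{effective}),
$$
with the largest admissible coefficient being $7/3$, one obtains $\mathrm{lct}(X)\leqslant 3/7$; concretely the bound $3/7$ signals a configuration where three boundary components meet with total multiplicity $7$ at a point (e.g.\ a point of multiplicity $7$ cut out on an exceptional divisor after an explicit log pull-back). I would make this presentation explicit so that the value $3/7$ is manifestly attained by a boundary divisor, as in Example~\ref{example:del-Pezzos} and the lemmas of Section~\ref{section:del-Pezzo}.

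\textbf{The lower bound.} For the reverse inequality I would assume there is an effective $D\qlineq -K_X$ and a rational $\lambda<3/7$ with $(X,\lambda D)$ not log canonical, and localise $\mathrm{LCS}(X,\lambda D)$. The strategy in every analogous lemma is: (1) use that the base varieties $B_i$ have known and larger log canonical thresholds, so that $\alpha_i(D)\qlineq -K_{B_i}$ forces $\mathrm{LCS}(X,\lambda D)$ to lie inside the exceptional locus of $\phi_i$; (2) intersect these containments over the several fibrations to pin $\mathrm{LCS}(X,\lambda D)$ down to a very small set (an intersection of exceptional divisors, typically a single fibre or a point); (3) apply Theorem~\ref{theorem:Hwang} (or Lemma~\ref{lemma:Hwang}) to a del~Pezzo fibration on $X$, reducing to the log canonical threshold of a fibre surface, which is at least $3/7$ by the surface results already proved (Examples~\ref{example:del-Pezzos}, \ref{example:singular-cubics}, \ref{example:del-Pezzo-quintic}, \ref{example:del-Pezzo-quartic}, and Lemmas~\ref{lemma:elliptic-times-P1}, \ref{lemma:F1}); and (4) rule out the residual low-dimensional locus either by the connectedness Theorem~\ref{theorem:connectedness} combined with a well-chosen auxiliary boundary making $-(K_X+\Delta)$ ample while disconnecting the locus, or by a direct multiplicity/intersection estimate against a covering family of curves as in Remark~\ref{remark:convexity} and Lemma~\ref{lemma:adjunction}.

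\textbf{Main obstacle.} The denominator $7$ is not of the form $2$ or $3$, so the fibrewise reduction cannot simply invoke $\mathrm{lct}(\mathbb{P}^1\times\mathbb{P}^1)=1/2$ or $\mathrm{lct}(\mathbb{P}^2)=1/3$; the value $3/7$ must come from an intermediate del~Pezzo surface (plausibly one of degree $6$ or $7$ with a mild singularity, cf.\ Lemmas~\ref{lemma:del-Pezzo-septic} and \ref{lemma:quadric-cone}) or from an explicit multiplicity-$7$ configuration on a boundary divisor. Thus the delicate step will be step~(3)--(4): correctly identifying which fibre type produces exactly $3/7$ and then decomposing $D=\mu S+\Omega$ with $S\not\subset\mathrm{Supp}(\Omega)$, bounding $\mu$ by intersecting with a general fibre of a conic bundle or $\mathbb{P}^1$-bundle, and showing that the induced pair $(S,\lambda\Omega|_S)$ violates the sharp surface threshold. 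Getting the boundary coefficients and the intersection numbers to line up so that the contradiction is quantitatively tight at $3/7$ (rather than at $1/2$ or $1/3$) is where the genuine work lies; everything else is the routine bookkeeping already templated in the preceding lemmas.
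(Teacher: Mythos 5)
Your proposal is a description of the section's general template rather than a proof of this lemma, and both halves of the actual argument are missing. For the upper bound, the value $3/7$ does not come from ``three boundary components meeting with total multiplicity $7$ at a point,'' nor from a fibre surface whose own threshold is $3/7$. Here $X$ is the blow up of $\mathbb{P}^1\times\mathbb{P}^2$ along a disjoint union of two curves of bidegree $(2,1)$ and $(1,0)$; writing $E$ for the exceptional divisor over the $(2,1)$-curve $C$, the first projection induces a double cover $C\to\mathbb{P}^1$ branched at two points $Q_1,Q_2$, and the proper transform $\bar{T}_i$ of the fibre of $\mathbb{P}^1\times\mathbb{P}^2\to\mathbb{P}^1$ through $Q_i$ is \emph{tangent} to $E$ along the fibre $\Gamma_i$ of $E\to C$ over $Q_i$. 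It is this tangency, fed into the decomposition $-K_{X}\sim 2\breve{H}_{2}+\breve{R}_{i}+2\breve{T}_{i}+\breve{E}$, that makes the pair $\bigl(X,\tfrac{3}{7}(2\breve{H}_{2}+\breve{R}_{i}+2\breve{T}_{i}+\breve{E})\bigr)$ log canonical but not log terminal along $\breve{\Gamma}_i$, whence $\lct(X)\leqslant 3/7$. Without identifying this configuration you have not established the upper bound, and your two guesses for the source of the $7$ both point in the wrong direction.

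For the lower bound your step (3) is not what happens: one does not reduce to a fibre whose threshold equals $3/7$ (no such surface occurs; the singular fibres of the relevant del Pezzo fibration are the two surfaces $\breve{T}_1,\breve{T}_2$, which are degree-$7$ del Pezzo surfaces with one node). Instead, after Lemma~\ref{lemma:Hwang}, Theorem~\ref{theorem:Hwang} and Theorem~\ref{theorem:connectedness} pin $\LCS(X,\lambda D)$ down to the single curve $\breve{\Gamma}_1$, the contradiction is quantitative and takes place on the blow up $U$ of $\mathbb{P}^1\times\mathbb{P}^2$: writing $\bar{D}=\eps\bar{H}_{2}+\Omega$, Theorem~\ref{theorem:adjunction} together with Lemma~\ref{lemma:elliptic-times-P1} on $\bar{H}_{2}\cong\mathbb{P}^1\times\mathbb{P}^1$ forces $\eps>4/3$; then Remark~\ref{remark:convexity} lets one assume $E\not\subseteq\Supp(\bar{D})$ or $\bar{T}_{1}\not\subseteq\Supp(\bar{D})$, and intersecting $\bar{D}$ with a fibre of $E\to C$ gives $\eps\leqslant 1$, while intersecting with a line in $T_{1}\cong\mathbb{P}^{2}$ through $Q_{1}$ gives $2\geqslant\mult_{\Gamma_{1}}(\bar{D})>7/3$. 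These estimates are exactly where the bound $1/\lambda>7/3$ enters, and none of them appear in your proposal; as written, neither inequality is proved.
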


\begin{proof}
Let $Q\subset\mathbb{P}^{4}$ be a quadric cone, let
$V\subset\mathbb{P}^{6}$ be a a section of
$\mathrm{Gr}(2,5)\subset\mathbb{P}^{9}$ by a linear subspace of
dimension $6$ such that $V$ has one ordinary double point. Then
the~diagram
$$
\xymatrix{
&&&X\ar@{->}'[ddll]^(0.7){\mu}[ddddll]\ar@{->}[ddd]^{\gamma}\ar@{->}[rd]^{\eta}\ar@{->}[rr]^{\theta}&&Y\ar@{->}[r]^{\iota}\ar@{->}[rd]^{\tau}&Q\ar@{-->}[d]^{\xi}&&\\%
V\ar@/^5pc/@{-->}[urrrrrr]^{\phi}&&&&\mathbb{P}^{1}\times\mathbb{F}_{1}\ar@{->}[dr]^{\xi}\ar@{->}[ddd]_{\nu}\ar@{->}[rr]^{\sigma}&&\mathbb{P}^{1}\times\mathbb{P}^{1}\ar@{->}[d]^{\upsilon_{2}}\ar@{->}`[rd]`[dddd]`[lllll]^{\upsilon_{1}}[dddlllll]&&\\%
&&&&&\mathbb{F}_{1}\ar@/^1pc/[ddr]^{\zeta}\ar@{->}[r]^{\chi}&\mathbb{P}^{1}&\\%
U_{1}\ar@{->}[uu]^{\delta_{1}}\ar@/_1pc/@{->}[dr]_{\omega_{1}}&&&U\ar@/^0.7pc/@{->}[uulll]_(0.3){\beta}\ar@{->}[dr]^{\alpha}\ar@{->}'[ll]_(0.7){\beta_{1}}[lll]\ar@{->}'[r]^{\beta_{2}}[rr]&&U_{2}\ar@{->}[dr]^{\omega_{2}}\ar@{->}'[ul]'[ull]_{\delta_{2}}'[uulll][uulllll]&&&\\%
&\mathbb{P}^{1}&&&\mathbb{P}^{1}\times\mathbb{P}^{2}\ar@{->}[lll]_{\pi_{1}}\ar@{->}[rr]^{\pi_{2}}&&\mathbb{P}^{2}\ar@{-->}[uu]_{\psi}&\\
&&&&&&&&&}%
$$
commutes (cf. \cite[Lemma~2.6]{Fur04}), where we have the
following notation:
\begin{itemize}
\item the morphisms $\pi_{i}$, $\upsilon_{i}$, $\xi$ and $\chi$ are natural projections;%
\item the morphism $\alpha$ contracts a surface
$\mathbb{F}_{3}\cong E\subset U$ to a curve $C$ such that
$$
\pi_{1}^{*}\Big(\mathcal{O}_{\mathbb{P}^{1}}\big(1\big)\Big)\cdot
C=2,
\ \pi_{2}^{*}\Big(\mathcal{O}_{\mathbb{P}^{2}}\big(1\big)\Big)\cdot C=1;%
$$
\item the morphism $\beta$ contracts a surface $\mathbb{P}^{1}\times\mathbb{P}^{1}\cong\bar{H}_{2}\subset U$ to the singular point of $V$;%
\item the morphism $\beta_{i}$ contracts the surface $\bar{H}_{2}$ to a smooth rational curve;%
\item the~morphism $\delta_{i}$ contracts the curve
$\beta_{i}(\bar{H}_{2})$ to the singular point of $V$ so that
the~map
$$
\delta_{2}\circ\delta_{1}^{-1}\colon U_{1}\dasharrow U_{2}
$$
is a standard flop in the curve $\beta_{1}(\bar{H}_{2})\cong\mathbb{P}^{1}$;%
\item the morphism $\omega_{1}$ is a fibration whose general fiber is $\mathbb{P}^{1}\times\mathbb{P}^{1}$;%
\item the morphisms $\omega_{2}$, $\pi_{2}$, $\xi$, $\sigma$, $\tau$ are $\mathbb{P}^{1}$-bundles;%
\item the morphism $\zeta$ is a blow up of a point $O\in\mathbb{P}^{2}$ such that $O\not\in\pi_{2}(C)$;%
\item the map $\psi$ is a linear projection from the point $O\in\mathbb{P}^{2}$;%
\item the morphism $\nu$ contracts a surface $G\cong\mathbb{P}^{1}\times\mathbb{P}^{1}$ to a curve $L$ such that $\pi_{2}(L)=O$;%
\item the morphism $\gamma$ contracts a surface $\breve{G}$ to a
curve $\bar{L}$ such~that
$$
\alpha\big(\bar{L}\big)=L\subset\mathbb{P}^{1}\times\mathbb{P}^{2}
$$
and the curve $\beta(\bar{L})$ is a line in $V\subset\mathbb{P}^{6}$ such that $\beta(\bar{L})\cap\mathrm{Sing}(V)=\varnothing$;%
\item the morphism $\eta$ contracts a surface $\breve{E}$ to a curve such~that $\nu\circ\eta(\breve{E})=C\subset\mathbb{P}^{1}\times\mathbb{P}^{2}$;%
\item the morphism $\theta$ contracts a surface
$\breve{R}\subset X$ to a curve such that $\breve{R}\ne\breve{E}$
and
$$
\tau\circ\theta\big(\breve{R}\big)=\sigma\circ\eta\big(\breve{E}\big)\subset\mathbb{P}^{1}\times\mathbb{P}^{1};
$$
\item the morphism $\mu$ is a fibration into del Pezzo surfaces of degree $6$;%
\item the morphism $\iota$ contracts the surface $\theta(\breve{H}_{2})$ to the singular point of the quadric $Q$;%
\item the map $\phi$ is a linear projection from the line $\beta(\bar{L})\subset V\subset\mathbb{P}^{6}$.%
\end{itemize}

The curve $\pi_{2}(C)\subset\mathbb{P}^{2}$ is a line. Then
$\alpha(\bar{H}_{2})\in|\pi_{2}^{*}(\mathcal{O}_{\mathbb{P}^{2}}(1))|$
and $C\subset\alpha(\bar{H}_{2})$.

The morphism $\pi_{1}$ induces a double cover $C\to\mathbb{P}^{1}$
branched in two points $Q_{1}\in C\ni Q_{2}$.~Let
$$
T_{i}\in\Big|\pi_{1}^{*}\Big(\mathcal{O}_{\mathbb{P}^{1}}\big(1\big)\Big)\Big|
$$
be the unique surface such that $Q_{i}\in T_{i}$. Let
$\bar{T}_{i}\subset U$ be the proper transform of $T_{i}$. Then
\begin{itemize}
\item the surface $\bar{T}_{i}$ has one ordinary double point,%
\item the surface $\bar{T}_{i}$ is tangent to the surface $E$ along the curve $E\cap\bar{T}_{i}$,%
\item the surface $\bar{T}_{i}$ is a del Pezzo surface such that $K_{\bar{T}_{i}}^{2}=7$.%
\end{itemize}

Let $Z_{i}\subset\mathbb{P}^{2}$ be the unique line such that
$O\in Z\ni \pi_{2}\circ\alpha(Q_{i})$. Then there is a unique
surface
$$
\bar{R}_{i}\in\Big|\big(\pi_{2}\circ\alpha\big)^{*}\Big(\mathcal{O}_{\mathbb{P}^{2}}\big(1\big)\Big)\Big|
$$
such that $Z_{i}\subset\pi_{2}\circ\alpha(\bar{R}_{i})$. One has
$\bar{L}\subset\bar{R}_{i}$ and
$$
-K_{U}\sim 2\bar{H}_{2}+\bar{R}_{i}+2\bar{T}_{i}+E.
$$

Let $\Gamma_{i}$ be a fiber of the projection $E\to C$ over the
point $Q_{i}$. Then $\Gamma_{i}=E\cap\bar{T}_{i}$ and
$$
\Gamma_{i}\subset\mathrm{LCS}\left(U,\
\frac{3}{7}\Big(2\bar{H}_{2}+\bar{R}_{i}+2\bar{T}_{i}+E\Big)\right).
$$

Let $\breve{R}_{i}$ and $\breve{T}_{i}$ be the proper transforms
of $\bar{R}_{i}$ and $\bar{T}_{i}$ on the threefold $X$,
respectively. Then
$$
-K_{X}\sim 2\breve{H}_{2}+\breve{R}_{i}+2\breve{T}_{i}+\breve{E},
$$
because $\bar{L}\subset\bar{R}_{i}$. Let
$\breve{\Gamma}_{i}\subset X$ be the proper transform of the curve
$\Gamma_{i}$. Then the log pair
$$
\left(X,\
\frac{3}{7}\Big(2\breve{H}_{2}+\breve{R}_{i}+2\breve{T}_{i}+\breve{E}\Big)\right)
$$
is log canonical but not log terminal. Thus, we see that
$\mathrm{lct}(X)\leqslant 3/7$.

We suppose that $\mathrm{lct}(X)<3/7$. Then there exists an
effective $\Q$-divisor $D\qlineq -K_{X}$ such that the log pair
$(X,\lambda D)$ is not log canonical for some rational
$\lambda<3/7$.

The surfaces $\breve{T}_{1}$ and $\breve{T}_{2}$ are the only
singular fibers of the fibration $\mu\colon X\to\mathbb{P}^{1}$.
Then
$$
\breve{T}_{i}\not\subseteq\LCS\Big(X,\ \lambda D\Big)\subsetneq\breve{T}_{1}\cup\breve{T}_{2},%
$$
by Lemma~\ref{lemma:Hwang}, because $D\cdot Z=\breve{T}_{1}=2$,
where $Z$ is a general fiber of $\pi_{2}\circ\alpha\circ\gamma$.

We may assume that $\LCS(X,\lambda D)\subseteq\breve{T}_{1}$ by
Theorem~\ref{theorem:connectedness}.

Applying Theorem~\ref{theorem:Hwang} to the log pair
$(\mathbb{P}^{1}\times\mathbb{F}_{1}, \lambda\eta(D))$, we see
that
$$
\varnothing\ne\LCS\Big(X,\ \lambda D\Big)\ne \breve{T}_{1}\cap\breve{G},%
$$
because $G=\eta(\breve{G})$ is a section of the
$\mathbb{P}^{1}$-bundle $\sigma$.

Applying Theorem~\ref{theorem:Hwang} to the log pair
$(\mathbb{P}^{1}\times\mathbb{P}^{2},
\lambda\alpha\circ\gamma(D))$, we see that
$$
\varnothing\ne\LCS\Big(X,\ \lambda D\Big)\subseteq\breve{T}_{1}\cap\breve{E}=\breve{\Gamma}_{1}%
$$
by Theorem~\ref{theorem:connectedness}, because
$\breve{G}\cap\breve{E}=\varnothing$ and $T_{1}$ is a section of
$\pi_{2}$.

Applying Theorem~\ref{theorem:Hwang} to the log pairs $(Y,
\lambda\theta(D))$ and $(U_{2}, \lambda\beta_{2}\circ\gamma(D))$
(and the fibrations $\tau$ and $\omega_2$) we see that
$$
\varnothing\ne\LCS\Big(X,\ \lambda D\Big)=\breve{\Gamma}_{1},%
$$
because $\breve{R}\cap\breve{H}_{2}=\varnothing$.  Put
$\bar{D}=\gamma(D)$. Then $\LCS(U,
\lambda\bar{D})=\Gamma_{1}$. Put
$$
\bar{D}=\eps\bar{H}_{2}+\Omega,
$$
where $\Omega$ is an effective $\mathbb{Q}$-divisor such that
$\bar{H}_{2}\not\subseteq\mathrm{Supp}(\Omega)$. Then
$$
\Omega\Big\vert_{\bar{H}_{2}}\qlineq
-\frac{\big(1+\eps\big)}{2}K_{\bar{H}_{2}},
$$
and the log pair $(\bar{H}_{2},\lambda\Omega\vert_{\bar{H}_{2}})$
is not log canonical by Theorem~\ref{theorem:adjunction}. The
latter implies that
$$
\frac{3}{7}\cdot\frac{1+\eps}{2}>\lambda \frac{1+\eps}{2}>1/2
$$
by Lemma~\ref{lemma:elliptic-times-P1}, and hence $\eps>4/3$.

We may assume that either $E\not\subseteq\mathrm{Supp}(\bar{D})$
or $\bar{T}_{1}\not\subseteq\mathrm{Supp}(\bar{D})$ by
Remark~\ref{remark:convexity}.

Suppose that $E\not\subseteq\mathrm{Supp}(\bar{D})$. Let $Z$ be a
general fiber of the projection $E\to C$. Then
$$
1=-K_{U}\cdot Z=\bar{D}\cdot Z=\eps+\Omega\cdot Z\geqslant \eps,
$$
which is a contradiction, because $\eps>4/3$. Thus, we see that
$\bar{T}_{1}\not\subseteq\mathrm{Supp}(\bar{D})$.

Let $\bar{\Delta}\subset\bar{T}_{1}$ be a proper transform of a
general line in $T_{1}\cong\mathbb{P}^{2}$ that passes through
$Q_{1}$. Then
$$
2=-K_{U}\cdot\bar{\Delta}=\bar{D}\cdot\bar{\Delta}\geqslant\mathrm{mult}_{\Gamma_{1}}\big(\bar{D}\big)\geqslant
1/\lambda>7/3,
$$
because $\bar{\Delta}\not\subset\mathrm{Supp}(\bar{D})$ and
$\bar{\Delta}\cap\Gamma_{1}\ne\varnothing$. The obtained
contradiction completes the proof.
\end{proof}

\begin{lemma}
\label{lemma:4-6} Suppose that $\gimel(X)=4.6$. Then
$\mathrm{lct}(X)=1/2$.
\end{lemma}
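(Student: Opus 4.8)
The plan is to run exactly the template of the preceding Lemmas~\ref{lemma:4-1}, \ref{lemma:4-2} and \ref{lemma:4-3}. The first step is bookkeeping: from the Mori--Mukai description of the family $4.6$ I would write down the diagram of extremal contractions of $X$, namely a birational morphism $\alpha\colon X\to V$ onto a Fano threefold $V$ of smaller Picard rank whose global log canonical threshold has already been shown to equal $1/2$ (one of the varieties treated in Section~\ref{section:del-Pezzos} or in the $\rho=2,3$ lemmas above), contracting an exceptional divisor $E$, together with the natural fibration structures (a conic bundle and/or a del Pezzo or quadric fibration $\phi\colon X\to Z$) supplied by the remaining extremal rays. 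The upper bound comes for free from an explicit anticanonical decomposition of the shape $-K_{X}\sim 2\bar{H}+E+\cdots$, where $\bar{H}$ is the proper transform of a suitable surface pulled back from $V$; since the coefficient in front of the reduced divisor is $2$, one reads off $\mathrm{lct}(X)\le\mathrm{lct}(X,\,2\bar{H}+E+\cdots)\le 1/2$. The opposite inequality is the content of the argument.

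So suppose $\mathrm{lct}(X)<1/2$, and take an effective $\mathbb{Q}$-divisor $D\qlineq -K_{X}$ with $(X,\lambda D)$ not log canonical for some rational $\lambda<1/2$. Because $\alpha(D)\qlineq -K_{V}$ and $\mathrm{lct}(V)=1/2>\lambda$, the pair $(V,\lambda\alpha(D))$ is log canonical, and therefore
$$
\varnothing\ne\mathrm{LCS}\Big(X,\ \lambda D\Big)\subseteq E.
$$
(If $V$ itself is a product such as $\P^1\times\P^2$ or $(\P^1)^{3}$ I would instead invoke Lemma~\ref{lemma:lct-P1-product} for the value $\mathrm{lct}(V)=1/2$.) This is the key localization; the rest of the proof confines the locus inside $E$ until it is impossible.

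Next I would apply the Hwang-type results to a convenient fibration $\phi$: by Lemma~\ref{lemma:Hwang} (or Theorem~\ref{theorem:Hwang}), either the locus lies inside a single fiber, or the log canonical threshold of the generic fiber (a smooth del Pezzo surface or quadric, equal to $1/2$ by Example~\ref{example:del-Pezzos}) bounds $\lambda$ from below, a contradiction. Combined with connectedness (Theorem~\ref{theorem:connectedness}) this pins $\mathrm{LCS}(X,\lambda D)$ down to $E\cap(\text{fiber})$, or to a fibre of the ruling $E\to C$. I then write $D=\mu E+\Omega$ with $E\not\subset\mathrm{Supp}(\Omega)$, and bound $\mu\le 1$ by intersecting with a general fibre $\Gamma$ of a conic bundle, $2=D\cdot\Gamma\ge\mu\,E\cdot\Gamma$. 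Adjunction (Theorem~\ref{theorem:adjunction}) shows that $(E,\lambda\Omega|_{E})$ is not log canonical, and since $E$ will be $C\times\P^1$ or $\P^1\times\P^1$, computing the class $\Omega|_{E}\qlineq(1+\mu)Z+kL$ and feeding it into Lemma~\ref{lemma:elliptic-times-P1} (using $\mu\le 1$ and $\lambda<1/2$) yields the contradiction. Should the analysis instead leave $\mathrm{LCS}(X,\lambda D)$ zero-dimensional, I would add a general member of $|\bar{H}|$ (or a suitable combination keeping $-(K_X+\cdots)$ ample) to disconnect the locus, again contradicting Theorem~\ref{theorem:connectedness}; Remark~\ref{remark:convexity} is used throughout to assume $E\not\subset\mathrm{Supp}(D)$ or $\bar H\not\subset\mathrm{Supp}(D)$ whenever convenient.

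The main obstacle I anticipate is purely geometric bookkeeping rather than a new idea: correctly reading off the extremal-contraction diagram of the family $4.6$, identifying which of the several fibrations most efficiently localizes the locus, and in particular ruling out the zero-dimensional case cleanly. The numerical heart — bounding $\mu$ and verifying that after dividing by $\lambda$ the restricted class $\Omega|_E$ falls in the range where Lemma~\ref{lemma:elliptic-times-P1} applies — should be routine once the diagram is fixed, so I expect essentially all the difficulty to be concentrated in choosing the right auxiliary surfaces and the right fibration to argue with.
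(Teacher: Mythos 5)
Your template breaks down at its first substantive step for the family $4.6$. Here $X$ is the blow up of $\mathbb{P}^{3}$ along three disjoint lines $L_{1},L_{2},L_{3}$, so the obvious divisorial contraction $\alpha\colon X\to\mathbb{P}^{3}$ lands on a target with $\mathrm{lct}(\mathbb{P}^{3})=1/4$, and contracting a single exceptional divisor lands on the threefold $3.25$ with $\mathrm{lct}=1/3$; in neither case can one conclude that the pushed-forward pair is log canonical for $\lambda<1/2$, so the localization $\mathrm{LCS}(X,\lambda D)\subseteq E$ is unavailable. The only contraction onto a Fano threefold with threshold $1/2$ is the non-obvious morphism $\sigma\colon X\to\mathbb{P}^{1}\times\mathbb{P}^{1}\times\mathbb{P}^{1}$ contracting the proper transform $\bar{Q}$ of the unique quadric through $L_{1},L_{2},L_{3}$, which your proposal never identifies. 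The fibrations are also weaker than you assume: the fibers of the three del Pezzo fibrations $\phi_{i}$ have degree $7$, hence $\mathrm{lct}=1/3$, which can be smaller than $\lambda$, so Lemma~\ref{lemma:Hwang} gives no contradiction; and the endgame via Lemma~\ref{lemma:elliptic-times-P1} does not close either, since on $\bar{Q}\cong\mathbb{P}^{1}\times\mathbb{P}^{1}$ one computes $\Omega\vert_{\bar{Q}}\qlineq(1+\mu)\ell+(4-2\mu)\ell'$, and with only $0\leqslant\mu\leqslant 2$ available the coefficient $4-2\mu$ can be close to $4$, so $\lambda\Omega\vert_{\bar{Q}}$ need not satisfy the hypotheses of that lemma.

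The paper's actual argument uses a different mechanism. It first rules out a zero-dimensional $\mathrm{LCS}(X,\lambda D)$ by a disconnectedness trick applied to $\lambda D+H_{1}+\frac{1}{2}E_{1}$; it then restricts $D$ to a general fiber $H_{1}$ of $\phi_{1}$ (a smooth del Pezzo surface of degree $7$) and invokes the refined Lemma~\ref{lemma:del-Pezzo-septic} --- not merely the value of $\mathrm{lct}(H_{1})$ --- to pin $\mathrm{LCS}(H_{1},\lambda D\vert_{H_{1}})$ down to the specific $(-1)$-curve $C=\bar{Q}\cap H_{1}$. By Remark~\ref{remark:hyperplane-reduction} this forces the whole surface $\bar{Q}$ into $\mathrm{LCS}(X,\lambda D)$, so writing $D=\mu\bar{Q}+\Omega$ gives $\mu\geqslant 1/\lambda>2$; but $\alpha(D)\qlineq 2Q\sim -K_{\mathbb{P}^{3}}$ forces $\mu\leqslant 2$. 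That contradiction --- a surface of multiplicity greater than $2$ against a degree bound in $\mathbb{P}^{3}$ --- is the step your proposal is missing, and it is not a routine instance of your template.
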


\begin{proof}
There is a birational morphism $\alpha\colon X\to\mathbb{P}^{3}$
that blows up three disjoint lines $L_{1}$, $L_{2}$, $L_{3}$.

Let $H_{i}$ be the proper transform on $X$ of a general plane in
$\mathbb{P}^{3}$ such that $L_{i}\subset\alpha(H_{i})$. Then
$$
-K_{X}\sim 2H_{1}+E_{1}+H_{2}+H_{3}\sim
2H_{2}+E_{2}+H_{1}+H_{3}\sim 2H_{3}+E_{3}+H_{1}+H_{2},
$$
where $E_{i}$ is the exceptional divisor of $\alpha$ such that
$\alpha(E_{i})=L_{i}$. In particular, we see that
$\mathrm{lct}(X)\leqslant 1/2$.

We suppose that $\mathrm{lct}(X)<1/2$. Then there exists an
effective $\mathbb{Q}$-divisor $D\qlineq -K_{X}$ such that the~log
pair $(X,\lambda D)$ is not log canonical for some positive
rational number $\lambda<1/2$.

The surface $H_{i}$ is a smooth del Pezzo surface such that
$K_{H_{i}}^{2}=7$, the linear system $|H_{i}|$ has no base points
and induces a smooth morphism $\phi_{i}\colon X\to\mathbb{P}^{1}$,
whose fibers are isomorphic to $H_{i}$.

Suppose that $|\mathrm{LCS}(X,\lambda D)|<+\infty$. We may assume
that  $\mathrm{LCS}(X,\lambda D)\not\subseteq E_{1}$. Then the set
$$
\mathrm{LCS}\left(X,\ \lambda D+H_{1}+\frac{1}{2}E_{1}\right)
$$
is disconnected, which is impossible by
Theorem~\ref{theorem:connectedness}, because
$H_{2}+H_{3}+(\lambda-1/2)K_{X}$ is ample.

We may assume that $H_{1}\cap\mathrm{LCS}(X,\lambda
D)\ne\varnothing$. Then
$$
\varnothing\ne H_{1}\cap\mathrm{LCS}\Big(X,\ \lambda D\Big)\subseteq\mathrm{LCS}\Big(H_{1},\ \lambda D\Big\vert_{H_{1}}\Big)%
$$
by Remark~\ref{remark:hyperplane-reduction}. Put
$C_{2}=E_{2}\vert_{H_{1}}$ and $C_{3}=E_{3}\vert_{H_{1}}$. Then
$$
C_{2}\cdot C_{2}=C_{3}\cdot C_{3}=-1,
$$
and there is a unique curve $\mathbb{P}^{1}\cong C\subset H_{1}$
such that $C\cdot C_{2}=C\cdot C_{3}=1$ and $C\cdot C=-1$. Note
that
$$
\mathrm{LCS}\Big(H_{1},\ \lambda D\Big\vert_{H_{1}}\Big)=C
$$
by Lemma~\ref{lemma:del-Pezzo-septic}.

There is a unique smooth quadric $Q\subset\mathbb{P}^{3}$ that
contains $L_{1}$, $L_{2}$, $L_{3}$. Note that
$$
\bar{Q}\cap H_{1}=C,
$$
where $\bar{Q}\subset X$ is a proper transform of the surface $Q$.

There is a morphism $\sigma\colon
X\to\mathbb{P}^{1}\times\mathbb{P}^{1}\times\mathbb{P}^{1}$
contracting $\bar{Q}$ to a curve of tri-degree $(1,1,1)$. Since
$\bar{Q}\cap H_{1}=C$, one obtains (see
Remark~\ref{remark:hyperplane-reduction}) that
$$
\mathrm{LCS}\Big(X,\ \lambda D\Big)\supset\bar{Q},
$$
and hence $\LCS(X, \lambda D)=\bar{Q}$, because
$\mathrm{lct}(\mathbb{P}^{1}\times\mathbb{P}^{1}\times\mathbb{P}^{1})=1/2$.
Put
$$
D=\mu \bar{Q}+\Omega,
$$
where $\mu\geqslant 1/\lambda>2$, and $\Omega$ is an effective
$\mathbb{Q}$-divisor such that
$\bar{Q}\not\subset\mathrm{Supp}(\Omega)$. Then
$$
\alpha\big(D\big)=\mu Q+\alpha\big(\Omega\big),
$$
which is impossible, because $\alpha(D)\qlineq
2Q\sim-K_{\mathbb{P}^{3}}$ and $\mu>2$.
\end{proof}

\begin{lemma}
\label{lemma:4-7} Suppose that $\gimel(X)=4.7$. Then
$\lct(X)=1/2$.
\end{lemma}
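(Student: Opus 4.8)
The plan is to follow the template that already governs the other rank-four cases (Lemmas~\ref{lemma:4-1}, \ref{lemma:4-2}, \ref{lemma:4-3} and~\ref{lemma:4-6}). First I would record the birational geometry of $X$: the standard Mori--Mukai description exhibits $X$ as a blow-up $\alpha\colon X\to Y$ contracting a surface $E$ to a smooth curve $C\subset Y$, where $Y$ is a variety whose global log canonical threshold is already known to be $1/2$ (either a lower-rank Fano threefold treated earlier in Sections~\ref{section:del-Pezzos}--\ref{section:rho-3}, or one of $\mathbb{P}^1\times\mathbb{P}^1\times\mathbb{P}^1$ with $\lct=1/2$ by Lemma~\ref{lemma:lct-P1-product}); simultaneously $X$ carries several $\mathbb{P}^1$-bundle, conic-bundle and del Pezzo fibration structures coming from its other extremal contractions. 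Writing $-K_X\sim 2\bar H+E+\cdots$ for a suitable proper transform $\bar H$ of an element of the relevant linear system on $Y$ produces an explicit anticanonical $\mathbb{Q}$-divisor whose associated pair fails to be log canonical at coefficient $1/2$, which gives the easy bound $\mathrm{lct}(X)\leqslant 1/2$.

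For the reverse inequality I would argue by contradiction: assume $\mathrm{lct}(X)<1/2$ and choose an effective $D\qlineq -K_X$ such that $(X,\lambda D)$ is not log canonical for some rational $\lambda<1/2$. Since $\alpha(D)\qlineq -K_Y$ and $\mathrm{lct}(Y)=1/2$, no center of log canonical singularities can lie away from $C$, so $\varnothing\neq\mathrm{LCS}(X,\lambda D)\subseteq E$. The next step is to locate this locus using the fibration structures. Writing $D=\mu E+\Omega$ with $E\not\subset\mathrm{Supp}(\Omega)$, I would intersect $D$ with a general fibre $\Gamma$ of a conic bundle or $\mathbb{P}^1$-bundle on $X$ to bound $\mu$, typically obtaining $\mu\leqslant 1$; then $(X,E+\lambda\Omega)$ is not log canonical in a neighbourhood of $E$, and Theorem~\ref{theorem:adjunction} transfers the failure of log canonicity to the surface $E$ itself. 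Computing $\Omega|_E$ in terms of the section and fibre classes of the ruled surface $E$ and invoking Lemma~\ref{lemma:elliptic-times-P1} (when $E$ is a product over an elliptic curve) or Lemma~\ref{lemma:del-Pezzo-septic} and Lemma~\ref{lemma:F1} (in the rational cases) then yields a contradiction. Should the natural contraction instead be to $\mathbb{P}^3$, where $\mathrm{lct}=1/4$ is too small to localize directly, I would follow Lemma~\ref{lemma:4-6} verbatim: apply Theorem~\ref{theorem:Hwang} to the del Pezzo fibrations and restrict to a smooth fibre $H$, using that $\mathrm{lct}(H)\geqslant 1/2$ by Example~\ref{example:del-Pezzos} to rule out a non-log-canonical point of $(H,\lambda D|_H)$.

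The main obstacle I anticipate is the bookkeeping needed to force $\mu\leqslant 1$ and to pin down $\mathrm{LCS}(X,\lambda D)$ exactly: when several contractions compete one must first use Remark~\ref{remark:convexity} to replace $D$ by a $\mathbb{Q}$-equivalent divisor whose relevant exceptional or fibre component is not contained in $\mathrm{Supp}(D)$, so that the intersection bound $\Gamma\cdot D\geqslant\mu$ (valid only for $\Gamma\not\subset\mathrm{Supp}(D)$) becomes available. If a coefficient $\mu>1$ nonetheless survives, the fallback is to intersect with a general fibre $\Gamma'$ of the projection $E\to C$, an irreducible curve with $-K_X\cdot\Gamma'=1$ and $\Gamma'\not\subset\mathrm{Supp}(D)$, giving $1=D\cdot\Gamma'\geqslant\mu>1$, the desired contradiction (exactly as in Lemmas~\ref{lemma:2-19} and~\ref{lemma:2-26}). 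The only genuinely case-specific input is the identification of $E$ (a product of curves versus a Hirzebruch surface) and of the fibre classes entering $\Omega|_E$; once these are in hand, Theorem~\ref{theorem:connectedness}, Theorem~\ref{theorem:adjunction} and the cited surface lemmas close the argument just as in the preceding rank-four cases.
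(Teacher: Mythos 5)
Your strategy is sound in outline but it is not the route the paper takes. The paper obtains the upper bound from $-K_X\sim 2\bar{H}+E_1+E_2$, where $H$ is a surface in $|{-\frac{1}{2}}K_W|$ containing both blown-up curves, and localizes $\mathrm{LCS}(X,\lambda D)$ inside $E_1\cup E_2$ using $\mathrm{lct}(W)=1/2$, exactly as you propose. But at that point, instead of running adjunction on $E_1$, it simply contracts the \emph{other} exceptional divisor via $\beta\colon X\to Y$, observes that $Y$ is the threefold with $\gimel(Y)=3.24$ and that $(Y,\lambda\beta(D))$ is then not log canonical with $\beta(D)\qlineq -K_Y$, and concludes by Lemma~\ref{lemma:3-24}. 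This reduction is two lines long and dispenses with all of the multiplicity bookkeeping your plan requires.

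Your template could in principle be completed here (each $E_i\cong\P^1\times\P^1$ because $L_i$ is a fiber of a $\P^1$-bundle structure on $W$ and so has trivial normal bundle; one finds $\Omega\vert_{E_1}\qlineq (1+\mu)Z+2F$; and Lemma~\ref{lemma:elliptic-times-P1} together with Theorem~\ref{theorem:adjunction} finishes once $\mu\leqslant 1$), but the step that bounds $\mu$ is a genuine gap as you describe it. A \emph{general} fiber $\Gamma$ of either conic bundle on $X$ is disjoint from $E_1\cup E_2$, because $L_1$ and $L_2$ are themselves fibers of the projections $W\to\P^2$; hence $2=D\cdot\Gamma\geqslant \mu E_1\cdot\Gamma$ gives no information. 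Your fallback also fails: a fiber $\Gamma'$ of the ruling $E_1\to L_1$ has $E_1\cdot\Gamma'=E_1\vert_{E_1}\cdot\Gamma'=-1$, so $1=D\cdot\Gamma'$ yields $\Omega\cdot\Gamma'=1+\mu$ rather than $\mu\leqslant 1$; in Lemmas~\ref{lemma:2-19} and~\ref{lemma:2-26} the curve used is a fiber of the \emph{other} exceptional divisor, which meets $E$ positively, and that device is unavailable here since $E_1\cap E_2=\varnothing$. To salvage your approach one must instead use the one-parameter family of fibers of the second projection lying over points of the line $\pi_2(L_1)$, whose proper transforms $\Gamma_q$ satisfy $-K_X\cdot\Gamma_q=E_1\cdot\Gamma_q=1$; since these sweep out only a surface rather than all of $X$, one must first invoke Remark~\ref{remark:convexity} to arrange that this surface is not a component of $D$. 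This is exactly the kind of case-specific input your proposal defers, and it is where the real work lies; the paper's contraction trick avoids it entirely.
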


\begin{proof}
There is a birational morphism $\alpha\colon X\to W$ such that
\begin{itemize}
\item the variety $W$ is a smooth divisor of bi-degree $(1,1)$ on $\mathbb{P}^{2}\times\mathbb{P}^{2}$;%
\item the morphism $\alpha$ contracts two (irreducible)
surfaces $E_{1}\ne E_{2}$ to two disjoint curves $L_1$ and $L_2$;%
\item the curves $L_i$
are fibers of one natural $\mathbb{P}^{1}$-bundle $W\to\P^2$.%
\end{itemize}

There is a surface $H\subset W$ such that $-K_{X}\sim 2H$ and
$L_{1}\subset H\supset L_{2}$. Then
$$
-K_{X}\sim 2\bar{H}+E_1+E_2,
$$
where $\bar{H}$ is a proper  transform of $H$ on the threefold
$X$. In particular, $\mathrm{lct}(X)\leqslant 1/2$.

We suppose that $\mathrm{lct}(X)<1/2$. Then there exists an
effective $\mathbb{Q}$-divisor $D\qlineq -K_{X}$ such that the log
pair $(X,\lambda D)$ is not log canonical for some $\lambda<1/2$.
Then
$$
\varnothing\ne\mathrm{LCS}\Big(X,\lambda D\Big)\subseteq E_1\cup
E_2,
$$
since $\mathrm{lct}(W)=1/2$ by Theorem~\ref{theorem:del-Pezzo} and
$\alpha(D)\qlineq -K_{W}$.

We may assume that $\LCS(X, \lambda D)\cap E_1\neq\varnothing$.
Let $\beta\colon X\to Y$ be a contraction of $E_2$. Then
$$
\mathbb{LCS}\Big(Y,\ \lambda\beta\big(D\big)\Big)\ne\varnothing
$$
and $\beta(D)\qlineq -K_Y$, which contradicts
Lemma~\ref{lemma:3-24}.
\end{proof}

\begin{lemma}
\label{lemma:4-8} Suppose that $\gimel(X)=4.8$. Then
$\mathrm{lct}(X)=1/3$.
\end{lemma}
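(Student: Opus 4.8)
The plan is to recognize the deformation family $\gimel(X)=4.8$ as a product and then read off the threshold from the product formula established in the preliminaries. Concretely, every threefold in this family is isomorphic to $\P^{1}\times S$, where $S$ is the smooth del Pezzo surface of degree $7$ (that is, $\P^{2}$ blown up at two distinct points, the unique smooth del Pezzo surface of Picard rank $3$). First I would record this description from the Mori--Mukai classification; it simultaneously fixes the geometry and accounts for the Picard rank $\rho(X)=1+\rho(S)=1+3=4$. Note that, since any two distinct points of $\P^{2}$ are projectively equivalent, $S$ is unique up to isomorphism and $X$ is rigid, which is consistent with the fact that the statement carries no generality hypothesis.

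Next I would determine $\lct(S)$ from Example~\ref{example:del-Pezzos}. The surface $S$ is a smooth del Pezzo surface with $K_{S}^{2}=7$, so it is neither $\P^{1}\times\P^{1}$, nor of degree in $\{5,6\}$, nor of any of the lower degrees singled out in that example; hence it belongs to the ``remaining cases'' and $\lct(S)=1/3$. Then I would invoke Lemma~\ref{lemma:lct-P1-product}, which applies because $S$ is a smooth (in particular log terminal) Fano surface, to obtain $\lct(X)=\lct(\P^{1}\times S)=\min(1/2,\ \lct(S))=\min(1/2,\ 1/3)=1/3$, as required. Equivalently, one may apply Lemma~\ref{lemma:lct-product} to the two smooth factors $\P^{1}$ and $S$, both of which are Fano with canonical Gorenstein singularities, and conclude $\lct(X)=\min(\lct(\P^{1}),\lct(S))=\min(1/2,1/3)=1/3$.

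There is no genuine analytic obstacle here: once the product structure is identified, the computation is immediate. The only points demanding care are the correct identification of the family $\gimel(X)=4.8$ with $\P^{1}\times S$ and the bookkeeping that places the degree‑$7$ del Pezzo surface in the $\lct=1/3$ bracket of Example~\ref{example:del-Pezzos}; with these in hand, the proof reduces to a single application of the product lemma.
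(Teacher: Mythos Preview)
Your proof rests on a misidentification of the deformation family. The threefold with $\gimel(X)=4.8$ is \emph{not} $\P^{1}\times S_{7}$; that product is the family $\gimel(X)=4.10$ (see Table~\ref{table:Fanos}, where the two families also have different anticanonical degrees, $38$ versus $42$). The family $4.8$ is the blow up of $\P^{1}\times\P^{1}\times\P^{1}$ along a smooth rational curve $C$ of tridegree $(0,1,1)$: the curve lies in a single fibre $F_{1}\cong\P^{1}\times\P^{1}$ of one projection and is a diagonal curve there. Since $C$ is concentrated in one particular fibre, the blow up does not split as a product with a $\P^{1}$-factor, and neither Lemma~\ref{lemma:lct-P1-product} nor Lemma~\ref{lemma:lct-product} applies. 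Your argument, as written, proves the (correct) statement $\lct(\P^{1}\times S_{7})=1/3$ for family $4.10$, but says nothing about family $4.8$.

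The paper's proof proceeds differently. One first obtains $\lct(X)\le 1/3$ from an explicit anticanonical boundary $-K_{X}\sim 2\bar{F}_{1}+2\bar{G}+3E$, where $E$ is the exceptional divisor and $G\in|F_{2}+F_{3}|$ contains $C$. For the reverse inequality one assumes $\lct(X)<1/3$, uses $\lct(\P^{1}\times\P^{1}\times\P^{1})=1/2$ to force the non--log-canonical locus into $E$, and then exploits two further birational contractions of $X$: one to a threefold $V$ with $\gimel(V)=3.31$ (contracting $\bar{G}$) and one to a threefold $U$ obtained by blowing up a conic on a quadric cone (contracting $\bar{F}_{1}$). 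Since $\lct(V)=1/3$ by Corollary~\ref{corollary:other-toric} and $\lct(U)=1/3$ by Lemma~\ref{lemma:2-29-singular}, the locus would have to lie in $E\cap\bar{G}\cap\bar{F}_{1}=\varnothing$, a contradiction. None of this structure is visible from a product description, so you will need to redo the argument along these lines.
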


\begin{proof}
There is blow up $\alpha\colon X\to
\mathbb{P}^1\times\mathbb{P}^1\times\mathbb{P}^1$ of a curve
$C\subset\mathbb{P}^1\times\mathbb{P}^1\times\mathbb{P}^1$ such
that~$C\subset F_{1}$~and
$$
C\cdot F_{2}=C\cdot F_{3}=1,
$$
where $F_{i}$ is a fiber of the projection of
$\mathbb{P}^1\times\mathbb{P}^1\times\mathbb{P}^1$~to its $i$-th
factor. There is a surface
$$
\mathbb{P}^1\times\mathbb{P}^1\cong G\in \big|F_{2}+F_{3}\big|
$$
such that $C\subset G$. Let $E$ be the exceptional divisor of
$\alpha$. Then
$$
-K_{X}\sim 2\bar{F}_{1}+2\bar{G}+3E,
$$
where $\bar{F}_{1}$ and $\bar{G}$ are proper transforms of $F_{1}$
and $G$, respectively. In particular, $\mathrm{lct}(X)\leqslant
1/3$.

We suppose that $\mathrm{lct}(X)<1/3$. Then there exists an
effective $\mathbb{Q}$-divisor $D\qlineq -K_{X}$ such that the~log
pair $(X,\lambda D)$ is not log canonical for some positive
rational number $\lambda<1/3$. Note that
$$
\varnothing\ne\mathrm{LCS}\Big(X,\lambda D\Big)\subseteq E,
$$
because
$\mathrm{lct}(\mathbb{P}^1\times\mathbb{P}^1\times\mathbb{P}^1)=1/2$
and
$\alpha(D)\qlineq-K_{\mathbb{P}^1\times\mathbb{P}^1\times\mathbb{P}^1}$.

Let $Q$ be a quadric cone in $\mathbb{P}^{4}$. Then there is a
commutative diagram
$$
\xymatrix{%
&&&&X\ar@{->}[dlll]_{\alpha}\ar@{->}[d]_{\beta}\ar@{->}[rrrd]^{\gamma}&&&&&\\
&\mathbb{P}^{1}\times\mathbb{P}^{1}\times\mathbb{P}^{1}\ar@{->}[drr]_{\phi}&&&V\ar@{->}[dl]_{\pi}\ar@{->}[rrd]^{\delta}&&&U\ar@{->}[dl]^{\xi}\\%
&&&\mathbb{P}^{1}\times\mathbb{P}^{1}&&&Q\ar@{-->}[lll]^{\psi}&}
$$
where we have the following notations:
\begin{itemize}
\item $V$ is a variety with $\gimel(V)=3.31$;%
\item the morphism $\beta$ is a contraction of the surface $\bar{G}$ to a curve;%
\item the morphism $\gamma$ is a contraction of $\bar{F}_{1}\cong\mathbb{P}^{1}\times\mathbb{P}^{1}$ to an ordinary double point;%
\item the~morphism $\delta$ is a blow up of the vertex of the
quadric cone
$Q\subset\mathbb{P}^{4}$;%
\item the morphism $\xi$~is~a~blow~up of a~smooth conic in $Q$;%
\item the map $\psi$ is a~projection from the~vertex of the cone $Q$;%
\item the morphism $\phi$ is a projection that is given by
$|F_{2}+F_{3}|$, i.\,e. the projection of
$\P^1\times\P^1\times\P^1$ onto the product of the
last two factors;%
\item the morphism $\pi$ is a natural $\mathbb{P}^{1}$-bundle.%
\end{itemize}

It follows from Corollary~\ref{corollary:other-toric} that
$\mathrm{lct}(V)=1/3$. On the other hand, $\mathrm{lct}(U)=1/3$ by
Lemma~\ref{lemma:2-29-singular}. Hence
$$
\varnothing\ne\mathrm{LCS}\Big(X,\ \lambda D\Big)\subseteq
E\cap\bar{G}\cap\bar{F}_{1}=\varnothing,
$$
which is a contradiction.
\end{proof}

The following result is implied by
Corollaries~\ref{corollary:other-toric} and
\ref{corollary:4-4-and-5-1}, Lemma~\ref{lemma:lct-product} and
Example~\ref{example:del-Pezzos}.

\begin{corollary}
\label{corollary:rho-5-6-7-8-9-10} Suppose that $\rho\geqslant 5$.
Then
$$
\mathrm{lct}\big(X\big)=\left\{%
\aligned
&1/3\ \text{whenever}\ \gimel(X)\in\{5.1, 5.2\},\\%
&1/2\ \text{in the remaining cases}.\\%
\endaligned\right.%
$$
\end{corollary}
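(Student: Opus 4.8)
The plan is to run through the finitely many deformation families with $\rho \geqslant 5$ — there are exactly eight of them, namely $\gimel(X) \in \{5.1, 5.2, 5.3, 5.4, 5.5, 5.6, 5.7, 5.8\}$ — and to dispose of each by quoting a result already established. First I would split these families into three groups according to their geometry as read off from Table~\ref{table:Fanos}: the toric family $\gimel(X) = 5.2$; the family $\gimel(X) = 5.1$; and the product families $\gimel(X) \in \{5.3, \ldots, 5.8\}$, each of which is isomorphic to $\mathbb{P}^1 \times S$ for a smooth del Pezzo surface $S$.

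For the first two groups there is nothing to compute. When $\gimel(X) = 5.2$ the threefold is toric and $\mathbb{Q}$-factorial, so Corollary~\ref{corollary:other-toric} gives $\mathrm{lct}(X) = 1/3$ at once. When $\gimel(X) = 5.1$ the equality $\mathrm{lct}(X) = 1/3$ is exactly the content of Corollary~\ref{corollary:4-4-and-5-1}. This accounts for both families in the first branch of the asserted formula.

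The remaining six families make up the second branch. Here I would use the identification $X \cong \mathbb{P}^1 \times S$, where $S$ runs through the smooth del Pezzo surfaces of degree $K_S^2 = d$ with $1 \leqslant d \leqslant 6$ (these are precisely the del Pezzo surfaces with $\rho(S) = \rho(X) - 1 \geqslant 4$). Since both factors are smooth Fano, in particular Fano with at most canonical Gorenstein singularities, Lemma~\ref{lemma:lct-product} applies and yields
\[
\mathrm{lct}\big(\mathbb{P}^1 \times S\big) = \mathrm{min}\big(\mathrm{lct}(\mathbb{P}^1),\ \mathrm{lct}(S)\big) = \mathrm{min}\big(1/2,\ \mathrm{lct}(S)\big),
\]
using $\mathrm{lct}(\mathbb{P}^1) = 1/2$ from Example~\ref{example:hypersurface-index-big}. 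It then remains to observe, from Example~\ref{example:del-Pezzos}, that every smooth del Pezzo surface $S$ with $K_S^2 \leqslant 6$ satisfies $\mathrm{lct}(S) \geqslant 1/2$ — indeed $\mathrm{lct}(S) = 1/2$ for $K_S^2 \in \{5,6\}$ and $\mathrm{lct}(S) \geqslant 2/3$ for $K_S^2 \leqslant 4$. Hence the minimum equals $1/2$ for each of these six families, as claimed.

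The argument involves no genuinely hard analytic step: the whole content is a reduction to results proved earlier, and the one point that requires care is the correct reading of the Mori--Mukai classification recorded in Table~\ref{table:Fanos}. Concretely, the main thing to verify is that $\gimel(X) \in \{5.3, \ldots, 5.8\}$ are exactly the product families $\mathbb{P}^1 \times S$ and that the del Pezzo factor always has degree at most $6$; this is what guarantees $\mathrm{lct}(S) \geqslant 1/2$ and thereby pins down the minimum. Once the families are matched to their geometric descriptions, the computation is immediate.
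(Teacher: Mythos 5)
Your argument is correct and is exactly the paper's: the corollary is stated there as an immediate consequence of Corollary~\ref{corollary:other-toric} (for $\gimel(X)=5.2$), Corollary~\ref{corollary:4-4-and-5-1} (for $\gimel(X)=5.1$), and Lemma~\ref{lemma:lct-product} combined with Example~\ref{example:del-Pezzos} (for the six product families $\mathbb{P}^1\times S_d$ with $1\leqslant d\leqslant 6$). Your write-up merely makes explicit the case check that the paper leaves to the reader, and all the details (including $\mathrm{lct}(S_d)\geqslant 1/2$ for $d\leqslant 6$) are verified correctly.
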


\begin{lemma}
\label{lemma:4-13} Suppose that $\gimel(X)=4.13$ and $X$ is
general. Then $\lct(X)=1/2$.
\end{lemma}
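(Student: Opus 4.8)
The plan is to follow the strategy used for the neighbouring deformation type $\gimel(X)=4.3$ in Lemma~\ref{lemma:4-3}, exploiting that the third contraction turns $X$ into a del Pezzo fibration. First I would recall the geometry from Example~\ref{example:4-13}: there is a birational morphism $\alpha\colon X\to\P^1\times\P^1\times\P^1$ contracting a surface $E$ to a curve $C$ with $C\cdot F_1=C\cdot F_2=1$ and $C\cdot F_3=3$, and $-K_X\sim 2\bar{G}+E+\bar{F}_3$, which already gives $\lct(X)\le 1/2$. The crucial observation is that the composition $\phi=\pi_3\circ\alpha\colon X\to\P^1$ is a fibration whose fibre over a point is the proper transform $\bar{F}_3$ of the corresponding fibre $F_3$, i.e.\ the surface $\P^1\times\P^1$ blown up along the length-three scheme $C\cap F_3$; a general such fibre is a smooth del Pezzo surface of degree $5$. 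I would impose the generality hypothesis that every scheme fibre of $\phi$ is a del Pezzo surface with at most one ordinary double point. Under this hypothesis Example~\ref{example:del-Pezzos} (smooth case, $K^2=5$) together with Example~\ref{example:del-Pezzo-quintic} (one node, $K^2=5$) gives $\lct(\bar{F}_3)=1/2$ for \emph{every} fibre of $\phi$. This is precisely the feature that makes $4.13$ easier than $4.3$, where the nodal fibre is a sextic del Pezzo with $\lct=1/3<1/2$ (Remark~\ref{remark:del-Pezzo-sextic}), forcing the longer argument there.

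Now suppose $\lct(X)<1/2$, so that there is $D\qlineq -K_X$ with $(X,\lambda D)$ not log canonical for some $\lambda<1/2$. Since $\lct(\P^1\times\P^1\times\P^1)=1/2$ by Lemma~\ref{lemma:lct-P1-product} and $\alpha(D)\qlineq -K_{\P^1\times\P^1\times\P^1}$, the usual discrepancy comparison outside $E$ forces $\varnothing\ne\LCS(X,\lambda D)\subseteq E$. I would then pick a fibre $\bar{F}_3$ of $\phi$ meeting $\LCS(X,\lambda D)$ and write $D=\mu\bar{F}_3+\Omega$ with $\bar{F}_3\not\subseteq\Supp(\Omega)$. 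Comparing with $\alpha(D)\qlineq 2(F_1+F_2+F_3)$ yields $\mu\le 2$, hence $\lambda\mu<1$, so $(X,\bar{F}_3+\lambda\Omega)$ is still not log canonical at a point of $\bar{F}_3\cap\LCS(X,\lambda D)$. By adjunction (Theorem~\ref{theorem:adjunction}, applicable since the fibre has at worst canonical Gorenstein singularities) the pair $(\bar{F}_3,\lambda\Omega|_{\bar{F}_3})$ is not log canonical; but $\bar{F}_3|_{\bar{F}_3}\sim 0$ gives $\Omega|_{\bar{F}_3}\qlineq -K_{\bar{F}_3}$, so $\lct(\bar{F}_3)\le\lambda<1/2$, contradicting $\lct(\bar{F}_3)=1/2$. (Equivalently, one may invoke Theorem~\ref{theorem:Hwang} for $\phi$ directly and rule out its second alternative by the same bound $\mu\le 2$.) This contradiction proves $\lct(X)=1/2$.

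The only genuinely delicate point is the generality hypothesis on the fibres of $\phi$. The degree-three map $C\to\P^1$ induced by $\pi_3$ must ramify, so some fibres $F_3$ meet $C$ in fewer than three reduced points; I would need to check that for a general threefold of type $4.13$ each such fibre acquires exactly one ordinary double point on its proper transform, and in particular that the totally tangent configuration $|C\cap F_3|=1$, which produces the estimate $\lct(X)\le 4/9$ recorded in Example~\ref{example:4-13}, never occurs. Establishing this amounts to a local analysis of the blow-up of the length-three scheme $C\cap F_3$ together with a dimension count showing that worse degenerations are avoided on a general member of the family. This, rather than the log-canonical threshold bookkeeping, is where the real work lies; once the fibres are controlled, the argument above is essentially formal.
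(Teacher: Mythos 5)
Your proposal is correct, and its key mechanism is exactly the paper's: restrict to the degree-$5$ del Pezzo fibration $\phi=\pi_3\circ\alpha$, use the bound $\mu\le 2$ on $\mathrm{mult}_{\bar{F}_3}(D)$ together with $\lct(\bar{F}_3)=1/2$ for fibres with at most one node (Examples~\ref{example:del-Pezzos} and~\ref{example:del-Pezzo-quintic}), and exclude the totally tangent fibre $|C\cap F_3|=1$ by generality. Where you genuinely diverge is in the amount of preliminary localization. The paper first introduces surfaces $\bar{H}_1\in|3F_1+F_3|$ and $\bar{H}_2\in|3F_2+F_3|$ with $\bar{H}_i\cap\bar{G}=\varnothing$, contracts $\bar{G}$ and the $\bar{H}_i$ to three auxiliary $\P^1$-bundle models, and uses Theorem~\ref{theorem:Hwang} and Theorem~\ref{theorem:connectedness} there to pin down $\mathrm{LCS}(X,\lambda D)$ to a single fibre $\Gamma$ of $E\to C$ before restricting to $\bar{F}_3$. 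You skip all of this and run the adjunction argument on $\phi$ directly at an arbitrary point of the non-log-canonical locus; since the contradiction with $\lct(\bar{F}_3)=1/2$ is already pointwise, the extra localization buys nothing here, and your shorter route is sound. The paper's detour appears to be inherited from the template of Lemma~\ref{lemma:4-3}, where it is indispensable: there the nodal fibre is a sextic del Pezzo with $\lct=1/3<1/2$, so non-log-canonicity at a point gives no contradiction and one must know the pair is non-log-canonical along the whole curve $\Gamma$ in order to invoke Lemma~\ref{lemma:singular-del-Pezzo-sextic}; your remark that $4.13$ is easier than $4.3$ for precisely this reason is on target. As for the generality claim you flag as the remaining work, the paper asserts it without proof as well (``which is impossible if the threefold $X$ is sufficiently general''), so your treatment is no less complete on this point; a Riemann--Hurwitz and dimension count for the trigonal map $\pi_3\vert_C\colon\P^1\to\P^1$ settles it.
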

\begin{proof}
Let $F_{1}\cong F_{2}\cong F_{3}\cong\P^1\times\P^1$ be fibers of
three different projections
$$
\mathbb{P}^1\times\mathbb{P}^1\times\mathbb{P}^1\longrightarrow\mathbb{P}^{1},
$$
respectively. There is a contraction $\alpha\colon X\to
\mathbb{P}^1\times\mathbb{P}^1\times\mathbb{P}^1$ of a surface
$E\subset X$ to a curve
$$
C\subset\mathbb{P}^1\times\mathbb{P}^1\times\mathbb{P}^1
$$
such that $C\cdot F_{1}=C\cdot F_{2}=1$ and $C\cdot F_{3}=3$. Then
there is a smooth surface
$$
\mathbb{P}^1\times\mathbb{P}^1\cong G\in \big|F_{1}+F_{2}\big|
$$
such that $C\subset G$. In particular, we see that
$$
-K_{X}\sim 2\bar{G}+E+2\bar{F}_{3},
$$
where $\bar{F}_{3}$ and $\bar{G}$ are proper transforms of $F_{3}$
and $G$, respectively. Hence $\mathrm{lct}(X)\leqslant 1/2$.

We suppose that $\mathrm{lct}(X)<1/2$. Then there exists an
effective $\mathbb{Q}$-divisor $D\qlineq -K_{X}$ such that the~log
pair $(X,\lambda D)$ is not log canonical for some positive
rational number $\lambda<1/2$. Note that
$$
\varnothing\ne\mathrm{LCS}\Big(X,\lambda D\Big)\subseteq E\cong\mathbb{F}_{4},%
$$
because
$\mathrm{lct}(\mathbb{P}^1\times\mathbb{P}^1\times\mathbb{P}^1)=1/2$
and
$\alpha(D)\qlineq-K_{\mathbb{P}^1\times\mathbb{P}^1\times\mathbb{P}^1}$.

There are smooth surfaces $H_{1}\in |3F_{1}+F_{3}\big|$ and
$H_{2}\in |3F_{2}+F_{3}\big|$ such that
$$
C=G\cdot H_{1}=G\cdot H_{2},
$$
and $H_{1}\cong H_{2}\cong\mathbb{P}^1\times\mathbb{P}^1$. Let
$\bar{H}_{i}$ be a proper transform of $H_{i}$ on the threefold
$X$. Then
$$
\bar{H}_{1}\cap\bar{G}=\bar{H}_{2}\cap\bar{G}=\varnothing.
$$

There is a commutative diagram
$$
\xymatrix{%
&&&&&X\ar@{->}[dd]^{\alpha}\ar@{->}[dlll]_{\gamma_{1}}\ar@{->}[ddll]^{\beta}\ar@{->}[rrd]^{\gamma_{2}}&&&&\\
&&U_{1}\ar@{->}[dddr]_{\phi_{1}}&&&&&U_{2}\ar@{->}[dddl]^{\phi_{2}}\\%
&&&V\ar@/_0.37pc/[ddrr]_{\pi}&&\mathbb{P}^{1}\times\mathbb{P}^{1}\times\mathbb{P}^{1}\ar@{->}[dd]_{\zeta}\ar@{->}[rdd]^{\xi_{2}}\ar@{->}'[ld]^{\xi_{1}}[lldd]&&\\%
&&&&&&\\
&&&\mathbb{P}^{1}\times\mathbb{P}^{1}&&\mathbb{P}^{1}\times\mathbb{P}^{1}&\mathbb{P}^{1}\times\mathbb{P}^{1}&}
$$
such that $\beta$ and $\gamma_{i}$ are contractions of the
surfaces $\bar{G}$ and $\bar{H}_{i}$ to a smooth curves,
the~morphisms $\pi$ and $\phi_{i}$ are $\mathbb{P}^{1}$-bundles,
the morphisms $\zeta$ and $\xi_{i}$ are projections that are
given~by the linear systems $|F_{1}+F_{2}|$ and $|F_{i}+F_{3}|$,
respectively.

It follows from $\bar{H}_{1}\cap\bar{G}=\varnothing$ that
\begin{itemize}
\item either the log pair $(V,\lambda\beta(D))$ is not log canonical,%
\item of the log pair $(U_{1},\lambda\gamma_{1}(D))$ is not log
canonical.
\end{itemize}

Applying Theorem~\ref{theorem:Hwang} to $(V,\lambda\beta(D))$ or
$(U_{1},\lambda\gamma_{1}(D))$ (and the fibration $\pi$ or
$\phi_1$) and using Theorem~\ref{theorem:connectedness}, we see
that
$$
\mathrm{LCS}\Big(X,\ \lambda D\Big)=\Gamma,
$$
where $\Gamma$ is a fiber of the~natural projection $E\to C$.

We may assume that $\alpha(\Gamma)\in F_{3}$. Let
$\bar{F}_{3}\subset X$ be the proper transform of the surface
$F_{3}$. Put
$$
D=\mu\bar{F}_{3}+\Omega,
$$
where $\Omega$ is an effective $\mathbb{Q}$-divisor on $X$ such
that $\bar{F}_{3}\not\subset\mathrm{Supp}(\Omega)$. Then
$$
\mu F_{3}+\alpha\big(\Omega\big)\qlineq 2\Big(F_{1}+F_{2}+F_{3}\Big),%
$$
which gives $\mu\leqslant 2$. The log pair
$(\bar{F}_{3},\lambda\Omega\vert_{\bar{F}_{3}})$ is not log
canonical along $\Gamma\subset\bar{F}_{3}$ by
Theorem~\ref{theorem:adjunction}. One has
$$
\Omega\Big\vert_{\bar{F}_{3}}\qlineq -K_{\bar{F}_{3}},
$$
and $\bar{F}_{3}$ is a del Pezzo surface such that
$K_{\bar{F}_{3}}^{2}=5$. Note that $\bar{F}_{3}$ may be singular.
Namely, we have
$$
\mathrm{Sing}\big(\bar{F}_{3}\big)=\varnothing\iff\big|C\cap F_{3}\big|=F_{3}\cdot C=3,%
$$
and $\mathrm{Sing}(\bar{F}_{3})\subset\Gamma$. The following cases
are possible:
\begin{itemize}
\item the surface $\bar{F}_{3}$ is smooth and $|C\cap F_{3}|=3$;%
\item the surface $\bar{F}_{3}$ has one ordinary double point and $|C\cap F_{3}|=2$;%
\item the surface $\bar{F}_{3}$ has a singular point of type
$\mathbb{A}_{2}$ and $|C\cap
 F_{3}|=1$.%
\end{itemize}

We have $\mathrm{lct}(\bar{F}_{3})\leqslant\lambda<1/2$. Thus, it
follows from Examples~\ref{example:del-Pezzos} and
\ref{example:del-Pezzo-quintic} that $|C\cap F_{3}|=1$, which  is
impossible if the threefold $X$ is sufficiently general.
\end{proof}

\section{Upper bounds}
\label{section:bounds}

We use the assumptions and notation introduced in
section~\ref{section:intro}. The purpose of this section is to
find upper bounds for the global log canonical thresholds of the
varieties $X$ with
$$
\gimel\big(X\big)\in\Big\{1.1,1.2,\ldots,1.17,2.1,\ldots,2.36,3.1,\ldots,3.31,4.1,\ldots,4.13,5.1,\ldots,5.7,5.8\Big\}.
$$

\begin{lemma}
\label{lemma:1-8} Suppose that $\gimel(X)=1.8$. Then
$\mathrm{lct}(X)\leqslant 6/7$.
\end{lemma}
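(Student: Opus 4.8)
The plan is to exhibit a single effective $\mathbb{Q}$-divisor $D\qlineq -K_X$ for which $\big(X,\tfrac{6}{7}D\big)$ fails to be log canonical; since $\lct(X)=\inf_D\lct(X,D)$ over all such divisors, this at once gives $\lct(X)\le 6/7$. Here $\gimel(X)=1.8$ is the smooth Fano threefold of genus $9$, so $-K_X$ is very ample and embeds $X\subset\P^{10}$ as a variety of degree $-K_X^3=2\cdot 9-2=16$ with $\mathcal{O}_X(1)\sim -K_X$; in particular every hyperplane section is an anticanonical surface, and $h^0(-K_X)=11$. Since the genus is at least $3$, $X$ contains lines (curves $L$ with $-K_X\cdot L=1$), and I would build $D$ out of a carefully chosen line together with an anticanonical surface tangent to $X$ along it.

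First I would fix a line $L\subset X$ and study the linear subsystem of $|-K_X|$ consisting of hyperplane sections singular along $L$. A hyperplane through $L$ varies in a $\P^{8}$, and the tangency conditions along $L\cong\P^1$ cut this down by a controlled amount, so the subsystem is non-empty and there is a surface $S\in|-K_X|$ with $\mult_L S=2$. The whole point of the construction is to choose $L$ and $S$ so that the transverse double structure of $S$ degenerates at one special point $p\in L$ — concretely, so that the proper transform of $S$ stays tangent to the first exceptional divisor after blowing up $L$, producing an infinitely near singular locus over $p$. I would then take $D=S$ (or, if one surface is not sharp enough, a convex combination $D=aS+bS'$ of two such surfaces sharing the bad point, normalized so that $D\qlineq -K_X$, using the convexity trick of Remark~\ref{remark:convexity}), and compute $\lct(X,D)$ locally at $p$.

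The threshold computation proceeds by a short chain of blow-ups, organized by the log-discrepancy formalism of Section~\ref{section:preliminaries}. Blowing up $L$ gives an exceptional divisor of discrepancy $1$ from which $2\lambda$ is subtracted, which on its own keeps the pair log canonical up to $\lambda=1$. The drop below $1$ comes from the locus over $p$: blowing up the resulting point (respectively curve) on the first exceptional surface and balancing the discrepancies against the multiplicities of $\lambda D$ yields $\lambda\le 6/7$ as the threshold value. I would confirm this from the explicit local equations of $S$ at $p$, checking Newton non-degeneracy so that no lower-dimensional stratum pushes the threshold below $6/7$.

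The main obstacle is geometric rather than computational: one must guarantee the existence of a line $L$ and a tangent anticanonical surface $S$ realizing \emph{exactly} the degenerate local model that produces $6/7$, and neither a milder one (an ordinary double line gives only $\lct\le 1$) nor a harsher one (a cone over a cuspidal cubic would already give the stronger bound $5/6$). This forces one to invoke the fine geometry of the genus-$9$ threefold — its Mukai realization as a three-dimensional linear section of the Lagrangian Grassmannian $\mathrm{LG}(3,6)\subset\P^{13}$, together with the structure of its family of lines and of its tangent hyperplanes — in order to locate a surface with the correct singularity. Once such an $S$ is produced, verifying that $\lct(X,D)\le 6/7$ is the routine local calculation sketched above, completing the proof.
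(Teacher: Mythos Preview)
Your proposal has a genuine gap: you never actually construct the divisor, and you yourself flag the construction as ``the main obstacle''. Hoping that some anticanonical surface $S$ acquires exactly the right infinitely-near singularity at a point $p\in L$ so as to yield the specific threshold $6/7$ is not a proof, and there is no reason such a degeneration should occur on \emph{every} $X$ with $\gimel(X)=1.8$ (the lemma is not stated for general $X$). Getting $6/7$ from a point computation is also numerically awkward: the value $6/7$ is most naturally $2/(7/3)$, i.e.\ the codimension-two discrepancy bound for a divisor of multiplicity $7/3$ along a curve, which suggests the right object lives in $|{-}3K_X|$, not in $|{-}K_X|$.

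The paper's argument is entirely different and much shorter. It invokes the classical double projection from a line $L\subset X$ (\cite[Theorem~4.3.3]{IsPr99}): after blowing up $L$ and performing flops, one lands on a blow-up of $\mathbb{P}^3$ along a smooth curve of degree $7$ and genus $3$. The proper transform $S\subset X$ of that exceptional divisor satisfies $S\sim -3K_X$ and $\mathrm{mult}_L(S)=7$. Then $D=\tfrac{1}{3}S\qlineq -K_X$ has multiplicity $7/3$ along $L$, and a single blow-up of $L$ shows $\lct(X,D)\le 2/(7/3)=6/7$. No delicate point-by-point analysis is needed; the bound comes from the generic point of $L$, and the existence of the surface $S$ is guaranteed by the known birational structure of genus-$9$ Fano threefolds rather than by an ad hoc tangency construction.
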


\begin{proof}
The linear system $|-K_{X}|$ does not have base points and induces
an embedding $X\subset\mathbb{P}^{10}$, and the threefold $X$
contains a line $L\subset X$ (see \cite{Sho79}).

It follows from \cite[Theorem~4.3.3]{IsPr99} that there is a
commutative diagram
$$
\xymatrix{
U\ar@{->}[d]_{\alpha}\ar@{-->}[rr]^{\rho}&&W\ar@{->}[d]^{\beta}\\%
X\ar@{-->}[rr]_{\psi}&&\mathbb{P}^{3}}
$$
where $\alpha$ is a blow up of the line $L$, the map $\rho$ is a
composition of flops, the morphism $\beta$ is a blow up of a
smooth curve of degree $7$ and genus $3$, and $\psi$ is a double
projection from $L$.

Let $S\subset X$ be the proper transform of the exceptional
surface of $\beta$. Then
$$
\mathrm{mult}_{L}\big(S\big)=7
$$
and $S\sim -3K_{X}$, which implies that $\mathrm{lct}(X)\leqslant
6/7$.
\end{proof}

\begin{lemma}
\label{lemma:1-9} Suppose that $\gimel(X)=1.9$. Then
$\mathrm{lct}(X)\leqslant 4/5$.
\end{lemma}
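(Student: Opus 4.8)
The plan is to exhibit a single effective $\mathbb{Q}$-divisor that is $\mathbb{Q}$-linearly equivalent to $-K_X$ and whose log canonical threshold is at most $4/5$; since $\mathrm{lct}(X)=\mathrm{lct}(X,[-K_X])=\inf_{D\qlineq -K_X}\mathrm{lct}(X,D)$ (see Remark~\ref{remark:lct-class}), producing one such $D$ already forces $\mathrm{lct}(X)\le 4/5$. The divisor will come from the double projection of $X$ from a line, exactly as in the proof of Lemma~\ref{lemma:1-8}. Recall that $\gimel(X)=1.9$ means $X$ is the Fano threefold of index $1$ with $-K_X^{3}=18$ (genus $10$), anticanonically embedded in $\mathbb{P}^{11}$, and that $X$ contains a line $L$ (see \cite{Sho79}).

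First I would invoke the relevant double projection result of \cite{IsPr99} (the genus-$10$ analogue of \cite[Theorem~4.3.3]{IsPr99} used in Lemma~\ref{lemma:1-8}): there is a commutative diagram
$$
\xymatrix{
U\ar@{->}[d]_{\alpha}\ar@{-->}[rr]^{\rho}&&W\ar@{->}[d]^{\beta}\\
X\ar@{-->}[rr]_{\psi}&&Q}
$$
where $\alpha$ is the blow up of $L$ with exceptional divisor $E$, the map $\rho$ is a composition of flops, $Q\subset\mathbb{P}^{4}$ is a smooth quadric threefold, $\beta$ is the blow up of a smooth curve with exceptional divisor $R$, and $\psi$ is the double projection from $L$.

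Next I would identify the proper transform $S\subset X$ of $R$. Writing $H=\alpha^{*}(-K_X)$, the double projection is given on $U$ by the linear system $|H-2E|$, so under the crepant identification $N^{1}(U)\cong N^{1}(W)$ induced by the flops $\rho$ one has $\beta^{*}(\mathcal{O}_{Q}(1))\leftrightarrow H-2E$, while $-K_U=H-E$ corresponds to $-K_W=\beta^{*}(-K_Q)-R=3\beta^{*}(\mathcal{O}_{Q}(1))-R$. Solving these two relations gives $R\leftrightarrow 2H-5E$, whence $\alpha_{*}(2H-5E)=-2K_X$ shows $S\sim -2K_X$ and $\mathrm{mult}_{L}(S)=5$. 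As a consistency check, running the same computation with $Q$ replaced by $\mathbb{P}^{3}$ (coefficient $4$ in place of $3$, since $-K_{\mathbb{P}^3}=4\mathcal{O}(1)$) reproduces $S\sim -3K_X$ and $\mathrm{mult}_{L}(S)=7$, precisely the data used in Lemma~\ref{lemma:1-8}; this is what pins the value at $4/5$ here rather than $6/7$.

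Finally, set $D=\tfrac{1}{2}S\qlineq -K_X$, an effective $\mathbb{Q}$-divisor with $\mathrm{mult}_{L}(D)=5/2$. Blowing up $L$ (so that $K_U=\alpha^{*}K_X+E$, the discrepancy of $E$ being $1$) yields, for the pair $(X,\lambda D)$, that the discrepancy of $E$ equals $1-\tfrac{5}{2}\lambda$, which is $<-1$ as soon as $\lambda>4/5$. Hence $(X,\lambda D)$ is not log canonical along $L$ once $\lambda>4/5$, so $\mathrm{lct}(X)\le\mathrm{lct}(X,D)\le 4/5$, as required. The main obstacle is purely one of citation and bookkeeping: one must confirm from \cite{IsPr99} that the double projection from a general line on the genus-$10$ threefold lands on a smooth quadric threefold $Q$ (giving $-K_Q=3\mathcal{O}_Q(1)$) and that the defining system on $U$ is $|H-2E|$; granting the diagram and the equality $\beta^{*}\mathcal{O}_{Q}(1)=H-2E$, everything else is the elementary intersection-theoretic computation above.
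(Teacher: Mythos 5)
Your proposal is correct and follows essentially the same route as the paper: the paper's proof likewise invokes the double-projection diagram from \cite[Theorem~4.3.3]{IsPr99} (with $\beta$ blowing up a curve of degree $7$ and genus $2$ on the quadric $Q$), takes $S$ to be the proper transform of the exceptional surface of $\beta$, and concludes from $S\sim -2K_{X}$ and $\mathrm{mult}_{L}(S)=5$ that $\mathrm{lct}(X)\leqslant 4/5$. The only difference is that you derive these two numerical facts from the crepancy of the flops and the identification $\beta^{*}(\mathcal{O}_{Q}(1))\leftrightarrow H-2E$, whereas the paper simply states them; your computation is a correct and welcome filling-in of that bookkeeping.
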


\begin{proof}
The linear system $|-K_{X}|$ does not have base points and induces
an embedding $X\subset\mathbb{P}^{11}$, and the threefold $X$
contains a line $L\subset X$ (see \cite{Sho79}).

It follows from \cite[Theorem~4.3.3]{IsPr99} that there is a
commutative diagram
$$
\xymatrix{
U\ar@{->}[d]_{\alpha}\ar@{-->}[rr]^{\rho}&&W\ar@{->}[d]^{\beta}\\%
X\ar@{-->}[rr]_{\psi}&&Q}
$$
where $Q\subset\mathbb{P}^{4}$ is a smooth quadric threefold,
$\alpha$ is a blow up of the line $L$, the map
$\rho$~is~a~composition of flops,  the morphism $\beta$ is a blow
up along a smooth curve of degree $7$ and genus $2$,
and~$\psi$~is~a~double projection from the line $L$.

Let $S\subset X$ be the proper transform of the exceptional
surface of $\beta$. Then
$$
\mathrm{mult}_{L}\big(S\big)=5
$$
and  $S\sim -2K_{X}$, which implies that $\mathrm{lct}(X)\leqslant
4/5$.
\end{proof}

\begin{lemma}
\label{lemma:1-10} Suppose that $\gimel(X)=1.10$. Then
$\mathrm{lct}(X)\leqslant 2/3$.
\end{lemma}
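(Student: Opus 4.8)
The plan is to imitate the arguments of Lemmas~\ref{lemma:1-8} and~\ref{lemma:1-9}: I would exhibit an explicit effective divisor $S\qlineq -K_X$ that is highly singular along a line, and then read off the bound $\mathrm{lct}(X)\le 2/3$ from the discrepancy of a single blow up. Recall that here $X$ is the Fano threefold of genus $12$ with $(-K_X)^3=22$ and $\mathrm{Pic}(X)=\Z[-K_X]$; its anticanonical system embeds $X\subset\P^{13}$ and $X$ contains lines. As in the two previous lemmas, the geometric input is the Sarkisov link furnished by the double projection from a line, described in~\cite{IsPr99}.

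First I would fix a general line $L\subset X$ and recall the commutative diagram
$$
\xymatrix{
U\ar@{->}[d]_{\alpha}\ar@{-->}[rr]^{\rho}&&W\ar@{->}[d]^{\beta}\\
X\ar@{-->}[rr]_{\psi}&&Y}
$$
of~\cite[Theorem~4.3.3]{IsPr99}, where $\alpha$ is the blow up of $L$, the map $\rho$ is a composition of flops, $\beta$ is a birational morphism, and $\psi$ is the double projection from $L$. Let $S\subset X$ be the proper transform of the exceptional surface of $\beta$. Since $\rho$ is an isomorphism in codimension one and $\mathrm{Pic}(X)=\Z[-K_X]$, both the class of $S$ and its multiplicity along $L$ are dictated by the link; the explicit description of the double projection from a line on the genus~$12$ threefold gives
$$
S\qlineq -K_X,\qquad \mathrm{mult}_{L}\big(S\big)=3,
$$
that is, a general point of $L$ is an ordinary triple point of the anticanonical surface $S$. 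This continues the pattern of Lemmas~\ref{lemma:1-8} and~\ref{lemma:1-9}, where $S\qlineq -mK_X$ with $\mathrm{mult}_L(S)=2m+1$ and $m=3$ and $m=2$ respectively; for $X$ of genus~$12$ one has $m=1$, so that $2m+1=3$ and the resulting bound $2m/(2m+1)$ equals $2/3$.

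Finally I would carry out the discrepancy computation. Let $\pi\colon V\to X$ be the blow up of the smooth line $L$ with exceptional divisor $E$, so that $K_{V}=\pi^{*}(K_{X})+E$ and $\pi^{*}(S)=\bar{S}+3E$, where $\bar{S}$ is the proper transform of $S$. Then for a rational number $\lambda$ one has
$$
K_{V}+\lambda\bar{S}\qlineq\pi^{*}\big(K_{X}+\lambda S\big)+\big(1-3\lambda\big)E,
$$
so the log pair $(X,\lambda S)$ fails to be log canonical as soon as $1-3\lambda<-1$, i.e. as soon as $\lambda>2/3$. Hence $\mathrm{lct}(X,S)\le 2/3$, and since $S\qlineq -K_X$ is effective, $\mathrm{lct}(X)\le\mathrm{lct}(X,S)\le 2/3$. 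The one genuinely geometric ingredient, and the main obstacle, is the verification that the double projection from a general line produces a surface $S$ in the class $-K_X$ with $\mathrm{mult}_L(S)=3$; this is precisely the numerical content of the link in~\cite{IsPr99}, while everything else is the formal discrepancy calculation above.
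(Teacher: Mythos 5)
Your proposal is correct and follows exactly the paper's argument: the paper also takes the Sarkisov link of \cite[Theorem~4.3.3]{IsPr99} (there $Y=V_{5}$ and $\beta$ blows up a smooth rational quintic curve), lets $S$ be the proper transform of the exceptional surface of $\beta$, and deduces the bound from $S\sim -K_{X}$ together with $\mathrm{mult}_{L}(S)=3$. The only difference is that you write out the discrepancy computation on the blow up of $L$, which the paper leaves implicit.
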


\begin{proof}
The linear system $|-K_{X}|$ does not have base points and induces
an embedding $X\subset\mathbb{P}^{13}$, and the threefold $X$
contains a line $L\subset X$ (see \cite{Sho79}).

It follows from \cite[Theorem~4.3.3]{IsPr99} that the diagram
$$
\xymatrix{
U\ar@{->}[d]_{\alpha}\ar@{-->}[rr]^{\rho}&&W\ar@{->}[d]^{\beta}\\%
X\ar@{-->}[rr]_{\psi}&&V_{5}}
$$
commutes, where $V_{5}$ is a smooth section of
$\mathrm{Gr}(2,5)\subset\mathbb{P}^9$ by a linear subspace of
dimension~$6$, the morphism $\alpha$ is a blow up of the line $L$,
the map $\rho$ is a composition of flops, the morphism $\beta$ is
a blow up of a smooth rational curve of degree $5$, and $\psi$ is
a double projection from $L$.

Let $S\subset X$ be the proper transform of the exceptional
surface of $\beta$. Then
$$
\mathrm{mult}_{L}\big(S\big)=3
$$
and  $S\sim -K_{X}$, which implies that $\mathrm{lct}(X)\leqslant
2/3$.
\end{proof}

\begin{lemma}
\label{lemma:2-2} Suppose that $\gimel(X)=2.2$. Then $\lct(X)\le
13/14$.
\end{lemma}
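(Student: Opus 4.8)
The plan is to produce a single explicit effective $\mathbb{Q}$-divisor $D\qlineq -K_X$ and bound $\mathrm{lct}(X,D)$ from above by $13/14$; since $\mathrm{lct}(X)=\mathrm{lct}(X,[-K_X])\le\mathrm{lct}(X,D)$ by Remark~\ref{remark:lct-class}, this gives the claimed inequality. First I would fix the birational model of $X$ from \cite{MoMu81} (cf. \cite{IsPr99}): I take the realization of the threefold with $\gimel(X)=2.2$ as a double cover $\phi\colon X\to\P^1\times\P^2$ branched over a general divisor of bidegree $(2,4)$, so that $-K_X\qlineq\phi^{*}(\O(1,1))$, the projection $p\colon X\to\P^1$ is a fibration whose general fibre is a del Pezzo surface of degree $2$, and the second projection is a conic bundle over $\P^2$ with a discriminant curve. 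The point of using this model is that it lets me write anticanonical surfaces and their singularities in adapted coordinates coming from the double-cover equation $w^2=f(s,t;x,y,z)$.

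Next I would choose a distinguished member $S\in|-K_X|$ of the form $S=\phi^{-1}(Y)$ for a carefully selected $(1,1)$-divisor $Y$ (equivalently, a special fibre-type surface relative to $p$), arranged so that $S$ is integral and acquires a single mild singularity at a point $O$ (respectively along a short curve) dictated by the tangency of $Y$ with the branch divisor. I then set $D=S$ and reduce the computation of $\mathrm{lct}(X,D)$ to a purely local problem: away from the bad locus $(X,\lambda D)$ is log canonical for all $\lambda\le 1$, so by the description of $\mathrm{LCS}(X,\lambda D)$ in Section~\ref{section:preliminaries} the threshold is governed entirely by the singularity of $S$ at $O$. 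Writing down the local equation of $S$ explicitly from the double-cover normal form, I would compute its complex singularity exponent and check that it equals $13/14$, whence $\mathrm{lct}(X,D)\le 13/14$ and $\mathrm{lct}(X)\le 13/14$.

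The hard part will be engineering the special surface $S$ so that the resulting local singularity has exponent exactly $13/14$ rather than one of the ``generic'' del Pezzo values $5/6$ or $3/4$ attached to the fibres of $p$ by Example~\ref{example:del-Pezzos}; note that $13/14$ is strictly closer to $1$ than those, so the bad locus cannot be a full anticanonical curve of a fibre and must instead be a genuinely mild point (or curve) singularity of the anticanonical surface. Since $13/14$ is not of the form $\sum_i 1/m_i$, it cannot be read off from a weighted-homogeneous normal form via Example~\ref{remark:sum-of-powers}; the singularity is therefore non-quasihomogeneous, and the exponent must be extracted from an explicit sequence of (weighted) blow-ups, exactly in the spirit of the case-by-case resolutions underlying Example~\ref{example:sextic-double-solid-explicit}. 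A secondary technical point, which I would settle using the restriction and adjunction machinery of Theorem~\ref{theorem:adjunction} and Remark~\ref{remark:hyperplane-reduction} on the del Pezzo fibres, is to confirm that $S$ is an integral anticanonical member and that no other component or infinitely near point of $S$ forces the threshold below $13/14$, so that the local computation at $O$ indeed controls $\mathrm{lct}(X,D)$.
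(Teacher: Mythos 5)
Your overall strategy (exhibit one explicit divisor $D\qlineq -K_X$ on the double cover $\pi\colon X\to\P^1\times\P^2$ and bound $\mathrm{lct}(X,D)$ by a local computation) is the right one, but the proposal never produces the divisor, and the specific divisor you commit to looking for is almost certainly not there. You insist on an \emph{integral} surface $S=\pi^{-1}(Y)$ with $Y\in|\mathcal{O}_{\P^1\times\P^2}(1,1)|$ whose isolated singularity has complex singularity exponent exactly $13/14$. Locally such an $S$ has equation $w^2+q(u,v)=0$ where $q$ is the restriction of $Y$ to the branch divisor $B$ near a tangency point, so you would need $c_0(q)=13/14-1/2=3/7$; since $3/7$ is not of the form $1/a+1/b$, this forces $q$ to be a quite degenerate (non-quasihomogeneous, non-reduced in restriction) plane curve germ. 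But $h^0(\mathcal{O}_{\P^1\times\P^2}(1,1))=6$, so for a given smooth $B$ of bidegree $(2,4)$ there is no room to impose that many tangency conditions, and the lemma is stated for \emph{every} $X$ with $\gimel(X)=2.2$, not a general one. Deferring this as ``the hard part'' leaves the entire content of the lemma unproved.

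The construction that actually works uses a \emph{reducible} member of $|\mathcal{O}(1,1)|$ -- precisely the case you exclude. Take a general fiber $\bar H_1\cong\P^2$ of $\pi_1$, so that $C=\bar H_1\cap B$ is a smooth plane quartic; every such quartic has a point $P$ with $\mathrm{mult}_P(C\cdot L)\geqslant 3$ for the tangent line $L$ (inflection points always exist), and this guarantees the configuration for every $X$ in the family. Setting $\bar H_2=\pi_2^{-1}(\pi_2(L))$ and $D=\pi^*(\bar H_1+\bar H_2)\sim -K_X$, one transfers log canonicity through the double cover (which is where the $\tfrac12 B$ term enters, via \cite[Proposition~3.16]{Ko97}) and blows up $C$: the exceptional divisor acquires coefficient $\tfrac{13}{14}+\tfrac12-1=\tfrac37$, and along the fiber $\Gamma$ over $P$ the proper transform of $\bar H_2$ meets the exceptional divisor with multiplicity $\mathrm{mult}_P(C\cdot L)\geqslant 3$, which violates log canonicity for the coefficient pair $(\tfrac{13}{14},\tfrac37)$. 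The threshold $13/14$ thus comes from the order-$3$ tangency between the \emph{two components} of $D$ interacting with the half-branch divisor, not from an isolated singularity of an irreducible anticanonical surface; your plan rules out exactly the mechanism that makes the bound accessible.
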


\begin{proof}
There is a smooth divisor $B\subset\P^1\times\P^2$  of bi-degree
$(2, 4)$ such that the diagram
$$
\xymatrix{
&& X\ar@{->}[d]_{\pi}\ar@{->}[lld]_{\phi_1}\ar@{->}[rrd]^{\phi_2} & \\
\P^1 &&\P^1\times\P^2\ar@{->}[ll]^{\pi_1}\ar@{->}[rr]_{\pi_2}&& \P^2}%
$$
commutes, where $\pi$ is a double cover branched along $B$, the
morphisms $\pi_1$ and $\pi_2$ are natural projections, $\phi_1$ is
a fibration into del Pezzo surfaces of degree $2$, and $\phi_2$ is
a conic bundle.

Let $H_1$ be a general fiber of $\phi_1$. Put
$\bar{H}_{1}=\pi(H_{1})$. Then the intersection
$$
C=\bar{H}_1\cap B\subset\bar{H}_{1}\cong\mathbb{P}^{2}
$$
is a smooth quartic curve.

There is a point $P\in C$ such that
$$
\mathrm{mult}_{P}\Big(C\cdot L\Big)\geqslant 3,
$$
where $L\subset\bar{H}_{1}\cong\mathbb{P}^{2}$ is a line that is
tangent to $C$ at the point $P$.

The curve $\pi_{2}(L)$ is a line. Thus, there is a unique surface
$$
H_{2}\in\Big|\phi_{2}^{*}\Big(\mathcal{O}_{\mathbb{P}^{2}}\big(1\big)\Big)\Big|
$$
such that $\phi_{2}(H_{2})=\pi_{2}(L)$. Hence $-K_X\sim H_1+H_2$.

Let us show that $\mathrm{lct}(X,H_{1}+H_{2})\leqslant 13/14$. Put
$\bar{H}_{2}=\pi(H_{2})$. Then
$$
\mathbb{LCS}\left(X,\
\frac{13}{14}\Big(H_{1}+H_{2}\Big)\right)\ne\varnothing\iff\mathbb{LCS}\left(\P^1\times\P^2,\ \frac{1}{2}B+\frac{13}{14}\Big(\bar{H}_{1}+\bar{H}_{2}\Big)\right)\ne\varnothing%
$$
by \cite[Proposition~3.16]{Ko97}. Let $\alpha\colon
V\to\P^1\times\P^2$ be a blow up of the curve $C$. Then
$$
K_{V}+\frac{1}{2}\tilde{B}+\frac{13}{14}\Big(\tilde{H}_{1}+\tilde{H}_{2}\Big)+\frac{3}{7}E\qlineq\alpha^{*}\left(K_{\P^1\times\P^2}+\frac{1}{2}B+\frac{13}{14}\Big(\bar{H}_{1}+\bar{H}_{2}\Big)\right),
$$
where $\tilde{B},\tilde{H}_{1},\tilde{H}_{2}\subset V$ are proper
transforms of $B,\bar{H}_{1},\bar{H}_{2}$, respectively. But the
log pair
$$
\left(V,\ \frac{13}{14}\tilde{H}_{2}+\frac{3}{7}E\right)%
$$
is not log terminal along the fiber $\Gamma$ of the morphism
$\alpha$ such that $\alpha(\Gamma)=P$, because
$$
\mathrm{mult}_{\Gamma}\Big(\tilde{H}_{2}\cdot E\Big)=
\mathrm{mult}_{P}\Big(C\cdot\bar{H}_{2}\Big)
\geqslant\mathrm{mult}_{P}\Big(C\cdot L\Big)\geqslant 3%
$$
due to the choice of the fiber $H_1$. We see that
$$
\Gamma\subseteq\mathrm{LCS}\left(V,\ \frac{13}{14}\tilde{H}_{2}+\frac{3}{7}E\right)\subseteq\mathrm{LCS}\left(V,\ \frac{1}{2}\tilde{B}+\frac{13}{14}\Big(\tilde{H}_{1}+\tilde{H}_{2}\Big)+\frac{3}{7}E\right),%
$$
which implies that $\mathrm{lct}(X,H_{1}+H_{2})\leqslant 13/14$.
Hence the inequality $\mathrm{lct}(X)\leqslant 13/14$ holds.
\end{proof}

\begin{remark}
It follows from Lemmas~\ref{lemma:Hwang} and
\ref{lemma:singular-Del-Pezzo-2} that $\mathrm{lct}(X)\geqslant
2/3$ if $\gimel(X)=2.2$ and the threefold $X$ satisfies the
following generality condition: any fiber of $\phi_1$ satisfies
the hypotheses of Lemma~\ref{lemma:singular-Del-Pezzo-2}.
\end{remark}

\begin{lemma}
\label{lemma:2-7} Suppose that $\gimel(X)=2.7$. Then
$\mathrm{lct}(X)\leqslant 2/3$.
\end{lemma}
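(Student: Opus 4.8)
The plan is to follow the template of the preceding upper-bound lemmas, especially Lemmas~\ref{lemma:1-10} and~\ref{lemma:2-2}: starting from the Mori--Mukai birational model of the threefold $X$ with $\gimel(X)=2.7$, I would produce a single explicit effective $\mathbb{Q}$-divisor $D\qlineq -K_{X}$ together with a smooth curve along which $D$ has large multiplicity, and then read off the bound $\lct(X)\le\lct(X,D)\le 2/3$ from a one-step blow-up computation. Concretely, the model realizes $X$ through a birational morphism $\alpha\colon X\to Y$ onto a minimal Fano threefold $Y$ (a projective space or a smooth quadric), contracting the exceptional divisor $E$ to a smooth curve $C\subset Y$; the first step is to record the anticanonical class in the shape $-K_{X}\sim m\alpha^{*}(H)-kE$ for the appropriate polarization $H$ of $Y$, and to identify the natural effective surfaces (proper transforms of members of $|H|$ or $|2H|$ through $C$, together with the components of $E$ and of reducible $\alpha$-images) whose classes span this expression.

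The heart of the argument is to exhibit the divisor realizing the bound. Mimicking Lemma~\ref{lemma:2-2}, I expect to choose a special surface $S$ in a suitable anticanonical (sub)system that acquires a curve of multiplicity three: either $S$ is singular of multiplicity $3$ along a fibre $L$ of the ruling $E\to C$, or, after intersecting with a member of the pencil defining the fibration structure on $X$, the restriction develops a cusp or triple tangency that survives on $X$. Writing $D=S$ and blowing up such a curve $L$ (a smooth curve along which $X$ is smooth, so that the blow-up has discrepancy $1$), the log pullback $K_{\widetilde X}+\lambda\widetilde D\qlineq\pi^{*}(K_{X}+\lambda D)+(1-3\lambda)\widetilde E$ is log canonical along the new exceptional divisor $\widetilde E$ precisely when $\lambda\le 2/3$; since $D\qlineq -K_{X}$ this yields $\lct(X)\le 2/3$. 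Before committing to $D$ one must confirm that the chosen configuration occurs for \emph{every} member of family $2.7$, the statement carrying no genericity hypothesis, which I expect to follow from the rigidity of the incidence of $C$ with the relevant linear system on $Y$.

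The step I anticipate to be the main obstacle is precisely the construction and verification of $S$: one must locate a geometrically forced multiplicity-three locus (an Eckardt-type point, a cuspidal or tacnodal member, or a triple tangency of $C$ to a member of $|H|$) present uniformly across the deformation family, and then check that the tangency computed downstairs on $Y$ genuinely produces a non-log-canonical centre on $X$ rather than being absorbed into the exceptional divisor---this is the same bookkeeping that makes the blow-up calculation in Lemma~\ref{lemma:2-2} delicate, and it is best handled by performing the multiplier-ideal estimate directly on a resolution of $(Y,\,\alpha_{*}D)$ as there. Once the divisor $D$ and its distinguished curve are in hand, the remaining discrepancy inequality is routine.
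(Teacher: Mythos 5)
Your plan has a genuine gap at exactly the point you flag as ``the main obstacle'': you never actually produce the divisor $D$. The whole argument is conditioned on finding a surface with a multiplicity-three curve (an Eckardt-type point, a cusp, a triple tangency) that occurs for \emph{every} member of family $2.7$, and you offer no construction of it --- only the hope that ``the rigidity of the incidence of $C$ with the relevant linear system'' will force one. Nothing of the sort is needed, and the search is a red herring: the bound does not come from a deep singularity of some special member of $|-K_X|$ but from an elementary coefficient count.

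Concretely, the model for $\gimel(X)=2.7$ is the blow up $\alpha\colon X\to Q$ of a smooth quadric threefold $Q\subset\mathbb{P}^{4}$ along the smooth complete intersection curve $C=S_{1}\cap S_{2}$ with $S_{1},S_{2}\in|\mathcal{O}_{\mathbb{P}^{4}}(2)\vert_{Q}|$. Writing $H=\mathcal{O}_{\mathbb{P}^{4}}(1)\vert_{Q}$ and $E$ for the exceptional divisor, one has $-K_{X}\sim 3\alpha^{*}(H)-E$ and $\bar{S}_{1}\sim 2\alpha^{*}(H)-E$, whence
$$
-K_{X}\qlineq\frac{3}{2}\bar{S}_{1}+\frac{1}{2}E.
$$
Since $\bar{S}_{1}$ is an irreducible divisor appearing with coefficient $3/2$, the pair $(X,\lambda(\tfrac{3}{2}\bar{S}_{1}+\tfrac{1}{2}E))$ fails to be log canonical along $\bar{S}_{1}$ as soon as $\lambda>2/3$ (a component of coefficient greater than $1$ already violates the definition --- no blow up or discrepancy computation is required). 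Hence $\mathrm{lct}(X)\leqslant 2/3$, uniformly over the family. Your discrepancy arithmetic $(1-3\lambda\geqslant -1\iff\lambda\leqslant 2/3)$ is correct as far as it goes, but it rests on an unconstructed object, and the correct template here is Lemma~\ref{lemma:2-9} or~\ref{lemma:2-13} (a one-line $\mathbb{Q}$-linear decomposition of $-K_{X}$), not the genuinely delicate tangency argument of Lemma~\ref{lemma:2-2}.
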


\begin{proof}
There is a commutative diagram
$$
\xymatrix{
&X\ar@{->}[dl]_{\alpha}\ar@{->}[dr]^{\beta}&\\%
Q\ar@{-->}[rr]_{\psi}&&\mathbb{P}^{1}}
$$
where $Q\subset\mathbb{P}^{4}$ is a smooth quadric threefold,
$\alpha$ is a blow up of a smooth curve that is a complete intersection of
two divisors
$$
S_{1}, S_2\in\Big|\mathcal{O}_{\mathbb{P}^{4}}\big(2\big)\Big\vert_{Q}\Big|,%
$$
the morphism $\beta$ is a fibration into del Pezzo surfaces of
degree $4$, and $\psi$~is~a~rational map that is induced by the
pencil generated by the surfaces $S_{1}$ and $S_{2}$. Then
$\mathrm{lct}(X)\leqslant 2/3$, because
$$
-K_{X}\qlineq \frac{3}{2}\bar{S}_{1}+\frac{1}{2}E,
$$
where $\bar{S}_{1}\subset X$ is a proper transform of the surface
$S_{1}$, and $E$ is the exceptional divisor of $\alpha$.
\end{proof}

\begin{lemma}
\label{lemma:2-9} Suppose that $\gimel(X)=2.9$. Then
$\mathrm{lct}(X)\leqslant 3/4$.
\end{lemma}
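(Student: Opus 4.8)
The plan is to imitate the upper-bound arguments of Lemmas~\ref{lemma:1-8}--\ref{lemma:1-10} and Lemma~\ref{lemma:2-28}: it suffices to exhibit a single effective $\mathbb{Q}$-divisor $D\qlineq -K_{X}$ for which the pair $\left(X,\frac{3}{4}D\right)$ fails to be log canonical, since then $\mathrm{lct}(X)\le\mathrm{lct}(X,D)\le\frac{3}{4}$ by Definition~\ref{definition:threshold}.

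First I would fix the geometry. The threefold $X$ is the blow up $\alpha\colon X\to\mathbb{P}^{3}$ of a smooth curve $C\subset\mathbb{P}^{3}$ of degree $7$ and genus $5$, with exceptional divisor $E$, so that
\begin{equation*}
-K_{X}\sim 4\alpha^{*}\big(H\big)-E ,
\end{equation*}
where $H$ is a plane in $\mathbb{P}^{3}$. Because $C$ is cut out by cubics, the net $\left|\alpha^{*}(3H)-E\right|$ of proper transforms of cubic surfaces through $C$ is base point free and realizes the second extremal contraction as a conic bundle $\beta\colon X\to\mathbb{P}^{2}$; its fibres are the proper transforms of the residual conics $C'$, where $S_{1}\cap S_{2}=C\cup C'$ for two cubics $S_{1},S_{2}$ through $C$. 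Here $-K_{X}\sim\alpha^{*}(H)+\beta^{*}(\mathcal{O}_{\mathbb{P}^{2}}(1))$ and $-K_{X}\cdot F=2$ for a fibre $F$ of $\beta$.

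Next I would produce the singular anticanonical divisor, following the multiplicity bookkeeping of Lemmas~\ref{lemma:1-8}--\ref{lemma:1-10}. The aim is a surface $S$ with $S\qlineq -mK_{X}$ together with a curve $Z\subset X$ along which $\mathrm{mult}_{Z}(S)=k$, chosen so that blowing up $Z$ (which has discrepancy $1$) yields
\begin{equation*}
1-\lambda\,\mathrm{mult}_{Z}\left(\tfrac{1}{m}S\right)=-1\quad\text{exactly at}\quad \lambda=\frac{3}{4},
\end{equation*}
that is $\mathrm{mult}_{Z}(S)/m=8/3$; then $D=\frac{1}{m}S\qlineq -K_{X}$ does the job. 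Concretely I would look for this $Z$ among the distinguished curves forced by the conic bundle: over the discriminant of $\beta$ the residual conic $C'$ breaks into a line-pair $L_{1}+L_{2}$ meeting $C$, and the proper transforms of multisecant lines of $C$, or of cubic surfaces tangent to $E$ along such a reducible fibre, are the natural carriers of a tacnodal singularity (of type $x^{2}+y^{4}$, whose log canonical threshold is $3/4$) of an anticanonical member. A valid alternative, matching Remark~\ref{remark:convexity}, is to write $-K_{X}$ as a positive combination of $\alpha^{*}(H)$, $E$ and the transform of a suitable surface, and to read off the threshold from the coefficients together with one further blow up.

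The main obstacle is the construction step: one must exhibit the relevant surface (equivalently, the curve $Z$ and the multiplicity $k$) in a way that is valid for every smooth member of the family $\gimel(X)=2.9$, not merely a special one, and then verify that the log pull back of $\left(X,\frac{3}{4}D\right)$ over $Z$ --- after a possible second blow up of a point or a curve lying over $Z$, as in Example~\ref{example:smooth-point-and-log-pull-back} --- produces a divisor of discrepancy exactly $-1$ and no worse. Controlling the multisecant geometry of the degree $7$, genus $5$ curve $C$ (its trisecants and the reducible fibres of $\beta$) uniformly over the family is where the real work lies; once the correct $Z$ and $k$ are identified, the discrepancy computation is the same routine intersection-theoretic calculation used throughout Section~\ref{section:bounds}.
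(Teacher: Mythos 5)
Your setup of the geometry is correct and the reduction to exhibiting a single bad divisor $D\qlineq -K_{X}$ is the right strategy, but the proof is not complete: you explicitly defer the construction of that divisor (``the main obstacle is the construction step\dots where the real work lies''), and the route you choose --- hunting for a curve $Z$ with $\mathrm{mult}_{Z}(D)\geqslant 8/3$ via trisecants, tacnodal anticanonical members and the discriminant of the conic bundle, uniformly over the family --- is both unnecessary and left unexecuted. The paper's argument needs no multiplicity estimate and no blow up at all. From the two relations you already wrote down, namely $-K_{X}\sim\alpha^{*}(\mathcal{O}_{\mathbb{P}^{3}}(4))-E$ and $S\sim\alpha^{*}(\mathcal{O}_{\mathbb{P}^{3}}(3))-E$ for $S\in|\beta^{*}(\mathcal{O}_{\mathbb{P}^{2}}(1))|$, one gets
$$
-K_{X}\qlineq\frac{4}{3}S+\frac{1}{3}E,
$$
and for an irreducible such $S$ (the proper transform of a general cubic through $C$, equivalently the preimage of a general line under the conic bundle) the pair $\left(X,\ \lambda\left(\frac{4}{3}S+\frac{1}{3}E\right)\right)$ contains the component $S$ with coefficient $\frac{4}{3}\lambda>1$ for every $\lambda>\frac{3}{4}$, hence fails to be log canonical already by the coefficient condition in Definition~\ref{definition:log canonical-singularities}. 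This gives $\mathrm{lct}(X)\leqslant 3/4$ at once. Your parenthetical ``alternative'' gestures in this direction but never writes the decomposition, invokes Remark~\ref{remark:convexity} (which plays no role here) and asserts that a further blow up is needed (it is not). Supplying the displayed decomposition is the entire proof; the multisecant analysis should be discarded.
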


\begin{proof}
There is a commutative diagram
$$
\xymatrix{
&X\ar@{->}[dl]_{\alpha}\ar@{->}[dr]^{\beta}&\\%
\mathbb{P}^{3}\ar@{-->}[rr]_{\psi}&&\mathbb{P}^{2}}
$$
where $\alpha$ is a blow up of a smooth curve
$C\subset\mathbb{P}^{3}$ of degree $7$ and genus $5$ that is a
scheme-theoretic intersection of cubic surfaces in
$\mathbb{P}^{3}$, the morphism $\beta$ is a conic bundle, and
$\psi$ is a rational map that is given by the linear system of
cubic surfaces that contain $C$. One has
$$
-K_{X}\qlineq \frac{4}{3}S+\frac{1}{3}E,
$$
where $S\in|\beta^{*}(\mathcal{O}_{\mathbb{P}^{2}}(1))|$, and $E$
is the exceptional divisor of $\alpha$. We see that
$\mathrm{lct}(X)\leqslant 3/4$.
\end{proof}

\begin{lemma}
\label{lemma:2-12} Suppose that $\gimel(X)=2.12$. Then
$\mathrm{lct}(X)\leqslant 3/4$.
\end{lemma}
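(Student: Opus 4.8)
The plan is to establish the bound $\mathrm{lct}(X)\leqslant 3/4$ by exhibiting a single effective $\mathbb{Q}$-divisor $\mathbb{Q}$-linearly equivalent to $-K_{X}$ one of whose prime components carries coefficient strictly larger than $1$, exactly as in the proofs of Lemmas~\ref{lemma:2-9} and~\ref{lemma:2-7}. First I would recall from the Mori--Mukai classification (see \cite{IsPr99}, \cite{MoMu81}) that a threefold with $\gimel(X)=2.12$ is the blow up $\alpha\colon X\to\mathbb{P}^{3}$ of a smooth curve $C\subset\mathbb{P}^{3}$ of degree $6$ and genus $3$, the scheme-theoretic base locus of the net of cubic surfaces through $C$ (the classical cubo-cubic Cremona situation). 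Writing $H$ for the hyperplane class on $\mathbb{P}^{3}$ and $E$ for the $\alpha$-exceptional divisor, the adjunction formula $K_{X}=\alpha^{*}K_{\mathbb{P}^{3}}+E$ gives
$$
-K_{X}\sim 4\alpha^{*}\big(H\big)-E.
$$

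Next I would use the net of cubics through $C$. Since $C$ is cut out by cubics, the linear system $|\mathcal{I}_{C}(3)|$ is nonempty, and by Bertini a general member $S_{0}\subset\mathbb{P}^{3}$ is smooth away from $C$ and contains $C$ with multiplicity one; its proper transform $S\subset X$ is then a smooth irreducible surface with
$$
S\sim 3\alpha^{*}\big(H\big)-E.
$$
Combining the two displayed equivalences yields
$$
\frac{4}{3}S+\frac{1}{3}E\qlineq\frac{4}{3}\Big(3\alpha^{*}\big(H\big)-E\Big)+\frac{1}{3}E\qlineq 4\alpha^{*}\big(H\big)-E\qlineq -K_{X},
$$
so that $D=\tfrac{4}{3}S+\tfrac{1}{3}E$ is an effective $\mathbb{Q}$-divisor with $D\qlineq -K_{X}$.

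Finally I would conclude. In the pair $(X,\lambda D)$ the prime divisor $S$ appears with coefficient $\tfrac{4}{3}\lambda$, and since $S$ is a smooth (in particular reduced and irreducible) Weil divisor, Definition~\ref{definition:log canonical-singularities} forces $\tfrac{4}{3}\lambda\leqslant 1$ whenever $(X,\lambda D)$ is log canonical; hence $\mathrm{lct}(X,D)\leqslant 3/4$, and therefore $\mathrm{lct}(X)\leqslant\mathrm{lct}(X,D)\leqslant 3/4$ by Definition~\ref{definition:threshold}. The only nontrivial point, and the step I would check most carefully, is the geometric input of the second paragraph: that $C$ is genuinely cut out by cubics and that the generic cubic through $C$ is smooth and meets $C$ with multiplicity one, so that $S\sim 3\alpha^{*}(H)-E$ really is represented by an irreducible surface of the asserted class. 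This is precisely the content of the cubo-cubic Cremona description of the family $2.12$, and once it is granted the coefficient computation is immediate; no delicate estimate on $\mathrm{LCS}(X,\lambda D)$ is needed because we only seek an upper bound.
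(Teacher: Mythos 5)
Your proposal is correct and coincides with the paper's argument: the paper likewise writes $-K_{X}\qlineq\frac{4}{3}S+\frac{1}{3}E$, where $S$ is the proper transform of a general cubic surface through the degree~$6$, genus~$3$ curve $C$ (phrased there as $S\in|\beta^{*}(\mathcal{O}_{\mathbb{P}^{3}}(1))|$ for the second contraction $\beta$ of the cubo-cubic diagram), and concludes $\mathrm{lct}(X)\leqslant 3/4$ from the coefficient $4/3>1$. Your explicit computation $S\sim 3\alpha^{*}(H)-E$ is just the same divisor described from the $\alpha$-side, and the geometric input you flag (that $C$ is scheme-theoretically cut out by cubics) is exactly what the paper takes from the Mori--Mukai description of family~$2.12$.
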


\begin{proof}
There is a commutative diagram
$$
\xymatrix{
&X\ar@{->}[dl]_{\alpha}\ar@{->}[dr]^{\beta}&\\%
\mathbb{P}^{3}\ar@{-->}[rr]_{\psi}&&\mathbb{P}^{3}}
$$
where $\alpha$ and $\beta$ are blow ups of smooth curves
$C\subset\mathbb{P}^{3}$ and $Z\subset\P^3$ of degree $6$ and
genus $3$ that are scheme-theoretic intersections of cubic
surfaces in $\mathbb{P}^{3}$, and $\psi$ is a
birational map that is given by the linear system of cubic
surfaces that contain $C$. Then
$$
-K_{X}\qlineq \frac{4}{3}S+\frac{1}{3}E,
$$
where $S\in|\beta^{*}(\mathcal{O}_{\mathbb{P}^{3}}(1))|$, and $E$
is the exceptional divisor of $\alpha$. We see that
$\mathrm{lct}(X)\leqslant 3/4$.
\end{proof}

\begin{lemma}
\label{lemma:2-13} Suppose that $\gimel(X)=2.13$. Then
$\mathrm{lct}(X)\leqslant 2/3$.
\end{lemma}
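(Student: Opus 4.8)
The plan is to mimic the structure of the preceding upper-bound lemmas (Lemmas~\ref{lemma:2-7}, \ref{lemma:2-9}, \ref{lemma:2-12}), all of which exhibit an explicit effective anticanonical $\mathbb{Q}$-divisor whose log canonical threshold realizes the claimed bound. For $\gimel(X)=2.13$ the threefold sits in a Sarkisov link: there should be a commutative diagram
$$
\xymatrix{
&X\ar@{->}[dl]_{\alpha}\ar@{->}[dr]^{\beta}&\\
Q\ar@{-->}[rr]_{\psi}&&\mathbb{P}^{1}}
$$
or an analogous one, where $\alpha$ is a blow up of a smooth curve $C$ lying on a quadric $Q\subset\mathbb{P}^4$ (or in $\mathbb{P}^3$), and $\beta$ is a del Pezzo or conic fibration. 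First I would extract from the classification (\cite{IsPr99}, Theorem~3.3.1 / Table~\ref{table:Fanos}) the precise description of the extremal contraction $\alpha$ realizing $X$ as a blow up along a curve $C$, together with the degree and genus data of $C$ and the ambient surfaces containing it.

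The key step is to write $-K_X$ explicitly in terms of the proper transform of a distinguished surface $S$ through $C$ and the exceptional divisor $E$ of $\alpha$. If $C$ is a complete intersection of surfaces from a pencil $\mathcal{P}$, the rational map $\psi$ induced by $\mathcal{P}$ produces a fibration, and pulling back a member $S$ of $\mathcal{P}$ gives a relation of the form
$$
-K_{X}\qlineq a\,\bar{S}+b\,E
$$
for explicit positive rationals $a,b$ determined by the adjunction computation on the blow up. As in Lemma~\ref{lemma:2-7} one reads off $\mathrm{mult}_C$-type coefficients; the claimed bound $\mathrm{lct}(X)\leqslant 2/3$ should emerge as $b/a=1/2$ combined with the singularity of the chosen divisor, so that the log pair $(X,\tfrac{2}{3}(a\bar S+bE))$ fails to be log terminal along $E$ (or along a curve in $E$). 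Concretely, I expect the anticanonical class to be $\mathbb{Q}$-equivalent to $\tfrac{3}{2}\bar S+\tfrac{1}{2}E$, exactly as in Lemma~\ref{lemma:2-7}, whence $\mathrm{lct}(X,[-K_X])\leqslant \tfrac{1}{3/2}=2/3$ by testing the divisor $\tfrac{3}{2}\bar S+\tfrac{1}{2}E$ against the coefficient condition of Definition~\ref{definition:log canonical-singularities}.

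The main obstacle will be pinning down the correct geometric description of the link for family $2.13$ and verifying that the coefficient of the relevant component (either $\bar S$ or $E$) is genuinely large enough to violate log canonicity at threshold $2/3$, i.e. checking that $\tfrac{2}{3}\cdot a\geqslant 1$ with equality forcing the non-log-terminal locus. This reduces to an adjunction/intersection computation on the exceptional divisor $E$ of the blow up $\alpha$, using the normal bundle of $C$ in $Q$ (or $\mathbb{P}^3$); since these lemmas only assert an upper bound, I do not need to prove the reverse inequality, so the work is limited to producing one explicit divisor and confirming its threshold by the definition of log canonical singularities. Once the relation $-K_X\qlineq \tfrac{3}{2}\bar S+\tfrac{1}{2}E$ is established, the bound $\mathrm{lct}(X)\leqslant 2/3$ follows immediately, and the proof closes in the same one-paragraph style as Lemma~\ref{lemma:2-7}.
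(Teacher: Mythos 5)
Your proposal is correct and follows essentially the same route as the paper: for $\gimel(X)=2.13$ the threefold is the blow up $\alpha\colon X\to Q$ of a smooth quadric along a curve $C$ of degree $6$ and genus $2$, with a conic bundle $\beta\colon X\to\mathbb{P}^{2}$ given by the quadric sections through $C$, and the paper's proof is exactly the one-line computation $-K_{X}\qlineq \tfrac{3}{2}S+\tfrac{1}{2}E$ with $S\in|\beta^{*}(\mathcal{O}_{\mathbb{P}^{2}}(1))|$, yielding $\mathrm{lct}(X)\leqslant 2/3$ from the coefficient $\tfrac{3}{2}$. Your only imprecision is hedging on whether $\beta$ fibers over $\mathbb{P}^{1}$ or $\mathbb{P}^{2}$; it is the latter, but this does not affect the argument.
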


\begin{proof}
There is a commutative diagram
$$
\xymatrix{
&X\ar@{->}[dl]_{\alpha}\ar@{->}[dr]^{\beta}&\\%
Q\ar@{-->}[rr]_{\psi}&&\mathbb{P}^{2}}
$$
where $Q\subset\mathbb{P}^{4}$ is a smooth quadric threefold,
$\alpha$ is a blow up of a smooth curve $C\subset Q$~of~degree~$6$
and genus $2$, the morphism $\beta$ is a conic bundle, and $\psi$
is a rational map that is given by the~linear system of surfaces
in $|\mathcal{O}_{\mathbb{P}^{4}}(2)\vert_{Q}|$ that contain the
curve $C$. One has
$$
-K_{X}\qlineq \frac{3}{2}S+\frac{1}{2}E,
$$
where $S\in|\beta^{*}(\mathcal{O}_{\mathbb{P}^{2}}(1))|$, and $E$
is the exceptional divisor of $\alpha$. We see that
$\mathrm{lct}(X)\leqslant 2/3$.
\end{proof}

\begin{lemma}
\label{lemma:2-16} Suppose that $\gimel(X)=2.16$. Then
$\mathrm{lct}(X)\leqslant 1/2$.
\end{lemma}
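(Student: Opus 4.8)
The plan is to realize $X$ as a blow up of a del Pezzo threefold and then read off an effective anticanonical divisor carrying a prime component of multiplicity $2$. Recall from \cite{IsPr99} that there is a birational morphism $\alpha\colon X\to V$, where $V$ is a smooth complete intersection of two quadrics in $\mathbb{P}^{5}$, and $\alpha$ is the blow up of a smooth conic $C\subset V$. Writing $\mathrm{Pic}(V)=\mathbb{Z}[H]$ with $-K_{V}\sim 2H$ and $H^{3}=4$, the blow up formula for a smooth curve in a smooth threefold gives
$$
-K_{X}\sim 2\alpha^{*}\big(H\big)-E,
$$
where $E$ is the exceptional divisor of $\alpha$.

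Next I would produce the multiplicity-$2$ component. The conic $C$ spans a plane $\Pi\cong\mathbb{P}^{2}\subset\mathbb{P}^{5}$, and the hyperplanes of $\mathbb{P}^{5}$ containing $\Pi$ cut out on $V$ a net of surfaces in $|H|$, each of which contains $C$. I would fix a general such surface $S\in|H|$; it is an irreducible del Pezzo surface of degree $4$ that is smooth along $C$, so $\mathrm{mult}_{C}(S)=1$ and its proper transform satisfies $\bar{S}\sim\alpha^{*}(H)-E$. Hence
$$
-K_{X}\sim 2\alpha^{*}\big(H\big)-E\sim 2\bar{S}+E,
$$
and the right-hand side is an effective anticanonical divisor in which the prime divisor $\bar{S}$ occurs with coefficient $2$.

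The conclusion is then immediate: for any rational $\lambda>1/2$ the coefficient $2\lambda$ of $\bar{S}$ in $\lambda(2\bar{S}+E)$ exceeds $1$, so the log pair $(X,\lambda(2\bar{S}+E))$ is not log canonical along $\bar{S}$ by Definition~\ref{definition:log canonical-singularities}. Therefore $\mathrm{lct}(X,2\bar{S}+E)\leqslant 1/2$, and since $2\bar{S}+E\qlineq -K_{X}$ this yields $\mathrm{lct}(X)\leqslant 1/2$ by Definition~\ref{definition:threshold}.

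There is essentially no hard step in this argument; the only point requiring care is the existence of an irreducible hyperplane section of $V$ through $\Pi$ that is smooth along $C$, which is automatic for a general member of the net since $C$ is a smooth conic lying on the smooth surface $S$, forcing $\mathrm{mult}_{C}(S)=1$. For alignment with the neighbouring lemmas one may also record the conic bundle structure $\beta\colon X\to\mathbb{P}^{2}$ defined by $|\alpha^{*}(H)-E|$, namely the projection from $\Pi$, so that $\bar{S}\in|\beta^{*}(\mathcal{O}_{\mathbb{P}^{2}}(1))|$; however this extra structure is not needed to obtain the bound.
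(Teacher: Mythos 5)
Your argument is correct and is essentially the paper's own proof: the paper likewise writes $-K_{X}\sim 2S+E$ with $S\in|\beta^{*}(\mathcal{O}_{\mathbb{P}^{2}}(1))|$ the pullback under the conic bundle given by projection from the plane of $C$, which is exactly your $\bar{S}\sim\alpha^{*}(H)-E$. The coefficient-$2$ component then gives $\mathrm{lct}(X)\leqslant 1/2$ just as you conclude.
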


\begin{proof}
There is a commutative diagram
$$
\xymatrix{
&X\ar@{->}[dl]_{\alpha}\ar@{->}[dr]^{\beta}&\\%
V_{4}\ar@{-->}[rr]_{\psi}&&\mathbb{P}^{2}}
$$
where $V_{4}\subset\mathbb{P}^{5}$ is a smooth complete
intersection of two quadric hypersurfaces, $\alpha$ is a blow up
of an irreducible conic $C\subset V_{4}$, the morphism $\beta$ is
a conic bundle, and $\psi$ is a rational map that is given by
the~linear system of surfaces in
$|\mathcal{O}_{\mathbb{P}^{5}}(1)\vert_{V_{4}}|$ that contain $C$.
One has
$$
-K_{X}\sim 2S+E,
$$
where $S\in|\beta^{*}(\mathcal{O}_{\mathbb{P}^{2}}(1))|$, and $E$
is the exceptional divisor of $\alpha$. We see that
$\mathrm{lct}(X)\leqslant 1/2$.
\end{proof}

\begin{lemma}
\label{lemma:2-17}  Suppose that $\gimel(X)=2.17$. Then
$\mathrm{lct}(X)\leqslant 2/3$.
\end{lemma}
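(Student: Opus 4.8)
The plan is to proceed exactly as in the proofs of Lemmas~\ref{lemma:2-13} and \ref{lemma:2-7}: since we only need an upper bound, it suffices to exhibit one effective $\Q$-divisor $\qlineq -K_X$ carrying a prime component of coefficient $3/2$, which immediately forces $\lct(X)\le 2/3$. First I would recall from \cite{IsPr99} the geometry of the family $\gimel(X)=2.17$: such an $X$ is the blow up $\alpha\colon X\to Q$ of a smooth quadric threefold $Q\subset\P^4$ along a smooth elliptic curve $C\subset Q$ of degree $5$. Writing $H=\mathcal{O}_{\P^4}(1)\vert_{Q}$ and letting $E$ be the exceptional divisor of $\alpha$, one has $-K_Q\sim 3H$ and $K_X=\alpha^{*}K_Q+E$, so that
$$
-K_X\sim 3\alpha^{*}\big(H\big)-E.
$$

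Next I would produce an effective member of $|2\alpha^{*}(H)-E|$, i.e. the proper transform on $X$ of a quadric section of $Q$ passing through $C$. Non-emptiness is the only input needed, and it follows from the exact sequence $0\to\mathcal{O}_Q(2)\otimes I_{C}\to\mathcal{O}_Q(2)\to\mathcal{O}_C(2H)\to 0$: since $h^{0}(\mathcal{O}_Q(2))=14$ while $h^{0}(\mathcal{O}_C(2H))=\deg(2H\vert_C)=10$ by Riemann--Roch on the elliptic curve $C$, one gets $h^{0}(\mathcal{O}_Q(2)\otimes I_{C})\ge 4>0$. Let $S$ be any such member. Then $S$ is an effective divisor with $S\sim 2\alpha^{*}(H)-E$, and a direct computation gives the decomposition
$$
-K_X\qlineq \frac{3}{2}S+\frac{1}{2}E,
$$
since $\frac{3}{2}\big(2\alpha^{*}(H)-E\big)+\frac{1}{2}E=3\alpha^{*}(H)-E\qlineq -K_X$.

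Finally I would set $D=\frac{3}{2}S+\frac{1}{2}E$, an effective $\Q$-divisor with $D\qlineq -K_X$. Every prime component of $S$ occurs in $D$ with coefficient $3/2$, so for any rational $\lambda>2/3$ the pair $(X,\lambda D)$ has a component of coefficient $\lambda\cdot\frac{3}{2}>1$ and hence is not log canonical. Therefore $\lct(X,D)\le 2/3$, and consequently $\lct(X)\le 2/3$.

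The only genuine content here is the identification of the $2.17$ family and the non-emptiness of $|2\alpha^{*}(H)-E|$; both are immediate from the classification and the elementary cohomology estimate above, so I expect no real obstacle, exactly as in the neighbouring bound lemmas where the estimate is read off directly from the presentation of $-K_X$. I would also remark that no matching lower bound is asserted in this lemma, which is precisely why $2.17$ appears in the list of families for which only bounds are established.
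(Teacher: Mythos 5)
Your proposal is correct and follows essentially the same route as the paper: both exhibit the decomposition $-K_X\qlineq \tfrac{3}{2}S+\tfrac{1}{2}E$ with $S\sim 2\alpha^{*}(H)-E$ the proper transform of a quadric section of $Q$ through the degree-$5$ elliptic curve, and conclude $\lct(X)\le 2/3$ from the coefficient $3/2$. The only cosmetic difference is that the paper identifies $S$ as $\beta^{*}(\mathcal{O}_{\mathbb{P}^{3}}(1))$ for the second extremal contraction $\beta\colon X\to\mathbb{P}^{3}$, whereas you verify non-emptiness of $|2\alpha^{*}(H)-E|$ by a direct cohomology count; both justifications are valid.
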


\begin{proof}
There is a commutative diagram
$$
\xymatrix{
&X\ar@{->}[dl]_{\alpha}\ar@{->}[dr]^{\beta}&\\%
Q\ar@{-->}[rr]_{\psi}&&\mathbb{P}^{3}}
$$
where $Q\subset\mathbb{P}^{4}$ is a smooth quadric threefold, the
morphisms $\alpha$ and $\beta$ are blow ups of smooth elliptic
curves $C\subset Q$ and $Z\subset\mathbb{P}^{3}$~of~degree~$5$,
respectively, and the map $\psi$ is given by the~linear system of
surfaces in $|\mathcal{O}_{\mathbb{P}^{4}}(2)\vert_{Q}|$ that
contain the curve $C$. One has
$$
-K_{X}\qlineq \frac{3}{2}S+\frac{1}{2}E,
$$
where $S\in|\beta^{*}(\mathcal{O}_{\mathbb{P}^{3}}(1))|$, and $E$
is the exceptional divisor of $\alpha$. We see that
$\mathrm{lct}(X)\leqslant 2/3$.
\end{proof}

\begin{lemma}
\label{lemma:2-20} Suppose that $\gimel(X)=2.20$. Then
$\mathrm{lct}(X)\leqslant 1/2$.
\end{lemma}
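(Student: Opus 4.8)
The plan is to exhibit an explicit effective anticanonical $\Q$-divisor on $X$ carrying a component with large coefficient, exactly in the spirit of the proofs of Lemmas~\ref{lemma:2-16} and~\ref{lemma:2-17}. Recall that for $\gimel(X)=2.20$ the threefold $X$ is the blow up
$$
\alpha\colon X\longrightarrow V_{5}
$$
of the quintic del Pezzo threefold $V_{5}$ (the smooth section of $\mathrm{Gr}(2,5)\subset\P^{9}$ by a linear subspace of dimension $6$, as in Lemma~\ref{lemma:1-10}) along a twisted cubic curve $C\subset V_{5}$. Here $V_{5}$ is a Fano threefold of index $2$, so that
$$
-K_{V_{5}}\sim 2H,\quad \Pic\big(V_{5}\big)=\Z\big[H\big],\quad H^{3}=5,
$$
where $H$ is the ample generator, the embedding $V_{5}\subset\P^{6}$ is given by $|H|$, and $H\cdot C=3$.

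First I would record the behaviour of the canonical class. Writing $E$ for the exceptional divisor of $\alpha$, blowing up the smooth curve $C$ in the smooth threefold $V_{5}$ gives
$$
-K_{X}\qlineq 2\alpha^{*}\big(H\big)-E.
$$
Next I would produce a surface realising the class $\alpha^{*}(H)-E$. Since $C$ spans a $\P^{3}\subset\P^{6}$ while $|H|$ is cut out by the full dual space, the hyperplane sections of $V_{5}\subset\P^{6}$ that contain $C$ form a linear subsystem of positive dimension; let $S\in|H|$ be a general such section. As $S$ is reduced and smooth at the general point of $C$, one has $\mult_{C}(S)=1$, so its proper transform satisfies $\bar{S}\qlineq\alpha^{*}(H)-E$. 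Combining the two displays yields
$$
-K_{X}\qlineq 2\alpha^{*}\big(H\big)-E\qlineq 2\bar{S}+E,
$$
an effective $\Q$-divisor that is $\Q$-linearly equivalent to $-K_{X}$.

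Finally I would deduce the bound. By Remark~\ref{remark:lct-class} and the definition of $\lct(X,[B_{X}])$ we have $\lct(X)\le\lct(X,2\bar{S}+E)$, since $2\bar{S}+E\qlineq -K_{X}$. Because $\bar{S}$ is a nonzero reduced effective divisor distinct from $E$, every irreducible component of $\bar{S}$ enters $2\bar{S}+E$ with coefficient $2$; hence for any $\lambda>1/2$ the pair $(X,\lambda(2\bar{S}+E))$ violates the coefficient condition of Definition~\ref{definition:log canonical-singularities} along $\bar{S}$ and so is not log canonical. Therefore $\lct(X,2\bar{S}+E)\le 1/2$, and consequently $\lct(X)\le 1/2$. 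The only genuinely nonroutine point is the geometric input: identifying $X$ as the blow up $\mathrm{Bl}_{C}V_{5}$ along the twisted cubic and checking that $|H|$ contains a section through $C$ of multiplicity one; once this is in place, everything reduces to the divisor-class computation above.
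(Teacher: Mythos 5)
Your proposal is correct and is essentially the paper's own argument: the paper also realises $X$ as the blow up of $V_{5}$ along a twisted cubic, takes $S$ the proper transform of a hyperplane section through $C$ (a fiber of the induced conic bundle $\beta\colon X\to\mathbb{P}^{2}$), and concludes from $-K_{X}\sim 2S+E$ that $\mathrm{lct}(X)\leqslant 1/2$. The divisor-class computation and the coefficient argument match the paper's proof.
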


\begin{proof}
There is a commutative diagram
$$
\xymatrix{
&X\ar@{->}[dl]_{\alpha}\ar@{->}[dr]^{\beta}&\\%
V_{5}\ar@{-->}[rr]_{\psi}&&\mathbb{P}^{2}}
$$
where $V_{5}\subset\mathbb{P}^{6}$ is a smooth intersection of
$\mathrm{Gr}(2,5)\subset\mathbb{P}^9$ with a linear subspace of
dimension~$6$, the morphism $\alpha$ is a blow up of a cubic curve
$\mathbb{P}^1\cong C\subset V_{5}$, the morphism $\beta$ is a
conic bundle, and $\psi$ is given by the~linear system of surfaces
in $|\mathcal{O}_{\mathbb{P}^{6}}(1)\vert_{V_{5}}|$ that contain
the curve $C$. One has
$$
-K_{X}\sim 2S+E,
$$
where $S\in|\beta^{*}(\mathcal{O}_{\mathbb{P}^{2}}(1))|$, and $E$
is the exceptional divisor of $\alpha$. We see that
$\mathrm{lct}(X)\leqslant 1/2$.
\end{proof}

\begin{lemma}
\label{lemma:2-21}  Suppose that $\gimel(X)=2.21$. Then
$\mathrm{lct}(X)\leqslant 2/3$.
\end{lemma}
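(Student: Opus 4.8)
The plan is to follow the pattern of the preceding upper-bound lemmas (in particular the cases $\gimel(X)=2.7$, $2.13$ and $2.17$) and simply produce one anticanonical $\Q$-divisor whose log canonical threshold is visibly at most $2/3$. By the classification of extremal contractions on Fano threefolds of Picard rank $2$ (see \cite[Chapter~4]{IsPr99}), the threefold $X$ fits into a two-ray link
$$
\xymatrix{
&X\ar@{->}[dl]_{\alpha}\ar@{->}[dr]^{\beta}&\\
Q\ar@{-->}[rr]_{\psi}&&\mathbb{P}^{3}}
$$
where $Q\subset\mathbb{P}^{4}$ is a smooth quadric threefold, the morphism $\alpha$ is the blow up of a smooth curve $C\subset Q$, the map $\psi$ is given by the linear system of those surfaces in $|\mathcal{O}_{\mathbb{P}^{4}}(2)\vert_{Q}|$ that contain $C$, and $\beta$ is the induced morphism. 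First I would record this diagram, let $E$ denote the $\alpha$-exceptional divisor, and let $S$ be the proper transform on $X$ of a general surface cut out on $Q$ by a quadric hypersurface through $C$, so that $S\in|\beta^{*}(\mathcal{O}_{\mathbb{P}^{3}}(1))|$ and $S\sim\alpha^{*}(\mathcal{O}_{\mathbb{P}^{4}}(2)\vert_{Q})-E$.

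The crux is a one-line computation of divisor classes. Because $-K_{Q}\sim\mathcal{O}_{\mathbb{P}^{4}}(3)\vert_{Q}$ and $\alpha$ blows up a curve, one has $-K_{X}\sim\alpha^{*}(-K_{Q})-E\sim\alpha^{*}(\mathcal{O}_{\mathbb{P}^{4}}(3)\vert_{Q})-E$, and combining this with the class of $S$ gives
$$
-K_{X}\qlineq\frac{3}{2}S+\frac{1}{2}E,
$$
the same shape of formula that yielded the bound in the $2.7$, $2.13$ and $2.17$ cases. Setting $D=\frac{3}{2}S+\frac{1}{2}E\qlineq -K_{X}$, I would then observe that for any $\lambda>2/3$ the coefficient $\frac{3}{2}\lambda$ of (a prime component of) $S$ in $\lambda D$ exceeds $1$, so that $(X,\lambda D)$ cannot be log canonical; hence $\lct(X,D)\le 2/3$. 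By Definition~\ref{definition:threshold} this forces $\lct(X)\le\lct(X,D)\le 2/3$, as required.

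The only genuine content is the geometric input, namely the existence of the link above — equivalently, the fact that $X$ is the blow up of a quadric threefold along the curve $C$ together with the bookkeeping identity $S\sim\alpha^{*}(\mathcal{O}_{\mathbb{P}^{4}}(2)\vert_{Q})-E$, which rests on the presence of a quadric section of $Q$ through $C$. I expect this identification to be the main (and only slightly delicate) point; it is supplied by the classification in \cite{IsPr99}, and once it is in place the class computation and the threshold estimate are immediate, exactly as in the proofs of Lemmas~\ref{lemma:2-13} and~\ref{lemma:2-17}. Note that, just as there, the argument produces only an upper bound, and matching it with a lower bound (which would require a genuine study of arbitrary anticanonical divisors) is deliberately left outside this lemma.
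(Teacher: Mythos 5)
Your proposal is correct and follows essentially the same route as the paper: exhibit the proper transform $S$ of a general quadric section of $Q$ through the twisted quartic $C$, note $S\sim\alpha^{*}(\mathcal{O}_{\mathbb{P}^{4}}(2)\vert_{Q})-E$, and read off $-K_{X}\qlineq\frac{3}{2}S+\frac{1}{2}E$, whence $\lct(X)\leqslant 2/3$. One small factual slip that does not affect the computation: for family $2.21$ the second contraction $\beta$ maps to another smooth quadric threefold $Q\subset\mathbb{P}^{4}$ (it is the blow up of a twisted quartic $Z\subset Q$), not to $\mathbb{P}^{3}$, so $S\in|\beta^{*}(\mathcal{O}_{\mathbb{P}^{4}}(1)\vert_{Q})|$; since your bound uses only the class of $S$ relative to $\alpha$, nothing else changes.
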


\begin{proof}
There is a commutative diagram
$$
\xymatrix{
&X\ar@{->}[dl]_{\alpha}\ar@{->}[dr]^{\beta}&\\%
Q\ar@{-->}[rr]_{\psi}&&Q}
$$
where $Q\subset\mathbb{P}^{4}$ is a smooth quadric threefold, the
morphisms $\alpha$ and $\beta$ are blow ups of smooth rational
curves $C\subset Q$ and $Z\subset Q$ of degree~$4$,
and $\psi$ is a birational map that is given by the~linear system
of surfaces in $|\mathcal{O}_{\mathbb{P}^{4}}(2)\vert_{Q}|$ that
contain the curve $C$. One has
$$
-K_{X}\qlineq \frac{3}{2}S+\frac{1}{2}E,
$$
where $S\in|\beta^{*}(\mathcal{O}_{\mathbb{P}^{4}}(1))\vert_{Q}|$,
and $E$ is the exceptional divisor of $\alpha$. We see that
$\mathrm{lct}(X)\leqslant 2/3$.
\end{proof}

\begin{lemma}
\label{lemma:2-22} Suppose that $\gimel(X)=2.22$. Then
$\mathrm{lct}(X)\leqslant 1/2$.
\end{lemma}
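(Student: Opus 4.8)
The plan is to argue exactly as in Lemmas~\ref{lemma:2-16} and~\ref{lemma:2-20}, exhibiting a single effective anticanonical divisor one of whose prime components is forced to carry coefficient $2$. By the Mori--Mukai classification (see \cite{IsPr99}), the threefold $X$ with $\gimel(X)=2.22$ is the blow up
$$
\alpha\colon X\longrightarrow V_{5}
$$
of an irreducible conic $C\subset V_{5}$, where $V_{5}\subset\mathbb{P}^{6}$ is the smooth section of $\mathrm{Gr}(2,5)\subset\mathbb{P}^{9}$ by a linear subspace of dimension $6$; moreover there is a second extremal contraction fitting into a commutative diagram
$$
\xymatrix{
&X\ar@{->}[dl]_{\alpha}\ar@{->}[dr]^{\beta}&\\
V_{5}\ar@{-->}[rr]_{\psi}&&\mathbb{P}^{2}}
$$
where $\beta$ is a conic bundle and $\psi$ is the projection from the plane spanned by $C$. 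First I would record this description, writing $E$ for the exceptional divisor of $\alpha$.

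The heart of the matter is a single linear equivalence. Recall that $-K_{V_{5}}\sim 2H$, where $H$ is the ample generator of $\mathrm{Pic}(V_{5})$ with $H^{3}=5$. Since $\alpha$ is the blow up of a smooth curve, $-K_{X}\sim 2\alpha^{*}(H)-E$. A general hyperplane section of $V_{5}\subset\mathbb{P}^{6}$ passing through $C$ exists and, because $C$ is a conic, is smooth along $C$, so its multiplicity along $C$ is $1$; hence its proper transform $S$ satisfies $S\sim\alpha^{*}(H)-E$ (in fact $S\in|\beta^{*}(\mathcal{O}_{\mathbb{P}^{2}}(1))|$). Substituting, I obtain
$$
-K_{X}\sim 2\alpha^{*}(H)-E\sim 2S+E.
$$

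It then remains only to read off the bound. The effective $\mathbb{Q}$-divisor $D=2S+E\qlineq -K_{X}$ has the prime component $S$ appearing with coefficient $2\lambda$ in the log pair $(X,\lambda D)$, so $(X,\lambda D)$ fails to be log canonical once $\lambda>1/2$; therefore $\lct(X)\leqslant\lct(X,D)\leqslant 1/2$. I expect no genuine obstacle: the only steps needing care are quoting the correct birational model $V_{5}$ from \cite{IsPr99} and checking that a general hyperplane section meets the conic with multiplicity exactly $1$ rather than higher, both of which are routine. In particular the conic bundle $\beta$ plays no role in the estimate itself---only the blow up $\alpha$ and the divisor $S$ are used.
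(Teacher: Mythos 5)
Your proof is correct and follows essentially the same route as the paper: the whole argument is the single relation $-K_{X}\sim 2S+E$ with $S\sim\alpha^{*}(H)-E$ the proper transform of a hyperplane section of $V_{5}$ through $C$, which immediately gives $\mathrm{lct}(X)\leqslant 1/2$. One factual slip: for $\gimel(X)=2.22$ the second extremal contraction $\beta$ is \emph{not} a conic bundle over $\mathbb{P}^{2}$ --- projection from the plane spanned by $C$ lands in $\mathbb{P}^{3}$ and is birational (in the paper, $\beta$ is the blow up of a rational, not linearly normal, quartic curve $Z\subset\mathbb{P}^{3}$); the conic-bundle picture you describe is the one for the cases $\gimel(X)=2.16$ and $\gimel(X)=2.20$. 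Since you correctly note that $\beta$ plays no role in the estimate and derive $S\sim\alpha^{*}(H)-E$ directly from the blow up $\alpha$, this misidentification does not affect the validity of your argument.
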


\begin{proof}
There is a commutative diagram
$$
\xymatrix{
&X\ar@{->}[dl]_{\alpha}\ar@{->}[dr]^{\beta}&\\%
V_{5}\ar@{-->}[rr]_{\psi}&&\mathbb{P}^{3}}
$$
where $V_{5}\subset\mathbb{P}^{6}$ is a smooth intersection of
$\mathrm{Gr}(2,5)\subset\mathbb{P}^9$ with a linear subspace of
dimension~$6$, the~morphisms $\alpha$ and $\beta$ are blow ups of
a conic $C\subset V_{5}$ and a rational (not linearly normal)
quartic $Z\subset\mathbb{P}^{3}$,~respectively, and $\psi$ is given by
the~linear system of surfaces in
$|\mathcal{O}_{\mathbb{P}^{6}}(1)\vert_{V_{5}}|$ that contain the
curve $C$. One has
$$
-K_{X}\sim 2S+E,
$$
where $S\in|\beta^{*}(\mathcal{O}_{\mathbb{P}^{3}}(1))|$, and $E$
is the exceptional divisor of $\alpha$. We see that
$\mathrm{lct}(X)\leqslant 1/2$.
\end{proof}

\begin{lemma}
\label{lemma:3-13} Suppose that $\gimel(X)=3.13$. Then
$\mathrm{lct}(X)\leqslant 1/2$.
\end{lemma}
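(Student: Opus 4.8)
The plan is to prove this bound exactly as in Lemmas~\ref{lemma:2-16}, \ref{lemma:2-20} and \ref{lemma:2-22}: to exhibit a single effective $\mathbb{Q}$-divisor $D\qlineq -K_X$ carrying an irreducible component of coefficient $2$. Indeed, if $D=2S+E$ with $S$ a prime divisor, then the first clause of Definition~\ref{definition:log canonical-singularities} forces the log pair $(X,\lambda D)$ to fail to be log canonical whenever $2\lambda>1$, since the coefficient of $S$ in $\lambda D$ is then $2\lambda>1$. Hence $\mathrm{lct}(X,D)\le 1/2$, and therefore $\mathrm{lct}(X)\le\mathrm{lct}(X,D)\le 1/2$. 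The whole argument thus reduces to producing one such $D$; no genuine log canonical analysis is needed.

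First I would recall, from the Mori--Mukai classification and \cite{IsPr99}, the extremal contraction attached to the family $3.13$. As with the neighbouring entries of this section, $X$ carries a divisorial contraction $\alpha\colon X\to V$ blowing up a smooth curve $C$ in a del Pezzo threefold $V$ with $-K_V\sim 2H$, where $H$ is a primitive ample class and $C$ lies on a general member of $|H|$. Writing $E$ for the $\alpha$-exceptional divisor and $S$ for the proper transform of a general member of $|H|$ passing through $C$, one has $S\sim\alpha^{*}H-E$, and hence, from $K_X=\alpha^{*}K_V+E$,
$$
-K_X\sim 2\alpha^{*}H-E\sim 2\bigl(S+E\bigr)-E\sim 2S+E.
$$
Since a general such $S$ is irreducible and reduced, $D=2S+E$ is the desired effective anticanonical divisor with a coefficient-$2$ component, and the bound $\mathrm{lct}(X)\le 1/2$ follows at once.

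The only real work is to pin down the geometry of $\alpha$: identifying the target $V$ for the family $3.13$ together with the degree and genus of the center $C$, and confirming that a general member of $|H|$ meets $C$ with multiplicity one, so that its proper transform is $\alpha^{*}H-E$ rather than $\alpha^{*}H-mE$ for some $m>1$. This is the step I expect to be the genuine obstacle, as it requires the precise classification data for $3.13$; once the linear equivalence $-K_X\sim 2S+E$ is in hand, the conclusion is formal. Should $3.13$ be most naturally presented via a fibration $\beta\colon X\to Z$ onto a rational surface, I would instead take $S\in|\beta^{*}(\mathcal{O}_Z(1))|$ and verify the same numerical identity $-K_X\sim 2S+E$, which is exactly the form used in Lemmas~\ref{lemma:2-16}, \ref{lemma:2-20} and \ref{lemma:2-22}.
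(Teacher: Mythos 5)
Your proposal is correct and follows essentially the same route as the paper: for $\gimel(X)=3.13$ the threefold is the blow up $\pi\colon X\to W$ of the $(1,1)$-divisor $W\subset\mathbb{P}^2\times\mathbb{P}^2$ (so $-K_W\sim 2H$) along a smooth curve $C$ of bidegree $(2,2)$, and the paper exhibits $-K_X\sim 2H_1+E_1$ with $H_1\in|\phi_1^{*}(\mathcal{O}_{\mathbb{P}^2}(1))|=|\pi^{*}H-E_1|$, which is precisely your proper transform $S$ of a general member of $|H|$ through $C$. The classification data you flag as the remaining obstacle is exactly what the paper supplies, and the multiplicity-one point is harmless since the general member of $|H-C|$ is an irreducible smooth del Pezzo surface of degree $6$ containing $C$ with multiplicity one.
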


\begin{proof}
There is a commutative diagram
$$
\xymatrix{
&&&&\mathbb{P}^{2}&&&&\\
W_{2}\ar@{->}[dd]_{\beta_{2}}\ar@{->}[urrrr]^{\alpha_{2}}&&&&&&&&W_{3}\ar@{->}[ullll]_{\beta_{3}}\ar@{->}[dd]^{\alpha_{3}}\\
&&&&X\ar@{->}[uu]_{\phi_1}\ar@{->}[drrrr]_{\phi_2}\ar@{->}[dllll]^{\phi_3}
\ar@{->}[dd]^{\pi_{1}}\ar@{->}[ullll]^{\pi_{2}}\ar@{->}[urrrr]_{\pi_{3}}&&&&\\
\mathbb{P}^{2}&&&&&&&&\mathbb{P}^{2}\\
&&&&W_{1}\ar@{->}[ullll]^{\alpha_{1}}\ar@{->}[urrrr]_{\beta_{1}}&&&&}
$$
such that $W_{i}\subset\mathbb{P}^2\times\mathbb{P}^2$ is a
divisor of bi-degree $(1,1)$, the morphisms $\alpha_{i}$ and
$\beta_{i}$ are $\mathbb{P}^{1}$-bundles, $\pi_{i}$ is a blow up
of a smooth curve $C_{i}\subset W_{i}$ of bi-degree $(2, 2)$ such
that
$$
\alpha_{i}\big(C_{i}\big)\subset\mathbb{P}^2\supset\beta_{i}\big(C_{i}\big)
$$
are irreducible conics, and $\phi_i$ is a conic bundle. Let
$E_{i}$ be the exceptional divisor of $\pi_{i}$. Then
$$
-K_{X}\sim 2H_{1}+E_{1}\sim 2H_{2}+E_{2}\sim 2H_{3}+E_{3}\sim
E_{1}+E_{2}+E_{3},
$$
where $H_{i}\in|\phi_{i}^*(\mathcal{O}_{\mathbb{P}^{2}}(1))|$. We
see that $\mathrm{lct}(X)\leqslant 1/2$.
\end{proof}

\begin{remark}
Let us use the notation of the proof of Lemma~\ref{lemma:3-13} and
assume that $\lct(X)<1/2$. Then there is an effective $\Q$-divisor
$D\qlineq -K_X$ such that the log pair $(X, \lambda D)$ is not log
canonical for some $\lambda< 1/2$. Since $\lct(W_i)=1/2$ by
Theorem~\ref{theorem:del-Pezzo}, one has
$$\varnothing\neq\LCS(X, \lambda D)\subset E_1\cap E_2\cap E_3.$$
In particular, by Theorem~\ref{theorem:connectedness} the locus
$\LCS(X, \lambda D)$ consists of a single point $P$; note that $P$
is an intersection $P=F_1\cap F_2\cap F_3$ of three curves $F_i$
such that $F_2\cup F_3$ (resp., $F_1\cup F_3$, $F_1\cup F_2$) is a
reducible fiber of the conic bundle $\phi_1$ (resp., $\phi_2$,
$\phi_3$).
\end{remark}

\appendix

\section{By Jean-Pierre Demailly. On Tian's invariant and log canonical thresholds}%
\label{section:alpha}

The goal of this appendix is to relate log canonical thresholds with
the $\alpha$-invariant introduced by G.\,Tian~\cite{Ti87} for the study of
the existence of K\"ahler--Einstein metrics. The approximation
technique of closed positive $(1,1)$-currents introduced in~\cite{De92} is
used to show that the $\alpha$-invariant actually coincides with the
log canonical threshold.

Algebraic geometers have been aware of this fact after~\cite{DeKo01}
appeared, and several papers have used it consistently in the latter
years (see e.g.~\cite{JoKo01b},~\cite{BoGaKo05}). However, it turns out that
the required result is stated only in a local analytic form
in~\cite{DeKo01},
in a language which may not be easily recognizable by algebraically
minded people. Therefore, we will repair here the lack of a proper
reference by stating and proving the statements required for the
applications to projective varieties, e.g.\ existence of
K\"ahler--Einstein metrics on Fano varieties and Fano orbifolds.

Usually, in these applications, only the case of the anticanonical
line bundle $L=-K_X$ is considered. Here we will consider more generally
the case of an arbitrary line bundle $L$ (or $\Q$-line bundle $L$) on a
complex  manifold $X$, with some additional restrictions which will
be stated later.

Assume that $L$ is equipped with a \textit{singular hermitian metric} $h$
(see e.g.~\cite{De90}). Locally, $L$ admits trivializations
$\theta:L_{|U}\simeq U\times\mathbb{C}$, and on $U$ the metric $h$ is given
by a weight function $\varphi$ such that
$$
\Vert\xi\Vert_h^2 =|\xi|^2e^{-2\varphi(z)} \text{ for all } z\in U,
\xi\in L_z,
$$
when $\xi\in L_z$ is identified with a complex number. We are interested
in the case where $\varphi$ is (at the very least) a locally integrable
function for the Lebesgue measure, since it is then possible to compute the
curvature form
$$
\Theta_{L,h}=\frac{i}{\pi}\ddbar\varphi
$$
in the sense of distributions. We have $\Theta_{L,h}\ge 0$ as a
$(1,1)$-current
if and only if the weights $\varphi$ are plurisubharmonic functions. In
the sequel we will be interested only in that case.

Let us first introduce
the concept of complex singularity exponent for singular hermitian metrics,
following e.g.~\cite{Va82},
\cite{Va83b}, \cite{ArGV85} and~\cite{DeKo01}.

\begin{definition}
\label{definition:A1} If $K$ is a compact subset of $X$, we define
the complex singularity exponent $c_K(h)$ of the metric $h$,
written locally as $h=e^{-2\varphi}$, to be the supremum of all
positive numbers $c$ such that  $h^c=e^{-2c\varphi}$ is integrable
in a neighborhood of every point $z_0\in K$, with respect to the
Lebesgue measure in holomorphic coordinates centered at~$z_0$.
\end{definition}

Now, we introduce a generalized version of Tian's invariant $\alpha$, as
defined in~\cite{Ti87} (see also~\cite{Siu88}).

\begin{definition}
\label{definition:A2} Assume that $X$ is a compact manifold and
that $L$ is a pseudo-effective line bundle, i.\,e. $L$ admits a
singular hermitian metric $h_0$ with $\Theta_{L,h_0}\ge 0$. If $K$
is a compact subset of $X$, we put
$$
\alpha_K(L)=\inf_{\{h,\,\Theta_{L,h}\ge 0\}}c_K(h)
$$
where $h$ runs over all singular hermitian metrics on $L$ such that
$\Theta_{L,h}\ge 0$.
\end{definition}

In algebraic geometry, it is more usual to look instead at linear systems
defined by a family of linearly independent sections
$\sigma_0,\sigma_1,\ldots,\sigma_N\in\nlb
H^0(X,L^{\otimes m})$. We denote by $\Sigma$ the vector subspace generated by
these sections and by
$$
|\Sigma|:=P(\Sigma)\subset|mL|:=P(H^0(X,L^{\otimes m}))
$$
the corresponding linear system. Such an $(N+1)$-tuple of sections
$\sigma=(\sigma_j)_{0\le j\le N}$ defines a
singular hermitian metric $h$ on $L$ by putting in any trivialization
$$
\Vert \xi\Vert^2_h
=\frac{|\xi|^2}{\big(\sum_j|\sigma_j(z)|^2\big)^{1/m}}
=\frac{|\xi|^2}{|\sigma(z)|^{2/m}} \text{ for } \xi\in L_z,
$$
hence $h(z)=|\sigma(z)|^{-2/m}$ with
$$\varphi(z)=\frac{1}{m}\log|\sigma(z)|=\frac{1}{2m}\log\sum_j|\sigma_j(z)|^2$$
as the associated weight function. Therefore, we are interested in the
number $c_K(|\sigma|^{-2/m})$. In the case of a single section
$\sigma_0$ (corresponding to a linear system containing a single divisor),
this is the
same as the log canonical threshold $\lct_K(X, \frac{1}{m}D)$ of the
where $D$ is a divisor corresponding to $\sigma_0$.
We will also use the formal notation
$\lct_K(X, \frac{1}{m}|\Sigma|)$ in the case of a higher dimensional
linear system $|\Sigma|\subset|mL|$.

Now, recall that the line bundle $L$ is
said to be \textit{big} if the Kodaira--Iitaka dimension
$\kappa(L)$ equals $n=\dim_\mathbb{C}(X)$. The main result of this
appendix is

\begin{theorem}
\label{theorem:A3} Let $L$ be a big line bundle on a compact
complex manifold $X$. Then for every compact set $K$ in $X$ we
have
$$
\alpha_K(L)=\inf_{\{h,\,\Theta_{L,h}\ge 0\}} c_K(h)
=\inf_{m\in\Z_{>0}}\inf_{D\in|mL|}
\lct_K\Big(X, \frac{1}{m}D\Big).
$$
\end{theorem}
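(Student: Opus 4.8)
The plan is to prove the two equalities separately. The first one, $\alpha_K(L)=\inf_{\{h,\,\Theta_{L,h}\ge 0\}}c_K(h)$, is nothing but Definition~\ref{definition:A2}, so the whole content lies in the identity $\inf_h c_K(h)=\inf_{m\in\Z_{>0}}\inf_{D\in|mL|}\lct_K(X,\frac1m D)$, which I would establish as two inequalities. For the inequality $\le$, given $m\in\Z_{>0}$ and a nonzero section $\sigma\in H^0(X,L^{\otimes m})$ with divisor $D$, the formula $\|\xi\|_h^2=|\xi|^2/|\sigma(z)|^{2/m}$ defines a singular hermitian metric $h_\sigma$ on $L$ with weight $\varphi_\sigma=\frac1m\log|\sigma|$, so that $\Theta_{L,h_\sigma}=\frac1m[D]\ge 0$. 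In a local trivialization in which $\sigma$ becomes a holomorphic function $f$ one has $e^{-2c\varphi_\sigma}=|f|^{-2c/m}$, whence $c_K(h_\sigma)=m\,c_K(f)=m\,\lct_K(X,D)=\lct_K(X,\frac1m D)$, using the homogeneity $\lct_K(X,tD)=t^{-1}\lct_K(X,D)$ and the identification of complex singularity exponents with log canonical thresholds recalled in Section~\ref{section:intro}. Since the metrics $h_\sigma$ form a subfamily of all metrics with $\Theta_{L,h}\ge 0$, passing to infima gives $\alpha_K(L)\le\lct_K(X,\frac1m D)$ for all $m$ and $D$, hence $\alpha_K(L)\le\inf_{m,D}\lct_K(X,\frac1m D)$.

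For the reverse inequality it suffices, for a fixed metric $h=e^{-2\varphi}$ with $\Theta_{L,h}\ge 0$ and each $\eps>0$, to exhibit an $m$ and a divisor $D\in|mL|$ with $\lct_K(X,\frac1m D)\le c_K(h)+\eps$; taking the infimum over such $D$ and then over $h$ yields $\inf_{m,D}\lct_K(X,\frac1m D)\le\alpha_K(L)$. The divisors will be produced by Demailly's $L^2$ approximation of $\varphi$. Fixing a smooth Kähler form $\omega$ and using that $L$ is big, Kodaira's lemma lets me write $L$ as an ample $\Q$-part plus an effective part; after twisting $\varphi$ by an arbitrarily small strictly positive weight (to be let tend to $0$ at the very end) I may assume enough positivity to apply the Nadel--Hörmander vanishing theorem and the Ohsawa--Takegoshi extension theorem to $L^{\otimes m}\otimes\mathcal I(m\varphi)$ for all large $m$. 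I then let $(g_{m,0},\dots,g_{m,N_m})$ be an orthonormal basis of $H^0\big(X,L^{\otimes m}\otimes\mathcal I(m\varphi)\big)$ for the $L^2$ inner product attached to $m\varphi$, and set $\varphi_m=\frac1{2m}\log\sum_j|g_{m,j}|^2$, the weight of the linear system $\Sigma_m\subset|mL|$ spanned by the $g_{m,j}$.

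By construction $\varphi_m$ has analytic singularities described by $\mathcal I(m\varphi)$, and a Bertini-type theorem for multiplier ideals shows that a general member $D_m\in\Sigma_m\subset|mL|$ satisfies $\lct_K(X,\frac1m D_m)=c_K(\varphi_m)$. It remains to compare the singularity exponents, and this is the analytic heart of the argument. Demailly's estimates from~\cite{De92} give, with a constant $C$ depending only on $n$ and the local geometry, the two-sided bound $\varphi(z)-\frac Cm\le\varphi_m(z)\le\sup_{|\zeta-z|<r}\varphi(\zeta)+\frac1m\log\frac{C}{r^{n}}$ for all $z$ and all admissible $r$, the lower bound resting on Ohsawa--Takegoshi and the upper bound on the sub-mean-value inequality for the psh functions $|g_{m,j}|^2e^{-2m\varphi}$. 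The lower bound immediately yields $c_K(\varphi_m)\ge c_K(\varphi)$, while the upper bound, optimized over $r$ and combined with the semicontinuity of complex singularity exponents established in~\cite{DeKo01}, yields $\limsup_{m\to\infty}c_K(\varphi_m)\le c_K(\varphi)$; together these give $c_K(\varphi_m)\to c_K(\varphi)$.

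Choosing $m$ large then makes $\lct_K(X,\frac1m D_m)=c_K(\varphi_m)\le c_K(\varphi)+\eps=c_K(h)+\eps$, and a final limiting argument removes the auxiliary positive twist, bigness of $L$ being exactly what guarantees, through Kodaira's lemma, that the twisted weight still converges to $\varphi$ in the relevant sense. The main obstacle is precisely this last comparison: controlling $c_K(\varphi_m)$ \emph{from above}, that is, verifying that the algebraic approximants $\varphi_m$ are genuinely as singular as $\varphi$ up to an error tending to $0$, which is where the full strength of the approximation theorem of~\cite{De92} and the semicontinuity theorem of~\cite{DeKo01} are needed. The secondary technical point is the globalization of these intrinsically local analytic estimates, which the bigness of $L$ makes possible by supplying, for all large $m$, an ample supply of global sections whose zero loci faithfully reflect the multiplier ideals $\mathcal I(m\varphi)$.
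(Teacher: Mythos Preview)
Your overall architecture is the same as the paper's: the easy inequality is obtained by associating to each $D\in|mL|$ the algebraic metric $h_\sigma$, and the hard inequality is obtained by Demailly's $L^2$ approximation, after first perturbing via Kodaira's lemma to get strict positivity. The comparison $\lct_K(X,\frac1mD_m)\le c_K(\varphi_m)$ you obtain from your Bertini argument is in fact trivially true for \emph{any} member of the system (a single section is more singular than the whole orthonormal basis), so no Bertini statement is needed there.

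The genuine gap is the step where you claim that the upper bound $\varphi_m(z)\le\sup_{|\zeta-z|<r}\varphi(\zeta)+\frac1m\log\frac{C}{r^n}$, together with semicontinuity from \cite{DeKo01}, yields $\limsup_m c_K(\varphi_m)\le c_K(\varphi)$. Neither ingredient gives this. First, the semicontinuity theorem of \cite{DeKo01} is \emph{lower} semicontinuity of $\varphi\mapsto c_K(\varphi)$ under $L^1_{\rm loc}$ convergence; it yields only $\liminf_m c_K(\varphi_m)\ge c_K(\varphi)$, which you already have from the lower bound $\varphi_m\ge\varphi-C/m$. Second, the sup-regularization $\varphi^r(z)=\sup_{|\zeta-z|<r}\varphi(\zeta)$ is a locally bounded function (a psh function is finite off a polar set, so the sup over any ball is finite), whence $c_K(\varphi^r)=+\infty$ for every $r>0$; the pointwise upper bound on $\varphi_m$ is therefore vacuous for controlling $c_K(\varphi_m)$ from above. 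What actually does the job is the H\"older inequality with exponents $p=1+mc^{-1}$, $q=1+m^{-1}c$, applied to
\[
\int_V e^{-2(m/p)\varphi}\,dV_\omega=\int_V\Big(\sum_k|g_{m,k}|^2e^{-2m\varphi}\Big)^{1/p}\Big(\sum_k|g_{m,k}|^2\Big)^{-c/(mq)}dV_\omega,
\]
using crucially the orthonormality $\int_X\sum_k|g_{m,k}|^2e^{-2m\varphi}dV_\omega=N(m)$. This gives the quantitative estimate $c_K(h)^{-1}\le c_K(h_m)^{-1}+1/m$ (Theorem~\ref{theorem:A4}(iii) in the paper), which is exactly the missing upper bound on $c_K(\varphi_m)$. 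Your identification of ``controlling $c_K(\varphi_m)$ from above'' as the main obstacle is correct; it is the mechanism you invoke that fails.
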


Observe that the inequality
$$
\inf_{m\in\Z_{>0}}\inf_{D\in|mL|}\lct_K\Big(X, \frac{1}{m}D\Big)
\geq \inf_{\{h,\,\Theta_{L,h}\ge 0\}} c_K(h)
$$
is trivial, since any divisor $D\in|mL|$ gives rise to a singular
hermitian metric~$h$. The converse inequality will follow from the
approximation technique of~\cite{De92} and some elementary analysis.  The
proof is parallel to the proof of \cite[Theorem~4.2]{DeKo01},
although the language used there was somewhat different. In any case,
we use again the crucial concept of multiplier ideal sheaves: if
$h$ is a singular hermitian metric with local plurisubharmonic
weights $\varphi$, the multiplier ideal sheaf $\mathcal{I}(h)\subset\O_X$
(also denoted by $\mathcal{I}(\varphi)$) is the ideal sheaf defined by

\begin{multline*}
\mathcal{I}(h)_z=\left\{\phantom{\int f}
f\in\O_{X,z}\ \mid \text{there exists a neighborhood $V\ni z$,}
\right. \\ \left\{
\text{ such that }\int_V|f(x)|^2e^{-2\varphi(x)}d\lambda(x)<+\infty\right\},
\end{multline*}
where $\lambda$ is the Lebesgue measure. By Nadel
(see~\cite{Na90}), this is a coherent analytic sheaf on $X$.
Theorem~\ref{theorem:A3} has a more precise version which can be
stated as follows.

\begin{theorem}
\label{theorem:A4} Let $L$ be a line bundle on a compact complex
manifold $X$ possessing a singular hermitian metric $h$ with
$\Theta_{L,h}\ge \eps\omega$ for some $\eps>0$ and some smooth
positive definite hermitian $(1,1)$-form $\omega$ on~$X$. For
every real number $m>0$, consider the space
$\mathcal{H}_m=H^0(X,L^{\otimes m}\otimes\mathcal{I}(h^m))$ of
holomorphic sections $\sigma$ of $L^{\otimes m}$ on $X$ such that
$$
\int_X|\sigma|_{h^m}^2dV_\omega
=\int_X|\sigma|^2e^{-2m\varphi}dV_\omega<+\infty,
$$
where $dV_\omega=\frac{1}{m!}\omega^m$ is the hermitian volume form.
Then for $m\gg 1$, $\mathcal{H}_m$
is a non zero finite dimensional Hilbert space
and we consider the closed positive $(1,1)$-current
$$
T_m=\frac{i}{2\pi}\ddbar\Big(\frac{1}{2m}\log\sum_k|g_{m,k}|^2\Big)
=\frac{i}{2\pi}\ddbar\Big(\frac{1}{2m}\log\sum_k|g_{m,k}|_h^2\Big)
+\Theta_{L,h}
$$
where $(g_{m,k})_{1\le k\le N(m)}$ is an orthonormal basis of~$\mathcal{H}_m$.
The following statements hold.
\begin{list}{\labelitemi}{\leftmargin=0.6em}
\item[(i)] For every trivialization  $L_{|U}\simeq U\times\mathbb{C}$
on a cordinate open set $U$ of $X$ and every compact set $K\subset U$,
there are constants $C_1,C_2>0$ independent of $m$ and $\varphi$ such that
$$
\varphi(z)-\frac{C_1}{m}\le
\psi_m(z):=\frac{1}{2m}\log\sum_k|g_{m,k}(z)|^2
\le\sup_{|x-z|<r}\varphi(x)+\frac{1}{m}\log\frac{C_2}{r^n}
$$
for every $z\in K$ and $r\le\frac{1}{2}d(K,\partial U)$. In particular,
$\psi_m$ converges to $\varphi$ pointwise and in $L^1_{\rm loc}$ topology
on~$\Omega$ when $m\to+\infty$, hence $T_m$ converges weakly
to $T=\Theta_{L,h}$.
\smallskip
\item[(ii)] The Lelong numbers $\nu(T,z)=\nu(\varphi,z)$ and
$\nu(T_m,z)=\nu(\psi_m,z)$ are related by
$$
\nu(T,z)-\frac{n}{m}\le\nu(T_m,z)\le
\nu(T,z)\quad\text{ for every $z\in X$.}
$$
\item[(iii)] For every compact set $K\subset X$, the complex
singularity
exponents of the metrics given locally by $h=e^{-2\varphi}$ and
$h_m=e^{-2\psi_m}$ satisfy
$$
c_K(h)^{-1}-\frac{1}{m}\le c_K(h_m)^{-1}\le c_K(h)^{-1}.
$$
\end{list}
\end{theorem}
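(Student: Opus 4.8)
The plan is to prove the three estimates in the stated order, deriving (ii) and (iii) from the two-sided comparison in (i), so the heart of the matter is establishing (i). First I would record that $\mathcal{H}_m$ is finite dimensional, being the space of global sections of a coherent sheaf on the compact manifold $X$ (coherence of $\mathcal{I}(h^m)$ is Nadel's theorem, see \cite{Na90}), and that it is nonzero for $m\gg 1$: applying the Ohsawa--Takegoshi $L^2$ extension theorem to $h^m$, whose curvature satisfies $\Theta_{L^{\otimes m},h^m}=m\,\Theta_{L,h}\ge m\eps\omega>0$, one extends a nonzero value at a point where $e^{-2m\varphi}$ is locally integrable to a global section of finite norm.

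For the upper bound in (i) I would use the Bergman-kernel (extremal) description
\[
e^{2m\psi_m(z)}=\sum_k|g_{m,k}(z)|^2=\sup\Big\{\,|\sigma(z)|^2 : \sigma\in\mathcal{H}_m,\ \int_X|\sigma|^2e^{-2m\varphi}\,dV_\omega\le 1\,\Big\}.
\]
For each competitor $\sigma$ the function $|\sigma|^2$ is plurisubharmonic, $\sigma$ being holomorphic, so the sub-mean value inequality over a ball $B(z,r)\subset U$ combined with the crude estimate $\int_{B(z,r)}|\sigma|^2\,d\lambda\le e^{2m\sup_{B(z,r)}\varphi}\int_X|\sigma|^2e^{-2m\varphi}\,dV_\omega$ yields $|\sigma(z)|^2\le (C_2^2/r^{2n})\,e^{2m\sup_{|x-z|<r}\varphi}$, with $C_2$ independent of $m$, of $\sigma$, and of $z\in K$. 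Taking logarithms and the supremum over $\sigma$ gives the asserted upper bound on $\psi_m$.

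The lower bound in (i) is where the real work lies, and I expect it to be the main obstacle. Fixing $z\in K$, I would apply the Ohsawa--Takegoshi extension theorem to $h^m$ once more, using the strict positivity $\Theta_{L^{\otimes m},h^m}\ge m\eps\omega$ to produce $\sigma\in\mathcal{H}_m$ with prescribed value at $z$ and $\int_X|\sigma|^2e^{-2m\varphi}\,dV_\omega\le C_0\,|\sigma(z)|^2e^{-2m\varphi(z)}$, where $C_0$ depends only on $(X,\omega,\eps)$ and not on $m$. Feeding this into the extremal description gives $e^{2m\psi_m(z)}\ge C_0^{-1}e^{2m\varphi(z)}$, that is $\psi_m(z)\ge\varphi(z)-\tfrac{1}{2m}\log C_0$, so one may take $C_1=\tfrac12\log C_0$. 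Securing the uniformity of $C_0$ in $m$, and the applicability of the extension theorem at points where $\varphi$ is singular, is the delicate point; once it is in place the pointwise sandwich forces $\psi_m\to\varphi$ in $L^1_{\mathrm{loc}}$ and hence $T_m\to T$ weakly.

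Granting (i), part (ii) follows quickly: the lower bound $\psi_m\ge\varphi-C_1/m$ gives $\nu(T_m,z)\le\nu(T,z)$ by monotonicity of Lelong numbers, while the reverse bound comes from $g_{m,k}\in\mathcal{I}(h^m)=\mathcal{I}(m\varphi)$, since the Demailly--Kollár inclusion $\mathcal{I}(m\varphi)_z\subseteq\mathfrak{m}_z^{\lceil m\nu(\varphi,z)\rceil-n}$ forces $\mathrm{ord}_z(g_{m,k})\ge m\nu(\varphi,z)-n$, whence $\nu(\psi_m,z)=\tfrac1m\min_k\mathrm{ord}_z(g_{m,k})\ge\nu(\varphi,z)-n/m$. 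For part (iii) the right inequality $c_K(h_m)\ge c_K(h)$ is immediate from $h_m\le e^{2C_1/m}h$. For the left inequality I would exploit the global identity
\[
\int_X e^{2m(\psi_m-\varphi)}\,dV_\omega=\sum_k\int_X|g_{m,k}|^2e^{-2m\varphi}\,dV_\omega=\dim\mathcal{H}_m<\infty,
\]
coming from orthonormality. If $e^{-2c\psi_m}$ is locally integrable near a point, then writing $-2c'\varphi=\alpha(-2c\psi_m)+\beta\big(2m(\psi_m-\varphi)\big)$ with $\beta=c/(m+c)$, $\alpha=1-\beta$, and $c'=mc/(m+c)$, Hölder's inequality with exponents $1/\alpha,1/\beta$ shows that $e^{-2c'\varphi}$ is locally integrable; hence $c_K(h)\ge mc/(m+c)$, and letting $c\uparrow c_K(h_m)$ rearranges to $c_K(h_m)^{-1}\ge c_K(h)^{-1}-1/m$, completing the argument.
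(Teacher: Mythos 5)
Your overall strategy coincides with the paper's, and parts (ii) and (iii) are handled correctly: your Hölder computation for (iii) is literally the one in the paper (your $c'=mc/(m+c)$ is the paper's $m/p$ with $p=1+mc^{-1}$), and for the inequality $\nu(T_m,z)\ge\nu(T,z)-n/m$ you take a legitimately different route — the Skoda-type inclusion $\mathcal{I}(m\varphi)_z\subseteq\mathfrak{m}_z^{\lceil m\nu(\varphi,z)\rceil-n}$ applied to the $g_{m,k}$ — whereas the paper deduces it from the upper bound in (i) by dividing $\sup_{|x-z|<r}\psi_m\le\sup_{|x-z|<2r}\varphi+\frac1m\log\frac{C_2}{r^n}$ by $\log r$ and letting $r\to0$. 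Both arguments are fine; yours trades an elementary limit for a citation to the multiplier-ideal/Lelong-number comparison.

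The genuine gap is in the lower bound of (i), which you yourself flag as ``the delicate point'' and then do not carry out. You invoke a global Ohsawa--Takegoshi theorem on $X$ producing $\sigma\in\mathcal{H}_m$ with $\sigma(z)$ prescribed and $\int_X|\sigma|^2e^{-2m\varphi}dV_\omega\le C_0|\sigma(z)|^2e^{-2m\varphi(z)}$, with $C_0$ independent of $m$. But $X$ is only a compact complex manifold (the hypotheses force it to be Moishezon, not projective or K\"ahler), so there is no off-the-shelf extension theorem in that form, and the uniformity in $m$ and in the singular weight $\varphi$ is exactly what has to be proved. The paper's argument is: apply Ohsawa--Takegoshi on the bounded pseudoconvex coordinate chart $U$ to get a \emph{local} holomorphic function $f$ with $f(z)=a$ and $\int_U|f|^2e^{-2m\varphi}d\lambda\le C_3|a|^2e^{-2m\varphi(z)}$ (here $C_3$ depends only on $n$ and $\mathrm{diam}(U)$, which gives the uniformity); then choose a cut-off $\theta$ and solve $\dbar g=\dbar(\theta f)$ globally in the $L^2$ space with weight $2m\varphi+2(n+1)\log|x-z|$, using the Andreotti--Vesentini--H\"ormander estimate, which is applicable for all $m\ge m_0$ because $\Theta_{L^{\otimes m},h^m}\ge m\eps\omega$ dominates the curvature contribution of the extra logarithmic pole, with $m_0$ depending only on $\eps$ and a fixed finite covering of $X$ (not on $h$ or $z$). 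The pole forces $g(z)=0$, so $\sigma=\theta f-g$ is the required global section. Without this local-extension-plus-$\dbar$-correction step your proof of (i), and hence of everything downstream of the estimate $\psi_m\ge\varphi-C_1/m$ (including the nonvanishing of $\mathcal{H}_m$ for $m\gg1$), is incomplete.
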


\begin{proof} The major part of the proof is a small variation
of the arguments already explained in~\cite{De92} (see
also~\cite[Theorem~4.2]{DeKo01}).
We give them here in some detail for the convenience of the reader.
\medskip

\noindent
(i) We note that $\sum|g_{m,k}(z)|^2$ is the square of the norm of
the evaluation linear form $\sigma\mapsto \sigma(z)$ on $\mathcal{H}_m$, hence
$$
\psi_m(z)=\sup_{\sigma\in B(1)}\frac{1}{m}\log|\sigma(z)|
$$
where $B(1)$ is the unit ball of $\mathcal{H}_m$. For
$r\le \frac{1}{2}d(K,\partial\Omega)$, the mean value inequality
applied to the plurisubharmonic function $|\sigma|^2$ implies
\begin{multline*}
|\sigma(z)|^2\le\frac{1}{\pi^nr^{2n}/n!}\int_{|x-z|<r}
|\sigma(x)|^2d\lambda(x)\le \\
\le\frac{1}{\pi^nr^{2n}/n!}\exp\Big(2m\sup_{|x-z|<r}\varphi(x)\Big)
\int_\Omega|\sigma|^2e^{-2m\varphi}d\lambda.
\end{multline*}
If we take the supremum over all $\sigma\in B(1)$ we get
$$
\psi_m(z)\le\sup_{|x-z|<r}\varphi(x)+\frac{1}{2m}
\log\frac{1}{\pi^nr^{2n}/n!}
$$
and the right hand inequality in (i) is proved. Conversely, the
Ohsawa--Takegoshi extension theorem~\cite{OhT87}, \cite{Ohs88}
applied to the $0$-dimensional
subvariety $\{z\}\subset U$ shows that for any $a\in\mathbb{C}$ there is
a holomorphic function $f$ on $U$ such that $f(z)=a$ and
$$
\int_U|f|^2e^{-2m\varphi}d\lambda\le C_3|a|^2e^{-2m\varphi(z)},
$$
where $C_3$ only depends on $n$ and~$\diam(U)$. Now, provided $a$ remains in
a compact set $K\subset U$, we can use a cut-off function $\theta$ with
support in $U$ and equal to $1$ in a neighborhood of $a$, and solve the
$\dbar$-equation $\dbar g=\dbar(\theta f)$ in the $L^2$ space associated
with the
weight $2m\varphi+2(n+1)\log|z-a|$, that is, the singular hermitian
metric $h(z)^m|z-a|^{-2(n+1)}$ on~$L^{\otimes m}$. For this, we apply
the standard Andreotti--Vesentini--H\"ormander $L^2$ estimates
(see e.g.~\cite{De82}
for the required version). This is possible for $m\ge m_0$ thanks to the
hypothesis that $\Theta_{L,h}\ge \varepsilon\omega>0$, even if $X$
is non K\"ahler ($X$ is in any event a Moishezon variety from our
assumptions). The bound $m_0$ depends only on $\varepsilon$ and the
geometry
of a finite covering of $X$ by compact sets $K_j\subset U_j$, where
$U_j$ are coordinate balls (say); it is independent of the point $a$ and
even of the metric~$h$. It follows that $g(a)=0$ and therefore
$\sigma=\theta f-g$ is a holomorphic section of $L^{\otimes m}$ such that
$$
\int_X|\sigma|^2_{h^m}dV_\omega=
\int_X|\sigma|^2e^{-2m\varphi}dV_\omega\le C_4
\int_U|f|^2e^{-2m\varphi}dV_\omega\le C_5
|a|^2e^{-2m\varphi(z)},
$$
in particular,
$\sigma\in \mathcal{H}_m=H^0(X,L^{\otimes m}\otimes\mathcal{I}(h^m))$.
We fix $a$ such that the right hand side of the latter inequality
is~$1$. This gives the inequality
$$
\psi_m(z)\ge\frac{1}{m}\log|a|=\varphi(z)-\frac{\log C_5}{2m}
$$
which is the left hand part of statement (i).
\medskip

\noindent
(ii) The first inequality in (i) implies $\nu(\psi_m,z)\le\nu(\varphi,z)$.
In the opposite direction, we find
$$
\sup_{|x-z|<r}\psi_m(x)\le\sup_{|x-z|<2r}\varphi(x)+
\frac{1}{m}\log\frac{C_2}{r^n}.
$$
Divide by $\log r<0$ and take the limit as $r$ tends to~$0$. The quotient
by $\log r$ of the supremum of a psh function over $B(x,r)$ tends to the
Lelong number at~$x$. Thus we obtain
$$
\nu(\psi_m,x)\ge\nu(\varphi,x)-\frac{n}{m}.
$$
(iii) Again, the first inequality in (i) immediately yields $h_m\le C_6h$,
hence $c_K(h_m)\ge c_K(h)$. For the converse inequality, since we
have $c_{\,\cup K_j}(h)=\min_j c_{K_j}(h)$, we can assume without loss
of generality that $K$ is contained in a trivializing open patch $U$
of~$L$.
Let us take $c<c_K(\psi_m)$. Then, by definition, if
$V\subset X$ is a sufficiently small open neighborhood of $K$,
the H\"older inequality
for the conjugate exponents $p=1+mc^{-1}$ and $q=1+m^{-1}c$
implies, thanks to equality $\frac{1}{p}=\frac{c}{mq}$,
\begin{multline*}
\int_V e^{-2(m/p)\varphi}dV_\omega =\int_V\Big(\sum_{1\le k\le
N(m)}|g_{m,k}|^2e^{-2m\varphi}\Big)^{1/p} \Big(\sum_{1\le k\le
N(m)}|g_{m,k}|^2\Big)^{-c/mq}dV_\omega\le\\
\le\left(\int_X\sum_{1\le k\le
N(m)}|g_{m,k}|^2e^{-2m\varphi}dV_\omega\right)^{1/p} \left(\int_V
\Big(\sum_{1\le k\le N(m)}|g_{m,k}|^2\Big)^{-c/m}dV_\omega
\right)^{1/q}= \\
=N(m)^{1/p} \left(\int_V \Big(\sum_{1\le k\le
N(m)}|g_{m,k}|^2\Big)^{-c/m}dV_\omega \right)^{1/q}<+\infty.
\end{multline*}
From this we infer $c_K(h)\ge m/p$, i.e., $c_K(h)^{-1}\le
p/m=1/m+c^{-1}$.
As $c<c_K(\psi_m)$ was arbitrary, we get $c_K(h)^{-1}\le
1/m+c_K(h_m)^{-1}$
and the inequalities of (iii) are proved.
\end{proof}

\begin{proof}[Proof of Theorem~\ref{theorem:A3}]
Given a big line bundle $L$ on~$X$, there exists a modification
$\mu:\tilde{X}\to X$ of $X$ such that $\tilde{X}$ is projective
and $\mu^*L=\O(A+E)$ where $A$ is an ample divisor and $E$ an
effective divisor with rational coefficients. By pushing forward
by $\mu$ a smooth metric $h_A$ with positive curvature on $A$, we
get a singular hermitian metric $h_1$ on $L$ such that
$$\Theta_{L,h_1}\ge\mu_*\Theta_{A,h_A}\ge \varepsilon\omega$$
on~$X$. Then for any $\delta>0$ and any singular hermitian metric
$h$ on $L$ with $\Theta_{L,h}\ge 0$, the interpolated metric
$h_\delta=h_1^\delta h^{1-\delta}$ satisfies
$\Theta_{L,h_\delta}\ge\delta\varepsilon\omega$. Since $h_1$ is
bounded away from $0$, it follows that $c_K(h)\ge
(1-\delta)c_K(h_\delta)$ by monotonicity. By
Theorem~\ref{theorem:A4}~(iii) applied to $h_\delta$, we infer
$$
c_K(h_\delta)=\lim_{m\to+\infty} c_K(h_{\delta,m}),
$$
and we also have
$$
c_K(h_{\delta,m})\ge \lct_K\Big(\frac{1}{m}D_{\delta,m}\Big)
$$
for any divisor $D_{\delta,m}$ associated with a section $\sigma\in
H^0(X,L^{\otimes m}\otimes\mathcal{I}(h_\delta^m))$, since the metric
$h_{\delta,m}$
is given by $h_{\delta,m}=(\sum_k|g_{m,k}|^2)^{-1/m}$ for an orthornormal
basis
of such sections. This clearly implies
$$
c_K(h)\ge\liminf_{\delta\to 0}\;\liminf_{m\to+\infty}\;
\lct_K\Big(\frac{1}{m}D_{\delta,m}\Big)\ge
\inf_{m\in\Z_{>0}}\inf_{D\in|mL|}
\lct_K\Big(\frac{1}{m}D\Big).\eqno\square
$$

\vskip10pt
In the applications, it is frequent to have a finite or compact group
$G$ of automorphisms of $X$ and to look at $G$-invariant objects, namely
$G$-equivariant metrics on $G$-equivariant line bundles~$L$; in the case
of a reductive algebraic group $G$ we simply consider a compact real form
$G^{\mathbb{R}}$ instead of $G$ itself.

One then gets an $\alpha$ invariant $\alpha_{G,K}(L)$ by
looking only at $G$-equivariant metrics in
Definition~\ref{definition:A2}. All contructions made are then
$G$-equivariant, especially $\mathcal{H}_m\subset |mL|$ is a $G$-invariant
linear system. For every $G$-invariant compact set $K$ in $X$, we thus
infer
$$
\alpha_{G,K}(L)=\inf_{\{h\ \text{\scriptsize is} \
G-\text{\scriptsize equivariant},\,\Theta_{L,h}\ge 0\}}c_K(h)
=\inf_{m\in\Z_{>0}}~~\inf_{|\Sigma|\subset|mL|,~\Sigma^G=\Sigma}~~
\lct_K\Big(\frac{1}{m}|\Sigma|\Big).
\leqno({\rm A}.5)
$$
When $G$ is a finite group, one can pick for $m$ large enough a
$G$-invariant divisor $D_{\delta,m}$ associated with a
$G$-invariant section $\sigma$, possibly after multiplying $m$ by
the order of $G$. One then gets the slightly simpler equality
$$
\alpha_{G,K}(L)
=\inf_{m\in\Z_{>0}}~~\inf_{D\in|mL|^G}~~\lct_K\Big(\frac{1}{m}D\Big).
\leqno({\rm A}.6)
$$
In a similar manner, one can work on an orbifold $X$ rather than on a non
singular variety. The $L^2$ techniques work in this setting with almost
no change ($L^2$ estimates are essentially insensitive  to
singularities, since one can just use an orbifold metric on
the open set of regular points).
\end{proof}

\section{The Big Table}
\label{section:appendix}

This appendix contains the list of nonsingular Fano threefolds. We
follow the notation and the numeration of these in~\cite{IsPr99},
\cite{MoMu81}, \cite{MoMu03}. We also assume the following
conventions. The symbol $V_i$ denotes a smooth Fano threefold such that
$-K_{X}\sim 2H$ and
$\mathrm{Pic}(V_{i})=\mathbb{Z}[H]$,
where $H$ is a Cartier divisor on $V_{i}$, and
$H^{3}=8i\in\{8,16,\ldots,40\}$.
The symbol $W$ denotes a divisor on $\P^2\times\P^2$ of
bidegree $(1,1)$
(or, that is the same, the variety $\P(T_{\P^2})$).
The symbol $V_{7}$ denotes a blow up of $\P^3$ at a point
(or, that is the same, the variety
$\mathbb{P}(\mathcal{O}_{\mathbb{P}^2}\oplus\mathcal{O}_{\mathbb{P}^2}(1))$).
The symbol $Q$ denotes a smooth quadric hypersurface in
$\mathbb{P}^{4}$.
The symbol $S_i$ denotes a smooth del Pezzo surface such
that
$$
K_{S_{i}}^{2}=i\in\big\{1,\ldots,8\big\},
$$
where $S_{8}\not\cong\mathbb{P}^{1}\times\mathbb{P}^{1}$.

The fourth column of Table~\ref{table:Fanos} contains the values
of global log canonical thresholds of the corresponding Fano
varieties. The symbol $\star$ near a number means that $\lct(X)$
is calculated for a general $X$ with a given deformation type. If
we know only the upper bound $\lct(X)\le\alpha$, we write
${}\le\alpha$ instead of the exact value of $\lct(X)$, and the
symbol ? means that we don't know any reasonable upper bound
(apart from a trivial $\lct(X)\le 1$).

\small{%\renewcommand\arraystretch{1.6}
\begin{longtable}{|c|c|p{8.5cm}|c|}
\caption{Smooth Fano threefolds}\label{table:Fanos}\\
\hline $\gimel(X)$ & $-K_X^3$ &  Brief description & $\mathrm{lct}(X)$ \\
\hline $1.1$ & $2$ & a hypersurface in $\mathbb{P}(1,1,1,1,3)$ of degree $6$ & $1\star$\\
\hline $1.2$ & $4$ & a hypersurface in $\mathbb{P}^4$ of degree
$4$ or\hfill\break a double cover of smooth quadric in
$\mathbb{P}^{4}$
branched over a surface of degree $8$ & ? \\
\hline $1.3$ & $6$ & a complete intersection of a quadric and a
cubic in
$\mathbb{P}^{5}$ & ? \\
\hline $1.4$ & $8$ & a complete intersection of three quadrics
$\mathbb{P}^{6}$ & ? \\
\hline $1.5$ & $10$ & a section of
$\mathrm{Gr}(2,5)\subset\mathbb{P}^9$ by
quadric and linear subspace of dimension~$7$ & ? \\
\hline $1.6$ & $12$ & a section of the Hermitian symmetric space
$M=G/P\subset\mathbb{P}^{15}$\hfill\break of type DIII  by a
linear
subspace of dimension~$8$ & ? \\
\hline $1.7$ & $14$ & a section of
$\mathrm{Gr}(2,6)\subset\mathbb{P}^{14}$
by a linear subspace of codimension $5$ & ? \\
\hline $1.8$ & $16$ & a section of the Hermitian symmetric space
$M=G/P\subset \mathbb{P}^{19}$\hfill\break of type CI  by a linear
subspace
of dimension~$10$ & ${}\le 6/7$\\
\hline $1.9$ & $18$ & a section of the $5$-dimensional rational
homogeneous contact\hfill\break manifold
$G_2/P\subset\mathbb{P}^{13}$  by
a linear subspace of dimension~$11$ &  ${}\le 4/5$\\
\hline $1.10$ & $22$ & a zero locus of three sections of the rank
$3$ vector bundle $\bigwedge^2\mathcal{Q}$,\hfill\break where
$\mathcal{Q}$ is
the universal quotient bundle on $\mathrm{Gr}(7,3)$ &  ${}\le 2/3$\\
\hline $1.11$ & $8$ & $V_{1}$ that is a hypersurface in $\mathbb{P}(1,1,1,2,3)$ of degree $6$ & $1/2$\\
\hline $1.12$ & $16$ & $V_{2}$ that is a hypersurface in $\mathbb{P}(1,1,1,1,2)$ of degree $4$ & $1/2$\\
\hline $1.13$ & $24$ & $V_{3}$ that is a hypersurface in $\mathbb{P}^{4}$ of degree $3$ & $1/2$\\
\hline $1.14$ & $32$ & $V_{4}$ that is a complete intersection of two quadrics in $\mathbb{P}^{5}$ & $1/2$\\
\hline $1.15$ & $40$ & $V_{5}$ that is a section of $\mathrm{Gr}(2,5)\subset\mathbb{P}^9$ by linear subspace of  codimension $3$ & $1/2$\\
\hline $1.16$ & $54$ & $Q$ that is a hypersurface in $\mathbb{P}^{4}$ of degree $2$ & $1/3$\\
\hline $1.17$ & $64$ & $\mathbb{P}^{3}$ & $1/4$\\
\hline $2.1$ & $4$ & a blow up of the Fano threefold $V_1$ along an elliptic curve\hfill\break that is an intersection of two divisors from $|-\frac{1}{2}K_{V_1}|$   & $1/2$\\%
\hline $2.2$ & $6$ & a double cover of
$\mathbb{P}^1\times\mathbb{P}^2$
whose branch locus is a divisor of bidegree $(2, 4)$   &  ${}\le 13/14$\\%
\hline $2.3$ & $8$ & the blow up of the Fano threefold $V_2$ along an elliptic curve\hfill\break that is an intersection of two divisors from $|-\frac{1}{2}K_{V_2}|$   & $1/2$\\%
\hline $2.4$ & $10$ & the blow up of $\mathbb{P}^3$ along an intersection of two cubics   & $3/4\star$ \\
\hline $2.5$ & $12$ & the blow up of the threefold $V_3\subset\mathbb{P}^4$ along a plane cubic   & $1/2\star$ \\
\hline $2.6$ & $12$ & a divisor on
$\mathbb{P}^2\times\mathbb{P}^2$ of bidegree $(2, 2)$
or\hfill\break a double cover of $W$ whose branch locus
is a surface in $|-K_W|$  & ? \\
\hline $2.7$ & $14$ & the blow up of $Q$ along the intersection of
two
divisors from $|\mathcal{O}_Q (2)|$   &  ${}\le 2/3$\\
\hline $2.8$ & $14$ & a double cover of $V_7$ whose branch locus
is a surface
in $|-K_{V_7}|$ & $1/2\star$\\
\hline $2.9$ & $16$ & the blow up of $\mathbb{P}^3$ along a curve
of degree $7$ and genus~$5$\hfill\break which is an intersection
of cubics   &
${}\le 3/4$\\
\hline $2.10$ & $16$ & the blow up of $V_4\subset\mathbb{P}^5$
along an elliptic curve\hfill\break which is an intersection of
two hyperplane
sections   &$1/2\star$  \\
\hline $2.11$ & $18$ & the blow up of $V_3$ along a line   & $1/2\star$\\
\hline $2.12$ & $20$ & the blow up of $\mathbb{P}^3$ along a curve
of degree $6$ and genus~$3$\hfill\break which is an intersection
of cubics   &
${}\le 3/4$\\
\hline $2.13$ & $20$ & the blow up of $Q\subset\mathbb{P}^4$ along
a curve
of degree $6$ and genus $2$   &  ${}\le 2/3$\\
\hline $2.14$ & $20$ & the blow up of $V_5\subset\mathbb{P}^6$ along an elliptic curve\hfill\break which is an intersection of two hyperplane sections   &$1/2\star$  \\
\hline $2.15$ & $22$ & the blow up of $\mathbb{P}^3$ along the intersection of a quadric and a cubic surfaces & $1/2\star$ \\
\hline $2.16$ & $22$ & the blow up of $V_4\subset\mathbb{P}^5$
along a
conic   & ${}\le 1/2$ \\
\hline $2.17$ & $24$ & the blow up of $Q\subset\mathbb{P}^4$ along
an
elliptic curve of degree~$5$ &  ${}\le 2/3$\\
\hline $2.18$ & $24$ & a double cover of $\mathbb{P}^1\times\mathbb{P}^2$ whose branch locus is a divisor of bidegree $(2, 2)$   & $1/2$\\
\hline $2.19$ & $26$ & the blow up of $V_4\subset\mathbb{P}^5$ along a line   & $1/2\star$\\
\hline $2.20$ & $26$ & the blow up of $V_5\subset\mathbb{P}^6$
along a
twisted cubic   &  ${}\le 1/2$\\
\hline $2.21$ & $28$ & the blow up of $Q\subset\mathbb{P}^4$ along
a
twisted quartic   &  ${}\le 2/3$\\
\hline $2.22$ & $30$ & the blow up of $V_5\subset\mathbb{P}^6$
along a
conic   &  ${}\le 1/2$\\
\hline $2.23$ & $30$ & the blow up of $Q\subset\mathbb{P}^4$ along a curve of degree $4$ that is an intersection of a surface in $|\mathcal{O}_{\mathbb{P}^{4}}(1)\vert_{Q}|$ and a surface in $|\mathcal{O}_{\mathbb{P}^{4}}(2)\vert_{Q}|$ & $1/3\star$\\
\hline $2.24$ & $30$ & a divisor on $\mathbb{P}^2\times\mathbb{P}^2$ of bidegree $(1, 2)$   & $1/2\star$\\
\hline $2.25$ & $32$ & the blow up of $\mathbb{P}^3$ along an elliptic curve which is an intersection of two quadrics   & $1/2$ \\
\hline $2.26$ & $34$ & the blow up of the threefold $V_5\subset\mathbb{P}^6$ along a line & $1/2\star$\\
\hline $2.27$ & $38$ & the blow up of $\mathbb{P}^3$ along a twisted cubic   & $1/2$\\
\hline $2.28$ & $40$ & the blow up of $\mathbb{P}^3$ along a plane cubic   & $1/4$\\
\hline $2.29$ & $40$ & the blow up of $Q\subset\mathbb{P}^4$ along a conic   & $1/3$\\
\hline $2.30$ & $46$ & the blow up of $\mathbb{P}^3$ along a conic   & $1/4$\\
\hline $2.31$ & $46$ & the blow up of $Q\subset\mathbb{P}^4$ along a line   & $1/3$\\
\hline $2.32$ & $48$ & $W$ that is a divisor on $\mathbb{P}^2\times\mathbb{P}^2$ of bidegree $(1, 1)$  & $1/2$\\
\hline $2.33$ & $54$ & the blow up of $\mathbb{P}^3$ along a line  & $1/4$\\
\hline $2.34$ & $54$ & $\mathbb{P}^1\times\mathbb{P}^2$  & $1/3$\\
\hline $2.35$ & $56$ & $V_7\cong\mathbb{P}(\mathcal{O}_{\mathbb{P}^2}\oplus\mathcal{O}_{\mathbb{P}^2}(1))$  & $1/4$\\
\hline $2.36$ & $62$ & $\mathbb{P}(\mathcal{O}_{\mathbb{P}^2}\oplus\mathcal{O}_{\mathbb{P}^2}(2))$  & $1/5$\\
\hline $3.1$ & $12$ & a double cover of $\mathbb{P}^1\times\mathbb{P}^1\times\mathbb{P}^1$ branched in a divisor of tridegree $(2, 2, 2)$  & $3/4\star$\\
\hline $3.2$ & $14$ & a divisor on a $\mathbb{P}^{2}$-bundle $\mathbb{P}(\mathcal{O}_{\mathbb{P}^1\times\mathbb{P}^1}\oplus\mathcal{O}_{\mathbb{P}^1\times\mathbb{P}^1}(-1,-1)\oplus\mathcal{O}_{\mathbb{P}^1\times\mathbb{P}^1}(-1,-1))$\hfill\break such that $X\in|L^{{}\otimes 2}\otimes\mathcal{O}_{\mathbb{P}^{1}\times\mathbb{P}^{1}}(2,3)|$, where $L$ is the tautological line bundle & $1/2\star$\\
\hline $3.3$ & $18$ & a divisor on $\mathbb{P}^1\times\mathbb{P}^1\times\mathbb{P}^2$ of tridegree  $(1, 1, 2)$  & $2/3\star$\\
\hline $3.4$ & $18$ & the blow up of the Fano threefold $Y$ with $\gimel(Y)=2.18$ along a smooth fiber\hfill\break of the composition $Y\to\mathbb{P}^1\times\mathbb{P}^2\to\mathbb{P}^2$ of the double cover with the projection  & $1/2$\\
\hline $3.5$ & $20$ & the blow up of $\mathbb{P}^1\times\mathbb{P}^2$ along a curve $C$ of bidegree $(5, 2)$\hfill\break such that the composition  $C\hookrightarrow\mathbb{P}^1\times\mathbb{P}^2\to\mathbb{P}^2$ is an embedding  & $1/2\star$\\
\hline $3.6$ & $22$ & the blow up of $\mathbb{P}^3$ along a disjoint union of a line and an elliptic curve of degree~$4$  & $1/2\star$\\
\hline $3.7$ & $24$ & the blow up of the threefold $W$ along an elliptic curve\hfill\break that is an intersection of two  divisors from $|-\frac{1}{2}K_W|$  & $1/2\star$\\
\hline $3.8$ & $24$ & a divisor in $|(\alpha\circ\pi_1)^*(\O_{\P^2}(1))\otimes\pi_2^*(\O_{\P^2}(2))|$, where $\pi_{1}\colon\F_1\times\P^2\to\F_1$\hfill\break and $\pi_{2}\colon\F_1\times\P^2\to\P^2$ are projections, and $\alpha\colon\F_1\to\P^2$ is a blow up of a point& $1/2\star$\\
\hline $3.9$ & $26$ & the blow up of a cone $W_4\subset\mathbb{P}^6$ over the Veronese surface  $R_4\subset\mathbb{P}^5$\hfill\break with center in a disjoint union of the vertex and a quartic on $R_4\cong\mathbb{P}^2$  & $1/3$\\
\hline $3.10$ & $26$ & the blow up of $Q\subset\mathbb{P}^4$ along a disjoint union of two conics & $1/2$ \\
\hline $3.11$ & $28$ & the blow up of the threefold $V_7$ along an elliptic curve\hfill\break that is an intersection of  two divisors from $|-\frac{1}{2}K_{V_7}|$  & $1/2$\\
\hline $3.12$ & $28$ & the blow up of $\mathbb{P}^3$ along a disjoint union of a line and a twisted cubic  & $1/2$\\
\hline $3.13$ & $30$ & the blow up of
$W\subset\mathbb{P}^2\times\mathbb{P}^2$ along a curve $C$ of
bidegree $(2, 2)$\hfill\break such that
$\pi_{1}(C)\subset\mathbb{P}^2$ and
$\pi_{2}(C)\subset\mathbb{P}^{2}$ are irreducible
conics,\hfill\break where $\pi_{1}\colon W\to\mathbb{P}^2$ and
$\pi_{2}\colon W\to\mathbb{P}^2$ are
natural projections &  ${}\le 1/2$\\
\hline $3.14$ & $32$ & the blow up of $\mathbb{P}^3$ along a
disjoint union of a plane cubic curve that\hfill\break is
contained in a  plane
$\Pi\subset\mathbb{P}^{3}$ and a point that is not contained in $\Pi$  & $1/2$\\
\hline $3.15$ & $32$ & the blow up of $Q\subset\mathbb{P}^4$ along a disjoint union of a line and a conic  & $1/2$ \\
\hline $3.16$ & $34$ & the blow up of $V_7$ along a proper
transform via the blow up $\alpha\colon
V_7\to\mathbb{P}^3$\hfill\break of a twisted cubic
passing through the center of the blow up $\alpha$  & $1/2$\\
\hline $3.17$ & $36$ & a divisor on $\mathbb{P}^1\times\mathbb{P}^1\times\mathbb{P}^2$ of tridegree $(1, 1, 1)$  & $1/2$\\
\hline $3.18$ & $36$ & the blow up of $\mathbb{P}^3$ along a disjoint union of a line and a conic & $1/3$\\
\hline $3.19$ & $38$ & the blow up of $Q\subset\mathbb{P}^4$ at two non-collinear points  & $1/3$\\
\hline $3.20$ & $38$ & the blow up of $Q\subset\mathbb{P}^4$ along a disjoint union of two lines & $1/3$\\
\hline $3.21$ & $38$ & the blow up of $\mathbb{P}^1\times\mathbb{P}^2$ along a curve of bidegree  $(2, 1)$  & $1/3$\\
\hline $3.22$ & $40$ & the blow up of $\mathbb{P}^1\times\mathbb{P}^2$ along a conic in a fiber of the projection $\mathbb{P}^{1}\times\mathbb{P}^2\to\mathbb{P}^1$  & $1/3$\\
\hline $3.23$ & $42$ & the blow up of $V_7$ along a proper transform via the blow up $\alpha\colon V_7\to\mathbb{P}^3$\hfill\break of an irreducible conic passing through the center of the blow up $\alpha$  & $1/4$\\
\hline $3.24$ & $42$ & $W\times_{\mathbb{P}^2}\mathbb{F}_1$, where
$W\to\mathbb{P}^2$ is a $\mathbb{P}^1$-bundle and
$\mathbb{F}_1\to\mathbb{P}^2$ is the
blow up  & $1/3$\\
\hline $3.25$ & $44$ & the blow up of $\mathbb{P}^3$ along a disjoint union of two lines & $1/3$\\
\hline $3.26$ & $46$ & the blow up of $\mathbb{P}^3$ with center in a disjoint union of a point and a line  & $1/4$\\
\hline $3.27$ & $48$ & $\mathbb{P}^1\times\mathbb{P}^1\times\mathbb{P}^1$  & $1/2$\\
\hline $3.28$ & $48$ & $\mathbb{P}^1\times\mathbb{F}_1$  & $1/3$\\
\hline $3.29$ & $50$ & the blow up of the Fano threefold $V_7$
along a line in $E\cong\P^2$,\hfill\break where $E$ is the
exceptional divisor of the
blow up $V_7\to\mathbb{P}^3$  & $1/5$ \\
\hline $3.30$ & $50$ & the blow up of $V_7$ along a proper transform via the blow up $\alpha\colon V_7\to\mathbb{P}^3$\hfill\break of a line that passes through the center of the blow up $\alpha$   & $1/4$\\
\hline $3.31$ & $52$ & the blow up of a cone over a smooth quadric in $\mathbb{P}^3$ at the vertex & $1/3$\\
\hline $4.1$ & $24$ & divisor on $\mathbb{P}^1\times\mathbb{P}^1\times\mathbb{P}^1\times\mathbb{P}^1$ of multidegree $(1, 1, 1, 1)$  & $1/2$\\
\hline $4.2$ & $28$ & the blow up of the cone over a smooth quadric $S\subset\nlb\mathbb{P}^3$\hfill\break along a disjoint union of the vertex and an elliptic curve on $S$  & $1/2$\\
\hline $4.3$ & $30$ & the blow up of $\mathbb{P}^1\times\mathbb{P}^1\times\mathbb{P}^1$ along a curve of tridegree $(1, 1, 2)$  & $1/2$\\
\hline $4.4$ & $32$ & the blow up of the smooth Fano threefold $Y$ with $\gimel(Y)=3.19$\hfill\break along the proper transform of a conic on the quadric $Q\subset\P^4$\hfill\break that passes through the both centers of the blow up $Y\to Q$  & $1/3$\\
\hline $4.5$ & $32$ & the blow up of $\mathbb{P}^1\times\mathbb{P}^2$ along a disjoint union of\hfill\break two irreducible curves of bidegree $(2, 1)$ and $(1, 0)$  & $3/7$\\
\hline $4.6$ & $34$ & the blow up of $\mathbb{P}^3$ along a disjoint union of three lines & $1/2$ \\
\hline $4.7$ & $36$ & the blow up of $W\subset\mathbb{P}^2\times\mathbb{P}^2$ along a disjoint union of\hfill\break two curves of bidegree $(0, 1)$ and $(1, 0)$  & $1/2$\\
\hline $4.8$ & $38$ & the blow up of $\mathbb{P}^1\times\mathbb{P}^1\times\mathbb{P}^1$ along a curve of tridegree $(0, 1, 1)$  & $1/3$\\
\hline $4.9$ & $40$ & the blow up of the smooth Fano threefold $Y$
with $\gimel(Y)=3.25$\hfill\break along a curve that is contracted
by the blow
up $Y \to\mathbb{P}^3$  &  $1/3$\\
\hline $4.10$ & $42$ & $\mathbb{P}^1\times S_7$  & $1/3$\\
\hline $4.11$ & $44$ & the blow up of
$\mathbb{P}^1\times\mathbb{F}_1$ along a curve $C\cong\P^1$ such
that $C$ is contained\hfill\break in a fiber
$F\cong\mathbb{F}_{1}$ of the projection
$\P^1\times\mathbb{F}_{1}\to\P^1$ and $C\cdot C=-1$ on $F$ &  $1/3$\\
\hline $4.12$ & $46$ & the blow up of the smooth Fano threefold
$Y$ with $\gimel(Y)=2.33$\hfill\break along two curves that are
contracted by the blow up
$Y\to\mathbb{P}^3$ & $1/4$\\
\hline $4.13$ & $26$ & the blow up of $\mathbb{P}^1\times\mathbb{P}^1\times\mathbb{P}^1$ along a curve of tridegree $(1, 1, 3)$  & $1/2\star$\\
\hline $5.1$ & $28$ & the blow up of the smooth Fano threefold $Y$
with $\gimel(Y)=2.29$\hfill\break along three curves that are
contracted by the
blow up $Y\to Q$  &  $1/3$\\
\hline $5.2$ & $36$ & the blow up of the smooth Fano threefold $Y$
with $\gimel(Y)=3.25$ along\hfill\break two curves $C_{1}\ne
C_{2}$ that are contracted by the blow up $\phi\colon
Y\to\nlb\mathbb{P}^3$\hfill\break  and that are contained in the
same exceptional divisor of the blow up $\phi$  &
$1/3$\\
\hline $5.3$ & $36$ & $\mathbb{P}^1\times S_6$  & $1/2$\\
\hline $5.4$ & $30$ & $\mathbb{P}^1\times S_5$  & $1/2$\\
\hline $5.5$ & $24$ & $\mathbb{P}^1\times S_4$  & $1/2$\\
\hline $5.6$ & $18$ & $\mathbb{P}^1\times S_3$  & $1/2$\\
\hline $5.7$ & $12$ & $\mathbb{P}^1\times S_2$  & $1/2$\\
\hline $5.8$ & $6$ & $\mathbb{P}^1\times S_1$  & $1/2$\\
\hline
\end{longtable}

}

%%%%%%%%%%%%%%%%%%%%%%%%%%%%%%%%%%%%%%%%%%%%%%%%%%%%%%%%%%%%%%%%%%%%%%%%%%%

\end{document}